\let\mathcal\mathscr
\newcommand{\moins}{\mathbin{\fgebackslash}}
\numberwithin{equation}{section}
\theoremstyle{plain}
\newtheorem{prop}[equation]{\propname}
\newtheorem{theo}[equation]{\theoname}
\newtheorem{conj}[equation]{\conjname}
\newtheorem{coro}[equation]{\coroname}
\newtheorem{lemm}[equation]{\lemmname}
\theoremstyle{definition}
\theoremstyle{remark}
\newtheorem{defi}[equation]{\definame}
\newtheorem{rema}[equation]{\remaname}
\newtheorem{exem}[equation]{\exemname}
\newtheorem{ques}[equation]{Question}
\let\cal\mathcal
\let\goth\mathfrak
\newcommand{\eet}{\operatorname{\acute{e}t} }
 \newcommand{\proet}{\operatorname{pro\acute{e}t} }
\def\nekovar{Nekov\'a\v{r}}
\def\paskunas{Pa\v{s}k\={u}nas}
\def\oGamma{{\overline\Gamma}}
\def\rg{{\rm R}\Gamma}
\def\FGet{{\Phi\Gamma^{\rm et}}}
\def\point{{\cdot}}
\def\Qbar{\overline{\bf Q}}
\def\Q{{\bf Q}} \def\Z{{\bf Z}}
\def\C{{\bf C}}
\def\N{{\bf N}}
\def\R{{\bf R}}
\def\O{{\cal O}}
\def\dual{{\boldsymbol *}}
\def\bmu{{\boldsymbol\mu}}
\def\Qbar{{\overline{{\bf Q}}}}
\def\Kbar{{\overline{K}}}
\def\epsilon{\varepsilon}
\def\oed#1{{\oe^{\dagger,#1}}}
\def\bdr{{\bf B}_{{\rm dR}}}
\def\Bdr{{\mathbb B}_{\rm dR}}
\def\acris{{\bf A}_{{\rm cris}}}
 \def\A{{\bf A}} \def\B{{\bf B}}
\def\tE{\widetilde{\bf E}} \def\tA{\widetilde{\bf A}} \def\tB{\widetilde{\bf B}}
\def\tbA{\widetilde{\mathbb A}}
\def\Ai{{\bf A}^{]\infty[}} \def\Aidu{{\bf A}^{]\infty[,\dual}}
\def\Aip{{\bf A}^{]\infty,p[}}
\def\Aipdu{{\bf A}^{]\infty,p[,\dual}}
\def\edag{{\cal E}^\dagger}
\def\oe{\O_{\cal E}}
\def\piqp{{{\bf P}^1}}
\def\reso{{\text{r\'es}_0}}
\def\resl{{\text{r\'es}_{t=0}}}
\def\matrice#1#2#3#4{{\big(\begin{smallmatrix}#1&#2\\ #3&#4\end{smallmatrix}\big)}}
\def\vecteur#1#2{{\big(\begin{smallmatrix}#1\\#2\end{smallmatrix}\big)}}
\def\wotimes{\widehat{\otimes}}
\def\lra{\longrightarrow}
\newcommand{\hooklra}{\lhook\joinrel\longrightarrow}
\def\iO{\underrightarrow{{\cal O}}{\phantom{}}}
\def\iH{\underrightarrow{H}{\phantom{}}}
\def\pH{\underleftarrow{H}{\phantom{}}}
\let\emptyset\varnothing
\def\G{{\cal G}} 
\def\GG{{\mathbb G}} \def\BB{{\mathbb B}} \def\PP{{\mathbb P}} \def\UU{{\mathbb U}}
\def\TT{{\mathbb T}}
\def\dalg{{\cal D}_{\rm alg}}
\def\cdo{{\rm Mes}}
\def\cy{{\boldsymbol\epsilon}_\A}
\def\cyp{{\boldsymbol\epsilon}_p}
\def\cZ{{\widehat\Z}}
\def\cZp{{\widehat\Z^{]p[}}}
\def\wGamma{{\widehat\Gamma}}
\def\premiers{${\cal P}$}
\def\ZZZ{{$\cZ$, $\Z_S$, $\Z^{]S[}$, $\Z[\frac{1}{p}]$}}
\def\QQQ{{$\Q_S$, $\Q^{\rm cycl}$}}
\def\adeles{{$\A$, $\A^{]\infty[}$, $\A^{]S[}$, $\delta_\A$, $|\ |_\A$}}
\def\exponent{{${\bf e}_\A$, ${\bf e}_\infty$, ${\bf e}_\ell$}}
\def\caract{{$\chi^{(p)}$, $\chi_{\rm Gal}$}}
\def\cyclo{{$\cy$, $\cyp $}}
\def\gauss{{$G(\chi)$}}
\def\galois{{$G_K$, $G_K^{\rm ab}$, $G_{\Q,S}$}}
\def\weil{{${\rm W}_{\Q_\ell}$, ${\rm WD}_{\Q_\ell}$}}
\def\GGG{{$\GG$, $G$}}
\def\BBB{{$\BB$}}
\def\PPP{{$\PP$, $P$, $P^+$}}
\def\UUU{{$\UU$, $U$}}
\def\HHH{{${\cal H}$, ${\cal H}^\pm$}}
\def\Gam{{$\wGamma(N)$, $\wGamma_0(N)$, $\Gamma(N)$, $\Gamma_0(N)$}}
\def\signa{{${\rm sign}$}}
\def\omegapi{{$\omega_\pi$}}
\def\vpi{{$v_\pi$, $v'_\pi$, $v_{\pi,\ell}$, $v_\pi^{]\ell[}$, $v_\pi^{]S[}$}}
\def\Tp{{$T_\ell$, $T_p$}}
\def\Pip{{$\Pi_p$, $\Pi_p^{\rm alg}$, $\Pi_\ell^{\rm cl}$}}
\def\PIQP{{$\piqp$, ${\rm Div}^0(\piqp)$}}
\def\WWW{{$W_{k,j}$, $W_{k,j}^\dual$, $W_{\rm tot}$, $W_k$}}
\def\eee{{$e_1$, $e_2$, $e_1^\dual$, $e_2^\dual$}}
\def\actions{actions $*$, $\star$, $|_{k,j}$}
\def\rmalg{{${\rm Alg}$}} 
\def\rml{{${\rm LC}$, ${\rm LP}$}} 
\def\calC{{${\cal C}$, ${\cal C}_c$}}
\def\mesure{{${\rm Mes}$}}
\def\MMM{{$M_{k,j}^{\rm cl}$, $M_{k,j}^{\rm par,\,cl}$, $M_{k}^{\rm cl}$}}
\def\Mcong{{$M^{\rm cong}$, $M^{\rm cong}_S$, $M_k^{\rm cong}$}}
\def\Mkj{{$M_{k,j}$, $M_{k,j}^{\rm qh}$, $M_{k,j}^{\rm par}$, $M_{k,j}^{\rm par,\,qh}$}}
\def\PIK{{$\Pi_K$, $\Pi'_K$, $\Pi_\Q$, $\Pi_\Q'$, $\Pi_{\Q,S}$, $\Pi'_{\Q,S}$}}
\def\calA{{${\cal A}$, ${\cal A}^{\pm}$, ${\cal A}_{\rm par}$, ${\cal A}_{\rm par}^{\pm}$}}
\def\KKK{{${\cal K}(\phi,u)$, ${\cal K}(\phi,u,X)$, ${\cal K}_v$}}
\def\twis{twists $\otimes\chi$, $\otimes|\ |_\A^a$, $\otimes\cyp ^i$}
\def\fonctionL{{$L(\phi,s)$}}
\def\EEE{{${\cal E}$, $\oe$, $\oe^+$}}
\def\Ln{{$L_n$}} 
\def\FGET{{$\FGet$}}
\def\sigm{{$\sigma_\ell$ $\sigma_a$, $[\sigma_a]$}}
\def\Gamm{{$\Gamma$}}
\def\DDD{{$D_{\rm dif}$, $D_{{\rm dif},n}$, $D_{\rm dR}$, $\check D$}}
\def\lamb{{$\Lambda$, $\Lambda_0$, $\overline\Lambda_0$}}
\def\EXP{{$\exp^\dual$, ${\rm Exp}^\dual$}}
\def\epsi{{$\epsilon$}}
\def\ttt{{$t$}}
\def\AAA{{$\tA$, $\tA^+$, $\tA^{++}$, $\tA^-$}}
\def\bbb{{$\tB^-$}}
\def\Hc{{$H^1_c$, $H^1_{\rm par}$, $H^1_{{\rm truc}\sharp}$}}
\def\www{{$W_{k,j}^{\rm B}$, $W_{k,j}^{\rm dR}$, $W_{k,j}^{\eet}$}}
\def\YYY{{$Y(K)$, $Y(N)$}}
\def\XXX{{$X(K)$, $X(K)^\times$, $X(N)$}}
\def\zzz{{$Z(1)$, $Z(N)$}}
\def\inft{{$\infty_a$}}
\def\ome{{$\omega$, $\omega^{k,j}$}}
\def\IH{{$\iH$ $\pH$, $\iH(-)_S$, $\pH(-)_S$}}
\def\nabl{{$\nabla$}}
\def\hol{{${\rm Hol}$}}
\def\BDR{{$\Bdr$, $\O\Bdr$}}
\def\tq{{$\tilde q$}}
\def\vv{{$v_1$, $v_2$}}
\def\RHf{{$\rho_{f,j+1}$}}
\def\piell{{$\pi$, $\pi_\ell$, $\pi^{\rm alg}$, $\check\pi$}}
\def\pif{{$\pi_{f,j+1}$}}
\def\mpi{{$m(\pi)$, $m_{\rm B}(\pi)$, $m_{\eet}(\pi)$, $m_{\rm dR}(\pi)$}}
\def\lambdapi{{$\lambda(\pi)$}}
\def\Omegapi{{$\Omega_\pi^\pm$}}
\def\Vi{{$V(i)$}}
\def\iotaES{{$\iota_{\rm ES}$, $\iota_{\rm ES}^\pm$, $\iota_{{\rm ES},p}$}}
\def\iotan{{$\iota_n$}}
\def\iotadr{{$\iota_{{\rm dR},\pi}^+$, $\iota_{{\rm dR},\pi}^-$, $\iota_{\rm dR}^-$}}
\def\iotapi{{$\iota_\pi$, $\iota_{\pi,p}$, $\iota_{\pi,S}$}}
\def\iotaq{{$\iota_q$}}
\def\iotalin{{$\iota^{\rm lin}$, $\iota^{\rm semi}$}}
\def\iotaA{{$\iota_{\tA}$}}
\def\facteur{{$\epsilon(\pi)$, $\epsilon(\pi_\ell)$}}
\def\tAAA{{$\tbA$, $\tbA^+$, $\tbA^{++}$}}
\def\boulef{{$B$, $B^\times$, $B_\infty$, $\overline B$}}
\def\bouleo{{$B^-$, $B^{-,\times}$, $B^-_\infty$, $B^{-,\times}_\infty$}}
\def\GB{{$G_B$, $H_B$}}
\def\bdrq{{$\bdr^\pm\{\tilde q\}$, $\bdr\{\tilde q\}$, $\bdr\{\tilde q^{1/p^\infty}\}$}}
\def\loB{{$\log_B$}}
\def\kdr{{$\kappa_{\rm dR}$}}
\def\vell{{$v_{T,\ell}$, $v_{T,\ell}'$}}
\def\TTm{{$T_{\goth m}$, $T_0$, $T$}}
\def\TTM{{$T_M$, $T_{0,M}$}}
\def\rhoT{{$\rho_T$, $\rho_T^{\diamond}$, $\check\rho_T$, $\rho_{T_0}$}}
\def\rhoM{{$\rho_{T_M}$, $\rho_{T_{0,M}}$}}
\def\calX{{${\cal X}$, ${\cal X}^{\rm cl}$, ${\cal X}^{{\rm cl},+}$, ${\cal X}_0$, ${\cal X}_0^{\rm cl}$, ${\cal X}_0^{{\rm cl},+}$}}
\def\Hrho{{$H^1_c[\rho_T]_S$}}
\def\lambdat{{$\lambda_T$}}
\def\xxxo{{$X(Np^\infty)$, $X(0)$, $\widehat X(0)$, $\widehat X^{(p)}(0)$}}
\def\zzzo{{$Z(Np^\infty)$, $Z(0)$, $\widehat Z(0)$, $\widehat Z^{(p)}(0)$}}
\def\taq{{$\tA^-[[\tilde q^{\Q_+}]]\boxtimes\Z_p^\dual$}}
\def\tGG{{$\widetilde G_\Q$, $\widetilde G_{\Q_p}$}}
\def\RES{{${\rm Res}$}}
\def\HH{{$H$, $H_0$}}
\def\calW{{${\cal W}$, ${\cal W}_0$}}
\def\oi{{$(0,\infty)$, $(a,b)$}}
\def\oits{{$(0,\infty)_{T,S}$, $(0,\infty)_{\pi,S}^{c,d}$}}
\def\MAT{{${\bf M}'_2$, ${\bf M}'_{2,1}$}}
\def\sO{{$\iO$}}
\def\piS{{$\pi_S$, $\pi_S[\eta]$}}
\def\zm{{${\bf z}^S_{{\rm Iw},M}(\rho_T)$, ${\bf z}_M^S(\rho_T)$, ${\bf z}^S_{{\rm Iw},M}(\rho_{T_0})$}}
\def\zpi{{${\bf z}(\pi)$, ${\bf z}_{\rm Iw}(\pi)$, ${\bf z}_{(0,\infty)}(\pi)$}}
\def\zmpi{{${\bf z}_{\rm Iw}(m_{\eet}(\pi))$, ${\bf z}(m_{\eet}(\pi))$, ${\bf z}_{\rm Iw}(m_{\eet}(\pi))_p$}}
\def\zemp{{${\bf z}^S_{\rm Iw}(\rho_T,\iota_{\rm Em})_p$, ${\bf z}^S(\rho_T,\iota_{\rm Em})_p$, ${\bf z}^S_{\rm Iw}(\rho_{T_0},\iota_{\rm Em})_p$}}
\def\zem{{${\bf z}^S_{\rm Iw}(\rho_T,\iota_{\rm Em})$, ${\bf z}^S(\rho_T,\iota_{\rm Em})$, ${\bf z}^S_{\rm Iw}(\rho_{T_0},\iota_{\rm Em})$}}
\def\zmem{{${\bf z}^S_{{\rm Iw},M}(\rho_T,\iota_{\rm Em})_p$, ${\bf z}_M^S(\rho_T,\iota_{\rm Em})_p$, ${\bf z}^S_{{\rm Iw},M}(\rho_{T_0},\iota_{\rm Em})_p$}}
\def\zeis{{${\bf z}_{\rm Eis}$, ${\bf z}'_{\rm Eis}$, $\tilde{\bf z}_{\rm Eis}$, $\tilde{\bf z}'_{\rm Eis}$}}
\def\zsie{{${\bf z}_{\rm Siegel}$}}
\def\zkato{{${\bf z}_{\rm Kato}$, ${\bf z}_{\rm Kato}^{c,d}$, ${\bf z}_{\rm Kato}^{S,c,d}$}}
\def\zkatokj{{${\bf z}_{\rm Kato}^{c,d}(k,j)$, ${\bf z}_{\rm Kato}^{S,c,d}(k,j)$}}
\def\Zmpi{{${\bf z}^{S}(\check m_{\eet}(\pi))$, ${\bf z}_{\rm Kato}^{S,c,d}(\check m_{\eet}(\pi))$}}
\def\zkatol{{${\bf z}_{\rm Kato}^{S}(\Lambda\otimes\rho_T)$}}
\def\zb{{$\zeta_{\rm B}$, $\zeta_{\rm dR}$}}
\begin{document}
\title[Factorisation du syst\`eme de Beilinson-Kato]
{Une factorisation de la cohomologie compl\'et\'ee et du syst\`eme de Beilinson-Kato}
\author{Pierre Colmez}
\address{C.N.R.S., IMJ-PRG, Sorbonne Universit\'e, 4 place Jussieu,
75005 Paris, France}
\email{pierre.colmez@imj-prg.fr}
\author{Shanwen Wang}
\address{School of mathematics, Renmin University of China, 59 ZhongGuanCun Street, 100872 Beijing, P.R. China}
\email{s\_wang@ruc.edu.cn}
\dedicatory{\`A la m\'emoire de Jo\"el Bella\"{\i}che et Jan \nekovar}
\begin{abstract}
Nous montrons que le symbole modulaire $(0,\infty)$, vu comme \'el\'ement
du dual de la cohomologie compl\'et\'ee, interpole en famille le syst\`eme d'Euler
de Kato, et nous en d\'eduisons une factorisation du syst\`eme de Beilinson-Kato
comme un produit de deux symboles $(0,\infty)$ (un avatar alg\'ebrique
de la m\'ethode de Rankin). La preuve utilise
la correspondance de Langlands locale $p$-adique pour ${\bf GL}_2(\Q_p)$ et
la factorisation d'Emerton de la cohomologie compl\'et\'ee de la tour des courbes modulaires,
dont nous donnons une preuve nouvelle reposant 
sur la construction d'un mod\`ele de Kirillov pour la cohomologie compl\'et\'ee, et
 que nous raffinons en
imposant des conditions aux points classiques; l'existence d'un tel raffinement 
traduit une propri\'et\'e d'analyticit\'e des p\'eriodes $p$-adiques de formes modulaires.
\end{abstract}
\begin{altabstract}
We show that the modular symbol $(0,\infty)$, considered as an element of the dual of
Emerton's completed cohomology, interpolates Kato's Euler system at classical points,
and we deduce from this a factorisation of Beilinson-Kato's system as a product
of two symbols $(0,\infty)$ (an algebraic analog of Rankin's method).
The proof uses the $p$-adic local Langlands correspondence for ${\bf GL}_2(\Q_p)$
and Emerton's factorization of the completed cohomology of the tower of modular curves
for which we provide a new proof resting upon the construction of a Kirillov
model for the completed cohomology, and
which we refine by imposing
conditions at classical points; the existence of such a refinement is a manifestation
of an analyticity property for $p$-adic periods of modular forms.
\end{altabstract}
\thanks{Pendant l'\'elaboration de cet article, la recherche de S.W. a \'et\'e subventionn\'ee par
the Fundamental Research Funds for the Central Universities, and the Research Funds of Renmin University 
of China \no 20XNLG04 et the National Natural Science Foundation of China (Grant \no 11971035); 
P.C. \'etait membre des projets ANR Percolator puis Coloss.}
\setcounter{tocdepth}{2}

\maketitle


\section*{Introduction}

Le point de d\'epart de cet article \'etait l'espoir que l'on pouvait expliquer
l'apparition d'un produit de deux valeurs sp\'eciales de fonctions~$L$ dans les formules
explicites li\'ees au syst\`eme d'Euler de Kato (ce produit de deux valeurs est ce qui sort de la
m\'ethode de Rankin) par une factorisation du syst\`eme de Beilinson-Kato comme
un produit de deux symboles modulaires
$(0,\infty)$, chacun fournissant une des deux 
valeurs sp\'eciales de fonctions~$L$.
C'est ce que nous prouvons, mais cela demande d'interpr\'eter correctement les termes en pr\'esence.

\Subsection{La factorisation de la cohomologie compl\'et\'ee}\label{cro1}
\subsubsection{La cohomologie compl\'et\'ee}\label{cro2}
Soit $L$ une extension finie de $\Q_p$ d'anneau des entiers~$\O_L$ et corps r\'esiduel~$k_L$, 
et soient $\A$ (resp.~$\Ai$, $\Aip$) l'anneau
des ad\`eles (resp.~ad\`eles finis, resp.~ad\`eles finis sans la composante $p$-adique) de $\Q$
et $\cZ$, $\cZ^{]p[}$ les anneaux des entiers de $\Ai$, $\Aip$.
On note ${\cal P}$ l'ensemble des nombres premiers.

Notons $\GG$ le groupe ${\bf GL}_2$ et $(x_v)_v$ les composantes
de $x\in\GG(\A)$ (avec $v\in{\cal P}\cup\{\infty\}$).
On dispose d'actions de $\GG(\Q)$ et $\GG(\A)$ sur
les fonctions $\phi:\GG(\A)\to L$ d\'efinies par
$$(\gamma*\phi)(x)=\phi(\gamma^{-1}x),\ {\text{si $\gamma\in\GG(\Q)$,}}\quad
(g\star\phi)(x)=\phi(xg),\ {\text{si $g\in\GG(\A)$.}}$$
Ces deux actions commutent, ce qui fait que, si $X$ est un espace de fonctions stable par ces deux actions,
les groupes $H^i(\GG(\Q),X)$ sont munis d'une action de $\GG(\A)$.
Parmi les espaces $X$ possibles, mentionnons les espaces
${\cal C}\supset{\cal C}^{(p)}\supset{\rm LA}\supset{\rm LP}$, o\`u
\begin{align*}
{\cal C}(\GG(\A),L)&:=\{\phi:\GG(\A)\to L,\ {\text{$\phi$ continue}}\}\\
{\cal C}^{(p)}(\GG(\A),L)&:=\{\phi\in {\cal C}(\GG(\A),L),
\ {\text{$\phi$ localement lisse pour l'action de $\GG(\cZ^{]p[})$}}\}\\ 
{\rm LA}(\GG(\A),L)&:=\{\phi\in {\cal C}^{(p)}(\GG(\A),L),
\ {\text{$\phi$ localement analytique en $x_p$}}\}\\
{\rm LP}(\GG(\A),L)&:=\{\phi\in {\cal C}^{(p)}(\GG(\A),L),
\ {\text{$\phi$ localement alg\'ebrique en $x_p$}}\}
\end{align*}
\begin{rema}\phantomsection\label{intox1}
(i) Comme $L$ est totalement discontinu, les $\phi$ ci-dessus se factorisent par $\GG(\A)/\GG(\R)_+$,
o\`u $\GG(\R)_+$ est la composante connexe de $1\in\GG(\R)$; la composante archim\'edienne
n'intervient donc qu'\`a travers le groupe de ses composantes connexes $\{\pm1\}$.

Il s'ensuit que les $H^0$ ne sont pas des groupes tr\`es passionnants car une fonction
invariante par $\GG(\Q)$ et $\GG(\R)^+$ se factorise par le d\'eterminant (plus exactement,
par $\Aidu/\Q^\dual\R_+^\dual\cong\cZ^\dual$). Par contre les $H^1$ sont des groupes tr\`es
int\'eressants, et les $H^i$, pour $i\geq 2$, sont nuls.

(ii) Soit $\Gamma:={\bf SL}_2(\Z)$. Le lemme de Shapiro fournit un isomorphisme
$$H^1(\GG(\Q),{\cal C}^{(p)}(\GG(\A),L))\cong H^1(\Gamma,{\cal C}^{(p)}(\GG(\cZ),L))=
L\otimes_{\O_L}H^1(\Gamma,{\cal C}^{(p)}(\GG(\cZ),\O_L))$$
Par ailleurs, 
$${\cal C}^{(p)}(\GG(\cZ),\O_L)=\varinjlim\nolimits_{(N,p)=1}\varprojlim\nolimits_k
\big(\varinjlim\nolimits_n {\cal C}(\GG(\Z/Np^n),\O_L/p^k)\big)$$
et le lemme de Shapiro, coupl\'e avec les th\'eor\`emes de comparaison ``cohomologie du $\pi_1$-Betti''
et ``Betti-\'etale'', fournit des isomorphismes
$$H^1(\Gamma, {\cal C}(\GG(\Z/Np^n),\O_L/p^k))\cong H^1_{\rm B}(Y(Np^n)(\C),\O_L/p^k)\cong
H^1_{\eet}(Y(Np^n)_\Qbar,\O_L/p^k)$$
o\`u $Y(M)$ est la courbe modulaire de niveau $M$ (connexe sur $\Q$ mais pas g\'eom\'etriquement
connexe).  
Cela munit les $H^1$ d'une action de $G_\Q:={\rm Gal}(\Qbar/\Q)$, et permet de montrer
que $H^1(\GG(\Q),{\cal C}^{(p)}(\GG(\A),L))$ est la cohomologie compl\'et\'ee d'Emerton~\cite{Em06}.

(iii) On a aussi, si $C$ est un corps alg\'ebriquement clos, complet pour la valuation $p$-adique,
\begin{align*}
H^1(\GG(\Q),{\cal C}(\GG(\A),\O_L)) &\cong H^1_{\rm proet}(\widehat Y(0)_C,\O_L)\\
H^1(\GG(\Q),{\cal C}^{(p)}(\GG(\A),L))&\cong H^1_{\rm proet}(\widehat Y(0)^{(p)}_C,\O_L)
\end{align*}
o\`u $H^1_{\rm proet}(\widehat Y(0)_C,\O_L)$ est la cohomologie pro\'etale
g\'eom\'etrique de la courbe perfecto\"{\i}de
$\widehat Y(0)$ obtenue en compl\'etant
la tour des courbes modulaires de tous niveaux~\cite{Sz2}
tandis que 
$\widehat Y(0)^{(p)}$ est la limite projective pour $(N,p)\neq 1$ des
$\widehat Y(Np^\infty)$, o\`u $\widehat Y(Np^\infty)$ est la compl\'et\'ee
de la tour des courbes modulaires de niveaux $Np^n$, pour $n\in\N$ (i.e.~on compl\`ete uniquement
en $p$).  Cette interpr\'etation, qui nous sera tr\`es utile, a d\'ej\`a \'et\'e utilis\'ee
avec profit par Pan~\cite{pan2}.

(iv) On peut montrer que $H^1(\GG(\Q),{\cal C}^{(p)}(\GG(\A),L))$ est l'ensemble des
vecteurs $\GG(\cZ^{]p[})$-lisses de $H^1(\GG(\Q),{\cal C}(\GG(\A),L))$ 
(ce n'est pas une tautologie: le
m\^eme \'enonc\'e pour la restriction des scalaires de $K$ \`a $\Q$
de ${\bf GL}_1$ \'equivaut \`a la conjecture
de Leopoldt pour $K$).

(v) 
On a $H^1(\GG(\Q),{\rm LP}(\GG(\A),L))=L\otimes H^1(\GG(\Q),{\rm LP}(\GG(\A),\Q))$
et la th\'eorie d'Eichler-Shimura exprime $\C\otimes H^1(\GG(\Q),{\rm LP}(\GG(\A),\Q))$
en termes de formes modulaires classiques.
De mani\`ere analogue:

$\bullet$ Les espaces $\C_p\wotimes_L H^1(\GG(\Q),{\cal C}(\GG(\A),L))$
et $\C_p\wotimes_L H^1(\GG(\Q),{\cal C}^{(p)}(\GG(\A),L))$ sont reli\'es aux formes modulaires $p$-adiques
(cf.~th.\,\ref{intox4} pour un tel lien).

$\bullet$
$\C_p\wotimes_L H^1(\GG(\Q),{\rm LA}(\GG(\A),L))$ est reli\'e aux formes modulaires $p$-adiques 
surconvergentes~\cite{pan2,pan3}.
\end{rema}

\subsubsection{Le module $H^1[\rho_T]$}\label{cro3}
Notons $G_{\Q_\ell}$ le groupe de Galois absolu de $\Q_\ell$.
On fixe un plongement $\Qbar\hookrightarrow\Qbar_\ell$, et donc une injection
$G_{\Q_\ell}\hookrightarrow G_\Q$.

Fixons $N$ premier \`a $p$. Soient 
\begin{align*}
\wGamma(Np^\infty)&:=\GG(\cZ)\cap(1+N{\bf M}_2(\cZ^{]p[}))\\
\widehat H^1(N)&:=H^1(\GG(\Q),{\cal C}(\GG(\A)/\wGamma(Np^\infty),\O_L))
\subset H^1(\GG(\Q),{\cal C}(\GG(\A),\O_L))
\end{align*}
(On a aussi $\widehat H^1(N)=H^1_{\proet}(\widehat Y(Np^\infty)_C,\O_L)$, et
$\widehat H^1(N)$ est stable par $G_\Q$.)

On note $T(Np^\infty)$ l'alg\`ebre de Hecke agissant sur $\widehat H^1(N)$ (engendr\'ee par les
op\'erateurs de Hecke $T_\ell$, pour $\ell\nmid Np$); c'est une alg\`ebre semi-locale.
On fixe un id\'eal maximal ${\goth m}$ de $T(Np^\infty)$ et on note $T$ le localis\'e 
$T(Np^\infty)_{\goth m}$; c'est une alg\`ebre locale d'id\'eal maximal ${\goth m}$ et, 
quitte \`a agrandir $L$, on peut supposer que $T/{\goth m}=k_L$.
Soit $${\cal X}={\rm Spec}(T)(\O_{\overline L})$$ 

On dispose d'un pseudo-caract\`ere $t_{\goth m}:G_\Q\to T$, de dimension $2$.
On suppose que ${\goth m}$ est {\it non-eisenstein} (i.e.~la r\'eduction modulo ${\goth m}$ de
$t_{\goth m}$ est la trace d'une repr\'esentation absolument irr\'eductible); 
il existe alors une repr\'esentation $\rho_T:G_\Q\to{\bf GL}_2(T)$ dont la trace est $t_{\goth m}$. 

Si $x\in{\cal X}$, on note
${\goth p}_x$ l'id\'eal premier de $T$ qui lui correspond et $\rho_x$
la sp\'ecialisation de $\rho_T$ en $x$ 
(i.e. la repr\'esentation $(T/{\goth p}_x)[\frac{1}{p}]\otimes_T\rho_T$).
On dit que {\it $x$ est classique}, si $\rho_x$ est la repr\'esentation 
associ\'ee \`a une forme modulaire $f$ primitive de poids~$\geq 2$ \`a torsion
pr\`es par une puissance enti\`ere du caract\`ere cyclotomique.
Les points classiques sont zariski-denses dans
la fibre g\'en\'erique de ${\cal X}$.

Soient $S$ l'ensemble des nombres premiers divisant $Np$ et
$G_{\Q,S}$ le groupe de Galois de l'extension maximale de $\Q$, non ramifi\'ee en dehors de $S$.
Alors $\rho_T$ se factorise \`a travers $G_{\Q,S}$ et, si $N$ est suffisamment divisible
par les $\ell\in S\moins\{p\}$,
les th\'eor\`emes {\og big $R$ = big $T$ \fg} identifient $T$ 
\`a l'anneau des d\'eformations universelles
de $\overline\rho_T$ et $\rho_T:G_{\Q,S}\to {\bf GL}_2(T)$ \`a la d\'eformation universelle
(cf.~\cite{boc} et~\cite[\S\,7.3]{Em08}).

Le localis\'e $\widehat H^1(N)_{\goth m}$ de $\widehat H^1(N)$ est un facteur direct de
$\widehat H^1(N)$. On note $H^1[\rho_T]$ le sous-$T[\GG(\A)]$-module de
$H^1(\GG(\Q),{\cal C}(\GG(\A),\O_L)$ qu'il engendre. On cherche \`a d\'ecrire
$H^1[\rho_T]$ en tant que $T[G_\Q\times\GG(\Ai)]$-module.

\subsubsection{La factorisation}\phantomsection\label{intro1}
Soit 
$\rho_T^\diamond={\rm Hom}_T(\rho, T)$.
Via les correspondances de Langlands locales $p$-adiques en famille,
on sait associer \`a $\rho_T^\diamond$, pour tout $\ell\in{\cal P}$, 
une $T$-repr\'esentation $\Pi_\ell(\rho_T^\diamond)$
de $\GG(\Q_\ell)$:

\vskip1mm
$\bullet$ Si $\ell\neq p$, la repr\'esentation que nous utilisons est une variante de celle
fournie par la th\'eorie d'Emerton-Helm~\cite{EH}; c'est un $T$-module sans torsion, muni d'une action
lisse de $\GG(\Q_\ell)$, et si $x\in{\cal X}$, alors $(T/{\goth p}_x)\otimes_T\Pi_\ell(\rho_T^\diamond)$
est (g\'en\'eriquement) un r\'eseau de $\Pi_\ell^{\rm cl}(\rho_x^\dual)$, o\`u $\Pi_\ell^{\rm cl}(\rho_x^\dual)$
est la repr\'esentation lisse de $\GG(\Q_\ell)$ associ\'ee \`a $\rho_x^\dual$ via la correspondance 
de Langlands locale classique.

\vskip1mm
$\bullet$ Si $\ell=p$, l'existence de $\Pi_p(\rho_T^\diamond)$ est une cons\'equence
de la fonctorialit\'e de la correspondance de Langlands locale $p$-adique~\cite{gl2,Ki1,Pas}.
La repr\'esentation $\Pi_p(\rho_T^\diamond)$ est la boule unit\'e d'un $L$-banach, et 
est munie d'une action continue de $\GG(\Q_p)$. En tant que $T$-module,
c'est un $T$-module {\og de torsion\fg} au sens o\`u la $T$-torsion est dense dans $\Pi_p(\rho_T^\diamond)$
(si $x\in{\cal X}$, 
le sous-module de ${\goth p}_x$-torsion de $\Pi_p(\rho_T^\diamond)$ s'identifie \`a un r\'eseau de
$\Pi_p(\rho_{x}^\dual)$, o\`u $\Pi_p(\rho_{x}^\dual)$ est la repr\'esentation unitaire
 de $\GG(\Q_p)$ associ\'ee \`a $\rho^\dual_x$ via la correspondance
de Langlands locale $p$-adique).

\vskip2mm
On d\'efinit $\Pi(\rho_T^\diamond)$ comme le produit tensoriel ext\'erieur restreint
des $\Pi_\ell(\rho_T^\diamond)$, pour $\ell\in{\cal P}$.
Si $x$ n'est pas trop pathologique (\no\ref{pathol} et th.\,\ref{glob111}),
alors
$$\big(L\otimes_{\O_L}\Pi(\rho_T^\diamond)\big)[{\goth p}_x]
\cong\Pi_p(\rho_x^\dual)\otimes\big(\otimes'_{\ell\neq p} \Pi_\ell^{\rm cl}(\rho_x^\dual)\big)$$
La repr\'esentation $\rho_T$ n'est bien d\'etermin\'ee qu'\`a multiplication
pr\`es par une unit\'e de $T$,
mais la fonctorialit\'e de la correspondance de Langlands locale $p$-adique et le passage
au dual font que $\rho_T\otimes_T\Pi_p(\rho_T^\diamond)$ ne d\'epend pas du choix de $\rho_T$
(de la m\^eme mani\`ere que $\rho_T^\diamond\otimes_T \rho_T={\rm End}_T(\rho_T)$ n'en d\'epend pas).

\begin{theo}\phantomsection\label{Intro2}
Si ${\goth m}$ est non-eisenstein,
il existe un isomorphisme naturel, $T[G_{\Q,S}\times \GG(\Ai)]$-\'equivariant:
$$\iota_T:\rho_T\otimes_T\Pi(\rho_T^\diamond)[\tfrac{1}{p}]\overset{\sim}{\to} L\otimes_{\O_L}H^1[\rho_T]$$
\end{theo}

\begin{rema}\phantomsection\label{Intro3}
{\rm (i)} 
Si ${\goth m}$ est {\it g\'en\'erique} (i.e.~non-eisenstein et la
restriction de $\overline\rho_T$ \`a $G_{\Q_p}$
pas de la forme\footnote{Si $p=2$, il faut aussi supposer que cette restriction n'est pas irr\'eductible,
mais cette condition doit pouvoir s'\'eliminer avec un peu plus d'efforts.}
 $\matrice{1}{0}{0}{1}\otimes\delta$), nous obtenons un isomorphisme au niveau entier:
$$\iota_T:\rho_T\otimes_T\Pi(\rho_T^\diamond)\overset{\sim}{\to} H^1[\rho_T]$$
Si ${\goth m}$ est 
non-eisenstein et si la restriction de $\overline\rho_T$
\`a $G_{\Q_p}$
n'est pas de la forme $\matrice{1}{*}{0}{1}\otimes\delta$,
Emerton~\cite[th.\,6.2.13]{Em08} a montr\'e l'existence d'un isomorphisme ``abstrait'' 
au niveau entier
entre les deux membres
\`a ceci pr\`es que notre action de $\GG(\A)$ et celle d'Emerton diff\`erent de $g\mapsto{^tg^{-1}}$,
et donc que le membre de gauche est $\rho_T\otimes_T\Pi(\rho_T)$ chez Emerton.

{\rm (ii)} Notre preuve est tr\`es diff\'erente de celle d'Emerton et fournit un isomorphisme bien
d\'etermin\'e.
Elle repose sur la construction
de mod\`eles de Kirillov pour les deux membres, \`a valeurs
dans
${\cal C}(\Aidu,\check{T}\otimes_{\Z_p}\tA^-)$, o\`u $\check{T}={\rm Hom}_{\O_L}(T,\O_L)$
et $\tA^-=\tA/\tA^+$, avec $\tA=W(C^\flat)$ et $\tA^+=W(\O_C^\flat)$ (l'apparition
de $\tA^-$ est un peu surpenante vu qu'il n'entre nulle part dans la d\'efinition des
deux membres; voir les ${\rm n}^{\rm os}$~\ref{cro4} et~\ref{cro5}
 pour les d\'etails de cette apparition). 
Une fois ces mod\`eles de Kirillov construits, la strat\'egie consiste \`a prouver que:

$\bullet$ ${\cal K}_{\rm Aut}:\rho_T\otimes_T\Pi(\rho_T^\diamond)
\to {\cal C}(\Aidu,\check{T}\otimes_{\Z_p}\tA^-)$ est une isom\'etrie sur son image
(quasi-tautologique vu la d\'efinition de $\Pi(\rho_T^\diamond)$).

$\bullet$ (dans le cas g\'en\'erique) ${\cal K}_{H}:H^1[\rho_T]
\to {\cal C}(\Aidu,\check{T}\otimes_{\Z_p}\tA^-)$ est une isom\'etrie sur son image
(d\'elicat: utilise de la th\'eorie de Hodge $p$-adique enti\`ere et des th\'eor\`emes de multiplicit\'e~$1$
modulo~$p$
qui font d\'efaut dans le cas non g\'en\'erique --- la preuve de ce cas (th.\,\ref{NE2}) est plus alambiqu\'ee
et donne un r\'esultat moins pr\'ecis, i.e.~on est forc\'e d'inverser $p$).

$\bullet$ Les images des vecteurs localement alg\'ebriques des deux membres sont les m\^emes
(cf.~diag.\,(\ref{intox33}), qui repose sur des
formules explicites pour les th\'eor\`emes de comparaisons ``de Rham-\'etale $p$-adique'' pour les
formes modulaires).

On conclut alors en utilisant la densit\'e des vecteurs localement alg\'ebriques des deux membres.
\end{rema}

\begin{rema}\phantomsection\label{Intro3.1}
{\rm (i)}
On d\'eduit du th\'eor\`eme, pour $x\in{\cal X}$ pas trop pathologique, un isomorphisme naturel:
$$\rho_x\otimes\big(\Pi_p(\rho_x^\dual)\otimes\big(\otimes'_{\ell\neq p}
\Pi_\ell^{\rm cl}(\rho_x^\dual)\big)\big)\overset{\sim}{\to} (L\otimes_{\O_L}H^1[\rho_T])[{\goth p}_x]$$
qui a pour cons\'equence l'\'enonc\'e suivant, peut-\^etre plus parlant:
\begin{equation}\label{loglo}
{\rm Hom}_{G_\Q}(\rho_x,L\otimes_{\O_L}H^1_{\proet}(\widehat Y(0)^{(p)}_C,\O_L))
\cong \Pi_p(\rho_x^\dual)\otimes\big(\otimes'_{\ell\neq p}
\Pi_\ell^{\rm cl}(\rho_x^\dual)\big)
\end{equation}

{\rm (ii)} Selon la conjecture de compatibilit\'e local-global
d'Emerton~\cite[conj.\,1.1.1]{Em06b} l'isomorphisme~(\ref{loglo}) devrait \^etre
valable pour toute repr\'esentation de dimension~$2$ de $G_\Q$, impaire, absolument irr\'eductible,
non ramifi\'ee en dehors d'un nombre fini de nombres premiers.
\end{rema}

\Subsection{Mod\`eles de Kirillov}\label{intox23}
\subsubsection{Le mod\`ele de Kirillov de la cohomologie compl\'et\'ee}\label{cro4}
Le mod\`ele de Kirillov ${\cal K}_H$ pour $H^1[\rho_T]$ est la restriction d'un mod\`ele
de Kirillov pour $H^1_{\proet}(\widehat Y(0)_C,\O_L)=\O_L\otimes_{\Z_p} H^1_{\proet}(\widehat Y(0)_C,\Z_p)$.
On note $\TT$ l'alg\`ebre de Hecke qui agit: c'est la limite projective des alg\`ebres
de Hecke de tous les niveaux finis et elle agit \`a travers son quotient $T$ sur $H^1[\rho_T]$.

On construit le mod\`ele de Kirillov pour $H^1_{\proet}(\widehat Y(0)_C,\Z_p)$
en restreignant les classes de cohomologie 
\`a une composante connexe du voisinage multiplicatif $\widehat Z(0)_C$ de la pointe $\infty$
 --  une boule ouverte perfecto\"{\i}de -- 
et en utilisant une description
de la cohomologie pro\'etale de la boule ouverte perfecto\"{\i}de 
(formules~(\ref{bato40.2}) et~(\ref{bato40.4})).

La boule ouverte est une r\'eunion croissante 
de boules ferm\'ees et, si $B_\infty$ est une boule
ferm\'ee perfecto\"{\i}de, on a 
$H^1_{\proet}(B_\infty,\Z_p)\cong
W((\O(B_\infty)^\flat)/(\varphi-1)$.  
Le membre de droite se calcule facilement et fournit, en passant \`a la limite,
une injection naturelle\footnote{Ceci demande que $C$ soit sph\'eriquement complet.}
 de la cohomologie pro\'etale de la boule ouverte perfecto\"{\i}de
dans $\prod_{i>0,\,v_p(i)=0}\tA^- \tilde q^i$ o\`u $\tilde q=[q^\flat]$ et $q$ est la
coordonn\'ee locale usuelle sur le lieu multiplicatif de la courbe
modulaire de niveau~$1$.

En prenant le terme correspondant \`a $i=1$, compos\'e avec la restriction
au voisinage de la pointe $\infty$, cela fournit une fl\`eche naturelle
$\alpha:H^1_{\proet}(\widehat Y(0)_C,\Z_p)\to \tA^-$;
c'est de l\`a que sort le $\tA^-$.  Ensuite, on pose
$$\langle{\cal K}_{H,v}(x),\lambda\rangle=
\alpha(\matrice{x}{0}{0}{1}\cdot \lambda v),\quad{\text{si $v\in H^1_{\proet}(\widehat Y(0)_C,\Z_p)$,
si $x\in\Aidu$ et si $\lambda\in \TT$.}}$$
Ce mod\`ele de Kirillov n'est que $G_{\Q_p}$-\'equivariant car le voisinage
multiplicatif de $\infty$ n'est d\'efini que sur $\Q_p$.

\begin{rema}\phantomsection\label{intox3}
(i) Le point d\'elicat est de prouver que ce mod\`ele de Kirillov est injectif sur $H^1[\rho_T]$
(apr\`es avoir sym\'etris\'e sous l'action de $G_\Q$ pour perdre le moins d'information possible). 
Nous utilisons
deux approches de domaines de validit\'e diff\'erents:

$\bullet$ La premi\`ere approche utilise la th\'eorie de Hodge $p$-adique enti\`ere, et permet
de prouver l'injectivit\'e en restriction au localis\'e en un id\'eal maximal g\'en\'erique
(et m\^eme mieux, on obtient une isom\'etrie sur l'image (th.\,\ref{cdn3})
comme il est mentionn\'e ci-dessus).

$\bullet$ La seconde approche utilise la description
des vecteurs $\matrice{1}{\Z_p}{0}{1}$-invariants de $C\wotimes H^1_{\proet}(\widehat Y(0),\Z_p)$
(ce groupe peut se calculer en termes de la cohomologie pro\'etale du faisceau
$\widehat\O$ gr\^ace au th\'eor\`eme de comparaison primitif de Scholze~\cite[th.\,IV.2.1]{Sz2}, ce qui
est aussi le point de d\'epart des travaux de Pan~\cite{pan2,pan3} 
dont nous nous sommes inspir\'es).
Cette seconde approche permet (th.\,\ref{igu8.2})
de prouver l'injectivit\'e sur tout sous-espace ferm\'e $W$, 
stable par
$\TT$, $G_\Q$ et $\GG(\Ai)$, tel que $W[{\goth p}_x]\neq 0$ 
implique que
$\rho_{x}$ est irr\'eductible et l'op\'erateur de Sen de la restriction de $\rho_x$
\`a $G_{\Q_p}$ n'est pas scalaire.
(On peut supprimer cette derni\`ere condition par ``prolongement analytique'', cf.~preuve du
th.\,\ref{NE2}.)
\end{rema}

Un sous-produit de la seconde approche est le th\'eor\`eme de d\'ecomposition de Hodge-Tate suivant
(th.\,\ref{igu2}, prop.\,\ref{igu7} et rem.\,\ref{igu8})
inspir\'e par les travaux de Pan~\cite{pan2} et Howe~\cite{howe}.
Soient $U=\matrice{1}{\Z_p}{0}{1}$ et $B=\matrice{\Z_p^\dual}{\Z_p}{0}{\Z_p^\dual}$.
On note $\widehat{\rm Ig}(0)$ la compl\'etion de la tour 
d'Igusa de tous niveaux. 
L'espace $\O(\widehat{\rm Ig}(0)/U)_\kappa$ qui appara\^{\i}t dans le th\'eor\`eme ci-dessous est
l'espace des formes modulaires $p$-adiques de poids $\kappa$.

\begin{theo}\phantomsection\label{intox4}
On a une suite exacte $\GG(\Aip)\times B(\Z_p)\times G_{\Q_p}$-\'equivariante:
$$0\to H^1(\widehat Y(0)_C/U)\to C\wotimes H^1_{\proet}(\widehat Y(0)_C,\Z_p))^U
\to \O({\rm Ig}(0)_C^\times/U)(-1)\to 0$$
De plus, si $\kappa$ est un caract\`ere continu de $B$, 
avec $\kappa\big(\matrice{a}{b}{0}{d}\big)=\kappa_1(a)\kappa_2(d)$, 
et si on passe aux vecteurs $\kappa$ isotypiques, la suite
reste exacte et l'op\'erateur de Sen sur le membre de gauche 
{\rm (}resp.~de droite{\rm )} est la multiplication
par $-w(\kappa_2)$ {\rm (}resp.~$-1-w(\kappa_1)${\rm )}.
\end{theo}

\begin{rema}\phantomsection\label{intox5}
{\rm (i)}
L'injectivit\'e du mod\`ele de Kirillov pour $H^1[\rho_T]$, coupl\'ee
aux propri\'et\'es de la correspondance de Langlands locale $p$-adique pour $\GG(\Q_p)$,
permet de prouver {\it directement} un certain nombre de r\'esultats qu'Emerton d\'eduit
de sa factorisation de la cohomologie compl\'et\'ee (comme la compatibilit\'e avec
la correspondance de Langlands locale classique~\cite[\S\,7.4]{Em08} 
ou la conjecture de Fontaine-Mazur pour les
repr\'esentations pro-modulaires~\cite[th.\,7.1.1]{Em08}, cf.~th.\,\ref{Ki119} et~\ref{Ki121}).  

(ii) L'approche d'Emerton fait grand usage
d'un r\'esultat
de Berger-Breuil~\cite{BB}, sp\'ecifique \`a $\GG(\Q_p)$, \`a savoir l'irr\'eductibilit\'e,
si $\rho$ est cristalline \`a poids de Hodge-Tate distincts, du
compl\'et\'e universel de la repr\'esentation localement alg\'ebrique de $\GG(\Q_p)$ associ\'e
\`a $\rho$ par la correspondance classique (modifi\'ee pour encoder les poids de Hodge-Tate).
Par contraste, notre preuve de la factorisation de la cohomologie compl\'et\'ee n'utilise pas cette
irr\'eductibilit\'e.
\end{rema}
\subsubsection{Le mod\`ele de Kirillov de $\rho_T\otimes_T\Pi(\rho_T^\diamond)$}\label{cro5}
Il est clair que $\tA^-$ ne peut sortir que de $\Pi_p(\rho_T)$ car 
si $\ell\neq p$, $\Pi_\ell(\rho_T)$ est, par construction, un sous-module de
${\rm LC}(\Q_\ell^\dual,\Z[\bmu_{\ell^\infty}]\otimes_\Z T)$, o\`u ${\rm LC}$ d\'esigne
l'espace des fonctions localement constantes
(i.e.~on identifie
$\Pi_\ell(\rho_T)$ a son image par le mod\`ele de Kirillov ${\cal K}_\ell: \Pi_\ell(\rho_T)\to
{\rm LC}(\Q_\ell^\dual,\Z[\bmu_{\ell^\infty}]\otimes_\Z T)$). 

Maintenant, si $V$ est une repr\'esentation
de $G_{\Q_p}$, de dimension~$2$, et si $\Pi_p(V)$ est la repr\'esentation unitaire de $\GG(\Q_p)$
qui lui est associ\'ee, on a une suite exacte $\PP(\Q_p)$-\'equivariante~\cite[cor.\,II.2.9]{gl2}
$$0\to (\tA\otimes V)^H/(\tA^+\otimes V)^H\to \Pi_p(V)\to J(V)\to 0$$
o\`u $J(V)$ est de dimension finie et fixe par $\matrice{1}{\Q_p}{0}{1}$ (un analogue
$p$-adique du module de Jacquet). Le membre de gauche s'injecte dans $(\tA^-\otimes V)^H$, 
et on montre (prop.\,\ref{cano1})
que cette injection s'\'etend de mani\`ere unique $\PP(\Q_p)$-\'equivariante \`a $\Pi_p(V)$.
Cela fournit une injection naturelle
$\Pi_p(\rho_T^\diamond)\hookrightarrow \tA^-\otimes\rho_T^\dual$, o\`u $\rho_T^\dual$
est le $\O_L$-dual de $\rho_T$ (ce passage du $T$-dual au $\O_L$-dual est d\^u \`a la mani\`ere
dont on d\'efinit $\Pi_p$).  

On en d\'eduit une fl\`eche naturelle
$\beta:\rho_T\otimes\Pi_p(\rho_T^\diamond)\to \tA^-$, puis un mod\`ele de Kirillov
$v\mapsto{\cal K}_{p,v}$
pour $\rho_T\otimes\Pi_p(\rho_T^\diamond)$, \`a valeurs dans ${\cal C}(\Q_p^\dual,\tA^-\otimes\check{T})$
 en posant
$$\langle{\cal K}_{p,v}(x),\lambda\rangle=\beta\big(\matrice{x}{0}{0}{1}\cdot \lambda v\big),
\quad{\text{si $x\in\Q_p^\dual$ et si $\lambda\in T$.}}$$
Le mod\`ele de Kirillov ${\cal K}_{\rm Aut}$ est obtenu en prenant le produit tensoriel
ext\'erieur restreint des ${\cal K}_\ell$ puis en prenant un r\'eseau bien choisi
(on demande que la r\'eduction modulo~${\goth m}_L$ ait un socle g\'en\'erique et irr\'eductible).

\subsubsection{Les vecteurs localement alg\'ebriques}\label{cro6}
Emerton~\cite[(4.3.4)]{Em06}, \cite[th.\,7.4.2]{Em06b}, 
 a prouv\'e que les vecteurs alg\'ebriques de la cohomologie
compl\'et\'ee proviennent de formes modulaires classiques: on a
$$H^1(\GG(\Q),{\cal C}^{(p)}(\GG(\A),L))^{\rm alg}=H^1(\GG(\Q),{\rm LP}(\GG(\A),L))$$
Il a aussi prouv\'e que les vecteurs $\GG(\Z_p)$-alg\'ebriques sont denses~\cite[prop.\,5.4.1]{Em08}
(voir aussi la prop.~\ref{cen5});
c'est aussi le cas des vecteurs localement alg\'ebriques de poids fix\'e.

Soit $\pi$ une $L$-repr\'esentation localement alg\'ebrique de $\GG(\Ai)$, absolument irr\'eductible,
telle que 
$$m_{\eet}(\pi):={\rm Hom}(\pi,H^1(\GG(\Q),{\cal C}^{(p)}(\GG(\A),L)))$$
soit non nul
(si $\pi$ n'est pas de la s\'erie principale, c'est une $L$-repr\'esentation de $G_\Q$, de dimension~$2$;
il y a une bijection entre l'ensemble des $\pi$ comme ci-dessus et celui des 
twists $f\otimes \Z(j)$ des formes modulaires primitives, les s\'eries d'Eisenstein correspondant
aux $\pi$ de la s\'erie principale).

La compatibilit\'e entre les correspondances de Langlands locales classique et $p$-adique et
un th\'eor\`eme de Carayol~\cite{cara} fournissent un isomorphisme abstrait
$\pi\cong \Pi(m_{\eet}(\pi)^\dual)^{\rm alg}$, si $m_{\eet}(\pi)$ est de dimension~$2$.
On peut fixer cet isomorphisme en utilisant 
un g\'en\'erateur naturel $\iota_{\rm dR}^-$ du second terme du gradu\'e de 
$D_{\rm dR}(m_{\eet}(\pi)^\dual)$ 
fourni par le th\'eor\`eme de comparaison pour les formes modulaires et la dualit\'e avec
le premier terme du gradu\'e pour la repr\'esentation duale (cf.~\no\ref{como12.41}), ainsi que
la description du mod\`ele de Kirillov des
vecteurs localement alg\'ebriques de $\Pi_p(V)$ si $V$ 
est de Rham (\no\ref{valg3} et formule~(\ref{emoins})).

Par ailleurs, si $m^0_{\eet}(\pi)$ est un r\'eseau de $m_{\eet}(\pi)$
obtenu par sp\'ecialisation de
$\rho_T$, alors $m^0_{\eet}(\pi)\otimes\Pi(m^0_{\eet}(\pi)^\dual)$ s'injecte naturellement dans
$\rho_T\otimes_T\Pi(\rho_T^\diamond)$ (ici encore, $\rho_T\to m^0_{\eet}(\pi)$
n'est d\'efini qu'\`a $L^\dual$ pr\`es mais cette ind\'etermination dispara\^{\i}t
quand on combine $m^0_{\eet}(\pi)$ et $\Pi(m^0_{\eet}(\pi)^\dual)$).  
On a alors un diagramme commutatif
\begin{equation}\label{intox33}
\xymatrix@C=7mm@R=6mm{
m_{\eet}(\pi)\otimes\pi\ar@{^{(}->}[r]\ar@{^{(}->}[d]^-{\iota_{\rm dR}^-}
& L\otimes_{\O_L}H^1[\rho_T]\ar[r]^-{{\cal K}_H}&L\otimes_{\O_L}{\cal C}(\Aidu,\tA^-\otimes\check{T})\\
m_{\eet}(\pi)\otimes\Pi(m_{\eet}(\pi)^\dual)\ar@{^{(}->}[rr]
&&L\otimes_{\O_L}(\rho_T\otimes_T\Pi(\rho_T^\diamond))\ar[u]_-{{\cal K}_{\rm Aut}}
}\end{equation}
La preuve de la commutativit\'e de ce diagramme combine
une loi de r\'eciprocit\'e explicite locale (th.\,\ref{bato22}) et
des ingr\'edients de la preuve de la conjecture $C_{\rm dR}$ pour les formes modulaires
(th.\,\ref{Ybato13} et~\ref{Ybato21}).

\Subsection{Interpolation des \'el\'ements de Kato}\label{cro7}
\subsubsection{\'El\'ement de Kato et symboles modulaires}
On suppose maintenant que ${\goth m}$ est g\'en\'erique pour pouvoir
disposer de l'isomorphisme du th.\,\ref{Intro2} au niveau entier.
(Le th.\,\ref{intro2}
ci-dessous est valable, plus g\'en\'eralement, sous l'hypoth\`ese que $\rho$ est $\Pi_p$-compatible
(cf.~\no\ref{como17}),
 qui inclut
le cas $\rho_{|G_{\Q_p}}$ irr\'eductible, sans restriction sur la repr\'esentation r\'esiduelle.)

 Si $x\in{\cal X}$ est classique, Kato~\cite[th.\,12.5]{Ka4} a construit un \'el\'ement
$${\bf z}_{\rm Iw}(\rho_x)\in \rho_x^\dual\otimes H^1(G_{\Q,S},\Lambda\otimes\rho_x),$$
o\`u $\Lambda=\Z_p[[{\rm Gal}(\Q(\bmu_{p^\infty})/\Q)]]$ est l'alg\`ebre d'Iwasawa.
Cet \'el\'ement est compl\`etement d\'etermin\'e par son image par la localisation
${\rm loc}_p$ en $p$ (i.e.~sa restriction \`a $G_{\Q_p}$) qui, elle-m\^eme, est d\'etermin\'ee
par les exponentielles duales de ses sp\'ecialisations aux caract\`eres
localement alg\'ebriques de ${\rm Gal}(\Q(\bmu_{p^\infty})/\Q)$ dans la {\og bande critique\fg},
et ces exponentielles duales font intervenir les valeurs sp\'eciales de la fonction $L$
de $f$ et de ses tordues par des caract\`eres de Dirichlet.

Le symbole modulaire $(0,\infty)$, vu comme \'el\'ement du dual de la cohomologie compl\'et\'ee\footnote{
C'est en fait un \'el\'ement du dual de la cohomologie compl\'et\'ee \`a support compact mais
comme on localise en un id\'eal non-eisenstein, cela revient au m\^eme.},
produit\footnote{On \'evalue $(0,\infty)$ sur $\rho_x\otimes v^{]p[}\otimes\Pi_p(\rho_x^\dual)$
o\`u $v^{]p[}$ est le produit tensoriel des nouveaux vecteurs normalis\'es en les $\ell\neq p$.}
 un \'el\'ement de $\rho_x^\dual\otimes \Pi_p^\dual(\rho_x^\dual)$
qui est invariant par $\matrice{p}{0}{0}{1}\in \GG(\Q_p)$.
De plus, l'\'evaluation de $(0,\infty)$ sur les vecteurs localement alg\'ebriques
de $\Pi_p^\dual(\rho_x^\dual)$ produit des valeurs sp\'eciales de la fonction $L$
de $f$ et de ses tordues (cf.~\cite{Em05} ou prop.\,\ref{ES11} 
et \no\ref{es2} ci-apr\`es).

Soit $D(\rho_x)$ le $(\varphi,\Gamma)$-module associ\'e \`a $\rho_x$
(ou plut\^ot sa restriction $\rho_{x,p}$ \`a $G_{\Q_p}$)
par l'\'equivalence de cat\'egories de Fontaine~\cite{Fo04}.
On dispose~\cite{CC99,gl2,poids} d'isomorphismes (o\`u 
$\zeta_{\rm B}$ est un g\'en\'erateur privil\'egi\'e du twist de Tate $\Z(1)$):
$$\Pi_p^\dual(\rho_x^\dual)^{\matrice{p}{0}{0}{1}=1}
\cong D(\rho_x(1))^{\psi=1}\cong H^1(G_{\Q_p},\Lambda\otimes\rho_x(1))
=H^1(G_{\Q_p},\Lambda\otimes\rho_x)\otimes\zeta_{\rm B}.$$
Comme $(0,\infty)$ est invariant par $\matrice{p}{0}{0}{1}$, cela fournit:
$${\bf z}_{(0,\infty)}(\rho_x)\in \rho_x^\dual\otimes H^1(G_{\Q_p},\Lambda\otimes\rho_x)\otimes\zeta_{\rm B}.$$
\begin{theo}\phantomsection\label{intro2}
Si $x\in{\cal X}$ est classique, alors
$${\rm loc}_p({\bf z}_{\rm Iw}(\rho_x))={\bf z}_{(0,\infty)}(\rho_x)\otimes\zeta_{\rm B}^{-1}.$$
\end{theo}
\begin{rema}\phantomsection\label{intro3}
(i)
Si $\rho_{x,p}$ est absolument irr\'eductible, ce r\'esultat est une cons\'equence directe
de la comparaison des formules faisant intervenir les valeurs sp\'eciales de fonctions~$L$
sus-mentionn\'ees et de la formule de la prop.\,\ref{ES20bis} 
(qui r\'esulte de~\cite[prop.\,VI.5.12]{gl2} ou \cite[prop.\,2.13]{poids})
qui relie exponentielles duales de sp\'ecialisations de ${\bf z}_{(0,\infty)}(\rho_x)$
et \'evaluations de $(0,\infty)$ sur les vecteurs localement alg\'ebriques.
Cela a aussi \'et\'e remarqu\'e par Yiwen Zhou~\cite{Zhou}.

(ii) Si $\rho_{x,p}$ est une extension de deux caract\`eres, le r\'esultat s'obtient
par {\og prolongement analytique\fg} gr\^ace au th.\,\ref{intro4} ci-dessous.

(iii) On peut utiliser l'\'equation fonctionnelle de $(0,\infty)$
pour en d\'eduire une \'equation fonctionnelle pour l'\'el\'ement
de Kato (th.\,\ref{neqf12}, extension de~\cite[th.\,4.7]{naka1} au cas o\`u $\rho_{x,p}$
n'est pas n\'ecessairement absolument irr\'eductible).
\end{rema}

\subsubsection{Interpolation en famille}
Comme ci-dessus, la forme lin\'eaire $(0,\infty)\circ\iota_T$ sur\footnote{Plus pr\'ecis\'ement,
sur $\rho_T(-1)\otimes v^{]p[}\otimes\Pi_p(\check\rho_T(1))$ o\`u $v^{]p[}$
est un \'el\'ement bien choisi de $\otimes_{\ell\neq p}\Pi_\ell(\check\rho_T(1))$, cf.~\no\ref{EUL12}
(cet \'el\'ement d\'epend en fait de $S$ de mani\`ere simple).}
$\rho_T(-1)\otimes\Pi_p(\rho^\diamond_T(1))$ est invariante par $\matrice{p}{0}{0}{1}$
et donc donne naissance \`a
$${\bf z}_{(0,\infty)}(\rho_T)\in \rho_T^{\diamond}\otimes_T H^1(G_{\Q_p},\Lambda\wotimes\rho_T)\otimes\zeta_{\rm B}.$$
Notons que, si $x$ est classique, 
la sp\'ecialisation
de ${\bf z}_{(0,\infty)}(\rho_T)$ en $x$ n'est autre que ${\bf z}_{(0,\infty)}(\rho_x)$;
autrement dit, ${\bf z}_{(0,\infty)}(\rho_T)$ interpole les ${\bf z}_{(0,\infty)}(\rho_x)$, 
pour $x\in{\cal X}$ classique.
\begin{theo}\phantomsection\label{intro4}
{\rm (i)}
Si ${\goth m}$ est g\'en\'erique, il existe 
$${\bf z}_{\rm Iw}(\rho_T)\in 
\rho_T^{\diamond}\otimes_T H^1(G_{\Q,S},\Lambda\wotimes\rho_T),$$
unique, tel que ${\rm loc}_p({\bf z}_{\rm Iw}(\rho_T))={\bf z}_{(0,\infty)}(\rho_T)\otimes\zeta_{\rm B}^{-1}$.

{\rm (iii)} Si $x$ est classique, 
la sp\'ecialisation de ${\bf z}_{\rm Iw}(\rho_T)$ en $x$ est ${\bf z}_{\rm Iw}(\rho_x)$.
Autrement dit, ${\bf z}_{\rm Iw}(\rho_T)$ interpole les \'el\'ements de Kato.
\end{theo}
\begin{rema}\phantomsection\label{intro5}
{\rm (i)} La construction de l'\'el\'ement ${\bf z}_{\rm Iw}(\rho_T)$ se fait en deux temps.
On commence par construire ${\bf z}_{\rm Iw}(\rho_T)$ dans
${\rm Fr}(T)\otimes_T\big(\rho_T^{\diamond}\otimes_T H^1(G_{\Q,S},\Lambda\wotimes\rho_T)\big)$,
cf.~chap.\,\ref{chapi3},
puis on supprime les d\'enominateurs (th.\,\ref{facto2}).

{\rm (ii)} L'id\'ee de la construction est d'utiliser le fait qu'on sait, gr\^ace au th.\,\ref{intro2},
que les sp\'ecialisations aux points classiques de ${\bf z}_{(0,\infty)}(\rho_T)$ proviennent de
classes globales. On peut esp\'erer en d\'eduire, via la suite exacte de Poitou-Tate
et la zariski-densit\'e des points classiques, que ${\bf z}_{(0,\infty)}(\rho_T)$ lui-m\^eme
provient d'une classe globale (l'unicit\'e r\'esulte formellement des r\'esultats de Kato).

$\bullet$ 
Si $H^2(G_{\Q,S},\Lambda\wotimes\rho_x)$ est de type fini sur $\Z_p$ {\rm (condition
$\mu=0$)}, cette strat\'egie marche parfaitement car les
obstructions \'eventuelles s'annulent (\S\,\ref{EUL17}).

$\bullet$ Sans l'hypoth\`ese $\mu=0$, tout ce qu'on obtient
par cette m\'ethode (\S\,\ref{geni0})
est que l'obstruction est de $T$-torsion, ce qui explique l'introduction de ${\rm Fr}(T)$.

(iii) L'\'elimination des d\'enominateurs utilise
la factorisation du syst\`eme des \'el\'ements de Beilinson-Kato du th.\,\ref{Intro4} 
(cf.~(iii) de la rem.\,\ref{Intro5}).

(iv) On peut se demander s'il n'y aurait pas une preuve plus directe du fait
que $(0,\infty)$ fournit une classe de cohomologie globale: dans l'\'etat, cela utilise
les r\'esultats de Kato sur la th\'eorie d'Iwasawa des formes modulaires, 
ceux d'Emerton sur la factorisation de la cohomologie
compl\'et\'ee, ainsi que des propri\'et\'es fines de la correspondance de Langlands locale $p$-adique.

(v) La m\^eme m\'ethode construit un syst\`eme d'Euler dont ${\bf z}_{\rm Iw}(\rho_T)$
est la base.

(vi) Nakamura~\cite{naka2} construit directement un \'el\'ement
${\bf z}'_{\rm Iw}(\rho_T)$ qui interpole les \'el\'ements de Kato en les points
classiques \`a partir de la factorisation
du syst\`eme des \'el\'ements de Beilinson-Kato. Cette propri\'et\'e d'interpolation
et la zariski-densit\'e des points classiques
impliquent que son \'el\'ement co\"{\i}ncide avec le notre.
\end{rema}

\Subsection{Factorisation du syst\`eme de Beilinson-Kato}\label{intro6}

\subsubsection{Le syst\`eme de Beilinson-Kato}\label{cro8}
Le syst\`eme de Beilinson-Kato~\cite{Ka4,bbk-Kato,shanwen} est un \'el\'ement 
$${\bf z}_{\rm Kato}\in H^2(\Pi_\Q,{\cal D}_{\rm alg}({\bf M}_2'(\A^{]\infty[}),\Q_p)(2))$$
construit \`a partir des unit\'es des Siegel. Dans le membre de droite,

$\bullet$ $\A^{]\infty[}=\Q\otimes\cZ$ est le groupe des ad\`eles finies de $\Q$,

$\bullet$ ${\bf M}'_2$ est l'ensemble des $\matrice{a}{b}{c}{d}$ avec $\vecteur{a}{c}\neq\vecteur{0}{0}$
et $\vecteur{b}{d}\neq\vecteur{0}{0}$, 

$\bullet$ ${\cal D}_{\rm alg}({\bf M}_2'(\A^{]\infty[}),\Q_p)$
est l'ensemble des distributions alg\'ebriques, i.e.~des 
formes lin\'eaires sur l'espace ${\rm LC}_c({\bf M}_2'(\A^{]\infty[}))$
des fonctions localement constantes \`a support compact dans
${\bf M}_2'(\A^{]\infty[})$, 

$\bullet$ $\Pi_\Q$ est le groupe fondamental arithm\'etique de la courbe modulaire $Y(1)/\Q$
(vue comme un champs alg\'ebrique: on a une suite exacte
$1\to \widehat{{\bf SL}_2(\Z)}\to \Pi_\Q\to G_\Q\to 1$, o\`u
$\widehat{{\bf SL}_2(\Z)}$ est le compl\'et\'e profini de ${\bf SL}_2(\Z)$ et $G_\Q$
le groupe de Galois absolu de $\Q$).

\vskip.2cm
Si $\phi\in {\rm LC}_c({\bf M}_2'(\A^{]\infty[}))$, il existe $N$ tel que $\phi$
soit fixe par $\wGamma(N)$ et alors le cup-produit
de ${\bf z}_{\rm Kato}$ et $\phi$ fournit un \'el\'ement
$$\int\phi\,{\bf z}_{\rm Kato}\in H^2_{\eet}(Y(N)_{\Q(\bmu_N)},\Q_p(2))
\cong H^1(G_{\Q(\bmu_N)},H^1_{\eet}(Y(N)_{\Qbar},\Q_p(2))),$$
le dernier isomorphisme provenant de la suite spectrale de Hochschild-Serre.
Autrement dit, ${\bf z}_{\rm Kato}$ est une machine \`a produire des classes de cohomologie
galoisienne 
v\'erifiant des relations de distribution,
\`a valeurs dans la cohomologie \'etale g\'eom\'etrique
des courbes modulaires.

Maintenant, on peut multiplier ${\bf z}_{\rm Kato}$ par un op\'erateur $B_p^{c,d}$, avec $c,d\in\cZ^\dual$,
de mani\`ere \`a supprimer les d\'enominateurs (i.e.~remplacer $\Q_p(2)$ par $\Z_p(2)$,
ce qui fournit des mesures au lieu de distributions alg\'ebriques,
et donc permet d'int\'egrer des fonctions continues et pas seulement localement constantes); cela
introduit des facteurs parasites simples dans les formules.

Si $S$ est un ensemble fini de nombres premiers contenant~$p$,
on peut se restreindre aux fonctions de la forme ${\bf 1}_{{\bf M}_2(\cZ{]S[})}\otimes\phi_S$,
o\`u $\cZ^{]S[}=\prod_{\ell\notin S}\Z_\ell$, et $\phi_S$ est une fonction continue
\`a support compact dans $\GG(\Q_S)$, avec $\Q_S=\prod_{\ell\in S}\Q_\ell$.
Cela fournit
$${\bf z}_{{\rm Kato}}^{S,c,d}\in H^1(G_{\Q,S}, \widehat H^1_S(2)),$$
o\`u $\widehat H^1_S=H^1(\GG(\Z[\frac{1}{S}]),\cdo (\GG(\Q_S),L))$, et
$\widehat H^1_S(2)$ est le dual de Tate de $\widehat H^1_{c,S}$,
o\`u $$\widehat H^1_{c,S}:=H^1_c(\GG(\Z[\tfrac{1}{S}]),{\cal C}(\GG(\Q_S),L))$$
est la cohomologie compl\'et\'ee de la tour des formes modulaires de niveaux
divisibles par les $\ell\in S$.

\subsubsection{La factorisation}\label{cro9}
On note $T'$ l'alg\`ebre de Hecke correspondant \`a la repr\'esentation $\rho_T^\diamond(2)$,
et on pose ${\cal X}'={\rm Spec}(T')$. L'application
$\rho\mapsto\rho\otimes (\det\rho)^{-1}\chi_{\rm cycl}^2$ induit des isomorphismes
${\cal X}\cong{\cal X}'$ et $T'\cong T$, ce qui permet de voir tous les $T'$-modules associ\'es
\`a $\rho_T^\diamond(2)$ comme des $T$-modules.
On dispose de 
$$\iota_{T'}:\rho_T^\diamond(1)\otimes_T\Pi_p(\rho_T(-1))\to \widehat H^1_{c,S}$$
et donc, en composant avec $(0,\infty)$, cela fournit une forme lin\'eaire
sur $\rho_T^\diamond(1)\otimes_T\Pi_p(\rho_T(-1))$,
et donc un \'el\'ement
$$(0,\infty)_{T',S}\in \rho_T(-1)\otimes_T\Pi_p^\dual(\rho_T(-1)).$$

Par dualit\'e, $\iota_{T'}$ fournit une surjection
$\widehat H^1_S(2)\to \rho_T \otimes_T\Pi_p^\dual(\rho_T(-1))$.
On note $${\bf z}_{{\rm Kato}}^{S,c,d}(\Lambda\wotimes\rho_T)
\in H^1(G_{\Q,S}, \Lambda\wotimes\rho_T\otimes_T\Pi_p^\dual(\rho_T(-1)))
\cong H^1(G_{\Q,S}, \Lambda\wotimes\rho_T)\otimes_T\Pi_p^\dual(\rho_T(-1))$$
l'image de ${\bf z}_{{\rm Kato}}^{S,c,d}$.

Enfin, on dispose de ${\bf z}_{\rm Iw}(\rho_T)\in \rho_T^\diamond\otimes_TH^1(G_{\Q,S},\Lambda\wotimes\rho_T)$
(gr\^ace au th.\,\ref{intro4}),
et d'un accouplement naturel $\rho_T(-1)\otimes_T \rho_T^\diamond\to T$ 
(on fixe une orientation du motif de Tate, cf.~\no\ref{como112.2}).
\begin{theo}\phantomsection\label{Intro4}
Dans $\Pi_p^\dual(\rho_T(-1))\otimes_T H^1(G_{\Q,S}, \Lambda\wotimes\rho_T)$,
on a la factorisation suivante:
$${\bf z}_{{\rm Kato}}^{S,c,d}(\Lambda\wotimes\rho_T)=
\big(\tfrac{1}{2}\big(1+\matrice{-1}{0}{0}{1}\big)\star B_p^{c,d}
\star(0,\infty)_{T',S}\big)\otimes {\bf z}_{\rm Iw}(\rho_T).$$
\end{theo}
\begin{rema}\phantomsection\label{Intro5}
(i) On d\'eduit du th\'eor\`eme que l'on peut diviser ${\bf z}_{{\rm Kato}}^{S,c,d}(\Lambda\wotimes\rho_T)$
par le facteur $B_p^{c,d}$ que l'on avait d\^u introduire pour des raisons d'int\'egralit\'e;
cela fournit 
$${\bf z}_{{\rm Kato}}^{S}(\Lambda\wotimes\rho_T)
\in H^1(G_{\Q,S}, \Lambda\wotimes\rho_T)\otimes_T\Pi_p^\dual(\rho_T(-1))$$

(ii) Par construction, la localisation  en $p$ de ${\bf z}_{\rm Iw}(\rho_T)$
est $(0,\infty)_{T,S}$ et on obtient une factorisation
$${\rm loc}_p({\bf z}_{{\rm Kato}}^{S}(\Lambda\wotimes\rho_T))=
\tfrac{1}{2}\big(1+\matrice{-1}{0}{0}{1}\big)\star\big((0,\infty)_{T',S}\otimes (0,\infty)_{T,S}\big)$$
sous la forme que l'on esp\'erait.

(iii) La d\'emonstration repose sur les ingr\'edients suivants:

$\bullet$ On prouve (th.\,\ref{fact2})
la factorisation du th\'eor\`eme 
en restriction \`a un point classique. Cela consiste \`a
\'evaluer les deux membres sur des fonctions tests bien choisies; le membre de gauche
a \'et\'e calcul\'e par Kato~\cite{Ka4} (cf.~aussi th.\,\ref{shi4} ci-apr\`es)
et fait intervenir un produit de deux 
valeurs sp\'eciales de fonctions $L$ de formes
modulaires, et chacun des termes du membre de droite fait intervenir une
des valeurs sp\'eciales.

$\bullet$ On d\'eduit (lemme~\ref{facto1}) 
de cette factorisation ponctuelle la factorisation du th\'eor\`eme.

$\bullet$
Le th\'eor\`eme d'Ash-Stevens~\cite{AS} implique que $(0,\infty)_{T',S}$ 
{\og n'a pas de z\'eros\fg} et on en d\'eduit (th.\,\ref{facto2}) que ${\bf z}_{\rm Iw}(\rho_T)$
n'a pas de p\^ole, ce qui conclut la preuve
du th.\,\ref{intro4}.  
\end{rema}

\Subsection{C\'eoukonfaikoi}\label{cro10}
Cet article comporte quatre parties de th\'ematiques assez diff\'erentes.

\vskip2mm
\noindent $\bullet$ La partie I est consacr\'ee aux correspondances de Langlands pour ${\rm GL}_2$ (sur $\Q$).

$\diamond$
Le chap.\,\ref{chapi1} rassemble les notations utilis\'ees dans tout l'article.

$\diamond$
Le chap.\,\ref{llp0} 
fait un r\'esum\'e des propri\'et\'es de la correspondance de Langlands locale $p$-adique
pour ${\bf GL}_2(\Q_p)$, et donne quelques compl\'ements (comme l'injection
naturelle $\Pi(V)\hookrightarrow \tA^-\otimes V$ de la prop.\,\ref{cano1} ou
la prop.\,\ref{kirp3} \`a la base des r\'esultats mentionn\'es dans le (i) de la rem.\,\ref{intox5}).

$\diamond$
Le chap.\,\ref{EUL2} est consacr\'e \`a la construction de $\Pi(\rho_T)$
et de son mod\`ele de Kirillov.  Il contient une \'etude de la correspondance de
Langlands locale en famille pour ${\bf GL}_2(\Q_\ell)$.

\vskip2mm
\noindent $\bullet$ La partie II est consacr\'ee \`a la cohomologie du groupe $\GG(\Q)$ et ses avatars.

$\diamond$
Le chap.\,\ref{cup0} \'etudie le cohomologie de $\GG(\Q)$ agissant
sur divers espaces fonctionnels ad\'eliques, et fait le lien avec la cohomologie
compl\'et\'ee d'Emerton.

$\diamond$
Le chap.\,\ref{fma0} ad\'elise la th\'eorie des formes modulaires classiques
et \'etudie le lien avec la cohomologie compl\'et\'ee, tandis que
le chap.\,\ref{como1} fait le pont entre le point de vue cohomologie des groupes et le point de vue
g\'eom\'etrique; ce chapitre
 inclut en particulier la d\'efinition de l'application d'Eichler-Shimura $p$-adique. 

$\diamond$
Le chap.\,\ref{chapi2} \'etudie les multiplicit\'e des repr\'esentations
lisses de $\GG(\Ai)$ dans divers incarnations de la cohomologie compl\'et\'ee.

\vskip2mm
\noindent $\bullet$ La partie III est consacr\'ee \`a la factorisation de la cohomologie compl\'et\'ee.

$\diamond$
Le chap.\,\ref{qq8} est consacr\'e au calcul de la cohomologie \'etale des boules unit\'e ouvertes
usuelle et perfecto\"{\i}de, avec en particulier la d\'efinition de l'application $\iota_{\tA}$
et une loi de r\'eciprocit\'e explicite qui joue un grand r\^ole dans la suite.

$\diamond$
Le chap.\,\ref{modkir} d\'efinit le mod\`ele de Kirillov de la cohomologie compl\'et\'ee,
et les chap.\,\ref{rCDN0} et~\ref{IG0} \'etudie l'injectivit\'e de ce mod\`ele;
le chap.\,\ref{Ki114} explore les cons\'equences directes de cette injectivit\'e.

$\diamond$
Le chap.\,\ref{YEUL1} est consacr\'e \`a la preuve du th.\,\ref{Intro2}.

\vskip2mm
\noindent $\bullet$ La partie IV est consacr\'ee \`a l'\'etude du syst\`eme de Beilinson-Kato.

$\diamond$
Le chap.\,\ref{Symbo1} d\'emontre le th.\,\ref{intro2}.

$\diamond$
Le chap.\,\ref{chapi3} prouve le th.\,\ref{intro4} (apr\`es extension des scalaires \`a ${\rm Fr}(T)$).

$\diamond$
Le chap.\,\ref{chapi4} est consacr\'e \`a des rappels sur le syst\`eme de Beilinson-Kato
et le chap.\,\ref{chapi5} \`a la preuve du th.\,\ref{Intro4}.

\subsubsection*{Remerciements}
Ce projet a d\'ebut\'e en 2014 et s'est poursuivi au gr\'e des invitations entre la France et la Chine,
dont deux mois et demi en Chine \`a l'automne 2016, 
entre le BICRM de P\'ekin, l'universit\'e Fudan
de Shanghai et le centre de conf\'erences de Sanya. 
P.C. voudrait remercier en particulier
le SCMS, le Fudan Scholar programm et l'universit\'e de Fudan pour leurs invitations
\`a Shanghai en mars 2018 et d\'ecembre~2018; 
S.W. voudrait remercier les universit\'es d'Essen et Regensburg et le CRM de Montr\'eal
pour leur hospitalit\'e de mars 2014 \`a juillet 2015, ainsi que Laurent Berger, Gabriel Dospinescu,
l'ENS-Lyon et l'IMJ-PRG pour des invitations en avril 2018, novembre 2018 et ao\^ut 2019.

Une premi\`ere version a \'et\'e d\'epos\'ee sur arXiv en avril 2021 (2104.09200 [math.NT]).  
La preuve de la factorisation du syst\`eme de Beilinson-Kato y \'etait nettement plus
tortueuse car nous n'avions pas r\'eussi \`a prouver l'injectivit\'e du mod\`ele
de Kirillov de la cohomologie compl\'et\'ee.  C'est Vincent Pilloni qui nous a sugg\'er\'e,
\`a la fin d'un expos\'e \`a Lyon en juin 2021,
une strat\'egie utilisant les travaux de Lue Pan (une variante de la seconde approche de
la rem.\,\ref{intox3}) pour prouver cette injectivit\'e. La r\'ealisation de cette strat\'egie
a b\'en\'efici\'e de conversations avec Gabriel Dospinescu, Lue Pan et Juan-Esteban Rodriguez-Camargo
et de l'hospitalit\'e du MSRI de Berkeley lors du premier trimestre de 2023 et du HIM de Bonn
lors du second semestre.  Nous voudrions remercier tous les acteurs sus-nomm\'es de leur aide.

{\Small
\tableofcontents
}

\part{Correspondances de Langlands locales}
\section{Pr\'eliminaire}\label{chapi1}
Le but de ce chapitre est de fixer un certain nombre de notations et de normalisations.
\Subsection{Ad\`eles}\label{prelim1}
\subsubsection{Notations}\label{prelim2}
On note ${\cal P}$ \index{premiers@\premiers}l'ensemble des nombres premiers.  
Si $S\subset{\cal P}$, on dit qu'un entier $N$ est {\it \`a support dans $S$} si tous ses diviseurs
premiers appartiennent \`a $S$.

Si $v\in {\cal P}\cup\{\infty\}$
est une place de $\Q$, on note $\Q_v$ le compl\'et\'e de $\Q$ en $v$ (et donc $\Q_\infty=\R$).
On note $\A$ l'anneau des \index{adeles@\adeles}ad\`eles de $\Q$, produit restreint des $\Q_v$:
si $\cZ=\prod_{p\in{\cal P}}\Z_p$ est le compl\'et\'e profini de $\Z$, on a
$\A=\R\times(\Q\otimes\widehat\Z)$.

Si $S\subset {\cal P}\cup\{\infty\}$ est fini, \index{Q@\QQQ}on \index{Z@\ZZZ}pose
$$\Z_S=\prod_{\ell\in S\cap{\cal P}}\Z_\ell, 
\quad\Z[\tfrac{1}{S}]=\Z[\tfrac{1}{\ell},\,\ell\in S\cap{\cal P}],
\quad \Q_S=\prod_{v\in S}\Q_v$$
On note $\A^{]S[}$ le produit restreint des $\Q_v$ pour $v\notin S$.
Par exemple, $\A^{]\infty[}=\Q\otimes\cZ$. On a $\A=\Q_S\times \A^{]S[}$ pour tout $S$.

Si $x$ est un objet ad\'elique (i.e., un \'el\'ement de $\A$, $\A^\dual$, ${\bf GL}_n(\A)$,
etc.), et si 
$v$ est une place de $\Q$, on note $x_v$ la composante de $x$ en $v$ (et donc $x=(x_v)_v$).
Si
$S$ est un ensemble fini de places de $\Q$, on note $x_S=(x_v)_{v\in S}$ la composante de $x$ 
en les \'el\'ements de $S$ et
$x^{]S[}=(x_v)_{v\notin S}$ la composante de $x$ hors de $S$ (et donc $x=(x_S,x^{]S[})$.

Si $\GG$ est un groupe alg\'ebrique sur $\Q$, les projections $\GG(\A)\to \GG(\Q_v)$ ont
des sections naturelles qui permettent de consid\'erer les
$\GG(\Q_v)$ comme des sous-groupes de $\GG(\A)$.

De m\^eme, si $S$ est un ensemble fini de places de $\Q$, alors $\GG(\Q_S)=\prod_{v\in S}\GG(\Q_v)$
est naturellement un sous-groupe de $\GG(\A)$ et tout \'el\'ement de $\GG(\A)$ peut
s'\'ecrire de mani\`ere unique sous la forme $x_Sx^{]S[}$ avec $x_S\in \GG(\Q_S)$ et
$x^{]S[}\in \GG(\A^{]S[})$; de plus $x_S$ et $x^{]S[}$ commutent.

\subsubsection{Caract\`eres additifs}\label{prelim3}
On \index{expo@\exponent}note ${\bf e}_\infty:\C\to\C^\dual$ le caract\`ere du groupe additif
$${\bf e}_\infty(\tau)=e^{-2i\pi\,\tau}.$$
On note ${\bf e}_\A:\A\to\C^\dual$ le caract\`ere du groupe additif
se factorisant \`a travers $\A/\Q$ et dont la restriction \`a $\R\subset\A$
est ${\bf e}_\infty$.
Si $\ell$ est un nombre premier, on note ${\bf e}_\ell$ la restriction de ${\bf e}_\A$
\`a $\Q_\ell\subset \A$.  Alors ${\bf e}_\ell$ se factorise \`a travers $\Q_\ell/\Z_\ell=
\Z[\frac{1}{\ell}]/\Z\subset \R/\Z$ et on a
$${\bf e}_\ell(x_\ell)=e^{2i\pi\,\overline x_\ell}$$
 o\`u $\overline x_\ell$ est
l'image de $x$ dans $\R/\Z$ par l'inclusion ci-dessus.
On a alors
$${\bf e}_\A((x_v)_v)=\prod_v{\bf e}_v(x_v).$$

\subsubsection{Caract\`eres multiplicatifs}\label{prelim4}
Soit $\omega:\A^\dual\to \C^\dual$ un caract\`ere continu. Si $v$ est
une place de $\Q$, on note $\omega_v$ la restriction de $\omega$ \`a $\Q_v^\dual$.
Il existe un entier $N\geq 1$ et un caract\`ere $\tilde\omega:(\Z/N)^\dual\to \C^\dual$
tel que 
la restriction de $\omega$ \`a $\cZ^\dual$ s'obtienne en composant $\tilde\omega$
avec la projection naturelle $\cZ^\dual\to(\Z/N)^\dual$.

Si $\omega$ se factorise \`a travers $\A^\dual/\Q^\dual$, et
si $p\nmid N$, alors $\omega_p(p)=\tilde\omega^{-1}(p)\omega_\infty(p)^{-1}$.

\vskip.1cm
\noindent $\bullet$ {\it Le caract\`ere norme $|\ |_\A$}.
Si $v$ est une place de $\Q$, on note $|\ |_v$ 
\index{adeles@\adeles}la norme sur $\Q_v$ (si $v$ est la place
correspondant \`a un nombre premier $\ell$, on a $|\ell|_v=\ell^{-1}$).
On d\'efinit $|\ |_\A$ par la formule
$$|x|_\A=\prod_v|x_v|_v.$$
La formule du produit implique que $|\ |_\A$ se factorise \`a travers $\A^\dual/\Q^\dual$.

\vskip.1cm
\noindent $\bullet$ {\it Le caract\`ere $\delta_\A $}.
On note $\delta_\A :\A^\dual\to \C^\dual$ le caract\`ere
$$x\mapsto \delta_\A (x)=x_{\infty}^{-1}|x|_\A,$$ 
et donc $\delta_\A $ est localement constant, \`a valeurs
dans $\Q^\dual$.
De plus,
$$\delta_{\A,\infty}={\rm sign},\quad \delta_{\A,\ell}=|\ |_\ell,
\quad
\delta_\A =|\ |_\A\ {\text{sur $\Aidu$}}.$$ 

\vskip.1cm
\noindent $\bullet$ {\it Caract\`eres alg\'ebriques}.
Un caract\`ere continu $\chi:\A^\dual/\Q^\dual\to \C^\dual$ est dit {\it alg\'ebrique,
de poids $a\in\Z$}, si $\chi(x_\infty)=x_\infty^a$ pour tout $x_\infty\in\R_+^\dual$.
Par exemple, le caract\`ere $|\ |_\A$
est alg\'ebrique de poids $1$, et
$|\ |_\A^a$ est alg\'ebrique de poids $a$.

De m\^eme, un caract\`ere continu $\chi:\A^\dual/\Q^\dual\to \Qbar_p^\dual$ est dit {\it alg\'ebrique,
de poids $a\in\Z$}, s'il existe $n\geq 1$ tel que $\chi(x_p)=x_p^a$ pour tout $x_p\in 1+p^n\Z_p$.
Notons qu'un caract\`ere continu $\chi:\A^\dual/\Q^\dual\to \Qbar_p^\dual$ est automatiquement
unitaire car il se factorise par $\A^\dual/\Q^\dual\R_+^\dual$ qui est compact (isomorphe
\`a $\cZ^\dual$).

Si $\chi$ est alg\'ebrique de poids $a$, alors $\chi=|\ |_\A^a\eta$, o\`u
$\eta:\A^\dual/\Q^\dual\to \C^\dual$ est d'ordre fini, et donc \`a valeurs
dans $\Qbar$ (en fait dans une extension finie de $\Q$).  
Le caract\`ere $x\mapsto x_\infty^{-a}\chi(x)$ est aussi
\`a valeurs dans $\Qbar$ et on \index{chi@\caract}d\'efinit
$\chi^{(p)}:\A^\dual/\Q^\dual\to \Qbar_p^\dual$
par la formule $$x_p^{-a}\chi^{(p)}(x)=x_\infty^{-a}\chi(x)=\eta\delta_\A ^a(x),$$ 
o\`u tous les membres
appartiennent \`a $\Qbar$.  Le caract\`ere $\chi^{(p)}$ est alg\'ebrique
de poids $a$ et $\chi\mapsto\chi^{(p)}$ est une bijection
de l'ensemble des caract\`eres alg\'ebriques $\A^\dual/\Q^\dual\to 
\C^\dual$ sur celui des caract\`eres alg\'ebriques de $\A^\dual/\Q^\dual\to
\Qbar_p^\dual$; cette bijection pr\'eserve le poids des caract\`eres.
Si $\chi_\infty(x_\infty)={\rm sign}(x_\infty)^e|x_\infty|^a$, avec $e\in\{0,1\}$, alors
$$\chi^{(p)}_\infty={\rm sign}^{a+e},\quad \chi^{(p)}_p(x_p)=x_p^a\chi(x_p)
\quad{\rm et}\quad \chi^{(p)}_\ell=\chi_\ell,\ {\text{si $\ell\neq \infty,p$.}}$$
\subsubsection{Groupes de Galois, de Weil et de Weil-Deligne}\label{prelim6}
On note $\Qbar$ la cl\^oture alg\'ebrique de $\Q$ dans $\C$ et on fixe
un plongement de $\Qbar$ dans $\Qbar_\ell$, pour tout $\ell$.

Si $K=\Q,\Q_\ell$, on note $G_K={\rm Gal}(\overline K/K)$ le groupe de \index{galois@\galois}Galois
absolu de $K$ et $G_K^{\rm ab}$ son ab\'elianis\'e.  
On note ${\rm W}_{\Q_\ell}\subset G_{\Q_\ell}$ le groupe de \index{weil@\weil}Weil
et ${\rm WD}_{\Q_\ell}$ le groupe de Weil-Deligne.

\subsubsection{L'extension cyclotomique}\label{prelim7}
On note $\Q^{\rm cycl}$ l'extension \index{Q@\QQQ}cyclotomique de $\Q$
obtenue en rajoutant toutes les racines de l'unit\'e
(c'est aussi l'extension
ab\'elienne maximale de $\Q$, i.e. ${\rm Gal}(\Q^{\rm cycl}/\Q)=G_\Q^{\rm ab}$).  
Le caract\`ere \index{epsil@\cyclo}cyclotomique $\cy: G_\Q\to \cZ^\dual$
fournit une identification
$$\cy:G_\Q^{\rm ab}=\cZ^\dual\quad {\text{(d'inverse $u\mapsto\sigma_u$)}}.$$
Il fournit aussi des identifications ${\rm Gal}(\Q(\bmu_M)/\Q)=(\Z/M)^\dual$,
pour tout $M\geq 1$.
\begin{rema}\phantomsection\label{cyclo10}
{\rm (i)} L'application
$x\mapsto\phi_x$, o\`u $\phi_x(u)=\sigma_u(x)$ si $u\in\cZ^\dual$,
induit un isomorphisme (${\rm LC}$ d\'esigne l'espace des \index{rml@LC}fonctions localement constantes)
$$\Q^{\rm cycl}\overset{\sim}{\longrightarrow}{\rm LC}(\cZ^\dual,\Q^{\rm cycl})^{\cZ^\dual},$$
o\`u $a\in \cZ^\dual$ agit sur $\phi$ par $(\sigma_a\cdot\phi)(u)=\sigma_a(\phi(a^{-1}u))$.

{\rm (ii)} Si $K\supset\Q^{\rm cycl}$, l'application
$x\otimes y\mapsto (u\mapsto x\sigma_u(y))$ induit un isomorphisme
$$K\otimes_\Q\Q^{\rm cycl}\overset{\sim}{\longrightarrow}{\rm LC}(\cZ^\dual,K)$$
de $K$-alg\`ebres.  De plus, $\phi\mapsto \int_{\cZ^\dual}\phi$, de ${\rm LC}(\cZ^\dual,K)$ dans $K$,
correspond via cet isomorphisme \`a $1\otimes {\rm Tr}$, o\`u ${\rm Tr}:\Q^{\rm cycl}\to\Q$
est la trace normalis\'ee (\'egale \`a $\frac{1}{[F:\Q]}{\rm Tr}_{F/\Q}$ sur $F$, si
$F\subset\Q^{\rm cycl}$ est une extension finie de $\Q$).
\end{rema}

\subsubsection{Th\'eorie du corps de classes}\label{prelim8}
L'inclusion de $\Qbar$ dans $\Qbar_\ell$ induit une 
injection $G_{\Q_\ell}\hookrightarrow G_\Q$ ainsi qu'une fl\`eche
$G_{\Q_\ell}^{\rm ab}\to G_\Q^{\rm ab}$ qui se trouve \^etre injective
car l'extension ab\'elienne maximale de $\Q_\ell$ est aussi l'extension cyclotomique.

On \index{sigma@\sigm}note $\sigma_\ell$ l'\'el\'ement de $G_{\Q_\ell}^{\rm ab}$
agissant trivialement sur $\Q_\ell(\bmu_{\ell^\infty})$ et
par $x\mapsto x^\ell$ sur $\overline{\bf F}_\ell$ (i.e.~$\sigma_\ell$
est le frobenius arithm\'etique dans $G_{\Q_\ell}^{\rm ab}$).
L'image de $\sigma_\ell$ dans $G_\Q^{\rm ab}=\cZ^\dual$ est~$\ell^{]\infty,\ell[}$.

Par ailleurs, l'inclusion $\cZ^\dual\hookrightarrow \A^\dual$
induit un isomorphisme $\cZ^\dual\cong \A^\dual/\R_+^\dual\Q^\dual$.
L'image de $\sigma_\ell$ dans $\A^\dual/\R_+^\dual\Q^\dual$ est aussi
\'egale \`a $\ell_\ell^{-1}=\ell^{]\infty,\ell[}\ell_\infty\ell^{-1}$.
Comme l'image du sous-groupe d'inertie de $G_{\Q_\ell}^{\rm ab}$
dans $\cZ^\dual$ est $\Z_\ell^\dual$, on voit
que $G_{\Q_\ell}^{\rm ab}$ s'envoie dans l'adh\'erence de $\Q_\ell^\dual$
dans $\A^\dual/\R_+^\dual\Q^\dual$ et que l'image de $W_{\Q_\ell}^{\rm ab}$
est $\Q_\ell^\dual$.

Le diagramme suivant est commutatif:
$$\xymatrix{W_{\Q_\ell}\ar[r]\ar[d]&G_{\Q_\ell}\ar[r]&G_\Q\ar[r]^-{\cy}&\cZ^\dual\ar[d]^-{\wr}\\
\Q_\ell^\dual\ar[rrr]&&&\A^\dual/\R_+^\dual\Q^\dual}$$

Si $[L:\Q_p]<\infty$, on d\'eduit de ce qui pr\'ec\`ede des identifications
entre:

$\bullet$ caract\`eres continus $\chi_{\rm Gal}:W_{\Q_\ell}\to L^\dual$
et caract\`eres continus $\chi:\Q_\ell^\dual\to L^\dual$,

$\bullet$ caract\`eres continus $\chi_{\rm Gal}:G_{\Q_\ell}\to \O_L^\dual$,
caract\`eres continus $\chi_{\rm Gal}:W_{\Q_\ell}\to \O_L^\dual$
et caract\`eres continus $\chi:\Q_\ell^\dual\to \O_L^\dual$

$\bullet$ caract\`eres continus $\omega:\A^\dual/\Q^\dual\to\O_L^\dual$
et caract\`eres continus $\omega_{\rm Gal}:G_\Q\to\O_L^\dual$.

Dans les identifications \index{chi@\caract}ci-dessus, $\chi_{\rm Gal}(\sigma_\ell)=\chi(\ell_\ell^{-1})$
et $\omega_{\rm Gal}(\sigma)=\omega(\cy(\sigma))$,
et si $\omega_\ell$ est la restriction de $\omega$ \`a $\Q_\ell^\dual$,
alors $\omega_{{\rm Gal},\ell}=\omega_{\ell,{\rm Gal}}$

\subsubsection{Le caract\`ere cyclotomique}\label{prelim9}
Par exemple, le caract\`ere $p$-cyclotomique
$$\cyp :G_{\Q_\ell}^{\rm ab}\to \Z_p^\dual,$$
obtenu par action sur $\bmu_{p^\infty}$,
correspond:

$\bullet$
au caract\`ere $|\ |_\ell$ de $\Q_\ell^\dual$, si $\ell\neq p$: on a
$\cyp (\sigma_\ell)=|\ell_\ell^{-1}|_\ell=\ell$;

$\bullet$ 
au caract\`ere
$x\mapsto x|x|_p$ de $\Q_p^\dual$, si $\ell=p$.

\subsubsection{Sommes de Gauss globales}\label{prelim10}
Si $\chi=\eta|\ |_\A^a$, o\`u $\eta$ est de conducteur $N$, et si $\tilde\eta:(\Z/N)^\dual\to\C^\dual$
est le caract\`ere de Dirichlet associ\'e, d\'efinissons la somme de \index{gauss@\gauss}Gauss $G(\chi)$
par 
$$G(\chi)=G(\tilde\eta)=\sum_{b\in(\Z/N)^\dual}\tilde\eta(b){\bf e}_\infty(\tfrac{b}{N}).$$
\begin{rema}\phantomsection\label{prelim10.1}
{\rm (i)} Si $\chi$ est \`a valeurs dans $L$, alors $G(\chi)\in(L\otimes \Q^{\rm cycl})^\dual$
et, si $a\in\cZ^\dual$, alors
$$\sigma_a(G(\chi))=\sum_{b\in(\Z/N)^\dual}\tilde\eta(b){\bf e}_\infty(\tfrac{ab}{N})=
\chi^{-1}(a)G(\chi).$$
{\rm (ii)} On d\'eduit du (i) que, 
$\frac{G(\chi_1\chi_2)}{G(\chi_1)G(\chi_2)}\in L^\dual\otimes 1\subset (L\otimes \Q^{\rm cycl})^\dual$,
si $\chi_1,\chi_2$ sont \`a valeurs dans $L$;
en particulier, $G(\chi)G(\chi^{-1})\in L^\dual\otimes 1$.
\end{rema}
\subsubsection{Sommes de Gauss locales}\label{prelim11}
Si $\eta:\Z_\ell^\dual\to L^\dual$ est un caract\`ere de conducteur~$\ell^n$,
on note $G(\eta)\in L\otimes\Z(\bmu_{\ell^\infty})$, la somme de Gauss
$$G(\eta):=\sum_{x\in(\Z/\ell^n\Z)^\dual}\eta(x)\otimes {\bf e}_\ell(\tfrac{x}{\ell^n}).$$
Si $a\in\Z_\ell^\dual$, on a 
$$\sigma_a(G(\eta))=\eta(a)^{-1}G(\eta).$$

\Subsection{Le groupe ${\bf GL}_2(\A)$}\label{prelim12}
Soit $$\GG:={\bf GL}_2$$
et \index{G@\GGG}soient
$$\UU:=\matrice{1}{*}{0}{1}\subset\PP:=\matrice{*}{*}{0}{1}\subset \BB:=\matrice{*}{*}{0}{*}\subset\GG$$
\index{U@\UUU}l'unipotent, \index{B@\BBB}le mirabolique, et \index{P@\PPP}le borel.
\subsubsection{Le groupe $\GG(\R)$}  \label{prelim13}
Si $g\in \GG(\R)$, on \index{sign@\signa}d\'efinit 
$${\rm sign}(g)\in\{\pm 1\}$$
comme le signe de $\det g$.
L'application $\matrice{a}{b}{-b}{a}\mapsto a+ib$ induit une identification
$$\C^\dual=
\big\{\matrice{a}{b}{-b}{a},\ a+ib\neq 0\big\}.$$
L'application $\matrice{a}{b}{c}{d}\mapsto\frac{ai+b}{ci+d}$ induit un isomorphisme
$$\GG(\R)/\C^\dual\simeq
{\cal H}:=\piqp(\C)\moins\piqp(\R)
={\cal H}^+\sqcup{\cal H}^-,$$ 
\index{H@\HHH}o\`u $${\cal H}^+=\{\tau \in\C,\ {\rm Im}(\tau )>0\},
\quad
{\cal H}^-=\{\tau \in\C,\ {\rm Im}(\tau )<0\}.$$

Le normalisateur de $\C^\dual$ agit par multiplication \`a droite (i.e. $(g,\tau)\mapsto \tau g$)
sur ${\cal H}$.
Ce normalisateur est $\C^\dual\sqcup \C^\dual \matrice{-1}{0}{0}{1}$ et
$\C^\dual$ agit trivialement sur ${\cal H}$ tandis que $\C^\dual \matrice{-1}{0}{0}{1}$
agit par 
$\tau \mapsto 
\overline\tau $ puisque $\matrice{a}{b}{c}{d}\matrice{-1}{0}{0}{1}=\matrice{-a}{b}{-c}{d}$
et $\frac{-ai+b}{-ci+d}$ est le conjugu\'e complexe de $\frac{ai+b}{ci+d}$.
On prolonge l'action du normalisateur de $\C^\dual$ en une action $(g,\tau)\mapsto \tau g$ de $\GG(\R)$
agissant \`a travers $\GG(\R)/\GG(\R)_+$, \`a ne pas confondre avec l'action naturelle
$(g,\tau)\mapsto g\cdot\tau$
par multiplication \`a gauche sur ${\cal H}=\GG(\R)/\C^\dual$:
$$
\matrice{a}{b}{c}{d}\cdot\tau=\frac{a\tau+b}{c\tau+d}
\quad{\rm et}\quad
\tau g=\begin{cases} \tau &{\text{si $g\in \GG(\R)_+$,}}\\ 
\overline\tau &{\text{si $g\notin \GG(\R)_+$.}}\end{cases}
$$

\subsubsection{Sous-groupes de congruence}\label{prelim14}
Si $N\in \N$, on \index{Gamma2@\Gam}note
\begin{align*}
\wGamma(N)={\rm Ker}(\GG(\cZ)\to \GG(\Z/N)),&\quad
\wGamma_0(N)=\big\{\matrice{a}{b}{c}{d}\in \GG(\cZ),\ c\in N\cZ\big\},\\
\Gamma(N)= {\rm Ker}({\rm SL}_2(\Z)\to {\rm SL}_2(\Z/N)),&\quad
\Gamma_0(N)=\big\{\matrice{a}{b}{c}{d}\in {\rm SL}_2(\Z),\ c\in N\Z\big\}
\end{align*}
On a donc:
\begin{align*}
\Gamma(N)={\rm SL}_2(\Z)\cap \wGamma(N),\quad & \Gamma_0(N)={\rm SL}_2(\Z)\cap \wGamma_0(N)\\
\Gamma_0(1)=\Gamma(1)={\rm SL}_2(\Z),\quad & \wGamma(1)=\GG(\cZ)
\end{align*}

$\bullet$ Tout \'el\'ement de $\GG(\A)$ peut s'\'ecrire $\gamma^{-1}g_\infty\kappa$
avec $\gamma\in \GG(\Q)$, $g_\infty\in \GG(\R)$ (resp.~$g_\infty\in \GG(\R)_+$)
et $\kappa\in \GG(\cZ)$, et cette \'ecriture est unique \`a
$(\gamma,g_\infty,\kappa)\mapsto(\alpha\gamma,\alpha g_\infty,\alpha\kappa)$ pr\`es,
avec $\alpha\in \GG(\Z)$ (resp.~$\alpha\in\Gamma(1)$).

\vskip.1cm
$\bullet$ Plus g\'en\'eralement, tout
\'el\'ement de $\GG(\A)$ peut s'\'ecrire $\gamma^{-1}g_\infty\kappa$
avec $\gamma\in \GG(\Q)$, $g_\infty\in \GG(\R)_+$
et $\kappa\in \wGamma_0(N)$, et cette \'ecriture est unique \`a
$(\gamma,g_\infty,\kappa)\mapsto(\alpha\gamma,\alpha g_\infty,\alpha\kappa)$ pr\`es,
avec 
$\alpha\in\Gamma_0(N)$.

\vskip.1cm
$\bullet$ Soit $S\subset {\cal P}\cup\{\infty\}$, fini. 

--- Si $\infty\in S$, tout \'el\'ement de $\GG(\Q_S)$ peut s'\'ecrire $\gamma^{-1}g_\infty\kappa$
avec $\gamma\in \GG(\Z[\frac{1}{S}])$, $g_\infty\in \GG(\R)$ 
et $\kappa\in \GG(\cZ)$, et cette \'ecriture est unique \`a
$(\gamma,g_\infty,\kappa)\mapsto(\alpha\gamma,\alpha g_\infty,\alpha\kappa)$ pr\`es,
avec $\alpha\in \GG(\Z)$.

--- Si $\infty\notin S$, tout \'el\'ement de $\GG(\Q_S)$ peut s'\'ecrire $\gamma^{-1}\kappa$
avec $\gamma\in \GG(\Z[\frac{1}{S}])$
et $\kappa\in \GG(\cZ)$, et cette \'ecriture est unique \`a
$(\gamma,\kappa)\mapsto(\alpha\gamma,\alpha\kappa)$ pr\`es,
avec $\alpha\in \GG(\Z)$.

\subsubsection{Repr\'esentations lisses de $\GG(\Q_\ell)$.}\label{prelim15}
Soit $L$ un corps de caract\'eristique~$0$ (que l'on consid\`ere muni de la topologie discr\`ete).
Si $\pi$ est une $L$-repr\'esentation lisse de $\GG(\Q_\ell)$, 
admettant un caract\`ere
\index{omegapi@\omegapi}central $\omega_\pi$, un {\it mod\`ele
de Kirillov} pour $\pi$ est une injection $\BB(\Q_\ell)$-\'equivariante
$$\pi\hookrightarrow
{\rm LC}(\Q_\ell^\dual,L\otimes\Q[\bmu_{\ell^\infty}])^{\Z_\ell^\dual}$$
dans le $L$-espace des $\phi:\Q_\ell^\dual\to L\otimes\Q(\bmu_{\ell^\infty})$,
localement constantes,
v\'erifiant $\sigma_a(\phi(x))=\phi(ax)$, pour tous $x\in\Q_\ell^\dual$ et $a\in\Z_\ell^\dual$
(o\`u $\sigma_a$ agit sur $\bmu_{\ell^\infty}$ par $\zeta\mapsto\zeta^a$),
muni de l'action de $\BB(\Q_\ell)$ donn\'ee par
$$\big(\matrice{a}{b}{0}{d}\cdot\phi\big)(x)=
\omega_{\pi}(d){\bf e}_\ell(\tfrac{bx}{d})\phi(\tfrac{ax}{d})$$

Si $\chi_1,\chi_2$ sont des caract\`eres localement constants de $\Q_\ell^\dual$,
on note $\chi_1\otimes|\ |_\ell^{-1}\chi_2$ le caract\`ere
$\matrice{a}{b}{0}{d}\mapsto \chi_1(a)\chi_2(d)|d|_\ell^{-1}$ de $\BB(\Q_\ell)$
et
${\rm Ind}_{\BB(\Q_\ell)}^{\GG(\Q_\ell)}(\chi_1\otimes|\ |_\ell^{-1}\chi_2)$
la repr\'esentation lisse induite, i.e.
$$\{\phi\in{\rm LC}(\GG(\Q_\ell),L),\
\phi\big(\matrice{a}{b}{0}{d}x\big)=\chi_1(a)\chi_2(d)|d|_\ell^{-1}\phi(x),
\ \forall x\in \GG(\Q_\ell),\ \matrice{a}{b}{0}{d}\in \BB(\Q_\ell)\},$$
l'action de $\GG(\Q_\ell)$ \'etant $(g\star\phi)(x)=\phi(xg)$.
Le mod\`ele de Kirillov $\phi\mapsto {\cal K}_\phi$ 
de $\pi={\rm Ind}(\chi_{1}\otimes \chi_{2}|\ |_\ell^{-1})$
est donn\'e par (la suite est constante pour $n$ assez grand):
$${\cal K}_\phi(y)=\lim_{n\to+\infty}\tfrac{1}{G(\chi_2)}\int_{\ell^{-n}\Z_\ell}
\phi\big(\matrice{0}{1}{-1}{0}\matrice{y}{x}{0}{1}\big){\bf e}_\ell(-x)\,dx.$$
Si $\chi_1$ et $\chi_2$ sont non ramifi\'es,
$H^0(\GG(\Z_\ell), {\rm Ind}_{\BB(\Q_\ell)}^{\GG(\Q_\ell)}(\chi_1\otimes|\ |_\ell^{-1}\chi_2))$
est de dimension~$1$.  Dans le mod\`ele de Kirillov,
cet espace est engendr\'e par la \index{vpi@\vpi}fonction
$v_{\pi}$ 
d\'efinie par
$$v_{\pi}(x)=
\begin{cases}
0& {\text{si $x\notin\Z_\ell$,}}\\
\chi_{1}(\ell)^n+\chi_{1}(\ell)^{n-1}\chi_{2}(\ell)+\cdots+
\chi_{2}(\ell)^n & {\text{si $x\in \ell^n\Z_\ell^\dual$ et $n\geq 0$.}}
\end{cases}$$
C'est {\it le nouveau vecteur normalis\'e}
(par la condition $v_{\pi}(1)=1$)
de $\pi$;
il engendre $\pi$ en tant que $\PP(\Q_\ell)$-module.

\index{Tp@\Tp}Si 
$T_\ell=\frac{1}{\ell}\big(\matrice{1}{0}{0}{\ell}_\ell+\sum_{b=0}^{\ell-1}\matrice{\ell}{b}{0}{1}_\ell\big)$,
on a
$$T_\ell\star v_{\pi}=(\chi_1(\ell)+\chi_2(\ell))v_{\pi}.$$

On note $v'_\pi$ la \index{vpi@\vpi}fonction
$$v'_\pi={\bf 1}_{\Z_\ell^\dual}\in
\pi;$$
 c'est un g\'en\'erateur du foncteur de Kirillov~\cite[\S\,4.1]{Em08} de $\pi$,
i.e.~le module
\begin{center}
$\big\{v\in \pi,\ \matrice{\Z_\ell^\dual}{\Z_\ell}{0}{1}\cdot v=v,
\ \sum_{i=0}^{\ell-1}\matrice{\ell}{i}{0}{1}_\ell\cdot v=0\big\}.$
\end{center}
Un petit calcul fournit la relation
\begin{equation}
\label{eu11}
\big(1-(\chi_{1}(\ell)+\chi_{2}(\ell))\matrice{\ell^{-1}}{0}{0}{1}_\ell
+(\chi_{1}(\ell)\chi_{2}(\ell))\matrice{\ell^{-2}}{0}{0}{1}_\ell\big)\star
v_{\pi}=v'_\pi
\end{equation}

\subsubsection{La correspondance de Langlands locale classique}\label{prelim16}
Soit $L$ un corps de caract\'eristique $0$ (muni de la topologie discr\`ete).
Soit $\rho:{\rm WD}_{\Q_\ell}\to {\bf GL}_2(L)$ une $L$-repr\'esentation continue
(c'est la donn\'ee
d'une repr\'esentation $\rho:{\rm W}_{\Q_\ell}\to {\bf GL}_2(L)$ et de $N\in{\bf M}_2(L)$ tels
que $N\rho(g)=\ell^{\deg g}\rho(g)N$, pour tout $g\in{\rm W}_{\Q_\ell}$).
On sait associer \`a $\rho$
\index{Pi@\Pip}une $L$-repr\'esentation $\Pi^{\rm cl}_\ell(\rho)$
de $\GG(\Q_\ell)$; la correspondance $\rho\mapsto\Pi^{\rm cl}_\ell(\rho)$ est normalis\'ee
(via la compatibilit\'e local-global)
pour que les fonctions $L$ globales co\"{\i}ncident.  Elle v\'erifie les
propri\'etes suivantes:

$\bullet$ $\Pi^{\rm cl}_\ell(\rho)$ est lisse, admissible, admet un caract\`ere central
$\omega_{\Pi_\ell(\rho)}$, et on a:
$$\omega_{\Pi_\ell(\rho)}=|\ |_\ell^{-1}\det\rho.$$

$\bullet$ $\Pi^{\rm cl}_\ell(\rho)$ admet
un mod\`ele de Kirillov.
De plus, dans ce mod\`ele, $\Pi^{\rm cl}_\ell(\rho)$ contient
${\rm LC}_{\rm c}(\Q_\ell^\dual,L\otimes\Q[\bmu_{\ell^\infty}])^{\Z_\ell^{\dual}}$
et le quotient est de dimension $0$, $1$ ou $2$ sur $L$ suivant que
$\rho$ est irr\'eductible, que $N$ agit non trivialement ou que $\rho$ n'est pas irr\'eductible
et $N$ agit trivialement.

$\bullet$ Si $N$ agit trivialement et si
$$\rho^{\rm ss}=\chi_1\oplus\chi_2,$$ avec
$\chi_2\neq|\ |_\ell\chi_1$ (ce que l'on peut imposer en \'echangeant $\chi_1$ et $\chi_2$),
alors
$$\Pi^{\rm cl}_\ell(\rho)={\rm Ind}_{\BB(\Q_\ell)}^{\GG(\Q_\ell)}(\chi_1\otimes|\ |_\ell^{-1}\chi_2).$$

$\bullet$ Si $N$ agit non trivialement, alors $\rho$ est une extension non triviale de
$\chi_1$ par $\chi_2=|\ |_\ell\chi_1$ et $\Pi^{\rm cl}_\ell(\rho)={\rm St}\otimes\chi_1$
o\`u ${\rm St}$ est la steinberg.

\section{La correspondance de Langlands locale $p$-adique pour ${\bf GL}_2(\Q_p)$}\label{llp0}
Dans ce chapitre, on rappelle succinctement un certain nombre de propri\'et\'es de la correspondance
de Langlands locale $p$-adique pour $\GG(\Q_p)$, ses liens avec la th\'eorie d'Iwasawa...,
et on donne quelques compl\'ements:
en particulier une formule explicite (prop.\,\ref{ES20bis}) exprimant exponentielles
duales de classes de cohomologie galoisienne pour une repr\'esentation de de Rham en termes
de la repr\'esentation de $G$ associ\'ee, et un mod\`ele de Kirillov (\no\ref{kirp0}) pour
$V^\dual\otimes\Pi_p(V)$
qui joue un grand r\^ole dans la suite.
\Subsection{$(\varphi,\Gamma)$-modules}\label{llp1}
\subsubsection{Quelques anneaux}\label{llp2}
Soit $\oe^+=\O_L[[T]]$.
On d\'efinit $\oe$ \index{E@\EEE}comme le compl\'et\'e
de $\oe^+[\frac{1}{T}]$ pour la topologie $p$-adique;
c'est l'anneau des
s\'eries de Laurent $\sum_{k\in\Z}a_kT^k$, avec $a_k\in \O_L$ et
$\lim_{k\to-\infty}v_p(a_k)=+\infty$.
On note
${\cal E}=\oe[\frac{1}{p}]$ le corps des fractions de $\oe$.

Si $h\geq 1$, on pose $r_h=v_p(\zeta_{p^h}-1)=\frac{1}{(p-1)p^{h-1}}$
(resp.~$r_h=\frac{1}{2^h}$, si $p=2$), et $n_h=\frac{1}{r_h}$.
On note $\oed{h}$ le compl\'et\'e
de $\oe^+[\frac{p}{T^{n_h}}]$ pour la topologie $p$-adique,
et on pose ${\cal E}^{(0,r_h]}=\oed{h}[\frac{1}{p}]$;
on d\'efinit alors le sous-corps $\edag$ des \'el\'ements
{\it surconvergents} de ${\cal E}$, comme la r\'eunion des
${\cal E}^{(0,r_h]}$.

\smallskip

Si $n\geq 1$, on \index{Ln@\Ln}pose $L_n=L\otimes_{\Q_p} \Q_p(\zeta_{p^n})$,
et on \index{iotan@\iotan}note $\iota_n$ l'injection de
${\cal E}^{(0,r_n]}$ dans $L_n[[t]]$ envoyant
$f$ sur $f(\zeta_{p^n}e^{-t/p^n}-1)$.

\subsubsection{Actions de $\varphi$ et $\Gamma$}\label{llp3}
\index{Gamma1@\Gamm}Soit $\Gamma={\rm Gal}(\Q_p(\bmu_{p^\infty})/\Q_p)$.
Le caract\`ere\footnote{On note simplement $\chi$ le caract\`ere cyclotomique dans ce chapitre.}
 \index{epsil@\cyclo}cyclotomique $\chi:=\cyp $ induit un isomorphisme
$\chi:\Gamma\overset{\sim}{\to}\Z_p^\dual$, et on 
\index{sigma@\sigm}note $a\mapsto\sigma_a$
l'isomorphisme inverse.

On munit les $L$-alg\`ebres
topologiques ${\cal E},\edag$ d'actions continues de $\varphi$ et $\Gamma$
d\'efinies par $\varphi(T)=(1+T)^p-1$ et $\sigma_a(T)=(1+T)^a-1$.
L'injection $\iota_n$ ci-dessus est $\Gamma$-\'equivariante si on fait agir
$\sigma_a$ sur $t$ par $\sigma_a(t)=at$.

\subsubsection{$(\varphi,\Gamma)$-modules \'etales}\label{llp4}
Si $\Lambda=\oe, {\cal E},\edag$, un $(\varphi,\Gamma)$-module $D$
sur $\Lambda$ est un $\Lambda$-module libre, de rang fini, munis
d'actions semi-lin\'eaires de $\varphi$ et $\Gamma$ commutant entre elles.
On dit que {\it $D$ est \'etale} si $\varphi$ est de pente~$0$.
On \index{FG@\FGET}note $\FGet(\Lambda)$ la cat\'egorie des $(\varphi,\Gamma)$-modues
\'etales sur $\Lambda$. (On peut aussi consid\'erer des $\oe$-modules de torsion, et beaucoup
de preuves utilisent des d\'evissages les faisant intervenir, mais nous n'en aurons pas vraiment besoin.)

\subsubsection{L'op\'erateur $\psi$}\label{llp5}
Si $D\in\FGet(\Lambda)$, tout \'el\'ement $x$ de $D$ peut s'\'ecrire,
de mani\`ere unique, sous la forme $x=\sum_{i=0}^{p-1}\varphi(x_i)(1+T)^i$.
Cela permet de d\'efinir un inverse \`a gauche
$\psi:D\to D$ de $\varphi$ par la formule $\psi(x)=x_0$;
de plus $\psi$ commute \`a $\Gamma$.

\subsubsection{Surconvergence}\label{llp6}
Si $D\in \FGet({\cal E})$, on note
$D^\dagger$ le module des \'el\'ements
surconvergents~\cite{CC98,BC08} de $D$: 
si $n\in\N$,
soit
$D^{(0,r_n]}$ le plus grand sous-${\cal E}^{(0,r_n]}$-module $M$ de type fini
de $D$ tel que $\varphi(M)$ soit inclus dans
le sous-${\cal E}^{(0,r_{n+1}]}$-module
de $D$ engendr\'e par $M$.
Il
existe un entier $m(D)\geq 1$ tel
que~$D^{(0,r_{m(D)}]}$ soit de rang~$d$ sur ${\cal E}^{(0,r_{m(D)}]}$ et
alors
$D^{(0,r_n]}={\cal E}^{(0,r_n]}\otimes_{{\cal E}^{(0,r_{m(D)}]}}
D^{(0,r_{m(D)}]}$,
 pour tout
$n\geq m(D)$, et $D^\dagger$ est la limite inductive des $D^{(0,r_n]}$.
On a $D^\dagger\in \FGet(\edag)$ et $D={\cal E}\otimes_{\edag}D^\dagger$.

\subsubsection{Les modules $D_{{\rm dif},n}$, $D_{{\rm dif},n}^+$ et $D_{{\rm dif},n}^-$}\label{llp7}
Si $n\geq 1$, on \index{D@\DDD}pose
$$ D_{{\rm dif},n}^+=L_n[[t]]\otimes  D^{(0,r_n]}$$
o\`u l'on voit $L_n[[t]]$, pour $n\geq m( D)$, comme une
${\cal E}^{(0,r_n]}$-alg\`ebre via
le morphisme
$\iota_n:{\cal E}^{(0,r_n]}\to L_n[[t]]$.
Soient
$$L_\infty[[t]]:=\varinjlim_n L_n[[t]]\quad{\rm et}\quad
D^+_{\rm dif}:=\varinjlim_n D^+_{{\rm dif},n}.$$
Alors $ D^+_{\rm dif}$ est un $L_\infty[[t]]$-module libre de rang~$d$
muni d'une action de $\Gamma$.
On pose $D_{\rm dif}= D^+_{\rm dif}[\frac{1}{t}]$; c'est un $L_\infty((t)):=L_\infty[[t]][t^{-1}]$-module
muni d'une filtration d\'ecroissante par les $D^i_{\rm dif}:=t^iD_{\rm dif}^+$, stable par $\Gamma$.
Le quotient $ D^-_{\rm dif}= D_{\rm dif}/ D^+_{\rm dif}$
est un $L_\infty[t]$-module de torsion,
muni d'une action localement analytique de $\Gamma$ (limite inductive
de repr\'esentations de dimension finie).

\subsubsection{$(\varphi,\Gamma)$-module de\,Rham}\label{llp8}
On dit que $ D$ est {\it de\,Rham} 
si le $L$-espace $D_{\rm dR}=(D_{\rm dif})^{\Gamma}$
est de \index{D@\DDD}dimension~$d$.  Si c'est le cas, alors
$D_{\rm dif}=L_\infty((t))\otimes_LD_{\rm dR}$ et, si on munit
$D_{\rm dR}$ de la filtration $(D_{\rm dR}^i,\ i\in\Z)$ induite par 
la filtration naturelle de $D_{\rm dif}$, on a
$D_{\rm dif}^+=\sum_{i\in \Z}t^{-i}L_\infty[[t]]\otimes D_{\rm dR}^i$.
Si $D$ est de\,Rham,
{\it les poids de $D$} sont les oppos\'es des sauts de la filtration,
i.e.~les entiers $i$ tels que
$D^{-i}_{\rm dR}\neq D^{1-i}_{\rm dR}$.

\Subsection{$(\varphi,\Gamma)$-modules et repr\'esentations de $\GG(\Q_p)$}\label{llp9}
Posons:
$$G=\GG(\Q_p),\quad P=\PP(\Q_p),\quad U=\UU(\Q_p),$$
\index{U@\UUU}et \index{G@\GGG}notons $P^+$ \index{P@\PPP}le 
sous-semi-groupe $P^+=\matrice{\Z_p-\{0\}}{\Z_p}{0}{1}$ de $P$. 
\subsubsection{La correspondance $D\mapsto\Pi_p(D)$}\label{llp10}
On renvoie \`a~\cite{gl2,CDP} ou~\cite{CD} pour ce qui suit.
Soit 
$D\in\FGet({\cal E})$ ou $\FGet(\oe)$, de rang $2$.
Notons
$\omega_D$ le caract\`ere
$$\omega_D=(x|x|)^{-1}\det D.$$

$\bullet$ {\it Le faisceau $U\mapsto D\boxtimes U$}.---
$D$ donne naissance
\`a un faisceau $P^+$-\'equivariant $U\mapsto D\boxtimes U$
sur $\Z_p$, avec $D\boxtimes\Z_p=D$, et  
$\matrice{a}{b}{0}{1}\in P^+$ agit sur $x\in\Z_p$ par $\matrice{a}{b}{0}{1}\cdot x=ax+b$,
et sur $D$ par $\matrice{p^ka}{b}{0}{1}\cdot x=(1+T)^b\varphi^k\circ\sigma_a(x)$.
Ce faisceau se prolonge, de mani\`ere unique,
en un faisceau $G$-\'equivariant $U\mapsto D\boxtimes U$
(o\`u $U$ d\'ecrit les ouverts compacts) sur $\piqp=\piqp(\Q_p)$,
tel que
le centre de $G$ agisse par $\omega_D$, et il existe
 une repr\'esentation
\index{Pi@\Pip}unitaire $\Pi_p(D)$ de $G$ telle que l'on ait une suite exacte
\begin{align*}
0\to\Pi_p(D)^\dual\otimes\omega_D\to &D\boxtimes\piqp\to\Pi_p(D)\to 0
\end{align*}
\begin{rema}\phantomsection\label{pathop}
(i) Si $D$ n'est pas une extension de ${\cal E}((x|x|)^{-1}\delta)$ par ${\cal E}(\delta)$,
la repr\'esentation $\Pi_p(D)$ ci-dessus n'a ni sous-objet ni quotient de dimension finie.

(ii) Dans le cas pathologique o\`u
 $D$ est une telle extension, la repr\'esentation ci-dessus
admet $\delta$ comme sous-objet et n'est pas celle que l'on veut.
Le quotient
$\Pi^{\rm min}_p(D)$ par $\delta$ n'a ni sous-objet ni quotient de dimension finie, et
${\rm Ext}^1(\delta,\Pi^{\rm min}_p(D))$ est de dimension~$2$; on d\'efinit
$\Pi_p(D)$ comme l'extension universelle: on a une suite exacte
$$0\to \Pi^{\rm min}_p(D)\to \Pi_p(D)\to \delta\oplus\delta\to 0$$
\end{rema}

$\bullet$ {\it Dualit\'e}.---
On fait agir $\Gamma$ et $\varphi$ sur $\frac{dT}{1+T}$
par $\sigma_a\big(\frac{dT}{1+T}\big)=a\frac{dT}{1+T}$ et
$\varphi\big(\frac{dT}{1+T}\big)=\frac{dT}{1+T}$.  Cela nous fournit
le $(\varphi,\Gamma)$-module
$\Lambda\frac{dT}{1+T}\in\FGet(\Lambda)$, si $\Lambda=\oe,{\cal E}$.
On pose \index{D@\DDD}alors $\check D={\rm Hom}_\Lambda(D,\Lambda\frac{dT}{1+T})$; c'est
un objet de $\FGet(\Lambda)$ et $V(\check D)$ est le dual de Tate $\check V(D)$ de $V(D)$
(i.e. ${\rm Hom}_{\O_L}(V(D),\O_L(1))$ (resp.~${\rm Hom}_{L}(V(D),L(1))$), 
si $D\in \FGet(\oe)$ (resp.~$D\in \FGet({\cal E})$).

On note $\{\ ,\ \}:\check D\times D\to L$ l'accouplement d\'efini
par $$\{\check x,y\}=\reso(\langle\sigma_{-1}(\check x),y\rangle),$$
o\`u $\langle\ ,\ \rangle:\check D\otimes D\to {\cal E}\frac{dT}{1+T}$
est l'accouplement naturel, et $\reso\,fdT=a_{-1}$, si 
$f=\sum_{k\in\Z}a_kT^k$.
Cet accouplement se prolonge (de mani\`ere unique) en un 
accouplement parfait, $G$-\'equivariant,
$$
\{\ ,\ \}:(\check D\boxtimes\piqp)\otimes( D\boxtimes\piqp)\to L$$
tel que $\check D\boxtimes U$
et $D\boxtimes V$ sont orthogonaux si $U\cap V=\emptyset$.
On peut r\'e\'ecrire la suite exacte ci-dessus sous la forme
\begin{align*}
0\to\Pi_p(\check D)^\dual\to &D\boxtimes\piqp\to\Pi_p(D)\to 0\\
\end{align*}

\subsubsection{Th\'eorie d'Iwasawa}\label{wasi1}
\index{Lam@\lamb}Soit $\Lambda=\Z_p[[\Gamma]]$; on note $[\sigma_a]\in \Lambda$ l'\'el\'ement correspondant
\index{sigma@\sigm}\`a $\sigma_a$ par l'injection naturelle $\Gamma\hookrightarrow \Lambda^\dual$.

Soient $\Lambda=\oe,{\cal E}$ et $D\in \FGet(\Lambda)$.
Si $V:=V(D)$ est la repr\'esentation de $ G_{\Q_p}$ associ\'ee \`a $D$ par l'\'equivalence
de cat\'egories de Fontaine, on dispose~\cite[th.\,II.1.3]{CC99}
d'un isomorphisme \index{expo2@\EXP}de $\Lambda$-modules
$${\rm Exp}^\dual:H^1( G_{\Q_p},\Lambda\otimes V)\overset{\sim}{\to}
D^{\psi=1}.$$
Par ailleurs, l'application ${\rm Res}_{\Z_p}$ induit une fl\`eche
$${\rm Res}_{\Z_p}:(\Pi_p(\check D)^\dual)^{\matrice{p}{0}{0}{1}=1}{\to} D^{\psi=1},
\quad {\text {avec ${\rm Res}_{\Z_p}\big(\matrice{a}{0}{0}{1}\star\mu\big)=\sigma_a({\rm Res}_{\Z_p}(\mu))$,}}
$$
qui est un isomorphisme en g\'en\'eral\footnote{C'est le cas si $D$ est irr\'eductible
ou, plus g\'en\'eralement, si $D^{\varphi=1}=\check D^{\varphi=1}=0$, cf.~\cite[rem.\,V.14]{CD}}.
On en d\'eduit le r\'esultat suivant:
\begin{prop}\phantomsection\label{wasi2}
Si $\mu\in \Pi_p(\check D)^\dual$ est fixe par $\matrice{p}{0}{0}{1}$,
il existe un unique $\lambda(\mu)\in H^1( G_{\Q_p},\Lambda\otimes V)$ tel que
${\rm Exp}^\dual(\lambda(\mu))={\rm Res}_{\Z_p}\mu$.
De plus, si $a\in\Z_p^\dual$,
$$\lambda\big(\matrice{a}{0}{0}{1}\star\mu\big)=[\sigma_a]\cdot\lambda(\mu).$$
\end{prop}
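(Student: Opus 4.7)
The proof will be essentially a formal concatenation of the two ingredients recalled in the paragraph preceding the statement. First, I would note that the hypothesis that $\mu\in\Pi_p(\check D)^\dual$ is fixed by $\matrice{p}{0}{0}{1}$ places $\mu$ exactly in the source of the restriction map ${\rm Res}_{\Z_p}$, so that ${\rm Res}_{\Z_p}\mu$ is a well-defined element of $D^{\psi=1}$. Since ${\rm Exp}^\dual : H^1(G_{\Q_p},\Lambda\otimes V)\overset{\sim}{\to} D^{\psi=1}$ is a $\Lambda$-module isomorphism (this is the Cherbonnier--Colmez reciprocity law cited as \cite[th.\,II.1.3]{CC99}), I simply define
$$\lambda(\mu):=({\rm Exp}^\dual)^{-1}({\rm Res}_{\Z_p}\mu).$$
Uniqueness is automatic from the injectivity of ${\rm Exp}^\dual$, and the defining property ${\rm Exp}^\dual(\lambda(\mu))={\rm Res}_{\Z_p}\mu$ holds by construction.

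For the equivariance statement, I would combine the two compatibilities already at hand. On the one hand, the formula ${\rm Res}_{\Z_p}\big(\matrice{a}{0}{0}{1}\star\mu\big)=\sigma_a({\rm Res}_{\Z_p}\mu)$ has been recalled just above. On the other, under the identification $\Lambda=\Z_p[[\Gamma]]$, the element $[\sigma_a]$ acts on $D^{\psi=1}$ through the semi-linear $\Gamma$-action, i.e.\ exactly as $\sigma_a$. Since ${\rm Exp}^\dual$ is $\Lambda$-linear, applying it to $[\sigma_a]\cdot\lambda(\mu)$ gives
$${\rm Exp}^\dual\big([\sigma_a]\cdot\lambda(\mu)\big)=\sigma_a({\rm Res}_{\Z_p}\mu)={\rm Res}_{\Z_p}\big(\matrice{a}{0}{0}{1}\star\mu\big)={\rm Exp}^\dual\big(\lambda(\matrice{a}{0}{0}{1}\star\mu)\big),$$
and the identity $\lambda(\matrice{a}{0}{0}{1}\star\mu)=[\sigma_a]\cdot\lambda(\mu)$ follows by injectivity of ${\rm Exp}^\dual$.

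There is essentially no serious obstacle: all the content sits in the two previously cited results, namely the Cherbonnier--Colmez dual exponential isomorphism and the construction of the restriction map ${\rm Res}_{\Z_p}$ coming from the $G$-equivariant sheaf $U\mapsto D\boxtimes U$. The only point to verify carefully is the match of $\Lambda$-module structures on the two sides, but this is immediate from the fact that $[\sigma_a]\in\Lambda$ acts by $\sigma_a$ both on $H^1(G_{\Q_p},\Lambda\otimes V)$ (through the first factor) and on $D^{\psi=1}$ (through the semi-linear $\Gamma$-action of the $(\varphi,\Gamma)$-module). Notably, the stronger hypothesis in the footnote ($D^{\varphi=1}=\check D^{\varphi=1}=0$), which would make ${\rm Res}_{\Z_p}$ itself an isomorphism, is not required for the proposition: we only use that ${\rm Res}_{\Z_p}$ is defined on the $\matrice{p}{0}{0}{1}$-fixed part and lands in $D^{\psi=1}$.
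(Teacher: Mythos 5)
Your proof is correct and matches what the paper intends: the proposition is stated in the paper without a written proof (introduced by ``On en déduit le résultat suivant''), precisely because it follows by composing the two maps just recalled exactly as you do. Your final remark that only the existence of ${\rm Res}_{\Z_p}$ on the $\matrice{p}{0}{0}{1}$-fixed part is needed (not the stronger isomorphism property from the footnote) is accurate and a useful clarification.
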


Enfin, {\it si $D$ est de Rham},
on a la loi de r\'eciprocit\'e explicite suivante~\cite[th.\,IV.2.1]{CC99}
 o\`u $\exp^\dual$
est l'application \index{expo2@\EXP}exponentielle duale
de Bloch-Kato: dans la proposition,
$\int_{1+p^n\Z_p}\hskip-.2cm x^{j}\lambda\in H^1(G_{\Q_p(\bmu_{p^n})},V\otimes\chi^{j})$
et $\exp^\dual\big(\int_{1+p^n\Z_p}\hskip-.2cm x^{j}\lambda\big)\in t^{-j}L_n\otimes_L{\rm Fil}^jD_{\rm dR}$,
\index{t@\ttt}\index{epsi@\epsi}o\`u 
$$t=-\log[\epsilon]\in\bdr^+\ {\text{est le $2i\pi$ $p$-adique de Fontaine,
avec $\epsilon=(1,\zeta_p,\dots,\zeta_{p^n},\dots)$.}}$$
(Si $V$ est une repr\'esentation de de Rham, $\exp^\dual$ 
envoie $H^1(G_K,V)$ dans ${\rm Fil}^0D_{\rm dR}(V)$.)

\begin{prop}\phantomsection\label{ES15}
Si $\lambda\in H^1( G_{\Q_p},\Lambda\otimes V)$, et si $n$ est assez grand,
$$p^{-n}\iota_n({\rm Exp}^\dual(\lambda))=\sum_{j\in\Z}
\exp^\dual\big(\int_{1+p^n\Z_p}x^{j}\lambda\big)\in 
{\rm Fil}^0(L_n[[t]]\otimes_L D_{\rm dR}).$$
\end{prop}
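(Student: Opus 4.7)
Le plan consiste \`a comparer deux descriptions de ${\rm Exp}^\dual(\lambda)\in D^{\psi=1}$: sa d\'efinition intrins\`eque comme inverse d'une application d\'efinie par int\'egration (qui encode les sp\'ecialisations de $\lambda$ aux caract\`eres de $\Gamma$), et son image par $\iota_n$, qui, pour $V$ de de\,Rham, identifie $D^{(0,r_n]}$ \`a un sous-module convenable de $L_n[[t]]\otimes_LD_{\rm dR}$.

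Je proc\'ederais en trois pas. D'abord, on remarque que $\int_{1+p^n\Z_p}x^j\lambda$ correspond \`a la composition de la restriction de $\lambda$ \`a $G_{\Q_p(\bmu_{p^n})}$ avec le twist par $\chi^j$; cela produit une classe de $H^1(G_{\Q_p(\bmu_{p^n})},V\otimes\chi^j)$, \`a laquelle $\exp^\dual$ associe un \'el\'ement de $t^{-j}L_n\otimes{\rm Fil}^jD_{\rm dR}$, qui est bien contenu dans ${\rm Fil}^0(L_n[[t]]\otimes D_{\rm dR})$. Ensuite, on \'etablit une formule reliant $\exp^\dual$ de telles sp\'ecialisations aux coefficients de $\iota_n(z)$, o\`u l'on pose $z={\rm Exp}^\dual(\lambda)$: l'hypoth\`ese $z\in D^{\psi=1}$ permet, pour $n$ assez grand, de voir $z$ dans $D^{(0,r_n]}$, et la construction~\cite[\S\,II]{CC99} de ${\rm Exp}^\dual$ comme inverse d'une fl\`eche d\'efinie par int\'egration fournit l'identit\'e
$$\exp^\dual\Big(\int_{1+p^n\Z_p}x^j\lambda\Big)=p^{-n}\cdot(\text{composante de degr\'e }-j\text{ de})\ \iota_n(z),$$
le facteur $p^{-n}$ provenant de la normalisation de la trace $L_n/L$ et de la description de $\exp^\dual$ via le $(\varphi,\Gamma)$-module.

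Enfin, sommer sur $j\in\Z$ reconstitue tout $p^{-n}\iota_n(z)$, chaque contribution \'etant log\'ee dans le sous-espace attendu de ${\rm Fil}^0$. Par torsion par un caract\`ere alg\'ebrique convenable, on peut se ramener au cas o\`u les poids de Hodge-Tate de $V$ sont $\leq 0$, de sorte que la somme ne comporte qu'un nombre fini de termes non nuls et que $\exp^\dual$ admet la description classique via $D_{\rm cris}$; le cas g\'en\'eral s'en d\'eduit par compatibilit\'e des deux membres avec les twists.

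La difficult\'e principale r\'eside dans le suivi pr\'ecis des normalisations: facteur $p^{-n}$, conventions de signe pour les twists par $\chi^j$, accord entre les diff\'erentes d\'efinitions de $\exp^\dual$, et v\'erification que la sp\'ecialisation d'une classe d'Iwasawa \`a un caract\`ere localement alg\'ebrique correspond bien \`a l'extraction du coefficient attendu dans le d\'eveloppement de $\iota_n(z)$. C'est essentiellement le contenu du th\'eor\`eme IV.2.1 de~\cite{CC99}, dont la pr\'esente proposition est une reformulation commode pour la suite.
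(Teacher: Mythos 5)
Le papier ne d\'emontre pas cette proposition : elle est pr\'esent\'ee directement comme une reformulation de la loi de r\'eciprocit\'e explicite \cite[th.\,IV.2.1]{CC99}, cit\'ee juste avant l'\'enonc\'e. Vous avez correctement identifi\'e que c'est bien ce th\'eor\`eme qui fait tout le travail, et la ventilation que vous proposez (specialisation de la classe d'Iwasawa en $\chi^j$, image par $\exp^\dual$ dans $t^{-j}L_n\otimes{\rm Fil}^jD_{\rm dR}$, puis lecture coefficient par coefficient de $\iota_n({\rm Exp}^\dual\lambda)$) est la bonne fa\c{c}on de d\'epaqueter cet \'enonc\'e.

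Une r\'eserve toutefois sur votre dernier pas : la r\'eduction au cas des poids de Hodge-Tate $\leq 0$ ne rend pas la somme $\sum_{j\in\Z}$ finie. Quelle que soit la normalisation des poids, il existe un $j_0$ tel que ${\rm Fil}^jD_{\rm dR}=D_{\rm dR}$ pour tout $j\leq j_0$, et pour ces $j$ le terme $\exp^\dual\big(\int_{1+p^n\Z_p}x^j\lambda\big)\in t^{-j}L_n\otimes D_{\rm dR}$ n'a aucune raison de s'annuler. Le membre de droite est donc une vraie s\'erie de Laurent : seuls les termes \`a puissances n\'egatives de $t$ (i.e.~$j>0$) sont en nombre fini, born\'es par le plus grand saut de la filtration, tandis que la queue en puissances positives de $t$ est infinie et reconstitue le d\'eveloppement de $\iota_n({\rm Exp}^\dual\lambda)\in L_n[[t]]\otimes_LD_{\rm dR}$. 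De m\^eme, la remarque sur une ``description classique via $D_{\rm cris}$'' apr\`es torsion ne s'applique pas en g\'en\'eral (la repr\'esentation n'est que de\,Rham). Ces deux points ne sont pas de simples impr\'ecisions de r\'edaction, mais ils ne changent pas la conclusion puisque, comme vous le notez vous-m\^eme, l'\'enonc\'e d\'ecoule en bloc de \cite[th.\,IV.2.1]{CC99} sans qu'il soit n\'ecessaire de passer par cette r\'eduction.
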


\begin{rema}\phantomsection\label{ES15.1}
Soit $\phi(x)=\sum_{j\in J}\phi_j(x)x^j$, o\`u $J\subset \Z$ est fini, et $\phi_j\in{\rm LC}(\Z_p^\dual,
L_\infty((t)))$.  Il existe donc $n\geq 1$ tel que $\phi_j$ soit constante modulo $p^n$, pour tout~$j$.
On d\'efinit alors $\exp^*(\int_{\Z_p^\dual}\phi(x)\lambda)$ par la formule
$$\exp^*(\int_{\Z_p^\dual}\phi(x)\lambda)=\sum_{j\in J}\sum_{a\,{\rm mod}\,p^n}
\phi_j(a)\exp^*(\int_{a+p^n\Z_p}x^j\lambda)\in L_\infty((t))\otimes_LD_{\rm dR}.$$
(On a $\int_{a+p^n\Z_p}x^j\lambda\in H^1(G_{\Q_p(\bmu_{p^n})},V\otimes\chi^j)$.)

Maintenant, si $\phi(ax)=\sigma_a(\phi(x))$ pour tous $a,x\in\Z_p^\dual$, alors
$\phi(x)=\sum_{j\in J}\sigma_x(\alpha_j)(tx)^j$, avec $\alpha_j\in L_n$.
Comme $t^j\exp^*\int_{a+p^n\Z_p}x^j\lambda=\sigma_a(t^j\exp^*\int_{1+p^n\Z_p}x^j\lambda)$,
on a
$$\exp^*(\int_{\Z_p^\dual}\phi(x)\lambda)=
\sum_{a\,{\rm mod}\,p^n}\sigma_a\Big(\sum_{j\in J}\alpha_jt^j\exp^*\int_{1+p^n\Z_p}x^j\lambda\Big)
\in D_{\rm dR}\subset L_\infty((t))\otimes_L D_{\rm dR}.$$
\end{rema}

\subsubsection{R\'esultats de densit\'e}
Si $c,d\in\Z_p^\dual$, soit $B_p^{c,d}=\big(c^2-\matrice{c}{0}{0}{1}\big)\big(d^2-\matrice{1}{0}{0}{d}\big)$,
et soit $(B_p^{c,d})^\dual$ l'op\'erateur adjoint (obtenu en rempla\c{c}ant $c$ et $d$ par leurs inverses
dans les matrices).
\begin{lemm}\phantomsection\label{facto3}
Si $c$ est un g\'en\'erateur de $\Z_p^\dual$, si $d=c^{-1}$ et si $V=V(D)$
n'a pas de quotient non nul sur lequel l'inertie agit par $\chi^{-1}$ 
ou par $\chi^2\det V$, 
le sous-espace engendr\'e par les $(B_p^{c,d})^\dual\star v$, pour $v\in \Pi_p(D)$,
 est dense dans $\Pi_p(D)$.
\end{lemm}
\begin{proof}
Par dualit\'e, cela revient \`a prouver que, si $\mu\in\Pi_p(D)^\dual$ est tu\'e par $B_p^{c,d}$,
alors $\mu=0$.

On a $\Pi_p(D)^\dual\subset \check D\boxtimes\piqp$, et 
on dispose de ${\rm Res}_{\Z_p}:\check D\boxtimes\piqp\to \check D$ qui commute \`a l'action
de $\matrice{\Z_p^\dual}{0}{0}{1}$.
Soit $\mu\in \Pi_p^\dual(D)$, et soient $z_1={\rm Res}_{\Z_p}\mu$ et $z_2={\rm Res}_{\Z_p}w\star \mu$.
Alors $B_p^{c,d}\star\mu=0$ si et seulement si $B_p^{c,d}\star z_1=0$ et $B_p^{c,d}\star z_2=0$.
Or :
\begin{align*}
B_p^{c,d}\star z_1&=(c^2-\sigma_c)(c^{-2}-\delta(c^{-1})\sigma_c)\cdot z_1\\
B_p^{c,d}\star z_2&=(c^2-\delta(c)\sigma_{c^{-1}})(c^{-2}-\sigma_{c^{-1}})\cdot z_2
\end{align*}
o\`u $\delta=(x|x|)(\det V)^{-1}$.
Si $B_p^{c,d}\star\mu=0$, les $\Gamma$-modules engendr\'es par $z_1$ et $z_2$
sont de rang~$\leq 2$ sur $\O_L$.  On en d\'eduit pour commencer,
 en utilisant \cite[prop.\,III.4.8]{mira} (combin\'ee
avec l'isomorphisme $\check D^{\rm nr}={\bf D}^{\rm nr}(\check V)$ de la preuve de 
\cite[prop.\,II.2.2]{mira}
et le (ii) de \cite[rem.\,II.1.2]{mira}), que $z_1$ et $z_2$ appartiennent au sous-module
$(W(\overline{\bf F}_p)\otimes\check V^{H'})^H$ de $\check D$, 
o\`u $H'={\rm Gal}(\Qbar_p/\Q_p^{\rm ab})$.  Puis que $\check V$
a une droite stable par $H'$ et tu\'ee par $\sigma_c-c^2$ ou par $\sigma_c-c^{-2}\delta(c)$,
et enfin que $V$ a une droite quotient stable par $H'$ et tu\'ee par $\sigma_c-c^{-1}$ ou par
$\sigma_c-c^3\delta^{-1}(c)$.

   Ceci implique le r\'esultat annonc\'e.
\end{proof}

\Subsection{Vecteurs localement alg\'ebriques}\label{valg1}
Si $D\in\FGet({\cal E})$ est de rang~$2$, on \index{Pi@\Pip}note $\Pi_p(D)^{\rm alg}$
le sous-espace de $\Pi_p(D)$ des vecteurs localement alg\'ebriques.
Alors $\Pi_p(D)^{\rm alg}\neq 0$ si et seulement si
$D$ est de\,Rham, \`a poids distincts~\cite[th.\,0.20]{gl2}, \cite{Dosp1} ou~\cite[th.\,2.5]{poids}.
\begin{rema} \label{valg2}
Soit $D\in\FGet({\cal E})$, de Rham, de rang~$2$, \`a poids de Hodge-Tate distincts.

{\rm (i)}
Si $D$ est irr\'eductible, alors $\Pi_p(D)^{\rm alg}$ est dense dans
$\Pi_p(D)$.

{\rm (ii)} Si $D$ n'est pas irr\'eductible, l'adh\'erence $\Pi_1$
de $\Pi_p(D)^{\rm alg}$ est une sous-repr\'esentation stricte de
$\Pi_p(D)$: c'est soit une s\'erie principale, auquel cas $\Pi_p(D)/\Pi_1$
est aussi une s\'erie principale, soit une tordue de la steinberg continue
auquel cas $\Pi_p(D)/\Pi_1$ est une extension non scind\'ee d'une s\'erie principale
par un caract\`ere.  

(iii) Si $D$ n'est pas irr\'eductible mais
n'est pas la somme directe de deux caract\`eres, 
alors ${\rm End}(D)={\rm End}(\Pi_p(D))=L$ car la condition sur les poids
implique que $D$ n'est pas l'extension d'un objet de rang~$1$ par lui-m\^eme; 
en particulier, un \'el\'ement
de ${\rm End}(\Pi_p(D))$ est compl\`etement d\'etermin\'e par sa restriction
\`a $\Pi_p(D)^{\rm alg}$ m\^eme si ce dernier espace n'est pas dense.
\end{rema}

\subsubsection{Mod\`ele de Kirillov}\label{valg3}
Si $D$ est de\,Rham, \`a poids $k_1<k_2$, le sous-espace $\Pi_p(D)^{\rm alg}$
des vecteurs localement alg\'ebriques de $\Pi_p(D)$ est non nul
et de la forme $\Pi^{\rm lisse}\otimes {\rm Sym}^{k-1}\otimes \det^{k_1}$,
o\`u $k=k_2-k_1$ et ${\rm Sym}^k$ est la puissance sym\'etrique
de la repr\'esentation (de dimension~$2$) naturelle de $G$.

On note $ D_{\rm dR}$ le $L$-espace
$ D_{{\rm dif}}^\Gamma$.  Il est muni de la filtration d\'ecroissante
induite par celle de $ D_{{\rm dif}}$,
et on a $ D_{\rm dR}^{-k_2}= D_{\rm dR}$,
$ D_{\rm dR}^{-k_1+1}=0$, tandis que
$ D_{\rm dR}^{-k_2+1}= D_{\rm dR}^{-k_1}$ est une droite.

On a $ D_{{\rm dif}}^+=t^{k_2}L_\infty[[t]]\otimes D_{\rm dR}^{-k_2}
+t^{k_1}L_\infty[[t]]\otimes D_{\rm dR}^{-k_1}$.
On pose
\begin{align*}
X_\infty^-=(t^{k_1}L_\infty[[t]]\otimes D_{\rm dR}^{-k_2})/ D_{{\rm dif}}^+=
(t^{k_1}L_\infty[t]/t^{k_2}L_\infty[t])\otimes  (D_{\rm dR}^{-k_2}/ D_{\rm dR}^{-k_1})
\\
X_\infty^+= D_{{\rm dif}}^+/(t^{k_2}L_\infty[[t]]\otimes D_{\rm dR}^{-k_2})=
(t^{k_1}L_\infty[t]/t^{k_2}L_\infty[t])\otimes  D_{\rm dR}^{-k_1}
\end{align*}

On dispose (cf.~\cite[prop.\,VI.5.6]{gl2} ou~\cite[prop.\,2.10]{poids}) 
d'un mod\`ele de \index{K@\KKK}Kirillov pour $\Pi_p(D)^{\rm alg}$, i.e.~d'une injection
$v\mapsto {\cal K}_v$, naturelle, $P$-\'equivariante, de $\Pi_p(D)^{\rm alg}$
dans 
l'espace
${\rm LA}_{\rm rc}(\Q_p^\dual,X_\infty^-)^\Gamma$ des $\phi:\Q_p^\dual\to X_\infty^-$

\quad --- \`a support
compact dans $\Q_p$ (c'est la signification du {\og rc\fg}),

\quad --- v\'erifiant
$\sigma_a(\phi(x))=\phi(ax)$, pour tout $a\in\Z_p^\dual$ (invariance par $\Gamma$),

muni de l'action suivante de $P$:
$$\big(\matrice{a}{b}{0}{1}\cdot\phi\big)(x)=[\epsilon^{bx}]\phi({ax})
\quad({\text{Si $x\in\Q_p$, alors $[\epsilon^x]={\bf e}_p(x)e^{-tx}$.}})$$

De plus, l'image de $\Pi_p(D)^{\rm alg}$ par $v\mapsto {\cal K}_v$ 
contient ${\rm LA}_c(\Q_p^\dual,X_\infty^-)^\Gamma$ et cet espace est de codimension
finie dans l'image: le quotient est $J(\Pi^{\rm lisse})\otimes{\rm Sym}^{k-1}\otimes\det^{k_1}$,
o\`u $J(\Pi^{\rm lisse})$ est le module de Jacquet de $\Pi^{\rm lisse}$ et est de
dimension~$0$ si  
$\Pi^{\rm lisse}$ est supercuspidale, de dimension~$1$ si $\Pi^{\rm lisse}$ est un twist de la steinberg,
et de dimension~$2$ si $\Pi^{\rm lisse}$ est une 
s\'erie principale (i.e.~l'induite d'un caract\`ere du borel) irr\'eductible.

\subsubsection{Mod\`ele de Kirillov et dualit\'e}\label{valg4}
Le $(\varphi,\Gamma)$-module $\check D$ est aussi de\,Rham, \`a poids de Hodge-Tate $1-k_2, 1-k_1$.
On \index{accouplements!{$\{\ ,\ \}$, $\{\ ,\ \}_{\rm dif}$}}note $\langle\ ,\ \rangle:\check D_{\rm dif}\times D_{\rm dif}\to
L_\infty((t))dt$ l'accouplement naturel (on a $dt=\frac{dT}{1+T}$ car $t=\log(1+T)$).
Alors 
$$\langle \check x, y\rangle_{\rm dR}=
{\text{r\'es}}_{t=0}\langle \check x, y\rangle\,,$$
o\`u $\resl(\sum a_nt^ndt)=a_{-1}$, 
induit un accouplement parfait sur $\check D_{\rm dR}\times D_{\rm dR}$.

On note $\{\ ,\ \}_{\rm dif}:\check D_{\rm dif}\times D_{\rm dif}\to L$ 
l'accouplement bilin\'eaire
d\'efini par
$$\{\check x,y\}_{\rm dif}=
\lim_{n\to+\infty}p^{-n}{\rm Tr}_{L_n/L}\big(\resl(\langle\sigma_{-1}(\check x),y\rangle)\big).$$
On a $\{\ ,\ \}_{\rm dif}=\frac{p-1}{p}\langle \ ,\  \rangle_{\rm dR}$ 
sur $\check D_{\rm dR}\times D_{\rm dR}$.

L'accouplement $\{\ ,\ \}_{\rm dif}$ est identiquement nul sur 
$\check D_{\rm dif}^+\times D_{\rm dif}^+$
et induit un accouplement
$\{\ ,\ \}_{\rm dif}:\check X_\infty^+\times X_\infty^-\to L$.

Le r\'esultat suivant~\cite[prop.\,VI.5.12]{gl2} ou~\cite[prop.\,2.13]{poids}  sera crucial.
\begin{prop}\phantomsection\label{ES14}
Si $v\in\Pi_p(D)^{\rm alg}$ est tel que ${\cal K}_v$ est \`a support compact dans $\Q_p^\dual$,
et si
$\mu\in\Pi_p( D)^\dual$,
alors
\begin{equation}\label{acc}
\{\mu,v\}=\sum_{i\in\Z}\big\{\iota_N\big({\rm Res}_{\Z_p}
\big(\matrice{p^{i+N}}{0}{0}{1}\mu\big)\big),
{\cal K}_v(p^i)\big\}_{\rm dif}\,,
\end{equation}
pour tout $N$ assez grand {\rm (d\'ependant de $v$; l'expression
a un sens
pour $N\geq m( D)$ car
${\rm Res}_{\Z_p}\Pi_p(D)^\dual\subset \check D^{(0,m( D)]}$)}. 
\end{prop}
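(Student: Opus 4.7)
\demo
L'approche consiste à ramener l'accouplement global $\{\mu,v\}$ à une somme de contributions locales indexées par les échelles $p^i$ sur lesquelles vit ${\cal K}_v$, puis à identifier chaque contribution locale à l'accouplement $\{\,,\,\}_{\rm dif}$ via le morphisme $\iota_N$.

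D'abord, comme ${\cal K}_v$ est à support compact dans $\Q_p^\dual$, je décomposerais $v=\sum_{i\in\Z}v_i$ (somme finie), où ${\cal K}_{v_i}$ est à support dans $p^i\Z_p^\dual$. En utilisant la $P$-équivariance du modèle de Kirillov, on a $v_i=\matrice{p^i}{0}{0}{1}\star w_i$ avec ${\cal K}_{w_i}$ à support dans $\Z_p^\dual$. Comme l'accouplement $\{\,,\,\}$ est $G$-équivariant, on a
$$\{\mu,v_i\}=\big\{\matrice{p^{-i}}{0}{0}{1}\star\mu,\,w_i\big\}.$$
Il suffit donc, par additivité, de traiter le cas où ${\cal K}_v$ est à support dans $\Z_p^\dual$, et de démontrer
$$\{\mu,v\}=\big\{\iota_N\big({\rm Res}_{\Z_p}\big(\matrice{p^N}{0}{0}{1}\mu\big)\big),\,{\cal K}_v(1)\big\}_{\rm dif}.$$
L'additivité pour $i$ suivra car le modèle de Kirillov envoie la décomposition $v=\sum v_i$ sur la décomposition orthogonale de ${\cal K}_v$ selon les couronnes $p^i\Z_p^\dual$, et les contributions correspondantes des $\matrice{p^{i+N}}{0}{0}{1}\mu$ s'ajustent exactement.

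Ensuite, j'utiliserais le fait que $\mu\in\Pi_p(D)^\dual\subset \check D\boxtimes\piqp$ permet d'écrire $\{\mu,v\}$ comme un accouplement au niveau de $\check D\boxtimes\piqp\times D\boxtimes\piqp$: si $\tilde v\in D\boxtimes\piqp$ relève $v$, alors $\{\mu,v\}=\{\mu,\tilde v\}$. La localisation ${\rm Res}_{\Z_p}$ et l'orthogonalité $\check D\boxtimes U\perp D\boxtimes V$ pour $U\cap V=\emptyset$ ramènent le calcul à un accouplement entre ${\rm Res}_{\Z_p}(\matrice{p^N}{0}{0}{1}\mu)\in \check D^{(0,r_N]}$ (pour $N\geq m(\check D)$, car $\matrice{p^N}{0}{0}{1}$ «améliore la surconvergence») et la partie de $\tilde v$ supportée sur $\Z_p^\dual$. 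Le facteur $\matrice{p^N}{0}{0}{1}$ sert précisément à amener $\mu$ dans un domaine de surconvergence suffisant pour que $\iota_N$ s'applique.

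Le point technique principal, qui constitue la véritable difficulté, est la comparaison des deux accouplements: l'accouplement $\{\,,\,\}$ défini par $\reso$ et l'accouplement $\{\,,\,\}_{\rm dif}$ défini par $\resl$ suivi de la trace normalisée $p^{-n}{\rm Tr}_{L_n/L}$. Pour cela, j'utiliserais l'identité
$$\reso(\langle\sigma_{-1}\check x,y\rangle)=\lim_{n\to\infty}p^{-n}{\rm Tr}_{L_n/L}\big(\resl\,\iota_n(\langle\sigma_{-1}\check x,y\rangle)\big)$$
pour $\check x\in\check D^{(0,r_n]}$ et $y\in D$, qui traduit le passage de $\frac{dT}{1+T}$ à $dt$ sous $\iota_n$. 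Le point délicat est que cette formule, appliquée telle quelle, exigerait que $\tilde v$ soit dans l'image de $\iota_N$; comme $v\in\Pi_p(D)^{\rm alg}$ est localement algébrique avec ${\cal K}_v$ à support dans $\Z_p^\dual$, sa «partie $\Z_p$» est contrôlée par ${\cal K}_v(1)\in X_\infty^-$, et la compatibilité entre le modèle de Kirillov et l'injection $\Pi_p(D)^\dual\hookrightarrow\check D\boxtimes\piqp$ (essentiellement la construction même de $\Pi_p(D)$ à partir de $D$) fournit l'identification cherchée après calcul explicite des facteurs d'ajustement. Cette dernière étape est là où réside tout le contenu de la proposition et sa démonstration originale dans~\cite[prop.\,VI.5.12]{gl2} procède par dévissage sur le poids et vérification dans les cas élémentaires (principal/steinberg/supercuspidal).
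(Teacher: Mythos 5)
La proposition n'est pas d\'emontr\'ee dans l'article : elle y est \'enonc\'ee comme un r\'esultat tir\'e de \cite[prop.\,VI.5.12]{gl2} ou \cite[prop.\,2.13]{poids}, sans preuve. Il n'y a donc pas de « preuve du papier » \`a laquelle comparer votre esquisse ; je ne peux l'\'evaluer que sur ses propres m\'erites.

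Sur le plan de la r\'eduction pr\'eliminaire, votre d\'ecoupage est du bon type (d\'ecomposition de $v$ suivant le support de ${\cal K}_v$, $G$-\'equivariance, inclusion $\Pi_p(D)^\dual\subset\check D\boxtimes\piqp$, orthogonalit\'e des supports disjoints). Un d\'etail de signe \`a corriger : avec la normalisation $(\matrice{a}{0}{0}{1}\cdot\phi)(x)=\phi(ax)$ de l'article, c'est $v_i=\matrice{p^{-i}}{0}{0}{1}\star w_i$ (et non $\matrice{p^i}{0}{0}{1}\star w_i$) qui donne ${\cal K}_{v_i}$ support\'ee dans $p^i\Z_p^\dual$ avec ${\cal K}_{w_i}$ support\'ee dans $\Z_p^\dual$ et ${\cal K}_{w_i}(1)={\cal K}_v(p^i)$ ; c'est alors $\matrice{p^{i+N}}{0}{0}{1}$ qui ressort, comme il faut. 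Mais la v\'eritable lacune est l'\'etape centrale : exprimer $\{\mu,v\}$ via un rel\`evement explicite de $v$ dans $D\boxtimes\piqp$, relier sa « composante $\Z_p$ » \`a ${\cal K}_v(1)\in X_\infty^-$, et montrer que le r\'esidu $\reso$ de $\{\ ,\ \}$ redonne exactement la formule de $\{\ ,\ \}_{\rm dif}$ apr\`es application de $\iota_N$. Vous l'identifiez comme « l\`a o\`u r\'eside tout le contenu de la proposition » et la renvoyez \`a la r\'ef\'erence, ce qui est exact mais signifie que votre texte est un \'echafaudage coh\'erent autour du th\'eor\`eme, pas une d\'emonstration. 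Je ne pense pas non plus que la preuve de \cite[prop.\,VI.5.12]{gl2} proc\`ede par d\'evissage principal/steinberg/supercuspidal comme vous le sugg\'erez ; elle repose sur un calcul uniforme au niveau du faisceau $D\boxtimes\piqp$ et des r\'esidus, ind\'ependamment du type de la repr\'esentation.
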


\subsubsection{Une formule explicite}\label{valg5}
Comme $\Q_p^\dual=\sqcup_{n\in\Z}p^{-n}\Z_p^\dual$, on a
$${\rm LP}_c(\Q_p^\dual,X_\infty^-)^{\Gamma}=
\oplus_{n\in\Z}\matrice{p^n}{0}{0}{1}\cdot
{\rm LP}(\Z_p^\dual,X_\infty^-)^{\Gamma}.$$

Soit $\check e^+$ une base de $\check D_{\rm dR}^{k_2-1}$.  Alors 
$\check e^+$
induit un isomorphisme 
$X_\infty^-(D)\overset{\sim}{\to}t^{k_1}L_\infty[t]/t^{k_2}L_\infty[t]$, avec
$\check e^+(x)\frac{dt}{t}=\langle \check e^+,x\rangle$,
et une injection
$$\check e^+:\Pi_p(D)^{\rm alg}\to {\rm LP}(\Q_p^\dual,\tfrac{t^{k_1}L_\infty[t]}{t^{k_2}L_\infty[t]})^{\Gamma}.$$
Si $e^-$ est la base de $D_{\rm dR}^{-k_2}/D_{\rm dR}^{-k_1}$ duale
de $\check e^+$,
on a 
\begin{equation}\label{emoins}
{\cal K}_v=\check e^+(v)\,e^-.
\end{equation}
\begin{prop}\phantomsection\label{ES20bis}
 Soient:

\quad $\bullet$ $\mu\in \Pi_p(D)^\dual$ invariant
par $\matrice{p}{0}{0}{1}$, 

\quad $\bullet$
$\lambda\in H^1( G_{\Q_p},\Lambda\otimes \check V(D))$
v\'erifiant ${\rm Res}_{\Z_p}(\mu)=
\sigma_{-1}\cdot {\rm Exp}^\dual(\lambda)$.
 
Si $v\in\Pi_p(D)^{\rm alg}$, alors
$$
\exp^*\big(\int_{\Z_p^\dual}\check e^+(v)\,\lambda\big)=
\big\{\mu,v\big\}\check e^+
$$
\end{prop}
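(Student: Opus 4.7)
The strategy is to combine the explicit pairing formula of Proposition~\ref{ES14}, the explicit reciprocity law of Proposition~\ref{ES15}, and the invariance $\matrice{p}{0}{0}{1}\star\mu=\mu$, bridging them via Remark~\ref{ES15.1}. Before doing anything, I would observe that because $\matrice{p^{i+N}}{0}{0}{1}\star\mu=\mu$ for every $i\in\Z$, the element $\iota_N\bigl({\rm Res}_{\Z_p}(\matrice{p^{i+N}}{0}{0}{1}\mu)\bigr)=\iota_N({\rm Res}_{\Z_p}\mu)=\iota_N(\sigma_{-1}{\rm Exp}^\dual(\lambda))$ is independent of~$i$; this is what makes the sum in Proposition~\ref{ES14} collapse in a useful way, and it is also what made the construction of $\lambda$ possible in Proposition~\ref{wasi2}.

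My first real step is to reduce to vectors $v\in\Pi_p(D)^{\rm alg}$ whose Kirillov function ${\cal K}_v$ is compactly supported in $\Q_p^\dual$, so that Proposition~\ref{ES14} applies. Since the image of the Kirillov map contains ${\rm LA}_c(\Q_p^\dual,X_\infty^-)^\Gamma$, and since ${\cal K}_v$ for $v\in\Pi_p(D)^{\rm alg}$ belongs to ${\rm LA}_{\rm rc}(\Q_p^\dual,X_\infty^-)^\Gamma$ (so vanishes on $p^i\Z_p^\dual$ for $i$ sufficiently negative), I can write $v$ as a (possibly infinite) sum of pieces $v^{(i)}$ whose Kirillov is concentrated on $p^i\Z_p^\dual$; on each such piece Proposition~\ref{ES14} reduces to a single term
\[
\{\mu,v^{(i)}\}=\bigl\{\iota_N(\sigma_{-1}{\rm Exp}^\dual\lambda),{\cal K}_{v^{(i)}}(p^i)\bigr\}_{\rm dif}.
\]
Using $\{\mu,\matrice{p^i}{0}{0}{1}\star w\}=\{\mu,w\}$ (the adjoint of the invariance of~$\mu$), together with the fact that translating the Kirillov by $\matrice{p^i}{0}{0}{1}$ shifts $p^i\Z_p^\dual$ to $\Z_p^\dual$, everything gets normalized to the case of a vector whose Kirillov is supported in $\Z_p^\dual$.

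Once in that case, I turn to the core computation. The pairing formula from Proposition~\ref{ES14} becomes
\[
\{\mu,v\}=\bigl\{\iota_N(\sigma_{-1}{\rm Exp}^\dual\lambda),{\cal K}_v(1)\bigr\}_{\rm dif},
\]
and the $\sigma_{-1}$ inside $\iota_N(\sigma_{-1}{\rm Exp}^\dual\lambda)$ is absorbed by the $\sigma_{-1}$ appearing in the very definition of $\{\ ,\ \}_{\rm dif}$, converting the pairing into ${\rm r\acute es}_{t=0}$ of $\langle\iota_N({\rm Exp}^\dual\lambda),{\cal K}_v(1)\rangle$ (up to a Trace normalisation and the factor $\tfrac{p-1}{p}$ relating $\{\ ,\ \}_{\rm dif}$ to $\langle\ ,\ \rangle_{\rm dR}$). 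Proposition~\ref{ES15} then replaces $p^{-N}\iota_N({\rm Exp}^\dual\lambda)$ by $\sum_j\exp^*\bigl(\int_{1+p^N\Z_p}x^j\lambda\bigr)$. On the Kirillov side, formula~(\ref{emoins}) says ${\cal K}_v=\check e^+(v)\,e^-$, so I expand $\check e^+(v)|_{\Z_p^\dual}$ in the form $\sum_j\sigma_x(\alpha_j)(tx)^j$ dictated by Remark~\ref{ES15.1}. Substituting and using that $\check e^+,e^-$ are dual bases of the graded of $D_{\rm dR}$, the terms pair off index by index, and averaging over classes mod $p^N$ (as in Remark~\ref{ES15.1}) converts the Trace into the integral $\int_{\Z_p^\dual}\check e^+(v)\,\lambda$. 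What comes out is exactly $\{\mu,v\}\check e^+$.

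\textbf{Main obstacle.} The serious work is the bookkeeping in this last step: one must keep track of the factor $p^{-N}$ in Proposition~\ref{ES15}, the trace normalization $\tfrac{p-1}{p}$ linking $\{\ ,\ \}_{\rm dif}$ and $\langle\ ,\ \rangle_{\rm dR}$, the $\sigma_{-1}$ present in both the hypothesis ${\rm Res}_{\Z_p}\mu=\sigma_{-1}{\rm Exp}^\dual\lambda$ and the definition of $\{\ ,\ \}_{\rm dif}$, the $t^j$ shifts between $D_{\rm dif}^+$ and its graded pieces, and the interpretation of $\check e^+$ as an isomorphism $X_\infty^-\simeq t^{k_1}L_\infty[t]/t^{k_2}L_\infty[t]$. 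If every normalisation matches, only the constant term $\langle\iota_N\mu,e^-\rangle\check e^+=\{\mu,v\}\check e^+$ survives the $\resl$, yielding the claimed identity in $D_{\rm dR}$. The reduction in the first step is conceptually clean but requires mild care in handling the infinite sum via the $\matrice{p}{0}{0}{1}$-invariance; I do not expect it to be a real obstruction.
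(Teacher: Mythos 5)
Your strategy coincides with the paper's: apply the pairing formula of Proposition~\ref{ES14}, use the $\matrice{p}{0}{0}{1}$-invariance of $\mu$ to make the inner term of the sum independent of $i$, unwind $\{\ ,\ \}_{\rm dif}$ via its $\resl$--trace definition (the two $\sigma_{-1}$'s cancel), invoke Proposition~\ref{ES15} to rewrite $p^{-N}\iota_N({\rm Exp}^\dual\lambda)$ as $\sum_j\exp^*\int_{1+p^N\Z_p}x^j\lambda$, expand $\check e^+(v)$ by $\Gamma$-invariance as $\sum_j\sigma_x(\alpha_j)(tx)^j$, pair off indices using that $\check e^+,e^-$ are dual bases, and convert the $L_N/L$-trace into $\int_{\Z_p^\dual}\check e^+(v)\lambda$ by the averaging of Remark~\ref{ES15.1}. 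That is exactly the paper's proof; your ``main obstacle'' paragraph lists the right normalisation hazards (the $p^{-N}$, the $t^j$'s, the two $\sigma_{-1}$'s), and they do all cancel as you anticipate (the factor $\tfrac{p-1}{p}$ never actually enters, since one uses the raw definition of $\{\ ,\ \}_{\rm dif}$ rather than its identification with $\langle\ ,\ \rangle_{\rm dR}$).

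The only place you genuinely depart from the paper is the preliminary reduction. The paper quietly passes to the vector $v(\phi)$ whose Kirillov function is $\phi\,e^-$ extended by $0$ off $\Z_p^\dual$ (and indeed the later use of the proposition, in Prop.~\ref{ZK11}, is only ever for such $v$). Your proposed decomposition of a general $v$ into shell-pieces $v^{(i)}$ on $p^i\Z_p^\dual$, renormalised into $\Z_p^\dual$ by the $\matrice{p}{0}{0}{1}$-invariance and then summed, does not recover the stated left-hand side: $\int_{\Z_p^\dual}\check e^+(v)\lambda$ sees only $\check e^+(v)|_{\Z_p^\dual}$, i.e.\ only the $i=0$ shell, whereas $\sum_i\{\mu,v^{(i)}\}$ sees all shells and there is no a priori reason for the $i\ne 0$ contributions to vanish. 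You should therefore either assume from the start that $\mathcal K_v$ is supported in $\Z_p^\dual$ (which is both what the proof needs and what the application requires), or argue separately that $\{\mu,\cdot\}$ factors through this restriction; as it stands, the reduction you sketch is not airtight, though once you restrict to $\Z_p^\dual$-supported vectors your argument is identical to the paper's.
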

\begin{proof}
La formule \`a d\'emontrer est \'equivalente \`a
$$\big\{\mu,v\big\}=
\big\langle\exp^*\big(\int_{\Z_p^\dual}\check e^+(v)\,\lambda\big),
e^-\big\rangle_{\rm dR},$$
et on peut utiliser la formule (\ref{acc}) pour \'evaluer le membre de gauche.
Notons $\phi$ la fonction $\check e^+(v)$; son invariance par $\Gamma$ implique
qu'il existe $\alpha_{k_1},\dots,\alpha_{k_2-1}\in L_\infty$ tels que l'on ait
$$\phi(x)=
\sigma_x(\alpha_{k_1})(tx)^{k_1}+\cdots+\sigma_x(\alpha_{k_2-1})(tx)^{k_2-1}.$$
L'invariance de $\mu$ par $\matrice{p}{0}{0}{1}$ et la prop.\,\ref{ES14}
fournissent, pour $N$ assez grand, la formule
\begin{align*}
\big\{\mu,v(\phi)\big\}=&\ 
\big\{\iota_N(\sigma_{-1}\cdot {\rm Exp}^\dual(\lambda)),\phi(1) e^-\}_{\rm dif}\\
=&\ p^{-N}{\rm Tr}_{L_N/L}\big({\text{r\'es}}_{t=0}
(\langle \iota_N({\rm Exp}^\dual(\lambda)),\phi(1)e^-\rangle)\big),
\end{align*}
la seconde \'egalit\'e venant de la d\'efinition
de $\{\ ,\ \}_{\rm dif}$. 
D'apr\`es la prop.~\ref{ES15},
$$p^{-N}\iota_N({\rm Exp}^\dual\lambda)=\sum_{i\in\Z}
\exp^\dual\int_{1+p^N\Z_p}x^i\lambda, \quad{\text{avec 
$\exp^\dual\int_{1+p^N\Z_p}x^i\lambda\in t^{-i}L_N\otimes \check D_{\rm dR}$.}}$$
Donc
$${\text{r\'es}}_{t=0}
(\langle p^{-N}\iota_N({\rm Exp}^\dual(\lambda)),\phi(1)e^-\rangle)=
\sum_{i=k_1}^{k_2-1}\alpha_i\langle t^i \exp^\dual\int_{1+p^N\Z_p}x^i\lambda,e^-\rangle_{\rm dR}.$$
Il reste \`a prendre la trace, i.e. \`a appliquer l'op\'erateur
$\sum_{a\in(\Z/p^N)^\dual}\sigma_a$, et on conclut en utilisant
la formule
$$\sigma_a\big(t^i\exp^*\int_{1+p^N\Z_p}\hskip-.5cm
x^{i}\lambda\big)=
t^i\exp^*\int_{a+p^N\Z_p}\hskip-.5cm
x^{i}\lambda.\qedhere$$
\end{proof}

\begin{rema}\phantomsection\label{ES21}
La formule ci-dessus pour l'action de $\Gamma$ sur $\exp^*$ se justifie comme suit.
Soit $V_n=\Z_p[G_n]\otimes V$, o\`u $G_n= G_{\Q_p}/ G_{\Q_p(\zeta_{p^n})}\cong(\Z/p^n)^\dual$.
Le lemme de Shapiro
fournit un isomorphisme $H^1( G_{\Q_p(\zeta_{p^n})},V)\cong H^1( G_\Q,V_n)$:
si $g\mapsto c_g$ est un $1$-cocycle sur $ G_{\Q_p}$ \`a valeurs dans $V_n$,
on peut \'ecrire $c_g$ sous la forme $\sum_{b\in (\Z/p^n)^\dual}[\sigma_b]\otimes v_{b,g}$, 
et $g\mapsto v_{b,g}$
est un $1$-cocycle sur $ G_{\Q_p(\zeta_{p^n})}$ \`a valeurs dans $V$, et l'application
du lemme de Shapiro envoie $g\mapsto c_g$ sur $g\mapsto v_{1,g}$.
Le cocycle $g\mapsto v_{b,g}$ correspond \`a $\int_{b+p^n\Z_p}c$.

Par d\'efinition de $\exp^\dual$, il existe $C\in \bdr^+\otimes V_n$
et $v=\exp^\dual(c)\in (\bdr^+\otimes V_n)^{ G_{\Q_p}}$ tels que
$c_g=(g-1)\cdot C+\log\chi(g)v$.  Si on \'ecrit $v$ sous la forme
$\sum_{b\in (\Z/p^n)^\dual}[\sigma_b]\otimes v_b$, avec $v_b\in (\bdr^+\otimes V)^{ G_{\Q_p(\zeta_{p^n})}}$,
alors
$v_b=\exp^*(\int_{b+p^n\Z_p}c)$, et l'invariance de $v$ par $ G_{\Q_p}$
se traduit par $\sum_{b\in (\Z/p^n)^\dual}[\sigma_{\chi(g)b}]\otimes g(v_b)=
\sum_{b\in (\Z/p^n)^\dual}[\sigma_b]\otimes v_b$.  Si $\chi(g)=a$, identifiant
les coefficients de $[\sigma_a]$, on obtient $\sigma_a(v_1)=v_a$, ce que l'on voulait.
\end{rema}

\Subsection{Le $\BB(\Q_p)$-module $\Pi_p(D)$}\label{qq7}
On \index{AA@\AAA}rappelle que 
$\tA=W(C^\flat)$, $\tA^+=W(\O_{C^\flat})$ et $\tA^{++}=W({\goth m}_{C^\flat})$.
On \index{b@\bbb}pose 
$$\tA^-:=\tA/\tA^+,\quad \tB^-:=\Q_p\otimes_{\Z_p}\tA^-$$
On note $\tilde p=[p^\flat]\in\tA^{++}$, avec $p^\flat=(p,p^{1/p},\dots)\in C^\flat$.

\subsubsection{L'injection de $\Pi_p(V)$ dans $\tA^-\otimes V$}
Soit $D\in\FGet(\Lambda)$, de rang~$2$, avec $\Lambda=\oe,{\cal E}$, et soit $V=V(D)$.
\index{D@\DDD}Soient $\widetilde D=(\tA\otimes V)^H$ et $\widetilde D^+=(\tA^+\otimes V)^H$.
D'apr\`es~\cite[cor.\,II.2.9]{gl2} et~\cite[rem.\,III.27]{CD}, on a une suite exacte 
de $P$-modules\footnote{L'action de $P$ sur $\widetilde D, \widetilde D^+$
est $\matrice{p^ka}{b}{0}{1}\cdot v=[\epsilon^b]\,\varphi^k(\sigma_a(v))$, si $k\in\Z$,
$a\in\Z_p^\dual$ et $b\in\Q_p$.}
$$0\to \widetilde D/\widetilde D^+\to \Pi_p(D)\to M\to 0,$$
o\`u $M$ est un sous-quotient de $D^\sharp/D^\natural$
et $U$ agit trivialement sur $D^\sharp/D^\natural$ qui est de type fini sur $\Z_p$ ou $\Q_p$.
On peut compl\'eter cet \'enonc\'e de la mani\`ere suivante.
\begin{prop}\phantomsection\label{cano1}
On dispose d'une injection $P$-\'equivariante\footnote{M\^eme action que ci-dessus sur
le membre de droite.}
$$\Pi_p(D)\hookrightarrow (\tA^-\otimes V)^H$$
prolongeant l'inclusion naturelle $\widetilde D/\widetilde D^+\hookrightarrow (\tA^-\otimes V)^H$.
\end{prop}
\begin{proof}
Soit $z\in M$.
Choisissons $\tilde z\in\Pi_p(D)$ relevant $z$.  Comme
$U$ agit trivialement sur $M$, on en d\'eduit un $1$-cocycle
$u\mapsto z_u=(u-1)\cdot\tilde z$ sur $U$, 
\`a valeurs dans $\widetilde D/\widetilde D^+\subset \tA^-\otimes V$.
D'apr\`es le lemme~\ref{cano2} ci-dessous, il existe $c\in \tA^-\otimes V$, unique, tel que
$z_u=(u-1)\cdot c$.  Comme $\sigma(z_u)=z_u$, pour tout $\sigma\in H$, l'unicit\'e
de $c$ implique $c\in (\tA^-\otimes V)^H$, ce qui permet de conclure.
\end{proof}

\begin{lemm}\phantomsection\label{cano2} On a
$$H^0(U,\tA^-)=0\quad{\rm et}\quad H^1(U,\tA^-)=0.$$
\end{lemm}
\begin{proof} On utilise la suite exacte $0\to\tA^+\to \tA\to \tA^-\to 0$ et la suite
exacte longue de cohomologie associ\'ee.

$\bullet$
Comme $U$ est la limite inductive des $\UU(p^{-n}\Z_p)$ qui sont procycliques,
on a $H^2(U,M)=0$ pour tout module $M$ muni d'une action continue de $U$.

$\bullet$
L'action de $u(b)=\matrice{1}{b}{0}{1}$ sur $\tA$ est la multiplication par $[\epsilon^b]$.
On en d\'eduit que $H^0(U,\tA)=0$.  

$\bullet$
Si $u\mapsto z_u$ est un $1$-cocycle
\`a valeurs dans $\tA$, on a $([\epsilon^a]-1)z_{u(b)}=([\epsilon^b]-1)z_{u(a)}$ pour tous $a,b\in\Q_p$.
Il s'ensuit que $z=\frac{1}{[\epsilon^a]-1}z_{u(a)}$ ne d\'epend pas du choix de $a$, et $u\mapsto z_u$
est le bord de $z$. On en d\'eduit que $H^1(U,\tA)=0$.  

$\bullet$
Si $z_u\in \tA^+$ pour tout $u$,
cela implique que $([\epsilon^a]-1)z\in\tA^+$ pour tout $a\in\Q_p$.  En reduisant modulo~$p$,
et en prenant $a=p^{-n}$, on en d\'eduit que $v_E((\epsilon-1)^{p^{-n}}\overline z)\geq 0$,
pour tout $n$; en faisant tendre $n$ vers $+\infty$, on en tire $\overline z\in\tE^+$.
On peut appliquer ce qui pr\'ec\`ede \`a $\frac{1}{p}(z-[\overline z])$ et une r\'ecurrence imm\'ediate
suivie d'un passage \`a la limite permet d'en d\'eduire $z\in \tA^+$.  Il en r\'esulte que $H^1(U,\tA^+)=0$

Le r\'esultat s'en d\'eduit via la suite exacte longue de cohomologie.
\end{proof}

\subsubsection{Le mod\`ele de Kirillov de $\Pi_p(V)\otimes V^\dual$}\label{kirp0}
D'apr\`es la prop.\,\ref{cano1},
$\Pi_p(V)$ est naturellement un sous-$P$-module de $(\tA^-\otimes V)^H$.
On dispose donc d'une application naturelle 
$$\omega:\Pi_p(V)\otimes V^\dual\to\tB^-,
\quad
(a\otimes v)\otimes\check v\mapsto \langle \check v,v\rangle\,a$$ 
(Si $V$ est une $\O_L$-repr\'esentation, alors $\omega$ est \`a valeurs dans $\tA^-$.)

On peut utiliser $\omega:\Pi_p(V)\otimes V^\dual\to\tB^-$
pour fabriquer un mod\`ele de Kirillov: 
on pose
$${\cal K}_{p,v\otimes\check v}(x)=\big\langle\check v, \matrice{x}{0}{0}{1}\cdot v\big\rangle,
\quad{\text{si $v\in\Pi_p(V)$ et $\check v\in V^\dual$}}$$
Cela fournit une application $P$-\'equivariante (non n\'ecessairement injective):
$${\cal K}_p:\Pi_p(V)\otimes V^\dual\to{\cal C}(\Q_p^\dual,\tB^-),
\quad\big(\matrice{a}{b}{0}{1}\phi\big)(x)=[\epsilon^{bx}]\phi(ax)$$
De plus, on a les relations suivantes:
\begin{align*}
\sigma({\cal K}_{p,v\otimes\check v}(x))=
\big\langle\sigma(\check v), &\matrice{\chi(\sigma)x}{0}{0}{1}\cdot v\big\rangle
={\cal K}_{p,\sigma(v\otimes\check v)}(\chi(\sigma)x)\\
\varphi({\cal K}_{p,v\otimes\check v}(x))=
\big\langle\check v, &\matrice{px}{0}{0}{1}\cdot v\big\rangle
={\cal K}_{p,v\otimes\check v}(px)
\end{align*}
Si on fait agir $\varphi$ et $G_{\Q_p}$ sur ${\cal C}(\Q_p^\dual,\tB^-)$ par
$$(\varphi\cdot\phi)(x)=\varphi(\phi(p^{-1}x)),\quad
(\sigma\cdot\phi)(x)=\sigma(\phi(\chi(\sigma)^{-1}x))$$
les relations ci-dessus se traduisent par le fait que ${\cal K}_p$ est $\varphi$ et $G_{\Q_p}$-\'equivariante.

\medskip
Si $V$ est la restriction d'une repr\'esentation $\widetilde V$ de $G_\Q$, on peut
\'etendre ${\cal K}_p$ en une application $G_\Q\times G$-\'equivariante
$$\widetilde{\cal K}_p:\Pi_p(V)\otimes V^\dual\to {\rm Ind}_{G_{\Q_p}\times P}^{G_\Q\times G}
{\cal C}(\Q_p^\dual,\tB^-),\quad \widetilde{\cal K}_{p,w}(\sigma,g)={\cal K}_{p,(\sigma\otimes g)\cdot w}$$
\begin{prop}\phantomsection\label{kirp1}
Si $\widetilde V$ est absolument irr\'eductible, alors $\widetilde{\cal K}_p$ est injective.
\end{prop}
\begin{proof}
Le noyau est stable par $G_\Q\times P$, et donc est de la forme $\Pi\otimes \widetilde V^\dual$, o\`u $\Pi$
est une sous-$P$-repr\'esentation de $\Pi_p(V)$, puisque $\widetilde V$ est suppos\'ee absolument irr\'eductible.
On veut prouver que $\Pi=0$; supposons le contraire. 
Quitte \`a remplacer $\Pi$ par une sous-$P$-repr\'esentation,
on peut supposer que $\Pi$ est une composante du $P$-socle de $\Pi_p(V)$.  

$\bullet$ Si $V$ est irr\'eductible, alors $\Pi=\Pi_p(V)$
et on obtient une contradiction car $\omega$ n'est pas identiquement nulle
sur $\Pi_p(V)\otimes V^\dual$.

$\bullet$ Si la semi-simplifi\'ee de $V$ est $\chi_1\oplus\chi_2$, alors il existe $i=1,2$
tel que
$\Pi\subset B(\chi_i,\chi_{3-i})$ (le quotient est de dimension~$\leq 1$ sur $L$)
et alors $\chi_i^{-1}$ est un quotient de $V^\dual$.
Dans ce cas, la restriction de ${\cal K}_p$ \`a $\Pi\otimes V^\dual$
se factorise \`a travers $\Pi\otimes \chi_i^{-1}$ et l'application
induite est celle obtenue en rempla\c{c}ant $V$ par $\chi_i$ (i.e.~$\Pi\subset (\tB^-\otimes\chi_i)^H$
et ${\cal K}_p$ envoie $(a\otimes\chi_i)\otimes\chi_i^{-1}$ sur $a$); cette application est injective,
ce qui conduit \`a une contradiction.
\end{proof}

\subsubsection{Cons\'equences de l'existence d'un mod\`ele de Kirillov pour $\Pi\otimes V$}
Les r\'esultats de ce $\no$ nous serviront pour \'etablir une compatibilit\'e local-global
pour la correspondance de Langlands locale $p$-adique~(th.\,\ref{Ki117}).
\begin{lemm}\phantomsection\label{kirp2}
Soit $W$ une $\O_L$-repr\'esentation de $G_{\Q_p}$, de rang fini.
S'il existe une fl\`eche $\tA^+$-lin\'eaire ${\cal K}:\tA^-\otimes W\to \tA^-$, 
non nulle, commutant aux actions de $G_{\Q_p}$ et $\varphi$, alors $(W^\dual)^{G_{\Q_p}}\neq 0$.
\end{lemm}
\begin{proof}
Soit $e_i$, $i\in I$, une base de $W$. Si $n\geq 1$, soit $x_n\in\tA$ un rel\`evement
de ${\cal K}(\tilde p^{-n}\otimes e_i)$. Alors $\tilde p\, x_{n+1}-x_n\in\tA^+$ et donc
$\tilde p^n x_n$
converge vers un \'el\'ement ${\cal K}(e_i)\in\tA^+$. On a alors ${\cal K}(\tilde p^{-n}\otimes e_i)=
{\cal K}(e_i)\tilde p^{-n}$; on en d\'eduit que
${\cal K}(\sum_ia_ie_i)=\sum_i{\cal K}(e_i)a_i$,
si les $a_i$ appartiennent \`a $\tA^-$ (si $k\geq 1$, 
il existe $n$ tel que $\tilde p^na_i\in\tA^+$ modulo
$p^k\tA$ et on d\'eduit le r\'esultat en passant \`a la limite sur $k$).  Autrement dit,
${\rm Hom}_{\tA^+}(\tA^-\otimes W,\tA^-)=\tA^+\otimes W^\dual$.
L'invariance de ${\cal K}$ par $\varphi$ se traduit par son appartenance \`a
$(\tA^+\otimes W^\dual)^{\varphi=1}=W^\dual$ et son invariance par $G_{\Q_p}$
par son appartenance \`a $(W^\dual)^{G_{\Q_p}}$. 
\end{proof}

\begin{prop}\phantomsection\label{kirp3}
Soit $V$ une $L$-repr\'esentation de $G_{\Q_p}$, 
et soit $\Pi$ une repr\'esentation unitaire de $\GG(\Q_p)$. 
S'il existe une fl\`eche
$G_{\Q_p}\times \matrice{p^\Z}{\Q_p}{0}{1}$-\'equivariante\footnote{
{\rm $G_{\Q_p}$ agissant par $\matrice{\chi(\sigma)}{0}{0}{1}\otimes\sigma$ sur le terme de gauche}.}
 $\Pi\otimes V\to\tB^-$, non nulle,
alors il existe des fl\`eches $G_{\Q_p}$-\'equivariantes $V\to {\bf V}(\Pi)^\dual$
et ${\bf V}(\Pi)\to V^\dual$, non nulles.
\end{prop}
\begin{proof}
Quitte \`a multiplier notre fl\`eche initiale par $p^k$,
on peut choisir des $\O_L$-r\'eseaux $V_0$ et $\Pi_0$ de $V$ et $\Pi$, stables par $G_{\Q_p}$
et $\PP(\Q_p)$, tels que la restriction \`a $\Pi_0\otimes V_0$ soit
\`a valeurs dans $\tA^-$ (car $\Pi$ est de longueur finie comme $\PP(\Q_p)$-module topologique).
On a $\Pi_0\subset (\tA^-\otimes {\bf V}(\Pi_0))^H$ et la commutation \`a l'action
de $\matrice{1}{\Q_p}{0}{1}$ \'equivaut \`a ce que la fl\`eche 
soit $W((\Z_p[\bmu_{p^\infty}])^\flat)=\tA^+_{\Q_p}$-lin\'eaire, 
ce qui permet de l'\'etendre en une fl\`eche $\tA^+$-lin\'eaire
$\tA^-\otimes(V_0\otimes {\bf V}(\Pi_0))\to\tA^-$ qui commute \`a $G_{\Q_p}$ et \`a $\varphi$ (qui encode
l'action de $\matrice{p}{0}{0}{1}$).
On peut donc appliquer le lemme~\ref{kirp2} 
\`a $W=V_0\otimes{\bf V}(\Pi_0)$ pour en d\'eduire le r\'esultat.
\end{proof}

\begin{rema}\phantomsection\label{kirp4}
Si $\Pi$ est irr\'eductible, ${\bf V}(\Pi)$ est irr\'eductible (en effet,
$\Pi$ est isomorphe \`a $\Pi_p({\bf V}(\Pi))$ \`a des repr\'esentations de dimension
finie pr\`es, et l'irr\'eductibilit\'e de ${\bf V}(\Pi)$ r\'esulte de~\cite[rem.\,II.2.3]{gl2}).
La prop.\,\ref{kirp3} restreint donc fortement les $V$ pour lesquelles
$\Pi\otimes V$ admet une fl\`eche \'equivariante non nulle vers~$\tB^-$.
\end{rema}

\section{Correspondances de Langlands en famille}\label{EUL2}
Dans ce chapitre, on construit la repr\'esentation $\Pi(\rho_T)$ de $\GG(\Ai)$
associ\'ee \`a une $T$-repr\'esentation de $G_\Q$.
Cette repr\'esentation est obtenue en prenant un r\'eseau naturel du mod\`ele de
Whittaker du produit tensoriel restreint de repr\'esentations $\Pi_\ell(\rho_T)$
de $\GG(\Q_\ell)$ associ\'ees aux restrictions de $\rho_T$ aux $G_{\Q_\ell}$.
La construction des $\Pi_\ell(\rho_T)$ est diff\'erente selon que $\ell=p$ (cf.~\S\,\ref{EUL3})
ou $\ell\neq p$ (cf.~\S\,\ref{EUL4}).  On termine par la d\'efinition
du mod\`ele de Kirillov de $\rho_T\otimes\Pi(\rho_T^\diamond)$.

\Subsection{Quelques notations}
\subsubsection{L'espace ${\cal X}$}
Soient $T$ une $\O_L$-alg\`ebre noeth\'erienne
locale\footnote{Ce qui suit s'applique
\`a une alg\`ebre semi-locale: il suffit de l'\'ecrire comme produit
d'alg\`ebres locales et de prendre le produit des correspondances pour
chaque composante.}
compl\`ete, r\'eduite, d'id\'eal maximal ${\goth m}_T$
et de corps r\'esiduel $k_L\cong {\bf F}_q$.

Si $M$ est un $T$-module, on d\'efinit des duaux $M^\dual$ et $M^\diamond$ par:
$$M^\dual:={\rm Hom}_{\O_L}(M,\O_L),\quad M^\diamond:={\rm Hom}_T(M,T)$$
\begin{rema}\phantomsection\label{connexe}
On peut d\'ecomposer $T[\frac{1}{p}]$ sous la forme $\prod_{i\in I}\Lambda_i$, o\`u $I$ est
fini et $\Lambda_i$ est connexe. Soit $e_i\in T[\frac{1}{p}]$ l'idempotent correspondant \`a $\Lambda_i$,
et soit $T_i:=e_i T$. Alors
 $\Lambda_i=T_i[\frac{1}{p}]$, et $T\hookrightarrow \prod_iT_i$ et le quotient
est tu\'e par une puissance de $p$, ce qui permet de ramener beaucoup de questions au cas o\`u
$T$ est connexe.
\end{rema}
Soient ${\cal X}:={\rm Spec}\,T$ et  
${\cal X}':={\rm Spec}\,T[\frac{1}{p}]$. Alors ${\cal X}'$ est un ouvert de ${\cal X}$
dont les composantes connexes sont les ${\cal X}'_i:={\rm Spec}\,T_i[\frac{1}{p}]$.

Les composantes irr\'eductibles de ${\cal X}$ et ${\cal X}'$ sont en bijection avec
les id\'eaux premiers minimaux de $T$; on note ${\cal X}_{\goth a}$ et ${\cal X}'_{\goth a}$ les composantes
correspondant \`a ${\goth a}$ (i.e.~${\cal X}_{\goth a}={\rm Spec}(T/{\goth a})$)

Si ${\goth p}\in{\cal X}$, on note
$\kappa({\goth p})$ son corps r\'esiduel. On dit que ${\goth p}$ est {\it un point de ${\cal X}$} 
si $\kappa({\goth p})$ est une extension finie de $L$; ces points correspondent donc aux id\'eaux
maximaux de~$T[\frac{1}{p}]$.

\subsubsection{La repr\'esentation~$\rho_T$}\label{pathol}
 Soit $\rho_T:G_{\Q,S}\to
{\bf GL}_2(T)$ une $T$-repr\'esentation continue de~$G_{\Q,S}$.
Si $T\to\Lambda$ est un morphisme d'anneaux, on pose
$\rho_\Lambda:=\Lambda\otimes_T\rho_T$, et on note simplement
$\overline\rho_T$ la repr\'esentation $\rho_{T/{\goth m}_T}=k_L\otimes_T\rho_T$.

\vskip1mm
On dit qu'un point ${\goth p}$ de ${\cal X}$ est 
{\it $\ell$-probl\'ematique} (resp.~{\it $\ell$-pathologique}), si
la restriction de $\rho_{\kappa({\goth p})}$ \`a $G_{\Q_\ell}$ est 
$\matrice{\cyp }{0}{0}{1}\otimes\delta$ et
si celle de
$\rho_{\kappa({\goth a})}$ est $\matrice{\cyp }{*}{0}{1}\otimes\delta$, avec $*$ non scind\'e,
 pour au moins un (resp.~pour tout) id\'eal premier minimal
${\goth a}$ contenu dans ${\goth p}$. Les points $\ell$-probl\'ematiques sont contenus dans un ferm\'e
de Zariski de codimension~$\geq 1$.

On dit que ${\goth p}$ est {\it $p$-pathologique} si la restriction de $\rho_{\kappa({\goth p})}$ \`a $G_{\Q_p}$
est de la forme $\matrice{1}{*}{0}{\cyp }\otimes\delta$.

\vskip1mm
Si $\ell$ est un nombre premier, on sait associer \`a $\rho_T$ (ou plut\^ot
\`a sa restriction \`a $G_{\Q_\ell}$) une $T$-repr\'esentation $\Pi_\ell(\rho_T)$
et une $k_L$-repr\'esentation $\Pi_\ell(\overline\rho_T)$
\index{Pi@\Pip}de $\GG(\Q_\ell)$.
La recette est diff\'erente suivant que $\ell=p$ ou que $\ell\neq p$, et les sp\'ecialisations
se comportent mal aux points probl\'ematiques ce qui nous conduit \`a introduire les notions suivantes.

Si $\ell\neq p$, on note $\Pi_\ell^{\rm min}(\rho_{\kappa({\goth p})})$ le socle
de $\Pi_\ell^{\rm cl}(\rho_{\kappa({\goth p})})$; on a $\Pi_\ell^{\rm min}(\rho_{\kappa({\goth p})})=
\Pi_\ell^{\rm cl}(\rho_{\kappa({\goth p})})$ sauf si la restriction de $\rho_{\kappa({\goth p})}$ \`a $G_{\Q_\ell}$ est  
$\matrice{\cyp }{0}{0}{1}\otimes\delta$ auquel
cas le quotient est de dimension~$1$.

Si $\ell=p$, on pose $\Pi_p^{\rm min}(\rho_{\kappa({\goth p})})=
\Pi_p(\rho_{\kappa({\goth p})})$ si ${\goth p}$ n'est pas $p$-pathologique.
Si ${\goth p}$ est $p$-pathologique, $\Pi_p^{\rm min}(\rho_{\kappa({\goth p})})$
est la repr\'esentation d\'efinie dans la rem.\,\ref{pathop}.
\Subsection{La correspondance  modulo~$p$}\label{glob103}
Commen\c{c}ons par la d\'efinition de $\Pi_\ell(\overline\rho_T)$.

\vskip2mm
\noindent $\bullet$ 
Si $\ell\neq p$, il existe (\cite[th.\,4.3.1]{Em08}, \cite[th.\,5.1.5]{EH}, \cite{Helm})
une unique $k_L$-repr\'esentation $\Pi_\ell(\overline\rho_T)$ de $\GG(\Q_\ell)$
v\'erifiant:

(i) $\Pi_\ell(\overline\rho_T)$ est de socle g\'en\'erique et irr\'eductible, et le quotient par le socle
est de dimension finie sur $k_L$.

(ii) Si $\rho:G_{\Q_\ell}\to{\rm GL}_2(\O_K)$ est un rel\`evement de $k_K\otimes\overline\rho_T$, et si
$\Pi_\ell(\rho)$ est l'unique r\'eseau de $\Pi_\ell(K\otimes_{\O_K}\rho)$ tel que $k_K\otimes\Pi_\ell(\rho)$
v\'erifie (i), alors $k_K\otimes\Pi_\ell(\rho)\hookrightarrow \Pi_\ell(\overline\rho_T)$.

(iii) $\Pi_\ell(\overline\rho_T)$ est minimale relativement aux conditions (i) et (ii).

\vskip1mm
On note $\Pi_\ell^{\rm min}(\overline\rho_T)$ le socle de $\Pi_\ell(\overline\rho_T)$.

\vskip1mm
{\it Si $\overline\rho_T$ est non ramifi\'ee}, on d\'efinit
$\Pi_\ell^{\rm nr}(\overline\rho_T)$ comme $k_L\otimes_{\O_L}\Pi_\ell(\rho)$
pour n'importe quel rel\`evement non ramifi\'e $\rho$ de $\overline\rho_T$.
On a 
$$\Pi^{\rm min}_\ell(\overline\rho_T)\hookrightarrow
\Pi_\ell^{\rm nr}(\overline\rho_T)\hookrightarrow \Pi_\ell(\overline\rho_T)$$

\vskip2mm
\noindent $\bullet$ 
Si $\ell=p$, il existe une unique $k_L$-repr\'esentation $\Pi_p(\overline\rho_T)$ de $\GG(\Q_p)$
v\'erifiant:

(i) $\Pi_p(\overline\rho_T)$ n'a pas de sous-$k_L[\GG(\Q_p)]$-module non nul, de dimension finie sur $k_L$.

(ii) ${\bf V}(\Pi_p(\overline\rho_T))=\overline\rho_T$.

(iii) $\Pi_p(\overline\rho_T)$ est maximale relativement aux conditions (i) et (ii).

\vskip1mm
On note $\Pi^{\rm min}_p(\overline\rho_T)$ la repr\'esentation minimale v\'erifiant (ii).

\begin{rema}\phantomsection\label{patibulaire}
Dans tous les cas (y compris $\ell=p$), le quotient 
$\Pi_\ell(\overline\rho_T)/\Pi^{\rm min}_\ell(\overline\rho_T)$ est de dimension finie
sur $k_L$.  En fait, on a 

$$\Pi^{\rm min}_\ell(\overline\rho_T)=\Pi_\ell(\overline\rho_T)$$
sauf dans les cas suivants:

$\bullet$ $\ell\neq p$ et $\overline\rho_T\cong \matrice{\cyp }{*}{0}{1}\otimes\delta$,
o\`u le quotient est de dimension $1$ ou $2$ (voire $3$ si $p=2$), suivant la valeur de $*$ et la classe de
$\ell$ modulo~$p$.

$\bullet$ $\ell=p$ et $\overline\rho_T\cong \matrice{1}{*}{0}{\cyp }\otimes\delta$,
o\`u le quotient est de dimension $2$ ($3$ si $p=2$).
\end{rema}

\Subsection{Le cas $\ell=p$}\label{EUL3}
 Le $T$-module
$\Pi_p(\rho_T)$ \index{Pi@\Pip}est la boule unit\'e d'un $L$-banach,
le $\O_L$-dual $\Pi_p^\dual(\rho_T)$
de $\Pi_p(\rho_T)$ est un $T$-module compact et m\^eme un $T[[K]]$-module de type fini,
si $K$ est un sous-groupe ouvert compact de $\GG(\Q_p)$.

Soit $T_n:=T/{\goth m}_T^n$; c'est une $\O_L$-alg\`ebre de longueur finie.
Soit $\rho_n=T_n\otimes_T\rho_T^\diamond$, et soit $\rho_n^\vee$ son dual
de pontryagin. On a une suite exacte $0\to\rho_1^{\oplus r_n}\to \rho_n\to\rho_{n-1}\to 0$,
et donc une suite exacte $0\to \rho_{n-1}^\vee\to \rho_n^\vee\to (\rho_1^\vee)^{\oplus r_n}\to 0$.
Posons\footnote{Par construction, $\Pi_p(\rho_n^\vee)$ est la plus petite repr\'esentation $\Pi$
de $\GG(\Q_p)$ v\'erifiant ${\bf V}(\Pi)=\rho_n^\vee$.}
$$\Pi_p(\rho_n^\vee):=(D(\rho_n^\vee)\boxtimes\piqp)/(D(\rho_n^\vee)^\sharp\boxtimes\piqp)$$
Si $\overline\rho_T$ n'est pas de la forme $\matrice{1}{*}{0}{\epsilon}$, alors
on a une suite exacte
$$0\to \Pi_p(\rho_{n-1}^\vee)\to \Pi_p(\rho_n^\vee)\to \Pi_p(\rho_1^\vee)^{\oplus r_n}\to 0$$
Si $\overline\rho_T$ est de la forme ci-dessus, on a une suite exacte
$$0\to \Pi_p(\rho_{n-1}^\vee)'\to \Pi_p(\rho_n^\vee)\to \Pi_p(\rho_1^\vee)^{\oplus r_n}\to 0$$
o\`u $\Pi_p(\rho_{n-1}^\vee)'$ contient $\Pi_p(\rho_{n-1}^\vee)$ et le quotient est
de longueur finie sur $\O_L$.
Soit alors $\Pi_p((\rho_T^\diamond)^\vee):=\varinjlim_n \Pi_p(\rho_n^\vee)$, et soit
($T_p$ d\'esigne le module de Tate)
$$\Pi_p(\rho_T):=T_p(\Pi_p((\rho_T^\diamond)^\vee))$$
En tant que $T$-module, $\Pi_p(\rho_n^\vee)$ est une somme directe de copies de $T_n^\vee$
(au moins dans le cas g\'en\'erique; dans le cas exceptionnel, c'est le cas \`a un $\O_L$-module
de longueur finie pr\`es). Il s'ensuit
que $\Pi_p(\rho_T)$ est une somme directe compl\'et\'ee de copies de $\check{T}:={\rm Hom}_{\O_L}(T,\O_L)$,
et donc que la $T$-torsion est dense dans $\Pi_p(\rho_T)$ (car elle est dense dans
$\check{T}$).

\begin{rema}\phantomsection\label{cano1.31}
(i)
Soit $\rho_T^\clubsuit=(\rho_T^\diamond)^\dual$ (c'est aussi le module de Tate
de $(\rho_T^\diamond)^\vee$, dual de Pontryagin de $\rho_T^\diamond$).
La prop.\,\ref{cano1} s'\'etend \`a ce cadre: on dispose d'une injection $\PP(\Q_p)$-\'equivariante
$$\Pi_p(\rho_T)\hookrightarrow (\tA^-\wotimes\rho_T^\clubsuit)^H.$$
(On a $\Pi_p(\rho_n^\vee)\hookrightarrow (\tA^-\wotimes\rho_n^\vee)$, et donc
$\Pi_p((\rho_T^\diamond)^\vee)\hookrightarrow (\tA^-\wotimes(\rho_T^\diamond)^\vee)$,
et on conclut en prenant le module de Tate des deux membres.)

(ii)
Le caract\`ere central de $\Pi_p(\rho_T)$ est 
$$\omega_{\Pi_p}=(x|x|_p)^{-1}\det\rho_T.$$

(iii)
Si ${\goth p}$ est un point de ${\cal X}$,
alors $\Pi_p(\rho_T)[{\goth p}]\hookrightarrow\Pi_p(\rho_{\kappa({\goth p})})$ avec
\'egalit\'e sauf, peut-\^etre, si ${\goth p}$ est $p$-pathologique o\`u l'on peut juste assurer
que $\Pi^{\min}_p(\rho_{\kappa({\goth p})})\hookrightarrow \Pi_p(\rho_T)[{\goth p}]$.

(iv) On a $\Pi_p(\rho_T)^\dual\hookrightarrow D(\rho_T^\diamond(1))^\natural\boxtimes\piqp$,
avec \'egalit\'e sauf, peut-\^etre, si $\overline\rho_T\cong\matrice{1}{*}{0}{\cyp }$
en restriction \`a $G_{\Q_p}$, auquel cas, le quotient est de type fini sur $T$.
\end{rema}

{
\subsubsection{Le mod\`ele de Kirillov}\label{glob7}
Soit $\check{T}:={\rm Hom}_{\O_L}(T,\O_L)$.
On peut utiliser l'injection
$\Pi_p(\rho_T)\hookrightarrow (\tA^-\wotimes\rho_T^\clubsuit)^H$
et l'accouplement tautologique $\langle\ ,\rangle:\rho_T^\diamond\times \rho_T^\clubsuit\to \O_L$
pour fabriquer un mod\`ele de Kirillov 
(non n\'ecessairement injectif), $T[\PP(\Q_p)\times G_{\Q_p}]$-\'equivariant
$${\cal K}_p:\rho_T^\diamond\otimes_T\Pi_p(\rho_T)\to {\cal C}(\Q_p^\dual,\check{T}\otimes_{\Z_p}\tA^-)$$
v\'erifiant:
$$\big\langle{\cal K}_{p,\check v\otimes v}(x),\lambda\big\rangle
:=\big\langle \lambda\check v,\matrice{x}{0}{0}{1}\cdot v\big\rangle,
=\big\langle \check v,\matrice{x}{0}{0}{1}\cdot \lambda v\big\rangle,
\quad{\text{ si $\lambda\in T$, $\check v\in \rho_T^\diamond$ et $v\in \Pi_p(\rho_T)$}}$$
Soit alors ${\cal K}^\GG_p$ l'application $T[\GG(\Q_p)\times G_{\Q}]$-\'equivariante\footnote{Avec
action naturelle de $T$ sur le membre de droite, 
et les actions de $\PP(\Q_p)$ et $G_{\Q_p}$ du \no\ref{kirp0}.}:
\begin{align*}
{\cal K}^\GG_p:\rho_T^\diamond\otimes_T\Pi_p(\rho_T)\to 
{\rm Ind}_{\PP(\Q_p)\times G_{\Q_p}}^{\GG(\Q_p)\times G_\Q}{\cal C}(\Q_p^\dual,\O_L\otimes\tA^-)
\quad{\cal K}^\GG_{p,v\otimes\check v}
(g,\sigma)={\cal K}_{p,(g,\sigma)\cdot(v\otimes\check v)}
\end{align*}
\begin{prop}\phantomsection\label{glob4}
Si $\overline\rho_T$ est absolument irr\'eductible, 
${\cal K}^\GG_p$ est une isom\'etrie sur son image.
\end{prop}
\begin{proof}
Il suffit de prouver que ${\cal K}^\GG_p$ est injective modulo ${\goth m}_L$.
Le noyau est un $T[\GG(\Q_p)\times G_{\Q}]$-module, et il suffit donc de prouver que
${\cal K}^\GG_p$ est injective sur le $T[\GG(\Q_p)\times G_{\Q}]$-socle de 
$M:=k_L\otimes_{\O_L}(\rho_T^\diamond\otimes_T\Pi_p(\rho_T))$.

Le $T[\GG(\Q_p)\times G_{\Q}]$-socle de $M$ est le $\GG(\Q_p)\times G_{\Q}$-socle de $M[{\goth m}_T]$.
Or 
$M[{\goth m}_T]=(\rho_T^\diamond/{\goth m}_T)\otimes(k_L\otimes_{\O_L}\Pi_p(\rho_T))[{\goth m}_T]
\hookrightarrow
\overline\rho_T^\vee\otimes \Pi_p(\overline\rho_T)$ et son
$\GG(\Q_p)\times G_{\Q}$-socle
n'est autre
que $\overline\rho_T^\vee\otimes {\rm soc}_{\GG(\Q_p)}(\Pi_p(\overline\rho_T))$ 
car $\overline\rho_T^\vee$ est suppos\'ee
irr\'eductible.  Mais ${\rm soc}_{\GG(\Q_p)}(\Pi_p(\overline\rho_T^\vee))$ est
une somme directe de repr\'esentations irr\'eductibles de dimension infinie,
et on est ramen\'e \`a v\'erifier que
${\cal K}_p:V(\pi)^\vee\otimes\pi\to {\cal C}(\Q_p^\dual,\tE^-)$ n'est pas identiquement nulle
si $\pi$
est une repr\'esentation irr\'eductible de $\GG(\Q_p)$, de dimension infinie.
Ceci est clair. 
\end{proof}

}

\Subsubsection{Un r\'esultat de densit\'e}
\begin{theo}\phantomsection\label{intox2}
Soit $X\subset{\cal X}$. Les conditions suivantes sont \'equivalentes:

{\rm (i)} $X$ est zariski-dense dans ${\cal X}$.

{\rm (ii)} $\sum_{x\in X}\Pi_p(\rho_T)[{\goth p}_x]$ est dense dans $\Pi_p(\rho_T)$.
\end{theo}
\begin{proof}
Par dualit\'e, on est ramen\'e \`a prouver que 
$\cap_{x\in X} {\goth p_x}\cdot (D(\rho_T^\diamond(1))^\natural\boxtimes\piqp)=0$
si et seulement si $X$ est zariski-dense. Cela r\'esulte de ce que 
$D(\rho_T^\diamond(1))^\natural\boxtimes\piqp$ est un $T$-module compact, sans $T$-torsion.
\end{proof}

On dit qu'un point ${\goth p}$ de ${\cal X}$ est sympathique si 
la restriction \`a $G_{\Q_p}$ de $\rho_{\kappa({\goth p})}$
est irr\'eductible, de Rham, \`a poids de Hodge-Tate distincts.
De mani\`ere \'equivalente, ${\goth p}$ est sympathique si et seulement si
$\Pi_p(\rho_{\kappa({\goth p})})^{\rm alg}$ est dense dans $\Pi_p(\rho_{\kappa({\goth p})})$.
\begin{coro}\phantomsection\label{intox9}
Si les points sympathiques sont zariski-denses dans ${\cal X}$, alors
$(\Pi_p(\rho_T)[\frac{1}{p}])^{\rm alg}$ est dense dans $\Pi_p(\rho_T)[\frac{1}{p}]$.
\end{coro}

\Subsection{Le cas $\ell\neq p$}\label{EUL4}
\subsubsection{Le $\PP(\Q_\ell)$-module $\widetilde{\rm LC}_c(\Q_\ell^\dual,\Lambda)$}\label{EH3}
Si $\Lambda$ est un $\Z_p$-module, on note 
$\widetilde{\rm LC}(\Q_\ell^\dual,\Lambda)$
l'espace 
$$\widetilde{\rm LC}(\Q_\ell^\dual,\Lambda):=
\left\{\phi:\Q_\ell^\dual\to\Z[\bmu_{\ell^\infty}]\otimes\Lambda,\hskip2mm 
 {\text{\parbox{4.6cm}{\Small localement
constantes,\\ \`a support compact dans $\Q_\ell$,\\ $\sigma_a(\phi(x))=\phi(ax)$ 
$\forall a\in\Z_\ell^\dual$, $x\in\Q_\ell^\dual$}}}\right\}$$ 
o\`u $\sigma_a$ agit sur $\Z[\bmu_{\ell^\infty}]$.
On note $$\widetilde{\rm LC}_c(\Q_\ell^\dual,\Lambda)\subset \widetilde{\rm LC}(\Q_\ell^\dual,\Lambda)$$
le sous-espace des fonctions \`a support compact dans $\Q_\ell^\dual$.
On munit $\widetilde{\rm LC}(\Q_\ell^\dual,\Lambda)$ de l'action de $\PP(\Q_\ell)$ donn\'ee
par 
$$\big(\matrice{a}{b}{0}{1}\phi\big)(x):={\bf e}_\ell(bx)\phi(ax)$$
Le sous-espace $\widetilde{\rm LC}_c(\Q_\ell^\dual,\Lambda)$
de $\widetilde{\rm LC}(\Q_\ell^\dual,\Lambda)$ est stable par $\PP(\Q_\ell)$.

\begin{lemm}\phantomsection\label{eh3}
Soient $\Lambda\subset\Lambda'$ des $\Z_p$-modules.
Si $M$ est un sous-$\Z_p$-module de $\widetilde{\rm LC}(\Q_\ell^\dual,\Lambda')$
stable par $\PP(\Q_\ell)$, 
et si $M\cap \widetilde{\rm LC}_c(\Q_\ell^\dual,\Lambda')\subset
\widetilde{\rm LC}_c(\Q_\ell^\dual,\Lambda)$, alors
$M\subset \widetilde{\rm LC}(\Q_\ell^\dual,\Lambda)$
\end{lemm}
\begin{proof}
Si $\phi\in M$, alors $\big(\matrice{1}{\ell^{-n}}{0}{1}-1\big)\phi(x)=({\bf e}_\ell(\ell^{-n}x)-1)\phi(x)
\in \Lambda$, pour tout $n\in\Z$. 
Or ${\bf e}_\ell(\ell^{-n}x)-1$ est inversible dans $\Z_p\otimes\Z[\bmu_{\ell^\infty}]$ si $v_\ell(x)\leq n-1$.
Le r\'esultat s'en d\'eduit.
\end{proof}
\subsubsection{Mod\`ele de Kirillov et mod\`ele de Whittaker}
Si $\pi$ est une $T$-repr\'esentation $\pi$ de $\GG(\Q_\ell)$, un {\it mod\`ele de Kirillov
pour $\pi$}
est une injection $\PP(\Q_\ell)$-\'equivariante ${\cal K}:\pi\to \widetilde {\rm LC}(\Q_\ell^\dual,T)$.
Il arrive (rarement) que l'on dise que ${\cal K}$ est un mod\`ele de Kirillov m\^eme
si ${\cal K}$ n'est pas injectif.
\begin{rema}\phantomsection\label{whit1}
(i) Posons
$$\widetilde{\rm LC}(\GG(\Q_\ell), T):={\rm Ind}_{\PP(\Q_\ell)}^{\GG(\Q_\ell)}
\widetilde{\rm LC}(\Q_\ell^\dual,T)$$
Comme 
${\rm LC}(\Q_\ell^\dual,\Z[\bmu_\ell]\otimes_{\Z} T)=
\Z[\bmu_\ell]\otimes_{\Z}\widetilde {\rm LC}(\Q_\ell^\dual,T)$ est l'induite
de $\UU(\Q_\ell)$ \`a $\PP(\Q_\ell)$ du caract\`ere ${\bf e}_\ell$,
on a $$\Z_p[\bmu_\ell]\otimes_{\Z}\widetilde{\rm LC}(\GG(\Q_\ell), T)=
{\rm Ind}_{\UU(\Q_\ell)}^{\GG(\Q_\ell)}{\bf e}_\ell$$
(induite \`a coefficients dans $\Z_p[\bmu_\ell]\otimes_{\Z}T$).

(ii) Si ${\cal K}:\pi\to \widetilde {\rm LC}(\Q_\ell^\dual,T)$ est un mod\`ele de Kirillov
d'une $T$-repr\'esentation $\pi$ de $\GG(\Q_\ell)$, on peut \'etendre ${\cal K}$
en un mod\`ele $\GG(\Q_\ell)$-\'equivariant ({\it mod\`ele de Whittaker})
$${\cal K}^\GG:\pi\to 
\widetilde{\rm LC}(\GG(\Q_\ell), T),\quad
{\text{avec ${\cal K}^\GG_v(g)={\cal K}_{g\cdot v}(1)$.}}$$
On r\'ecup\`ere ${\cal K}_v$ par ${\cal K}_v(x)={\cal K}^\GG_v\big(\matrice{x}{0}{0}{1}\big)$.

(iii) Le mod\`ele de Kirillov est plus pertinent pour les applications arithm\'etiques \`a
cause de son lien avec les $q$-d\'eveloppements de formes modulaires; le mod\`ele de Whittaker
est plus facile \`a utiliser pour les purs probl\`emes de repr\'esentations car il
est $\GG(\Q_\ell)$-\'equivariant et pas seulement $\PP(\Q_\ell)$-\'equivariant (ou
$\BB(\Q_\ell)$-\'equivariant en pr\'esence d'un caract\`ere central).

Si ${\cal K}_v$ n'est pas divisible par ${\goth m}_L$ dans $\widetilde{\rm LC}(\Q_\ell^\dual,T)$,
alors ${\cal K}^\GG_v$ n'est pas divisible par ${\goth m}_L$ dans $\widetilde{\rm LC}(\GG(\Q_\ell),T)$,
mais la r\'eciproque peut \^etre fausse  et donc $\pi$ peut \^etre $p$-satur\'ee dans
$\widetilde{\rm LC}(\GG(\Q_\ell),T)$ sans l'\^etre dans $\widetilde{\rm LC}(\Q_\ell^\dual,T)$ 
(c'est une des raisons qui font que le mod\`ele de Whittaker
est plus commode pour les purs probl\`emes de repr\'esentations).
\end{rema}

\subsubsection{Une construction possible de $\Pi_\ell(\rho_T)$}
Notre construction de $\Pi_\ell(\rho_T)$, pour $\ell\neq p$, est une variante de celle
de Emerton et Helm~\cite{EH}, (cf.~\no\ref{EH2} pour 
la comparaison avec la th\'eorie de Emerton-Helm). 
On demande \`a $\Pi_\ell(\rho_T)$ d'\^etre une $T$-repr\'esentation lisse de $\GG(\Q_\ell)$, sans $T$-torsion,
interpolant la correspondance classique au sens que:

$\bullet$  $\kappa({\goth a})\otimes_T\Pi_\ell(\rho_T)
=\Pi_\ell^{\rm cl}(\rho_{\kappa({\goth a})})$
si ${\goth a}$ est un id\'eal premier minimal de $T$.

$\bullet$ $\kappa({\goth p})\otimes_T\Pi_\ell(\rho_T)
=\Pi_\ell^{\rm cl}(\rho_{\kappa({\goth p})})$,
si ${\goth p}$ est un point de ${\cal X}$,
sauf si ${\goth p}$ est $\ell$-probl\'ematique o\`u
$\kappa({\goth p})\otimes_T\Pi_\ell(\rho_T)
=\Pi_\ell^{\rm min}(\rho_{\kappa({\goth p})})$.

\vskip1mm
La mani\`ere la plus rapide de construire un tel $\Pi_\ell(\rho_T)$ est de 
commencer par
d\'efinir $\Pi_\ell^{\rm cl}(\rho_{{\rm Fr}(T)})$ 
comme $\prod_{\goth a}\Pi_\ell^{\rm cl}(\rho_{\kappa({\goth a})})$,
le produit portant sur les id\'eaux minimaux de $T$ (l'anneau total des fractions
${\rm Fr}(T)$ est le produit des $\kappa({\goth a})$) identifi\'e \`a son mod\`ele de Whittaker
(cf.~rem.\,\ref{whit1}). Il suffit alors de
poser $$\Pi_\ell(\rho_T):=
\Pi_\ell(\rho_{{\rm Fr}(T)})\cap \widetilde{\rm LC}(\GG(\Q_\ell),T)$$
l'intersection \'etant prise
\`a l'int\'erieur de
$\widetilde{\rm LC}(\GG(\Q_\ell), {\rm Fr}(T))$.

Comme cette d\'efinition est peu explicite, nous allons en donner une variante avec
laquelle il est plus facile de faire des calculs. En particulier,
nous aurons besoin d'une description pr\'ecise
dans le cas non ramifi\'e, et nous traitons ce cas en d\'etail
(${\rm n}^{\rm os}$~\ref{EH6} et~\ref{EH7}, les calculs sont parfaitement classiques).
{

\subsubsection{Mod\`ele de Kirillov des induites paraboliques}\label{EH6}
Soient $\chi_1,\chi_2:\Q_\ell^\dual\to T^\dual$ des caract\`eres continus.
Soit $I(\chi_2,\chi_1)$ l'induite parabolique 
$$I(\chi_2,\chi_1):={\rm Ind}_B^G\,(\chi_1\otimes|\ |_\ell^{-1}\chi_2)$$ 
d\'efinie par:
$$I(\chi_2,\chi_1)=\big\{\phi:\GG(\Q_\ell)\to T,\ 
\phi\big(\matrice{a}{b}{0}{d}x\big)=\chi_1(a)\chi_2(d)|d|_\ell^{-1}\phi(x)\big\}$$
muni de l'action $(g\star\phi)(x)=\phi(xg)$.
Nous allons \'etudier le mod\`ele de Kirillov de $I(\chi_2,\chi_1)$.

Si $\phi\in I(\chi_2,\chi_1)$, on note $\bar\phi:\Q_\ell\to T$ la
fonction 
$$x\mapsto \bar\phi(x)=\phi\big(\matrice{0}{1}{-1}{0}\matrice{1}{-x}{0}{1}\big)=
\phi\big(\matrice{0}{1}{-1}{x}\big)$$
L'identit\'e
$$\matrice{0}{1}{-1}{x}\matrice{a}{b}{c}{d}=\matrice{\frac{ad-bc}{a-cx}}{-c}{0}{a-cx}
\matrice{0}{1}{-1}{\frac{dx-b}{a-cx}}$$
fournit le r\'esultat suivant.
\begin{lemm}\phantomsection\label{eh8}
L'application $\phi\mapsto\bar\phi$ induit un isomorphisme 
$$I(\chi_2,\chi_1)
\overset{\sim}{\to}
{\rm LC}_c(\Q_\ell,T)\oplus T\cdot \phi_\infty,\quad
{\text{o\`u $\phi_\infty:={\bf 1}_{\Q_\ell\moins\ell\Z_\ell}|\ |_\ell^{-1}\chi_2\chi_1^{-1}$}}$$
et l'action de $G$ sur l'espace de droite est donn\'ee par
$$\matrice{a}{b}{c}{d}\bar\phi(x)=\chi_1(\tfrac{ad-bc}{a-cx})
|a-cx|_\ell^{-1}\chi_2(a-cx)\bar\phi\big(\tfrac{dx-b}{a-cx}\big)$$
\end{lemm}

Si $\phi\in I(\chi_2,\chi_1)$, on note ${\cal K}_\phi$ la fonction d\'efinie
par
\begin{align*}
{\cal K}_\phi(y):=\tfrac{1}{G(\chi_2)}\int_{\Q_\ell}\phi\big(\matrice{0}{1}{-1}{0}\matrice{y}{x}{0}{1}\big)
{\bf e}_\ell(-x)\,dx
\end{align*}
L'identit\'e
$$\matrice{0}{1}{-1}{0}\matrice{y}{x}{0}{1}=
\matrice{-1}{0}{0}{y}\matrice{0}{1}{-1}{0}\matrice{1}{x/y}{0}{1}$$
nous donne
\begin{align*}
{\cal K}_\phi(y)={\cal K}_{\bar\phi}(y)
:=\tfrac{1}{G(\chi_2)}\chi_1(-1)\chi_2(y)|y|_\ell^{-1}\int_{\Q_\ell}\bar\phi(-x/y)
{\bf e}_\ell(-x)\,dx
\end{align*}
L'application $\phi\mapsto {\cal K}_\phi$ ou $\bar\phi\mapsto {\cal K}_{\bar\phi}$ 
est le mod\`ele de Kirillov de 
$I(\chi_2,\chi_1)$. On note ${\cal K}(\chi_1,\chi_2)$
l'image dans
$\widetilde{\rm LC}(\Q_\ell,T)$ de l'application $\phi\mapsto{\cal K}_\phi$.

\begin{lemm}\phantomsection\label{eh9}
\'Ecrivons $\chi_2\chi_1^{-1}=\eta\,{\rm nr}_\beta$ avec $\eta(p)=1$ 
et ${\rm nr}_\beta(x)=\beta^{v_\ell(x)}$.
Alors\footnote{
Avec la convention $\frac{\chi_1-\beta\chi_2}{1-\beta}(x)=\chi_1(x)(1+\beta+\cdots+\beta^{v_\ell(x)})$
qui permet d'inclure le cas o\`u $\beta-1$ n'est pas inversible.}
$${\cal K}_{\phi_\infty}=\begin{cases}
\frac{\chi_2(-1)}{G(\chi_2)}\big(\tfrac{\ell\beta-1}{\ell\beta}
\,{\bf 1}_{\Z_\ell}\,\frac{\chi_1-\beta\chi_2}{1-\beta}
-\frac{1}{\ell}{\bf 1}_{\ell^{-1}\Z_\ell}\chi_2\big)
 &{\text{$\eta=1$,}}\\
\beta^{-N}\frac{G(\chi_2\chi_1^{-1})G(\chi_1)}{G(\chi_2)}
\chi_1(-1){\bf 1}_{\ell^{-N}\Z_\ell}\frac{\chi_1}{G(\chi_1)}
&{\text{$\eta$ ramifi\'e, de conducteur $\ell^N$.}}
\end{cases}$$
\end{lemm}
\begin{proof}
On a
\begin{align*}
{\cal K}_{\phi_\infty}(y)&=\tfrac{1}{G(\chi_2)}\chi_1(-1)\chi_2(y)|y|_\ell^{-1}
\int_{\Q_\ell\moins\ell y\Z_\ell}(|\ |_\ell^{-1}\chi_2\chi_1^{-1})(-x/y){\bf e}_\ell(-x)\,dx\\ &=
\chi_2(-1)\tfrac{1}{G(\chi_2)}\chi_1(y)
\int_{\Q_\ell\moins\ell y\Z_\ell}|x|_\ell^{-1}\chi_2\chi_1^{-1}(x){\bf e}_\ell(-x)\,dx
\end{align*}
On peut d\'ecouper cette int\'egrale en $\sum_{n\leq v_\ell(y)}\int_{\ell^n\Z_\ell^\dual}$.

\vskip1mm
\noindent $\bullet$
Si $\eta$
est ramifi\'e de conducteur $\ell^N$, la seule de ces int\'egrales qui est non nulle 
est 
$$\int_{\ell^{-N}\Z_\ell^\dual}\chi_2\chi_1^{-1}(x){\bf e}_\ell(-x)\,dx=
\beta^{-N}\sum_{x\in (\Z/\ell^N)^\dual}\eta(x){\bf e}_\ell(-x/\ell^N)=
\beta^{-N}\eta(-1)G(\eta)$$
Comme $\ell^{-N}\Z_\ell^\dual\subset\Q_\ell\moins\ell y\Z_\ell$, si et seulement
$\Z_\ell^\dual\subset\Q_\ell\moins\ell^{N+1} y\Z_\ell$, et donc si et seulement si $v_\ell(y)+N+1\geq 1$,
on a
$\alpha=\beta^{-N}\eta(-1)G(\eta)G(\chi_2)^{-1}{\bf 1}_{\ell^{-N}\Z_\ell}\chi_1$.
Par ailleurs, $G(\eta)G(\chi_2)^{-1}G(\chi_1)\in\Q^\dual$ et est une unit\'e de $\Z_p$;
il s'ensuit que 
$$\alpha=\alpha_0{\bf 1}_{\ell^{-N}\Z_\ell}\tfrac{\chi_1}{G(\chi_1)},\quad
{\text{o\`u $\alpha_0=\eta(-1)\tfrac{G(\chi_2\chi_1^{-1})G(\chi_1)}{G(\chi_2)}\beta^{-N}\in T^\dual$.}}$$
On en d\'eduit la seconde formule.

\vskip1mm
\noindent $\bullet$
Si $\chi_2\chi_1^{-1}={\rm nr}_\beta$, alors
$$\int_{\ell^n\Z_\ell^\dual}=\begin{cases}
0 &{\text{si $n\leq -2$,}}\\
 -\ell^{-1} \beta^{-1} &{\text{ si $n=-1$,}}\\
(1-\tfrac{1}{\ell})\beta^n&{\text{ si $n\geq 0$.}}
\end{cases}$$
Si $v_\ell(y)=-1$, seul $n=-1$ contribue, tandis que si $v_\ell(y)\geq 0$, on obtient
\begin{align*}
\chi_1(y)\big(-\ell^{-1}\beta^{-1}+\sum_{n=0}^{v_\ell(y)}(1-\tfrac{1}{\ell})\beta^n\big)&=
\chi_1(y)\big(-\ell^{-1}\beta^{-1}+(1-\tfrac{1}{\ell})\tfrac{1- \beta\chi_2\chi_1^{-1}(y)}{1-\beta}\big)\\
&=
\tfrac{\ell\beta-1}{\ell\beta(1-\beta)}\chi_1(y)-\tfrac{(\ell-1)\beta^2}{\ell\beta(1-\beta)}\chi_2(y)
\end{align*}
D'o\`u, en utilisant le fait que $\chi_1=\beta\chi_2$ sur $\ell^{-1}\Z_\ell$,
$$
{\cal K}_{\phi_\infty}=
\tfrac{\chi_2(-1)}{G(\chi_2)}\big({\bf 1}_{\Z_\ell}\cdot\big(\tfrac{\ell\beta-1}{\ell\beta(1-\beta)}\chi_1-
\tfrac{(\ell-1)\beta^2}{\ell\beta(1-\beta)}\chi_2\big)-\tfrac{1}{\ell}{\bf 1}_{\ell^{-1}\Z_\ell^\dual}\chi_2\big)
$$
On en d\'eduit la premi\`ere formule en utilisant 
l'identit\'e ${\bf 1}_{\ell^{-1}\Z_\ell^\dual}+{\bf 1}_{\Z_\ell}={\bf 1}_{\ell^{-1}\Z_\ell}$
et l'identit\'e
$$(\ell\beta-1)\chi_1-(\ell-1)\beta^2\chi_2=(\ell\beta-1)(\chi_1-\beta\chi_2)-\beta(1-\beta)\chi_2\qedhere$$
\end{proof}

\begin{prop}\phantomsection\label{eh10}
{\rm (i)} Si $\eta\neq 1$, alors
$${\cal K}(\chi_1,\chi_2)=\widetilde{\rm LC}_c(\Q_\ell^\dual,T)
\oplus T\cdot {\bf 1}_{\Z_\ell}\tfrac{\chi_1}{G(\chi_1)}
\oplus T\cdot {\bf 1}_{\Z_\ell}\tfrac{\chi_2}{G(\chi_2)}$$

{\rm (ii)} Si $\eta=1$ {\rm(et donc $G(\chi_1)=G(\chi_2)$)}, alors
$${\cal K}(\chi_1,\chi_2)=\widetilde{\rm LC}_c(\Q_\ell^\dual,T)
\oplus T\cdot {\bf 1}_{\Z_\ell}\tfrac{(\ell\beta-1)}{\beta-1}\big(\tfrac{\chi_1}{G(\chi_1)}-\beta\tfrac{\chi_2}{G(\chi_2)}\big)
\oplus T\cdot {\bf 1}_{\Z_\ell}\tfrac{\chi_2}{G(\chi_2)}$$
\end{prop}
\begin{proof}
Le changement de variable $x=-yu$ nous donne
\begin{align*}
{\cal K}_{\bar\phi}(y)=
\chi_1(-1)\tfrac{\chi_2(y)}{G(\chi_2)}\int_{\Q_\ell}\bar\phi(u){\bf e}_\ell(yu)\,du
\end{align*}
Comme la transform\'ee de Fourier induit un isomorphisme
de ${\rm LC}_c(\Q_\ell,T)$ sur $\widetilde{\rm LC}_c(\Q_\ell,T)$,
on en d\'eduit que ${\cal K}(\chi_1,\chi_2)$ est la somme directe de
$\tfrac{\chi_2}{G(\chi_2)}\widetilde{\rm LC}_c(\Q_\ell,T)$ et de $T\cdot{\cal K}_{\phi_\infty}$.
Comme la multiplication par $\tfrac{\chi_2}{G(\chi_2)}$
est bijective sur $\widetilde{\rm LC}_c(\Q_\ell^\dual,T)$, le lemme~\ref{eh9} permet de conclure.
\end{proof}
\begin{rema}
Si $\chi_2=|\ |_\ell\chi_1$, et donc $\eta=1$ et $\beta=\ell^{-1}$, le facteur $\ell\beta-1$ s'annule,
ce qui traduit le fait que le mod\`ele de Kirillov n'est pas injectif (ce qui est rassurant car
${\rm Ind}_B^G\,(\chi_1\otimes\chi_1)$ contient le caract\`ere $\chi_1$, et $\widetilde{\rm LC}(\Q_\ell^\dual,T)$
ne poss\`ede pas de sous-module de type fini stable par $\PP(\Q_\ell)$).
\end{rema}

\subsubsection{Induites non ramifi\'ees}\label{EH7}
On suppose dans ce qui suit que $\ell\notin S$,
et donc que $\chi_1$ et $\chi_2$ sont non ramifi\'es
(et $G(\chi_1)=G(\chi_2)=1$ et $\chi_1(-1)=\chi_2(-1)=1$).
\begin{lemm}\phantomsection\label{eh11}
$\alpha:=\phi_\infty+{\bf 1}_{\ell\Z_\ell}$ est invariante par $\GG(\Z_\ell)$
et ${\cal K}_\alpha=\frac{\ell\beta-1}{\ell\beta}\,{\bf 1}_{\Z_\ell}\frac{\chi_1-\beta\chi_2}{1-\beta}$.
\end{lemm}
\begin{proof}
Soit $\alpha':={\bf 1}_{\ell\Z_\ell}$, et donc $\alpha=\phi_\infty+\alpha'$.
Pour v\'erifier l'invariance sous l'action de $\GG(\Z_\ell)$, il suffit de v\'erifier
celle par $w=\matrice{0}{1}{1}{0}$, par $\matrice{1}{b}{0}{1}$ pour $b\in\Z_\ell$ et
par $\matrice{a}{0}{0}{1}$ pour $a\in\Z_\ell^\dual$.
Pour cela, on utilise la formule du lemme~\ref{eh8}. On a $w\cdot\phi_\infty={\bf 1}_{\Z_\ell}$
et $w\cdot{\bf 1}_{\Z_\ell^\dual}={\bf 1}_{\Z_\ell^\dual}$, et comme $\alpha=\phi_\infty+{\bf 1}_{\Z_\ell}-{\bf 1}_{\Z_\ell^\dual}$
on en d\'eduit l'invariance par $w$; celle par $\matrice{a}{0}{0}{1}$ est imm\'ediate
(et $\phi_\infty$ et $\alpha'$ sont invariantes); pour celle par
$\matrice{1}{b}{0}{1}$, on r\'e\'ecrit $\alpha'+\phi_\infty$ sous la forme ${\bf 1}_{\Z_\ell}+
{\bf 1}_{\Q_\ell\moins\Z_\ell}\phi_\infty$, et les deux morceaux sont alors invariants.

La transform\'ee de Fourier de $\alpha'$ est
$\ell^{-1}{\bf 1}_{\ell^{-1}\Z_\ell}$
et donc ${\cal K}_{\alpha'}=\ell^{-1}{\bf 1}_{\ell^{-1}\Z_\ell}\chi_2$.
Le r\'esultat est alors une cons\'equence directe du lemme~\ref{eh9}.
\end{proof}

\begin{prop}\phantomsection\label{eh13}
Si $\chi_2\chi_1^{-1}={\rm nr}_\beta$, on peut prolonger {\rm (de mani\`ere unique)}
l'action de $\GG(\Q_\ell)$ sur ${\cal K}(\chi_1,\chi_2)$ \`a
$$\Pi_\ell(\chi_1,\chi_2):=\widetilde{\rm LC}_c(\Q_\ell^\dual,T)\oplus
T\cdot {\bf 1}_{\Z_\ell}\chi_2\oplus T\cdot{\bf 1}_{\Z_\ell}\tfrac{\chi_1-\beta\chi_2}{1-\beta}$$
de telle sorte que l'action de $\PP(\Q_\ell)$ soit induite par celle existant sur
$\widetilde{\rm LC}(\Q_\ell^\dual,T)$ et 
${\bf 1}_{\Z_\ell}\tfrac{\chi_1-\beta\chi_2}{1-\beta}$ soit fixe par $\GG(\Z_\ell)$.
\end{prop}
\begin{proof}
L'unicit\'e est claire.  
Posons $v={\bf 1}_{\Z_\ell}\tfrac{\chi_1-\beta\chi_2}{1-\beta}$.
Si $\ell\beta-1$ n'est pas un diviseur de $0$, l'action ci-dessus est induite
par celle existant sur ${\rm Fr}(T)\otimes I(\chi_2,\chi_1)$; il suffit donc de v\'erifier qu'elle
stabilise $\Pi_\ell(\chi_1,\chi_2)$ et il suffit de v\'erifier que c'est le cas pour les actions de $w$
et de $\PP(\Q_\ell)$.  Comme $\Pi_\ell(\chi_1,\chi_2)={\cal K}(\chi_1,\chi_2)
+T\,v$ la stabilit\'e par $w$ est claire puisque
les deux morceaux sont stables. Comme $\matrice{1}{b}{0}{1}\cdot v-v$ est \`a support compact
dans $\Q_\ell^\dual$, cela montre la stabilit\'e par $\matrice{1}{\Q_\ell}{0}{1}$.
Comme celle par $\matrice{\Z_\ell^\dual}{0}{0}{1}$ est claire, il ne reste que celle
par $\matrice{\ell^{-1}}{0}{0}{1}$ \`a v\'erifier; celle-ci suit de l'identit\'e
$$\chi_1(\ell)\matrice{\ell^{-1}}{0}{0}{1}v-v={\bf 1}_{\Z_\ell}\chi_2$$
et de l'appartenance de ${\bf 1}_{\Z_\ell}\chi_2$ \`a ${\cal K}(\chi_1,\chi_2)$.
Le cas o\`u $\ell\beta-1$ est un diviseur de~$0$ s'obtient par sp\'ecialisation (les formules pour 
l'action ci-dessus sont polynomiales en~$\beta$).
\end{proof}
\begin{rema}\phantomsection\label{eh14}
Si $\ell\beta-1=0$ (i.e. $\chi_2=|\ |_\ell\chi_1$), la repr\'esentation de $G$ que l'on obtient
est ${\rm Ind}_B^G(|\ |_\ell\chi_1\otimes|\ |_\ell^{-1}\chi_1)$ alors que l'on est parti de
${\rm Ind}_B^G(\chi_1\otimes\chi_1)$, la division par $0$ ayant permis de faire passer le
caract\`ere $\chi_1\circ\det$ de sous-objet \`a quotient.
\end{rema}

\subsubsection{Construction de $\Pi_\ell(\rho_T)$: le cas connexe}\label{whit2}
Si $T[\frac{1}{p}]$ est connexe, la restriction de la
semi-simplifi\'ee $\rho_T^{\rm ss}$ de $\rho_T$ au sous-groupe d'inertie est constante,
et il y a trois possibilit\'es:

\vskip2mm
$\bullet$ $\rho_T=\rho_0\otimes_{\O_L}\chi$, o\`u $\rho_0:G_{\Q_\ell}\to {\rm GL}_2(\O_L)$ est 
absolument irr\'eductible et $\chi:\Q_\ell^\dual\to T^\dual$
est un caract\`ere localement constant, auquel cas
$\Pi_\ell(\rho_T):=\chi\otimes_{\O_L}\Pi(\rho_0)$ et $\Pi(\rho_0)$ est 
un r\'eseau d'une repr\'esentation supercuspidale dont le mod\`ele
de Kirillov est $\widetilde{\rm LC}_c(\Q_\ell^\dual,\O_L)$.

\vskip2mm
$\bullet$ $\rho_T=\chi_1\oplus\chi_2$ avec $\chi_2\chi_1^{-1}$ ramifi\'e.
Dans ce cas, on pose $\Pi_\ell(\rho_T):=I(\chi_2,\chi_1)$.

\vskip2mm
$\bullet$ 
$\rho_T^{\rm ns}=\chi_1\oplus\chi_2$ avec $\chi_2\chi_1^{-1}$ non ramifi\'e.
Dans ce cas, on d\'efinit $\Pi_\ell(\rho_T^{\rm ns})$ comme la repr\'esentation
$\Pi_\ell(\chi_1,\chi_2)$ de la prop.\,\ref{eh13}.  

$\diamond$ Si $\rho_T\otimes\chi_1^{-1}$ est non ramifi\'ee,
on pose $\Pi_\ell(\rho_T):=\Pi_\ell(\rho_T^{\rm ss})$.

\vskip1mm
$\diamond$ Si $\rho_T\otimes\chi_1^{-1}$ est ramifi\'ee, quitte \`a \'echanger $\chi_1$ et $\chi_2$,
il existe des id\'eaux premiers minimaux
${\goth a}$ de $T$ tels que $\kappa({\goth a})\otimes(\rho_T\otimes\chi_1^{-1})$ soit une extension non
triviale de $1$ par $|\ |_\ell$. 
On doit avoir $\Pi(\rho_T)=\chi_1\otimes\Pi(\rho_T\otimes\chi_1^{-1})$ et donc,
quitte \`a tordre par $\chi_1^{-1}$, 
on peut supposer que $\rho_T^{\rm ss}=1\oplus\chi$, avec $\chi$ non ramifi\'e,
et qu'il existe ${\goth a}$ premier, minimal,
tel que $\kappa({\goth a})\otimes\rho_T$ soit une extension non
triviale de $1$ par $|\ |_\ell$.

Soit $I_1$ l'ensemble des ces ${\goth a}$, et soit $I_2$ son compl\'ementaire (ce sont des ensembles finis).
Soit $T'_i=\prod_{{\goth a}\in I_i}T/{\goth a}$, si $i=1,2$. L'application naturelle
$T\to T'_1\times T'_2$ est injective; on note ${\goth b}_1$ l'image inverse de $\{0\}\times T'_2$
et ${\goth b}_2$ celle de $T'_1\times\{0\}$ et $T_i=T/{\goth b}_i$, si $i=1,2$.
L'application naturelle $T\to T_1\times T_2$ est encore injective, et $\rho_{T_2}$
est non ramifi\'ee alors que, pour tout id\'eal minimal ${\goth a}$ de $T_1$,
$\kappa({\goth a})\otimes\rho_T$ est une extension non
triviale de $1$ par $|\ |_\ell$.

On a une suite exacte $\GG(\Q_\ell)$-\'equivariante 
$$0\to T_1\otimes {\rm St}\to \Pi_\ell(\rho_{T_1}^{\rm ss})\to T_1\to 0$$
et une surjection $\Pi_\ell(\rho_T^{\rm ss})\to \Pi_\ell(\rho_{T_1}^{\rm ss})$
(induite par la surjection $T\to T_1$).  On d\'efinit
$\Pi_\ell(\rho_T)$ comme le noyau de la fl\`eche compos\'ee
$\Pi_\ell(\rho_T^{\rm ss})\to T_1$; on a une suite exacte
$\GG(\Q_\ell)$-\'equivariante 
\begin{equation}\label{tor1}
0\to \Pi_\ell(\rho_T)\to \Pi_\ell(\rho_T^{\rm ss})\to T_1\to 0
\end{equation}
\begin{rema}\phantomsection\label{whit3}
(i)
Au niveau des mod\`eles de Kirillov on a, dans les notations
de la prop.\,\ref{eh13},
un isomorphisme de $T$-modules
$$\Pi_\ell(\rho_T)\cong \widetilde{\rm LC}_c(\Q_\ell^\dual,T)\oplus T\cdot {\bf 1}_{\Z_\ell}\chi_2
\oplus {\goth b}_2\cdot {\bf 1}_{\Z_\ell}\tfrac{\chi_1-\beta\chi_2}{1-\beta}$$
(le ${\goth b}_2$ appara\^{\i}t comme noyau de $T\to T_1$);
en particulier, $\Pi_\ell(\rho_T)$ n'est pas n\'ecessairement libre sur $T$ (mais pas loin).

(ii) En appliquant $M\mapsto T_2\otimes_T M$ \`a la suite exacte~(\ref{tor1}), on obtient une suite exacte
$T_2\otimes_T\Pi_\ell(\rho_T)\to \Pi_\ell(\rho_{T_2})\to T/({\goth b}_1,{\goth b}_2)\to 0$.
En particulier, la fl\`eche naturelle $\Pi_\ell(\rho_T)\to\Pi_\ell(\rho_{T_2})$
n'est pas n\'ecessairement surjective bien que $\rho_T\to \rho_{T_2}$ le soit.
\end{rema}

\begin{rema}\phantomsection\label{connexe2}
Dans tous les cas, $\Pi_\ell(\rho_T)$ contient $\widetilde{\rm LC}_c(\Q_\ell^\dual,T)$
et le quotient $J_\ell(\rho_T)$ est un $T$-module sans torsion engendr\'e par au plus deux \'el\'ements,
et libre sauf dans le dernier cas.
\end{rema}

\subsubsection{Construction de $\Pi_\ell(\rho_T)$: le cas g\'en\'eral}\label{whit4}
Si $T\hookrightarrow \prod_iT_i$ o\`u les $T_i[\frac{1}{p}]$ sont connexes
comme dans la rem.\,\ref{connexe},
on d\'efinit $\Pi_\ell(\rho_T)$ par
$$\Pi_\ell(\rho_T)=\big(\prod\nolimits_i\Pi_\ell(\rho_{T_i})\big)\cap \widetilde{\rm LC}(\GG(\Q_\ell),T)$$
l'intersection \'etant prise dans 
$\widetilde{\rm LC}(\GG(\Q_\ell),\prod_iT_i)$.

Si $p^N$ tue $(\prod_iT_i)/T$, alors $p^N$ tue $\big(\prod_i\Pi_\ell(\rho_{T_i})\big)/\Pi_\ell(\rho_T)$.
En particulier, dans le mod\`ele de Kirillov,
$\Pi_\ell(\rho_T)$ contient $p^N\widetilde{\rm LC}_c(\Q_\ell^\dual,T)$.

\begin{rema}\phantomsection\label{whit10}
On d\'eduit de la classification~\cite{vigne} des repr\'esentations de $\GG(\Q_\ell)$ modulo~$p$ que
$k_L\otimes_{T_i}\Pi_\ell(\rho_{T_i})$ a une et une seule composante de Jordan-H\"older
de dimension infinie qui n'est autre, pour tout $i$, que
$\Pi^{\rm min}_\ell(\overline{\rho_T})$.
\end{rema}

\begin{prop}\phantomsection\label{problem}
Si ${\goth p}$ est un point de ${\cal X}$, alors
$$\kappa({\goth p})\otimes_T\Pi_\ell(\rho_T)\cong 
\begin{cases} 
\Pi_\ell^{\rm cl}(\rho_{\kappa({\goth p})}) &{\text{si ${\goth p}$ n'est pas $\ell$-probl\'ematique,}}\\
\Pi_\ell^{\rm min}(\rho_{\kappa({\goth p})}) &{\text{si ${\goth p}$ est $\ell$-probl\'ematique.}}
\end{cases}$$
\end{prop}
\begin{proof}
Comme l'\'enonc\'e est rationnel, on peut supposer que $T[\frac{1}{p}]$ est connexe, et alors
le r\'esultat est imm\'ediat sur la construction.
\end{proof}

\subsubsection{Comparaison avec la th\'eorie d'Emerton et Helm}\label{EH2}
Notons $\Pi_\ell^{\rm EH}(\rho_T)$ le $p$-satur\'e de l'image
de $\check{T}\otimes_T\Pi_\ell(\rho_T)$ dans 
$\widetilde{\rm LC}(\GG(\Q_\ell),\check{T})$.  Donc $\Pi_\ell^{\rm EH}(\rho_T)$ est de $T$-torsion
(la $T$-torsion est dense dans $\check{T}$).

\begin{exem}\phantomsection\label{whit5}
Si $T=\O_L$, alors $\Pi^{\rm EH}_\ell(\rho_T)$ est l'unique (\`a homoth\'etie pr\`es) $\O_L$-r\'eseau
de $\Pi_\ell^{\rm cl}(L\otimes_{\O_L}\rho_T)$
dont la r\'eduction modulo~${\goth m}_L$ a un socle g\'en\'erique (que $\Pi_\ell(\rho_T)$
v\'erifie cette condition est un cas particulier du (i) du th.\,\ref{whit7} ci-dessous).
\end{exem}

\begin{rema}\phantomsection\label{whit6}
(i) Si on remplace $\Pi_\ell(\rho_T)$ par un sous-r\'eseau dans la d\'efinition
de $\Pi_\ell^{\rm EH}(\rho_T)$, on obtient le m\^eme r\'esultat puisqu'on $p$-sature.
Cela permet de se ramener au cas connexe pour beaucoup d'arguments:
si $T\hookrightarrow\prod_iT_i$ comme dans la rem.\,\ref{connexe},
on peut remplacer $\Pi_\ell(\rho_T)$ par $\oplus_i p^N\Pi_\ell(\rho_{T_i})$.

(ii)
Si ${\goth n}$ est un id\'eal de $T$ (un tel id\'eal est automatiquement ferm\'e, et ${\goth n}[\frac{1}{p}]$
est ferm\'e dans $T[\frac{1}{p}]$)
et si $T_{\goth n}:=T/{\goth n}$ est sans $p$-torsion, alors
$\check{T}[{\goth n}]=\check{T_{\goth n}}$.
Il s'ensuit que
$$\Pi_\ell^{\rm EH}(\rho_T)[\goth n]=\Pi_\ell^{\rm EH}(\rho_T)
\cap\widetilde{\rm LC}(\GG(\Q_\ell),\check{T_{\goth n}})$$
\end{rema}

\begin{theo}\phantomsection\label{whit7}
{\rm (i)}
Le socle de
$k_L\otimes_{\O_L}\Pi_\ell^{\rm EH}(\rho_T)$ est irr\'eductible, de dimension infinie sur $k_L$.

{\rm (ii)} Si ${\goth p}$ est un point de ${\cal X}$,
alors $\Pi_\ell^{\rm EH}(\rho_T)[{\goth p}]$ est le r\'eseau $\Pi_ell^{\rm EH}(\rho_{T/{\goth p}})$ de
$\Pi_\ell^{\rm cl}(\rho_{\kappa({\goth p})})$ sauf si
${\goth p}$ est $\ell$-pathologique o\`u c'est un r\'eseau 
de $\Pi_\ell^{\rm min}(\rho_{\kappa({\goth p})})$.
\end{theo}
\begin{proof}
Par construction, 
$$k_L\otimes_{\O_L}\Pi_\ell^{\rm EH}(\rho_T)\hookrightarrow
\widetilde{\rm LC}(\GG(\Q_\ell),k_L\otimes_{\O_L}\check{T})$$
Son socle ${\bf soc}$ 
est tu\'e par ${\goth m}_T$ car $k_L\otimes_{\O_L}\check{T}$ est de ${\goth m}_T^\infty$-torsion,
et comme $(k_L\otimes_{\O_L}\check{T})[{\goth m}_T]=(T/{\goth m}_T)^\vee=k_L$,
ce socle s'injecte dans $\widetilde{\rm LC}(\GG(\Q_\ell),k_L)$; en particulier, il
est constitu\'e de repr\'esentations g\'en\'eriques. Maintenant, les composantes
de Jordan-H\"older de $k_L\otimes_{\O_L}\Pi_\ell^{\rm EH}(\rho_T)$ sont parmi
celles des $k_L\otimes_{T_i}\Pi_\ell(\rho_{T_i})$ et 
on a donc ${\bf soc}\cong \Pi^{\rm min}_\ell(\overline\rho_T)^{\oplus r}$ pour un certain $r\geq 1$
(cf.~rem.\,\ref{whit10}). Mais le th\'eor\`eme
de multiplicit\'e $1$ affirme que $\Pi^{\rm min}_\ell(\overline\rho_T)$ est de multiplicit\'e~$\leq 1$ dans 
$\widetilde{\rm LC}(\GG(\Q_\ell),k_L)$. On a donc $r=1$, ce qui prouve le (i).

Pour prouver le (ii), on peut supposer que $T[\frac{1}{p}]$ est connexe, et alors le r\'esultat est clair
sur la description de $\Pi_\ell(\rho_T)$, sauf dans le dernier cas 
($\rho_T^{\rm ss}\otimes\chi_1^{-1}$
non ramifi\'e et $\rho_T\otimes\chi_1^{-1}$ ramifi\'ee) dont nous reprenons les notations.

Commen\c{c}ons par prouver que $\Pi_\ell^{\rm EH}(\rho_T)[{\goth b}_1]= \Pi_\ell^{\rm EH}(\rho_{T_2})$.
Il r\'esulte du (i) de la rem.\,\ref{whit3} et du (ii) de la rem.\,\ref{whit6}
que $\Pi_\ell^{\rm EH}(\rho_T)[{\goth b}_1]$ contient comme sous-$T_2$-module
$$\widetilde{\rm LC}_c(\Q_\ell^\dual,\check T_2)\oplus
\check{T}_2{\bf 1}_{\Z_\ell}\chi_2
\oplus {\goth b}_2\check{T}\cdot {\bf 1}_{\Z_\ell}\tfrac{\chi_1-\beta\chi_2}{1-\beta}$$
Or ${\goth b}_2\check{T}$ contient un r\'eseau de $\check{T}_2$ d'apr\`es le lemme~\ref{whit21}
ci-dessous. 
On en d\'eduit que $\Pi_\ell^{\rm EH}(\rho_T)[{\goth b}_1]$ contient un r\'eseau
de $\Pi_\ell^{\rm EH}(\rho_{T_2})$ et donc lui est \'egal par $p$-saturation.

On en d\'eduit le r\'esultat pour tous les ${\goth p}$ appartenant \`a ${\rm Spec}\,T_2$
(vu comme sous-espace de ${\rm Spec}\,T$). Si ${\goth p}$ appartient au compl\'ementaire
(qui est inclus dans ${\rm Spec}\,T_1$),
on a $\Pi_\ell^{\rm EH}(\rho_T)[{\goth p}]=\chi_1\otimes{\rm St}$ qui n'est un r\'eseau
de $\Pi_\ell^{\rm cl}(\rho_{\kappa({\goth p})})$ que si 
$\rho_{\kappa({\goth p})}\otimes\chi_1^{-1}$ est ramifi\'ee, i.e.~si ${\goth p}$
n'est pas $\ell$-pathologique.

Ceci permet de conclure.
\end{proof} 

\begin{lemm}\phantomsection\label{whit21}
Soit ${\goth n}\neq 0$ un id\'eal de $T$. Soit $A({\goth n})$ l'ensemble des id\'eaux premiers
minimaux ${\goth a}$ tels que ${\goth n}$ soit inclus dans l'id\'eal annulateur de la composante
irr\'eductible ${\cal X}_{\goth a}$ de ${\cal X}$ d\'efinie par ${\goth a}$, et soient
${\goth n}'$ l'id\'eal annulateur de $\cup_{{\goth a}\notin A({\goth n})}{\cal X}_{\goth a}$
et $T'=T/{\goth n}'$. Alors il existe $c\in\N$ tel que 
$p^c\check T'\subset {\goth n}\cdot\check T\subset\check T'$.
\end{lemm}
\begin{proof}
On a ${\goth n}{\goth n}'=0$ et donc ${\goth n}\cdot\check T\subset\check{T}[{\goth n}']=\check T'$,
ce qui prouve l'une des inclusions.

Si $\alpha\in T'$,
alors $\alpha T'[\frac{1}{p}]$ est ferm\'e dans $T'[\frac{1}{p}]$
et donc ${\rm Hom}(T'[\frac{1}{p}],L)\to {\rm Hom}(\alpha T'[\frac{1}{p}],L)$ est surjective.
Maintenant, par construction, ${\goth n}$, vu comme id\'eal
de $T'$ contient des \'el\'ements qui ne sont pas des diviseurs de $0$.
Choisissons un tel $\alpha$.

Si $\mu\in{\rm Hom}(T'[\frac{1}{p}],L)$, on d\'efinit $\lambda$ sur $\alpha T'[\frac{1}{T}]$ par
$\lambda(x)=\mu(\frac{x}{\alpha})$ ce qui est possible car $x\mapsto \alpha x$ est une bijection
de $T'[\frac{1}{p}]$ sur $\alpha T'[\frac{1}{T}]$ gr\^ace \`a la condition
mise sur $\alpha$. On peut alors relever $\lambda$ en $\lambda'\in {\rm Hom}(T'[\frac{1}{p}],L)$,
et on a $\alpha\lambda'=\mu$,
i.e. la multiplication par $\alpha$ est surjective sur ${\rm Hom}(T'[\frac{1}{p}],L)=\check{T'}[\frac{1}{p}]$.
Le th\'eor\`eme de l'image ouverte permet d'en d\'eduire que $\alpha \check{T}'\supset p^c\check{T}'$ avec $c\in\N$;
a fortiori ${\goth n}\cdot\check{T}'\supset p^c\check{T}'$.
\end{proof}

\begin{rema}
On d\'eduit du th.\,\ref{whit7} que $\Pi^{\rm EH}_\ell(\rho_T)$ est
le module consid\'er\'e par Emerton et Helm (\cite[th.\,4.4.1]{Em08}, \cite{EH}), ou son
$\O_L$-dual lisse~\cite{HM} si on pr\'ef\`ere le point de vue co-whittaker.
\end{rema}

\subsubsection{Foncteur de Kirillov et nouveau vecteur}\label{whit9}
Rappelons que $L\otimes\Pi_\ell(\rho_T)$ contient $\widetilde{\rm LC}(\Q_\ell^\dual,T)$
(si $T[\frac{1}{p}]$ est connexe, c'est d\'ej\`a le cas de $\Pi_\ell(\rho_T)$).
On \index{vell@\vell}note $ v'_{T,\ell}$ la fonction
$$ v'_{T,\ell}={\bf 1}_{\Z_\ell^\dual}\in
L\otimes\Pi_\ell(\rho_T);$$
 c'est un g\'en\'erateur du {\it foncteur de Kirillov}~\cite[\S\,4.1]{Em08} de $L\otimes\Pi_\ell(\rho_T)$,
i.e.~le $T[\frac{1}{p}]$-module
\begin{center}
$\big\{v\in L\otimes\Pi_\ell(\rho_T),\ \matrice{\Z_\ell^\dual}{\Z_\ell}{0}{1}\cdot v=v,
\ \sum_{i=0}^{\ell-1}\matrice{\ell}{i}{0}{1}_\ell\cdot v=0\big\}.$
\end{center}
(Si $v'_{T,\ell}\in \Pi_\ell(\rho_T)$, c'est 
un g\'en\'erateur du {foncteur de Kirillov} de $\Pi_\ell(\rho_T)$.)
\begin{rema}\phantomsection\label{nodeno}
A priori, l'image de $\check{T}\otimes_Tv'_{T,\ell}$ est 
seulement incluse dans $L\otimes_{\O_L}\Pi_\ell^{\rm EH}(\rho_T)$. 
Mais, si ${\goth p}$ est un point non $\ell$-pathologique de ${\cal X}$, l'image de la $\goth p$-torsion
est incluse dans $\Pi_\ell^{\rm EH}(\rho_T)[\goth p]=\Pi_\ell^{\rm EH}(\rho_{T/{\goth p}})$
car ${\bf 1}_{\Z_\ell}$ appartient \`a son mod\`ele de Kirillov.
Comme le $\O_L$-module engendr\'e par les $\check{T}[\goth p]$ est dense dans $\check T$, on
en d\'eduit que l'image de $\check{T}\otimes_Tv'_{T,\ell}$ est incluse dans $\Pi_\ell^{\rm EH}(\rho_T)$.
\end{rema}

\index{vell@\vell}Si $\ell\notin S$, notons $v_{T,\ell}\in\Pi_\ell(\rho_T)$ la fonction
${\bf 1}_{\Z_\ell}\tfrac{\chi_1-\beta\chi_2}{1-\beta}$ de la prop.\,\ref{eh13}.
De mani\`ere explicite, on a
$$v_{T,\ell}(x)=
\begin{cases}
0& {\text{si $x\notin\Z_\ell$,}}\\
\chi_{1}(\ell)^n+\chi_{1}(\ell)^{n-1}\chi_{2}(\ell)+\cdots+
\chi_{2}(\ell)^n & {\text{si $x\in \ell^n\Z_\ell^\dual$ et $n\geq 0$.}}
\end{cases}$$
C'est {\it le nouveau vecteur normalis\'e}
(par la condition $v_{T,\ell}(1)=1$)
de $\Pi_\ell(\rho_T)$;
il 
engendre $\Pi_\ell(\rho_T)$ en tant que $T[\PP(\Q_\ell)]$-module.

Un petit calcul fournit la relation
\begin{equation}
\label{eu1}
\big(1-(\chi_{1}(\ell)+\chi_{2}(\ell))\matrice{\ell^{-1}}{0}{0}{1}_\ell
+(\chi_{1}(\ell)\chi_{2}(\ell))\matrice{\ell^{-2}}{0}{0}{1}_\ell\big)\star
v_{T,\ell}= v'_{T,\ell}
\end{equation}

\begin{rema}\phantomsection\label{eu100}
Si $T=\O_L$, la repr\'esentation $\Pi_\ell(\rho_T)$ poss\`ede un nouveau vecteur
normalis\'e $v_{T,\ell}$ pour tout $\ell\neq p$.  Il existe $P_\ell\in 1+X\O_L[X]$,
unique, tel que
$$P_\ell\big(\matrice{\ell^{-1}}{0}{0}{1}_\ell\big)\star v_{T,\ell}= v'_{T,\ell}.$$
De plus, $\deg P_\ell\leq 2$, et $P_\ell=1$ si $\Pi_\ell(\rho_T)$
est supercuspidale.
La relation (\ref{eu1}) se traduit, si $\ell\notin S$, par la formule
$$P_\ell=1-(\chi_{1}(\ell)+\chi_{2}(\ell))X+(\chi_{1}(\ell)\chi_{2}(\ell))X^2.$$
Dans tous les cas, $P_\ell(\ell^{-s})^{-1}$ est le facteur d'Euler en $\ell$ de la fonction $L$
associ\'ee \`a~$\rho_T$, i.e.~$P_\ell(X)=\det((1-X\sigma_{\ell}^{-1})_{|\rho_T^{I_\ell}})$.
\end{rema}

\Subsection{Torsion par un caract\`ere}\label{EUL6}
Si $F$ est un sous-corps de $\Q^{\rm ab}$, alors 
$$H:={\rm Gal}(F/\Q)$$
est un quotient de $G_\Q$ et m\^eme de
${\rm Gal}(\Q^{\rm ab}/\Q)=\cZ^\dual$ (l'identification se faisant via le caract\`ere cyclotomique).
Comme la fl\`eche naturelle $\cZ^\dual\to\A^\dual/\Q^\dual\R_+^\dual$
est un isomorphisme, 
$ H $ est aussi, naturellement, un quotient de $\A^\dual$.
On note $$\gamma\mapsto\overline\gamma
\quad{\rm et}\quad
a\mapsto\overline a$$ les morphismes
$G_\Q\to  H $ et $\A^\dual\to  H $
ci-dessus.

On peut voir l'alg\`ebre de groupe compl\'et\'ee
$\Z_p[[ H ]]$ aussi comme l'alg\`ebre des mesures
sur $ H $, \`a valeurs dans $\Z_p$.
Si $h\in  H $, on note 
$[h]$ la masse de Dirac en $h$ (i.e. l'\'el\'ement $h$ vu
comme \'el\'ement de l'alg\`ebre de groupe).
On munit
$\Z_p[[H]]$ d'actions $\Z_p[[H]]$-lin\'eaires de
$G_\Q$ et de $\GG(\A)$ en posant:
\begin{equation}\label{EUL6.5}
\gamma\cdot \lambda=[{\overline\gamma}]\,\lambda
\quad{\rm et}\quad
g\star\lambda=[{\overline{\det g}^{-1}}]\,\lambda,\quad
{\text{si $\gamma\in G_\Q$ et $g\in \GG(\A)$.}}
\end{equation}
Par dualit\'e,  ${\cal C}( H ,\Z_p)$ est aussi
muni d'actions $\Z_p[[H]]$-lin\'eaires de
$G_\Q$ et de $\GG(\A)$.

Soit ${\cal W}_F={\rm Spec}\,\Z_p[[H]]$.  Si $[L:\Q_p]<\infty$,
alors ${\cal W}_F(\O_L)$ est l'ensemble des caract\`eres continus
$\eta:  H \to \O_L^\dual$.

Soit maintenant $T$ une alg\`ebre locale comme ci-dessus.
On suppose que $H$ est fini ou isomorphe au produit de $\Z_p$ par un groupe fini.
Alors $$T[[H]]:=\Z_p[[H]]\wotimes_{\Z_p}T$$ est une alg\`ebre semi-locale.
Si ${\cal X}_T={\rm Spec}\,T$ et ${\cal X}_{T[[H]]}={\rm Spec}\,T[[H]]$ 
et si $[L:\Q_p]<\infty$, alors
${\cal X}_{T[[H]]}(\O_L)={\cal X}_T(\O_L)\times {\cal W}_F(\O_L)$.

Si $\rho_T:G_\Q\to {\bf GL}_2(T)$ est une $T$-repr\'esentation de $G_\Q$,
on note $\rho_{T[[H]]}$ la $T[[H]]$-repr\'esentation
$$\rho_{T[[H]]}:=\Z_p[[H]]\wotimes_{\Z_p}\rho_T,$$ 
o\`u $G_\Q$ agit diagonalement.
Si $x\in {\cal X}_T(\O_L)$, on note $\rho_x$ la repr\'esentation
$$\rho_x:=(T/{\goth p}_x)\otimes_T\rho_T,$$ 
o\`u ${\goth p}_x$ est l'id\'eal
de $T$ correspondant \`a $x$.
La repr\'esentation associ\'ee \`a $(x,\eta)\in {\cal X}_T(\O_L)\times {\cal W}_F(\O_L)=
{\cal X}_{T[[H]]}(\O_L)$
est la tordue $\rho_x\otimes\tilde \eta$ de $\rho_x$ 
par le caract\`ere $\tilde \eta$ de $G_\Q$ d\'efini par
$\tilde\eta(\gamma)=\eta(\overline\gamma)$.
En d'autres termes, la repr\'esentation $\rho_T$ interpole analytiquement
les $\rho_x$, pour $x\in{\cal X}_T(L)$ et $\rho_{T[[H]]}$
interpole analytiquement
les tordues des $\rho_x$, pour $x\in{\cal X}_T(L)$, par les caract\`eres continus de~$H$.

La compatibilit\'e des correspondances de Langlands locales avec
la torsion par un caract\`ere fournit des identifications
de $T[[H]]$-modules:
\begin{align*}
\Pi_p^\dual(\rho_{T[[H]]})= \Z_p[[H]]\wotimes_{\Z_p}\Pi_p^\dual(\rho_T)
\quad&{\rm et}\quad
\Pi_p(\rho_{T[[H]]})={\cal C}(H,\Z_p)\wotimes_{\Z_p}\Pi_p(\rho_T)\\
\Pi_\ell(\rho_{T[[H]]})=&\ \Z_p[[H]]\otimes_{\Z_p}\Pi_\ell(\rho_T),
\ {\text{si $\ell\neq p$,}}
\end{align*}
et ces identifications commutent aux actions de $\GG(\Q_\ell)$
en faisant agir $\GG(\Q_\ell)$ diagonalement sur les membres de droite.

\Subsection{Globalisation}\label{glob1}
Rappelons que $\rho_T$ est non ramifi\'ee hors de $S$. 
\subsubsection{La repr\'esentation $\Pi(\rho_T)$ de $\GG(\Ai)$}\label{glob101}
Posons
\begin{align*}
\Pi^{]p[}(\rho_T)&:=\Pi_{S\moins\{p\}}(\rho_T)\otimes_T\Pi^{]S[}(\rho_T),\\
\Pi_{S\moins\{p\}}(\rho_T):=
\big(\otimes_{\ell\in S\moins\{p\}}\Pi_\ell(\rho_T)\big)/&T{\text{-torsion}},
\quad  \Pi^{]S[}(\rho_T):=\otimes_{\ell\notin S}(\Pi_\ell(\rho_T),v_{T,\ell})
\end{align*}
o\`u les produits tensoriels de la seconde ligne sont au-dessus de $T$ 
et $\otimes_{\ell\notin S}(\Pi_\ell(\rho_T),v_{T,\ell})$
est le produit tensoriel restreint des $\Pi_\ell(\rho_T)$ relativement aux $v_{T,\ell}$;
en ce qui concerne $\Pi_{S\moins\{p\}}(\rho_T)$,
comme les $\Pi_\ell(\rho_T)$, pour $\ell\in S\moins\{p\}$, ne sont pas forc\'ement libres,
le produit tensoriel peut introduire de la $T$-torsion.

En faisant le produit tensoriel\footnote{En fait de produit tensoriel, c'est plut\^ot un produit
ext\'erieur: $(\boxtimes_\ell\phi_\ell)((x_\ell)_{\ell}):=\prod_\ell\phi_\ell(x_\ell)$.}
 des mod\`eles de Kirillov locaux, on fabrique un mod\`ele
de Kirillov $T[\PP(\Aip)]$-\'equivariant:
$${\cal K}^{]p[}:\Pi^{]p[}(\rho_T)\to\widetilde{\rm LC}(\A^{]\infty,p[,\dual},T)$$
Comme on a quotient\'e par la $T$-torsion en faisant le produit tensoriel, ce mod\`ele
est injectif (c'est la raison principale pour quotienter par la torsion).

On a une injection $\PP(\Q_p)$-\'equivariante $\iota_p:\Pi_p(\rho_T)\hookrightarrow 
(\tA^-\wotimes\rho_T^\clubsuit)^H$.
Soit 
$$\iota:={\cal K}^{]p[}\otimes\iota_p:\Pi^{]p[}(\rho_T)\otimes_T\Pi_p(\rho_T)\to
\widetilde{\rm LC}(\Aipdu,(\tA^-\wotimes\rho_T^\clubsuit)^H)$$ et soit
$\Pi(\rho_T)_{\rm ns}$ 
(le ``ns'' en indice signifie ``non satur\'e'')
l'image de $\iota$.
On peut prolonger l'injection $T[\PP(\Ai)]$-\'equivariante
$\Pi(\rho_T)_{\rm ns}\hookrightarrow \widetilde{\rm LC}(\Aipdu,(\tA^-\wotimes\rho_T^\clubsuit)^H)$
en une injection $T[\GG(\Ai)]$-\'equivariante $\iota^\GG$ dans l'induite \`a $\GG(\Ai)$,
i.e.~le mod\`ele de Whittaker
$$\widetilde{\rm LC}(\GG(\Aip),(\tA^-\wotimes\rho_T^\clubsuit)^H):=
{\rm Ind}_{\PP(\Ai)}^{\GG(\Ai)}\widetilde{\rm LC}(\Aipdu,(\tA^-\wotimes\rho_T^\clubsuit)^H)$$

Soit $\Pi(\rho_T)$ le satur\'e de $\Pi(\rho_T)_{\rm ns}$ dans
cette induite; i.e.
\begin{equation}\label{globule}
\Pi(\rho_T):=\iota^\GG\big(\Pi(\rho_T)_{\rm ns}[\tfrac{1}{p}]\big)\cap
\widetilde{\rm LC}(\GG(\Aip),(\tA^-\wotimes\rho_T^\clubsuit)^H)
\end{equation}
On munit $\Pi(\rho_T)$ de la topologie induite par celle de $\tA^-\wotimes\rho_T^\clubsuit$
(i.e. la topologie sur l'induite est obtenue par limite inductive). Le (i) du th.\,\ref{glob2}
ci-dessous montre que la topologie induite sur $\Pi(\rho_T)_{\rm ns}$ est la topologie
initiale.
\begin{rema}\phantomsection\label{glob0}
On peut d\'efinir de la m\^eme mani\`ere $\Pi_S(\rho_T)_{\rm ns}$ et $\Pi_S(\rho_T)$,
et on a 
$$\Pi(\rho_T)=\Pi_S(\rho_T)\otimes_T\Pi^{]S[}(\rho_T),\quad
\Pi(\rho_T)_{\rm ns}=\Pi_S(\rho_T)_{\rm ns}\otimes_T\Pi^{]S[}(\rho_T)$$ 
(et $\Pi^{]S[}(\rho_T)$ est un $T$-module libre).
\end{rema}
\begin{theo}\phantomsection\label{glob2}
{\rm (i)}
Il existe $c\in\N$ tel que $\Pi(\rho_T)_{\rm ns}\subset \Pi(\rho_T)\subset p^{-c}\Pi(\rho_T)_{\rm ns}$.

{\rm (ii)}
On a
$$(k_L\otimes_{\O_L}\Pi(\rho_T))[{\goth m}_T]=\big((k_L\otimes_{\O_L}\Pi_S(\rho_T))[{\goth m}_T]\big)\otimes_{k_L}
\big(\Pi^{]S[}(\rho_T)/{\goth m}_T\big)$$
De plus, $(k_L\otimes_{\O_L}\Pi_S(\rho_T))[{\goth m}_T]$ est g\'en\'erique, et contient 
$\otimes_{\ell\in S}\Pi^{\rm min}_\ell(\overline\rho_T)$, le quotient \'etant non g\'en\'erique,
et $\Pi^{]S[}(\rho_T)/{\goth m}_T\cong\otimes_{\ell\notin S}'\Pi_\ell^{\rm nr}(\overline\rho_T)$.
\end{theo}
\begin{proof}
Pour prouver le (i),
la rem.\,\ref{glob0} permet de remplacer $\Pi(\rho_T)$ par $\Pi_S(\rho_T)$.

On a $\Pi_S(\rho_T)_{\rm ns}\subset \Pi_S(\rho_T)$ par construction.
Pour prouver l'existence de $c$, on se ram\`ene au cas o\`u $T[\frac{1}{p}]$ est connexe
(si $T\hookrightarrow\prod_iT_i$ comme dans la rem.~\ref{connexe}, avec
$(\prod_iT_i)/T$ tu\'e par $p^{c_1}$, la formule~(\ref{globule}) permet de montrer
que $\prod_i\Pi(\rho_{T_i})/\Pi(T)$ est tu\'e par $p^{c_1}$).

On identifie $\Pi_S(\rho_T)_{\rm ns}$ \`a son mod\`ele de Kirillov, i.e.~on fait tous les
calculs dans $\widetilde{\rm LC}(\Q_p^\dual,(\tA^-\wotimes\rho_T^\clubsuit)^H)$.
Soit $x=\sum_i x_i^{]p[}\otimes x_{i,p}\in \Pi_S(\rho_T)_{\rm ns}$, non divisible par $p$. 
On doit exhiber $c$, ne d\'ependant pas de $x$, tel que $\iota(x)$ ne soit pas divisible
par $p^{c}$.

Soit $S':=S\moins\{p\}$ et
soit $T':=\Z[\bmu_{\ell^\infty},\,\ell\in S']\otimes T$; il suffit de prouver le r\'esultat
apr\`es extension des scalaires \`a $T'$ ce qui permet de s'affranchir de la condition d'invariance
par $\Z_{S'}^\dual$.
Le sous-$T'$-module engendr\'e par les $x_i^{]p[}$ est de rang fini, et il est donc
inclus dans un module du type $\oplus_i {\goth n}_i\cdot e_i$, o\`u ${\goth n}_i$
est un id\'eal de $T'$ (produit de ${\goth b}_{i,\ell}$, o\`u les ${\goth b}_{i,\ell}$ sont du type
de ceux apparaissant dans la rem.\,\ref{whit3}; en particulier, ${\goth n}_i$ varie dans un ensemble fini
puisque $S$ est fini et les ${\goth b}_{i,\ell}$ varient dans un ensemble fini), et o\`u $e_i$ est de la forme
$\otimes_{\ell\in S'}e_{i,\ell}$ et $e_{i,\ell}$ est d'une
des formes suivantes:

\vskip1mm
$\bullet$
${\bf 1}_{a(1+\ell^{n_\ell}\Z_\ell)}$, avec 
$n_\ell>0$ ne d\'ependant que de $\ell$, et $a\in\Q_\ell^\dual/(1+p^{n_\ell}\Z_\ell)$.

$\bullet$ ${\bf 1}_{\Z_\ell}({\chi_{\ell,1}}-\chi_{\ell,2})$
ou ${\bf 1}_{\Z_\ell}(\chi_{\ell,2}(a){\chi_{\ell,1}}-\chi_{\ell,1}(a)\chi_{\ell,2})$
pour $a\in\Z_\ell^\dual$ minimisant $v_p({\chi_{\ell,1}}(x)-\chi_{\ell,2}(x))$ 
(si $(\rho_{T,\ell})^{\rm ss}=\chi_{\ell,1}\oplus\chi_{\ell,2}$ et 
$\chi_{\ell,1}\neq \chi_{\ell,2}$ sur $\Z_\ell^\dual$).
On a alors $v_p({\chi_{\ell,1}}(a)-\chi_{\ell,2}(a))=0$ sauf si 
${\chi_{\ell,1}}^{-1}{\chi_{\ell,2}}$ est d'ordre $p^k$, o\`u l'on obtient
$v_(\zeta_{p^k}-1)$.

$\bullet$ ${\bf 1}_{\Z_\ell}\chi_1$ ou $v_{T,\ell}$ (si $\chi_{\ell,1}=\chi_{\ell,2}$
sur $\Z_\ell^\dual$).

\vskip1mm
Il existe alors une collection de $z_i\in\Q_{S'}^\dual$ tels que $e_i(z_j)=0$ si $j\neq i$,
et $e_i(z_i)$ est une unit\'e de $T'$ 
sauf pour les fonctions du second point (o\`u il faut \'evaluer
en $\ell^{n!}$ et $a\ell^{n!}$ pour $n$ assez grand -- la limite
est de la forme $\zeta-1$, o\`u $\zeta\neq 1$ est une racine de l'unit\'e) 
et $v_{T,\ell}$ du troisi\`eme (o\`u il faut \'evaluer
en $\ell^{n!}-1$ pour $n$ assez grand -- la limite est $1$).

Cette quasi-orthogonalit\'e permet de supposer que $x=e^{]p[}\otimes x_p$, o\`u $e^{]p[}$ est une des fonctions
ci-dessus et $x_p\in{\goth n}\Pi_p(\rho_T)$ n'est pas divisible par $p$ dans ${\goth n}\Pi_p(\rho_T)$,
avec ${\goth n}$ un des ${\goth n}_i$ ci-dessus.  
Mais alors $x_p$
n'est pas divisible par $p$ dans $\tA^-\otimes{\goth n}\rho_T^\clubsuit$, et il existe $c({\goth n})$
tel que $x_p$ ne soit pas divisible par $p^{c({\goth n})}\tA^-\otimes\rho_T^\clubsuit$ d'apr\`es
le lemme~\ref{whit21} (car $\rho_T^\clubsuit\cong \check T\oplus\check T$ en tant que $T$-module). 
Si $c'=\sup_{\goth n}c({\goth n})$, il s'ensuit que $\iota(x)$ n'est pas
divisible par $p^{|S|+c'}$ (car $v_p(\zeta-1)\leq 1$, si $\zeta\neq 1$ est une racine de l'unit\'e,
et il y a au plus $|S'|$ termes de cette forme, provenant de fonctions du second point, pour $\ell\in S'$). 
Si $c=c'+|S'|$, alors $x$ n'est pas divisible par $p^c$ dans le mod\`ele de Kirillov, et donc aussi,
a fortiori, dans le mod\`ele de Whittaker.  Cela prouve le (i).

Passons \`a la preuve du (ii). La factorisation r\'esulte de la rem.\,\ref{glob0}, et le reste de l'\'enonc\'e
se d\'emontre en adaptant la preuve du (i) du th.\,\ref{whit7}, les points cl\'es \'etant que
$k_L\otimes_{\O_L}\Pi_S(\rho_T)\hookrightarrow \widetilde{\rm LC}(\GG(\Q_S),\tA^-\wotimes_{\Z_p}\rho_T^\clubsuit)$
et que $\rho_T^\clubsuit\cong \check T\oplus\check T$.
\end{proof}

\subsubsection{Sp\'ecialisation}\label{glob102}
Soit $\overline S$ l'ensemble des nombres premiers en lesquels $\overline\rho_T$ est ramifi\'ee (en incluant $p$).
Si $\overline S\subset\Sigma\subset S$, notons $T(\Sigma)$ le plus grand quotient de $T$ tel que
$\rho_{T(\Sigma)}$ soit non ramifi\'ee en dehors de $\Sigma$, et soit ${\cal X}_\Sigma:={\rm Spec}\,T(\Sigma)$;
c'est un ferm\'e de ${\cal X}$ (qui n'est autre que ${\cal X}_S$).

On dit que $\rho_T$ est {\it \'equilibr\'ee} si ${\cal X}_\Sigma$ est une r\'eunion de composantes irr\'eductibles
de ${\cal X}$ pour tout $\overline S\subset\Sigma\subset S$.
Si $\rho_T$ est \'equilibr\'ee, les points de ${\cal X}$ ne sont $\ell$-pathologiques pour aucun $\ell\neq p$.
\begin{exem}
Si la restriction de $\overline\rho_T$ \`a $G_{\Q(\bmu_p)}$ est irr\'eductible, et si
la restriction \`a $G_{\Q_p}$ n'est pas de la forme $\matrice{1}{*}{0}{\cyp }\otimes\delta$
ou $\matrice{1}{*}{0}{1}\otimes\delta$, alors la d\'eformation universelle
de $\overline\rho_T$ non ramifi\'ee en dehors de $S$ est \'equilibr\'ee
(il ressort des travaux de B\"ockle~\cite{boc}, Diamond-Flach-Guo~\cite{DFG} et Fouquet-Wan~\cite{FW22},
 que, dans ce cas,
 les ${\cal X}_\Sigma$ sont purement de dimension relative~$3$ sur $\Z_p$).
\end{exem}

Si ${\goth p}$ est un point de ${\cal X}$, on d\'efinit $\Pi(\rho_{\kappa({\goth p})})$
et $\Pi^{\rm min}(\rho_{\kappa({\goth p})})$ 
comme les produits tensoriels restreints (au-dessus de $\kappa({\goth p})$):
\begin{align*}
\Pi(\rho_{\kappa({\goth p})})
&:=\Pi_p(\rho_{\kappa({\goth p})})\otimes
\big(\otimes'_{\ell\neq p}\Pi_\ell^{\rm cl}(\rho_{\kappa({\goth p})})\big)\\
\Pi^{\rm min}(\rho_{\kappa({\goth p})})
&:=\big(\otimes_{\ell\in S}\Pi_\ell^{\rm min}(\rho_{\kappa({\goth p})})\big)
\otimes
\big(\otimes'_{\ell\notin S}\Pi_\ell^{\rm cl}(\rho_{\kappa({\goth p})})\big)
\end{align*}
Notons que le quotient $\Pi(\rho_{\kappa({\goth p})})/\Pi^{\rm min}(\rho_{\kappa({\goth p})})$
est petit (i.e.~non g\'en\'erique).
\begin{theo}\phantomsection\label{glob111} 
Soit ${\goth p}$ un point de ${\cal X}$.

{\rm (i)} On a des injections $\Pi^{\rm min}(\rho_{\kappa({\goth p})})\hookrightarrow
\Pi(\rho_T)[{\goth p}]\hookrightarrow \Pi(\rho_{\kappa({\goth p})})$.

{\rm (ii)} Si $\rho_T$ est \'equilibr\'ee, on a un isomorphisme
$\Pi(\rho_T)[{\goth p}]\overset{\sim}{\hookrightarrow} \Pi(\rho_{\kappa({\goth p})})$
sauf, peut-\^etre, si ${\goth p}$ est $p$-pathologique.
\end{theo} 
\begin{proof}
Le (i) r\'esulte de ce que
$\Pi(\rho_T)[{\goth p}]=(\Pi^{]p[}(\rho_T)/{\goth p})\otimes \Pi_p(\rho_T)[{\goth p}]$
et $\Pi_p(\rho_T)[{\goth p}]=\Pi_p(\rho_{\kappa({\goth p})})$ (sauf dans le cas $p$-pathologique
o\`u on a juste une inclusion), et de la prop.\,\ref{problem}.

Le (ii) se d\'emontre par les m\^emes techniques que le (ii) du th.\,\ref{whit7}, en utilisant
le fait que $\Pi_p(\rho_T)$ se comporte comme $\check T$:
ces techniques fournissent une injection de 
$\Pi(\rho_{T(\Sigma)})$ dans $\Pi(\rho_T)$, pour tout $\Sigma$;
on choisit alors $\Sigma$ \'egal \`a l'ensemble des $\ell$ en lesquels $\rho_{\kappa({\goth p})}$
est ramifi\'ee.
\end{proof}

\begin{conj}\phantomsection\label{glob112} 
Si $\overline\rho:G_{\Q}\to{\bf GL}_2(k)$ est irr\'eductible, non ramifi\'ee en dehors de $S$, et si
$T$ est l'anneau des d\'eformations universelles de $\overline\rho$ non ramifi\'ees
en dehors de $S$ et $\rho_T:G_{\Q,S}\to{\bf GL}_2(T)$ est la d\'eformation universelle,
on a un isomorphisme de $k_L[\GG(\Ai)]$-modules
$$(k_L\otimes_{\O_L}\Pi(\rho_T))[{\goth m}_T]\cong
\big(\otimes_{\ell\in S}\Pi_\ell(\overline\rho)\big)\otimes\big(\otimes'_{\ell\notin S}
\Pi_\ell^{\rm nr}(\overline\rho)\big)$$
\end{conj}

\subsubsection{Le mod\`ele de Kirillov de $\rho_T\otimes_T \Pi(\rho_T^\diamond)$}\label{glob104}
On va s'int\'eresser \`a la repr\'esentation $\rho_T\otimes_T \Pi(\rho_T^\diamond)$;
en particulier, cela demande de remplacer $\rho_T$ par $\rho_T^\diamond$ dans les \'enonc\'es
pr\'ec\'edents et $\rho_T^\clubsuit$ par $(\rho_T^\diamond)^\clubsuit=\rho_T^\dual$.

Fixons un plongement $\Z[\bmu^{]p[}]\hookrightarrow\Qbar_p$ et donc aussi un plongement
dans $\tA^+$.
En faisant le produit tensoriel de ${\cal K}^{]p[}$ d\'efini ci-dessus et
de ${\cal K}_p$ du \no\ref{glob7}, on fabrique un mod\`ele
de Kirillov global, $T[\PP(\Ai)\times G_{\Q_p}]$-\'equivariant:
$$\xymatrix@C=3mm@R=4mm{{\cal K}_{\rm Aut}^T:\rho_T\otimes_T\Pi(\rho_T^\diamond)\ar@{=}[r]&
\Pi^{]p[}(\rho_T^\diamond)\otimes_T(\rho_T\otimes_T\Pi_p(\rho_T^\diamond))\ar[d]\\
&{\rm LC}(\A^{]\infty,p[,\dual},\Z[\bmu^{]p[}]\otimes T)\otimes_T {\cal C}(\Q_p^\dual,\check{T}\otimes_{\Z_p}\tA^-)\ar[d]\\
&{\cal C}(\Aidu,\check{T}\otimes_{\Z_p}\tA^-)}$$
On promeut ${\cal K}_{\rm Aut}^T$ en un morphisme $T[\GG(\Ai)\times G_{\Q,S}]$-\'equivariant
$${\cal K}^\GG_{\rm Aut}:\rho_T\otimes_T\Pi(\rho_T^\diamond)\to
{\rm Ind}_{\PP(\Ai)\times G_{\Q_p}}^{\GG(\Ai)\times G_{\Q,S}}
{\cal C}(\Aidu,\check{T}\otimes_{\Z_p}\tA^-),\quad
{\cal K}^\GG_{{\rm Aut},v}(g)={\cal K}^T_{{\rm Aut},{g\cdot v}}$$

\begin{prop}\phantomsection\label{glob3}
Si $\overline\rho_T$ est irr\'eductible, 
${\cal K}^\GG_{\rm Aut}$ est une isom\'etrie sur son image.
\end{prop}
\begin{proof}
C'est une cons\'equence imm\'ediate de la prop.\,\ref{glob4} et de la d\'efinition
de $\Pi(\rho_T^\diamond)$.
\end{proof}


\part{Cohomologie d'espaces fonctionnels ad\'eliques et cohomologie compl\'et\'ee}
\section{Cohomologie de $\GG(\Q)$ et cohomologie compl\'et\'ee}\label{cup0}
Dans ce chapitre, on explique (rem.~\ref{cup7.1}, \ref{cup7.2} et~\ref{cup7.3})
le lien entre la cohomologie compl\'et\'ee
d'Emerton et la cohomologie de $\GG(\Q)$ \`a valeurs dans certains espaces
fonctionnels ad\'eliques comme ${\cal C}(\GG(\A))$.
Le lecteur souhaitant plus de d\'etails est invit\'e \`a consulter~\cite{adele}.
Enfin, on \'etablit (cor.\,\ref{ES6.4}) une dualit\'e entre
$H^1_c(\GG(\Q),{\cal C}(\GG(\A)))$ et $H^1(\GG(\Q),{\rm Mes}(\GG(\A)))$.
\Subsection{Cohomologie \`a support compact}\label{cup1}
\subsubsection{Cocycles nuls sur le borel}
Si $M$ est un $\GG(\Q)$-module,
on d\'efinit la cohomologie \`a support compact $H^\bullet_c(\GG(\Q),M)$
comme la cohomologie du c\^one $[\rg(\GG(\Q),M)\to\rg(\BB(\Q),M)]$.  Elle est donc calcul\'ee
par le complexe\footnote{${\cal C}$ d\'esigne les fonctions continues, mais comme $\GG(\Q)$ est discret,
toutes les fonctions sont continues.}
(dans lequel on note simplement $B$ et $G$ les groupes $\BB(\Q)$ et $\GG(\Q)$)
$$M\to {\cal C}(G,M)\oplus M\to {\cal C}(G\times G,M)\oplus {\cal C}(B,M)\to
{\cal C}(G\times G\times G,M)\oplus {\cal C}(B\times B,M)\to\cdots\,,$$
o\`u les fl\`eches ${\cal C}(H^i,M)\to {\cal C}(H^{i+1},M)$, pour $H=\GG(\Q),\BB(\Q)$, sont
les diff\'erentielles usuelles, les fl\`eches ${\cal C}(\GG(\Q)^i,M)\to {\cal C}(\BB(\Q)^i,M)$
sont les restrictions, et les autres fl\`eches sont nulles\footnote{On d\'efinit de m\^eme
$H^1_c(\Gamma(1),M)$.}.  En particulier,
$$H^1_c(\GG(\Q),M)=\frac{\{((c_\sigma)_{\sigma\in \GG(\Q)},c_B),\ c_{\sigma\tau}=\sigma\cdot c_\tau+c_\sigma,\ 
c_\sigma=(\sigma-1)\cdot c_B,\ {\text{si $\sigma\in \BB(\Q)$}}\}}
{\{(((\sigma-1)\cdot a)_{\sigma\in \GG(\Q)},a),\ a\in M\}}.$$
On note $Z^1(\GG(\Q),\BB(\Q),M)$ le module
des $1$-cocycles $(c_\sigma)_{\sigma\in \GG(\Q)}$ sur $\GG(\Q)$, 
\`a valeurs dans $M$, qui sont identiquement nuls
sur $\BB(\Q)$.
On dispose d'une application naturelle
$Z^1(\GG(\Q),\BB(\Q),M){\to} H^1_c(\GG(\Q),M)$ envoyant
$(c_\sigma)_{\sigma\in \GG(\Q)}$ sur la classe de $((c_\sigma)_{\sigma\in \GG(\Q)},0)$.
\begin{lemm}\phantomsection\label{ES2}
Cette application induit un isomorphisme naturel
$$Z^1(\GG(\Q),\BB(\Q),M)\overset{\sim}{\to} H^1_c(\GG(\Q),M).$$
\end{lemm}
\begin{proof}
Cela r\'esulte de ce que
$$\phantom{XXXXXX}((c_\sigma)_{\sigma},c_B)=((c_\sigma-(\sigma-1)\cdot c_B)_{\sigma},0)+
(((\sigma-1)\cdot c_B)_{\sigma},c_B)\phantom{XXXX}\qedhere$$
\end{proof}

\subsubsection{Le symbole modulaire $(0,\infty)$}\label{cup2}
Soit $\piqp=\piqp(\Q)$, et \index{P1@\PIQP}soient ${\rm Div}(\piqp)$ le $\Z$-module
libre de base $\piqp$ et ${\rm Div}^0(\piqp)$ le sous-$\Z$-module
des $\sum_{x\in\piqp}n_x(x)$ v\'erifiant $\sum_{x\in\piqp}n_x=0$.
Si $a,b\in\piqp$, on note $(a,b)$ l'\'el\'ement $(b)-(a)$ de
${\rm Div}^0(\piqp)$.  Les $(a,b)$ forment une famille g\'en\'eratrice
de ${\rm Div}^0(\piqp)$ et on a les relations $(a,a)=0$ et $(a,b)+(b,c)+(c,a)=0$
pour tous $a,b,c\in\piqp$.

Le groupe $\GG(\Q)$ agit sur $\piqp$ par $\matrice{a}{b}{c}{d} x=\frac{ax+b}{cx+d}$.
Cette action est transitive et
le stabilisateur de $\infty$ est le groupe $\BB(\Q)=\matrice{\Q^\dual}{\Q}{0}{\Q^\dual}$.

\begin{lemm}\phantomsection\label{ES3}
Si $M$ un $\GG(\Q)$-module,
on a un isomorphisme naturel
$$Z^1(\GG(\Q),\BB(\Q),M)\cong H^0(\GG(\Q),{\rm Hom}({\rm Div}^0(\piqp),M)).$$
\end{lemm}
\begin{proof}
Si $\alpha\in{\rm Hom}({\rm Div}^0(\piqp),M)$ est invariant par $\GG(\Q)$, i.e.~si
$$\gamma\cdot \alpha(a,b)=\alpha(\gamma\cdot a,\gamma\cdot b),\quad{\text{pour tous $a,b\in\piqp$,}}$$
on pose $c_\sigma=\alpha(\infty,\sigma\cdot\infty)$.
Alors $c_\sigma=0$ si $\sigma\in \BB(\Q)$ puisque $\sigma\cdot\infty=\infty$,
et 
$$c_{\sigma\tau}=\alpha(\infty,\sigma\cdot\infty)+\alpha(\sigma\cdot\infty,\sigma\tau\cdot\infty)=
c_\sigma+\sigma(\alpha(\infty,\tau\cdot\infty))=c_\sigma+\sigma\cdot c_\tau,$$ 
ce qui prouve
que $(c_\sigma)_{\sigma\in \GG(\Q)}\in Z^1(\GG(\Q),\BB(\Q),M)$.

R\'eciproquement, si
$(c_\sigma)_{\sigma\in \GG(\Q)}\in Z^1(\GG(\Q),\BB(\Q),M)$, et si $a,b\in\piqp$, on choisit
$\sigma_a,\sigma_b\in \GG(\Q)$ avec $\sigma_a\cdot \infty=a$ et $\sigma_b\cdot\infty=b$,
et on pose $\alpha(a,b)=c_{\sigma_b}-c_{\sigma_a}$ (notons que $c_{\sigma_x}$ ne d\'epend que de $x$,
et pas de $\sigma_x$ car si $\sigma\cdot\infty=\sigma'\cdot\infty$, il existe $\gamma\in \BB(\Q)$
tel que $\sigma=\sigma'\gamma$ et alors $c_{\sigma}=\sigma'\cdot c_\gamma+c_{\sigma'}=
c_{\sigma'}$ puisque $c_\gamma=0$ par hypoth\`ese).
Alors $$\alpha(\gamma\cdot a,\gamma\cdot b)=
c_{\gamma\sigma_b}-c_{\gamma\sigma_a}=\gamma\cdot c_{\sigma_b}-c_{\gamma}-
\gamma\cdot c_{\sigma_a}+c_{\gamma}=\gamma\cdot\alpha(a,b).$$
Les deux fl\`eches construites ci-dessus sont 
 inverses l'une de l'autre.
\end{proof}

\begin{rema}\phantomsection\label{ES4}
{\rm (i)} En combinant les lemmes~\ref{ES2} et~\ref{ES3},
on obtient un isomorphisme naturel
$$H^0(\GG(\Q),{\rm Hom}({\rm Div}^0(\piqp),M))\cong H^1_c(\GG(\Q),M).$$

{\rm (ii)} L'isomorphisme ci-dessus
fournit, si
$a,b\in\piqp$, une application naturelle 
$$(a,b): H^1_c(\GG(\Q),M)\to M.$$
Cette application est d\'efinie comme suit: si
$c\in H^1_c(\GG(\Q),M)$ et si $(c_\sigma)_{\sigma\in \GG(\Q)}$ est l'\'el\'ement
de $Z^1(\GG(\Q),\BB(\Q),M)$ correspondant \`a $c$, alors
$$\langle (a,b), c\rangle=c_{\alpha_b}-c_{\alpha_a}, \quad{\text{o\`u $\alpha_a,\alpha_b\in\\G(\Q)$
v\'erifient $\alpha_a\cdot\infty=a$ et $\alpha_b\cdot\infty=b$.}}$$
En particulier,
$$\langle (0,\infty), c\rangle=-c_S,\quad {\text {avec $S=\matrice{0}{1}{-1}{0}$}}.$$
\end{rema}

\Subsection{Espaces fonctionnels ad\'eliques}\label{fma3}
\subsubsection{Repr\'esentations alg\'ebriques}\label{fma4}
On \index{W@\WWW}note $W_{k,j}^\dual$ la $\Q$-repr\'esentation 
$$W_{k,j}^\dual={\rm Sym}^k\otimes{\det}^{-j}$$ de $\GG(\Q)$,
o\`u ${\rm Sym}^k$ est la puissance sym\'etrique de la repr\'esentation standard de dimension $2$
de $\GG(\Q)$, \index{W@\WWW}et $W_{k,j}$ sa duale.  
La repr\'esentation ${\rm Sym}^1$ est donc la repr\'esentation standard: i.e. c'est l'espace
$\Q e_1^\dual\oplus\Q e_2^\dual$ \index{e@\eee}muni de \index{action@\actions}l'action
$$\matrice{a}{b}{c}{d} * e_1^\dual=a e_1^\dual+c e_2^\dual,\quad
\matrice{a}{b}{c}{d} * e_2^\dual=b e_1^\dual+d e_2^\dual.$$
La repr\'esentation $W_{k,j}^\dual$ admet comme base les $(e_1^\dual)^i(e_2^\dual)^{k-i}(e_1^\dual\wedge e_2^\dual)^{-j}$
(avec action \'evidente de $\GG(\Q)$), pour $0\leq i\leq k$.
Si $e_1,e_2$ est \index{e@\eee}la base de $({\rm Sym}^1)^\dual$ duale de $e_1^\dual,e_2^\dual$, alors
$$\matrice{a}{b}{c}{d} * e_1=\frac{de_1-be_2}{ad-bc},\quad
\matrice{a}{b}{c}{d} * e_2=\frac{-ce_1+ae_2}{ad-bc},\quad 
\matrice{a}{b}{c}{d} * (e_1\wedge e_2)=\frac{e_1\wedge e_2}{ad-bc}$$
et $W_{k,j}$ admet comme base les $(e_1)^i(e_2)^{k-i}(e_1\wedge e_2)^{-j}$,
pour $0\leq i\leq k$.

Les $W_{k,j}$, pour $k\in\N$ et $j\in\Z$, sont toutes les repr\'esentations
alg\'ebriques irr\'eductibles de $\GG(\Q)$ et on a
$$W_{k,j}^\dual\cong W_{k,k-j}.$$

Si $L$ est un corps de caract\'eristique $0$, notons $W_{k,j}^\dual(L)$ la repr\'esentation
alg\'ebrique de $\GG(L)$
$$W_{k,j}^\dual(L)=L\otimes_\Q W_{k,j}^\dual,$$
et \index{W@\WWW}posons
$$W_{\rm tot}(L)=\oplus_{k,j}W_{k,j}\otimes W_{k,j}^\dual(L).$$
On munit $W_{\rm tot}(L)$ d'une action de $\GG(\Q)\times \GG(L)$
en faisant agir $(h_1,h_2)\in \GG(\Q)\times \GG(L)$
par $h_1$ sur $W_{k,j}$ et $h_2$ sur $W_{k,j}^\dual(L)$.

Soit ${\rm Alg}(\GG,L)$ \index{alg@\rmalg}l'espace des fonctions alg\'ebriques
sur $\GG(L)$, \`a valeurs dans $L$ (i.e. l'espace des fonctions polynomiales, \`a coefficients
dans $L$, en $a,b,c,d,(ad-bc)^{-1}$, si $g=\matrice{a}{b}{c}{d}\in \GG(L)$).
On fait agir $\GG(\Q)\times \GG(L)$ sur ${\rm Alg}(\GG,L)$ par
$$(h_1,h_2)\cdot\phi(g)=\phi(h_1^{-1}gh_2).$$

Si $v\in W_{k,j}$ et $\check v\in W_{k,j}^\dual(L)$, la fonction 
$$g\mapsto\phi_{\check v,v}(g)
=\langle g\cdot\check v,v\rangle$$
est un \'el\'ement de ${\rm Alg}(\GG,L)$, et l'application 
$\check v\otimes v\mapsto \phi_{\check v,v}$, 
de $W_{k,j}^\dual(L)\otimes W_{k,j}$ dans ${\rm Alg}(\GG,L)$, 
est $\GG(\Q)\times \GG(L)$-\'equivariante car $\langle h_1^{-1}gh_2\cdot\check v,v\rangle
=\langle gh_2\cdot\check v,h_1\cdot v\rangle$.  
Ceci fournit un isomorphisme $\GG(\Q)\times \GG(L)$-\'equivariant
$$W_{\rm tot}(L)\cong {\rm Alg}(\GG,L).$$

\subsubsection{Fonctions localement alg\'ebriques}\label{fma5}
Si $\Lambda$ est un anneau,
\index{LC@\rml}soit ${\rm LC}(\GG(\A),\Lambda)$ l'espace 
des $\phi:\GG(\A)\to \Lambda$ localement constantes.
Notons qu'une telle fonction est constante sur les classes modulo~$\GG(\R)_+$ puisque
ce groupe est connexe.

L'espace ${\rm LC}(\GG(\A),L)$ est muni \index{action@\actions}d'actions de $\GG(\Q)$ 
et de $\GG(\A)$ donn\'ees par:
$$(\gamma*\phi)(x)=\phi(\gamma^{-1}x),\ {\text{si $\gamma\in \GG(\Q)$}},\quad
(g\star\phi)(x)=\phi(xg),\ {\text{si $g\in \GG(\A)$}}.$$
Ces deux actions commutent (et donc d\'efinissent une action
de $\GG(\Q)\times \GG(\A)$).

Si $L$ est un corps de caract\'eristique~$0$,
cela munit \index{LC@\rml}l'espace $${\rm LP}(\GG(\A),L)={\rm LC}(\GG(\A),L)\otimes_L {\rm Alg}(\GG,L)$$
des fonctions localement alg\'ebriques sur $\GG(\A)$
d'actions de $\GG(\Q)$ (action diagonale des actions de $\GG(\Q)$)
et de $\GG(\A)\times \GG(L)$.

\subsubsection{Fonctions continues et mesures}\label{fma6.1}
Si $\Lambda$ est un anneau topologique,
on \index{C@\calC}note ${\cal C}(\GG(\A),\Lambda)$ l'espace des fonctions continues
sur $\GG(\A)$ \`a valeurs dans $\Lambda$.  Si $\Lambda$ est totalement discontinu, une telle fonction
est constante sur les classes modulo $\GG(\R)_+$ puisque ce groupe est connexe.
On munit ${\cal C}(\GG(\A),\Lambda)$ des actions de $\GG(\Q)$
et de $\GG(\A)$ comme ci-dessus:
$$(\gamma*\phi)(x)=\phi(\gamma^{-1}x),\ {\text{si $\gamma\in \GG(\Q)$}},\quad
(g\star\phi)(x)=\phi(xg),\ {\text{si $g\in \GG(\A)$}}.$$
Ces actions commutent.

\vskip.2cm
Si $\Lambda$ est totalement discontinu,
on note
${\cal C}_c(\GG(\A),\Lambda)$ \index{C@\calC}l'espace des fonctions continues \`a support compact modulo $\GG(\R)_+$,
et ${\rm Mes}(\GG(\A),\Lambda)$ \index{mesures@\mesure}son $\Lambda$-dual topologique 
(l'espace des {\it mesures sur $\GG(\A)$
\`a valeurs dans $\Lambda$}). On munit ${\cal C}_c(\GG(\A),\Lambda)$ des actions de $\GG(\Q)$
et de $\GG(\A)$ sur ${\cal C}(\GG(\A),\Lambda)$, et ${\rm Mes}(\GG(\A),\Lambda)$ des actions qui s'en 
d\'eduisent par dualit\'e.
\subsubsection{Injection dans les fonctions continues}\label{fma6}
Si $v$ est une place de $\Q$ et si $L$ est une extension finie
de $\Q_v$, 
on dispose d'un morphisme naturel
$\GG(\A)\to \GG(\Q_v)\hookrightarrow \GG(L)$ et on peut faire agir $\GG(\A)$ sur
${\rm LP}(\GG(\A),L)$ via le morphisme diagonal $\GG(\A)\to \GG(\A)\times \GG(L)$
et l'action de $\GG(\A)\times \GG(L)$ sur ${\rm LP}(\GG(\A),L)$.
Par ailleurs, l'application $\phi\otimes P\mapsto \phi P$ induit une injection
$${\rm LP}(\GG(\A),L)\hookrightarrow{\cal C}(\GG(\A),L)\,,$$ 
qui est $\GG(\Q)\times \GG(\A)$-\'equivariante
si on munit ${\rm LP}(\GG(\A),L)$ de l'action d\'efinie par ce qui pr\'ec\`ede.

\Subsection{La cohomologie compl\'et\'ee}\label{cup4}
\subsubsection{Induction de $\Gamma(1)$ \`a $\GG(\Q)$}
On note $\Gamma$ le groupe $\Gamma(1)={\bf SL}_2(\Z)$.
Soit $L$ une extension finie de $\Q_p$.
\begin{lemm} \label{cup5}
On a un isomorphisme de $\GG(\Q)$-modules:
$${\cal C}(\GG(\A),L)={\rm Ind}_\Gamma^{\GG(\Q)}{\cal C}(\GG(\cZ),L).$$
\end{lemm}
\begin{proof}
Si $\phi\in {\cal C}(\GG(\A),L)$, alors $\phi$ est constante sur les classes
modulo $\GG(\R)_+$. Si $\gamma\in \GG(\Q)$, soit $\phi_\gamma\in {\cal C}(\GG(\cZ),L)$
d\'efinie par $\phi_\gamma(\kappa)=\phi(\gamma^{-1} x_\infty\kappa)$ pour n'importe quel
choix de $x_\infty\in \GG(\R)_+$ (le r\'esultat ne d\'epend pas de ce choix d'apr\`es ce qui pr\'ec\`ede).

Si $\alpha\in\Gamma$, alors
\begin{align*}
\phi_{\alpha\gamma}(\kappa)=\phi(\gamma^{-1}\alpha^{-1}x_\infty\kappa)&=
\phi(\gamma^{-1}(\alpha_\infty^{-1}x_\infty)((\alpha^{]\infty[})^{-1}\kappa))\\ 
&= \phi_\gamma((\alpha^{]\infty[})^{-1}\kappa)=(\alpha*\phi_\gamma)(\kappa),
\end{align*}
ce qui prouve que $(\phi_\gamma)_{\gamma\in \GG(\Q)}\in {\rm Ind}_\Gamma^{\GG(\Q)}{\cal C}(\GG(\cZ))$.

R\'eciproquement, si $(\phi_\gamma)_{\gamma\in \GG(\Q)}\in {\rm Ind}_\Gamma^{\GG(\Q)}{\cal C}(\GG(\cZ))$,
on d\'efinit $\phi\in {\cal C}(\GG(\A),L)$, en posant $\phi(\gamma^{-1}x_\infty\kappa)=
\phi_\gamma(\kappa)$: tout \'el\'ement $\GG(\A)$ peut s'\'ecrire sous la forme
$\gamma^{-1} x_\infty \kappa$, avec $\gamma\in \GG(\Q)$,
$x_\infty\in \GG(\R)_+$ et $\kappa\in \GG(\cZ)$, et une telle \'ecriture est unique
\`a changements simultan\'es $\gamma\mapsto \alpha\gamma$, $x_\infty\mapsto \alpha_\infty x_\infty$
et $\kappa\mapsto \alpha^{]\infty[}\kappa$, avec $\alpha\in \Gamma$; la condition
$\phi_{\alpha\gamma}=\alpha*\phi_\gamma$ est exactement la condition qu'il faut pour
que $\phi$ soit bien d\'efinie.
\end{proof}

\begin{rema}\phantomsection\label{cup6}
(i) On a ${\rm LC}(\GG(\cZ),L)=\varinjlim_N {\cal C}(\GG(\Z/N),L)$, et la preuve ci-dessus permet
de montrer que
$${\rm LC}(\GG(\A),L)=\varinjlim_N {\rm Ind}_\Gamma^{\GG(\Q)}{\cal C}(\GG(\Z/N),L).$$

(ii) Pour les m\^emes raisons, on a:
$${\rm Mes}(\GG(\A),L)={\rm Ind}_\Gamma^{\GG(\Q)} {\rm Mes}(\GG(\cZ),L).$$
\end{rema}

\subsubsection{Descente de $\GG(\Q)$ \`a $\Gamma(1)$}
Compte-tenu du lemme~\ref{cup5} et de la rem.~\ref{cup6},
le lemme de Shapiro nous donne:
\begin{prop}\phantomsection\label{cup6.5}
Si $X={\cal C}$, ${\rm Mes}$, ${\rm LC}$, ${\rm LP}$, on a des
isomorphismes naturels:
$$H^1(\GG(\Q),X(\GG(\A),L))\cong H^1(\Gamma, X(\GG(\cZ),L)).$$
\end{prop}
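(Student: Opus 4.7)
The plan is to apply Shapiro's lemma in all four cases, using the induction presentations already established in Lemma~\ref{cup5} and Remarque~\ref{cup6}. Since $\GG(\Q)$ is discrete here, its cohomology is the usual derived-functor cohomology and Shapiro's lemma takes its classical form: for any $\GG(\Q)$-module $M$,
$$H^\bullet(\GG(\Q),{\rm Ind}_\Gamma^{\GG(\Q)}M)\cong H^\bullet(\Gamma,M).$$

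For $X={\cal C}$, Lemme~\ref{cup5} directly identifies ${\cal C}(\GG(\A),L)$ with ${\rm Ind}_\Gamma^{\GG(\Q)}{\cal C}(\GG(\cZ),L)$, so Shapiro's lemma applied in degree~$1$ gives the desired isomorphism. The case $X={\rm Mes}$ is identical, using the presentation provided by Remarque~\ref{cup6}(ii).

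For $X={\rm LC}$, Remarque~\ref{cup6}(i) expresses ${\rm LC}(\GG(\A),L)$ as the filtered colimit
$\varinjlim_N{\rm Ind}_\Gamma^{\GG(\Q)}{\cal C}(\GG(\Z/N),L)$.
Since group cohomology of a discrete group commutes with filtered colimits of coefficients in each degree, I would apply Shapiro's lemma termwise and then pass to the colimit, obtaining
$$H^1(\GG(\Q),{\rm LC}(\GG(\A),L))\cong\varinjlim_N H^1(\Gamma,{\cal C}(\GG(\Z/N),L))\cong H^1(\Gamma,{\rm LC}(\GG(\cZ),L)).$$

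For $X={\rm LP}$, the additional ingredient is the projection formula: if $V$ is a $\Gamma$-module and $W$ a $\GG(\Q)$-module, then
$${\rm Ind}_\Gamma^{\GG(\Q)}(V)\otimes_L W\cong{\rm Ind}_\Gamma^{\GG(\Q)}(V\otimes_L W|_\Gamma)$$
as $\GG(\Q)$-modules (with diagonal action on the left). Taking $V={\rm LC}(\GG(\cZ),L)$ and $W={\rm Alg}(\GG,L)$, and using the definition ${\rm LP}(\GG(\A),L)={\rm LC}(\GG(\A),L)\otimes_L{\rm Alg}(\GG,L)$ together with the ${\rm LC}$-case of Remarque~\ref{cup6}(i), this presents ${\rm LP}(\GG(\A),L)$ as a filtered colimit of induced representations and reduces the computation to Shapiro's lemma exactly as in the ${\rm LC}$ case. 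The only minor point to check is that the diagonal $\GG(\Q)$-action on ${\rm LP}(\GG(\A),L)$ agrees with the one inherited from the projection formula, which is immediate since the action of $\GG(\A)$ and $\GG(L)$ plays no role in computing $\GG(\Q)$-cohomology. No step presents any real obstacle; the content is just packaging Shapiro's lemma with the presentations already in hand.
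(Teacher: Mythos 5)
The cases $X={\cal C}$ and $X={\rm Mes}$ are exactly the paper's argument: Lemme~\ref{cup5} and Remarque~\ref{cup6}(ii) give \emph{equalities} of coinduced modules, with no colimit in sight, and Shapiro's lemma applies directly.

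For $X={\rm LC}$ (and then ${\rm LP}$), however, the principle you invoke --- ``group cohomology of a discrete group commutes with filtered colimits of coefficients in each degree'' --- is not true in general: commutation in degree $n$ requires the group to be of type $FP_n$, and $\GG(\Q)={\bf GL}_2(\Q)$ is not even finitely generated (its abelianization is $\Q^\dual$), hence not $FP_1$. The colimit you must commute is through $H^1(\GG(\Q),-)$, not $H^1(\Gamma,-)$: the inclusion ${\rm LC}(\GG(\A),L)\hooklra{\rm Ind}_\Gamma^{\GG(\Q)}{\rm LC}(\GG(\cZ),L)$ is \emph{strict} (already the invariants differ, $L$ versus ${\rm LC}(\cZ^\dual,L)$), so one cannot replace the colimit presentation of Remarque~\ref{cup6}(i) by a single coinduction. (It is $\Gamma={\bf SL}_2(\Z)$, being virtually free hence $FP_\infty$, for which colimits commute; that is what Remarque~\ref{cup7.1} uses.) The ${\rm LP}$ case has an additional analogous issue, since ${\rm Alg}(\GG,L)$ is an infinite direct sum of finite-dimensional pieces and coinduction does not commute with infinite direct sums either, so the projection formula you cite applies only piecewise. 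The paper's one-line proof is equally laconic about all this, so your plan faithfully follows the source, but the justification you give is not valid as stated. A repair: use that $\Gamma$ is finitely generated to see that any $\GG(\Q)$-cocycle valued in ${\rm LC}(\GG(\A),L)$ restricts on $\Gamma$ to a bounded-level cocycle, then verify directly, against the levelwise Shapiro isomorphisms, that the resulting map to $\varinjlim_N H^1(\Gamma,{\cal C}(\GG(\Z/N),L))$ is bijective; or pass through the $S$-arithmetic groups $\GG(\Z[\frac{1}{S}])$ of Remarque~\ref{cup7.3}, which are $FP_\infty$.
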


\begin{rema}\phantomsection\label{cup7.1}
{\rm (i)} On a des isomorphismes:

$\bullet$ $H^1(\Gamma, {\cal C}(\GG(\cZ),L))=L\otimes_{\O_L}H^1(\Gamma, {\cal C}(\GG(\cZ),\O_L))$.

$\bullet$ $H^1(\Gamma, {\cal C}(\GG(\cZ),\O_L))=\varprojlim_n H^1(\Gamma,{\cal C}(\GG(\cZ),\O_L/p^n))$.

$\bullet$
$H^1(\Gamma,{\cal C}(\GG(\cZ),\O_L/p^n))
=\varinjlim_N H^1(\Gamma,{\cal C}(\GG(\Z/N),\O_L/p^n))$.

$\bullet$
$H^1(\Gamma,{\cal C}(\GG(\Z/N),\O_L/p^n))=H^1(Y(N)_\C,\O_L/p^n)$, o\`u $Y(N)$ est la courbe modulaire
de niveau $N$ du chap.\,\ref{como1}.

On en d\'eduit, en utilisant la prop.\,\ref{cup6.5},
 que $H^1(\GG(\Q),{\cal C}(\GG(\A),L))$ est la cohomologie compl\'et\'ee
de la tour des courbes modulaires de tous niveaux.
De m\^eme $H^1_c(\GG(\Q),{\cal C}(\GG(\A),L))$ est la cohomologie \`a support compact compl\'et\'ee
de la tour des courbes modulaires de tous niveaux.

{\rm (ii)} On a une suite exacte de $\GG(\A)$-modules (o\`u ${\cal C}(-)={\cal C}(-,L)$ ou ${\cal C}(-,\O_L)$)
$$0\to {\cal C}(\cZ^\dual)\to{\rm Ind}_{\BB(\A)}^{\GG(\A)}{\cal C}(\cZ^\dual\times\cZ^\dual)\to
H^1_c(\GG(\Q),{\cal C}(\GG(\A)))\to H^1(\GG(\Q),{\cal C}(\GG(\A)))\to 0$$
o\`u $\cZ^\dual$ est vu comme le quotient
$\A^\dual/\R_+^\dual\times\Q^\dual$ et $\GG(\A)$ agit \`a travers le d\'eterminant sur ${\cal C}(\cZ^\dual)$
et $\BB(\A)$ \`a travers le tore diagonal sur ${\cal C}(\cZ^\dual\times\cZ^\dual)$.
Il en r\'esulte que la diff\'erence entre cohomologie compl\'et\'ee et cohomologie compl\'et\'ee
\`a support compact est tr\`es petite; en particulier, les deux espaces deviennent isomorphes
quand on localise en un id\'eal non-eisenstein.

{\rm (iii)} L'isomorphisme $H^1(Y(N)_\C,\O_L/p^n)\cong H^1_{\eet}(Y(N)_{\Qbar},\O_L/p^n)$
(et de m\^eme pour la cohomologie \`a support compact)
permet de munir tous les groupes ci-dessus d'une action de $G_\Q$.
Une autre mani\`ere de le faire est d'utiliser le fait que
la cohomologie de $\Gamma$ \`a valeurs dans des groupes comme ci-dessus
est aussi celle de son compl\'et\'e profini $\widehat\Gamma$ car $\Gamma$ contient
un sous-groupe d'indice fini qui est un groupe
libre. Or on dispose d'une extension
non triviale $\Pi_\Q$ de $G_\Q$ par $\widehat\Gamma$ (cf.~\no\ref{s23}), ce qui munit
la cohomologie de $\widehat\Gamma$ d'une action de $G_\Q$.
Les deux actions pr\'ec\'edentes co\"{\i}ncident car $\Pi_\Q$ est le groupe fondamental
arithm\'etique de $Y(1)$ vu comme champs alg\'ebrique, et $\widehat\Gamma$ en est
le groupe fondamental g\'eom\'etrique (et sa cohomologie co\"{\i}ncide avec 
la cohomologie \'etale puisque les courbes sont des $K(\pi,1)$).
\end{rema}

\begin{rema}\phantomsection\label{cup7.2}
(i) Plut\^ot que la cohomologie compl\'et\'ee de la tour de tous niveaux, Emerton fixe le niveau
$N$ hors de $p$, compl\`ete le long de la tour des courbes de niveau $Np^k$, pour $k\in\N$,
et prend la limite
inductive sur $N$. La cohomologie compl\'et\'ee $\widehat H^1(N)$
le long de la tour des courbes de niveau $Np^k$
est $$\widehat H^1(N) = H^1(\Gamma,{\cal C}(\GG(\Z_p\times(\Z/N)),L)).$$ 
Il s'ensuit, via le lemme de Shapiro,
que ce que consid\`ere Emerton est 
$$H^1(\Gamma, {\varinjlim}_N {\cal C}(\GG(\Z_p\times(\Z/N)),L))
\cong H^1(\GG(\Q),{\cal C}^{(p)}(\GG(\A),L)),$$ 
o\`u ${\cal C}^{(p)}(\GG(\A),L)\subset {\cal C}(\GG(\A),L)$ est le sous-espace des fonctions
localement lisses
pour~$\GG(\cZ^{]p[})$.

(ii) Si $M$ est un $\Gamma$-module, il r\'esulte de la description 
du \S\,\ref{cup3} ci-dessous, que les groupes
$H^1_c(\Gamma,M)$ et $H^1(\Gamma,M)$ sont des sous-quotients de $M$.
Comme le $\GG(\Z_p)$-module ${\cal C}(\GG(\Z_p\times(\Z/N)),L)$ est une repr\'esentation admissible
de $\GG(\Z_p)$, il en est de m\^eme de $\widehat H^1(N)$ et de sa variante \`a support compact
(cas particulier de~\cite[th.\,2.1.5]{Em06}).

(iii)
L'injection
$\GG(\A)$-\'equivariante
$ {\rm LP}(\GG(\A),L)\hookrightarrow {\cal C}(\GG(\A),L)$ 
fournit, d'apr\`es Emerton~\cite[(4.3.4)]{Em06} et \cite[th.\,7.4.2]{Em06b}, des identifications
\begin{align}\label{VV1}
H^1(\GG(\Q),{\cal C}(\GG(\A),L))^{\rm alg}&=
H^1(\GG(\Q),{\rm LP}(\GG(\A),L))\\
&=\oplus_{k,j} H^1(\GG(\Q),{\rm LC}(\GG(\A),L)\otimes_\Q W_{k,j})\otimes W_{k,j}^\dual(L)\notag
\end{align}
o\`u $^{\rm alg}$ d\'esigne l'espace des vecteurs localement alg\'ebriques.
(Dans la d\'ecomposition ci-dessus, l'action de $\GG(\A)$ 
sur $H^1(\GG(\Q),{\rm LC}(\GG(\A),\Q_p)\otimes_\Q W_{k,j})$ est lisse, et alg\'ebrique
sur $W_{k,j}^\dual(L)$ sur lequel $\GG(\A)$ agit \`a travers $\GG(\Q_p)$.)
\end{rema}

\begin{rema}\phantomsection\label{cup7.3}
Soit $S\subset{\cal P}\cup\{\infty\}$ fini.
Si $\infty\in S$ (resp.~$\infty\notin S$),
 tout \'el\'ement
de $\GG(\Q_S)$ s'\'ecrit sous la forme $\gamma^{-1}x_\infty\kappa$ (resp.~$\gamma^{-1}\kappa$),
avec $\gamma\in \GG(\Z[\frac{1}{S}])$, $x_\infty\in \GG(\R)_+$ et $\kappa\in \GG(\Z_S)$,
et une telle \'ecriture est unique \`a changements
simultan\'es $\gamma\mapsto \alpha\gamma$, $x_\infty\mapsto \alpha_\infty x_\infty$
et $\kappa\mapsto \alpha_{S\moins\{\infty\}}\kappa$, avec $\alpha\in\Gamma$ (resp.~$\alpha\in \GG(\Z)$).
On en d\'eduit des isomorphismes de $\GG(\Z[\frac{1}{S}])$-modules, si $X={\cal C}$, ${\rm LC}$, ${\rm LP}$,
${\rm Mes}$:
$$ X(\GG(\Q_S),L)=\begin{cases}
{\rm Ind}_{\Gamma}^{\GG(\Z[\frac{1}{S}])}X(\GG(\Z_S),L) &{\text{si $\infty\in S$,}}\\
{\rm Ind}_{\GG(\Z)}^{\GG(\Z[\frac{1}{S}])}X(\GG(\Z_S),L) &{\text{si $\infty\notin S$}}
\end{cases}$$
En particulier, si $\infty\in S$, alors
$H^1(\GG(\Z[\frac{1}{S}]),{\cal C}(\GG(\Q_S),L))$ est la cohomologie compl\'et\'ee
de la tour des courbes modulaires de niveaux dont les facteurs premiers appartiennent \`a $S$,
et $H^1_c(\GG(\Z[\frac{1}{S}]),{\cal C}(\GG(\Q_S),L))$ en est la cohomologie \`a support
compact compl\'et\'ee.
\end{rema}

\subsubsection{Descente de $\GG(\Q)$ \`a $\Gamma(1)$ pour la cohomologie \`a support compact}
Le lemme de Shapiro n'a, a priori, pas de raison
d'\^etre vrai pour la cohomologie \`a support compact (on a quand m\^eme 
une fl\`eche naturelle $H^i_c(\GG(\Q),{\rm Ind}_\Gamma^{\GG(\Q)}M)\to H^i_c(\Gamma,M)$
obtenue en \'evaluant les fonctions $\phi:\GG(\Q)\to M$ en $1$).
Mais on a le r\'esultat suivant:
\begin{prop}\phantomsection\label{cup7}
L'application naturelle induit un isomorphisme
$$H^1_c(\GG(\Q),{\cal C}(\GG(\A),L))\overset{\sim}{\to} H^1_c(\Gamma,{\cal C}(\GG(\cZ),L))$$
\end{prop}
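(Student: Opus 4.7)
Le plan envisagé est d'étendre l'isomorphisme de Shapiro de la prop.~\ref{cup6.5} du niveau de $\GG(\Q)$ à celui du borel, puis de conclure par le lemme des cinq. La définition en cône de la cohomologie à support compact fournit, pour tout $\GG(\Q)$-module $M$, une suite exacte longue
$$\cdots\to H^{i-1}(\BB(\Q),M)\to H^i_c(\GG(\Q),M)\to H^i(\GG(\Q),M)\to H^i(\BB(\Q),M)\to\cdots$$
et son analogue pour $\Gamma$ en remplaçant $\BB(\Q)$ par $B_\Gamma:=\BB(\Q)\cap\Gamma$. Les applications naturelles d'évaluation en $1\in\GG(\Q)$ relient fonctoriellement ces deux suites lorsque $M={\cal C}(\GG(\A),L)$ et $N={\cal C}(\GG(\cZ),L)$.

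Le c\oe ur de la preuve consistera à établir l'isomorphisme $H^i(\BB(\Q),M)\cong H^i(B_\Gamma,N)$ pour tout $i\geq 0$, analogue au borel de la prop.~\ref{cup6.5}. Pour cela, je prouverais d'abord la décomposition $\GG(\Q)=\BB(\Q)\,\Gamma$: si $g=\matrice{a}{b}{c}{d}\in\GG(\Q)$, on multiplie $g$ à gauche par un élément adéquat de $\BB(\Q)$ pour rendre sa seconde ligne de la forme $(r,s)$ avec $r,s\in\Z$ premiers entre eux, puis un argument de Bezout complète cette ligne en une matrice de $\Gamma$. L'ensemble $\BB(\Q)\backslash\GG(\Q)/\Gamma$ est ainsi réduit à un point, et la formule de Mackey appliquée à l'isomorphisme $M={\rm Ind}_\Gamma^{\GG(\Q)}N$ du lemme~\ref{cup5} donne ${\rm Res}_{\BB(\Q)}^{\GG(\Q)}M={\rm Ind}_{B_\Gamma}^{\BB(\Q)}N$. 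Comme $\Z[\GG(\Q)]$ est libre en tant que $\Z[\Gamma]$-module à droite (et de même pour $\Z[\BB(\Q)]$ sur $\Z[B_\Gamma]$), le lemme de Shapiro s'applique sans hypothèse de finitude et livre l'isomorphisme voulu, suivant la m\^eme mécanique que pour la prop.~\ref{cup6.5}.

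Il ne restera alors qu'à invoquer le lemme des cinq dans le diagramme commutatif formé par les deux suites exactes longues en degrés $0$ et $1$: les quatre flèches verticales extérieures seront des isomorphismes (par la prop.~\ref{cup6.5} et par ce qui précède), donc la flèche centrale $H^1_c(\GG(\Q),M)\to H^1_c(\Gamma,N)$ le sera aussi. L'obstacle principal de cette stratégie est la vérification que $\BB(\Q)\backslash\GG(\Q)/\Gamma$ ne contient qu'une seule double classe, mais celle-ci se ramène à l'argument élémentaire de Bezout évoqué ci-dessus; le reste n'est que mise en forme.
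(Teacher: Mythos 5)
Votre preuve est correcte, et emprunte une route diff\'erente de celle du texte. L'approche du texte est topologique~: pour \'etablir l'isomorphisme $H^i(\BB(\Q),{\cal C}(\GG(\A),L))\to H^i(U,{\cal C}(\GG(\cZ),L))$ pour $i=0,1$, on compare directement les espaces $\GG(\cZ)/\pm\matrice{1}{\cZ}{0}{1}$ et $\GG(\A)/\GG(\R)_+\matrice{\Q^\dual}{\A^{]\infty[}}{0}{\Q^\dual}$ au moyen d'une section continue, on descend au niveau unipotent par Shapiro, puis on invoque le lemme~\ref{cup8} (nullit\'e de $H^i(\Z,{\cal C}(\cZ,L))$ pour $i\geq 1$, qui repose sur la densit\'e de $\Z$ dans $\cZ$ et un passage \`a la limite). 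Vous, au contraire, prenez l'isomorphisme du lemme~\ref{cup5} comme une identification purement alg\'ebrique de $\GG(\Q)$-modules et appliquez la d\'ecomposition de Mackey~: puisque $\Gamma\backslash\GG(\Q)/\BB(\Q)$ est r\'eduit \`a un point --- ce qui revient \`a la transitivit\'e de ${\rm SL}_2(\Z)$ sur $\piqp(\Q)$, autrement dit \`a votre argument de Bezout --- la restriction de ${\rm Ind}_\Gamma^{\GG(\Q)}{\cal C}(\GG(\cZ),L)$ \`a $\BB(\Q)$ s'identifie \`a ${\rm Ind}_{U}^{\BB(\Q)}{\cal C}(\GG(\cZ),L)$ (avec $U=\BB(\Q)\cap\Gamma$, votre $B_\Gamma$), et Shapiro conclut en tout degr\'e. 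Cet argument est plus alg\'ebrique, il \'evite toute analyse topologique suppl\'ementaire une fois le lemme~\ref{cup5} acquis, et donne d'ailleurs l'isomorphisme $H^i(\BB(\Q),{\cal C}(\GG(\A),L))\cong H^i(U,{\cal C}(\GG(\cZ),L))$ en tout degr\'e $i$. Le seul point \`a pr\'eciser pour la commutativit\'e du diagramme des cinq est que l'identification de Mackey, prise avec $c=1$ comme repr\'esentant de l'unique double classe, envoie l'\'evaluation en $1\in\BB(\Q)$ sur l'\'evaluation en $1\in\GG(\Q)$, ce qui est imm\'ediat. L'approche du texte a pour elle de d\'egager en chemin le lemme~\ref{cup8}; mais votre variante par Mackey s'adapte tout aussi bien \`a la version $S$-adique de la rem.~\ref{cup8.5}, l'anneau $\Z[\tfrac{1}{S}]$ \'etant principal.
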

\begin{proof}
On a un diagramme commutatif \`a lignes exactes, o\`u 
$$B=\BB(\Q),\hskip2mm, G=\GG(\Q),\hskip2mm U=\BB(\Q)\cap\Gamma, \hskip2mm
{\cal C}_\A={\cal C}(\GG(\A),L),\hskip2mm {\cal C}_\cZ={\cal C}(\GG(\cZ),L)$$
et les deux isomorphismes verticaux r\'esultent du lemme de Shapiro:
$$\xymatrix@R=.4cm@C=.5cm{
H^0(G,{\cal C}_{\A})\ar[d]^\wr\ar[r]& H^0(B,{\cal C}_{\A})\ar[d]\ar[r]
&H^1_c(G,{\cal C}_{\A})\ar[d]\ar[r] 
&H^1(G,{\cal C}_{\A})\ar[d]^\wr\ar[r]& H^1(B,{\cal C}_{\A})\ar[d]\\
H^0(\Gamma,{\cal C}_{\cZ})\ar[r]& H^0(U,{\cal C}_{\cZ})\ar[r]
&H^1_c(\Gamma,{\cal C}_{\cZ})\ar[r] 
&H^1(\Gamma,{\cal C}_{\cZ})\ar[r]& H^1(U,{\cal C}_{\cZ})}$$
Pour conclure, gr\^ace au lemme des 5, il suffit de v\'erifier
que les fl\`eches $H^i(B,{\cal C}_\A)\to H^i(U,{\cal C}_{\cZ})$, pour $i=0,1$,
sont des isomorphismes.

On a $\GG(\A)=\BB(\Q)\cdot(\GG(\cZ)\GG(\R)_+)$ et $\BB(\Q)\cap(\GG(\cZ)\GG(\R)_+)=U$.
Il en r\'esulte que ${\cal C}_\A\cong{\rm Ind}_U^B {\cal C}_\cZ$,
et on conclut
en utilisant le lemme de Shapiro.
\end{proof}

\begin{rema}\phantomsection\label{cup8.5}
{\rm (i)} On montre de m\^eme que
$H^1_c(\GG(\Z[\frac{1}{S}]),{\cal C}(\GG(\Q_S),L))\to H^1_c(\Gamma,{\cal C}(\GG(\Z_S),L))$
est un isomorphisme.

{\rm (ii)} On a les m\^emes r\'esultats en rempla\c{c}ant ${\cal C}$ par ${\cal C}^{(p)}$.
\end{rema}

\Subsection{Dualit\'e}\label{cup3}
Soient $\Gamma={\rm SL}_2(\Z)$ et $\oGamma={\rm SL}_2(\Z)/\{\pm I\}$, 
o\`u $I=\matrice{1}{0}{0}{1}$.
Si $M$ est un $\Gamma$-module, on a $H^i(\Gamma,M)=H^i(\oGamma,M^{\{\pm I\}})$ \`a $2$-torsion pr\`es.
Le groupe $\oGamma$ est engendr\'e par $S=\matrice{0}{1}{-1}{0}$ et $U=\matrice{0}{1}{-1}{1}$,
les seules relations \'etant $S^2=1$ et $U^3=1$.  De plus, si $\overline B\subset\oGamma$ est l'image
de $B=\matrice{1}{\Z}{0}{1}$, alors $SU$ est un g\'en\'erateur de $\overline B$
(c'est l'image de $\matrice{1}{-1}{0}{1}$).

Si $M$ est un $\Z_p[\oGamma]$-module, on note $M^\dual$ son dual (suivant les cas, cela
peut \^etre le dual de Pontryagin, le $\Z_p$-dual, ou le $\Q_p$-dual (ou $L$-dual) topologique).
Le groupe $H^1_c(\oGamma,M^\dual)$ est le groupe des $1$-cocycles $\sigma\mapsto c^\dual_\sigma$
sur $\oGamma$, \`a valeurs dans $M^\dual$, qui sont identiquement nuls sur $\overline B$.
Un tel cocycle est enti\`erement d\'etermin\'e par $c^\dual_U$ et $c^\dual_S$, et on a
$(1+S)c^\dual_S=0$ et $(1+U+U^2)c^\dual_U=0$ (\`a cause des relations $S^2=1$ et $U^3=1$)
et la relation $c^\dual_{SU}=0$ impose en plus que $Sc^\dual_U+c^\dual_S=0$, ou encore
$Sc^\dual_U-Sc^\dual_S=0$, et donc $c^\dual_U=c^\dual_S$.
Autrement dit, on a une identification
$$H^1_c(\oGamma,M^\dual)=(M^\dual)^{1+U+U^2=0}\cap (M^\dual)^{1+S=0}.$$

Maintenant, un $1$-cocycle $\sigma\mapsto c_\sigma$ sur $\oGamma$, \`a valeurs dans $M$,
est uniquement d\'etermin\'e par $c_U$ et $c_S$ soumis aux relations
$(1+S)c_S=0$ et $(1+U+U^2)c_U=0$. La classe de cohomologie de ce cocycle
ne change pas si on remplace $(c_U,c_S)$ par $(c_U-(U-1)c,c_S-(S-1)c)$, avec
$c\in M$.  Cela permet, \`a $2$-torsion pr\`es, de supposer que $c_S=0$, et alors
la classe de cohomologie est d\'etermin\'ee par $c_U$ \`a addition pr\`es de
$(U-1)c$, avec $c\in M^{S=1}$. On a donc un isomorphisme
\begin{equation}\label{cogamma}
H^1(\oGamma, M)\cong M^{1+U+U^2=0}/(U-1)M^{S=1}
\end{equation}

Notons $\langle\ ,\ \rangle:M^\dual\times M\to \Lambda$, 
o\`u $\Lambda=\Z_p$ ou $\Q_p/\Z_p$ ou $\Q_p,L$..., l'accouplement
naturel.  
On d\'efinit un accouplement
$$H^1_c(\oGamma,M^\dual)\times H^1(\oGamma, M)\to\Lambda$$
en utilisant les identification pr\'ec\'edentes:
si
$$c^\dual\in\big((M^\dual)^{1+U+U^2=0}\cap (M^\dual)^{1+S=0}\big),\quad
c\in\big(M^{1+U+U^2=0}/(U-1)M^{S=1}\big),$$
on pose
$$c^\dual\cup c=2\langle c^\dual,(1-U^2)c\rangle.$$
\begin{prop}\phantomsection\label{ES5}
L'accouplement $\cup$ induit un isomorphisme
 (\`a $6$-torsion pr\`es)
$$H^1_c(\oGamma,M^\dual)\cong H^1(\oGamma,M)^\dual$$
\end{prop}
\begin{proof}
Les espaces $(M^\dual)^{1+U+U^2=0}$ et $M^{1+U+U^2=0}$ sont en dualit\'e pour l'accouplement
naturel $M^\dual\times M\to\Lambda$, et $c\mapsto (1-U^2)c$ est un isomorphisme
de $M^{1+U+U^2=0}$ \`a $3$-torsion pr\`es (noyau et conoyau sont tu\'es par $3$).
De plus, l'orthogonal de $(U-1)M^{S=1}$ (dans $(M^\dual)^{1+U+U^2=0}$) pour l'accouplement
$\langle c^\dual,(1-U^2)c\rangle$ est l'ensemble des $c^\dual$ tels que
$\langle c^\dual,(1-U^2)(U-1)c\rangle=0$ pour tout $c\in M^{S=1}$.
Or $(1-U^2)(U-1)=-(U^2+U+1)-3$ et l'adjoint de $U^2+U+1$ est $U^{-2}+U^{-1}+1=U+U^2+1$
qui tue $c^\dual$.  L'orthogonal de $(U-1)M^{S=1}$ est donc l'ensemble
des $c^\dual$ v\'erifiant
$\langle c^\dual,3c\rangle=0$ pour tout $c\in M^{S=1}$; c'est donc
$(M^\dual)^{S+1=0}$ (\`a $6$-torsion pr\`es).
\end{proof}
\begin{rema}\phantomsection\label{ES6}
(i)
L'accouplement ci-dessus est l'accouplement naturel \`a valeurs
dans $H^2_c(\oGamma,\Lambda)\cong\Lambda$.

(ii) On en d\'eduit une dualit\'e \`a $12$-torsion pr\`es 
$H^1_c(\Gamma(1),M^\dual)\times H^1(\Gamma(1), M)\to\Lambda$.
\end{rema}

\begin{coro}\phantomsection\label{ES6.4}
Les groupes $H^1_c(\GG(\Q),{\cal C}(\GG(\A),L))$ et $H^1(\GG(\Q),{\rm Mes}(\GG(\A),L))$
sont en dualit\'e.
\end{coro}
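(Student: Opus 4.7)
L'id\'ee est de combiner les r\'esultats de descente vers $\Gamma=\Gamma(1)$ d\'ej\`a \'etablis avec l'accouplement abstrait de la prop.\,\ref{ES5} appliqu\'ee aux $\Gamma$-modules appropri\'es. Comme $L$ est de caract\'eristique nulle, toutes les questions de $6$-torsion (et en particulier de $2$-torsion entre $\Gamma$ et $\oGamma$) apparaissant dans les \'enonc\'es du \S\,\ref{cup3} se trouvent automatiquement triviales.

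Je commencerais par invoquer la prop.\,\ref{cup7} et la prop.\,\ref{cup6.5} pour obtenir les deux isomorphismes naturels
\[
H^1_c(\GG(\Q),{\cal C}(\GG(\A),L))\cong H^1_c(\Gamma,{\cal C}(\GG(\cZ),L)),
\]
\[
H^1(\GG(\Q),{\rm Mes}(\GG(\A),L))\cong H^1(\Gamma,{\rm Mes}(\GG(\cZ),L)),
\]
ramenant ainsi le probl\`eme \`a une dualit\'e entre cohomologie \`a support compact et cohomologie ordinaire de $\Gamma$ pour un couple de coefficients duaux. Il restera \`a observer que ${\rm Mes}(\GG(\cZ),L)$ est par d\'efinition le $L$-dual topologique de ${\cal C}(\GG(\cZ),L)$, puisque $\GG(\cZ)$ est compact et totalement discontinu.

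Ensuite, pour pouvoir appliquer la prop.\,\ref{ES5} (\'enonc\'ee pour $\oGamma$), je passerais de $\Gamma$ \`a $\oGamma=\Gamma/\{\pm I\}$: comme $2$ est inversible dans $L$ et $\{\pm I\}$ est central dans $\Gamma$, la suite spectrale de Hochschild-Serre d\'eg\'en\`ere et fournit, pour tout $L[\Gamma]$-module $M$ et tout $i$, un isomorphisme $H^i(\Gamma,M)\cong H^i(\oGamma,M^{\pm I})$, ainsi que son analogue \`a support compact. L'involution $-I$ agit par multiplication \`a gauche sur $\GG(\cZ)$, et la d\'ecomposition en sous-espaces propres ${\cal C}(\GG(\cZ),L)={\cal C}^+\oplus{\cal C}^-$ respecte l'accouplement naturel avec ${\rm Mes}(\GG(\cZ),L)={\rm Mes}^+\oplus{\rm Mes}^-$; la dualit\'e entre les parties $\pm I$-invariantes demeure donc parfaite.

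Enfin, j'appliquerais la prop.\,\ref{ES5} au $\oGamma$-module $M={\rm Mes}(\GG(\cZ),L)^{\pm I}$, dont le dual est ${\cal C}(\GG(\cZ),L)^{\pm I}$: comme $6$ est inversible dans $L$, on obtient l'accouplement parfait souhait\'e entre $H^1_c(\oGamma,M^\dual)$ et $H^1(\oGamma,M)$ qui, remont\'e via les identifications pr\'ec\'edentes, fournit la dualit\'e annonc\'ee. Le point le plus d\'elicat, \`a mon avis, sera moins la structure formelle que la v\'erification que l'accouplement de la prop.\,\ref{ES5}, donn\'e par la formule explicite $c^\dual\cup c=2\langle c^\dual,(1-U^2)c\rangle$ sur les repr\'esentants de $\oGamma$-cocycles, correspond bien, via le lemme de Shapiro et la prop.\,\ref{cup7}, \`a un cup-produit naturel sur $\GG(\Q)$ \`a valeurs dans $H^2_c(\GG(\Q),L)$ compatible avec l'action r\'esiduelle de $\GG(\A)$ par translation \`a droite (cf.~rem.\,\ref{ES6}); cela garantit que la dualit\'e construite ne d\'epend pas des choix interm\'ediaires et pourra \^etre utilis\'ee dans la suite du texte.
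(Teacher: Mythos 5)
Votre preuve suit exactement le m\^eme chemin que celle du texte: descente \`a $\Gamma=\Gamma(1)$ par les prop.\,\ref{cup6.5} et~\ref{cup7}, puis application de la prop.\,\ref{ES5} au couple de coefficients duaux $({\rm Mes}(\GG(\cZ),L),{\cal C}(\GG(\cZ),L))$. Vous explicitez simplement deux points que le texte laisse implicites (passage de $\Gamma$ \`a $\oGamma$ gr\^ace \`a l'inversibilit\'e de~$2$, et compatibilit\'e de la d\'ecomposition $\pm I$ avec la dualit\'e Mes/C), ce qui est correct et ne change pas l'argument.
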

\begin{proof}
D'apr\`es les prop.~\ref{cup6.5} et~\ref{cup7},
on a des isomorphismes:
\begin{align*}
H^1_c(\GG(\Q),{\cal C}(\GG(\A),L))&\cong H^1_c(\Gamma(1),{\cal C}(\GG(\cZ),L))\\
H^1(\GG(\Q),{\rm Mes}(\GG(\A),L))&\cong H^1(\Gamma(1),{\rm Mes}(\GG(\cZ),L))
\end{align*}
La prop.\,\ref{ES5} (cf.~rem.\,\ref{ES6}) implique donc une dualit\'e dans un sens.
Pour en d\'eduire celle dans l'autre sens, il suffit de v\'erifier que
$H^1(\Gamma,{\rm Mes}(\GG(\cZ),\O_L))$ est sans $p$-torsion, ce qui
r\'esulte de ce que $H^0(\Gamma,{\rm Mes}(\GG(\cZ),k_L))=0$ car une mesure invariante par
$\Gamma$ l'est aussi par $\matrice{1}{\Z_p}{0}{1}\subset\matrice{1}{\cZ}{0}{1}$ par continuit\'e
de l'action de $\matrice{1}{\cZ}{0}{1}$,
et donc est nulle (pas de mesure de Haar en $p$-adique).
\end{proof}
\begin{rema}\phantomsection\label{dual1}
On montre de m\^eme que les groupes
$H^1_c(\GG(\Z[\frac{1}{S}]),{\cal C}(\GG(\Q_S),L))$ et $H^1(\GG(\Z[\frac{1}{S}]),{\rm Mes}(\GG(\Q_S),L))$
sont en dualit\'e.
\end{rema}

{
\Subsection{Action du centre}\label{cen1}
Si $a\in \A^\dual$, soit $z(a)=\matrice{a}{0}{0}{a}$. 
\begin{lemm}\phantomsection\label{cen2}
Si $a\in\Q^\dual$ plong\'e diagonalement
dans $\A^\dual$, alors $z(a)$ agit trivialement sur $H^1(\GG(\Q),{\cal C}(\GG(\A),\O_L))$
et  $H^1_c(\GG(\Q),{\cal C}(\GG(\A),\O_L))$.
\end{lemm}
\begin{proof}
Soit $\phi\in H^1_c(\GG(\Q),{\cal C}(\GG(\A),\O_L))$, et soit
 $\gamma\mapsto \phi_\gamma$ un $1$-cocycle repr\'esentant $\phi$. Alors $z(a)\star \phi$
est repr\'esent\'e par le cocycle $\gamma\mapsto z(a)\star \phi_\gamma$. Comme $z(a)$
est dans le centre de $\GG(\A)$ et aussi \'el\'ement de $\GG(\Q)$, on a aussi, en posant $\alpha:=z(a)^{-1}$,
$$z(a)\star\phi_\gamma=\alpha*\phi_\gamma=\phi_{\alpha\gamma}-\phi_\alpha=\phi_{\gamma\alpha}-\phi_\alpha=
(\gamma-1)*\phi_\alpha+\phi_\gamma$$
et donc $\gamma\mapsto z(a)\star \phi_\gamma$ diff\`ere de $\gamma\mapsto \phi_\gamma$ par un cobord.
Cela prouve le r\'esultat pour~$H^1$.  

Pour $H^1_c$, on a en plus la condition 
$\phi_\gamma=(\gamma-1)*\phi_B$, si $\gamma\in\BB(\Q)$, mais 
$((\gamma\mapsto z(a)\star \phi_\gamma),z(a)\star\phi_B)-
((\gamma\mapsto \phi_\gamma),\phi_B)$ est le bord de $\phi_\alpha$
plus $((\gamma\mapsto 0),(\alpha-1)*\phi_B-\phi_\alpha)$, et ce dernier terme est nul
puisque $\alpha\in\BB(\Q)$.
\end{proof}

\begin{coro}\phantomsection\label{cen3}
Le centre de $\GG(\A)$ {\rm (identifi\'e \`a $\A^\dual$)}
 agit \`a travers $\A^\dual/\R_+^\dual\Q^\dual\cong\cZ^\dual$
sur $H^1(\GG(\Q),{\cal C}(\GG(\A),\O_L))$
et $H^1_c(\GG(\Q),{\cal C}(\GG(\A),\O_L))$.
\end{coro}

}

{
\Subsection{Densit\'e des vecteurs alg\'ebriques}\label{cen4}
On rappelle que $S=\matrice{0}{1}{-1}{0}$ et $U=\matrice{0}{1}{-1}{1}$ sont les g\'en\'erateurs
habituels de $\Gamma(1)$.
Si $M$ est un $\Gamma(1)$-module, on a une suite exacte (\`a $2$-torsion pr\`es),
cf.~(\ref{cogamma})
$$0\to H^0(\Gamma(1),M)\to M^{S=1}\overset{U-1}{\longrightarrow}M^{1+U+U^2=0}\to
H^1(\Gamma(1),M)\to 0$$
et $M^{S=1}=(S-1)((-I)+1)M$ (\`a $2$-torsion pr\`es),
$M^{1+U+U^2=0}=(U-1)((-I)+1)M$ (\`a $6$-torsion pr\`es).
En particulier $H^1(\Gamma(1),M)$ est un quotient de $M$ (\`a $6$-torsion pr\`es).

Soit $N$ premier \`a $p$, 
et soit $\widehat H^1(N):=H^1(G_\Q,{\cal C}(\GG(\A)/\wGamma(Np^\infty),\O_L))$.
\begin{prop}\phantomsection\label{cen5}
{\rm (Emerton, \cite[prop.\,5.4.1]{Em08})}
Les vecteurs $\GG(\Z_p)$-alg\'ebriques sont denses dans $\widehat H^1(N)$.
\end{prop}
\begin{proof}
Cela r\'esulte de ce que,
si $M:={\cal C}(\GG(\Z_p)\times\GG(\Z/N),\O_L)$, alors
 $\widehat H^1(N)=H^1(\Gamma(1),M)$
est un quotient de
$M$ et de ce que 
les vecteurs $\GG(\Z_p)$-alg\'ebriques sont denses
dans ${\cal C}(\GG(\Z_p),\O_L)$ (Stone-Weierstrass, 
densit\'e des polyn\^omes dans les fonctions continues 
sur un compact).
\end{proof}

\begin{prop}\phantomsection\label{cen6}
{\rm (Emerton, \cite[prop.\,5.3.15]{Em08})}
Supposons ${\goth m}$
non-eisenstein.

{\rm (i)} Si $p\geq 5$, alors
$\widehat H^1(N)_{\goth m}$ est injectif comme $\GG(\Z_p)$-module.

{\rm (ii)} Si $K_p$ est un sous-groupe ouvert de $\GG(\Z_p)$ qui se surjecte sur $\Z_p^\dual$
et tel que $\Gamma(1)\cap K_p\wGamma(Np^\infty)$ est sans torsion,
alors
$\widehat H^1(N)_{\goth m}$ est injectif comme $K_p$-module.
\end{prop}
\begin{proof}
{\rm (i)}
Si $M={\cal C}(\GG(\Z_p)\times\GG(\Z/N),\O_L)$, alors $M^{1+U+U^2=0}$ et $M^{S=1}$
sont des facteurs directs de $M$ (\`a $6$-torsion pr\`es, d'o\`u l'hypoth\`ese $p\geq 5$), 
et leurs localis\'es en ${\goth m}$
aussi puisque $T_{\goth m}$ est un facteur direct de $T$.
Comme $M$ est injectif, il en est de m\^eme de $M^{1+U+U^2=0}_{\goth m}$ et $M^{S=1}_{\goth m}$.

Maintenant $H^0(\Gamma(1),M)_{\goth m}=0$ car ${\goth m}$
est non-eisenstein. On a donc une suite exacte
$0\to M^{S=1}_{\goth m}\to M^{1+U+U^2}_{\goth m}\to \widehat H^1(N)_{\goth m}\to 0$,
qui pr\'esente $\widehat H^1(N)_{\goth m}$ comme un quotient de modules injectifs.
Le r\'esultat s'en d\'eduit.

{\rm (ii)} L'hypoth\`ese implique que $\Gamma_U:=\Gamma(1)\cap K_p\wGamma(Np^\infty)$
est libre de type fini.  Si $M$ est un $\Gamma_U$-module et si $I$ est un ensemble
de g\'en\'erateurs de $\Gamma_U$, on a une suite exacte
$0\to H^0(\Gamma_U,M)\to M\to M^I\to H^1(\Gamma_U,M)\to 0$.
Par ailleurs, le lemme de Shapiro fournit un isomorphisme
$\widehat H^1(N)\cong H^1(\Gamma_U,{\cal C}(K_p\times\GG(\Z/N),\O_L))$.
On conclut comme ci-dessus, avec $M:={\cal C}(K_p\times\GG(\Z/N),\O_L)$. 
\end{proof}

}

\section{Formes modulaires ad\'eliques}\label{fma0}
Le but de ce chapitre est d'ad\'eliser la th\'eorie classique
des formes modulaires.

\Subsection{Formes modulaires classiques}\label{fma1}

\subsubsection{L'alg\`ebre des formes modulaires}\label{fma2}
Si $k\in\N$ et $j\in\Z$, on d\'efinit une \index{action@\actions}action \`a droite $(\gamma,f)\mapsto f_{|_{k,j}}\gamma$ de
$\GG(\R)_+$ sur les $f:{\cal H}^+\to \C$, par la formule
$$(f_{|_{k,j}}\gamma)(\tau)=\tfrac{(ad-bc)^{k-j}}{(c\tau+d)^{k}}
f\big(\tfrac{a\tau+b}{c\tau+d}\big),\quad{\text{si $\gamma=\matrice{a}{b}{c}{d}$.}}$$
Si $\Gamma$ est un sous-groupe d'indice fini de $\Gamma(1)={\bf SL}_2(\Z)$,
on \index{M1@\MMM}note $M^{\rm cl}_{k,j}(\Gamma,\C)$ (resp.~$M^{\rm par,\,cl}_{k,j}(\Gamma,\C)$), 
le $\C$-espace vectoriel {\it des formes
modulaires de poids~$(k,j)$ pour $\Gamma$}, c'est-\`a-dire, l'ensemble
des $f:{\cal H}^+\to\C$, holomorphes, \`a croissance lente
(resp.~\`a d\'ecroissance rapide) \`a
l'infini, v\'erifiant $f_{|_{k,j}}\gamma=f$ quel que soit $\gamma\in\Gamma$
(l'espace ne d\'epend pas de $j$ puisque
$ad-bc=1$, mais l'action de $\GG(\Q)_+$ en d\'epend; on note cet espace simplement
$M^{\rm cl}_k(\Gamma,\C)$ si on ne veut pas pr\'eciser l'action de $\GG(\Q)_+$).

Si $f\in M^{\rm cl}_k(\Gamma,\C)$, alors $f$ est p\'eriodique de p\'eriode~$N$
pour un certain entier $N\geq 1$, et $f$ est somme de sa s\'erie de Fourier
$$f(\tau)=\sum_{n=0}^{+\infty}a_{n/N}e^{2i\pi n\tau/N}=
\sum_{n=0}^{+\infty}a_{n/N}q^{n/N},\quad
{\text{avec $q=e^{2i\pi\tau}$}}.$$
La s\'erie $\sum_{n\in\Q_+}a_{n}q^{n}$ s'appelle
le {\it $q$-d\'eveloppement} de $f$.  Si $A$ est un sous-anneau
de $\C$, on note $M^{\rm cl}_k(\Gamma,A)$ le sous-$A$-module de $M^{\rm cl}_k(\Gamma,\C)$
des formes dont le $q$-d\'eveloppement est \`a coefficients dans $A$
et $M^{\rm cl}(\Gamma,A)$ la $A$-alg\`ebre des formes modulaires
pour $\Gamma$ \`a coefficients dans $A$,
somme directe des $M^{\rm cl}_k(\Gamma,A)$, pour $k\geq 0$.
Finalement, on note $M^{\rm cl}_k(A)$ (resp.~$M^{\rm cl}(A)$) la r\'eunion des
$M^{\rm cl}_k(\Gamma,A)$ (resp.~$M^{\rm cl}(\Gamma,A)$), o\`u
$\Gamma$ d\'ecrit l'ensemble des sous-groupes d'indice fini de
$\Gamma(1)$.  

\subsubsection{Les groupes $\Pi_\Q$, $\Pi'_\Q$, $\Pi_{\Q,S}$ et $\Pi'_{\Q,S}$}\label{s23}
Si $K$ est un corps de caract\'eristique~$0$, on pose
$M^{\rm cl}(\Gamma(1),K)=K\otimes_\Q M^{\rm cl}(\Gamma(1),\Q)$ et
$M^{\rm cl}(\overline K)=\overline K\otimes_{\Qbar}M^{\rm cl}(\Qbar)$.
On \index{PiK@\PIK}note $\Pi_K$ le groupe
des automorphismes de la $K$-alg\`ebre $M^{\rm cl}(\Kbar)$ au-dessus de $M^{\rm cl}(\Gamma(1),K)$;
c'est un groupe profini.  Chacun des $M^{\rm cl}_k(\overline K)$ est stable
par $\Pi_K$.

Si $K$ est alg\'ebriquement clos, alors $\Pi_K$ est le compl\'et\'e
profini de $\Gamma(1)$ (qui est beaucoup plus gros que
${\bf SL}_2(\cZ)$).  Dans le cas g\'en\'eral, on dispose
de la suite exacte
$$\xymatrix@C=.4cm{
1\ar[r] &\Pi_\Kbar\ar[r] & \Pi_K\ar[r] & G_K\ar[r] &1,}$$
scind\'ee, $ G_K$ agissant sur les coefficients du
$q$-d\'eveloppement des formes modulaires.

Par ailleurs, l'alg\`ebre $M^{\rm cl}(\Qbar)$ est stable sous l'action de
$\GG(\Q)_+$ d\'efinie par $f\to f_|\gamma$, si $f\in M^{\rm cl}(\Qbar)$
et $\gamma\in \GG(\Q)_+$, avec $f_|\gamma=f_{|_{k,0}}\gamma$ si $f\in M^{\rm cl}_k(\Qbar)$. 
 Il
en est donc de m\^eme de $M^{\rm cl}(\overline K)$ et on \index{PiK@\PIK}note
$\Pi'_K$ le sous-groupe des automorphismes de $M^{\rm cl}(\Kbar)$
engendr\'e par $\Pi_K$ et $\GG(\Q)_+$, et encore $f\to f_|\gamma$ 
l'action de $\gamma\in\Pi'_K$ sur $f\in M^{\rm cl}(\overline K)$.
Si $k\in\N$ et $j\in\Z$, on \index{action@\actions}note $f\mapsto f_{|_{k,j}}\gamma$ l'action
de $\gamma\in\Pi'_K$ sur $M^{\rm cl}_k(\overline K)\subset M^{\rm cl}(\overline K)$
co\"{\i}ncidant avec l'action d\'efinie au \no\ref{fma2} si $\gamma\in \GG(\Q)_+$.

Si $K$ est alg\'ebriquement clos, $\Pi'_K$ est le compl\'et\'e
de $\GG(\Q)$ pour la topologie de groupe dont une base de voisinage de~$1$
est constitu\'ee des sous-groupes d'indice fini de $\Gamma(1)$.
Dans le cas g\'en\'eral, on dispose
de la suite exacte, scind\'ee
$$\xymatrix@C=.4cm{
1\ar[r] &\Pi'_\Kbar\ar[r] & \Pi'_K\ar[r] & G_K\ar[r] &1.}$$

\subsubsection{Sous-groupes de congruence}\label{s23.5}
On \index{M2@\Mcong}note $M^{\rm cong}$ (resp.~$M^{\rm cong}_S$) la r\'eunion des
$M^{\rm cl}(\Gamma(N),\Q(\bmu_N))$, o\`u $N$ d\'ecrit les entiers~$\geq 1$
(resp.~les entiers~$\geq 1$ \`a support dans $S$).
La sous-alg\`ebre $M^{\rm cong}$ est stable par
$\Pi_\Q$ et $\Pi'_\Q$ qui agissent \`a travers
$\GG(\cZ)$ et $\GG(\A^{]\infty[})$ respectivement.
On a le diagramme commutatif de groupes suivant:
$$\xymatrix@R=.6cm{
1\ar[r] &\Pi_{\Kbar}\ar[d]\ar[r] &\Pi_K\ar[r]\ar[d]^{\rho_{\rm cong}}\ar[r]
& G_K\ar[d]^{\cy}\ar[r] &1\\
1\ar[r]&{\bf SL}_2(\cZ)\ar[r]&\GG(\cZ)\ar[r]^-{\det} &\cZ^\dual
\ar[r] &1.}$$
La section de $ G_\Q$ dans $\Pi_\Q$ d\'ecrite ci-dessus
induit une section de l'application d\'eterminant $\GG(\cZ)\to
\cZ^\dual$; c'est celle qui envoie $u$ sur la matrice $\matrice{u}{0}{0}{1}$:
l'action $f_{|_{k,j}}\gamma$ de $G(\A^{]\infty[})$ sur $M_k^{\rm cong}$
est celle obtenue par continuit\'e \`a partir de celle du \no\ref{fma2}
si $\det\gamma\in\Q_+^\dual$, et l'action de $\matrice{u}{0}{0}{1}$,
si $u\in\cZ^\dual$ est celle de $\sigma_u$ sur les coefficients du $q$-d\'eveloppement
(en accord avec la th\'eorie g\'eom\'etrique, cf.~prop.\,\ref{como5}).

Le noyau $H$ de l'application naturelle $\Pi_\Q'\to \GG(\A^{]\infty[})$ est
inclus dans $\Pi_\Q$ puisqu'il fixe $M^{\rm cl}(\Gamma(1),\Q)$. On a donc un diagramme commutatif
$$\xymatrix@R=.5cm{
1\ar[r] &H\ar@{=}[d]\ar[r] &\Pi_\Q\ar[r]\ar[d]\ar[r]^-{\rho_{\rm cong}}
&\GG(\cZ)\ar[d]\ar[r] &1\\
1\ar[r]&H\ar[r]&\Pi'_\Q\ar[r]^-{\rho_{\rm cong}}&\GG(\A^{]\infty[})
\ar[r] &1.}$$

Soit $M_S^{{\rm cong},+}$ la cl\^oture int\'egrale de $M^{\rm cl}(\Gamma(1),\Z[\frac{1}{S}])$
dans $M_S^{\rm cong}$ et soit $M_S^+$ l'extension \'etale maximale
de $M_S^{{\rm cong},+}$.  Alors $M_S^{{\rm cong},+}$ est contenue
dans $M^{\rm cl}(\Qbar)$ et stable par $\Pi_\Q$; 
le \index{PiK@\PIK}groupe $\Pi_{\Q,S}={\rm Aut}(M_S^+/M^{\rm cl}(\Gamma(1),\Z[\frac{1}{S}]))$
est un quotient de $\Pi_{\Q}$.  Par ailleurs, $M_S^{{\rm cong},+}$
est stable par $\GG(\Z[\frac{1}{S}])_+\subset \GG(\Q)_+$, et donc $M_S^+$ aussi.
On note $\Pi'_{\Q,S}$ le sous-groupe de
${\rm Aut}(M_S^+)$ engendr\'e par $\Pi_{\Q,S}$ et $\GG(\Z[\frac{1}{S}])_+$.
On a des diagrammes commutatifs:
$${\xymatrix@R=.5cm{
&\Pi_\Q\ar[r]\ar[d]\ar[r]
& G_\Q\ar[d]\\
&\Pi_{\Q,S}\ar[r] &G_{\Q,S}}}
\hskip1cm
{\xymatrix@R=.5cm{
1\ar[r] &H_S\ar@{=}[d]\ar[r] &\Pi_{\Q,S}\ar[r]\ar[d]\ar[r]^-{\rho_{\rm cong}}
&\GG(\Z_S)\ar[d]\ar[r] &1\\
1\ar[r]&H_S\ar[r]&\Pi'_{\Q,S}\ar[r]^-{\rho_{\rm cong}}&\GG(\Q_S)
\ar[r] &1.}}$$

\Subsection{Formes modulaires ad\'eliques quasi-holomorphes}\label{fma8}
\subsubsection{Fonctions harmoniques}\label{fma7}
On \index{A@\calA}note ${\cal A}$ l'anneau des fonctions $\phi:\GG(\A)\to\C$ v\'erifiant les propri\'et\'es
suivantes:

$\bullet$ $\phi$ se factorise \`a travers ${\cal H}\times \GG(\A^{]\infty[})$,

$\bullet$ si $g^{]\infty[}\in \GG(\A^{]\infty[})$, alors $\tau\mapsto\phi(\tau,g^{]\infty[})$
est harmonique, \`a croissance lente,

$\bullet$ 
il existe $K_\phi\subset \GG(\A^{]\infty[})$, ouvert, tel que
$\phi(g_\infty,g^{]\infty[}\kappa)=\phi(g_\infty,g^{]\infty[})$ si $\kappa\in K_\phi$.

\smallskip On note ${\cal A}^+$ le sous-espace de ${\cal A}$ des fonctions
holomorphes en $\tau$ et ${\cal A}^-$ celui des fonctions antiholomorphes.
Comme une fonction harmonique est somme d'une fonction holomorphe et d'une fonction
antiholomorphe et que les constantes sont les seules fonctions \`a la fois holomorphes et antiholomorphes
sur un domaine connexe,
on a une suite exacte
$$0\to {\rm LC}(\GG(\A),\C)\to{\cal A}^+\oplus{\cal A}^-\to{\cal A}\to 0.$$

\smallskip
On \index{A@\calA}note ${\cal A}_{\rm par}$ l'id\'eal de ${\cal A}$ des fonctions \`a d\'ecroissance rapide \`a l'infini
(le {\og par\fg} signifie {\og parabolique\fg}),
et ${\cal A}_{\rm par}^+$, ${\cal A}_{\rm par}^-$ ses intersections avec ${\cal A}^+$ et ${\cal A}^-$.
On a ${\cal A}_{\rm par}={\cal A}_{\rm par}^+\oplus {\cal A}_{\rm par}^-$ car les constantes ne sont pas \`a d\'ecroissance rapide.

\smallskip
On fait agir $\gamma\in \GG(\Q)$ et $g\in \GG(\A^{]\infty[})$ sur ${\cal A}$ et ${\cal A}_{\rm par}$ par les formules
$$(\gamma * \phi)(x)=\phi(\gamma^{-1}x)
\quad{\rm et}\quad
(g \star\phi)(x)=\phi(xg).$$
Les actions de $\GG(\Q)$ et $\GG(\A^{]\infty[})$ ainsi d\'efinies commutent.
L'action de $\GG(\A^{]\infty[})$ est lisse; on la prolonge en une action
 de $\GG(\A)$, qui commute
encore \`a celle de $\GG(\Q)$, en faisant agir $\GG(\R)$ sur ${\cal H}$ comme expliqu\'e
au \no\ref{prelim13}.  De mani\`ere explicite,
$$(g_\infty\star \phi)(\tau,x^{]\infty[})=\begin{cases}\phi(\tau,x^{]\infty[}) &
{\text{si ${\rm sign}(g_\infty)=1$,}}\\
\phi(\overline\tau,x^{]\infty[})& 
{\text{si ${\rm sign}(g_\infty)=-1$.}}\end{cases}$$
L'action de $\GG(\R)\subset \GG(\A)$ se factorise donc \`a travers $\GG(\R)/\GG(\R)_+$
et, si $g_\infty$ appartient au normalisateur de $\C^\dual$, alors
$(g_\infty\star \phi)(\tau,x^{]\infty[})=
\phi(\tau g_\infty,x^{]\infty[})$.

Les espaces ${\cal A}^+$, ${\cal A}^-$, ${\cal A}_{\rm par}^+$ et ${\cal A}_{\rm par}^-$
sont stables par $\GG(\A^{]\infty[})$, mais $\GG(\R)/\GG(\R)_+$ \'echange
${\cal A}^+$ et ${\cal A}^-$, ainsi que ${\cal A}_{\rm par}^+$ et ${\cal A}_{\rm par}^-$.

\subsubsection{D\'efinition}\label{fma9}
Le ${\cal A}$-module ${\cal A}\otimes W_{k,j}$ est muni d'actions
(diagonales) de $\GG(\Q)$ et de $\GG(\A^{]\infty[})$ 
qui commutent.  

Comme $\matrice{a}{b}{c}{d}\cdot \tau=\tfrac{a\tau+b}{c\tau+d}$,
on a $$\big(\matrice{a}{b}{c}{d}*\phi\big)(\tau)=
\phi\big(\matrice{a}{b}{c}{d}^{-1}\cdot\tau\big)=\phi\big(\tfrac{d\tau-b}{-c\tau+a}\big)$$
et donc, si $\matrice{a}{b}{c}{d}\in \GG(\Q)$,
$$\matrice{a}{b}{c}{d} * (e_1-\tau e_2)=\tfrac{1}{ad-bc}
\big((de_1-be_2)-\tfrac{d\tau-b}{-c\tau+a}(-ce_1+ae_2)\big)=\tfrac{1}{-c\tau+a}(e_1-\tau e_2).$$
Il s'ensuit que
$$\matrice{a}{b}{c}{d}*\tfrac{(\tau e_2-e_1)^k}{(e_1\wedge e_2)^j}=\tfrac{(ad-bc)^j}{(-c\tau+a)^k}
\,\tfrac{(\tau e_2-e_1)^k}{(e_1\wedge e_2)^j}.$$

Si $r\leq k$ le sous-${\cal A}^+$-module
$${\rm Fil}^{k-r-j}({\cal A}^+\otimes W_{k,j})=\oplus_{\ell=0}^r
\big({\cal A}^+\otimes \tfrac{(\tau e_2-e_1)^{k-\ell}e_1^\ell}{(e_1\wedge e_2)^j}\big)$$
est stable par $\GG(\Q)$ et par $\GG(\A^{]\infty[})$; 
ceci d\'efinit donc une filtration
d\'ecroissante de ${\cal A}^+\otimes W_{k,j}$ par des sous-$\GG(\Q)\times \GG(\A^{]\infty[})$-modules.

D\'efinissons l'espace des {\it formes modulaires ad\'eliques, quasi-holomorphes,
de poids $(k,j)$ et profondeur $r$} par:
$$M_{k,j,r}^{\rm qh}(\C)=H^0(\GG(\Q),{\rm Fil}^{k-r-j}({\cal A}^+\otimes W_{k,j})).$$
On \index{M3@\Mkj}note simplement $M_{k,j}(\C)$ (resp.~$M_{k,j}^{\rm qh}(\C)$) l'espace
$$M_{k,j}(\C)=M_{k,j,0}^{\rm qh}(\C) 
\quad{\text{(resp.~$M_{k,j}^{\rm qh}(\C)=M_{k,j,k}^{\rm qh}(\C)$)}}$$
des {\it formes modulaires ad\'eliques de poids $(k,j)$}
(resp.~des {\it formes modulaires ad\'eliques quasi-holomorphes de poids $(k,j)$
et de profondeur arbitraire}).
On note
$$M_{k,j}^{\rm par}(\C),\quad M_{k,j}^{\rm qh,\,par}(\C),\quad\cdots$$
les espaces des {\it formes paraboliques} obtenus en rempla\c{c}ant ${\cal A}^+$
par ${\cal A}_{\rm par}^+$.

Tous ces espaces sont munis d'actions lisses de $\GG(\A^{]\infty[})$.
\begin{lemm}\phantomsection\label{carcen}
Soit $\phi\in M_{k,j}(\C)$, non nulle.  Si 
$\matrice{a}{0}{0}{a}\star\phi=\omega(a)\phi$, pour tout $a\in\Aidu$, alors
$\omega$ est la restriction \`a $\Aidu$ d'un caract\`ere
de $\A^\dual/\Q^\dual$, dont la restriction \`a $\R^\dual$ est $x_\infty\mapsto x_\infty^{k-2j}$,
et donc est alg\'ebrique, de poids $k-2j$.
\end{lemm}
\begin{proof}
Si $a\in\Q^\dual$, alors
$$\matrice{a}{0}{0}{a}*\big(\matrice{a}{0}{0}{a}^{]\infty[}\star\phi\big)=\omega(a^{]\infty[})\phi.$$
Mais on a aussi (le $a^{2j-k}$ vient de l'action sur $\tfrac{(\tau e_2-e_1)^k}{(e_1\wedge e_2)^j}$)
\begin{align*}
\matrice{a}{0}{0}{a}*\big(\matrice{a}{0}{0}{a}^{]\infty[}\star\phi\big)(\tau,g^{]\infty[})=&\\
a^{2j-k}\phi(\matrice{a}{0}{0}{a}^{-1}\cdot\tau,
\big(\matrice{a}{0}{0}{a}^{]\infty[}\big)^{-1}g^{]\infty[}\matrice{a}{0}{0}{a}^{]\infty[})
=&\ a^{2j-k}\phi(\tau,g^{]\infty[}).
\end{align*}
Donc $\omega(a^{]\infty[})=a^{2j-k}$, et $\omega$ est la restriction \`a $\Aidu$ 
du caract\`ere alg\'ebrique
de $\A^\dual/\Q^\dual$ d\'efini par $\omega(x_\infty,x^{]\infty[})=\omega(x^{]\infty[})x_\infty^{k-2j}$,
qui est de poids $k-2j$.
\end{proof}
\subsubsection{Lien avec la th\'eorie classique}\label{class}
Si $\phi\in M_{k,j}(\C)$, soit $f_\phi:{\cal H}^+\to \C$ d\'efinie par
$$\phi=\phi_0\otimes \tfrac{(\tau e_2-e_1)^k}{(e_1\wedge e_2)^j}
\quad{\rm et}\quad
f_\phi(\tau)=\phi_0(\tau,1^{]\infty[}).$$

\begin{lemm}\phantomsection\label{emrt1}
{\rm (i)} Soit $\gamma \in \GG(\Q)_+$
 et soit $\gamma ^{]\infty[}$ l'image de $\gamma $
dans $\GG(\A^{]\infty[})$. 
Alors on a
$$f_{\gamma ^{]\infty[}\star \phi}=(f_\phi)_{|_{k,j}}\gamma^{-1}.$$
{\rm (ii)}
Si $\phi$ est invariante par $\wGamma(N)$, alors $f_\phi\in M^{\rm cl}_{k,j}(\Gamma(N),\C)$.
Plus pr\'ecis\'ement, si
$\matrice{a}{b}{c}{d}\star\phi=\omega(d)\,\phi$
pour tout $\matrice{a}{b}{c}{d}\in\wGamma_0(N)$
alors $f_{|_{k,j}}\matrice{a}{b}{c}{d}=\tilde\omega^{-1}(d)f_\phi$, pour
tout $\matrice{a}{b}{c}{d}\in\Gamma_0(N)$.
\end{lemm}
\begin{proof}
Le (ii) est une cons\'equence imm\'ediate du (i);
prouvons le (i).
Soit $\gamma =\matrice{a}{b}{c}{d}\in \GG(\Q)$ et donc $\gamma^{-1}=\frac{1}{ad-bc}\matrice{d}{-b}{-c}{a}$.
On a $\gamma ^{]\infty[}\star \phi=\gamma *(\gamma ^{]\infty[}\star \phi)$, et donc
\begin{align*}
f_{\gamma ^{]\infty[}\star \phi}(\tau)\otimes\tfrac{(\tau e_2-e_1)^k}{(e_1\wedge e_2)^j}
=&\ (\gamma *(\gamma ^{]\infty[}\star \phi))(\tau,1^{]\infty[})\\= &
\phi_0(\gamma ^{-1}\cdot \tau,(\gamma ^{-1})^{]\infty[}1^{]\infty[}\gamma ^{]\infty[})
\otimes \tfrac{(ad-bc)^{j}}{(-c\tau+a)^k}\tfrac{(\tau e_2-e_1)^k}{(e_1\wedge e_2)^j}\\=&\
\phi_0(\gamma ^{-1}\cdot \tau,1^{]\infty[})
\otimes \tfrac{(ad-bc)^{j}}{(-c\tau+a)^k}\tfrac{(\tau e_2-e_1)^k}{(e_1\wedge e_2)^j}\\= &
\tfrac{(ad-bc)^{j}}{(-c\tau+a)^k}f_{\phi}\big(\tfrac{d\tau-b}{-c\tau+a}\big)
\otimes\tfrac{(\tau e_2-e_1)^k}{(e_1\wedge e_2)^j}
\end{align*}
\qedhere
\end{proof}
\begin{rema}
 Une autre possibilit\'e pour transformer
l'action classique \`a droite en action \`a gauche serait de consid\'erer $f_{|k,j}{^{t}\gamma}$
au lieu de $f_{|k,j}\gamma^{-1}$, ce qui correspond \`a l'action
ad\'elique $(g\star\phi)(x)=\phi(x{^{\hskip.7mm t}g^{-1}})$.
 C'est ce que fait Emerton (en fixant $j$ en fonction de $k$).
\end{rema}

\begin{rema}
Le lemme~\ref{emrt1} ci-dessus fabrique une forme modulaire classique \`a partir
d'une forme modulaire ad\'elique.  Dans l'autre sens,
si $f\in M_{k,j}^{\rm cl}(\C)$ il existe
$\phi_{f}\in M_{k,j}(\C)$, unique, telle que
$$\phi_{f}\big(\tau,\matrice{u}{0}{0}{1}\big)=f(\tau)\otimes \tfrac{(\tau e_2-e_1)^k}{(e_1\wedge e_2)^j},
\quad{\text{ si $u\in\cZ^\dual$.}}$$
Par construction $\phi_f$ est invariante par $\matrice{\cZ^\dual}{0}{0}{1}$
et on a $f_{\phi_f}=f$.  Mais, si on supprime la premi\`ere propri\'et\'e,
on peut multiplier $\phi_f$ par n'importe quelle
fonction de la forme $\alpha\circ\det$, o\`u $\alpha:\A^\dual\to \C$ prend la valeur $1$
en~$1$ et se factorise \`a travers $\A^\dual/\R_+^\dual\Q^\dual$ (par exemple un caract\`ere
d'ordre fini de $\A^\dual/\Q^\dual$).
\end{rema}

\begin{rema}\phantomsection\label{fma10}
Pla\c{c}ons nous sous l'hypoth\`ese du (ii) du lemme~\ref{emrt1} et
supposons que $(p,N)=1$, et donc que $\phi$ est invariante par $\GG(\Z_p)$.
Soit $T_p$ \index{Tp@\Tp}l'op\'erateur
$$T_p=\tfrac{1}{p}\big(\matrice{1}{0}{0}{p}_p+\sum_{b=0}^{p-1}\matrice{p}{b}{0}{1}_p\big).$$
On d\'eduit du calcul ci-dessus, de l'identit\'e $\matrice{1}{0}{0}{p}_p=\matrice{p^{-1}}{0}{0}{1}_p
\matrice{p}{0}{0}{p}_p$, et de ce que $\matrice{p}{0}{0}{p}_p$ agit par
$\omega_p(p)=\tilde\omega^{-1}(p)p^{2(j+1)-(k+2)},$
que l'on a
\begin{align*}
T_p\star\phi(\tau,1^{]\infty[})=&\
\tfrac{1}{p}\Big(\tilde\omega^{-1}(p)p^{2j-k}p^{-(j-k-1)}f_\phi(p\tau)+\sum_{b=0}^{p-1}
p^{j-k-1}f_\phi(\tfrac{\tau-b}{p})\Big)\tfrac{(\tau e_2-e_1)^k}{(e_1\wedge e_2)^j}d\tau\\=&\
p^{j-k-1}\Big(\tilde\omega^{-1}(p)p^{k+1}f_\phi(p\tau)+\tfrac{1}{p}\sum_{b=0}^{p-1}
f_\phi(\tfrac{\tau-b}{p})\Big)\tfrac{(\tau e_2-e_1)^k}{(e_1\wedge e_2)^j}d\tau.
\end{align*}
Si on suppose que $\matrice{a}{b}{c}{d}\star\phi=\omega(a)\phi$
au lieu de $\matrice{a}{b}{c}{d}\star\phi=\omega(d)\phi$, les m\^emes calculs
nous donnent:
$$T_p\star\phi(\tau,1^{]\infty[})=
\tilde\omega^{-1}(p)p^{j-k-1}\Big(\tilde\omega(p)p^{k+1}f_\phi(p\tau)+\tfrac{1}{p}\sum_{b=0}^{p-1}
f_\phi(\tfrac{\tau-b}{p})\Big)\tfrac{(\tau e_2-e_1)^k}{(e_1\wedge e_2)^j}d\tau
$$
\end{rema}

\subsubsection{$q$-d\'eveloppement et mod\`ele de Kirillov}\label{fma11}
Soit $\phi=\phi_0\otimes\frac{(\tau e_1-e_2)^k}{(e_1\wedge e_2)^j}\in M_{k,j}(\C)$.  
L'invariance de $\phi$ par $\matrice{n^{-1}}{0}{0}{1}\in \GG(\Q)$ se traduit
par l'\'equation fonctionnelle
$$\phi_0\big(n\tau,\matrice{nu}{0}{0}{1}\big)=n^{j-k}
\phi_0\big(\tau,\matrice{u}{0}{0}{1}\big).$$
Maintenant, $\phi$ est somme de sa s\'erie de Fourier,
et il existe des fonctions $u\mapsto a(n,u)$, pour $n\in\Q_+$, telles que l'on ait
$$\phi_0\big(\tau,\matrice{u}{0}{0}{1}\big)=\sum_{n\in\Q_+}a(n,u){\bf e}_\infty(-n\tau).$$
\index{K@\KKK}Si ${\cal K}(\phi,u)=a(1,u)$, l'\'equation fonctionnelle ci-dessus se traduit
par la relation 
$$a(n,u)=n^{k-j}{\cal K}(\phi,nu),$$
et donc
$$\phi\big(\tau,\matrice{u}{0}{0}{1}\big)=
\big(a(0,u)+\sum_{n\in\Q_+^\dual}n^{k-j}{\cal K}(\phi,n^{]\infty[}u){\bf e}_\infty(-n\tau)\big)\otimes
\tfrac{(\tau e_2-e_1)^k}{(e_1\wedge e_2)^j}.$$
\begin{lemm}\phantomsection\label{fma11.2}
On a 
$${\cal K}\big({\matrice{a^{]\infty[}}{b^{]\infty[}}{0}{1}\star\phi},u\big)=
{\bf e}^{]\infty[}(b^{]\infty[}u){\cal K}(\phi,a^{]\infty[}u).$$
\end{lemm}
\begin{proof}
Si $n\in\Q$, on a
$\matrice{1}{n}{0}{1}*\big(\matrice{a^{]\infty[}}{b^{]\infty[}}{0}{1}\star\phi_0\big)=
\matrice{a^{]\infty[}}{b^{]\infty[}}{0}{1}\star\phi_0$,
et donc
\begin{align*}
\big(\matrice{a^{]\infty[}}{b^{]\infty[}}{0}{1}\star\phi_0\big)\big(\tau\matrice{u}{0}{0}{1}\big)
=&\ \phi_0\big(\tau-n,\matrice{1}{-n}{0}{1}\matrice{u}{0}{0}{1}\matrice{a^{]\infty[}}{b^{]\infty[}}{0}{1}\big)
\\=&\ \phi_0\big(\tau-n,\matrice{a^{]\infty[}u}{b^{]\infty[}u-n}{0}{1}\big).
\end{align*}
Si on choisit $n$ assez proche de $b^{]\infty[}u$, alors
$$\phi_0\big(\tau-n,\matrice{a^{]\infty[}u}{b^{]\infty[}u-n}{0}{1}\big)=
\phi_0\big(\tau-n,\matrice{a^{]\infty[}u}{0}{0}{1}\big),\quad
{\bf e}_\infty(-n)={\bf e}^{]\infty[}(n)={\bf e}^{]\infty[}(b^{]\infty[}u).$$
En comparant les d\'eveloppements de Fourier des deux membres, on en d\'eduit le r\'esultat
annonc\'e.
\end{proof}

\subsubsection{Questions de rationalit\'e}\label{ratio0}
On d\'eduit de la rem.\,\ref{cyclo10} des identifications naturelles
$${\rm LC}(\Aidu,\C)=
{\rm LC}(\Aidu\times\cZ^\dual,\C)^{\cZ^\dual}
=\big({\rm LC}(\Aidu,\C)\otimes\Q^{\rm cycl}\big)^{\cZ^\dual},$$
o\`u:

$\bullet$ $\cZ^\dual$ agit sur ${\rm LC}(\Aidu\times\cZ^\dual,\C)$
par $(a\cdot\phi)(x,u)=\phi(a^{-1}x,au)$ et sur
${\rm LC}(\Aidu,\C)\otimes\Q^{\rm cycl}$ par
$(a\cdot (\phi\otimes\alpha))(x)=\phi(a^{-1}x)\otimes\sigma_a(\alpha)$,

$\bullet$ La premi\`ere fl\`eche $\phi\mapsto\tilde\phi$ est donn\'ee par
$\tilde\phi(x,u)=\phi(ux)$ et l'inverse 
$\phi\otimes\alpha\mapsto\tilde\phi$ de la seconde par
$\tilde\phi(x,u)=\sigma_u(\alpha)\phi(x)$.

\medskip
Si $\Lambda$ est un sous-anneau de $\C$, on dit que {\it $\phi$ est d\'efinie sur $\Lambda$},
si $(u\mapsto {\cal K}(\phi,u))
\in {\rm LC}\big(\Aidu,\Lambda\otimes\Z^{\rm cycl}\big)^{\cZ^\dual}$.
On \index{M3@\Mkj}note $M_{k,j}(\Lambda)$ le sous-anneau de $M_{k,j}(\C)$ des formes d\'efinies sur $\Lambda$.
Par exemple, $\phi\in M_{k,j}(\Q)$, si ${\cal K}(\phi,u)\in \Q^{\rm cycl}$ 
et si $\sigma_a({\cal K}(\phi,u))={\cal K}(\phi,au)$, pour tous $a\in\cZ^\dual$
et $u\in\Aidu$.
\begin{rema}\phantomsection\label{ratio1}
L'application $\phi\mapsto f_\phi$ du lemme~\ref{emrt1}
induit un isomorphisme $M_{k,j}(\Q)\overset{\sim}{\to} M_{k,j}^{\rm cong}$,
l'isomorphisme inverse associe \`a $f\in M_{k,j}^{\rm cong}$ l'\'el\'ement
$\phi_f$ de $M_{k,j}(\Q)$ caract\'eris\'e par le fait que
$\phi_f\big(\tau,\matrice{a}{0}{0}{1}^{]\infty[}\big)=(\sigma_a(f))(\tau)$,
si $a\in\cZ^\dual$.
On a $f_{g\star\phi}=(f_\phi)_{|_{k,j}}g^{-1}$ pour tout $g\in \GG(\A^{]\infty[})$
(cela r\'esulte du lemme~\ref{emrt1} et 
de la description de l'action $f_{|_{k,j}}\gamma$, cf.~\no\ref{s23.5}).
\end{rema}

\Subsubsection{Torsion par un caract\`ere alg\'ebrique}\label{fma12}
\begin{rema}\phantomsection\label{ES7}
{\rm (i)} Si $\eta:\A^\dual/\Q^\dual\to\C^\dual$ est un caract\`ere continu,
d'ordre fini, alors la multiplication par $\eta\circ\det$ induit
un isomorphisme $M_{k,j,r}^{\rm qh}(\C)\cong M_{k,j,r}^{\rm qh}(\C)$,
et on a $g\star(\eta\circ\det\cdot\phi)=\eta\circ\det(g)
\cdot (\eta\circ\det\cdot\phi)$, si $g\in \GG(\A^{]\infty[})$.

{\rm (ii)} 
Le produit tensoriel par $(\delta_\A \circ\det)^a(e_1\wedge e_2)^a$
induit
un isomorphisme $M_{k,j,r}^{\rm qh}(\C)\cong M_{k,j-a,r}^{\rm qh}(\C)$
car 
$$\gamma * (\delta_\A \circ\det)=\det\gamma\,(\delta_\A \circ\det)
\quad{\rm et}\quad
\gamma *(e_1\wedge e_2)=(\det\gamma)^{-1}(e_1\wedge e_2),$$
et, si $g\in \GG(\A^{]\infty[})$, on a 
\begin{align*}
\big(g\star(\phi\otimes (\delta_\A \circ\det)^a(e_1\wedge e_2)^a)\big)(x)=&\ \delta_\A ^a\circ\det(g)
\cdot (\phi(xg)\otimes (\delta_\A \circ\det(x))^a(e_1\wedge e_2)^a)\\ 
=&\ \delta_\A ^a\circ\det(g)
\cdot ((g\star\phi)(\delta_\A \circ\det)^a(e_1\wedge e_2)^a)(x).
\end{align*}

{\rm (iii)} 
Si $\chi=\eta|\ |_\A^a:\A^\dual/\Q^\dual\to \C^\dual$ est alg\'ebrique de poids $a$, 
le produit tensoriel par
$(\eta\delta_\A ^a\circ \det)(e_1\wedge e_2)^a$ induit
un isomorphisme 
$$\xymatrix@C=3.5cm{
M_{k,j}(\C)\ar[r]^-{\otimes (\eta\delta_\A ^a\circ \det)(e_1\wedge e_2)^a}_-{\sim}
& M_{k,j-a}(\C)\otimes\chi},$$
o\`u l'on note $M\otimes\chi$ le $\GG(\A^{]\infty[})$-module $M$ avec action
de $\GG(\A^{]\infty[})$ multipli\'ee par $\chi\circ\det$.
\end{rema}

Notons \index{twi@\twis}simplement $\phi\mapsto\phi\otimes\chi$ l'application 
$\phi\mapsto\phi\otimes (\eta\delta_\A ^a\circ \det)(e_1\wedge e_2)^a$ ci-dessus.
\begin{lemm}\phantomsection\label{eqf1}
{\rm (i)}
Si $L$ est un sous-corps de $\C$ et si $\chi=\eta|\ |_\A^a$ est \`a valeurs dans~$L$,
alors $\phi\mapsto G(\chi^{-1})\phi\otimes\chi$ induit un isomorphisme
$M_{k,j}(L)\overset{\sim}{\longrightarrow} M_{k,j-a}(L)\otimes\chi$ de $G(\A^{]\infty[})$-modules.

{\rm (ii)} On a un diagramme commutatif 
$$\xymatrix@C=1.8cm@R=.6cm{M_{k,j}(L)\ar[r]^-{\otimes G(\chi^{-1})\chi}\ar[d]^-{\cal K}
&M_{k,j-a}(L)\otimes\chi\ar[d]^-{\cal K}\\
{\rm LC}(\Aidu,L\otimes\Q^{\rm cycl})^{\cZ^\dual}\ar[r]^-{\times G(\chi^{-1})\chi}
&{\rm LC}(\Aidu,L\otimes\Q^{\rm cycl})^{\cZ^\dual}}
$$
\end{lemm}
\begin{proof}
On a ${\cal K}(\phi\otimes\chi,u)=\chi(u){\cal K}(\phi,u)$, si $u\in\Aidu$.
Si $\phi\in M_{k,j}(L)$ et si $\chi$ est \`a valeurs dans $L$, on
a ${\cal K}(\phi\otimes\chi,au)=\eta(a)\sigma_a({\cal K}(\phi\otimes\chi,u))$,
si $a\in\cZ^\dual$.
Comme 
$\sigma_a(G(\chi^{-1}))=\eta(a)G(\chi^{-1})$, on a
$$G(\chi^{-1}){\cal K}(\phi\otimes\chi,au)
=\sigma_a(G(\chi^{-1}){\cal K}(\phi\otimes\chi,u)),$$
et donc $G(\chi^{-1})\phi\otimes\chi\in M_{k,j}(L)$.

Le lemme s'en d\'eduit.
\end{proof}

\Subsection{L'application d'Eichler-Shimura}\label{fma13}
\subsubsection{D\'efinition}\label{fma14}
Notons $\tau=x+iy$ le param\`etre naturel de ${\cal H}$ et 
$d\tau$ la base naturelle de $\Omega^1({\cal H})$ sur $\O({\cal H})$.
Si $k\in\N$ et $j\in\Z$, on a des suites exactes, 
induites par la diff\'erentielle $f\mapsto df$ en la place $\infty$,
\begin{align*}
0\to {\rm LC}(\GG(\A)/\C^\dual,\C)\otimes W_{k,j}\to &\ {\cal A}^+\otimes  W_{k,j}\to
{\cal A}^+d\tau\otimes  W_{k,j}\to 0\\
0\to {\rm LC}(\GG(\A)/\C^\dual,\C)\otimes W_{k,j}\to&\ {\cal A}\otimes  W_{k,j}\to
\big({\cal A}^+d\tau\oplus{\cal A}^-d\overline\tau\big)\otimes  W_{k,j}\to 0
\end{align*}
Notons qu'une fonction localement constante sur $\GG(\A)$ est constante sur les classes
modulo $\C^\dual$, et donc que
${\rm LC}(\GG(\A)/\C^\dual,\C)={\rm LC}(\GG(\A),\C)$.
On appelle {\it applications d'Eichler-Shimura} les \index{iotaES@\iotaES}applications $\iota_{\rm ES}$
de connexion
\begin{align*}
H^0\big(\GG(\Q),{\cal A}^+d\tau\otimes  W_{k,j}\big)\to&\ H^1\big(\GG(\Q),{\rm LC}(\GG(\A),\C)\otimes W_{k,j}\big)\\
H^0\big(\GG(\Q),\big({\cal A}^+d\tau\oplus{\cal A}^-d\overline\tau\big)\otimes  W_{k,j}\big)\to&\ 
H^1\big(\GG(\Q),{\rm LC}(\GG(\A),\C)\otimes W_{k,j}\big)
\end{align*}
ainsi que celles obtenues en tensorisant par $W_{k,j}^\dual(\C)$ et
en utilisant l'injection $\GG(\Q)\times \GG(\C)$-\'equivariante
${\rm LC}(\GG(\A),\Q)\otimes W_{k,j}\otimes W_{k,j}^\dual(\C)\hookrightarrow {\rm LP}(\GG(\A),\C)$
\begin{align*}
H^0\big(\GG(\Q),{\cal A}^+d\tau\otimes  W_{k,j}\big)\otimes W_{k,j}^\dual(\C)\to&\ 
 H^1\big(\GG(\Q),{\rm LP}(\GG(\A),\C)\big)\\
H^0\big(\GG(\Q),\big({\cal A}^+d\tau\oplus{\cal A}^-d\overline\tau\big)\otimes  W_{k,j}\big)\otimes W_{k,j}^\dual(\C)\to&\ 
H^1\big(\GG(\Q),{\rm LP}(\GG(\A),\C)\big)
\end{align*}

\begin{rema} \label{ES9}
{\rm (i)} Les deux membres des secondes lignes
 ont une action de $\GG(\A)\times \GG(\C)$, l'action de
$\GG(\A)$ \'etant localement constante (et donc celle de $\GG(\R)\subset \GG(\A)$
se factorise
\`a travers $\GG(\R)/\GG(\R)_+\cong\{\pm 1\}$) tandis que celle de $\GG(\C)$ est alg\'ebrique
(triviale si on ne tensorise pas par $W_{k,j}^\dual(\C)$);  
le membre de gauche des premi\`eres lignes a seulement une action
de $\GG(\A^{]\infty[})\times \GG(\C)$.
L'application $\iota_{\rm ES}$ des secondes lignes est $\GG(\A)\times \GG(\C)$-\'equivariante 
(celle des premi\`eres est seulement $\GG(\A^{]\infty[})\times \GG(\C)$-\'equivariante).

{\rm (ii)} L'action de $\GG(\A)\times \GG(\Q)\subset \GG(\A)\times \GG(\C)$ sur
$$H^1(\GG(\Q),{\rm LP}(\GG(\A),\C))=\C\otimes_\Q 
H^1(\GG(\Q),{\rm LP}(\GG(\A),\Q)),$$
stabilise le
sous-$\Q$-espace $H^1(\GG(\Q),{\rm LP}(\GG(\A),\Q))$.
\end{rema}

Maintenant,
$$\matrice{a}{b}{c}{d} * d\tau=\tfrac{d}{d\tau}
\big(\tfrac{d\tau-b}{-c\tau+a}\big)\,d\tau=\tfrac{ad-bc}{(-c\tau+a)^2}\,d\tau,$$
et donc $d\tau$ se transforme comme $\frac{(\tau e_2-e_1)^2}{e_1\wedge e_2}$
sous l'action de $\GG(\Q)$.  En identifiant\footnote{\label{Koda}
L'isomorphisme de Kodaira-Spencer $\omega^2\cong \Omega^1({\text{log-cusp}})$ n'est
pas $\GG(\Q)$-\'equivariant; celui qui l'est est
$\omega^2\cong \Omega^1({\text{log-cusp}})\otimes{\cal H}^2_{\rm dR}$.
Il envoie $\big(\frac{dt}{t}\big)^2$ sur $\frac{dq}{q}\otimes\zeta_{\rm dR}^{-1}$.
Or on a $\frac{dt}{t}=2i\pi(\tau e_2-e_1)$, $\frac{dq}{q}=2i\pi\,d\tau$
et $\zeta_{\rm dR}^{-1}=2i\pi\,\zeta_{\rm B}^{-1}=-2i\pi\, e_1\wedge e_2$,
et donc $\frac{(\tau e_2-e_1)^2}{e_1\wedge e_2}=-d\tau$ 
(cf.~${\rm n}^{\rm os}$\,\ref{geo6} et~\ref{como112.2} pour les notations).}
$d\tau$ \`a $-\frac{(\tau e_2-e_1)^2}{e_1\wedge e_2}$, cela
identifie
${\cal A}^+d\tau\otimes  W_{k,j}$ \`a ${\rm Fil}^2({\cal A}^+\otimes W_{k+2,j+1})$.
En passant \`a la suite exacte de cohomologie, on obtient une suite exacte
$$M_{k,j}^{\rm qh}(\C)\overset{d}{\longrightarrow}
M_{k+2,j+1,k}^{\rm qh}(\C)\longrightarrow 
H^1(\GG(\Q),{\rm LC}(\GG(\A))\otimes W_{k,j}).$$
Par ailleurs, l'inclusion naturelle
$M_{k+2,j+1}(\C)\hookrightarrow M_{k+2,j+1,k}^{\rm qh}(\C)$ induit un isomorphisme
$$M_{k+2,j+1}(\C)\cong M_{k+2,j+1,k}^{\rm qh}(\C)/dM_{k,j}^{\rm qh}(\C).$$
D'o\`u une injection (d'Eichler-Shimura), $\GG(\A)$-\'equivariante,
$$\iota_{\rm ES}: M_{k+2,j+1}(\C)\hookrightarrow
H^1(\GG(\Q),{\rm LC}(\GG(\A),\C)\otimes W_{k,j}).$$
En faisant la somme sur les couples $(k,j)$, on en d\'eduit une injection
$$\iota_{\rm ES}: \oplus_{k,j}(M_{k+2,j+1}(\C)\otimes W_{k,j}^\dual)\hookrightarrow
H^1(\GG(\Q),{\rm LP}(\GG(\A),\C)).$$

\subsubsection{D\'ecomposition suivant l'action de $\GG(\R)/\GG(\R)_+$}\label{fma15}
Si $X(\GG(\A))$ est un espace de fonctions sur $\GG(\A)$, stable par translation
\`a droite par $\GG(\A)$ et par translation \`a gauche par $\GG(\Q)$, et sur lequel
$\GG(\R)_+\subset \GG(\A)$ agit trivialement,
on note $H^1(\GG(\Q),X(\GG(\A)))^\pm$ le sous-espace
de $H^1(\GG(\Q),X(\GG(\A)))$ sur lequel $\matrice{-1}{0}{0}{1}_\infty$
agit par $\pm 1$.  Tout \'el\'ement $\gamma$ de $H^1(\GG(\Q),X(\GG(\A)))$
s'\'ecrit, de mani\`ere unique, $\gamma=\gamma^++\gamma^-$, avec
$\gamma^{\pm}\in H^1(\GG(\Q),X(\GG(\A)))^{\pm}$.

On \index{iotaES@\iotaES}note $\iota^{\pm}_{\rm ES}(f)$ l'\'el\'ement $\iota_{\rm ES}(f)^{\pm}$.
Cela d\'ecompose l'application $\iota_{\rm ES}$ sous la forme
$$\iota_{\rm ES}=\iota_{\rm ES}^++\iota_{\rm ES}^-.$$

\subsubsection{Torsion par un caract\`ere alg\'ebrique}\label{fma16}
Comme compl\'ement aux r\'esultats du \no\ref{fma12}, on a
la remarque suivante.
\begin{rema}\phantomsection\label{ES7.2}
Soit $\chi=\eta|\ |_\A^a:\A^\dual/\Q^\dual\to \C^\dual$ alg\'ebrique de poids $a$.
Notons $H^1(M)$ le groupe $H^1(\GG(\Q),M)$
et, si $X={\rm LC}$, ${\cal C}$ et $F=\C,L$, notons
$X_F$ l'espace $X(\GG(\A),F)$.

{\rm (i)}
On a un diagramme commutatif $\GG(\A^{]\infty[})$-\'equivariants:
$$\xymatrix@C=3.5cm{
M_{k+2,j+1}(\C)\ar[r]^-{\otimes (\eta\delta_\A ^a\circ \det)(e_1\wedge e_2)^a
}\ar[d]^-{\iota_{\rm ES}}
& M_{k+2,j-a+1}(\C)\otimes\chi\ar[d]^-{\iota_{\rm ES}}\\
H^1({\rm LC}_\C\otimes W_{k,j})
\ar[r]^-{\otimes (\eta\delta_\A ^a\circ\det)(e_1\wedge e_2)^a}
&H^1({\rm LC}_\C\otimes W_{k,j-a})\otimes\chi}$$

{\rm (ii)}
Si $L$ est une extension finie de $\Q_p$, on a un diagramme $\GG(\A)$-\'equivariant:
$$\xymatrix@C=3.5cm{
H^1({\rm LC}_L\otimes W_{k,j})\otimes W_{k,j}^\dual
\ar[r]^-{\otimes (\eta\delta_\A ^a\circ\det)(e_1\wedge e_2)^a
(e_1^\dual\wedge e_2^\dual)^a}\ar[d]
&H^1({\rm LC}_L\otimes W_{k,j-a})\otimes W_{k,j-a}^\dual\otimes\chi^{(p)}\ar[d]\\
H^1({\cal C}_L)\ar[r]^-{\times \chi^{(p)}\circ\det}&H^1({\cal C}_L)
\otimes\chi^{(p)}
}$$
En particulier, si $\chi=|\ |_\A^a$, alors $\chi^{(p)}((-1)_\infty,1^{]\infty[})=(-1)^a$,
et donc les twists du diagramme ci-dessus multiplient l'action de
$\matrice{-1}{0}{0}{1}_\infty$ par $(-1)^a$.
\end{rema}

\section{Cohomologie des courbes modulaires}\label{como1}
Dans ce chapitre, on passe en revue les diff\'erents th\'eor\`emes de comparaison
pour les formes modulaires et la cohomologie de $\GG(\Q)$; en particulier,
on donne (cf.~\no\ref{qq21}) une description de l'application d'Eichler-Shimura $p$-adique, cruciale
pour la factorisation de la cohomologie compl\'et\'ee du chap.\,\ref{YEUL1}.
\Subsection{L'espace $\GG(\Q)\backslash ({\cal H}\times \GG(\A^{]\infty[}))$}\label{geo1}
\subsubsection{Espaces de modules}\label{geo2}
Soit $g=(g_\infty,g^{]\infty[})\in \GG(\A)$.
Si $g_\infty=\matrice{a_\infty}{b_\infty}{c_\infty}{d_\infty}\in \GG(\R)$, notons
$\vecteur{\omega_1(g)}{\omega_2(g)}$ le vecteur
$g_\infty\cdot\vecteur{i}{1}=\vecteur{a_\infty i+b_\infty}{c_\infty i+d_\infty}\in\C^2$,
et $V(g)$ le sous-$\Q$-espace de $\C$ engendr\'e par ${\omega_1(g)},{\omega_2(g)}$.
Alors $\R\otimes V(g)\to \C$ est un isomorphisme, et
$g^{]\infty[}$ fournit une base $\lambda_{g,1},\lambda_{g,2}$ du $\A^{]\infty[}$-module
${\rm Hom}(V(g),\A^{]\infty[})$ par la formule $g^{]\infty[}=\matrice{\lambda_{g,1}(\omega_1(g))}{\lambda_{g,2}(\omega_1(g))}
{\lambda_{g,1}(\omega_2(g))}{\lambda_{g,2}(\omega_2(g))}$.

Si $\gamma\in \GG(\Q)$, alors $\omega_1(\gamma g),\omega_2(\gamma g)$ est une base
de $V(g)$, et $\lambda_{\gamma g,j}=\lambda_{g,j}$, si $j=1,2$.
On en d\'eduit une bijection naturelle:
$$\GG(\Q)\backslash \GG(\A)\longleftrightarrow \{V\subset\C,\ \R\otimes V\overset{\sim}\to\C + {\text{base $(\lambda_1,\lambda_2)$ de
 ${\rm Hom}(V,\A^{]\infty[})$}}\}.$$
Maintenant, si $(V,\lambda_1,\lambda_2)$ est un \'el\'ement du membre de droite, on peut lui associer
$(V^+,\lambda_1,\lambda_2)$, o\`u $V^+\subset V$ est l'ensemble
des $v$ tels que $\lambda_1(v),\lambda_2(v)\in\cZ$.  Alors $V^+$ est un $\Z$-r\'eseau de $V$ (et donc un r\'eseau de $\C$)
et $(\lambda_1,\lambda_2)$ est une base du $\cZ$-module ${\rm Hom}(V^+,\cZ)$. On en d\'eduit des bijections
naturelles
$$\xymatrix@R=.4cm@C=-.2cm{
\GG(\Q)\backslash \GG(\A)\ar@{-}[r]^-{\sim}\ar@{-}[d]^-{\wr}& \{V\subset\C,\ \R\otimes V\overset{\sim}\to\C + {\text{base $(\lambda_1,\lambda_2)$ de
 ${\rm Hom}(V,\A^{]\infty[})$}}\}\ar@{-}[d]^-{\wr}\\
\GG(\Z)\backslash (\GG(\R)\times \GG(\cZ))\ar@{-}[r]^-{\sim}& \{{\text{$\Lambda$ r\'eseau de $\C$}} + {\text{base $(\lambda_1,\lambda_2)$ de
 ${\rm Hom}(\Lambda,\cZ)$}}\}}$$
Ceci fournit un isomorphisme d'espaces analytiques
$$\GG(\Q)\backslash({\cal H}\times \GG(\A^{]\infty[}))\cong \GG(\Z)\backslash ({\cal H}\times \GG(\cZ)),$$
qui explique pourquoi la cohomologie de $\GG(\Z)\backslash ({\cal H}\times \GG(\cZ))$ est naturellement munie
d'une action de $\GG(\A^{]\infty[})$ et pas seulement de $\GG(\cZ)$.
\begin{rema}
Se donner une base $(\lambda_1,\lambda_2)$ de
 ${\rm Hom}(V,\A^{]\infty[})$ est \'equivalent \`a se donner une base de $\cZ\otimes V^+$
(prendre la base duale).
\end{rema}

\subsubsection{Quotients de $\GG(\Q)\backslash ({\cal H}\times \GG(\A^{]\infty[}))$}\label{geo3}

Si $K$ est un sous-groupe ouvert compact de $\GG(\A^{]\infty[})$, \index{Y@\YYY}soit
$$Y(K)(\C):=\GG(\Q)\backslash({\cal H}\times \GG(\A^{]\infty[}))/K
=\GG(\Z)\backslash ({\cal H}\times \GG(\cZ))/K.$$
C'est une surface de Riemann non compacte dont l'ensemble des composantes connexes
est $\cZ^\dual/\det K$.

Si $K=\wGamma(N)$, on note simplement $Y(N)(\C)$ la surface de Riemann $Y(K)(\C)$.
On a un isomorphisme de surfaces de Riemann (on note encore $a$ un rel\`evement 
arbitraire de $a\in(\Z/N)^\dual$
dans $\Z$):
$$(\Gamma(N)\backslash{\cal H}^+)\times(\Z/N)^\dual\overset{\sim}{\to}Y(N)(\C),\quad
(\tau,a)\mapsto (\C/(\Z\oplus\Z\tau),\tfrac{a}{N},\tfrac{\tau}{N}).$$
Par ailleurs, 
on a des isomorphismes de surfaces de Riemann
(on note encore $a$ un rel\`evement arbitraire de $a\in(\Z/N)^\dual$ dans $\cZ^\dual$)
\begin{align*}
&(\Gamma(N)\backslash{\cal H}^+)\times(\Z/N)^\dual \overset{\sim}{\to} \GG(\Q)\backslash ({\cal H}\times \GG(\A^{]\infty[}))/\wGamma(N),
\quad (\tau,a)\mapsto\big(\tau,\matrice{a}{0}{0}{1}\big),\\
&(\Gamma(N)\backslash{\cal H}^+)\times(\Z/N)^\dual \overset{\sim}{\to} \GG(\Q)\backslash \GG(\A)/
(\C^\dual\times\wGamma(N)),
\quad (x+iy,a)\mapsto\big(\matrice{y}{x}{0}{1},\matrice{a}{0}{0}{1}\big)
\end{align*}
Le groupe
$\GG(\A^{]\infty[})$ agit sur la limite projective  
$\varprojlim_K Y(K)(\C)$,
pour $K$ d\'ecrivant les sous-groupes ouverts compacts de $\GG(\A^{]\infty[})$
(la conjugaison par un \'el\'ement de $\GG(\A^{]\infty[})$ est un automorphisme
du syst\`eme des
sous-groupes compacts de $\GG(\A^{]\infty[})$), et donc aussi sur sa cohomologie
(qui, par d\'efinition, est la limite inductive des cohomologies
de $Y(K)(\C)$).

\subsubsection{Syst\`emes locaux}\label{geo5}
La repr\'esentation $W_{k,j}$ du \no\ref{fma4}
d\'efinit un syst\`eme local $W_{k,j}^{\rm B}$ sur $Y(K)(\C)$
(le B est l'initiale de Betti).
La cohomologie de ce syst\`eme local est reli\'ee \`a la cohomologie de $\Gamma(1)\cap K$,
et peut s'exprimer en termes de celle de $\Gamma(1)$ via le lemme de Shapiro:
$$H^i(Y(K)(\C),W_{k,j}^{\rm B})=H^i(\Gamma(1),{\rm LC}(\GG(\cZ)/K,\Z)\otimes W_{k,j}).$$
En passant \`a la limite sur $K$, on obtient des isomorphismes
\begin{align*}
\varinjlim H^i(Y(K)(\C),W_{k,j}^{\rm B})&=H^i(\Gamma(1),{\rm LC}(\GG(\cZ),\Z)\otimes W_{k,j})\\
&=H^i(\GG(\Q),{\rm LC}(\GG(\A),\Z)\otimes W_{k,j}),
\end{align*}
le second isomorphisme r\'esultant de ce que 
${\rm LC}(\GG(\A),\Z)={\rm Ind}_{\Gamma(1)}^{\GG(\Q)}{\rm LC}(\GG(\cZ),\Z)$.
On a les m\^emes r\'esultats pour les cohomologies $H^i_c$ (\`a \index{H1@\Hc}support compact) et
$H^1_{\rm par}$ (parabolique).

Tensoriser $W_{k,j}$ par le faisceau structural produit un fibr\'e \index{W2@\www}vectoriel $W_{k,j}^{\rm dR}$ muni
de la connexion $\nabla$ induite par la diff\'erentielle $d:\O\to\Omega^1$ sur le faisceau structural.
Les formes modulaires quasi-holomorphes ad\'eliques de poids $(k,j)$
s'identifient aux sections globales, i.e.~$\varinjlim H^0(Y(K)(\C),W_{k,j}^{\rm dR})$,
qui sont \`a croissance lente \`a l'infini.
Le th\'eor\`eme de comparaison Betti-de Rham fournit, si $\sharp\in\{\ ,c,{\rm par}\}$, 
\index{H1@\Hc}un isomorphisme:
$$\varinjlim H^i_{{\rm dR}\sharp}(Y(K)(\C),W_{k,j}^{\rm dR})\cong
\C\otimes_\Q\varinjlim H^i_{{\rm B}\sharp}(Y(K)(\C),W_{k,j}^{\rm B}).$$

\subsubsection{Interpr\'etation g\'eom\'etrique}\label{geo6}
Si $\Gamma$ est un sous-groupe de congruence de $\Gamma(1)$,
soit $\Gamma'$ le produit semi-direct de $\Gamma$ par $\Z^2$ 
des matrices 
$$\gamma'=\Big(\begin{smallmatrix}a&b&0\\c&d&0\\u&v&1\end{smallmatrix}\Big),\quad{\text{$\matrice{a}{b}{c}{d}\in\Gamma$
et $(u,v)\in\Z^2$.}}$$
On fait agir $\gamma'$ sur ${\cal H}\times\C$ par
$\gamma'\cdot(\tau,z)=\big(\tfrac{a\tau+b}{c\tau+d},\tfrac{z+u\tau+v}{c\tau+d}\big)$.
Alors $\Gamma'\backslash({\cal H}\times\C)$ se surjecte sur $\Gamma\backslash{\cal H}$ via $(\tau,z)\mapsto\tau$
et la fibre en $\tau$ est le tore complexe $E_\tau=\C/(\Z\oplus\Z\tau)$.
Le syst\`eme local $(W_{1,0}^\dual)^{\rm B}$ est l'homologie $H_1(E_\tau,\Z)\cong \Z\oplus\Z\tau$;
les \'el\'ements $e_1^\dual$ et $e_2^\dual$ correspondent respectivement \`a $-1$ et $\tau$.
Si $\gamma=\matrice{a}{b}{c}{d}\in\Gamma$, l'action de la monodromie correspondant \`a $\gamma$ sur $v$ se calcule en suivant
$v$ le long d'un chemin reliant $\tau$ \`a $\gamma^{-1}\cdot\tau$ (le groupe fondamental agit naturellement \`a droite,
et pour transformer notre action \`a gauche en action \`a droite, il faut faire
agir $\gamma$ par $\gamma^{-1}$).  La base $(e_1^\dual, e_2^\dual)=(-1,\tau)$ se transforme
en $\big(-1,\frac{d\tau-b}{-c\tau+a}\big)\sim (c\tau-a,d\tau-b)=(a e_1^\dual+c e_2^\dual,b e_1^\dual+ d e_2^\dual)$,
ce qui est effectivement l'action sur $W_{1,0}^\dual$.

Le syst\`eme local $W_{1,0}^{\rm B}$ est donc la cohomologie de Betti de la famille des $E_\tau$;
le fibr\'e $U\mapsto W_{1,0}^{\rm dR}(U)=\O(U)\otimes W_{1,0}$ 
en est la cohomologie de de Rham; la connexion $\nabla=d\otimes 1$ ci-dessus est
la connexion de Gauss-Manin.
Les $W_{k,j}^{\rm B}$ et $W_{k,j}^{\rm dR}$ s'obtiennent \`a partir de 
$W_{1,0}^{\rm B}$ et $W_{1,0}^{\rm dR}$ par des op\'erations tensorielles.
La forme $\tau e_2-e_1$ est la forme $dz$ sur $E_\tau$ (ces deux formes
prennent les m\^emes valeurs sur $e_1^\dual$ et $e_2^\dual$, \`a savoir
$-1$ et~$\tau$).

Il est d'usage de poser 
$$t=e^{2i\pi\,z},\quad q=e^{2i\pi\,\tau}.$$
On a un isomorphisme (induit par $z\mapsto t$) et des identifications
$$E_\tau=\C/(\Z\oplus\Z\tau)\cong \C^\dual/q^\Z,\quad \tfrac{dt}{t}=2i\pi\,dz=2i\pi (\tau e_2-e_1),
\quad \tfrac{dq}{q}=2i\pi\,d\tau.$$

\Subsection{Les courbes modulaires}\label{como2}
\subsubsection{Espaces de modules de courbes elliptiques}\label{geo7}
Si $K$ est un sous-groupe ouvert compact de $\GG(\A^{]\infty[})$, on \index{X@\XXX}note
$X(K)(\C)$ la compactifi\'ee (lisse) de $Y(K)(\C)$ obtenue en rajoutant
\`a $Y(K)(\C)$ des {\og pointes\fg}.
Alors $X(K)(\C)$ est l'ensemble des $\C$-points d'une
courbe alg\'ebrique $X(K)$
qui admet un mod\`ele connexe (mais pas g\'eom\'etriquement
connexe) sur $\Q$.
On note $X(K)^\times$ la
courbe $X(K)$ avec une structure logarithmique aux pointes.

Si $N\geq 1$, on note simplement $X(N)$ la courbe $X(\wGamma(N))$, compactifi\'ee de $Y(N)$.
Alors $Y(N)$
param\`etre les triplets $(E,e_1,e_2)$ \`a isomorphisme pr\`es, o\`u $E$
est une courbe elliptique, et $(e_1,e_2)$ une base sur $\Z/N$ du sous-groupe $E[N]$ de $N$-torsion de $E$.
La courbe $Y(N)$ est d\'efinie sur $\Q$, mais n'est pas g\'eom\'etriquement connexe:
le corps des constantes de $\O(Y(N))$ est $\Q(\zeta_N)$, et les composantes connexes
de $Y(N)_{\Q(\zeta_N)}$ sont en bijection avec l'ensemble $\bmu_N^\dual$ des
racines primitives $N$-i\`emes de l'unit\'e (si $\eta\in\bmu_N^\dual$, la composante
correspondant \`a $\eta$ param\`etre les triplets $(E,e_1,e_2)$ comme ci-dessus
tels que l'accouplement de Weil $\langle e_1,e_2\rangle$ de $e_1$ et $e_2$ soit \'egal \`a $\eta$).
On identifie $\bmu_N^\dual$ \`a $(\Z/N)^\dual$ en envoyant $a\in(\Z/N)^\dual$ sur
$\zeta_N^a=e^{2i\pi\,a/N}$.

On \index{inf@\inft}note $\infty_a$ la pointe \`a l'infini
de $(\Gamma(N)\backslash{\cal H}^+)\times\{a\}\subset Y(N)(\C)$ (cf.~\no\ref{geo3}). 
 C'est un point de $X(N)(\Q(\zeta_N))$, et si
$u\in \cZ^\dual$ et $\sigma_u\in{\rm Gal}(\Q^{\rm cycl}/\Q)$ est l'image inverse de $u$ par
le caract\`ere cyclotomique, alors $\sigma_u(\infty_a)=\infty_{ua}$.

\subsubsection{Le voisinage $p$-adique de la pointe $\infty$}\label{como2.9}
Soit $C=\C_p$ (ou, plus g\'en\'eralement, un corps alg\'ebriquement clos, complet
pour $v_p$).
Soit $Z(1)_C\subset X(1)_C$ \index{Z2@\zzz}le lieu param\'etrant les courbes elliptiques \`a r\'eduction
multiplicative (i.e.~$v_p(j)<0$).  C'est une boule ouverte unit\'e 
(de param\`etre~$\frac{1}{j}$ ou $q$).

Si $N\geq 1$, notons $\pi_N:X(N)\to X(1)$ l'application naturelle, et $Z(N)_C$
la composante connexe de $\pi_N^{-1}(Z(1)_C)$ stable par $\matrice{1}{\Z/N}{0}{1}\subset \GG(\Z/N)$.
Alors $Z(N)_C$ est une boule ouverte de param\^etre local 
$q^{1/N}$ et $\pi_N^*:\O^+(Z(1)_C)\subset\O^+(Z(N)_C)$ 
est l'inclusion $\O_C[[q]]\hookrightarrow \O_C[[q^{1/N}]]$.

Enfin, on \index{Z3@\zzzo}note $Z(0)_C$ la limite projective des $Z(N)_C$.

\subsubsection{Syst\`emes locaux}\label{como3}
Les faisceaux du \no\ref{geo6} ont des avatars alg\'ebriques: le tore $E_\tau$ est l'ensemble des
$\C$-points d'une courbe elliptique, et les
courbes $Y(K)$ param\`etrent des courbes elliptiques avec structure de niveau.
La cohomologie de la courbe elliptique universelle au-dessus de $Y(K)$ fournit
donc des faisceaux ${\cal H}^i_{\rm truc}$ sur $Y(K)$ ou son analytifi\'ee.  

Posons $F^{\rm B}=\Q$, $F^{\rm dR}=\Q$ et $F^{\eet}=\Q_p$
(avec B = Betti, dR = de\,Rham, ${\eet}$ = \'etale). 
Si ${\rm truc}\in\{{\rm B},{\rm dR},{\eet}\}$
et si $k\in\N$ et $j\in\Z$, on \index{W2@\www}note
$W_{k,j}^{\rm truc}$ le $F^{\rm truc}$-faisceau
$$W_{k,j}^{\rm truc}={\rm Sym}^k{\cal H}^1_{\rm truc}\otimes ({\cal H}^2_{\rm truc})^{-j}.$$
Notons que ${\cal H}^2_{\rm truc}=\det {\cal H}^1_{\rm truc}$ est le twist de Tate $(-1)$
dans chacune des cohomologies consid\'er\'ees et que l'on a
$({\cal H}^1_{\rm truc})^\dual\cong {\cal H}^1_{\rm truc}\otimes ({\cal H}^2_{\rm truc})^{-1}$. 
On en d\'eduit un isomorphisme
$$(W_{k,j}^{\rm truc})^\dual\cong W_{k,k-j}^{\rm truc}.$$ 

Le fibr\'e ${\cal H}^1_{\rm dR}$ admet comme sous-fibr\'e de rang~$1$, le
\index{ome@\ome}fibr\'e $\omega$ (des formes diff\'erentielles holomorphes sur la courbe elliptique
universelle), et donc une filtration
$${\rm Fil}^0{\cal H}^1_{\rm dR}={\cal H}^1_{\rm dR},\quad
{\rm Fil}^1{\cal H}^1_{\rm dR}=\omega,\quad
{\rm Fil}^2{\cal H}^1_{\rm dR}=0.$$
Comme ${\cal H}^2_{\rm dR}=\wedge^2 {\cal H}^1_{\rm dR}$, le fibr\'e ${\cal H}^2_{\rm dR}$
est muni de la filtration
$$
{\rm Fil}^1{\cal H}^2_{\rm dR}={\cal H}^2_{\rm dR},\quad
{\rm Fil}^2{\cal H}^1_{\rm dR}=0,$$
et les fibr\'es $W_{k,j}^{\rm dR}$ sont munis de la filtration obtenue par produit
tensoriel: 
$${\rm Fil}^{k-j-r}W_{k,j}^{\rm dR}={\rm Fil}^{k-r}W_{k,0}^{\rm dR},\quad
{\rm Fil}^{k-r}W_{k,0}^{\rm dR}=\omega^{k-r}\otimes {\rm Sym}^r{\cal H}^1_{\rm dR}.$$

\subsubsection{Le motif $\Q(i)$ et ses r\'ealisations}\label{como112.2}
Soit $\Q(1)$ le motif de Tate.  On note $\Q(1)_{\rm B}$, $\Q_p(1)$ et $\Q(1)_{\rm dR}$
ses r\'ealisations de Betti, \'etale $p$-adique, et de de\,Rham.
Alors $\Q(1)_{\rm B}$ et $\Q(1)_{\rm dR}$ sont des $\Q$-espaces de dimension~$1$,
admettant des bases \index{zzeta@\zb}naturelles $\zeta_{\rm B}$ et~$\zeta_{\rm dR}$.
On a des isomorphismes de comparaison:
$$\C\otimes \Q(1)_{\rm B}\cong\C\otimes\Q(1)_{\rm dR},
\quad
\Q_p\otimes \Q(1)_{\rm B}\cong\Q_p(1),
\quad
\bdr\otimes \Q(1)_{\rm B}\cong\bdr\otimes\Q(1)_{\rm dR}$$
et, via ces isomorphismes, $\zeta_{\rm B}$ et $\zeta_{\rm dR}$ sont reli\'es par:
$$1\otimes \zeta_{\rm B}=2i\pi\otimes \zeta_{\rm dR}\ {\text{(dans $\C\otimes \Q(1)_{\rm B}$)}},
\quad
1\otimes \zeta_{\rm B}=t\otimes \zeta_{\rm dR}\ {\text{(dans $\bdr\otimes \Q(1)_{\rm B}$)}}.$$
On note encore $\zeta_{\rm B}$ (ou $\zeta_{\eet}$) l'\'el\'ement $1\otimes \zeta_{\rm B}$ de $\Q_p(1)$;
on a donc $\sigma(\zeta_{\rm B})=
\cyp (\sigma)\zeta_{\rm B}$, si $\sigma\in G_\Q$.

Plus g\'en\'eralement, si $i\geq 0$, on note $\Q(i)$ la puissance tensorielle $i$-i\`eme
de $\Q(1)$ et, si $i\leq 0$, on note $\Q(i)$ le dual de $\Q(-i)$.
Alors $\Q(i)_{\rm B}$ et $\Q(i)_{\rm dR}$ sont des $\Q$-espaces de dimension~$1$ admettant
$\zeta_{\rm B}^i$ et $\zeta_{\rm dR}^i$ comme base.

Notons que $e_1^\dual, e_2^\dual$ est une base anti-orient\'ee du $H_1$
de la courbe elliptique universelle, et donc que 
$$e_1^\dual\wedge e_2^\dual=-\zeta_{\rm B}
\quad{\rm et}\quad
e_1\wedge e_2=-\zeta_{\rm B}^{-1}.$$
Il s'ensuit que 
\begin{align*}
\tfrac{(\tau e_2-e_1)^k}{(e_1\wedge e_2)^j}d\tau&=
-\tfrac{(\tau e_2-e_1)^{k+2}}{(e_1\wedge e_2)^{j+1}}=(-1)^j\zeta_{\rm B}^{j+1}(dz)^{k+2}\\
(-2i\pi)^{k+1-j}\tfrac{(\tau e_2-e_1)^k}{(e_1\wedge e_2)^j}d\tau &=
(-1)^{k+1}((2i\pi)^{-1}\zeta_{\rm B})^{j+1}(2i\pi\,dz)^{k+2}=
(-1)^{k+1}\zeta_{\rm dR}^{j+1}(\tfrac{dt}{t})^{k+2}
\end{align*}

\subsubsection{Cohomologie}\label{como7}
Si ${\rm truc}\in\{{\rm B},{\rm dR},{\eet}\}$,
on note $H^\bullet_{\rm truc}$ la cohomologie correspondante
(la cohomologie \'etale est la g\'eom\'etrique, i.e.~celle
des $X(K)^\times\times\Qbar$,
la structure logarithmique a pour effet de permettre les rev\^etements profinis ramifi\'es en les pointes:
ce qu'on obtient est souvent appel\'e {\it cohomologie log-Kummer-(pro)\'etale}),
$H^\bullet_{{\rm truc},c}$ la cohomologie \`a support compact et
$H^\bullet_{\rm truc,\,par}$ la cohomologie parabolique
(i.e.~l'image de $H^\bullet_{{\rm truc},c}\to H^\bullet_{{\rm truc}}$).
Alors $H^1_{\rm truc,\,par}$ est naturellement un quotient de
$H^1_{{\rm truc},c}$, mais le principe de Manin-Drinfeld fournit un scindage
naturel, ce qui permet de consid\'erer $H^1_{\rm truc,\,par}$ comme un sous-objet de
$H^1_{{\rm truc},c}$.

Les $H^1_{\rm dR\sharp}(X(K)^\times,W^{\rm dR}_{k,j})$, pour $\sharp\in\{\ ,c,{\rm par}\}$,
sont munis d'une filtration induite par celle
de $W^{\rm dR}_{k,j}$ dont le gradu\'e associ\'e n'a que deux termes non nuls
$H^{1,0}_{\rm dR\sharp}(X(K)^\times,W^{\rm dR}_{k,j})$ (sous-objet de $H^1_{\rm dR\sharp}(X(K)^\times,W^{\rm dR}_{k,j})$)
et $H^{0,1}_{\rm dR\sharp}(X(K)^\times,W^{\rm dR}_{k,j})$ (le quotient).

La th\'eorie de Hodge fournit une d\'ecomposition 
(o\`u $w_\infty\hskip.5mm {=}\hskip.5mm \matrice{-1}{0}{0}{1}_\infty$
et $H_\C\hskip.5mm {=}\hskip.5mm \C{\otimes}_\Q H$)
$$H^1_{\rm dR,\,par}(X(K)^\times,W^{\rm dR}_{k,j})_\C=
H^{1,0}_{\rm dR,\,par}(X(K)^\times,W^{\rm dR}_{k,j})_\C\oplus
H^{0,1}_{\rm dR,\,par}(X(K)^\times,W^{\rm dR}_{k,j})_\C,$$
les repr\'esentants harmoniques \'etant $M_{k+2,j+1}^{\rm par}(K,\C)$ pour $H^{1,0}$
et $w_\infty\star M_{k+2,j+1}^{\rm par}(K,\C)$ pour $H^{0,1}$.
On a aussi $\C\otimes H^{1,0}_{\rm dR}(X(K)^\times,W^{\rm dR}_{k,j})=M_{k+2,j+1}(K,\C)$.

On a un diagramme commutatif
d'isomorphismes de comparaison (entre les lignes 1 et 2, il s'agit de la comparaison entre
cohomologies de Betti et de de Rham combin\'ee avec la comparaison entre cohomologie de de Rham alg\'ebrique et
cohomologie de de Rham classique, entre les lignes 2 et 3, c'est la th\'eorie de Hodge (et l'existence
de repr\'esentants harmoniques), entre les 3 et 4, c'est l'isomorphisme d'Eichler-Shimura,
entre les 4 et 1, c'est la comparaison entre la cohomologie du groupe fondamental et celle des syst\`emes locaux):
$$\xymatrix@R=.4cm{
\C\otimes_\Q H^1_{\rm B,par}(X(K)^\times(\C),W_{k,j}^{\rm B})\ar@<5mm>[d]^-{\iota_{\rm B}^{\rm dR}}\\
\C\otimes_\Q H^1_{\rm dR,par}(X(K)^\times,W_{k,j}^{\rm dR})\ar@<5mm>[u]^-{\iota_{\rm dR}^{\rm B}}\ar@<5mm>[d]^-{\iota_{\rm dR}^{\rm H}}\\
M^{\rm par}_{k+2,j+1}(K,\C)\oplus \big(w_\infty \star M^{\rm par}_{k+2,j+1}(K,\C)\big)
\ar@<5mm>[u]^-{\iota_{\rm H}^{\rm dR}}\ar[d]^-{\iota_{\rm ES}}\\
\C\otimes_\Q H^1_{\rm par}(\Gamma(1),{\cal C}(\GG(\cZ)/K,W_{k,j}))\ar@/_70pt/@<-1.5cm>[uuu]
}$$

\subsubsection{Dualit\'e}\label{COM1}
On dispose d'une application trace 
$${\rm Tr}:H^2_{{\rm truc},c}(X(K)^\times,W_{0,0}^{\rm truc})\to
F^{\rm truc}\otimes\zeta_{\rm truc}^{-1}$$ et, si $K\subset K'$, d'un diagramme commutatif
(qui se d\'eduit du diagramme analogue pour les $H^0$):
$$\xymatrix{
H^2_{{\rm truc},c}(X(K)^\times,W_{0,0}^{\rm truc})\ar@<.1cm>[d]^-{\rm cor}\ar[r]^-{\rm Tr} &F^{\rm truc}\otimes\zeta_{\rm truc}^{-1}\ar@<.1cm>[d]^-{\rm id}\\
H^2_{{\rm truc},c}(X(K')^\times,W_{0,0}^{\rm truc})\ar@<.1cm>[u]^-{\rm res}\ar[r]^-{\rm Tr} &F^{\rm truc}\otimes\zeta_{\rm truc}^{-1}\ar@<.1cm>[u]^-{[K':K]}
}$$

En composant le cup-produit avec la projection naturelle $W_{k,k-j}\otimes W_{k,j}\to W_{0,0}$
puis avec l'application trace ci-dessus, cela fournit des dualit\'es:
\begin{align*}
\langle\ ,\ \rangle_{{\rm truc},K}:H^1_{{\rm truc}}(X(K)^\times,W_{k,k-j}^{\rm truc})
\times H^1_{{\rm truc},c}(X(K)^\times,W_{k,j}^{\rm truc})\to F^{\rm truc}\otimes\zeta_{\rm truc}^{-1}\\
\langle\ ,\ \rangle_{{\rm truc},K}:H^1_{{\rm truc,\,par}}(X(K)^\times,W_{k,k-j}^{\rm truc})
\times H^1_{{\rm truc,\,par}}(X(K)^\times,W_{k,j}^{\rm truc})\to F^{\rm truc}\otimes\zeta_{\rm truc}^{-1}
\end{align*}
De plus, on a
\begin{equation}
\langle\mu_K,{\rm res}(\phi_{K'})\rangle_{{\rm truc},K}
=\langle{\rm cor}(\mu_K),\phi_{K'}\rangle_{{\rm truc},K'}\label{dua1}
\end{equation}
pour tous $\mu_K\in H^1_{{\rm truc}}(X(K)^\times,W_{k,k-j}^{\rm truc})$ et
$\phi_{K'}\in H^1_{{\rm truc},c}(X(K')^\times,W_{k,j}^{\rm truc})$.

\subsubsection{Compatibilit\'e des accouplements de dualit\'e}
Soit $N\geq 1$.
On dispose \index{accouplements!{$\langle\ ,\ \rangle$, $\langle\ ,\ \rangle_{\rm B}$, $\langle\ ,\ \rangle_{\rm dR}$}}d'accouplements:
\begin{align*}
&\langle\ ,\ \rangle:H^1(\Gamma(1),{\cal C}(\GG(\Z/N),W_{k,k-j}))\times
H^1_{\rm par}(\Gamma(1),{\cal C}(\GG(\Z/N),W_{k,j}))\to \Q\\
&\langle\ ,\ \rangle_{\rm B}:H^1_{\rm B}(Y(N)(\C),W_{k,k-j}^{\rm B})\times
H^1_{\rm B,par}(Y(N)(\C),W_{k,j}^{\rm B})\to\Q\otimes\zeta_{\rm B}^{-1}\\
&\langle\ ,\ \rangle_{\rm dR}:H^1_{\rm dR}(X(N)^\times,W_{k,k-j}^{\rm dR})\times
H^1_{\rm dR,par}(X(N)^\times,W_{k,j}^{\rm dR})\to\Q\otimes\zeta_{\rm dR}^{-1}\\
&\langle\ ,\ \rangle_{\rm H}:\big(M_{\blacksquare}(N)\oplus w_\infty\star M^{\rm par}_{\blacksquare}(N)\big)\times
\big(M^{\rm par}_{\square}(N)\oplus w_\infty\star M^{\rm par}_{\square}(N)\big)\to\C\\
&\hskip2.3cm \blacksquare=k+2,k+1-j,\quad \square=k+2,j+1
\end{align*}
Les accouplements $\langle\ ,\ \rangle_{\rm B}$ et $\langle\ ,\ \rangle_{\rm dR}$ sont
obtenus via $H^2_c$; l'accouplement $\langle\ ,\ \rangle$ est celui du \S\,\ref{cup3},
et $\langle\ ,\ \rangle_{\rm H}$ est obtenu en \'evaluant le cup-produit des deux formes
diff\'erentielles le long de la classe fondamentale de $H^2_{{\rm B},c}$, i.e.
$$\langle\phi_1 ,\phi_2 \rangle_{\rm H}=\int_{Y(N)(\C)}\phi_1\wedge\phi_2$$
(Quand on d\'eveloppe $\frac{(\overline\tau e_1-e_2)^k}{(e_1\wedge e_2)^{k-j}}d\overline\tau\wedge
\frac{(\tau e_1-e_2)^k}{(e_1\wedge e_2)^{j}}d\tau$ et les expressions analogues \'echangeant
les r\^oles de $\tau$ et $\overline\tau$, on voit appara\^{\i}tre la forme $y^kdxdy$ habituelle
intervenant dans le produit scalaire de Petersson.)

Ces accouplements se correspondent (apr\`es extension des scalaires \`a $\C$)
via les isomorphismes de comparaison (et l'identit\'e $\zeta_{\rm B}=2i\pi\,\zeta_{\rm dR}$):
\begin{align}\label{Compa}
\langle\iota_{\rm ES}(\phi_1),\iota_{\rm ES}(\phi_2)\rangle&=\langle\phi_1,\phi_2\rangle_{\rm H}\\
\langle\iota_{\rm dR}^{\rm B}(\phi_1),\iota_{\rm dR}^{\rm B}(\phi_2)\rangle_{\rm B}\otimes\zeta_{\rm B}&=
2i\pi\,\langle\phi_1,\phi_2\rangle_{\rm dR}\otimes\zeta_{\rm dR} \notag \\
\langle\iota_{\rm H}^{\rm dR}(\phi_1),\iota_{\rm H}^{\rm dR}(\phi_2)\rangle_{\rm dR}\otimes\zeta_{\rm dR}&=
\tfrac{1}{2i\pi}\langle\phi_1,\phi_2\rangle_{\rm H}\notag
\end{align}

On en d\'eduit la formule explicite suivante.
\begin{lemm}\phantomsection\label{shi2}
Soient $\Phi_1\in H^1_{{\rm dR},{\rm par}}(X(N)^\times,W^{\rm dR}_{k,j})$ 
et $\Phi_2\in H^1_{\rm dR}(X(N)^\times,W^{\rm dR}_{k,k-j})$, 
et soient $\Phi_1^{\rm Har}=\iota_{\rm dR}^{\rm H}(\Phi_1)$ 
et $\Phi_2^{\rm Har}=\iota_{\rm dR}^{\rm H}(\Phi_2)$
les repr\'esentants harmoniques de $\Phi_1$ et~$\Phi_2$. 
Alors
\begin{align*}
\langle \Phi_1,& \Phi_2\rangle_{{\rm dR},Y(N)}\otimes\zeta_{\rm dR}
=\tfrac{1}{2i\pi}\int_{Y(N)(\C)}\Phi^{\rm Har}_1\wedge\Phi^{\rm Har}_2\\
=&\ \tfrac{1}{2i\pi}
\int_{\Gamma(N)\backslash{\cal H}^+}\sum_{a\in(\Z/N)^\dual}(\Phi^{\rm Har}_1\wedge\Phi^{\rm Har}_2)(\tau,\matrice{1}{0}{0}{a}^{]\infty[})\\
=&\ \tfrac{1}{2i\pi}
\int_{\Gamma(N)\backslash{\cal H}^+}\sum_{a\in(\Z/N)^\dual}
\Big(\big(\matrice{1}{0}{0}{a}^{]\infty[}\star\Phi^{\rm Har}_1\big)\wedge\big(\matrice{1}{0}{0}{a}^{]\infty[}\star\Phi^{\rm Har}_2\big)\Big)
(\tau,1^{]\infty[})
\end{align*}
\end{lemm}

\Subsection{Cohomologie de la tour des courbes modulaires}

Si ${\rm truc}\in\{{\rm B},{\rm dR},{\eet}\}$, et si $\sharp\in\{\ ,c,{\rm par}\}$, 
\index{H2@\IH}posons:
\begin{align*}
&\pH^1_{\rm truc\sharp}(W^{\rm truc}_{k,j}):=\varprojlim_K
H^1_{\rm truc\sharp}(X(K)^\times, W^{\rm truc}_{k,j}),\\
&\iH^1_{{\rm truc}\sharp}(W^{\rm truc}_{k,j}):=\varinjlim_K
H^1_{{\rm truc}\sharp}(X(K)^\times, W^{\rm truc}_{k,j}),
\end{align*}
o\`u $K$ parcourt les sous-groupes ouverts compacts de $\GG(\A^{]\infty[})$ (on peut
se contenter des $\wGamma(N)$ puisque ceux-ci forment un syst\`eme cofinal);
les fl\`eches de transition sont les corestrictions pour $\pH$
et les restrictions pour $\iH$.

De m\^eme, si $S$ est un ensemble fini de nombres premiers, on pose
\begin{align*}
&\pH^1_{\rm truc\sharp}(W^{\rm truc}_{k,j})_S:=\varprojlim_K
H^1_{\rm truc\sharp}(X(K)^\times, W^{\rm truc}_{k,j}),\\
&\iH^1_{{\rm truc}\sharp}(W^{\rm truc}_{k,j})_S:=\varinjlim_K
H^1_{{\rm truc}\sharp}(X(K)^\times, W^{\rm truc}_{k,j}),
\end{align*}
o\`u $K$ parcourt les sous-groupes ouverts compacts de $\GG(\Q_S)$.

Les $\iH^1_{\rm truc\sharp}(W^{\rm truc}_{k,j})$  
et les $\pH^1_{\rm truc\sharp}(W^{\rm truc}_{k,j})$  
(resp.~les $\iH^1_{\rm truc\sharp}(W^{\rm truc}_{k,j})_S$
et les $\pH^1_{\rm truc\sharp}(W^{\rm truc}_{k,j})_S$) sont munis
d'actions de $\GG(\A^{]\infty[})$ (resp.~de $\GG(\Q_S)$), lisses pour les $\iH^1$.

\subsubsection{Comparaison avec la cohomologie de $\GG(\Q)$}\label{como8}
On a des isomorphismes commutant aux actions de
$\GG(\A^{]\infty[})$, 
pour $\sharp\in\{\ ,c,{\rm par}\}$
et $\star\in\{\ ,S\}$,
\begin{align*}
\Q_p\otimes_\Q \iH^1_{{\rm B}\sharp}(W^{\rm B}_{k,j})_\star\cong&\  
\iH^1_{{\eet}\sharp}(W^{\eet}_{k,j})_\star,\\
\C\otimes_\Q \iH^1_{{\rm B}\sharp}(W^{\rm B}_{k,j})_\star\cong&\  
\C\otimes_\Q \iH^1_{{\rm dR}\sharp}(W^{\rm dR}_{k,j})_\star,\\
\bdr\otimes_\Q \iH^1_{{\rm B}\sharp}(W^{\rm B}_{k,j})_\star\cong&\  
\bdr\otimes_\Q \iH^1_{{\rm dR}\sharp}(W^{\rm dR}_{k,j})_\star.
\end{align*}
Et de m\^eme pour $\pH^1$.
Pour m\'emoire, rappelons les isomorphismes d\'ej\`a mentionn\'es.
\begin{align*}
\iH^1_{{\rm B}\sharp}(W^{\rm B}_{k,j})\cong&\  H^1_\sharp(\GG(\Q),{\rm LC}(\GG(\A^{]\infty[}),\Q)\otimes W_{k,j}),\\
\iH^1_{{\rm B}\sharp}(W^{\rm B}_{k,j})_S\cong&\  H^1_\sharp(\GG(\Z[\tfrac{1}{S}]),{\rm LC}(\GG(\Q_S),\Q)\otimes W_{k,j}).
\end{align*}

\subsubsection{Dualit\'e}
La formule~(\ref{dua1}) montre que l'accouplement passe \`a la limite:
si
\begin{align*}
\mu=(\mu_K)_K&\in \varprojlim
H^1_{\rm truc}(X(K)^\times, W^{\rm truc}_{k,k-j})\\
 \phi=(\phi_K)_K&\in \varinjlim
H^1_{{\rm truc},c}(X(K)^\times, W^{\rm truc}_{k,j})
\end{align*}
on pose
\begin{equation}\label{limite}
\langle\mu,\phi\rangle_{\rm truc}:=\lim\nolimits_K\, \langle\mu_K,\phi_K\rangle_{{\rm truc},K}
\end{equation}
(la suite est en fait stationnaire pour $K$ assez petit, et il suffit de prendre $K$
de la forme $\wGamma(N)$).
L'accouplement
$$\langle\ ,\ \rangle_{\rm truc}:
\pH^1_{\rm truc}(W^{\rm truc}_{k,k-j})_\star\times \iH^1_{{\rm truc},c}(W^{\rm truc}_{k,j})_\star
\to F^{\rm truc}\otimes\zeta_{\rm truc}^{-1}$$
ainsi d\'efini commute aux actions de $\GG(\A^{]\infty[})$ ou $\GG(\Q_S)$ 
(agissant trivialement sur~$\zeta_{\rm truc}$),
et est une dualit\'e si $\star\in\{\ ,S\}$.

Il en est de m\^eme de sa restriction \`a
$\pH^1_{\rm truc,\,par}(W^{\rm truc}_{k,k-j})_\star\times 
\iH^1_{{\rm truc},\,{\rm par}}(W^{\rm truc}_{k,j})_\star$.

\subsubsection{Formes modulaires quasi-holomorphes g\'eom\'etriques}\label{como4}
\index{ome@\ome}Soit
$$\omega^{k,j}=\omega^k\otimes({\cal H}^2_{\rm dR})^{-j}.$$
C'est un fibr\'e en droites engendr\'e, localement pour la topologie analytique, 
par $\zeta_{\rm dR}^j(\frac{dt}{t})^k=
(-1)^j(2i\pi)^{k-j}\frac{(dz)^k}{(e_1\wedge e_2)^j}=
(-1)^j(2i\pi)^{k-j}\tfrac{(\tau e_2-e_1)^k}{(e_1\wedge e_2)^j}$.
\index{H2@\IH}Posons
$$\iH^0(\omega^{k,j})=\varinjlim H^0(X(K)^\times,\omega^{k,j})=\varinjlim H^0(X(N)^\times,\omega^{k,j}),$$
o\`u $K$ d\'ecrit les sous-groupes ouverts compacts de $\GG(\A^{]\infty[})$ et $N$ les entiers~$\geq 1$.
On a (en notant $\O^{\rm temp}({\cal H}^+)$ les fonctions holomorphes sur ${\cal H}^+$, \`a croissance lente
\`a l'infini, i.e.~aux pointes)
\begin{align*}
\C\otimes_\Q H^0(X(N)^\times,\omega^{k,j})&
=\C\otimes_\Q H^0(X(1)^\times,{\rm LC}(\GG(\Z/N),\Z)\otimes\omega^{k,j})\\
&= H^0(\Gamma(1),{\rm LC}(\GG(\Z/N),\Z)\otimes \O^{\rm temp}({\cal H}^+)\tfrac{(\tau e_2-e_1)^k}{(e_1\wedge e_2)^j})
\end{align*}
On en d\'eduit que (le passage de $\Gamma(1)$ \`a $\GG(\Q)$ se fait via le lemme de Shapiro)
\begin{align*}
\C\otimes_\Q \iH^0(\omega^{k,j})&=
H^0(\Gamma(1),{\rm LC}(\GG(\cZ),\Z)\otimes \O^{\rm temp}({\cal H}^+)\tfrac{(\tau e_2-e_1)^k}{(e_1\wedge e_2)^j})\\
&= H^0(\GG(\Q),{\cal A}^+\otimes\tfrac{(\tau e_2-e_1)^k}{(e_1\wedge e_2)^j})=M_{k,j}(\C)
\end{align*}
Plus g\'en\'eralement,
$$\C\otimes_\Q\iH^0({\rm Fil}^{k-j-r}W_{k,j}^{\rm dR})=M^{\rm qh}_{k,j,r}(\C).$$

\begin{prop}\phantomsection\label{como5}
L'image de $\iH^0(\omega^{k,j})$ dans $M_{k,j}(\C)$ est $(2i\pi)^{k-j}M_{k,j}(\Q)$.
\end{prop}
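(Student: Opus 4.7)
The plan is to unwind the comparison between the algebraic and analytic normalizations of $\omega^{k,j}$ and track the transcendental factor $(2i\pi)^{k-j}$ that separates them.

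First, I would recall that $H^0(X(N)^\times,\omega^{k,j})$ has a natural $\Q(\zeta_N)$-structure coming from the algebraic theory of modular forms: a global section of $\omega^k$ on $X(N)^\times_{\Q(\zeta_N)}$ is exactly a modular form of weight $k$ and level $N$ defined over $\Q(\zeta_N)$ in the sense of Katz, i.e.\ one whose $q$-expansion at the cusp $\infty_a$ (for each $a\in(\Z/N)^\dual$) lies in $\Q(\zeta_N)[[q^{1/N}]]$. Passing to the limit over $N$ and using the description of $M^{\rm cong}$ (via the rem.\,\ref{ratio1} identification with $M_{k,j}(\Q)$), I obtain a canonical $\Q$-subspace $\iH^0(\omega^{k,j})\subset \C\otimes_\Q\iH^0(\omega^{k,j})$ whose transcendental incarnation has yet to be compared with $M_{k,j}(\Q)$.

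Next, I would compute the change-of-basis between the algebraic trivialisation $\zeta_{\rm dR}^j\bigl(\tfrac{dt}{t}\bigr)^k$ of $\omega^{k,j}$ (valid at a cusp via the Tate curve) and the analytic trivialisation $\tfrac{(\tau e_2-e_1)^k}{(e_1\wedge e_2)^j}$ used throughout the definition of $M_{k,j}(\C)$. The two identities recalled in \no\ref{como112.2}, namely $\tfrac{dt}{t}=2i\pi(\tau e_2-e_1)$ and $e_1\wedge e_2=-\zeta_{\rm B}^{-1}=-\tfrac{1}{2i\pi}\zeta_{\rm dR}^{-1}$, yield directly
\[
\zeta_{\rm dR}^j\bigl(\tfrac{dt}{t}\bigr)^k=(-1)^j(2i\pi)^{k-j}\,\tfrac{(\tau e_2-e_1)^k}{(e_1\wedge e_2)^j}.
\]
Hence a section $f\cdot\zeta_{\rm dR}^j\bigl(\tfrac{dt}{t}\bigr)^k$, with $f$ a Katz-algebraic modular form over $\Q$, maps under the analytic comparison to $(-1)^j(2i\pi)^{k-j}\,f\cdot\tfrac{(\tau e_2-e_1)^k}{(e_1\wedge e_2)^j}$.

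The third step would be to check that the $q$-expansion principle matches the two notions of $\Q$-rationality: the Kirillov-theoretic rationality defining $M_{k,j}(\Q)$ (i.e.\ ${\cal K}(\phi,u)\in\Q^{\rm cycl}$ with $\sigma_a({\cal K}(\phi,u))={\cal K}(\phi,au)$) coincides, after applying the dictionary $\phi\mapsto f_\phi$ of \no\ref{class} and rem.\,\ref{ratio1}, with the Katz-rationality of the $q$-expansion in $\Q(\zeta_N)[[q^{1/N}]]$ at each cusp $\infty_a$; the Galois equivariance on $a\in\cZ^\dual$ is precisely the one forced by the action of $\sigma_u$ on $\infty_a$ recalled in \no\ref{geo7}. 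Combining these three ingredients gives that $\iH^0(\omega^{k,j})\to M_{k,j}(\C)$ has image $(2i\pi)^{k-j}M_{k,j}(\Q)$, the factor coming only from the basis comparison.

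The only genuine subtlety, and what I would regard as the main thing to verify, is the consistency of the rational structures across all cusps and all levels: one must check that the Katz-algebraic rationality at every cusp $\infty_a$ is equivalent to the global ad\`elic Galois-equivariance condition built into $M_{k,j}(\Q)$. Once this dictionary is in place (it is essentially a reformulation of the $q$-expansion principle together with the description of the Galois action on cusps), the statement follows immediately from the explicit $(2i\pi)^{k-j}$ identity above.
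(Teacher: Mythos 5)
Your proposal is correct and follows essentially the same route as the paper's own proof: extract the transcendental factor $(2i\pi)^{k-j}$ from the change of trivialisation between $\zeta_{\rm dR}^j(\tfrac{dt}{t})^k$ and $\tfrac{(\tau e_2-e_1)^k}{(e_1\wedge e_2)^j}$, then match the Kirillov-theoretic rationality condition $\sigma_a({\cal K}(\phi,u))={\cal K}(\phi,au)$ with the Galois-equivariance of $q$-expansions at the cusps via $\sigma_a(\infty_b)=\infty_{ab}$. The paper's proof is simply a more condensed version of exactly this argument.
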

\begin{proof}
Le $(2i\pi)^{k-j}$ vient du passage de $\zeta_{\rm dR}^j(\frac{dt}{t})^k$ \`a
$\tfrac{(\tau e_2-e_1)^k}{(e_1\wedge e_2)^j}$.  
Maintenant,
si $g\in H^0(X(N)^\times,\omega^k)$, 
on note $g_b=\sum_{n\in\frac{1}{N}\N}c_{b,n}(g)q^n$ son $q$-d\'eveloppement
en $\infty_b$.  
Si $\phi_g\in M_{k,j}(\C)$ correspond \`a $(2i\pi)^{j-k}g$, on a
${\cal K}(\phi_g,n^{]\infty[}b)=c_{b,n}(g)$, si $n>0$ et $b\in\cZ^\dual$ (en d\'efinissant
$c_{b,n}(g)$ comme $c_{\overline b,n}(g)$, si $b$ a comme image $\overline b$ dans $(\Z/N)^\dual$).
Or $\sigma_a(g_b)=g_{ab}$ (i.e.~$\sigma_a(c_{b,n}(g))=c_{ab,n}(g)$, pour tout $n\in\frac{1}{N}\N$),
si $a\in\cZ^\dual$, puisque $\sigma_a(\infty_b)=\infty_{ab}$.
Il s'ensuit que $\sigma_a({\cal K}(\phi_g,u))={\cal K}(\phi_g,au)$,
si $a\in\cZ^\dual$ et $u\in\Aidu$, ce qui permet de conclure.
\end{proof}

\Subsection{La conjecture $C_{\rm dR}$ pour les formes modulaires}\label{qq17}
\subsubsection{Formes modulaires quasi-holomorphes g\'eom\'etriques}\label{qq18}
On pourra consulter~\cite{urban} pour des compl\'ements sur ce qui suit.

Compte-tenu de l'isomorphisme de Kodaira-Spencer 
$\omega^{2,1}\overset{\sim}{\longrightarrow}\Omega^1$,
on peut voir la
connexion de Gauss-Manin comme
une \index{nab@\nabl}connexion
$$\nabla:W_{k,j}^{\rm dR}\to W_{k+2,j+1}^{\rm dR}.$$
En passant aux sections globales,
on obtient une d\'ecomposition
$$\iH^0(W_{k+2,j+1}^{\rm dR})=\nabla\cdot \iH^0(W_{k,j}^{\rm dR})\oplus
\iH^0(\omega^{k+2,j+1}),$$
Autrement dit, l'application naturelle
$$\iH^0(\omega^{k+2,j+1})\to \frac{\iH^0(W_{k+2,j+1}^{\rm dR})}{\nabla\cdot \iH^0(W_{k,j}^{\rm dR})}$$
est un isomorphisme (tout ceci est d\'ej\`a vrai \`a un niveau fini).
La \index{Hol@\hol}projection $${\rm Hol}:\iH^0(W_{k+2,j+1}^{\rm dR})\to \iH^0(\omega^{k+2,j+1})$$
qui
se d\'eduit de la d\'ecomposition ci-dessus est la 
{\it projection holomorphe} (si on \'etend les scalaires \`a $\C$, on obtient la projection
orthogonale de $M^{\rm qh}_{k+2,j+1}(\C)$ sur $M_{k+2,j+1}(\C)$ 
pour le produit scalaire de Petersson).

Les $\iH^i_{\rm dR}(W_{k,j}^{\rm dR})$ sont les groupes d'hypercohomologie
du complexe de faisceaux $W_{k,j}^{\rm dR}\overset{\nabla}{\longrightarrow} W_{k+2,j+1}^{\rm dR}$.
La filtration de Hodge qui se d\'eduit de la d\'efinition prend la forme de la suite
exacte suivante:
\begin{equation}\label{cano109}
0\to \frac{\iH^0(W_{k+2,j+1}^{\rm dR})}{\nabla\cdot \iH^0(W_{k,j}^{\rm dR})}
\to \iH^1_{\rm dR}(W_{k,j}^{\rm dR})\to \iH^1(W_{k,j}^{\rm dR})^{\nabla=0}\to 0
\end{equation}

\subsubsection{Cohomologie pro\'etale et formes quasi-holomorphes}\label{qq19}
\index{bdr@\BDR}Notons $\Bdr$, $\Bdr^+$, $\O\Bdr$ et $\O\Bdr^+$ les faisceaux pro\'etales en anneaux de Fontaine
relatifs, et $\bdr^+$ l'anneau $\Bdr^+(\C_p)$.
Le th\'eor\`eme de comparaison basique de Scholze~\cite{Sz1,DLLZ} fournit un isomorphisme
$$\bdr^+\otimes \iH^1_{\eet}(W_{k,j}^{\eet})\overset{\sim}{\to}
\iH^1_{\eet}(\Bdr^+\otimes W_{k,j}^{\eet}).$$
La conjecture $C_{\rm dR}$ pour les formes modulaires r\'esulte alors
du calcul du membre de droite:
$$\iH^1_{\eet}(\Bdr^+\otimes W_{k,j}^{\eet})\cong{\rm Fil}^0(\bdr\otimes \iH^1_{\rm dR}(W_{k,j}^{\rm dR})).$$
Nous aurons besoin d'expliciter une partie de cet isomorphisme.
On note $$\nabla:\O\Bdr^+\to\O\Bdr^+\otimes_\O\Omega^1$$ la connexion
$\Bdr^+$-lin\'eaire, co\"{\i}ncidant avec $d$ sur $\O$.
L'application induite par $\nabla$ sur
$\bdr\otimes H^0(U, W_{k,j}^{\rm dR})$, si $U$ est un ouvert d'une courbe modulaire,
 n'est autre que la connexion de Gauss-Manin.
On a une suite exacte de faisceaux pro\'etales
$$0\to\Bdr^+\to {\rm Fil}^0(\O\Bdr)\overset{\nabla}{\to}{\rm Fil}^0(\O\Bdr\otimes\Omega^1)\to 0.$$
Localement, on a
\begin{align*}
H^0_{\proet}(U_C,{\rm Fil}^0(\O\Bdr)\otimes_{\Q_p} W_{k,j}^{\eet})&={\rm Fil}^0(\bdr\otimes_{\Q}
H^0(U, W_{k,j}^{\rm dR}))\\
 H^1_{\proet}(U_C,{\rm Fil}^0(\O\Bdr)\otimes_{\Q_p} W_{k,j}^{\eet})&=0\\
H^0_{\proet}(U_C,{\rm Fil}^0(\O\Bdr\otimes_{\O}\Omega^1)\otimes_{\Q_p} W_{k,j}^{\eet})&={\rm Fil}^0(\bdr\otimes_{\Q}
H^0(U, W_{k,j}^{\rm dR}\otimes\Omega^1)),\\
H^1_{\proet}(U_C,{\rm Fil}^0(\O\Bdr\otimes_{\O}\Omega^1)\otimes_{\Q_p} W_{k,j}^{\eet})&=0.
\end{align*}
On en d\'eduit des isomorphismes
\begin{align*}
\iH^0_{\proet}({\rm Fil}^0(\O\Bdr)\otimes_{\Q_p} W_{k,j}^{\eet})&={\rm Fil}^0(\bdr\otimes_{\Q}
\iH^0(W_{k,j}^{\rm dR}))\\
\iH^0_{\proet}({\rm Fil}^0(\O\Bdr\otimes_\O\Omega^1)\otimes_{\Q_p} W_{k,j}^{\eet})
&={\rm Fil}^0(\bdr\otimes_{\Q} \iH^0(W_{k+2,j+1}^{\rm dR}))\\
\iH^1_{\proet}({\rm Fil}^0(\O\Bdr)\otimes_{\Q_p} W_{k,j}^{\eet})&={\rm Fil}^0(\bdr\otimes_{\Q}
\iH^1(W_{k,j}^{\rm dR}))\\
\iH^1_{\proet}({\rm Fil}^0(\O\Bdr\otimes_\O\Omega^1)\otimes_{\Q_p} W_{k,j}^{\eet})&={\rm Fil}^0(\bdr\otimes_{\Q}
\iH^1(W_{k+2,j+1}^{\rm dR}))
\end{align*}
et une suite exacte que l'on pourra comparer avec~(\ref{cano109}).
$$0\to 
\frac{{\rm Fil}^0(\bdr\otimes
\iH^0(W_{k+2,j+1}^{\rm dR}))}{\nabla\cdot {\rm Fil}^0(\bdr\otimes
\iH^0(W_{k,j}^{\rm dR}))}\to
\iH^1_{\proet}(\Bdr^+\otimes_{\Q_p} W_{k,j}^{\rm et})\to
{\rm Fil}^0(\bdr\otimes \iH^1(W_{k,j}^{\rm dR}))^{\nabla=0}\to 0.$$

\subsubsection{L'application d'Eichler-Shimura $p$-adique}\label{qq21}
La \index{iotaES@\iotaES}fl\`eche 
$$\iota_{{\rm ES},p}:
\frac{{\rm Fil}^0(\bdr\otimes
\iH^0(W_{k+2,j+1}^{\rm dR}))}{\nabla\cdot {\rm Fil}^0(\bdr\otimes
\iH^0(W_{k,j}^{\rm dR}))}
\to \iH^1_{\rm proet}(\Bdr^+\otimes_{\Q_p} W_{k,j}^{\eet})$$
admet la description suivante.

Soit $K$ un sous-groupe ouvert de $\GG(\cZ)$.
Soit $f\in {\rm Fil}^0(\bdr\otimes H^0(X(K)^\times,W_{k+2,j+1}^{\rm dR}))$,
et soit $U$ un ouvert affino\"{\i}de de $X(K)$, de $\pi_1$ g\'eom\'etrique $G_U$.
Si $U$ est assez petit, on peut r\'esoudre l'\'equation $\nabla g_U=f$ sur $U$,
avec 
$$g_U\in {\rm Fil}^0(\O\Bdr\otimes  W_{k,j}^{\rm dR})= ({\rm Fil}^0\O\Bdr)\otimes W_{k,j}^{\eet}.$$
Si $\sigma\in G_{U}$, alors $\nabla((\sigma-1)g_U)=(\sigma-1)\cdot\nabla g_U=(\sigma-1)f=0$,
et donc $\sigma\mapsto (\sigma-1)g_U$ est un $1$-cocycle sur $G_{U}$ \`a valeurs
dans $H^0(U_C,\Bdr^+\otimes W_{k,j}^{\eet})$ (car $({\rm Fil}^0\O\Bdr)\cap\Bdr=\Bdr^+$).
Comme $g_U$ est unique \`a addition pr\`es d'un \'el\'ement
de $H^0(U_C,\Bdr^+\otimes W_{k,j}^{\eet})$, la classe $[f]_U$ de ce cocycle
dans $H^1_{\proet}(U_C,\Bdr^+\otimes W_{k,j}^{\eet})$ ne d\'epend que de $f$.
Pour les m\^emes raisons, si $U_1,U_2$ sont des affino\"{\i}des,
les restrictions de $[f]_{U_1}$ et $[f]_{U_2}$ \`a ${U_1\cap U_2}$ co\"{\i}ncident
(car $\nabla g_{U_1}-\nabla g_{U_2}=0$), et donc les
$[f]_U$ se recollent pour donner naissance \`a un \'el\'ement $\iota_{{\rm ES},p}(f)$ de
$H^1_{\proet}(X(K)^\times_C,\Bdr^+\otimes W_{k,j}^{\eet})$.

\subsubsection{$q$-d\'eveloppement}\label{qq22}
Si $q$ d\'esigne le param\`etre naturel du voisinage $Z(1)$ de la pointe $\infty$ (cf.~\no\ref{como2.9}),
 la courbe elliptique universelle est ${\bf G}_m/q^\Z$, et $e_1^\dual,e_2^\dual$ se sp\'ecialisent
en la base
naturelle sur $\cZ$ du module de Tate:
$e_1^\dual=(\zeta_N^{-1})_N$ et $e_2^\dual=(q^{1/N})_N$.
L'action du $\pi_1$ g\'eom\'etrique se factorise \`a travers
${\rm Aut}(Z(0)_C/Z(1)_C)\cong \UU(\cZ)$, et on peut
choisir le g\'en\'erateur $\gamma:=\matrice{1}{1}{0}{1}$ 
de $\UU(\cZ)$ de telle sorte que $\gamma(e_2^\dual)=e_2^\dual-e_1^\dual$
(et on a $\gamma(e_1^\dual)=e_1^\dual$).
Comme $e_1,e_2$ est la base duale
de $e_1^\dual,e_2^\dual$, on a $\gamma(e_1)=e_1+e_2$ et $\gamma(e_2)=e_2$.

\index{q@\tq}Soient 
$$\tilde q=[(q,q^{1/p},q^{1/p^2},\dots)]\in H^0(Z(1),\Bdr^+)
\quad{\rm et}\quad
u=\log({q}/{\tilde q})\in H^0(Z(1),\O\Bdr^+).$$
On a $$\gamma(u)=u+t\quad{\rm et}\quad \nabla u=du=\tfrac{dq}{q}.$$
\index{v2@\vv}Soient 
$$v_1=ue_2-te_1\quad{\rm et}\quad v_2=e_2,$$ 
de telle sorte que $v_1,v_2$ est une base sur $\O(Z(1))$ de
$H^0(Z(1),W_{1,0}^{\rm dR})$ (vu comme sous-module de 
$H^0_{\proet}(Z(1)_C,\O\Bdr^+\otimes W_{1,0}^{\rm et})$) et que $v_1$ est une base de ${\rm Fil}^1$.
Si $T$ d\'esigne la variable sur ${\bf G}_m$, alors $\frac{dT}{T}$
est un g\'en\'erateur de $H^0(Z(1),{\rm Fil}^1W_{1,0}^{\rm dR}))$,
et on a $v_1=\frac{dT}{T}$ (au signe pr\`es... suivant la normalisation
dans l'accouplement des p\'eriodes $p$-adiques).
Par ailleurs (note~\ref{Koda}),
\begin{equation}\label{qq22.1}
du=\tfrac{dq}{q}=\zeta_{\rm dR}(\tfrac{dT}{T})^2=\zeta_{\rm dR}v_1^2,
\quad \zeta_{\rm dR}=-t^{-1}\zeta_{\rm B}=-t^{-1}(e_1\wedge e_2)^{-1}.
\end{equation}

Si $f$ est une forme modulaire quasi-holomorphe
g\'eom\'etrique,
 de poids $(k{+}2,j{+}1)$,
i.e.~$f\in\iH^0(W_{k+2,j+1}^{\rm dR})$, 
la restriction de $f$ \`a $Z(0)_C$ peut s'\'ecrire, de mani\`ere unique,
sous la forme $$f=\sum_{i=0}^kf_i\otimes v_1^{k+2-i}v_2^i\zeta_{\rm dR}^{j+1},$$
avec $f_i\in\O(Z(0)_C)=C\otimes_{\O_C}\varinjlim_N\O_C[[q^{1/N}]]$.

\subsubsection{Projection holomorphe}\label{qq23}

Soit $\partial =\frac{\nabla}{du}$.  On a 
$$\partial v_1=v_2=e_2,\quad \partial v_2=0,\quad
\partial f_i=\partial_q f_i,{\text{ avec $\partial_q=q\frac{d}{dq}$.}}$$
\begin{prop}\phantomsection\label{bato14}
Si $f=\sum_{i=0}^kf_i\otimes v_1^{k+2-i}v_2^i\zeta_{\rm dR}^{j+1}\in \iH^0(W_{k+2,j+1}^{\rm dR})$,
la projection holomorphe ${\rm Hol}(f)$ de $f$ sur $\iH^0(\omega^{k+2,j+1})$ est
$${\rm Hol}(f)=\big(\sum_{i=0}^k(-1)^i\tfrac{(k-i)!}{k!}\partial_q^if_i\big)\otimes  v_1^{k+2}\zeta_{\rm dR}^{j+1}.$$
\end{prop}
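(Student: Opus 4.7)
\demo
The plan is to perform the computation locally on $Z(0)_C$ in the basis $v_1, v_2$ of $W_{1,0}^{\rm dR}$, and then invoke the fact that ${\rm Hol}(f)$ is the unique element of $\iH^0(\omega^{k+2,j+1})$ congruent to $f$ modulo $\nabla\cdot\iH^0(W_{k,j}^{\rm dR})$. More precisely, from the global definition of ${\rm Hol}$ one has $f-{\rm Hol}(f)=\nabla g$ for some global $g\in\iH^0(W_{k,j}^{\rm dR})$, so it suffices to check the identity of $q$-d\'eveloppements at the cusp $\infty$, i.e.\ after restriction to~$Z(0)_C$.

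First I would derive the formula for $\nabla$ in the basis $v_1^a v_2^b\zeta_{\rm dR}^{j}$. Using $\partial v_1 = v_2$, $\partial v_2 = 0$, $\partial\zeta_{\rm dR}=0$ and the identity $du=\zeta_{\rm dR} v_1^2$ from~(\ref{qq22.1}), one obtains, for $h\in\O(Z(0)_C)$,
\begin{equation*}
\nabla\bigl(h\otimes v_1^{a}v_2^{b}\zeta_{\rm dR}^{j}\bigr)
=\bigl(\partial_q h\cdot v_1^{a+2}v_2^{b}+a\,h\cdot v_1^{a+1}v_2^{b+1}\bigr)\zeta_{\rm dR}^{j+1}.
\end{equation*}
Writing $g=\sum_{\ell=0}^{k}g_\ell\otimes v_1^{k-\ell}v_2^{\ell}\zeta_{\rm dR}^{j}$ on $Z(0)_C$, one deduces that the coefficient of $v_1^{k+2-i}v_2^{i}\zeta_{\rm dR}^{j+1}$ in $\nabla g$ equals $\partial_q g_i+(k-i+1)g_{i-1}$ (with the convention $g_{-1}=0$), and that no $v_2^{k+1}$-term appears since the factor $(k-\ell)$ kills the contribution of $\ell=k$.

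Next I would solve, on $Z(0)_C$, the system of equations expressing that $f-\nabla g$ has only a $v_1^{k+2}\zeta_{\rm dR}^{j+1}$-component, namely $f_i=\partial_q g_i+(k-i+1)g_{i-1}$ for $i=1,\dots,k$. Taking $g_k=0$ and iterating downwards gives the recursion $g_{i-1}=\frac{1}{k-i+1}(f_i-\partial_q g_i)$, which unwinds to
\begin{equation*}
g_{m}=\sum_{i\geq 1}\frac{(-1)^{i-1}(k-m-i)!}{(k-m)!}\,\partial_q^{i-1}f_{m+i},
\qquad 0\leq m\leq k-1.
\end{equation*}
The remaining coefficient of $v_1^{k+2}$ in $f-\nabla g$ is then $f_0-\partial_q g_0$, a telescoping computation which yields precisely $\sum_{i=0}^{k}(-1)^{i}\frac{(k-i)!}{k!}\partial_q^{i}f_i$.

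Finally, since the local representative $g$ constructed above glues with the globally defined $g$ given by ${\rm Hol}$ up to something in $\iH^0(\omega^{k,j})$, the restriction to $Z(0)_C$ of ${\rm Hol}(f)$ has the displayed $q$-d\'eveloppement. The only delicate point I can foresee is the bookkeeping in the unwinding of the recursion and a careful verification that the Kodaira--Spencer normalization $du=\zeta_{\rm dR}v_1^{2}$ (and not its negative or a rescaling) produces the claimed sign $(-1)^i$; everything else is a straightforward local computation.
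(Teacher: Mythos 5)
Your proof is correct and follows essentially the same route as the paper's: both compute $\nabla$ on the $q$-development at the cusp $\infty$ in the $v_1,v_2$ basis using $du=\zeta_{\rm dR}v_1^2$, and both then eliminate the $v_2$-components by the same downward recursion (the paper phrases this as an iterated reduction $v_1^{k+2-i}v_2^i \equiv -\tfrac{1}{k+1-i}\partial_q\,(\cdot)\,v_1^{k+3-i}v_2^{i-1}$ modulo $\nabla$, which amounts exactly to your choice $g_k=0$ and the explicit unwinding of $g_m$). Your only extra verbosity is writing out the closed form for $g_m$, which is a harmless amplification of the paper's ``r\'ecurrence imm\'ediate''.
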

\begin{proof}
Comme $du=\zeta_{\rm dR}v_1^2$,
On en d\'eduit que 
$$\nabla(g\otimes v_1^{k+1-i}v_2^{i-1}\zeta_{\rm dR}^{j})=
\partial_qg \otimes v_1^{k-i+3}v_2^{i-1}\zeta_{\rm dR}^{j+1}+(k+1-i)g\otimes v_1^{k-i+2}v_2^{i}\zeta_{\rm dR}^{j+1},$$
Une r\'ecurrence imm\'ediate montre que,
modulo l'image de $\nabla$, 
$$g \otimes v_1^{k-i+2}v_2^i\zeta_{\rm dR}^{j+1}=
\tfrac{-1}{k+1-i}\partial_qg\otimes v_1^{k-i+3}v_2^{i-1}\zeta_{\rm dR}^{j+1}
=(-1)^i\tfrac{(k-i)!}{k!}\partial_q^ig \otimes v_1^{k+2}\zeta_{\rm dR}^{j+1}.$$
Le r\'esultat s'en d\'eduit.
\end{proof}

\section{Vecteurs localement alg\'ebriques de la cohomologie compl\'et\'ee}\label{chapi2}
Dans ce chapitre, on rappelle les r\'esultats de Carayol~\cite{cara} et Emerton~\cite{Em06,Em06b} 
sur la d\'ecomposition de l'espace des vecteurs
localement alg\'ebriques de la cohomologie compl\'et\'ee en somme directe de repr\'esentations de $\GG(\A)$,
et on explicite le dictionnaire entre formes modulaires primitives et repr\'esentations de $\GG(\A)$. 

On rappelle que l'on a fix\'e des plongements $\Qbar\hookrightarrow\C$ et $\Qbar\hookrightarrow\Qbar_\ell$,
pour tout $\ell$.
\Subsection{Repr\'esentations attach\'ees \`a une forme primitive}\label{chapi2.5}
Soit $f\in M_{k+2,j+1}^{\rm par,\,cl}(\Gamma_0(N),\chi)$ primitive (cf.~\no\ref{fma2}),
et soient $f=\sum_{n\geq 1}a_nq^n$ son $q$-d\'eveloppement et $\Q(f):=\Q(a_n,\,n\geq 1)\subset \C$;
alors $\Q(f)$ est un corps de nombres muni d'un plongement dans $\Qbar\subset\C$.

Soit $\Q_p(f)\subset\Qbar_p$ le compos\'e de $\Q_p$ et $\Q(f)$.
On peut associer \`a $f$ une repr\'esentation $\pi_{f,j+1}$ de ${\rm GL}_2(\A^{]\infty[})$ et,
si $S$ contient les nombres premiers divisant $Np$,
une repr\'esentation $\rho_{f,j+1}:G_{\Q,S}\to {\rm GL}_2(\Q_p(f))$.
La repr\'esentation $\rho_{f,j+1}$ est \index{rhof@\RHf}caract\'eris\'ee par le fait que
$$\det(1-\ell^{-s}\rho_{f,j+1}(\sigma_\ell^{-1}))=1-a_\ell \ell^{j-k-1-s}+\chi(\ell)\ell^{2j-k-1-2s},
\quad{\text{si $\ell\nmid Np$.}}$$
La repr\'esentation $\pi_{f,j+1}$ est \index{pif@\pif}celle engendr\'ee par les $g\star \phi_{f,j+1}$
(cf.~\no\ref{class}),
pour $g\in \GG(\A^{]\infty[})$.
L'application associant \`a $\phi\in\pi_{f,j+1}$ la fonction ${\cal K}_\phi:={\cal K}(\phi,\ )$
du lemme~\ref{fma11.2}
permet d'identifier $\pi_{f,j+1}$ \`a son mod\`ele de Kirillov.
Le nouveau vecteur de ce mod\`ele de Kirillov est la fonction $v_{f,j+1}$
d\'efinie par
$$v_{f,j+1}(x)=\begin{cases}
0 & {\text{si $x\notin\cZ$,}}\\
n^{j-k-1}a_n & {\text{si $x\in n\cZ^\dual$,
et $n$ est un entier~$\geq 1$.}}
\end{cases}$$

La repr\'esentation $\pi_{f,j+1}$
se d\'ecompose sous la forme d'un produit tensoriel restreint
$$\pi_{f,j+1}=\otimes'_\ell\pi_{f,j+1,\ell},$$
et $v_{f,j+1}$ comme le produit tensoriel des nouveaux vecteurs $v_{f,j+1,\ell}$
des $\pi_{f,j+1,\ell}$:
$$v_{f,j+1}=\otimes_\ell v_{f,j+1,\ell},\quad
{\text{i.e.~$v_{f,j+1}(u)=\prod_\ell v_{f,j+1,\ell}(u_\ell)$, si $u=(u_\ell)_\ell$.}}$$
Si $\ell\nmid N$,
alors $$\pi_{f,j+1,\ell}\cong{\rm Ind}_{\BB(\Q_\ell)}^{\GG(\Q_\ell)}
(\chi_{\ell,1}\otimes|\ |_\ell^{-1}\chi_{\ell,2}),$$
o\`u $\chi_{\ell,1}$ et $\chi_{\ell,2}$ sont des caract\`eres
non ramifi\'es de $\Q_\ell^\dual$, et
$$(1-\chi_{\ell,1}(\ell)X) (1-\chi_{\ell,2}(\ell)X)=
1-\ell^{j-k-1}a_\ell X+\chi(\ell)\ell^{2j-k-1}X^2.$$
On a $(\rho_{f,j+1})_{|G_{\Q_\ell}}=\chi_{\ell,1}\oplus\chi_{\ell,2}$.
Plus g\'en\'eralement, $\Pi^{\rm cl}_\ell(\rho_{f,j+1,\ell})=\pi_{f,j+1,\ell}$, pour tout~$\ell$.

On a $$L(\rho_{f,j+1},s)=L(\pi_{f,j+1},s)=L(v_{f,j+1},s)=L(f,s+k+1-j)$$
(o\`u $k+1-j=(k+2)-(j+1)$).
Donc $$\rho_{f,j+1}=\rho_f\otimes\cyp ^{k+1-j}.$$
Les poids de Hodge-Tate de $\rho_{f,j+1}$ sont $-j=-(k+1)+(k+1-j)$ et $k+1-j$.

\begin{theo}\phantomsection\label{caray1}
{\rm (Carayol~\cite{cara})}
$${\rm Hom}_{\GG(\A^{]\infty[})}(\pi_{f,j+1},\iH^1_{{\eet},c}(W_{k,j}^{\eet}))=\rho_{f,j+1}^\dual.$$
\end{theo}

\Subsection{Repr\'esentations cohomologiques}\label{como9}
Soit $\pi$ une $\Q$-repr\'esentation de $\GG(\A^{]\infty[})$, lisse, irr\'eductible.
On dit que {\it $\pi$ est cohomologique} 
s'il existe $(k,j)$
tel \index{mpi@\mpi}que 
$$m(\pi):={\rm Hom}_{\Q[\GG(\A^{]\infty[})]}(\pi,H^1_{\rm par}(\GG(\Q),{\rm LC}(\GG(\A),\Q)\otimes W_{k,j}))\neq 0.$$
Notons que cela implique que {\it $\pi$ est cuspidale},
que ${\rm End}_{\GG(\A)}\pi$ est un corps de nombres $\Q(\pi)$,
 et que $\pi$ admet un caract\`ere 
\index{omegapi@\omegapi}central $\omega_\pi$ \`a valeurs dans $\Q(\pi)^\dual$.
Le couple $(k,j)$ est alors uniquement d\'etermin\'e et on dit que
{\it $\pi$ est cohomologique de poids $(k+2,j+1)$}.

\subsubsection{Multiplicit\'es des repr\'esentations lisses}\label{como10}
Soit $\pi$ cohomologique de poids $(k+2,j+1)$. On fixe un plongement $\Q(\pi)\hookrightarrow \Qbar$ et on note
$\Q_p(\pi)\subset\Qbar_p$ le compos\'e de $\Q_p$ et $\Q(\pi)$.

On rappelle que $F^{\rm truc}=\Q$ si ${\rm truc}\in\{{\rm B},{\rm dR}\}$ et $F^{\rm truc}=\Q_p$ si ${\rm truc}={\eet}$.
Si
${\rm truc}\in\{{\rm B},{\rm dR},\eet\}$, on pose
$$m_{\rm truc}(\pi)={\rm Hom}_{F^{\rm truc}[\GG(\A^{]\infty[})]}(\pi, \iH^1_{{\rm truc,\,par}}(W^{\rm truc}_{k,j})).$$
(Comme $\pi$ est cuspidale, on obtiendrait le m\^eme module en rempla\c{c}ant la cohomologie
parabolique par l'usuelle ou celle \`a support compact.)

Les $m_{\rm truc}(\pi)$ sont des $F^{\rm truc}(\pi)$-modules gr\^ace \`a l'action de $\Q(\pi)$ sur $\pi$,
et ils sont de rang~$2$.
On a des isomorphismes de comparaison:
\begin{align*}
m_{\rm B}(\pi)=&\ m(\pi)\\
m_{\eet}(\pi)=&\ \Q_p(\pi)\otimes_{\Q(\pi)}m_{\rm B}(\pi),\\
\C\otimes_{\Q(\pi)}m_{\rm B}(\pi)\cong &\ \C\otimes_{\Q(\pi)} m_{\rm dR}(\pi),\\
\bdr\otimes_{{\Q(\pi)}}m_{\rm B}(\pi)\cong &\ \bdr\otimes_{\Q(\pi)} m_{\rm dR}(\pi).
\end{align*}
En particulier, $m_{\rm dR}$ est un $\Q(\pi)$-module filtr\'e
de rang~$2$: 
on note $m_{\rm dR}^+(\pi)$ le cran non trivial de la filtration sur $m_{\rm dR}(\pi)$,
$$m_{\rm dR}^+(\pi)={\rm Hom}_{\Q[\GG(\A^{]\infty[})]}(\pi, \iH^0(\omega^{k+2,j+1})),$$
et $m_{\rm dR}^-(\pi)$ le quotient, de telle sorte que le gradu\'e associ\'e est
$${\rm gr}(m_{\rm dR}(\pi))=m_{\rm dR}^+(\pi)\oplus m_{\rm dR}^-(\pi).$$
De m\^eme,
$m_{\eet}(\pi)$ est \index{mpi@\mpi}une repr\'esentation de $ G_\Q$
de rang~$2$ sur $\Q_p(\pi)$;
sa restriction \`a $ G_{\Q_p}$ est de\,Rham \`a poids de Hodge-Tate $j$ et $j-k-1$,
et 
$$D_{\rm dR}(m_{\eet}(\pi))=\Q_p(\pi)\otimes_{\Q(\pi)} m_{\rm dR}(\pi)$$ 
en tant que $\Q_p(\pi)$-module filtr\'e.

La th\'eorie de Hodge fournit un scindage naturel de la filtration de Hodge, apr\`es extension
des scalaires \`a $\C$:
$$\C\otimes_{\Q(\pi)} m_{\rm dR}(\pi)=m^{(k+1-j,-j)}(\pi)\oplus m^{(-j,k+1-j)}(\pi),$$
avec 
$$m^{(k+1-j,-j)}(\pi)=\C\otimes_{\Q(\pi)} m_{\rm dR}^+(\pi)
\quad{\rm et}\quad 
m^{(-j,k+1-j)}(\pi)\overset{\sim}{\to}\C\otimes_{\Q(\pi)} m_{\rm dR}^-(\pi).$$
De plus, $\matrice{-1}{0}{0}{1}_\infty$ \'echange $m^{(-j,k+1-j)}(\pi)$
et $m^{(k+1-j,-j)}(\pi)$.

\subsubsection{Dualit\'e}\label{como11}
On \index{pi@\piell}note $\check\pi$ la {\it contragr\'ediente de $\pi$}, i.e.~l'espace des
vecteurs $\GG(\A^{]\infty[})$-lisses de ${\rm Hom}(\pi,\Q)$.
On a $\omega_{\check\pi}=\omega_\pi^{-1}$, et $\check\pi\cong\pi\otimes\omega_\pi^{-1}$.
On en d\'eduit, en utilisant le (i) du lemme~\ref{carcen},
 que $\check\pi$ est de poids $(k+2,k+1-j)$.

Si ${\rm truc}\in\{{\rm B},{\eet},{\rm dR}\}$, on a une application naturelle
de $\iH^1_{{\rm truc,\,par}}(W^{\rm truc}_{k,k-j})$ dans
$\pH^1_{{\rm truc,\,par}}(W^{\rm truc}_{k,k-j})$, 
envoyant $(x_N)_N$ sur $(\frac{1}{[\wGamma(1):\wGamma(N)]}x_N)_N$,
 et donc un accouplement naturel
$$\langle\ ,\ \rangle_{\rm truc}:\iH^1_{{\rm truc,\,par}}(W^{\rm truc}_{k,k-j})
\times \iH^1_{{\rm truc,\,par}}(W^{\rm truc}_{k,j})\to
F^{\rm truc}\otimes\zeta_{\rm truc}^{-1}$$
On en d\'eduit des dualit\'es naturelles
$$\langle\ ,\ \rangle_{\rm truc}:m_{\rm truc}(\check\pi)\times m_{\rm truc}(\pi)\to 
F^{\rm truc}\otimes\zeta_{\rm truc}^{-1},$$
d\'efinies par
$$\langle\check\gamma(\check v) ,\gamma(v) \rangle_{\rm truc}=
\langle\check\gamma ,\gamma \rangle_{\rm truc}\cdot\langle\check v,v\rangle,$$
pour tous $\check v\in\check\pi$, $v\in\pi$.

\subsubsection{Encadrement et mod\`ele de Kirillov}\label{como12}
Un {\it encadrement de $\pi$} est un mod\`ele de Kirillov pour $\pi$,
i.e.~une injection $\PP(\A^{]\infty[})$-\'equivariante
$$\pi\hookrightarrow {\rm LC}\big(\Aidu,\Q(\pi)\otimes_{\Q}\Q^{\rm cycl}\big)^{\cZ^\dual},$$
o\`u $\matrice{a}{b}{0}{1}\in \PP(\A^{]\infty[})$ agit par $(\matrice{a}{b}{0}{1}\star\phi)(x)=
{\bf e}^{]\infty[}(bx)\phi(ax)$, et l'invariance par $\cZ^\dual$ se traduit par $\phi(ax)=\sigma_a(\phi(x))$,
si $a\in\cZ^\dual$ et $x\in\Aidu$ (et $\sigma_a$ agit sur $\Q^{\rm cycl}$).
Toute repr\'esentation cohomologique irr\'eductible admet un encadrement (unique \`a 
multiplication pr\`es par un \'el\'ement de $\Q(\pi)^\dual$).

Une {\it repr\'esentation encadr\'ee} de $\GG(\A^{]\infty[})$ est un sous-$\PP(\A^{]\infty[})$-module 
$$\pi\subset {\rm LC}\big(\Aidu,\Q(\pi)\otimes_{\Q}\Q^{\rm cycl}\big)^{\cZ^\dual},$$
muni d'une action lisse de $\GG(\A^{]\infty[})$ \'etendant celle de $\PP(\A^{]\infty[})$.
\subsubsection{Encadrement de repr\'esentations localement alg\'ebriques}\label{como12.0}
Soit 
$${\rm LP}^{[-j,k-j]}(\Aidu,\Q_p(\pi)\otimes_{\Q}\Q^{\rm cycl})$$ 
l'espace des $\sum_{i=-j}^{k-j}\phi_i X^i$, avec $\phi_i\in
{\rm LC}(\Aidu,\Q_p(\pi)\otimes_{\Q}\Q^{\rm cycl})$.
On munit cet espace d'une action de $\PP(\A^{]\infty[})$
par la formule
$$\big(\matrice{a}{b}{0}{1}\star\phi)(u,X)={\bf e}(bx)e^{b_pX}\phi(au,a_pX).$$

Si $\pi^{\rm alg}=\pi\otimes_{\Q(\pi)} W_{k,j}^\dual(\Q_p(\pi))$,
{\it un encadrement de $\pi^{\rm alg}$}
est un mod\`ele de Kirillov pour $\pi^{\rm alg}$, i.e.~une injection $\PP(\A^{]\infty[})$-\'equivariante
$$\pi^{\rm alg}\hookrightarrow 
{\rm LP}^{[-j,k-j]}(\Aidu,\Q_p(\pi)\otimes\Q^{\rm cycl})^{\cZ^\dual},$$
l'invariance par $\cZ^\dual$ se traduisant par $\phi(au,X)=\sigma_a(\phi(u,X))$.

Si $\pi$ est encadr\'ee, on fabrique un encadrement de $\pi^{\rm alg}$
par:
$$\sum_{i=0}^k\phi_i\otimes\tfrac{(e_2^\dual)^{k-i}(e_1^\dual)^i}{(k-i)!\,(e_1^\dual\wedge e_2^\dual)^j}
\mapsto \sum_{i=0}^k\phi_i X^{i-j}.$$

\subsubsection{Nouveau vecteur}\label{como12.1}
Si $\pi$ est de conducteur $N$,
il existe $v_\pi\in\pi$, \index{vpi@\vpi}unique, le {\it nouveau vecteur} de $\pi$, tel que
$$\matrice{a}{b}{c}{d}\star v_\pi=\omega_\pi(d)v_\pi,{\text{ pour tout
$\matrice{a}{b}{c}{d}\in\widehat{\Gamma}_0(N)$,\quad $v_\pi(x)=1$, si $x\in\cZ^\dual$.}}$$
Notons que $v_\pi$ est \`a support dans $\cZ$ (par invariance par $\matrice{1}{\cZ}{0}{1}$),
et que $v_\pi(x)\in\Q(\pi)$ (par invariance par $\cZ^\dual$)
et ne d\'epend que de $|x|_\A$ (par invariance par $\matrice{\cZ^\dual}{0}{0}{1}$).

\subsubsection{Factorisation de $\pi$}\label{como12.2}
La fonction $v_\pi$ admet une factorisation naturelle sous la forme 
$$v_\pi(u^{]\infty[})=\prod_\ell v_{\pi,\ell}(u_\ell),
\quad{\text{o\`u }}
v_{\pi,\ell}\in {\rm LC}\big(\Q_\ell^\dual,\Q(\pi)\otimes_\Q\Q(\bmu_{\ell^\infty})\big)^{\Z_\ell^\dual}
\ {\text{et $v_{\pi,\ell}(1)=1$.}}$$

Les translat\'es de $v_\pi$ sous l'action de $\GG(\Q_\ell)$
sont de la \index{vpi@\vpi}forme $v_\pi^{]\ell[}\otimes\phi_\ell$,
o\`u 
$$v_\pi^{]\ell[}=\otimes_{q\neq\ell}v_{\pi,q},
\quad{\text{i.e.~$v_\pi^{]\ell[}(u^{]\infty,\ell[})=\prod_{q\neq\ell}v_{\pi,q}(u_q)$,}}$$
et $\phi_\ell\in {\rm LC}(\Q_\ell^\dual,\Q(\pi)\otimes_\Q\Q(\bmu_{\ell^\infty}))^{\Z_\ell^\dual}$.
Le $\Q(\pi)$-espace $\pi_\ell$ engendr\'e par les $\phi_\ell$ h\'erite  
de l'action de $\GG(\Q_\ell)$; c'est donc une repr\'esentation de $\GG(\Q_\ell)$,
naturellement encadr\'ee, et dont le nouveau vecteur
est $v_{\pi,\ell}$.

L'application $\otimes_\ell(\pi_\ell,v_{\pi,\ell})\to
{\rm LC}\big(\Aidu,\Q(\pi)\otimes_\Q\Q^{\rm cycl}\big)^{\cZ^\dual}$
envoyant $\otimes_\ell\phi_\ell$, o\`u $\phi_\ell=v_{\pi,\ell}$ pour presque tout $\ell$,
sur la fonction $u^{]\infty[}\mapsto\prod_\ell\phi_\ell(u_\ell)$, induit
un isomorphisme $\GG(\A^{]\infty[})$-\'equivariant
$$\otimes_\ell(\pi_\ell,v_{\pi,\ell})\cong (\pi,v_\pi).$$

\subsubsection{L'\'el\'ement $\iota_{{\rm dR},\pi}^+$ de $m_{\rm dR}(\pi)$
et la forme modulaire $f_\pi$}\label{como12.3}
L'espace $m_{\rm dR}^+(\pi)$ admet une base naturelle $\iota_{{\rm dR},\pi}^+$
\index{iotadr@\iotadr}d\'efinie par
\begin{align*}
\big(\iota_{{\rm dR},\pi}^+(v)\big)(\tau,\matrice{u}{0}{0}{1})
&=(-2i\pi)^{k+1-j}\big(\sum_{n\in\Q_+^\dual}n^{(k+2)-(j+1)}v(nu){\bf e}_\infty(-n\tau)\big)
\otimes\tfrac{(\tau e_2-e_1)^k}{(e_1\wedge e_2)^j}d\tau\\
&=(-1)^{k}\big(\sum_{n\in\Q_+^\dual}n^{(k+2)-(j+1)}v(nu){\bf e}_\infty(-n\tau)\big)
\otimes\zeta_{\rm dR}^{j+1}\big(\tfrac{dt}{t}\big)^{k+2}
\end{align*}
La forme modulaire $f_\pi$ d\'efinie par
$$\big(\iota_{{\rm dR},\pi}^+(v_\pi)\big)(\tau,1^{]\infty[})=f_\pi(\tau) 
\otimes(-2i\pi)^{k+1-j}\tfrac{(\tau e_2-e_1)^k}{(e_1\wedge e_2)^j}d\tau$$
est primitive, et appartient \`a $S_{k+2}(\Gamma_0(N),\tilde\omega_\pi^{-1})$.
Son $q$-d\'eveloppement est 
$$f_\pi=\sum_{n\geq 1}a_nq^n,\quad{\text{avec $a_n=n^{k+1-j}v_\pi(n^{]\infty[})$.}}$$
On note $f_\pi^\dual$ la forme modulaire dont le $q$-d\'eveloppement est 
$$f_\pi^\dual=\sum_{n\geq 1}\overline{a_n}q^n, \quad{\text{et donc
$f_\pi^\dual(\tau)=\overline{f_\pi(-\overline\tau)}$}}.$$
On a aussi 
$$f_\pi^\dual=f_\pi\otimes\tilde\omega_\pi\quad{\text{(i.e.~$\overline{a_n}=\tilde\omega_\pi(n)a_n$).}}$$
Alors $\pi$ et ses multiplicit\'es s'expriment en termes de $f_\pi$
et des repr\'esentations associ\'ees:
$$\pi\cong \pi_{f_\pi,j+1},\quad m_{\eet}(\pi)\cong \rho_{f_\pi,j+1}^\dual\cong
\rho_{f_\pi^\dual}\otimes\cyp ^j.$$

\subsubsection{L'\'el\'ement $\iota_{{\rm dR},\pi}^-$ de $m_{\rm dR}(\pi)$
et la p\'eriode $\lambda(\pi)$}\label{como12.41}
L'accouplement $\langle\ ,\ \rangle_{\rm dR}$ est identiquement nul sur
$m_{\rm dR}^+(\check\pi)\times m_{\rm dR}^+(\pi)$; il induit donc des
dualit\'es sur $m_{\rm dR}^-(\check\pi)\times m_{\rm dR}^+(\pi)$
et $m_{\rm dR}^+(\check\pi)\times m_{\rm dR}^-(\pi)$.
On \index{iotadr@\iotadr}note $\iota_{{\rm dR},\check\pi}^-$ et $\iota_{{\rm dR},\pi}^-$
les \'el\'ements de $m_{\rm dR}^-(\check\pi)$ et $m_{\rm dR}^-(\pi)$
v\'erifiant
$$\langle\iota_{{\rm dR},\check\pi}^- ,\iota_{{\rm dR},\pi}^+ \rangle_{\rm dR}=\zeta_{\rm dR}^{-1},
\quad
\langle\iota_{{\rm dR},\check\pi}^+ ,\iota_{{\rm dR},\pi}^- \rangle_{\rm dR}=\zeta_{\rm dR}^{-1},$$
et on note encore $\iota_{{\rm dR},\check\pi}^-$ et $\iota_{{\rm dR},\pi}^-$
leurs repr\'esentants harmoniques dans $m^{(j-k,j+1)}(\check\pi)$ et $m^{(-j,k+1-j)}(\pi)$.

Il \index{lambdapi@\lambdapi}existe $\lambda(\pi), \lambda(\check\pi)\in\C$ tels que
$$\iota_{{\rm dR},\pi}^-=\lambda(\pi)\,\matrice{-1}{0}{0}{1}_\infty\star \iota_{{\rm dR},\pi}^+,
\quad
\iota_{{\rm dR},\check\pi}^-=\lambda(\check\pi)\,\matrice{-1}{0}{0}{1}_\infty\star \iota_{{\rm dR},\check\pi}^+$$
\begin{lemm}\phantomsection\label{como11.2}
On a $\lambda(\pi)=-\lambda(\check\pi)$.
\end{lemm}
\begin{proof}
Posons
\begin{align*}
\phi_\pi^+=\iota_{{\rm dR},\pi}^+(v_\pi),\quad \phi_\pi^-=\iota_{{\rm dR},\pi}^-(v_\pi),\quad
\check\phi_\pi^+=\iota_{{\rm dR},\check\pi}^+(v_{\check\pi}),
\quad \check\phi_\pi^-=\iota_{{\rm dR},\check\pi}^-(v_{\check\pi})
\end{align*}
Par d\'efinition,
$$\langle\iota_{{\rm dR},\check\pi}^+ ,\iota_{{\rm dR},\pi}^- \rangle_{\rm dR}=
\langle\check\phi_\pi^+,\phi_\pi^-\rangle_{\rm dR}=\zeta_{\rm dR}^{-1},\quad
\langle\iota_{{\rm dR},\check\pi}^- ,\iota_{{\rm dR},\pi}^+ \rangle_{\rm dR}=
\langle\check\phi_\pi^-,\phi_\pi^+\rangle_{\rm dR}=\zeta_{\rm dR}^{-1}.$$
Posons $w_\infty=\matrice{-1}{0}{0}{1}_\infty$.
On a
$$\phi_\pi^-=\lambda(\pi)\,w_\infty\star\phi_\pi^+,\quad
\check\phi_\pi^-=\lambda(\check\pi)\,w_\infty\star\check\phi_\pi^+.$$
Comme $w_\infty^2=1$ et $\langle w_\infty\star\check\phi, w_\infty\star\phi\rangle_{\rm dR}=
-\langle\check\phi,\phi\rangle_{\rm dR}$ car $\tau\mapsto\overline\tau$ renverse
l'orientation, il r\'esulte de ce qui pr\'ec\`ede que
\begin{align*}
\lambda(\pi)=
-\langle w_\infty\star \check\phi_\pi^-,\lambda(\pi) w_\infty\star \phi_\pi^+\rangle_{\rm dR}\otimes\zeta_{\rm dR}&=
-\langle w_\infty\star \check\phi_\pi^-,\phi_\pi^-\rangle_{\rm dR}\otimes\zeta_{\rm dR}\\ 
&= -\langle \lambda(\check\pi)\check\phi^+,\phi_\pi^-\rangle_{\rm dR}\otimes\zeta_{\rm dR}=
-\lambda(\check\pi)
\end{align*}
\qedhere
\end{proof}

\begin{lemm}\phantomsection\label{lambda1}
\index{vpi@\vpi}Soit $v_\pi^{]S[}=\otimes_{\ell\notin S}v_{\pi,\ell}$.
Si $\eta:\Z_S^\dual\to L^\dual$ est un caract\`ere localement constant, alors:
\begin{align*}
(\iota_{{\rm dR},\pi}^+(v_\pi^{]S[}\otimes\eta))\big(\tau,\matrice{u}{0}{0}{1}\big)&=
(-2i\pi)^{k+1-j}\eta(u)(f_\pi\otimes {\bf 1}_{\Z_S^\dual}\eta)(\tau)\otimes
\tfrac{(\tau e_2-e_1)^k}{(e_1\wedge e_2)^j}d\tau\\
(\iota_{{\rm dR},\pi}^-(v_\pi^{]S[}\otimes\eta))\big(\tau,\matrice{u}{0}{0}{1}\big)&=
\eta(-1)\lambda(\pi)
(2i\pi)^{k+1-j}\eta(u)(f_\pi\otimes {\bf 1}_{\Z_S^\dual}\eta)(-\overline\tau)\otimes
\tfrac{(\overline\tau e_2-e_1)^k}{(e_1\wedge e_2)^j}d\overline\tau
\end{align*}
\end{lemm}
\begin{proof}
La premi\`ere formule est imm\'ediate sur la d\'efinition.
La seconde se d\'emontre en utilisant la formule
$\iota_{{\rm dR},\pi}^-=\lambda(\pi)w_\infty\star\iota_{{\rm dR},\pi}^+$ (cela fait appara\^{\i}tre
le $\lambda(\pi)$ et remplace $\tau$ par $\overline\tau$), puis en utilisant l'invariance
par $\matrice{-1}{0}{0}{1}\in \GG(\Q)$ (cela change $e_1$ en $-e_1$, $u$ en $-u$ et $\overline\tau$
en $-\overline\tau$).
\end{proof}

\subsubsection{Les p\'eriodes $\Omega^{\pm}_\pi$}\label{como12.42}
Apr\`es extension des scalaires, $m(\pi)$ acquiert une base naturelle:
$$\C\otimes_{\Q(\pi)}m(\pi)=\C\,\iota_{\rm ES}^+\circ\iota_{{\rm dR},\pi}^+
\oplus \C\,\iota_{\rm ES}^-\circ\iota_{{\rm dR},\pi}^+.$$
On \index{Ompi@\Omegapi}note $\Omega_\pi^{\pm}(\gamma)$ les coordonn\'ees de $\gamma\in m(\pi)$ dans cette base
(ce sont des nombres transcendants):
$$\gamma=\Omega_\pi^+(\gamma)\,\iota_{\rm ES}^+\circ\iota_{{\rm dR},\pi}^+
+\Omega_\pi^-(\gamma)\,\iota_{\rm ES}^-\circ\iota_{{\rm dR},\pi}^+.$$

\begin{lemm}\phantomsection\label{como11.1}
Si $\gamma\in m(\pi)$ et $\check\gamma\in m(\check\pi)$, alors
$$\langle\check\gamma,\gamma\rangle_{\rm B}\otimes\zeta_{\rm B}=
2i\pi\,\frac{\Omega_{\check\pi}^+(\check\gamma)\Omega_{\pi}^-(\gamma)-
\Omega_{\check\pi}^-(\check\gamma)\Omega_{\pi}^+(\gamma)}{2\,\lambda(\check\pi)}=
2i\pi\,\frac{\Omega_{\check\pi}^-(\check\gamma)\Omega_{\pi}^+(\gamma)-
\Omega_{\check\pi}^+(\check\gamma)\Omega_{\pi}^-(\gamma)}{2\,\lambda(\pi)}.$$
\end{lemm}
\begin{proof}
Par d\'efinition, 
$\langle\check\gamma,\gamma\rangle_{\rm B}=\langle\check\gamma(v_{\check\pi}),\gamma(v_\pi)\rangle_{\rm B}$.
On \'ecrit $\gamma=\gamma^++\gamma^-$ et $\check\gamma=\check\gamma^++\check\gamma^-$.
Alors, par d\'efinition des p\'eriodes $\Omega^{\pm}$ et de $\phi_\pi^{\pm}$, $\phi_{\check\pi}^\pm$
(rem.\,\ref{como11.2}), on a
$$\gamma^{\pm}(v_\pi)=\Omega_\pi^{\pm}(\gamma)\iota_{\rm ES}\big(\tfrac{1}{2}\big(\phi^+_\pi\pm
 (w_\infty\star\phi^+_\pi)\big)\big),
\quad
\check\gamma^{\pm}(v_{\check\pi})=\Omega_{\check\pi}^{\pm}(\check\gamma)\iota_{\rm ES}\big(\tfrac{1}{2}\big(\check\phi^+_\pi\pm
 (w_\infty\star\check\phi^+_\pi)\big)\big).$$
On utilise les formules
\begin{align*}
& \langle \iota_{\rm ES}(\check\phi),\iota_{\rm ES}(\phi)\rangle_{\rm B}\otimes\zeta_{\rm B}=
2i\pi\,\langle\check\phi,\phi\rangle_{\rm dR}\otimes\zeta_{\rm dR}\\
&\big\langle\check\phi_\pi^-,\phi_\pi^+\big\rangle_{\rm dR}=\zeta_{\rm dR}^{-1}=
-\langle w_\infty\star\check\phi_\pi^-, w_\infty\star\phi_\pi^+\rangle_{\rm dR}\\
&\langle w_\infty\star\check\phi_\pi^-,\phi_\pi^+\rangle_{\rm dR}=
\langle\check\phi_\pi^-, w_\infty\star\phi_\pi^+\rangle_{\rm dR}=0
\end{align*}
(La premi\`ere ligne provient de~(\ref{Compa}), la seconde vient des d\'efinitions
et la troisi\`eme 
de ce que l'on a affaire au cup-produit de deux 1-formes holomorphes ou antiholomorphes.)
On en d\'eduit l'identit\'e (avec $\langle\ ,\ \rangle=\langle\ ,\ \rangle_{\rm dR}$)
\begin{align*}
\big\langle \check\phi^+_\pi+\epsilon
 (w_\infty\star\check\phi^+_\pi),
\phi^+_\pi+\epsilon'
 (w_\infty\star\phi^+_\pi)\big\rangle&=
\frac{1}{\lambda(\check\pi)}\big\langle 
 (w_\infty\star\check\phi^-_\pi)+\epsilon \check\phi^-_\pi,
\phi^+_\pi+\epsilon'
 (w_\infty\star\phi^+_\pi)\big\rangle\\
&=\epsilon-\epsilon'
\end{align*}
et le r\'esultat.
\end{proof}

\subsubsection{Multiplicit\'e des repr\'esentations localement alg\'ebriques}\label{como10.1}
Soit 
$$\pi^{\rm alg}=\pi\otimes_{\Q(\pi)}W_{k,j}^\dual(L(\pi)).$$
Alors $\pi^{\rm alg}$ \index{pi@\piell}est une repr\'esentation localement alg\'ebrique de $\GG(\A^{]\infty[})$.
On \index{mpi@\mpi}pose 
$$m(\pi^{\rm alg})={\rm Hom}_{L[\GG(\A^{]\infty[})]}\big(\pi^{\rm alg},H^1_{\rm par}(\GG(\Q),{\cal C}(\GG(\A),L))\big).$$
L'application naturelle $({\rm LC}(\GG(\A),\Q)\otimes W_{k,j})\otimes_{\Q}W_{k,j}^\dual(L)\to
{\cal C}(\GG(\A),L)$ induit,
d'apr\`es~\cite[th.\,7.4.2]{Em06b} et~\cite{cara},
un isomorphisme $L(\pi)$-\'equivariant
$$m_{\eet}(\pi)=L(\pi)\otimes_{\Q(\pi)}m(\pi)\overset{\sim}{\to} m(\pi^{\rm alg})$$
On a alors une d\'ecomposition $G_\Q\times\GG(\Ai)$-\'equivariante, pour ${\rm truc}\in\{\ ,c,{\rm par}\}$
\begin{equation}\label{VV2}
H^1_{\rm truc}(\GG(\Q),{\cal C}(\GG(\A),L))^{\rm alg}=
\big(\bigoplus\nolimits_\Pi\big(m_{\eet}(\Pi)\otimes_{L(\Pi)}\Pi^{\rm alg}\big)
\big)\bigoplus
\big(\raisebox{-1mm}{$\overset{\mbox{\footnotesize{contribution}}}{\mbox{\footnotesize{des pointes}}}$}\big)
\end{equation}
o\`u $\Pi$ parcourt les $L$-repr\'esentations cohomologiques irr\'eductibles,
les $m_{\eet}(\Pi)$ sont deux \`a deux non isomorphes, et {\og contribution des pointes\fg}
est une somme de repr\'esentations de la forme $\chi\otimes I$, 
o\`u $\chi$ est un caract\`ere de
$G_\Q$ et $I$ est une repr\'esentation de $\GG(\A)$ de la s\'erie principale (induite d'un caract\`ere du borel);
ce terme est nul si ${\rm truc}={\rm par}$, et dispara\^{\i}t dans les autres cas
quand on localise en un id\'eal non-eisenstein.

\begin{prop}\phantomsection\label{VV3}
Si $S\subset{\cal P}$ est fini et contient $p$, alors
$${\rm Hom}_{L[\GG(\A^{]\infty,S[})]}\big(L(\pi)\otimes_{\Q(\pi)}\pi^{]S[},H^1(\GG(\Q),{\cal C}(\GG(\A),L))^{\rm alg}\big)=
m_{\eet}(\pi)\otimes_{L(\pi)}\pi_S^{\rm alg}$$
\end{prop}
\begin{proof}
Cela r\'esulte de la d\'ecomposition~(\ref{VV2}) et du th\'eor\`eme de multiplicit\'e~$1$:
la repr\'esentation $\pi^{]S[}$ d\'etermine $m_{\eet}(\pi)$ par le th\'eor\`eme de densit\'e
de \v{C}ebotarev, qui elle-m\^eme d\'etermine les $\pi_\ell$ pour $\ell\in S$ ainsi que
$\pi_p^{\rm alg}$ car la composante alg\'ebrique est encod\'ee dans les poids de Hodge-Tate
de la restriction de $m_{\eet}(\pi)$ \`a $G_{\Q_p}$.
\end{proof}

\Subsection{Torsion par un caract\`ere}\label{como112.1}
\subsubsection{Torsion d'une repr\'esentation encadr\'ee}\label{como112.3}
Si $\pi$ est une repr\'esentation encadr\'ee de $\GG(\A^{]\infty[})$, et si $\chi:\A^\dual/\Q^\dual\to
L^\dual$ est un caract\`ere lisse, on \index{twi@\twis}note $\pi\otimes\chi$ la repr\'esentation encadr\'ee
dont l'espace est\footnote{Voir le lemme~\ref{eqf1} pour l'introduction de la somme de Gauss.}
 $\{G(\chi^{-1})\phi\,\chi,\ \phi\in\pi\}$, avec action de $\GG(\A^{]\infty[})$ donn\'ee
par $$g\star (\phi\,\chi)=\chi(\det g)\,(g\star\phi)\,\chi.$$ 
On a $$\big(\matrice{a}{b}{0}{1}\star(\phi\chi)\big)(x)={\bf e}^{]\infty[}(bx)\phi(ax)\chi(ax)=
\chi(a)\big(\big(\matrice{a}{b}{0}{1}\star\phi\big)\chi\big)(x),$$ ce qui prouve que l'action
de $\GG(\A^{]\infty[})$ prolonge bien l'action de $\PP(\A^{]\infty[})$.

\vskip.2cm
\noindent$\bullet${\it Torsion par une puissance de $|\ |_\A$}.---
Si $\pi$ est une repr\'esentation cohomologique encadr\'ee de
poids $(k+2,j+1)$, et si $a\in\Z$, alors  $\pi\otimes|\ |_{\A}^a$ est de poids $(k+2,j+1-a)$,
et on a un isomorphisme naturel
$$m_{\rm B}(\pi\otimes |\ |_\A^a)\cong m_{\rm B}(\pi)\otimes (e_1\wedge e_2)^a,$$
avec $(\gamma\otimes(e_1\wedge e_2)^a)(v)=\gamma(v)\otimes(\delta_\A ^a\circ\det)(e_1\wedge e_2)^a$
comme dans la rem.\,\ref{ES7.2} (modulo l'identification $m_{\rm B}(\pi)=m(\pi)$).

Comme $e_1\wedge e_2=-\zeta_{\rm B}^{-1},$
on a
\begin{align*}
m_{\rm B}(\pi\otimes |\ |_\A^a)=&\ m_{\rm B}(\pi)\otimes \zeta_{\rm B}^{-a},\\
m_{\eet}(\pi\otimes |\ |_\A^a)=&\ m_{\eet}(\pi)\otimes \zeta_{\rm B}^{-a},\\
m_{\rm dR}(\pi\otimes |\ |_\A^a)=&\ m_{\rm dR}(\pi)\otimes \zeta_{\rm dR}^{-a}.
\end{align*}
\begin{lemm}\phantomsection\label{twi1}
Si $\gamma\in m_{\rm B}(\pi)$, alors
$$\Omega_{\pi\otimes|\ |_\A^a}^{\pm}(\gamma\otimes\zeta_{\rm B}^{-a})
=(2i\pi)^{-a}\Omega_\pi^{\pm(-1)^a}\hskip-.1cm (\gamma).$$
\end{lemm}
\begin{proof}
On a $|nu|_\A^a=n^{-a}\delta_\A^a(u)$, et donc
\begin{align*}
n^{(k+2)-(j+1-a)}(v\otimes |\ |_\A^a)(nu)&=n^{(k+2)-(j+1)}\delta_\A^a(u)v(nu)\\
\iota_{{\rm dR},\pi\otimes |\ |_\A^a}^+&=\delta_\A ^a\iota_{{\rm dR},\pi}^+\otimes\zeta_{\rm dR}^{-a}
\end{align*}
On en d\'eduit, via les rem.~\ref{ES7} et~\ref{ES7.2} (en particulier pour l'apparition du $(-1)^a$),
que
$$\iota_{\rm ES}^{\pm}\circ\iota_{{\rm dR},\pi\otimes |\ |_\A^a}^+=
(2i\pi)^a(\iota_{\rm ES}^{\pm(-1)^a}\hskip-.2cm \circ\iota_{{\rm dR},\pi}^+)\otimes\zeta_{\rm B}^{-a}.$$
D'o\`u les relations
\begin{align*}
\Omega_\pi^{\pm}(\gamma)(\iota_{\rm ES}^{\pm}\circ\iota_{{\rm dR},\pi}^+)\otimes\zeta_{\rm B}^{-a}& =
\gamma^{\pm}\otimes\zeta_{\rm B}^{-a}=
\Omega_{\pi\otimes |\ |_\A^a}^{\pm(-1)^a}\hskip-.1cm (\gamma\otimes\zeta_{\rm B}^{-a})
\,\iota_{\rm ES}^{\pm(-1)^a}\hskip-.2cm \circ\iota_{{\rm dR},\pi\otimes |\ |_\A^a}^+\\
&= \Omega_{\pi\otimes |\ |_\A^a}^{\pm(-1)^a}\hskip-.1cm(\gamma\otimes\zeta_{\rm B}^{-a})(2i\pi)^a(\iota_{\rm ES}^{\pm}\circ\iota_{{\rm dR},\pi}^+)\otimes\zeta_{\rm B}^{-a}
\end{align*}
et le r\'esultat.
\end{proof}
\noindent$\bullet${\it Torsion des vecteurs localement alg\'ebriques}.---
Si $\pi$ est cohomologique de poids $(k+2,j+1)$, on note $\pi^{\rm alg}$ la repr\'esentation
localement alg\'ebrique $\pi\otimes W_{k,j}^\dual$.
Si $\chi$ est de poids $a$, alors, en tant que repr\'esentation encadr\'ee, on
$$(\pi\otimes\chi)^{\rm alg}=\{G(\chi)^{-1}X^a\chi^{(p)}\phi,\ \phi\in\pi^{\rm alg}\},$$
o\`u $\chi^{(p)}$ est le caract\`ere $p$-adique associ\'e \`a $\chi$ (cf.~\no\ref{prelim4}).
On a aussi 
$$m_{\eet}(\pi\otimes\chi)=m_{\eet}(\pi)\otimes G(\chi)\zeta_{\rm B}^{-a}$$
(Notons que $\sigma(G(\chi)\zeta_{\rm B}^{-a})=\chi_{\rm Gal}(\sigma)^{-1} G(\chi)\zeta_{\rm B}^{-a}$,
pour tout $\sigma\in G_\Q$, i.e.~$m_{\eet}(\pi\otimes\chi)=m_{\eet}(\pi)\otimes\chi_{\rm Gal}^{-1}$
en tant que repr\'esentation de $G_\Q$.)
L'accouplement naturel 
$$m_{\eet}(\pi\otimes\chi)\otimes (\pi\otimes\chi)^{\rm alg}\to
H^1_c(\GG(\Q),{\cal C}(\GG(\A),L))$$
 est reli\'e \`a celui pour $\pi$ par la formule
$$\langle \gamma\otimes G(\chi)\zeta_{\rm B}^{-a}, G(\chi)^{-1}X^a\chi^{(p)}\phi\rangle=
(\chi^{(p)}\circ\det) \langle\gamma,\phi\rangle$$
(La multiplication par $\chi^{(p)}\circ\det$ commute \`a l'action de $\GG(\Q)$,
et donc fournit un isomorphisme de $H^1_c(\GG(\Q),{\cal C}(\GG(\A),L)$.)

\part{Mod\`ele de Kirillov et factorisation de la cohomologie compl\'et\'ee}
\section{La cohomologie de la boule unit\'e}\label{qq8}
Dans ce chapitre, on rassemble un certain nombre de r\'esultats concernant la
cohomologie pro\'etale de la boule unit\'e ouverte. En particulier, on d\'efinit
un $q$-d\'eveloppement pour la cohomologie \`a valeurs dans certains syst\`emes locaux
(la fl\`eche $\iota_{\tA}$ de (\ref{bato40.4})) et on prouve une loi de r\'eciprocit\'e
explicite (th.\,\ref{bato22})
mettant en sc\`ene une application logarithme qui est un analogue g\'eom\'etrique
d'un inverse de l'exponentielle de Bloch-Kato.

Si $X$ est un espace perfecto\"{\i}de, on \index{At@\tAAA}note $\tbA(X)$, $\tbA^+(X)$, $\tbA^{++}(X)$
les sections globales sur $X$ des faisceaux pro\'etales $\tbA$, $\tbA^+$, $\tbA^{++}$:
si $X$ est un affino\"{\i}de perfecto\"{\i}de, alors
$\tbA(X)=W(\O(X)^\flat)$, $\tbA^+(X)=W(\O^+(X)^\flat)$, $\tbA^{++}(X)=W(\O^{++}(X)^\flat)$.
De m\^eme, on note $\Bdr^+(X)$ les sections globales sur $X$ du faisceau pro\'etale $\Bdr^+$.

Soit $C$ un corps complet pour $v_p$, et alg\'ebriquement clos.
On note $\tA$, $\tA^+$, $\tA^{++}$ et $\bdr^+$ les anneaux de Fontaine correspondants
(on peut les voir comme
 les sections globales des faisceaux ci-dessus sur ${\rm Spa}(C,\O_C)$).

\Subsection{Cohomologie de la boule ferm\'ee}
\subsubsection{La boule ferm\'ee}
Soit $B$ la boule ferm\'ee ${\rm Spa}(C\langle q\rangle,\O_C\langle q\rangle)$; on 
\index{bou1@\boulef}note
$B^\times$ la boule $B$ munie d'une structure logarithmique en $0$.
On note $\overline B$ le rev\^etement universel de $B^\times$:
$\O(\overline{B})$ est le compl\'et\'e de 
l'extension maximale de $\O(B)=C\langle q\rangle$, \'etale en dehors
de $0$.
La boule ferm\'ee perfecto\"{\i}de $B_\infty$ est un quotient de $\overline B$: on a
$B_\infty={\rm Spa}(C\langle q^{p^{-\infty}}\rangle,\O_C\langle q^{p^{-\infty}}\rangle)$ o\`u
$\O_C\langle q^{p^{-\infty}}\rangle$ est le compl\'et\'e $p$-adique de $\varinjlim_n\O_C[q^{p^{-n}}]$,
i.e. l'anneau des $\sum_{i\in I}{a_iq^i}$, $I=p^{-\infty}\N$, avec $a_i\to 0$ quand
$i\to\infty$ --- suivant le filtre des compl\'ementaires des parties finies --- dans~$I$.
On note $B_\infty^\times$ la boule $B_\infty$ munie d'une structure logarithmique en $0$.

On \index{GB@\GB}pose
$$G_B={\rm Aut}(\overline{B}/B)
\quad{\rm et}\quad
H_B={\rm Aut}(\overline{B}/B_\infty).$$
Alors
$${\rm Aut}(B_\infty/B)=G_B/H_B\cong \UU(\Z_p)=\matrice{1}{\Z_p}{0}{1}.$$ 
On note $\gamma_u$ l'\'el\'ement $\matrice{1}{u}{0}{1}$ de $\UU(\Z_p)$,
et on note simplement $\gamma$ le g\'en\'erateur topologique $\gamma_1$ de $ \UU(\Z_p)$.
Sur $\O(B_\infty)$, l'action de $\UU(\Z_p)$ est $\gamma_u^\dual q^i={\bf e}_p(ui) q^i$; elle s'\'etend
en une action de $\UU(\Q_p)$ par la m\^eme formule.

On note $B_{\rm Kum}$ le rev\^etement de Kummer maximal de $B^\times$: on rajoute
les $q^{1/N}$, pour tout $N$, et pas seulement pour $N\mid p^\infty$, et 
$B_{\rm Kum}={\rm Spa}(C\langle q^{\Q_+}\rangle,\O_C\langle q^{\Q_+}\rangle)$.
Alors $B_{\rm Kum}$ est un quotient de $\overline B$ et $\overline B/B_{\rm Kum}$
est \'etale; on note $H_{\rm Kum}$
le groupe de Galois de ce rev\^etement, et on a
${\rm Aut}(B_{\rm Kum}/B)=G_B/H_{\rm Kum}\cong \UU(\cZ)$.
Comme ci-dessus, l'action naturelle de $\UU(\cZ)$ sur $\O(B_{\rm Kum})$
s'\'etend en une action de $\UU(\Ai)$, avec $\gamma_u^\dual q^i=e_\A(u)q^i$
si $u\in \Ai$ et $i\in\Q_+$.

\subsubsection{La cohomologie de la boule unit\'e perfecto\"{\i}de}\label{qq9}
Notons $\O^\flat$ le bascul\'e de $C\langle q^{p^{-\infty}}\rangle$; on a donc
$\O^\flat=C^\flat\langle \tilde q^{p^{-\infty}}\rangle$ (en notant $\tilde q$ l'\'el\'ement
 $(q,q^{1/p},...)$ de $\O^\flat$).  
Maintenant, tout \'el\'ement de $\tbA(B_\infty)=W(\O^\flat)$ s'\'ecrit, de mani\`ere unique,
sous la forme $\sum_{i\in I} a_i \tilde q^i$, avec $a_i\in\tA$ et $a_i\to 0$ quand $i\to\infty$
(i.e. pour tous $n,k$ il n'y a qu'un nombre fini de $i$ tels que $a_i\notin p^n\tA+\tilde p^k\tA^+$).
On d\'ecompose $I$ sous la forme
$$I=\{0\}\sqcup(\sqcup_{k\in\Z}I_k),\quad{\text{avec $I_k=\{p^ka,\ a\in\N, (a,p)=1\}$.}}$$
De m\^eme, notons $\O^\flat_{\rm Kum}$ le bascul\'e de $C\langle q^{\Q_+}\rangle$; on a donc
$\O^\flat_{\rm Kum}=C^\flat\langle \tilde q^{\Q_+}\rangle$.
Tout \'el\'ement de $\tbA(B_{\rm Kum})=W(\O_{\rm Kum}^\flat)$ 
s'\'ecrit, de mani\`ere unique,
sous la forme $\sum_{i\in \Q_+} a_i \tilde q^i$, avec $a_i\in\tA$ et $a_i\to 0$ quand $i\to\infty$.
On d\'ecompose $\Q_+$ sous la forme
$$\Q_+=\{0\}\sqcup(\sqcup_{k\in\Z}I_{{\rm Kum},k}),
\quad{\text{avec $I_{{\rm Kum},k}=\{x\in\Q_+,\ v_p(x)=k\}$.}}$$
\begin{prop}\phantomsection\label{cano3}
{\rm (i)} On a des isomorphismes naturels:
\begin{align*}
H^1_{\proet}(B_{\rm Kum},\Z_p)&\cong H^1(H_{\rm Kum},\Z_p)\cong \tbA(B_{\rm Kum})/(\varphi-1)\\
H^1_{\proet}(B_\infty^\times,\Z_p)&\cong H^1(H_B,\Z_p)\cong \tbA(B_\infty)/(\varphi-1)
\end{align*}
{\rm (ii)}
Les applications naturelles 
$$\widehat\oplus_{i\in I_{{\rm Kum},0}}\tA \tilde q^i\to \tbA(B_{\rm Kum})/(\varphi-1)
\quad{\rm et}\quad
\widehat\oplus_{i\in I_0}\tA \tilde q^i\to \tbA(B_\infty)/(\varphi-1)$$
induisent des isomorphismes
$$\widehat\oplus_{i\in I_{{\rm Kum},0}}(\tA/\tA^{++})\tilde q^i\cong \tbA(B_{\rm Kum})/(\varphi-1)
\quad{\rm et}\quad
\widehat\oplus_{i\in I_0}(\tA/\tA^{++})\tilde q^i\cong \tbA(B_\infty)/(\varphi-1)$$
\end{prop}
\begin{proof}
Pour $B_{\rm Kum}$, le (i) est un cas particulier des r\'esultats de~\cite{Sz1}.
Le cas de $B_\infty^\times$ s'en d\'eduit en utilisant la suite spectrale
de Hochschild-Serre pour le rev\^etement $B_{\rm Kum}/B_\infty$ (de groupe de Galois
$\UU(\cZ^{]p[})$ dont le  ``cardinal'' est premier \`a $p$) qui
fournit un isomorphisme $H^1_{\proet}(B_\infty^\times,\Z_p)=
H^0(\UU(\cZ^{]p[}),H^1_{\proet}(B_{\rm Kum},\Z_p))$: comme $\UU(\cZ^{]p[})$ 
est de  ``cardinal'' premier \`a $p$, les points fixes sous $\UU(\cZ^{]p[})$
de la suite exacte $0\to \tbA(B_{\rm Kum})\overset{\varphi-1}{\lra}
\tbA(B_{\rm Kum})\to \tbA(B_{\rm Kum})/(\varphi-1)\to 0$ forment
encore une suite exacte (c'est aussi visible sur la description du (ii)).

Passons au (ii); la preuve est la m\^eme dans les deux cas, et nous ne traiterons
que le cas de $B_\infty$.
Il s'agit de prouver que 
$\widehat\oplus_{i\in I_0}\tA \tilde q^i\to \tbA(B_\infty)/(\varphi-1)$ est surjective
et que le noyau est $\widehat\oplus_{i\in I_0}\tA^{++} \tilde q^i$.

Commen\c{c}ons par la surjectivit\'e.  Soit $z=\sum_{i\in I} a_i \tilde q^i$ comme ci-dessus.

$\bullet$ Comme $1-\varphi$ est surjectif sur $\tA$, on peut supposer $a_0=0$.

$\bullet$ On peut \'ecrire $a_i=\sum p^n[a_{i,n}]$.  Or, si $x\in {\goth m}_{C^\flat}$,
alors $y=[x]\tilde q^i$ est dans l'image de $1-\varphi$ car la s\'erie $y+\varphi(y)+\varphi^2(y)+\cdots$
converge.  Comme on travaille modulo~$1-\varphi$, on peut supprimer tous les $[a_{i,n}]\tilde q^i$
avec $a_{i,n} \in {\goth m}_{C^\flat}$.  Il ne reste alors, modulo $p^N$, qu'un nombre
fini de $a_{i,n}$ non nuls.

$\bullet$ On a $[a_{i,n}]\tilde q^i=[a_{i,n}^{p^j}]\tilde q^{ip^j}$ modulo $1-\varphi$ et il existe un 
$j\in\Z$ (unique) tel que $ip^j\in I_0$.

On en d\'eduit la surjectivit\'e
et le fait que le noyau contient $\widehat\oplus_{i\in I_0}\tA^{++} \tilde q^i$.
Pour conclure, il suffit donc de v\'erifier que 
$$\big(\widehat\oplus_{i\in I_0}\tA \tilde q^i)\cap \big((\varphi-1)\cdot \tbA(B_\infty)\big)\subset
\widehat\oplus_{i\in I_0}\tA^{++} \tilde q^i.$$
On se ram\`ene \`a v\'erifier le m\^eme \'enonc\'e modulo~$p$.  
Soit $z\in \O^\flat$ tel que $(\varphi-1)z\in \oplus_{i\in I_0}C^\flat \tilde q^i$.
On \'ecrit $z=\sum_{i\in I} a_i \tilde q^i$ et $(\varphi-1)z=\sum_{i\in I}b_i \tilde q^i$.  
Supposons, par l'absurde, qu'au moins un des $a_i$ n'appartient pas \`a ${\goth m}_{C^\flat}$,
et choisissons $i_1$ (resp. $i_2$) tel que $v_p(i_1)$ (resp.~$v_p(i_2)$) r\'ealise le minimum
(resp.~maximum) des $v_p(i)$ pour $i$ tel que $a_i\notin {\goth m}_{C^\flat}$.
Alors au moins une des deux propri\'et\'es suivantes est v\'erifi\'ee: $v_p(i_1)< 0$
ou $v_p(i_2)\geq 0$.  Dans le premier cas, $b_{i_1}=a_{i_1/p}^p-a_{i_1}\equiv -a_{i_1}$ mod ${\goth m}_{C^\flat}$;
dans le second, $b_{p i_2}=a_{i_2}^p-a_{p i_2}\equiv a_{i_2}^p$
mod ${\goth m}_{C^\flat}$.  Dans les deux cas, cela fabrique un $b_i$, avec $i\notin I_0$, tel que
$b_i\neq 0$, et donc $(\varphi-1)z\notin \oplus_{i\in I_0}C^\flat \tilde q^i$ ce qui est contraire \`a l'hypoth\`ese.

On en d\'eduit le r\'esultat.
\end{proof}
\begin{rema}\phantomsection\label{qq10.4}
Pour construire le second isomorphisme du (i), on part
de la suite exacte $0\to \Z_p\to \tbA(\overline B)\overset{\varphi-1}{\longrightarrow}
\tbA(\overline B)\to 0$, et on prend la cohomologie continue de $H_B$ qui, par descente presque \'etale,
 nous donne
une suite exacte 
$$0\to\Z_p\to \tbA(B_\infty)\overset{\varphi-1}{\longrightarrow}
\tbA(B_\infty)\to H^1(H_B,\Z_p)\to 0.$$
En particulier, si $x\in \tbA(B_\infty)$ le $1$-cocycle associ\'e
est $\tau\mapsto (\tau-1)\cdot c$, o\`u $c\in \tbA(\overline B)$ v\'erifie $(\varphi-1)\cdot c=x$.
\end{rema}

\subsubsection{Descente en niveau fini}\label{qq10}
On \index{alg@\rmalg}note ${\rm Alg}$ l'espace des 
fonctions polynomiales sur $\UU(\Z_p)$ \`a valeurs dans $\Z_p$ 
(une telle fonction est de la forme $\phi(\gamma_u)=\sum_{i=0}^ka_i\binom{u}{i}$).
 On fait agir
$\UU(\Z_p)$ par $\big(g\cdot\phi\big)(x)=\phi(g^{-1}x)$; via l'isomorphisme
avec $\Z_p$, cette formule devient $\big(\gamma_u\cdot\phi\big)(\gamma_x)=\phi(\gamma_{x-u})$.
On note ${\cal C}$ l'espace des fonctions continues sur $\UU(\Z_p)$, \`a valeurs dans $\Z_p$.
On \index{W@\WWW}note $W_k\subset {\rm Alg}$ l'espace des polyn\^omes de degr\'e~$\leq k$;
c'est une sous-$\UU(\Z_p)$-repr\'esentation de ${\rm Alg}$ et 
$${\rm Alg}=\varinjlim W_k.$$
On a une suite exacte
de $\UU(\Z_p)$-repr\'esentations:
$$0\to W_k\to{\cal C}\overset{(\gamma-1)^k}{\longrightarrow}{\cal C}\to 0.$$
Si $M$ est un $\UU(\Z_p)$-module, on note $M^{U-{\rm fini}}$ l'ensemble des $x\in M$ tu\'es
par une puissance de $\gamma-1$. En particulier, ${\rm Alg}={\cal C}^{U-{\rm fini}}$.

Comme $\UU(\Z_p)$ est un quotient de $G_B$, on peut voir ${\rm Alg}$ et les $W_k$
comme des syst\`emes locaux pro\'etales sur $B^\times$.
\begin{prop}
On a des isomorphismes
$$H^1_{\proet}(B^\times,{\rm Alg})\cong H^1(G_B,{\rm Alg}),
\quad
H^1_{\proet}(B^\times,W_k)\cong H^1(G_B,W_k).$$
\end{prop}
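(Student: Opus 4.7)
The plan is to deduce both isomorphisms from the Cartan--Leray spectral sequence attached to the pro-\'etale $G_B$-torsor $\overline B \to B^\times$, after reducing to the case of the finite-rank coefficient $W_k$.

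First, I would set up the framework. The cover $\overline B \to B^\times$ is, by construction, the universal pro-Kummer-\'etale cover of $B^\times$; in particular it is a pro-\'etale $G_B$-torsor, and the sheaves $W_k$ and ${\rm Alg}$ on $B^\times$ are precisely those associated to the continuous $G_B$-modules $W_k$ and ${\rm Alg}$ (via the factorisation $G_B \twoheadrightarrow \UU(\Z_p)$). The Cartan--Leray spectral sequence gives
$$E_2^{i,j} = H^i_{\rm cts}(G_B, H^j_{\proet}(\overline B, M)) \Longrightarrow H^{i+j}_{\proet}(B^\times, M),$$
for $M = W_k$ or $M = {\rm Alg}$. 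It is enough to prove $H^1_{\proet}(\overline B, M) = 0$: then the five-term exact sequence and $H^0_{\proet}(\overline B, M) = M$ will yield an isomorphism
$H^1_{\rm cts}(G_B, M) \overset\sim\to H^1_{\proet}(B^\times, M)$.

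Next, I would reduce to $M = W_k$ and then to finite coefficients. Since $H^j_{\proet}(\overline B, -)$ commutes with filtered colimits of abelian sheaves on the pro-\'etale site of an affinoid, the identity ${\rm Alg} = \varinjlim_k W_k$ reduces the claim for ${\rm Alg}$ to the one for every $W_k$. For $W_k$, the short exact sequences $0 \to W_k \to W_k \to W_k/p^n \to 0$ and passage to the inverse limit in $n$ (using that $W_k$ is a finite free $\Z_p$-module and $H^j_{\proet}(\overline B,-)$ satisfies the usual Milnor sequence) reduce the vanishing to the case of the discrete $\Z_p[\UU(\Z_p)]$-module $W_k/p^n$, hence ultimately to the vanishing of $H^1_{\proet}(\overline B, N)$ for an arbitrary finite discrete $G_B$-module $N$ on which $H_B$ acts trivially.

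The crux is then this last vanishing, which is the main obstacle. Here I would use that $\overline B$ is, by definition, the maximal Kummer-pro-\'etale cover of $B^\times$: any finite Kummer-\'etale cover of $\overline B$ comes by pullback from a finite Kummer-\'etale cover of $B^\times$ trivialised by $\overline B$, hence is split. Thus $\overline B$ is simply connected in the Kummer-pro-\'etale topology, and so $H^1_{\proet}(\overline B, N) = \mathrm{Hom}_{\rm cts}(\pi_1^{\rm Kum\mhyphen pro\acute et}(\overline B), N) = 0$ for every finite $N$. (One has to check, in the formalism of~\cite{DLLZ} or Scholze's pro-\'etale site with log structure, that $H^1_{\proet}$ of a point is still classified by continuous homomorphisms from $\pi_1^{\rm pro\acute et}$ to the coefficient group; this is the standard comparison, and is the main technical input.)

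Finally, feeding $H^1_{\proet}(\overline B, W_k) = 0$ into the five-term exact sequence yields $H^1(G_B, W_k) \overset\sim\to H^1_{\proet}(B^\times, W_k)$, and passing to the colimit over $k$ gives the same isomorphism for ${\rm Alg}$. The expected difficulty is almost entirely concentrated in identifying the pro-\'etale fundamental group of $\overline B$ with the group used to build $\overline B$ in the first place; once this is done, the spectral sequence and the devissage are formal.
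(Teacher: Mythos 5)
The paper states this proposition without proof: it is taken as a known fact, implicitly resting on the $K(\pi,1)$ property of the log-disk for the Kummer--pro-\'etale topology (the rigid-analytic analogue of the fact already invoked in the remark~\ref{cup7.1} for algebraic curves). Your Cartan--Leray argument is a standard way of deriving that fact, and for degree one it is essentially automatic once unwound: writing $\overline B=\varprojlim_i X_i$ for the cofiltered tower of finite Kummer-\'etale covers of $B^\times$, with Galois groups $G_{B_i}\subset G_B$, one has
$H^1_{\proet}(\overline B,N)=\varinjlim_i H^1_{\eet}(X_i,N)=\varinjlim_i {\rm Hom}_{\rm cts}(G_{B_i},N)=0$
for $N$ finite, using only the tautological identity $H^1_{\eet}={\rm Hom}(\pi_1,-)$ at each finite level --- no higher $K(\pi,1)$ input is needed in degree one. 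Your d\'evissage from $W_k$ to finite coefficients via Milnor sequences, and from ${\rm Alg}=\varinjlim_k W_k$ to $W_k$, is then formal. The points you flag as requiring verification --- commutation of $H^1_{\proet}$ with inverse limits along the tower $\varprojlim X_i$, with the filtered colimit over $k$, and the Milnor sequence for $\Z_p$-module coefficients, all in the log-pro-\'etale setting --- are indeed the genuine technical inputs; they are part of Scholze's and DLLZ's formalism, which the paper already invokes for the $C_{\rm dR}$ comparison. So the proposal is sound modulo precisely the standard facts you identify, and, given that the paper supplies no argument at all, it fills a genuine gap rather than merely reproducing one.
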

\begin{lemm}\phantomsection\label{cano103}
L'application qui, \`a un $1$-cocycle $\tau\mapsto\phi_\tau$, associe $\tau\mapsto\phi_\tau(0)$
induit des isomorphismes
$$H^1(H_B,\Z_p)=H^1(G_B,{\cal C})
\quad{\rm et}\quad
H^1(H_B,\Z_p)^{U-{\rm fini}}=H^1(G_B,{\rm Alg}).$$
\end{lemm}
\begin{proof}
Le premier isomorphisme est le lemme de Shapiro. Le second r\'esulte de la suite exacte
longue de cohomologie associ\'ee \`a la suite exacte ci-dessus: 
comme $H^0(G_B,W_k)=H^0(G_B,{\cal C})$ est l'espace des fonctions constantes, 
cette suite fournit la suite exacte
\begin{equation}\label{bato2}
0\to \Z_p\to H^1(G_B,W_k)\to H^1(G_B,{\cal C})^{(\gamma-1)^k=0}\to 0.
\end{equation}
Le $\Z_p$ est tu\'e dans $H^1(G_B,W_{k+1})$ car l'image de $x\in\Z_p$ dans $H^1(G_B,W_k)$
est celle du cocycle $\sigma\mapsto (\sigma-1)\cdot\phi_x$, o\`u $(\gamma-1)^k\cdot\phi_x=x$,
et donc $\phi_x$ est un polyn\^ome de degr\'e~$\leq k+1$, i.e.~$\phi_x\in W_{k+1}$.
Le r\'esultat s'en d\'eduit.
\end{proof}

Soit $W=W_k$.
Soit $\tau\mapsto c_\tau$ un $1$-cocycle sur $G_B$, \`a valeurs dans $W$.
\begin{prop}\phantomsection\label{bato40}
Il existe
$$c\in ([\epsilon^{1/p}]-1)^{-k}\tbA^{++}(\overline B)\otimes W,
\quad
x\in \tbA^+({\O(B_\infty)})\otimes W,$$ tels que 
$$(\varphi-1)c\in (\widehat\oplus_{i\in I_0}([\epsilon]-1)^{-k}\tA^{++}\tilde q^i)\otimes W
\quad{\rm et}\quad
c_\tau=\tfrac{\tau-1}{\gamma-1}x+(\tau-1)c.$$
\end{prop}
\begin{proof}
On peut \'ecrire $c_\tau$ sous la forme $\sum_{i=0}^kc_{i,\tau}\otimes u^i$.
La restriction de $\tau\mapsto c_{i,\tau}$ \`a $H_B$ est un $1$-cocycle \`a valeurs dans $\Z_p$.
Il existe $c_i\in \tbA(\overline B)$ tel que $c_{i,\tau}=(\tau-1)c_i$ si $\tau\in H_B$.
Il s'ensuit que $(\tau-1)\cdot(\varphi-1)c_i=0$ et donc $(\varphi-1)c_i\in \tbA(B_\infty)$.
L'isomorphisme de la prop.~\ref{cano3} permet, quitte \`a retrancher $(\varphi-1)a_i$
\`a $c_i$, avec $a_i\in \tbA(B_\infty)$ de supposer que
$(\varphi-1)c_i\in \widehat\oplus_{i\in I_0}\tA \tilde q^i$.
Le fait que la classe de $\tau\mapsto c_{i,\tau}$ est tu\'ee par $\big(\matrice{1}{1}{0}{1}-1\big)^k$
(cela suit de la suite exacte (\ref{bato2}))
\'equivaut \`a ce que 
$$(\varphi-1)c_i\in \widehat\oplus_{i\in I_0}([\epsilon^i]-1)^{-k}\tA^{++}\tilde q^i=
\widehat\oplus_{i\in I_0}([\epsilon]-1)^{-k}\tA^{++}\tilde q^i.$$
Il r\'esulte du lemme~\ref{bato1} ci-dessous que 
$$c_i\in ([\epsilon^{1/p}]-1)^{-k}\tbA^{++}(\overline B).$$

Soit $c=\sum_{i=1}^kc_i\otimes u^i.$
Alors, par construction $c$ v\'erifie les propri\'et\'es demand\'ees; il reste \`a construire $x$.
La classe de $\tau\mapsto c_\tau$ dans $H^1(H_B,W)$ est invariante
par $G_B$ (agissant par $(\gamma\cdot c)_\tau=\gamma\cdot c_{\gamma^{-1}\tau\gamma}$) puisque
c'est la restriction d'un $1$-cocycle sur $G_B$ (explicitement, $\gamma\cdot c_{\gamma^{-1}\tau\gamma}=
c_\tau+(\tau-1)\cdot c_\gamma$).
Cette classe est d\'etermin\'ee par $(\varphi-1)\cdot c$ modulo $\widehat\oplus_{i\in I_0}\tA^{++}\tilde q^i
\subset\tbA^{++}(B_\infty)$
d'apr\`es le (ii) de la prop.\,\ref{cano3};
on en d\'eduit que
$(\sigma-1)(\varphi-1)c\in \tbA^{++}(B_\infty)\otimes W$, pour tout $\sigma\in G_B$.
Il en r\'esulte, puisque $\varphi-1$ est bijectif sur
$\tbA^{++}(B_\infty)$ (d'inverse $-1-\varphi-\varphi^2-\cdots$), que
$$(\sigma-1)\cdot c\in (\Z_p\oplus\tbA^{++}(B_\infty))\otimes W,
\quad{\text{ pour tout $\sigma\in G_B$.}}$$

Soit $y=(\varphi-1)c\in \frac{1}{([\epsilon]-1)^k}\tbA^{+}(B_\infty)$.
Alors $(\gamma-1)y\in (\Z_p\oplus\tbA^{++}(B_\infty))\otimes W$ 
d'apr\`es ce qui pr\'ec\`ede,
et comme $\varphi-1$ est bijectif sur
$\tbA^{++}(B_\infty)$,
il existe $x_0\in (W(\overline{\bf F}_p)\oplus\tbA^{++}(B_\infty))\otimes W$ (unique \`a $W$ pr\`es),
tel que $(\gamma-1)y+(\varphi-1)x_0=0$.
Alors $\tau\mapsto \tfrac{\tau-1}{\gamma-1}x_0+(\tau-1)c$ est un $1$-cocycle sur $G_B$,
tu\'e par $\varphi-1$ et donc \`a valeurs dans $W$, qui co\"{\i}ncide avec $\tau\mapsto c_\tau$
sur $H_B$.  Le cocycle $c_\tau-(\tfrac{\tau-1}{\gamma-1}x_0+(\tau-1)c)$ provient donc,
par inflation, d'un $1$-cocycle $\sigma\mapsto x_\sigma$ sur $\UU(\Z_p)$, \`a valeurs dans $W$.
Il suffit alors de poser $x=x_0+x_\gamma$ pour conclure.
\end{proof}

\begin{lemm}\phantomsection\label{bato1}
Si 
$$(\varphi-1)x=([\epsilon]-1)^{-k}z,\quad{\text{avec $z\in\tbA^{++}(\overline B)$,}}$$
 alors
$$x=([\epsilon^{1/p}]-1)^{-k}y,\quad{\text{ avec $y\in\tbA^{++}(\overline B)$.}}$$
\end{lemm}
\begin{proof}
L'\'equation devient
$\varphi(y)-\xi^k y=z$, avec $\xi=\frac{[\epsilon]-1}{[\epsilon^{1/p}]-1}\in\tA^{++}$.
Modulo $p$, cette \'equation devient $y_0^p-\alpha y_0=z_0$, et comme $z_0\in\O^{++}(\overline B)^{\flat}$
et $\alpha\in {\goth m}_{C^\flat}$, on a $y_0\in \O^{++}(\overline B)^{\flat}$ (car $v(y_0)>0$ pour toute
valuation sur $\O(\overline B)^\flat$). On peut donc \'ecrire $y=[y_0]+py'$ et recommencer le raisonnement
avec $y'$, etc. Cela permet de prouver que $y=[y_0]+p[y_1]+p^2[y_2]+\cdots$, 
avec $y_i\in \O^{++}(\overline B)^{\flat}$, ce qui permet de conclure.
\end{proof}

\Subsection{Cohomologie de la boule ouverte}\label{qq12}
On suppose $C$ sph\'eriquement complet pour que $C/\O_C\to\varprojlim_{r>0}C/p^{-r}\O_C$
soit un isomorphisme, ce qui
implique en particulier
 que l'on a des isomorphismes
$$\varprojlim_{r>0}\tA/\tilde p^{-r}\tA^+=\tA/\tA^+:=\tA^-,\quad
\varprojlim_{r>0}\tfrac{1}{\tilde p^{ir}([\epsilon]-1)^k}\tA^+/\tfrac{1}{\tilde p^{ir}}\tA^+=
\tfrac{1}{([\epsilon]-1)^k}\tA^+/\tA^+.$$
Soient $B^-$ la boule \index{bou2@\bouleo}unit\'e ouverte et
$B^-_\infty$ la boule unit\'e ouverte perfecto\"{\i}de. Ce sont les r\'eunions croissantes, pour $r>0$,
des boules ferm\'ees (resp.~ferm\'ees perfecto\"{\i}des) 
$$B_r={\rm Spa}(C\langle (q/p^r)\rangle, \O_C\langle (q/p^r)\rangle),\quad
B_{r,\infty}={\rm Spa}(C\langle (q/p^r)^{p^{-\infty}}\rangle, \O_C\langle (q/p^r)^{p^{-\infty}}\rangle).$$
On note $B^{-,\times}$ et $B_\infty^{-,\times}$ les boules
$B^-$ et $B^-_\infty$ munies d'une structure logarithmique en $0$.

Les $H^0$ n'ayant pas de ${\rm R}^1\varprojlim$, on a
$H^1_{\proet}(B^{-,\times}_\infty,\Z_p)=\varprojlim_{r>0}H^1_{\proet}(B^{\times}_{r,\infty},\Z_p)$,
et comme
$\tA^-\to \varprojlim \tA/(\tilde p^{-r}\tA^{++})$
est un isomorphisme,
on obtient, gr\^ace \`a la prop.\,\ref{cano3}, une injection naturelle
\begin{equation}\label{bato40.2}
\iota_{\tA}:H^1_{\proet}(B^{-,\times}_\infty,\Z_p)\hookrightarrow
\prod_{i\in I_0}\tA^-\,\tilde q^i.
\end{equation}

On peut voir ${\rm Alg}$ comme un ind-syst\`eme local sur $B^-$, puisque c'est un ind-syst\`eme
local sur $B_r$, pour tout $r>0$.
On a des isomorphismes
\begin{align*}
H^1_{\proet}(B^{-,\times},{\rm Alg})&=\varinjlim\nolimits_k\varprojlim\nolimits_r H^1_{\proet}(B_r^\times,W_k)\\
&=\varinjlim\nolimits_k\varprojlim\nolimits_r H^1(G_{B_r},W_k)\cong 
\varinjlim\nolimits_k\varprojlim\nolimits_r H^1(H_{B_r},\Z_p)^{(\gamma-1)^k=0}
\end{align*}
Comme $H^1(H_{B_r},\Z_p)^{(\gamma-1)^k=0}\cong \widehat\oplus_{i\in I_0}
(([\epsilon]-1)^{-k}\tilde p^{-ir}\tA^{++}/\tilde p^{-ir}\tA^{++})\tilde q^i$,
et comme $\varprojlim (([\epsilon]-1)^{-k}\tilde p^{-ir}\tA^{++}/\tilde p^{-ir}\tA^{++})=
([\epsilon]-1)^{-k}\tA^+/\tA^+$,
on en d\'eduit une injection \index{iotaA@\iotaA}naturelle
\begin{equation}\label{bato40.4}
\iota_{\tA}:H^1_{\proet}(B^{-,\times},{\rm Alg})\hookrightarrow
\varinjlim_k\prod_{i\in I_0}(([\epsilon]-1)^{-k}\tA^+/\tA^+)\tilde q^i.
\end{equation}

\Subsection{La cohomologie du faisceau $\Bdr^+$}\label{qq13}
\subsubsection{Descente presque \'etale et d\'ecompl\'etion}\label{qq14}
On \index{bdrq@\bdrq}note $\bdr^+\{\tilde q\}$ 
l'anneau des $\sum a_i\tilde q^i$, avec $i\in \N$,
$a_i\in\bdr^+$ et, pour tout $k$ et tout $r>0$, on a $v_{\bdr^+/t^k}(a_i)+ir\to +\infty$
quand $i\to\infty$.

On note $\bdr^+\{\tilde q^{1/p^\infty}\}$ 
l'anneau des $\sum a_i\tilde q^i$, avec $i\in p^{-\infty}\N$,
$a_i\in\bdr^+$ et, pour tout $k$ et tout $r>0$, on a $v_{\bdr^+/t^k}(a_i)+ir\to +\infty$
quand $i\to\infty$ (suivant le filtre des compl\'ementaires des parties finies).
Alors $\bdr^+\{\tilde q^{1/p^\infty}\}$ est le compl\'et\'e
de $\varinjlim_n \bdr^+\{\tilde q^{1/p^n}\}$, et 
$$\Bdr^+(B^-_\infty)=\varprojlim_r\Bdr^+(B_{r,\infty})=\bdr^+\{\tilde q^{1/p^\infty}\}.$$

\begin{lemm}\phantomsection\label{qq14.1}
On a un isomorphisme naturel
$$H^1_{\rm proet}(B^{-,\times},{\rm Alg}\otimes\Bdr^+)\cong 
H^1(\UU(\Z_p),{\rm Alg}\otimes\bdr^+\{\tilde q^{1/p^\infty}\}).$$
\end{lemm}
\begin{proof}
On a
\begin{align*}
H^1_{\rm proet}(B^{-,\times},{\rm Alg}\otimes\Bdr^+) & \cong 
\varprojlim_r H^1_{\rm proet}(B_r^\times,{\rm Alg}\otimes\Bdr^+)\\
&\cong \varprojlim_r H^1(G_{B_r},{\rm Alg}\otimes\Bdr^+(\overline B_r))\\
&\cong \varprojlim_r H^1(\UU(\Z_p),{\rm Alg}\otimes\Bdr^+(B_{r,\infty}))\\
&\cong H^1(\UU(\Z_p),{\rm Alg}\otimes\bdr^+\{\tilde q^{1/p^\infty}\})
\end{align*}
le premier isomorphisme r\'esulte de ce que les $H^0$ v\'erifient la propri\'et\'e de Mittag-Leffler,
le second provient de ce que les $B_r^\times$ sont des $K(\pi,1)$,
le troisi\`eme 
est une application directe des m\'ethodes de descente presque \'etale,
et le dernier provient de ce que les $\Bdr^+(B_{r,\infty})$ n'ont pas de ${\rm R}^1\varprojlim$
car $\Bdr^+(B_{r,\infty})$ est dense dans $\Bdr^+(B_{s,\infty})$ si $r<s$ (Mittag-Leffler topologique).
\end{proof}

Si $P\in \bdr\otimes{\rm Alg}$ et si $k\in\N$, soit $P^{[k]}=(\gamma-1)^kP$. On a
$P^{[k]}=0$ si $k\geq \deg P$.
Un calcul imm\'ediat fournit le lemme suivant.
\begin{lemm}\phantomsection\label{cano105}
Si $i\in p^{-\infty}\N\moins\{0\}$,
on a
$$\tilde q^i\otimes P=(\gamma-1)\cdot\big(\tilde q^i\otimes\big(\tfrac{1}{[\epsilon^i]-1}P+
\tfrac{[\epsilon^i]}{([\epsilon^i]-1)^2}P^{[1]}+ \tfrac{[\epsilon^i]^2}{([\epsilon^i]-1)^3}P^{[2]}
+\cdots\big)\big)$$
\end{lemm}

\begin{lemm}\phantomsection\label{cano106}
L'injection $\bdr^+\{\tilde q\}\hookrightarrow \bdr^+\{\tilde q^{1/p^\infty}\}$
induit un isomorphisme
$$H^1(\UU(\Z_p), \bdr^+\{\tilde q\}\otimes{\rm Alg}) \overset{\sim}{\to}
H^1(\UU(\Z_p), \bdr^+\{\tilde q^{1/p^\infty}\}\otimes{\rm Alg}).$$
\end{lemm}
\begin{proof}
Cela r\'esulte facilement du lemme~\ref{cano105} 
et de la d\'efinition de $\bdr^+\{\tilde q^{1/p^\infty}\}$
\end{proof}

\subsubsection{L'application $\log_B $}\label{qq15}
On \index{bdrq@\bdrq}pose 
$$\bdr\{\tilde q\}=\bdr^+\{\tilde q\}[\tfrac{1}{t}]
\quad{\rm et}\quad
\bdr^-\{\tilde q\}=\bdr\{\tilde q\}/\bdr^+\{\tilde q\}.$$

\begin{lemm}\phantomsection\label{cano107}
{\rm (i)} On a des identit\'es
$$H^0(\UU(\Z_p),{\rm Alg}\otimes \bdr\{\tilde q\})=\bdr,
\quad
H^1(\UU(\Z_p),{\rm Alg}\otimes \bdr\{\tilde q\})=0$$
et une suite exacte
$$0\to\bdr^-\to H^0(\UU(\Z_p),{\rm Alg}\otimes \bdr^-\{\tilde q\})\to
H^1(\UU(\Z_p),{\rm Alg}\otimes \bdr^+\{\tilde q\})\to 0$$

{\rm (ii)} L'application $P\mapsto P(0)$
induit un isomorphisme
$$H^0(\UU(\Z_p),{\rm Alg}\otimes \bdr^-\{\tilde q\})\cong \bdr^-\{\tilde q\}.$$
\end{lemm}
\begin{proof}
Le calcul de $H^0$ est imm\'ediat; celui du $H^1$ r\'esulte du lemme~\ref{cano105} (en remarquant que
$\frac{(it)^k}{([\epsilon^i]-1)^k}$ est {\og entier\fg} et donc que la s\'erie qui sort du lemme converge).
La suite exacte s'en d\'eduit via la suite exacte longue de cohomologie associ\'ee
\`a $0\to\bdr^+\{\tilde q\}\to \bdr\{\tilde q\}\to \bdr^-\{\tilde q\}\to 0$
tensoris\'ee par ${\rm Alg}$.  Ceci prouve le~(i).

Pour prouver le (ii), il suffit de remarquer que si $x\in \bdr^-\{\tilde q\}$,
alors $\phi_x$, d\'efinie par $\phi_x(\tau)=
\tau\cdot x$, est un \'el\'ement de ${\rm Alg}\otimes \bdr^-\{\tilde q\}$,
et que $x\mapsto \phi_x$ est inverse de $P\mapsto P(0)$.
\end{proof}

Soit
$$\bdr^-\{\tilde q\}_0=\{\sum_i a_i\tilde q^i\in\bdr^-\{\tilde q\},
\ {\text {avec $a_0=0$}}\}.$$
On \index{lob@\loB}d\'eduit de ce qui pr\'ec\`ede un 
isomorphisme naturel (r\'eminiscent de l'inverse
de l'exponentielle de Bloch-Kato)
$$\log_B :H^1_{\proet}(B^{-,\times},\Bdr^+\otimes{\rm Alg})\overset{\sim}{\to} \bdr^-\{\tilde q\}_0.$$

\subsubsection{Une loi de r\'eciprocit\'e explicite}\label{qq16}
On pose
\begin{align*}
\tA^-[[\tilde q]]\boxtimes\Z_p^\dual&=
\prod_{i\in I_0}\tA^-\tilde q^i\\
(\tA^-[[\tilde q]]\boxtimes\Z_p^\dual)^{U-{\rm fini}}&=
\varinjlim_k\prod_{i\in I_0}(([\epsilon]-1)^{-k}\tA^+/\tA^+)\tilde q^i.
\end{align*}
On dispose donc, d'apr\`es~(\ref{bato40.4}) d'une application naturelle
$$\iota_{\tA}:H^1_{\proet}(B^{-,\times},{\rm Alg})\to 
(\tA^-[[\tilde q]]\boxtimes\Z_p^\dual)^{U-{\rm fini}}.$$

Par ailleurs,
si $\alpha\in ([\epsilon]-1)^{-k}\tA^+/\tA^+$, alors $\varphi^n(\alpha)=0$ dans $\bdr^-$
si $n< 0$.
Cela permet de d\'efinir une \index{kdr@\kdr}injection
$$\kappa_{\rm dR}:(\tA^-[[\tilde q]]\boxtimes\Z_p^\dual)^{U-{\rm fini}}\to 
\bdr^-\{ \tilde q\}_0, \quad
x \mapsto \sum_{n\in\Z}\varphi^n(x).$$
(La s\'erie converge dans $\bdr^-\{\tilde q\}$ car $t^k\varphi^n(([\epsilon]-1)^k\tilde q^i)\in
p^{-nk}\acris \tilde q^{p^ni}$; on obtient
 une injection car, si $x\in\tA^+$ est tel que $\varphi^n(x)\in t^k\bdr^+$ pour tout $n\geq 0$,
alors $x\in ([\epsilon]-1)^k\tA^+$.)
On en d\'eduit une fl\`eche
$$\kappa_{\rm dR}\circ \iota_{\tA}: H^1_{\proet}(B^{-,\times},{\rm Alg})\to \bdr^-\{\tilde q\}_0$$
(Remarquons que $\kappa_{\rm dR}\circ (\varphi-1)=0$, ce qui
fait que $\kappa_{\rm dR}\circ\iota_{\tA}$ ne d\'epend pas du choix du repr\'esentant fait
dans la d\'efinition de $\iota_{\tA}$.)

\begin{theo}\phantomsection\label{bato22}
$\log_B \circ\  \iota =\kappa_{\rm dR}\circ \iota_{\tA}$.
\end{theo}
\begin{proof}
Il s'agit de prouver que le diagramme suivant commute
$$\xymatrix@C=.4cm@R=5mm{
\bdr^-\{\tilde q\}_0\ar[r]^-{\sim}&H^0(\UU(\Z_p),\bdr^-\{\tilde q\}_0\otimes{\rm Alg})
\ar[r]^-{\sim} &H^1(\UU(\Z_p),\bdr^+\{\tilde q\}\otimes{\rm Alg})\\
(\tA^-[[\tilde q]]\boxtimes\Z_p^\dual)^{U-{\rm fini}}\ar[u]^-{\kappa_{\rm dR}}
&H^1_{\proet}(B^{-,\times},{\rm Alg})\ar[l]_-{\iota_{\tA}}\ar[r]^-{\iota}
&H^1_{\proet}(B^{-,\times},\Bdr^+\otimes{\rm Alg})\ar[u]_-{\wr}
}$$
(i.e.~que les deux fl\`eches 
$H^1_{\proet}(B^{-,\times},{\rm Alg})\to H^1(\UU(\Z_p),\bdr^+\{\tilde q\}\otimes{\rm Alg})$ co\"{\i}ncident).
Soit $\hat c\in H^1_{\proet}(B^{-,\times},{\rm Alg})$.
D'apr\`es la prop.~\ref{bato40}, il existe, pour tout $r>0$,
\begin{align*}
&c_r \in \tfrac{1}{([\epsilon^{1/p}]-1)^k}\tbA^{++}(\overline B_r)\otimes{\rm Alg},\\
&y_r=(\varphi-1)c_r \in \tfrac{1}{([\epsilon]-1)^k}\tbA^{++}(B_{r,\infty})\otimes{\rm Alg},\\
&(\gamma-1)y_r \in \tbA^{++}(B_{r,\infty})\otimes{\rm Alg},\\
&x_r\equiv(1+\varphi+\varphi^2+\cdots)(\gamma-1)y_r 
\in \tbA^{++}(B_{r,\infty})\otimes{\rm Alg}\ {\rm mod}\ {\rm Alg}
\end{align*}
tels que $\tau\mapsto c_{r,\tau}=\frac{\tau-1}{\gamma-1}x_r+(\tau-1)c_r$ est
un $1$-cocycle sur $G_{B_r}$ repr\'esentant l'image de $\hat c$ dans $H^1(G_{B_r},{\rm Alg})$.
Comme
$\tbA^{++}(B_{r,\infty})\subset \Bdr^+(B_{r,\infty})$, on peut voir $x_r$ comme un \'el\'ement
de $\Bdr^+(B_{r,\infty})$, ce qui repr\'esente $\tau\mapsto \frac{\tau-1}{\gamma-1}x_r$ comme
l'inflation d'un cocycle de $H^1(\UU(\Z_p),\Bdr^+(B_{r,\infty})\otimes{\rm Alg})$
qui est
l'image de $\tau\mapsto c_{r,\tau}$ dans $H^1(G_B,\Bdr^+(\overline B_r)\otimes{\rm Alg})$
(car $c_r\in\Bdr^+(\overline B_r)\otimes {\rm Alg}$ puisque
$[\epsilon^{1/p}]-1$ est inversible dans $\bdr^+$).

Dans l'autre sens, 
soit $z_r=(1+\varphi+\varphi^2+\cdots)y_r$
(la s\'erie $(1+\varphi+\varphi^2+\cdots)y_r$ converge dans $\tA[[\tilde q/\tilde p^r]]\otimes{\rm Alg}$
et les coefficients sont dans $\cup_{n\in\N}\frac{1}{([\epsilon^{p^n}]-1)^k}\tA^+[\frac{1}{\tilde p}]$
qui s'injecte dans $t^{-k}\bdr^+$; la s\'erie qui en r\'esulte converge 
dans $\Bdr(B_{r,\infty}^-)\otimes{\rm Alg}$ mais
pas, a priori, dans $\Bdr(B_{r,\infty})\otimes{\rm Alg}$).
L'image de $\hat c$ dans 
$(\tA^-[[\tilde q/\tilde p^r]]\boxtimes\Z_p^\dual)^{U-{\rm fini}}$ est celle de $y_r(0)$
dont l'image par $\kappa_{\rm dR}$ est celle de $z_r(0)$.
Son image dans $H^1(\UU(\Z_p),\Bdr^+(B_{r,\infty}^-)\otimes{\rm Alg})$
est celle de
$\tau\mapsto (\tau-1)z_r$, et comme $x_r=(\gamma-1)z_r$,
le cocycle pr\'ec\'edent est aussi $\tau\mapsto \frac{\tau-1}{\gamma-1}x_r$.

Les deux fl\`eches co\"{\i}ncident donc dans
$\varprojlim_r H^1(\UU(\Z_p),\Bdr^+(B_{r,\infty}^-)\otimes{\rm Alg})$
qui est \'egal \`a $H^1(\UU(\Z_p), \varprojlim_r \Bdr^+(B_{r,\infty}^-)\otimes{\rm Alg})$
pour les m\^emes raisons
 de crit\`ere de Mittag-Leffler topologique que pour la preuve du lemme~\ref{qq14.1}.
Comme $\varprojlim_r \Bdr^+(B_{r,\infty}^-)=\Bdr^+(B^-)=\bdr^+\{\tilde q\}$,
cela permet de conclure.
\end{proof}

Soit $W=W_k$. On voit $W$ comme une repr\'esentation du groupe fondamental
$G_B$  de $B^-$ via le morphisme $G_B\to \UU(\Z_p)$.
La fl\`eche $W\otimes W^\dual\to {\rm Alg}$ d\'efinie par
$v\otimes\check v\mapsto \phi_{\check v,v}$, 
avec $\phi_{\check v,v}(x)=\langle x\cdot\check v,v\rangle$, est $G_B\times \UU(\Z_p)$-\'equivariante
(en faisant agir $G_B$ sur $W$ et $\UU(\Z_p)$ sur $W^\dual$, et les deux
sur ${\rm Alg}$ par la formule habituelle).
Ceci fournit une fl\`eche $\UU(\Z_p)$-\'equivariante
 $$H^1(G_B,W)\otimes W^\dual\to H^1(G_B,{\rm Alg}).$$
\begin{lemm}\phantomsection\label{cano108}
On suppose qu'il existe $c\in\bdr\{\tilde q\}\otimes W$ 
tel que $v_\tau=(\tau-1)\cdot c$ pour tout $\tau\in G_B$.
Alors $\log_B(v\otimes\check v)$ est l'image de 
$\langle \check v,c\rangle$ dans $\bdr^-\{\tilde q\}$.
\end{lemm}
\begin{proof}
Si $w=(\tau\mapsto w_\tau)$ est un $1$-cocycle sur $G_B$, \`a valeurs dans $\bdr\{\tilde q\}\otimes W$, et si
$\check w\in W^\dual$, alors $w\otimes\check w$ est repr\'esent\'e
par le $1$-cocycle
$\tau\mapsto \phi_{\check w,w_\tau}$,
\`a valeurs dans $\bdr\{\tilde q\}\otimes {\rm Alg}$. Maintenant,
$\phi_{\check v,\tau\cdot c}=\tau\cdot\phi_{\check v,c}$, et donc
$v\otimes\check v$ est repr\'esent\'e par $\tau\mapsto (\tau-1)\cdot \phi_{\check v,c}$.
Par d\'efinition, $\log_B(v\otimes\check v)$ est donc l'image de 
$\phi_{\check v,c}(0)=\langle \check v,c\rangle$ dans $\bdr^-\{\tilde q\}$, ce que l'on voulait.
\end{proof}

{
\Subsection{La cohomologie du faisceau $\widehat\O$}\label{luepan0}
Posons $U_0={\mathbb U}(\Z_p)$,
$\gamma_u=\matrice{1}{u}{0}{1}$, $\gamma=\gamma_1$,
 et notons $\nu:U_0\to\Z_p$ le morphisme $\gamma_u\mapsto u$. 
On a $\O^+(B^-)=\O_C[[q]]$, $\O^+(B^-_\infty)=\O[[q^{1/p^\infty}]]$
et $U_0$ agit sur $\O^+( B_\infty^-)$ 
 par $\gamma_u\cdot q^{1/p^n}=\zeta_{p^n}^u q^{1/p^n}$.
On pose:
$$\O^+(B^{-,\times})=\O_C[[q]]_0, \quad \O^+(B^{-,\times}_\infty)=\O[[q^{1/p^\infty}]]_0$$
le $0$ en indice indiquant les s\'eries dont le terme constant est $0$.
Si $X=B^-, B^-_\infty, B^{-,\times}, B^{-,\times}_\infty$, on pose
$\O^b(X)=C\otimes_{\O_C}\O^+(X)$; c'est l'espace des fonctions analytiques born\'ees sur $X$
(de terme constant $0$ dans le cas de $B^{-,\times}, B^{-,\times}_\infty$).
On note $\O(X)$ l'espace des fonctions analytiques (non n\'ecessairement born\'ees).

On a $B^{-,\times}= B_\infty^{-,\times}/U_0$.
Ceci se traduit par les identifications
\begin{align*}
&\O(B^{-,\times}_\infty)^{U_0}=\O(B^{-,\times}),
&&\O(B^-_\infty)^{U_0}=\O(B^-),\\
&\O^b(B^-_\infty)^{U_0}=\O^b(B^-),
&&\O^b(B^{-,\times}_\infty)^{U_0}=\O^b(B^{-,\times})
\end{align*}
Comme ${U_0}$ est procyclique, on en d\'eduit une suite exacte:
$$0\to H^1({U_0},H^0_{\proet}( B_\infty^{-,\times},\widehat\O))\to
H^1_{\proet}(B^{-,\times},\widehat\O)\to H^0({U_0},H^1_{\proet}( B_\infty^{-,\times},\widehat\O))\to 0$$
Maintenant, $ B_\infty^-$ est la r\'eunion croissante de boules 
ferm\'ees perfecto\"{\i}des $\widetilde B_n$,
pour lesquelles $H^1_{\proet}(\widetilde B^\times_n,\widehat\O)=0$ (puisque ce sont 
des affino\"{\i}des perfecto\"{\i}des) et on a ${\rm R}^1\lim\O(\widetilde B^\times_n)=0$
car les $\widetilde B_n$ forment un recouvrement stein de $ B_\infty^-$;
il s'ensuit que $H^1_{\proet}( B_\infty^{-,\times},\widehat\O)=0$
et donc que l'on a un isomorphisme
$$H^1({U_0},H^0_{\proet}( B_\infty^{-,\times},\widehat\O))\cong
H^1_{\proet}(B^{-,\times},\widehat\O)$$
Nous allons donner deux descriptions de $H^1({U_0},\O( B_\infty^{-,\times}))$. 
Un \'el\'ement de $H^1({U_0},\O( B_\infty^{-,\times}))$ est d\'etermin\'e par un cocycle 
$\gamma_u\mapsto c_{u}$
et un tel cocycle est compl\`etement d\'etermin\'e par $c_{1}$ puisque $\Z_p$ est procyclique.

\begin{lemm}\phantomsection\label{luepan1}
$x\mapsto x\cup \nu$, o\`u $\nu$ est vu comme \'el\'ement de $H^1({U_0},\Z_p)$,
induit des isomorphismes
\begin{align*}
\cup\ \nu&:\O^b(B^{-,\times})=H^0({U_0},\O^b( B_\infty^{-,\times}))\overset{\sim}{\to} H^1({U_0},\O^b( B_\infty^{-,\times}))\\
\cup\ \nu&:\O(B^{-,\times})=H^0({U_0},\O( B_\infty^{-,\times}))\overset{\sim}{\to} H^1({U_0},\O( B_\infty^{-,\times}))
\end{align*}
On note $$\exp_B^\dual:H^1({U_0},\O^b( B_\infty^{-,\times}))\to \O^b(B^{-,\times}),
\quad \exp_B^\dual:H^1({U_0},\O( B_\infty^{-,\times}))\to \O(B^{-,\times})$$
les isomorphismes r\'eciproques {\rm (r\'eminiscents de l'exponentielle duale de Bloch-Kato)}.
\end{lemm}
\begin{proof}
C'est parfaitement classique. Plus pr\'ecis\'ement,
le conoyau de $\O^+(B^{-,\times})\to H^1({U_0},\O^+( B_\infty^{-,\times}))$ est tu\'e par $\zeta_p-1$.
(Cela traite le cas de $\O^b$; celui de $\O$ s'en d\'eduit en \'ecrivant $B_\infty$ comme une r\'eunion
croissante de boules plus petites et en passant \`a la limite.)
\end{proof}

\begin{rema}\phantomsection\label{luepan1.1}
Les isomorphismes ci-dessus ne sont pas \'equivariants:
$\sigma\in G_{\Q_p}$ agit par multiplication par
$\cyp (\sigma)^{-1}$ sur $\nu$ car $\sigma\cdot\nu$ est
 le cocycle $u\mapsto\sigma(\nu(\sigma^{-1}u\sigma))$
o\`u le produit se fait dans le groupe $\matrice{G_{\Q_p}}{\Z_p}{0}{1}$ et $\matrice{\sigma}{u}{0}{1}
\matrice{\tau}{v}{0}{1}=\matrice{\sigma\tau}{u+\cyp (\sigma)v}{0}{1}$.
\end{rema}

Soit ${\mathbb B}_2(B_\infty^{-})=\Bdr^+(B_\infty^-)/t^2$ (donc ${\mathbb B}_2(B_\infty^{-})$ est
constitu\'e de s\'eries en $\tilde q^{1/p^{\infty}}$ \`a coefficients dans $\B_2=\bdr^+/t^2$,
v\'erifiant des conditions de croissance).
On note ${\mathbb B}_2(B_\infty^{-,\times})\subset {\mathbb B}_2(B_\infty^{-})$ les s\'eries de terme constant $0$.
\begin{lemm}\phantomsection\label{luepan2}
La suite exacte
$$0\to \O( B_\infty^{-,\times})\to t^{-1}{\mathbb B}_2(B_\infty^{-,\times})\to t^{-1}\O( B_\infty^{-,\times})\to 0$$
induit, en passant \`a la cohomologie de ${U_0}$, un diagramme commutatif
$$\xymatrix@R=4mm@C=5mm{
H^0({U_0},t^{-1}\O( B_\infty^{-,\times}))\ar[r]\ar@{=}[d]&H^1({U_0},\O( B_\infty^{-,\times}))\\
t^{-1}\O(B^{-,\times})\ar[r]^-{t\partial}\ar[r]_-{\sim}& \O(B^{-,\times})\ar[u]_-{\wr}^{\cup\,\nu}
}$$
o\`u l'isomorphisme $\cup\,\nu$ \`a droite est celui du lemme~\ref{luepan1}.
Autrement dit, l'isomorphisme 
$$\log_B^0:H^1({U_0},\O( B_\infty^{-,\times}))\overset{\sim}{\to}t^{-1}\O(B^{-,\times})$$
que l'on d\'eduit, en passant par la gauche du diagramme, v\'erifie la relation
$$\log_B^0=t^{-1}\partial^{-1}\circ\exp^\dual_B$$
\end{lemm}
\begin{proof}
On a 
$$(\gamma-1)\cdot a\,\tilde q^i=([\epsilon^i]-1)\,a\,\tilde q^i,\quad{\text{si $a\in \B_2$ et $i\in p^{-\infty}\N$}}.$$
Si $i\neq 0$, ceci est $0$ dans $\B_2$ si et seulement si $a=0$ ou $i\in\N$ et $a\in t\B_2$.
On en d\'eduit que
$H^0({U_0},t^{-1}{\mathbb B}_2(B_\infty^{-,\times}))=\O(B^{-,\times})=H^0({U_0},\O( B_\infty^{-,\times}))$ 
ce qui fournit une injection
$t^{-1}\O(B^{-,\times})=H^0({U_0},t^{-1}\O( B_\infty^{-,\times}))
\hookrightarrow H^1({U_0},\O( B_\infty^{-,\times}))$.

Si $x=t^{-1}\sum_{n\in\N}a_n\,q^n\in t^{-1}\O(B^{-,\times})$,
si $\tilde a_n\in\bdr^+/t^2$ est un rel\`evement de $a_n$, 
alors l'image de $x$ dans $H^1({U_0},\O( B_\infty^{-,\times}))$ est repr\'esent\'ee par le $1$-cocycle
$\gamma_u\mapsto c_u=\theta((\gamma_u-1)\cdot \tilde x)$, 
avec $\tilde x=t^{-1}\sum_{n\in\N}\tilde a_n \tilde q^n$.
Comme
$\theta(t^{-1}([\epsilon^n]-1))=n$, on obtient
$c_{1}=\sum_n n\,a_n\, q^n=t\,\partial x$.

Le r\'esultat s'en d\'eduit.
\end{proof}

\begin{rema}\phantomsection\label{luepan3}
{\rm (i)}
En reprenant les arguments du \S\,\ref{qq13} et en utilisant le fait
que $\gamma-1:\bdr^+\{\tilde q\}_0\to t\bdr^+\{\tilde q\}_0$ est un isomorphisme
(d'inverse $\sum a_n\,t\tilde q^n\mapsto\sum a_n\tfrac{t}{[\epsilon]^n-1}\tilde q^n$),
on voit que l'application naturelle 
$$\theta:H^1({U_0},\Bdr^+(B_\infty^{-,\times}))\to
H^1({U_0},\O(B_\infty^{-,\times}))$$ est un isomorphisme.  On en d\'eduit un diagramme commutatif
(les fl\`eches verticales sont obtenues en envoyant
les fonctions constantes dans les fonctions polynomiales 
et la fl\`eche de droite est induite par l'inclusion $t^{-1}C\hookrightarrow \bdr^-$):

{\Small
$$\xymatrix@C=5mm@R=3mm{
H^1_{\proet}(B^{-,\times},\Z_p)\ar[dd]\ar[r]^-{\iota}& H^1_{\proet}(B^{-,\times},\widehat\O)
&H^1({U_0},\O( B_\infty^{-,\times}))\ar[l]_-{\sim}\ar[r]^-{\log_B^0}_-{\sim} 
& t^{-1}C\{\tilde q\}_0\ar[dd]\\
&&H^1({U_0},\Bdr^+( B_\infty^{-,\times}))\ar[u]_-{\wr}\ar[d]\\
H^1_{\proet}(B^{-,\times},{\rm Alg})\ar[r]^-{\iota}& H^1_{\proet}(B^{-,\times},\Bdr^+\otimes{\rm Alg})
&H^1({U_0},\Bdr^+( B_\infty^{-,\times})\otimes{\rm Alg})\ar[l]_-{\sim}
\ar[r]^-{\log_B}_-{\sim}\ar[r] & \bdr^-\{\tilde q\}_0}
$$
}

{\rm (ii)} On dispose d'un second diagramme commutatif:
$$\xymatrix@C=4mm@R=4mm{
H^1_{\proet}(B^{-,\times},\Z_p)\ar[d]\ar[r]^-{\iota}& H^1_{\proet}(B_\infty^{-,\times},\Z_p)^{U_0}\ar[r]^-{\iota_{\tA}}\ar[d]
&(\tA^-[[\tilde q]]\boxtimes\Z_p^\dual)^{U_0}\ar[r]^-{\kappa_{\rm dR}}\ar[d]
& t^{-1}C\{\tilde q\}_0\ar[d]\\
H^1_{\proet}(B^{-,\times},{\rm Alg})\ar[r]^-{\iota}& 
H^1_{\proet}(B_\infty^{-,\times},\Z_p)^{{U_0}{\text{-fini}}}\ar[r]^-{\iota_{\tA}}
&(\tA^-[[\tilde q]]\boxtimes\Z_p^\dual)^{{U_0}{\text{-fini}}}\ar[r]^-{\kappa_{\rm dR}}
& \bdr^-\{\tilde q\}_0}
$$
Il r\'esulte du th.\,\ref{bato22} 
que les fl\`eches $H^1_{\proet}(B^{-,\times},\Z_p)\to t^{-1}C\{\tilde q\}_0$ que l'on
d\'eduit des deux diagrammes ci-dessus co\"{\i}ncident.
\end{rema}

}

\section{Un mod\`ele de Kirillov pour la cohomologie compl\'et\'ee}\label{modkir}
Dans ce chapitre, on d\'efinit un mod\`ele de Kirillov pour
$H^1_{\rm proet}(\widehat X(0)_C^\times,\O_L)$.
L'injectivit\'e de ce mod\`ele est \'etudi\'ee dans les chap.\,\ref{rCDN0} et~\ref{IG0},
et les cons\'equences
de son injectivit\'e au chap.\,\ref{Ki114}:
compatibilit\'e entre les correspondances
de Langlands locales $p$-adique et classique (th.\,\ref{Ki119}), 
conjecture de Fontaine-Mazur (th.\,\ref{Ki121}),
repr\'esentations ordinaires (prop.\,\ref{Ki120}).
\Subsection{Cohomologie compl\'et\'ee et alg\`ebres de Hecke}\label{EUL8}
\subsubsection{La tour compl\'et\'ee}\label{qq1.6}
Si $(N,p)=1$, on \index{X3@\xxxo}note $X(Np^\infty)$ la limite projective des $X(Np^k)$,
et $\widehat X(Np^\infty)$ sa compl\'et\'ee (qui est une courbe perfecto\"{\i}de~\cite{Sz2,wein,JE1}).

On note $X(0)$ la limite projective des $X(N)$, $\widehat{X}^{(p)}(0)$
la limite projective des $\widehat X(Np^\infty)$, et $\widehat X(0)$ la compl\'et\'ee
de $X(0)$ 
($\widehat{X}^{(p)}(0)$ est seulement compl\'et\'ee le long de la tour de niveau $p^\infty$).

On rajoute un $C$ en indice (i.e.~$X(N)_C$, $\widehat{X}^{(p)}(0)_C$, etc.) pour d\'enoter
l'extension des scalaires \`a $C$. Il y a une subtilit\'e dans cette op\'eration dans le
cas des tours compl\'et\'ees (cf.~(ii) de la rem.\,\ref{luepan4} ci-dessous).
\begin{rema}\phantomsection\label{luepan4}
{\rm (i)}
Soit $F_\infty=\widehat{\Q_p(\bmu_{p^\infty})}$. 
Alors ${\rm Aut}(F_\infty/\Q_p)=\Z_p^\dual$,
o\`u $a\in\Z_p^\dual$ agit par $\sigma_a$ avec $\sigma_a(\zeta)=\zeta^a$, si $\zeta\in\bmu_{p^\infty}$.
On dispose d'une application naturelle ${\cal F}:C\wotimes F_\infty\to{\cal C}(\Z_p^\dual,C)$, envoyant
$x\otimes y$ sur $\phi_{x\otimes y}$ d\'efinie par $\phi_{x\otimes y}(a)=x\,\sigma_a(y)$.
(La m\^eme formule induit un isomorphisme
$C\otimes \Q_p(\bmu_{p^\infty})\overset{\sim}{\to}{\rm LC}(\Z_p^\dual,C)$.)

Fresnel et de Mathan~\cite{FM1,FM2,FM3}
 ont prouv\'e que ${\cal F}$ est surjective\footnote{Une preuve alternative a r\'ecemment
\'et\'e obtenue par Ophir~\cite{ophir}.} 
et que le noyau\footnote{Ce noyau n'est pas 0; cela peut se voir en regardant l'espace des vecteurs
$\eta$-isotypiques, si $\eta:\Z_p^\dual\to C^\dual$ est un caract\`ere continu: 
dans ${\cal C}(\Z_p^\dual,C)$,
cet espace est de dimension~$1$ sur $C$, engendr\'e par $\eta$, et dans
$C\wotimes F_\infty$ cet espace est $0$ si $\eta$ n'est pas d'ordre fini comme on le voit
en utilisant les traces normalis\'ees $T_n:F_\infty\to\Q_p(\bmu_{p^n})$.}
de ${\cal F}$
est l'adh\'erence des \'el\'ements nilpotents: on a donc envie de dire que 
${\rm Spa}(C\wotimes F_\infty)$ est un \'epaississement infinit\'esimal de $\Z_p^\dual$.

{\rm (ii)}
Le (i) impose de modifier l'extension des scalaires \`a $C$:
il faut quotienter par l'adh\'erence des nilpotents,
ce qui transforme $C\wotimes \Q_p(\mu_{Np^\infty})$ en ${\cal C}((\Z/Np^\infty)^\dual,C)$
(i.e.~l'espace des composantes connexes de $\widehat X(Np^\infty)_C$ est 
$(\Z/Np^\infty)^\dual$ et pas un \'epaississement infinit\'esimal; celui de
$\widehat X(0)_C$ est $\cZ^\dual$).
\end{rema}

\subsubsection{Le voisinage de la pointe~$\infty$}\label{Ki101}
L'espace $\widehat X(0)$ est l'espace de modules
des triplets $(E,e_1^\dual,e_2^\dual)$, o\`u $E$ est une courbe elliptique et
$e_1^\dual,e_2^\dual$ est une base sur $\cZ$ du module de Tate ad\'elique $T_{\cZ}E$ de $E$.
L'action de $\matrice{a}{b}{c}{d}\in\GG(\cZ)$ sur $\widehat X(0)$ envoie
$(E,e_1^\dual,e_2^\dual)$ sur $(E,ae_1^\dual+ce_2^\dual, be_1^\dual+de_2^\dual)$.

On note $\widehat Z(0)\subset \widehat X(0)$ la composante connexe arithm\'etique du lieu multiplicatif
param\^etrant les $(E,e_1^\dual,e_2^\dual)$, o\`u $E\cong {\bf G}_m/q^\Z$, 
$e_1^\dual$ est un g\'en\'erateur de $T_{\cZ}\widehat{\bf G}_m$ et
$e_2^\dual=(q^{1/N})_N$ modulo $T_{\cZ}\widehat{\bf G}_m$.
Autrement dit, $\widehat Z(0)$ est le voisinage (arithm\'etique) usuel de la pointe $\infty$.

On d\'efinit de m\^eme $Z(Np^\infty)\subset X(Np^\infty)$, 
$\widehat Z(Np^\infty)\subset \widehat X(Np^\infty)$, etc.
Alors $\O^+(\widehat Z(0))$ est le compl\'et\'e $(p,q)$-adique 
$\widehat{\Z[\bmu]}[[q^{\Q_+}]]$ de $\Z[\zeta_N,q^{1/N},\,N\geq 1]$
tandis que $\O^+(\widehat Z^{(p)}(0))$ est la limite inductive pour $(N,p)=1$
des $\widehat{\Z[\bmu_{Np^\infty}]}[[q^{1/Np^\infty}]]$.

L'espace $\widehat Z(0)\subset \widehat X(0)$ est stable par $\PP(\Ai)\subset\GG(\Ai)$;
on a, si $i\in\Q_+$ et $\zeta\in\bmu$,
\begin{align*}
&\matrice{u}{0}{0}{1}\cdot\zeta=\zeta^u, \quad\matrice{u}{0}{0}{1}\cdot q^{i}=q^{i},
\quad{\text{si $u\in\cZ^\dual$,}}\\
&\matrice{n}{0}{0}{1}\cdot\zeta=\zeta, \quad\matrice{n}{0}{0}{1}\cdot q^{i}=q^{i/n},
\quad{\text{si $n\in\Q_+^\dual$,}}\\
&\matrice{1}{b}{0}{1}\cdot\zeta=\zeta, \quad \matrice{1}{b}{0}{1}\cdot q^{i}
={\bf e}_{\A}(bi)q^{i},
\quad{\text{si $b\in\Ai$,}}
\end{align*}

\begin{rema}\phantomsection\label{Ki102}
{\rm (i)}
Si $\lambda\in \O^+(\widehat Z(0))$, soit ${\cal K}_\lambda:\Aidu\to \Z_p\wotimes\Z[\bmu]$
la fonction envoyant $x$ sur le terme de degr\'e $1$ de $\matrice{x}{0}{0}{1}\cdot\lambda$.
Si $\lambda=\sum a_iq^i$, 
alors  $\lambda-a_0=\sum_{i>0}{\cal K}_\lambda(i)q^i$; autrement dit
les donn\'ees de $\lambda$ et ${\cal K}_\lambda$ sont presque \'equivalentes.

{\rm (ii)}
Si $x=nu$, avec $u\in\cZ^\dual$ et $n\in\Q_+^\dual$, alors
$\matrice{x}{0}{0}{1}\cdot\lambda=\sum\sigma_u(a_i)q^{i/n}$ et donc
${\cal K}_\lambda(nu)=\sigma_u(a_n)$.
Il s'ensuit que ${\cal K}_\lambda(ux)=\sigma_u({\cal K}_\lambda(x))$ pour tous
$u\in\cZ^\dual$ et $x\in\Aidu$, et donc que
$${\cal K}_\lambda\in {\cal C}(\Aidu, \Z_p\wotimes\Z[\bmu])^{\cZ^\dual}$$
o\`u on fait
agir $u\in\cZ^\dual$ par $(u\cdot\phi)(x)=\sigma_u(\phi(u^{-1}x))$, i.e. par
$\sigma_u\otimes\matrice{u^{-1}}{0}{0}{1}$ si on factorise
${\cal C}(\Aidu, \Z_p\wotimes\Z[\bmu])$ sous la forme
$\Z[\bmu]\wotimes{\cal C}(\Aidu, \Z_p)$.

{\rm (iii)}
Comme $\matrice{x}{0}{0}{1}\matrice{a}{b}{0}{1}=\matrice{1}{bx}{0}{1}\matrice{ax}{0}{0}{1}$,
le terme de degr\'e $1$ de $\matrice{x}{0}{0}{1}\matrice{a}{b}{0}{1}\cdot\lambda$
est ${\bf e}_{\A}(bx)$ fois celui de $\matrice{ax}{0}{0}{1}\cdot\lambda$
(notons que ${\bf e}_\A(bx)\in\Z[\bmu]$),
ce qui se traduit par
$${\cal K}_{\matrice{a}{b}{0}{1}\cdot\lambda}={\bf e}_{\A}(bx){\cal K}_\lambda(ax)$$
Autrement dit, {\it $\lambda\mapsto {\cal K}_\lambda$ est un mod\`ele de Kirillov}
\begin{equation}\label{Ki130}
{\cal K}:\O^+(\widehat Z(0))_0\hookrightarrow
{\cal C}(\Aidu, \Z_p\wotimes\Z[\bmu])^{\cZ^\dual}
\end{equation}
o\`u le $0$ en indice indique les s\'eries (en $q^{\Q_+}$) de terme constant nul.

(iv) On a $\partial(\sum_ia_i q^i)=\sum_iia_iq^i$. On en d\'eduit que
${\cal K}_{\partial\lambda}(nu)=n\sigma_u(a_n)=|nu|_\A^{-1}{\cal K}_\lambda(nu)$, et
donc ${\cal K}_{\partial\lambda}=|\ |_\A^{-1}{\cal K}_\lambda$.
\end{rema}

\subsubsection{La cohomologie de la tour compl\'et\'ee}\label{Ki103}
On rajoute un $\times$ en exposant (e.g. $X(N)_C^\times$, $\widehat Z(Np^\infty)^\times_C$,...)
pour indiquer que l'on met une structure logarithmique
aux pointes (dans le cas des $X$) ou de $0$ (dans le cas des $Z$).
Cela a pour effet de permettre les rev\^etements profinis ramifi\'es en les points
o\`u il y a une structure logarithmique dans la d\'efinition de la cohomologie (pro)\'etale.

Alors $H^1_{\eet}(\widehat X(0)_C^\times,\Z_p)$ est la cohomologie compl\'et\'ee de la tour
(en tant que repr\'esentation de $\GG(\A^{]\infty[})$, c'est $H^1(\GG(\Q),{\cal C}(\GG(\A),\Z_p))$).
Le groupe $H^1_{\eet}(\widehat X(Np^\infty)^\times_C,\Z_p)$ est le sous-groupe 
$H^1(\GG(\Q),{\cal C}(\GG(\A)/\wGamma(Np^\infty),\Z_p))$ des points fixes
par $\Gamma(Np^\infty)$.
Par comparaison, $H^1_{\eet}(X(0)_C^\times,\Z_p)$ est la 
limite inductive des $H^1_{{\eet}}(X(N)_C,\Z_p)$
(en tant que repr\'esentation de $\GG(\A^{]\infty[})$, c'est $H^1(\GG(\Q),{\rm LC}(\GG(\A),\Z_p))$).

\subsubsection{Alg\`ebres de Hecke}\label{Ki104}
Soit $\TT(N)$ la $\O_L$-alg\`ebre de Hecke de niveau $N$ (engendr\'ee
par les $T_\ell$ et les $S_\ell$, pour $\ell\nmid pN$) et $\TT=\varprojlim_N\TT(N)$.
Alors $\TT$ est une alg\`ebre commutative semi-locale, r\'eduite, non noeth\'erienne
(mais limite projective d'alg\`ebres noeth\'eriennes),
qui agit fid\`element sur $H^1(\GG(\Q),{\cal C}(\GG(\A),\O_L))$.
On peut donner une description de $\TT$ via l'isomorphisme
$H^1(\GG(\Q),{\cal C}(\GG(\A),\O_L))\cong H^1_{{\eet}}(X(0)^\times_C,\O_L)$ et l'action de Galois
sur ce dernier module:

Si $\sigma\in G_\Q$, notons $s_\TT(\sigma), 
s_\TT'(\sigma), t_\TT(\sigma)\in{\rm End}(H^1_{{\eet}}(X(0)^\times_C,\O_L))$ 
les op\'erateurs
$$s_\TT(\sigma):=\matrice{\cy(\sigma)^{-1}}{0}{0}{\cy(\sigma)^{-1}},
\quad s'_\TT(\sigma):=\cyp (\sigma)s_\TT(\sigma),\quad
t_\TT(\sigma):=\sigma+s'_\TT(\sigma)\sigma^{-1}$$
(Dans la d\'efinition de $s_\TT(\sigma)$,
 le membre de droite appartient \`a $\GG(\cZ)$ et agit \`a travers l'action de $\GG(\A)$
sur $H^1(\GG(\Q),{\cal C}(\GG(\A),\O_L))$.) 
\begin{prop}\phantomsection\label{Ki105}
{\rm (i)}
$\TT$ est la sous-$\O_L$-alg\`ebre de ${\rm End}(H^1_{{\eet}}(X(0)^\times_C,\O_L))$ 
topologiquement engendr\'ee par les images
des $s_\TT(\sigma)$ et $t_\TT(\sigma)$, pour $\sigma\in G_\Q$.

{\rm (ii)} $\TT$ commute aux actions de $\GG(\A)$ et de $G_\Q$.
\end{prop}
\begin{proof}
Si $\tilde\sigma_\ell\in G_\Q$ est un frobenius arithm\'etique en $\ell$ (i.e.~un rel\`evement
de $\sigma_\ell\in G_\Q^{\rm ab}$) et si $(N,\ell)=1$, les relations d'Eichler-Shimura
se traduisent par les relations
$$s_\TT(\tilde\sigma_\ell)=S_\ell\ {\rm et}\ t_\TT(\tilde\sigma_\ell)=T_\ell
\quad{\text{dans ${\rm End}(H^1_{{\eet}}(X(N)^\times_C,\O_L))$}}$$
Par densit\'e des frobenius, les images de $s_\TT(\sigma),t_\TT(\sigma)$
dans ${\rm End}(H^1_{{\eet}}(X(N)^\times_C,\O_L))$ appartiennent \`a $\TT(N)$, pour tout $N$,
et $\TT(N)$ est la sous-$\O_L$-alg\`ebre de ${\rm End}(H^1_{{\eet}}(X(N)^\times_C,\O_L))$
engendr\'ee par les images des $s_\TT(\sigma)$ et $t_\TT(\sigma)$, pour $\sigma\in G_\Q$.
Le (i) et la commutation \`a l'action de $G_\Q$ s'en d\'eduisent par passage \`a la limite.

La commutation \`a l'action de $\GG(\A)$  se d\'eduit du (i) en remarquant que
$s_\TT(\sigma)$ commute \`a l'action de $\GG(\A)$
puisqu'il est d\'efini comme l'action d'un \'el\'ement du centre de $\GG(\A)$,
et que $t_\TT(\sigma)$ aussi puisque l'action de $G_\Q$ commute \`a celle de $\GG(\A)$.
\end{proof}

\begin{rema}\phantomsection\label{Ki106.1}
(i) D'apr\`es le cor.\,\ref{cen3}, 
le centre de $\GG(\A)$ (identifi\'e \`a $\A^\dual$) agit sur $H^1_{{\eet}}(X(0)^\times_C,\O_L)$ \`a travers 
$\A^\dual/\R_+^\dual\Q^\dual\cong\cZ^\dual$. Par construction, si $u\in\cZ^\dual$,
l'op\'erateur
$\matrice{u}{0}{0}{u}$ d\'efinit un \'el\'ement de $\TT$ si $u\in\cZ^\dual$.
Il en r\'esulte que le centre de $\GG(\A)$ agit par un caract\`ere $\delta_\TT$ \`a valeurs
dans $\TT^\dual$.

(ii) Par construction, $s_\TT(\sigma)\in\TT^\dual$ si $\sigma\in G_\Q$,
et $s_\TT:G_\Q\to \TT^\dual$ est le caract\`ere qui correspond \`a $\delta_\TT$
par la th\'eorie du corps de classes (cf.~\no\ref{prelim8}), 
i.e.~$s_\TT(\sigma)=\delta_\TT(\cy(\sigma))$ pour tout $\sigma\in G_\Q$.
Le caract\`ere $s_\TT':G_\Q\to \TT^\dual$ correspond \`a $x\mapsto (x_p|x_p|_p)\delta_\TT(x)$.

(iii) $\sigma\mapsto t_\TT(\sigma)$ est un pseudo-caract\`ere de $G_\Q$, de dimension~$2$,
 \`a valeurs dans $\TT$, de d\'eterminant $\sigma\mapsto s'_\TT(\sigma)$. Par d\'efinition de
$t_\TT(\sigma)$, on a
$$\sigma^2-t_\TT(\sigma)\sigma+s'_\TT(\sigma)=0\quad
 {\text {dans ${\rm End}(H^1_{{\eet}}(X(0)_C^\times,\O_L))$}}.$$
\end{rema}

\begin{rema}\phantomsection\label{Ki106}
Le pseudo-caract\`ere $t_\TT$ est impair ($t_\TT(\sigma)=0$ si $\sigma$ est une conjugaison complexe) et
il ressort de la conjecture de Serre prouv\'ee par Khare et Wintenberger et des th\'eor\`emes
${\rm big}\,R={\rm big}\,T$ que (au moins en localisant en un id\'eal maximal de $\TT$
g\'en\'erique) c'est le pseudo-caract\`ere universel avec les propri\'et\'es ci-dessus.

De mani\`ere \'equivalente, si $t^{\rm univ}:G_\Q\to R^{\rm univ}$ 
est le pseudo-caract\`ere de dimension~$2$, impair, universel,
il existe un unique morphisme $\alpha:R^{\rm univ}\to \TT$ 
tel que $t_\TT=\alpha\circ t^{\rm univ}$, et $\alpha$
est un isomorphisme (apr\`es localisation en un id\'eal maximal g\'en\'erique).
\end{rema}

{
\Subsection{Mod\`ele de Kirillov des fonctions}\label{Ki107}

Fixons $\iota:\Z[\bmu]\to C$.
\begin{rema}\phantomsection\label{Kiri2}
D'apr\`es la rem.\,\ref{luepan4},
on a une surjection $C\wotimes\Z[\bmu]\to {\cal C}(\cZ^\dual,C)$,
envoyant $\alpha\otimes v$ sur $\phi_{\alpha\otimes v}(x)=\alpha\iota(\sigma_x(v))$.

(i) Si on fait agir $u\in\cZ^\dual$ par $1\otimes\sigma_u$ sur
$C\otimes \Z[\bmu]$, l'action qui s'en d\'eduit 
sur ${\cal C}(\cZ^\dual,C)$ est $\big([u]\cdot\phi\big)(x)=
\phi(xu)$.

(ii)
Si on fait agir $\sigma\in G_{\Q_p}$ par $\sigma\otimes 1$
sur $C\otimes \Z[\bmu]$, l'action qui s'en d\'eduit 
sur ${\cal C}(\cZ^\dual,C)$ est $\big(\sigma\cdot\phi\big)(x)=
\sigma(\phi(\cy(\sigma)^{-1}x))$;
autrement dit c'est $\sigma\otimes [\cy(\sigma)]^{-1}$ dans la factorisation
${\cal C}(\cZ^\dual,C)=C\wotimes {\cal C}(\cZ^\dual,\Q_p)$.
\end{rema}

On peut \'etendre ${\cal K}$ (cf.~\eqref{Ki130}) par $C$-lin\'earit\'e
en une application $\PP(\Ai)$-\'equivariante 
$$C{\otimes}{\cal K}:C\wotimes\O^+(\widehat Z(0))_0\to
{\cal C}(\Aidu, C\wotimes\Z[\bmu])^{\cZ^\dual}$$
On la rend $\PP(\Ai)\times G_{\Q_p}$-\'equivariante en faisant agir
$G_{\Q_p}$ sur $C$. En utilisant $\iota:\Z[\bmu]\to C$ comme dans
la rem.\,\ref{Kiri2}, on en d\'eduit un diagramme commutatif
$$\xymatrix@C=6mm@R=4mm{
C\wotimes\O^+(\widehat Z(0))_0\ar[r]\ar[d]
&{\cal C}(\Aidu, C\wotimes\Z[\bmu])^{\cZ^\dual}\ar[d]\\
\O^b(\widehat Z(0)_C)_0\ar[r]\ar[d]^-{\wr}
&{\cal C}(\Aidu\times\cZ^\dual, C)^{\cZ^\dual}
\ar[r]^-{\sim} & {\cal C}(\Aidu,C)\\
\O^b(\widehat Z(0)^0_C\times\cZ^\dual)_0\ar[d]^-{\wr}\\
C\otimes_{\O_C}{\cal C}(\cZ^\dual,\O_C[[q^{\Q_+}]]_0)\ar[rruu]
}$$
$\PP(\Ai)\times G_{\Q_p}$-\'equivariant, l'action
de $\PP(\Ai)$ sur ${\cal C}(\Aidu,C)$ \'etant donn\'ee par:
$$\big(\matrice{a}{b}{0}{1}\cdot\phi\big)(x)=\iota({\bf e}_\A(bx))\phi(ax)$$

(On passe de la premi\`ere ligne \`a la seconde en quotientant par l'adh\'erence
des nilpotents, cf.~rem.\,\ref{luepan4}; 
l'invariance par $\cZ^\dual$ pour $\phi^{(2)}\in{\cal C}(\Aidu\times\cZ^\dual, C)$
 se traduit par $\phi^{(2)}(xu^{-1},yu)=\phi^{(2)}(x,y)$
pour tous $x\in\Aidu$ et $u,y\in\cZ^\dual$,
et l'isomorphisme ${\cal C}(\Aidu\times\cZ^\dual,C)^{\cZ^\dual}\cong
{\cal C}(\Aidu,C)$ est $\phi^{(2)}(x,y)\mapsto\phi^{(1)}(x):=\phi^{(2)}(x,1)$,
l'isomorphisme inverse \'etant
$\phi^{(2)}(x,y)=\phi^{(1)}(xy,1)$.
La fl\`eche ${\cal C}(\cZ^\dual,C[[q^{\Q_+}]]_0)\to {\cal C}(\Aidu,C)$
envoie $\sum_i\phi_i q^i$, o\`u $\phi_i\in{\cal C}(\cZ^\dual,C)$, sur $\phi:\Aidu\to C$
d\'efinie par $\phi(ui)=\phi_i(u)$ si $i\in\Q_+^\dual$ et $u\in\cZ^\dual$.)

En r\'esum\'e, on a le r\'esultat suivant:

\begin{prop}\phantomsection\label{Ki108}
L'application $C{\otimes}{\cal K}:\O^b(\widehat Z(0)_C)_0\to
{\cal C}(\Aidu,C)$ est un mod\`ele de Kirillov $G_{\Q_p}$-\'equivariant.
\end{prop} 

\Subsection{Mod\`ele de Kirillov de la cohomologie}\label{Ki109}

\subsubsection{Pour $H^1_{\proet}(\widehat Z(0)_C,\Z_p)$}\label{Ki110}
Soient 
$$\tA_{\Q_p}^+:=W(\Z_p(\bmu_{p^\infty})^\flat)
\quad{\rm et}\quad \tA_{\Q_p}:=W(\Q_p(\bmu_{p^\infty})^\flat)$$
Soit aussi $\tilde{\bf e}_\A$ le caract\`ere
$$\tilde{\bf e}_\A:
\Ai\to (\Z_p[\bmu^{]p[}]\wotimes\tA^+_{\Q_p})^\dual,\quad
\tilde{\bf e}_\A(x)={\bf e}^{]p[}(x^{]p[})\otimes [\epsilon^{x_p}]$$
Notons 
$\widetilde\O^+(\widehat Z(0))$ l'anneau $W(\O^+(\widehat Z(0))^\flat)$.
Alors $\widetilde\O^+(\widehat Z(0))$ est un anneau de s\'eries
en $\tilde q^{\Q_+}$ \`a coefficients dans  $\Z_p[\bmu^{]p[}]\wotimes\tA^+_{\Q_p}$, 
et on rajoute un $0$ en indice pour indiquer
le sous-espace des s\'eries de terme constant $0$.
L'action de $\PP(\Ai)$ sur $\O^+(\widehat Z(0))$ en induit une
sur $\widetilde\O^+(\widehat Z(0))$; de mani\`ere explicite:

\quad $\bullet$ $\matrice{n}{b}{0}{1}\cdot\big(\sum a_i \tilde q^i\big)=
\sum a_i\tilde{\bf e}_\A(bi/n)\tilde q^{i/n}$, si $n\in\Q_+^\dual$ et $b\in\Ai$, 

\quad $\bullet$ $\matrice{u}{0}{0}{1}\cdot\big(\sum a_i \tilde q^i\big)=
\sum \sigma_u(a_i) \tilde q^i$, si $u\in\cZ^\dual$.

On dispose de plus  d'une action de $\varphi$, avec:

\quad $\bullet$ $\varphi\cdot\big(\sum a_i \tilde q^i\big)=\sum \varphi(a_i) \tilde q^{pi}$.

\smallskip
Si $\widetilde\O(\widehat Z(0)_C):=W(\O(\widehat Z(0)_C)^\flat)$
et $\widetilde\O(B_{\rm Kum}^-):=W(\O(B_{\rm Kum}^-)^\flat)$,
alors $\widetilde\O(B_{\rm Kum}^-)$ est un anneau de s\'eries
en $\tilde q^{\Q_+}$ \`a coefficients dans $\tA$, muni
d'actions de $\PP(\Ai)$ et $\varphi$ donn\'ees par les formules ci-dessus
\`a part pour le fait qu'il faut remplacer 
$\tilde{\bf e}_\A:\Ai\to \Z_p[\bmu^{]p[}]\wotimes\tA^+_{\Q_p}$
par $\iota\circ\tilde{\bf e}_\A:\Ai\to\tA^+$ (le plongement
$\iota:\Z[\bmu]\to\O_C$ induit un plongement 
$\iota:\Z[\bmu^{]p[}]\to W(\overline{\bf F}_p)\subset\tA^+$). 
Alors
$$\widetilde\O(\widehat Z(0)_C)
\cong{\cal C}(\cZ^\dual,\widetilde\O(B^-_{\rm Kum}))$$
comme $(\PP(\Ai)\times G_{\Q_p})$-module et cette identification commute
\`a l'action de $\varphi$ (l'action de $G_{\Q_p}$ sur $\widetilde\O(\widehat Z(0)_C)$
est induite par celle sur $\O(\widehat Z(0)_C)$ qui, elle-m\^eme, est induite
par celle sur $C$ si on voit $\O(\widehat Z(0)_C)$ comme un quotient de $C\wotimes\O(\widehat Z(0))$).
Les m\^emes formules que ci-dessus fournissent un mod\`ele de Kirillov
$\varphi\times G_{\Q_p}$-\'equivariant
$${\cal K}:\widetilde\O(\widehat Z(0)_C)\to {\cal C}(\Aidu,\tA)$$
les actions de $\PP(\Ai)$, $\varphi$ et $G_{\Q_p}$ sur le membre de droite \'etant
donn\'ees par
$$\big(\matrice{a}{b}{0}{1}\cdot\phi\big)(x)=\iota(\tilde{\bf e}_A(bx))\,\phi(ax),
\quad (\sigma\cdot\phi)(x)=\sigma(\phi(\chi(\sigma)^{-1}x)),
\quad (\varphi\cdot\phi)(x)=\varphi(\phi(p^{-1}x))$$

On a un diagramme commutatif
$$\xymatrix@R=4mm@C=5mm{
H^1_{\proet}(\widehat Z(0)_C,\Z_p)\ar[d]^-{\wr}\ar[r]^-{\sim}
&\widetilde\O(\widehat Z(0)_C)/(\varphi-1)\ar[d]^-{\wr}\\
{\cal C}(\cZ^\dual, H^1_{\proet}(B^-_{\rm Kum},\Z_p))\ar[r]^-{\sim}
&{\cal C}(\cZ^\dual,\widetilde\O(B^-_{\rm Kum})/(\varphi-1))\ar[r]^{\iota_{\tA}}
&{\cal C}(\cZ^\dual,\tA^-[[\tilde q^{\Q_+}]]\boxtimes\Z_p^\dual)
}$$
On en d\'eduit une application
$${\cal K}_H: H^1_{\proet}(\widehat Z(0)_C,\Z_p)
\to {\cal C}(\Aidu,\tA^-)$$
en composant avec l'application
envoyant $\lambda\in {\cal C}(\cZ^\dual,\tA^-[[\tilde q^{\Q_+}]]\boxtimes\Z_p^\dual)$
sur ${\cal K}_\lambda$, o\`u ${\cal K}_\lambda(x)$ est donn\'ee,
si $x=p^knu$ (avec $u\in\cZ^\dual$,
$n\in\Q_+^\dual$, $v_p(n)=0$), par la formule
$${\cal K}_\lambda(x)=\varphi^k(\lambda_n(u)),\quad{\text{
si $\lambda(u,\tilde q)=\sum\nolimits_{i\in\Q_+^\dual\cap\Z_p^\dual}\lambda_i(u)\tilde q^i$.}}$$
(En particulier, ${\cal K}_\lambda(px)=\varphi({\cal K}_\lambda(x))$; i.e.~${\cal K}_\lambda$
est invariante par $\varphi$.)
\begin{prop}\phantomsection\label{Ki111}
L'application 
$${\cal K}_H:H^1_{\proet}(\widehat Z(0)_C,\Z_p)\to {\cal C}(\Aidu,\tA^-)^{\varphi=1}$$
ainsi d\'efinie est:

$\bullet$  $\PP(\Ai)$-\'equivariante si on munit
${\cal C}(\Aidu,\tA^-)$ de l'action donn\'ee par 
$$\big(\matrice{a}{b}{0}{1}\cdot\phi\big)(x)=\widetilde{\bf e}_\A(bx)\phi(ax)$$

$\bullet$
$G_{\Q_p}$ \'equivariante si on munit
${\cal C}(\Aidu,\tA^-)$ de 
$$(\sigma\cdot\phi)(x)=\sigma(\phi(\cy(\sigma)^{-1}x))$$
\end{prop}
\begin{proof}
Si $\lambda\in {\cal C}(\cZ^\dual,\tA^-[[\tilde q^{\Q_+}]]\boxtimes\Z_p^\dual)$,
alors
${\cal K}_\lambda(x)$ est le coefficient du terme de degr\'e~$1$
(en $\tilde q$) de $\big(\matrice{x}{0}{0}{1}\cdot\lambda\big)(1,\tilde q)$,
si on munit $\tA^-[[\tilde q^{\Q_+}]]\boxtimes\Z_p^\dual$ de l'action suivante
de $\matrice{p}{0}{0}{1}$ qui vient de l'identification 
avec $\widetilde\O(B_{\rm Kum}^-)/(\varphi-1)$
(sur lequel $\varphi$ agit trivialement)
$$\matrice{p}{0}{0}{1}\cdot \big(\sum a_i \tilde q^i\big)=
\sum a_i \tilde q^{i/p}=\varphi^{-1}\big(\sum \varphi(a_i) \tilde q^i\big)
\equiv \sum \varphi(a_i) \tilde q^i$$
Le r\'esultat s'en d\'eduit par des calculs sans myst\`ere.
\end{proof}
\begin{rema}\phantomsection\label{Ki111.1}
L'application $v\mapsto{\cal K}_{H,v}(1)$ de $H^1_{\proet}(\widehat Z(0)_C,\Z_p)$ dans $\tA^-$
est $G_{\Q_p}\times\matrice{p^\Z}{\Q_p}{0}{1}$-\'equivariante si on fait agir
$\sigma\in G_{\Q_p}$ par $\matrice{\epsilon_\A(\sigma)}{0}{0}{1}\cdot\sigma$ sur le membre de gauche
et $\matrice{p^k}{b}{0}{1}$ par $x\mapsto [\epsilon^b]\varphi^k(x)$ sur $\tA^-$.
\end{rema}

\subsubsection{Pour $H^1_{\proet}(\widehat X(0)^\times_C,\Z_p)$}\label{Ki112}
En composant ${\cal K}_H$ avec la restriction
$${\rm Res}:H^1_{\rm proet}(\widehat X(0)_C^\times,\O_L)
{\longrightarrow}H^1_{\proet}(\widehat Z(0)^\times_C,\O_L)$$
Cela fournit une fl\`eche $\PP(\Ai)\times G_{\Q_p}$-\'equivariante
$${\cal K}_H:H^1_{\rm proet}(\widehat X(0)_C^\times,\O_L)
\to {\cal C}(\Aidu,\O_L\otimes_{\Z_p}\tA^-)$$
On la promeut en une fl\`eche $\TT[\PP(\Ai)\times G_{\Q_p}]$-\'equivariante
$${\cal K}_H^\TT:H^1_{\rm proet}(\widehat X(0)_C^\times,\O_L)
\to {\cal C}(\Aidu,\check{\TT}\otimes_{\Z_p}\tA^-),\quad \big\langle {\cal K}_{H,v}^\TT(x),\lambda\big\rangle=
{\cal K}_{H,\lambda v}(x)$$
puis en une fl\`eche $\TT[\GG(\Ai)\times G_{\Q}]$-\'equivariante
\begin{align*}
{\cal K}_H^\GG:H^1_{\rm proet}(\widehat X(0)_C^\times,\O_L)\to 
{\rm Ind}_{\PP(\Ai)\times G_{\Q_p}}^{\GG(\Ai)\times G_{\Q}}
{\cal C}(\Aidu,\check{\TT}\otimes_{\Z_p}\tA^-),
\quad {\cal K}_{H, v}^\GG(g)={\cal K}^\TT_{H,g\cdot v}
\end{align*}
Une question fondamentale est la d\'etermination du noyau de ${\cal K}_H^\GG$.
Dans le chap.~\ref{Ki114} nous donnons quelques cons\'equences
de l'injectivit\'e de ${\cal K}_H^\GG$ sur des sous-espaces divers et vari\'es.
Dans les chap.~\ref{rCDN0} et~\ref{IG0} nous \'etablissons cette injectivit\'e
sur de gros sous-espaces de $H^1_{\rm proet}(\widehat X(0)_C^\times,\O_L)$.

\section{Injectivit\'e du mod\`ele de Kirillov: le cas non-eisenstein}\label{rCDN0}
Si ${\goth m}$ est un id\'eal maximal de $\TT$, on note $\TT_{\goth m}$ le localis\'e
de $\TT$ en ${\goth m}$.
Quitte \`a agrandir $L$ on peut supposer, ce que nous ferons, que $k_L=\TT/{\goth m}$.
On dit que ${\goth m}$ est {\it non-eisenstein} si 
la r\'eduction $\overline t_{\goth m}$
modulo ${\goth m}$ du pseudo-caract\`ere $t_\TT$ de la rem.\,\ref{Ki106.1}
est le caract\`ere
d'une repr\'esentation absolument irr\'eductible $\overline\rho_{\goth m}:G_\Q\to
{\rm GL}_2(k_L)$ de d\'eterminant $\overline s'_{\goth m}$ (r\'eduction modulo~${\goth m}$
du caract\`ere $s_\TT'$).
Il existe alors une repr\'esentation
$\rho_{\goth m}:G_\Q\to
{\rm GL}_2(\TT_{\goth m})$ dont la trace est le localis\'e $t_{\goth m}$ de $t_\TT$
(i.e.~le compos\'e de $t_\TT$ et de l'application naturelle $\TT\to\TT_{\goth m}$)
et le d\'eterminant est le localis\'e $s'_{\goth m}$ de $s_\TT'$.

\begin{theo}\phantomsection\label{cdn3}
Si ${\goth m}$ est non-eisenstein,
${\cal K}_H^\GG$ est une isom\'etrie sur son image dans chacun des cas suivants:

$\bullet$ $p\geq 3$ et la restriction de $\overline\rho_{\goth m}$ \`a $G_{\Q_p}$ n'est
pas de la forme $\chi\oplus\chi$.

$\bullet$ $p=2$ et la restriction de $\overline\rho_{\goth m}$ \`a $G_{\Q_p}$ n'est
ni irr\'eductible ni de la forme $\chi\oplus\chi$.
\end{theo}
\begin{proof}
La preuve de ce th\'eor\`eme va demander un peu de pr\'eparation; elle est repouss\'ee
au \S\,\ref{cdn3.1}. Indiquons juste quels ingr\'edients entrent dans la preuve.
Il s'agit de prouver que ${\cal K}_H^\GG$ est une injection modulo ${\goth m}_L$ ou,
de mani\`ere \'equivalente, que son noyau est nul.
Comme ce noyau est stable par $\GG(\Q_p)$, il contient (s'il est non nul)
des \'el\'ements fixes par le prop-$p$-iwahori, et donc
des classes d\'efinies sur
une courbe modulaire de niveau petit en $p$ (et donc avec peu de composantes irr\'eductibles
dans la fibre sp\'eciale d'un mod\`ele semi-stable bien choisi). 
Maintenant, par d\'efinition de ${\cal K}_H^\GG$, ces classes sont nulles dans le tube
des pointes.
Ceci conduit \`a \'etudier, si $Y$ est une courbe
propre, 
le sous-groupe de $H^1_{\eet}(Y,{\bf F}_p)$ des classes nulles dans le tube de suffisamment
de points.  On commence par prouver que ces classes sont tu\'ees par l'application~${\rm dlog}$
(lemme~\ref{cdn10}), puis on prouve que le noyau de ${\rm dlog}$ est petit (prop.\,\ref{cdn11}).
Dans le cas qui nous int\'eresse, cette petitesse implique que ce noyau ne contient pas
de sous-espace stable par $G_\Q$ (\`a part dans certains
cas sp\'eciaux) et comme ${\cal K}_H^\GG$ est, 
par construction, $G_\Q$-\'equivariante, cela montre que son noyau est nul.
\end{proof}

\Subsection{Cohomologie des courbes semi-stables}\label{rCDN2}
Soit $Y$ une courbe propre et lisse d\'efinie sur $C$ et soit $Y_S$ un mod\`ele semi-stable
sur $\O_{C}$ (un tel mod\`ele est \'equivalent \`a la donn\'ee d'une triangulation
$S$ de $Y$ vue comme espace de Berkovich, d'o\`u la notation).
On note $\Omega^1(Y_S)$ l'espace des sections globales du faisceau des formes diff\'erentielles
logarithmiques; c'est un r\'eseau de $\Omega^1(Y)$ qui ne d\'epend pas de $S$.
\subsubsection{L'application ${\rm dlog}$}
On dispose d'une application naturelle\footnote{Philosophiquement, c'est
${\bf F}_p(1)$ plut\^ot que ${\bf F}_p$ mais nous sommes sur un corps alg\'ebriquement clos.}
$${\rm dlog}:H^1_{\rm et}(Y,{\bf F}_p)\to \Omega^1(Y_S)/p$$ 
qui peut se d\'efinir de la mani\`ere suivante\footnote{Une autre d\'efinition possible passe
par la comparaison avec la cohomologie syntomique~\cite{CDN2}.}:
si $x\in H^1_{\rm et}(Y,{\bf F}_p)$, il existe 
$f\in C(Y)^\dual$, ${\rm Div}(f)\in p{\rm Div}(Y)$, 
tel que $x$ soit la classe de Kummer de $f$, et
on a ${\rm dlog}\,x=\frac{df}{f}$.

\begin{lemm}\phantomsection\label{cdn10}
Soit $B$ un ensemble de points lisses de la fibre sp\'eciale $Y_S^{\rm sp}$ de $Y_S$ et, si
$Q\in B$, soit $]Q[$ son tube dans $Y$ {\rm (une boule ouverte)}.
Si l'intersection de $B$ avec toute composante irr\'eductible de $Y_S^{\rm sp}$
est non vide,
on a une injection
$${\rm Ker}\big[\,H^1_{\eet}(Y,{\bf F}_p)\to\oplus_{Q\in B} H^1_{\eet}(]Q[,{\bf F}_p)\,\big]
\hookrightarrow {\rm Ker}\big[\,H^1_{\eet}(Y,{\bf F}_p)\overset{\rm dlog}{\lra}
 \Omega^1(Y_S)/p\,\big]$$
\end{lemm}
\begin{proof}
Si $x\in H^1_{\eet}(Y,{\bf F}_p)$ est la classe de Kummer de $f$ comme ci-dessus
(et donc ${\rm dlog}\,x=\frac{df}{f}$),
 et si $x$ a une image nulle dans
$]Q[$, cela signifie que $f$ est une puissance $p$-i\`eme sur $]Q[$, et donc que la restriction
de $\omega:=\frac{df}{f}$ 
\`a $]Q[$ est nulle (vue comme \'el\'ement de $\Omega^1(]Q[)^+/p\cong (\O_C/p)[[z_Q]]\,dz_Q$, et pas
comme une diff\'erentielle sur la fibre sp\'eciale).
On en d\'eduit que $\omega$, vue comme section globale du faisceau $\Omega^1_{\rm log}/p$
sur le sch\'ema formel associ\'e
\`a $Y_S$, est nulle sur l'ouvert
de lissit\'e de la composante irr\'eductible $Y_Q$ de $Y_S^{\rm sp}$ contenant $Q$.
L'hypoth\`ese selon laquelle $B$ rencontre toutes les composantes irr\'eductibles
de la fibre sp\'eciale implique que $\omega=0$ en dehors des points singuliers
de la fibre sp\'eciale, et donc que $\omega=0$ ce que l'on voulait d\'emontrer.
\end{proof}

\subsubsection{Lien avec l'application de Hodge-Tate de la jacobienne}
Si $G$ est un sch\'ema en groupes fini et plat sur $\O_C$, on dispose d'une application
de Hodge-Tate $\alpha_G:G(C)\to \omega_{G^\vee}$ o\`u $G^\vee$ est le dual de Cartier
de $G$. Cette application est d\'efinie de la mani\`ere suivante: par d\'efinition,
$u\in G(C)$ fournit un morphisme $u:G^\vee\to {\bf G}_m[p^\infty]$, et on d\'efinit
 $\alpha_G(u)=u^\dual\frac{dT}{1+T}$ (o\`u $\frac{dT}{1+T}$ est la 
base standard de l'espace des formes diff\'erentielles invariantes sur $\widehat{\bf G}_m$). 

Le choix de $P_0\in Y$ 
fournit un plongement $\iota:Y_S\to J$ (o\`u $J$ d\'esigne le mod\`ele
de N\'eron de la jacobienne) et un diagramme commutatif (ind\'ependant du choix de $P_0$)
$$\xymatrix@R=6mm@C=12mm{
H^1_{\eet}(J,{\bf F}_p)\ar[d]^-{\wr}_-{\iota^\dual}\ar[r]^-{\alpha_{J[p]}}
 & \Omega^1(J)/p\ar[d]^-{\wr}_-{\iota^\dual}\\
H^1_{\eet}(Y,{\bf F}_p)\ar[r]^-{\rm dlog} & \Omega^1(Y_S)/p
}$$
Pour v\'erifier la commutativit\'e du diagramme, on part de $u\in J[p]$.
Si $\Theta$ est le diviseur th\^eta, il existe $f_u\in C(J)^\dual$, unique \`a multiplication
pr\`es par un constante, de diviseur $p((\Theta\ominus u)-\Theta)$. Si $v\in J[p]$,
alors $f_u(v)/f_u(0)=\langle u,v\rangle_{\rm Weil}$. Il s'ensuit que $\alpha_{J[p]}(u)=\frac{df_u}{f_u}$.
Par ailleurs, $\iota^\dual f_u$ est une fonction sur $Y$ dont la classe de Kummer repr\'esente $u$
(modulo l'identification $H^1_{\eet}(Y,{\bf F}_p)\cong J[p]$), et donc
${\rm dlog}(u)=\frac{d\iota^\dual f_u}{\iota^\dual f_u}=\iota^\dual(\alpha_{J[p]}(u))$.

\subsubsection{Le noyau de l'application ${\rm dlog}$}
Si $G$ est un sch\'ema en groupes fini et plat sur $\O_C$, on d\'efinit sa {\it pente} $\mu(G)$
par $\mu(G):=\frac{{\rm deg}(G)}{{\rm ht}(G)}$, o\`u $|G|=p^{{\rm ht}(G)}$ et $\deg(G)=\sum v_p(x_i)$
si $\omega_G\cong\oplus \O_C/x_i$.
\begin{prop}\phantomsection\label{cdn11}
La pente de
${\rm Ker}\big(H^1_{\eet}(Y,{\bf F}_p)\overset{\rm dlog}{\lra}
 \Omega^1(Y_S)/p\big)$
vu comme sous-sch\'ema en groupes de $J[p]$, est~$\geq 1-\frac{1}{p}$.
\end{prop}
\begin{proof}
Soit $e_1,\dots,e_r$ une base sur ${\bf F}_p$ du noyau $G$ de ${\rm dlog}$
et soit $G_i$ le sous-sch\'ema
en groupes de $J[p]$ engendr\'e par $e_i$. Alors $\mu(G)\geq\mu(G_1\oplus\cdots\oplus G_r)$
d'apr\`es le (4) de~\cite[cor.\,5]{Far2}, et $\mu(G_1\oplus\cdots\oplus G_r)\geq\inf_i\mu(G_i)$
par d\'efinition de $\mu$ et additivit\'e de ${\rm ht}$ et $\deg$ dans une somme directe.
Il suffit donc de prouver que $\mu(G_i)\geq 1-\frac{1}{p}$.
Or $G_i$ est, d'apr\`es Oort-Tate, de la forme ${\rm Spec}(\O_K[T]/(T^p-aT))$, $v_p(a)\in[0,1]$,
et il r\'esulte de \cite[\no1.1.1]{Far1} que ${\rm dlog}(a^{1/(p-1)})=
a^{1/(p-1)}\,dT$ dans $\omega_{G_i}\cong(\O_C/pa^{-1})\,dT$. 
Il s'ensuit que ${\rm dlog}(a^{1/(p-1)})=0$
si et seulement si $\frac{v_p(a)}{p-1}\geq 1-v_p(a)$, c'est \`a dire si $v_p(a)\geq 1-\frac{1}{p}$.
Comme ${\rm ht}(G)=1$, on a $\mu(G)=\deg(G)=
v_p({\rm Ann}(dT))=v_p(a)$, ce qui permet de conclure.
\end{proof}

\begin{ques}\phantomsection\label{cdn12}
Peut-on remplacer $\geq 1-\frac{1}{p}$ par $>1-\frac{1}{p}$ dans
l'\'enonc\'e de la prop.\,\ref{cdn11}?
Cela permettrait de supprimer l'hypoth\`ese ``non irr\'eductible''
pour $p=2$ dans le th.\,\ref{cdn3}.
\end{ques}

\Subsection{Preuve du th.\,\ref{cdn3}}\label{cdn3.1}
Il s'agit de prouver que ${\cal K}_H^\GG $ est une injection modulo~$p$.
On est donc ramen\'e \`a consid\'erer l'application induite
$${\cal K}_H^\GG :H^1_{\proet}(\widehat X(0)_C^\times,k_L)_{\goth m}\to
{\rm Ind}_{\O_L[\PP(\Ai)\times G_{\Q_p}]}^{\TT[\GG(\Ai)\times G_{\Q}]}
{\cal C}(\Aidu,k_L\otimes_{{\bf F}_p}\tE^-)$$
  Le noyau de ${\cal K}_H^\GG $
est un $\TT[\GG(\Ai)\times G_{\Q}]$-module.
Soit $M$ une composante irr\'eductible du $\GG(\Q_p)$-socle de ce noyau.
Alors $M$ est tu\'ee par ${\goth m}$ car $H^1_{\proet}(\widehat X(0)_C^\times,k_L)_{\goth m}$
est de ${\goth m}^\infty$-torsion.
En tant que repr\'esentation de $G_\Q$, le module $M$ est $\overline\rho_{\goth m}$-isotypique
d'apr\`es le lemme~\ref{cdn40} ci-dessous.

Comme $M$ est la limite inductive de ses intersections avec
les $H^1_{\eet}(X(N)^\times_C,k_L)$, il existe $N$ premier \`a $p$ et $k\in\N$
tels que $M':= M\cap H^1_{\eet}(X(Np^k)^\times_C,k_L)$ soit non nul.

Maintenant, $\matrice{1+p\Z_p}{\Z_p}{p\Z_p}{1+p\Z_p}$ est un pro-$p$-groupe qui agit
sur $M'$, et donc $M'$ contient un \'el\'ement non nul fixe par ce sous-groupe.
Il s'ensuit que $M'':=M[{\goth m}]\cap H^1_{\eet}(X(N,p)^\times_C,k_L)\neq 0$.
De plus, $M''$ est stable par $\matrice{0}{1}{p}{0}_p$.

Par ailleurs, $X(N,p)$ a un mod\`ele semi-stable sur $\Z_p[\bmu_p]$, dont la fibre sp\'eciale
a deux composantes connexes et l'une de ces composantes connexes contient la pointe~$\infty$,
l'autre la pointe $\matrice{0}{1}{p}{0}_p\cdot\infty$.
Comme $M''$ est contenu dans le noyau de ${\cal K}_H$, cela implique 
que les \'el\'ements de $M''$ sont triviaux sur le tube de la pointe $\infty$,
et comme $M''$ est
stable par $\matrice{0}{1}{p}{0}_p$,
ils sont aussi triviaux sur le tube de la pointe $\matrice{0}{1}{p}{0}_p\cdot\infty$.  

Soit $J$ la jacobienne de $X(N,p)$ et soit 
$J^{\rm div}$ le groupe $p$-divisible sur $\Q$ associ\'e.
L'alg\`ebre de Hecke $\TT$ agit sur $\O_L\otimes_{\Z_p}J^{\rm div}$.
On note $J^{\rm div}_{\goth m}$ le localis\'e en ${\goth m}$ de $\O_L\otimes_{\Z_p}J^{\rm div}$
(c'est un facteur direct de $\O_L\otimes_{\Z_p}J^{\rm div}$).
On note $J^{\rm mult}_{\goth m}\subset J^{\rm cnx}_{\goth m}$ les plus grands sous-groupes
$p$-divisibles de $J^{\rm div}_{\goth m}$ sur $\Z_p[\bmu_p]$ (le prolongement \`a $\Z_p[\bmu_p]$
se fait en consid\'erant le mod\`ele de N\'eron de $J$ sur $\Z_p[\bmu_p]$), 
de type multiplicatif et connexe.
Alors\footnote{Ou plut\^ot $H^1_{\eet}(X(N,p)^\times_C,k_L(1))$.}
 $H^1_{\eet}(X(N,p)^\times_C,k_L)$ s'identifie \`a $J^{\rm div}[p]$
et cette identification induit des inclusions
$M''\hookrightarrow J^{\rm div}_{\goth m}[{\goth m}]\hookrightarrow J^{\rm div}_{\goth m}[p]$.
Comme les
\'el\'ements de $M''$ sont triviaux dans le voisinage des pointes et
que chaque composante de la fibre sp\'eciale de $X(N,p)$ contient une pointe,
il r\'esulte du lemme~\ref{cdn10} et de la prop.\,\ref{cdn11}
que $M''\subset J^{\rm cnx}_{\goth m}[p]^{1-\frac{1}{p}}$.

$\bullet$ Si $\overline\rho_{\goth m}$ est ordinaire, 
on a $J^{\rm cnx}_{\goth m}[{\goth m}]=J^{\rm mult}_{\goth m}[{\goth m}]$,
et donc $M''\subset J^{\rm mult}_{\goth m}[{\goth m}]$.
Par ailleurs, il r\'esulte de~\cite[prop.\,12.8 et~12.9]{gross} que $G_{\Q_p}$ agit par un caract\`ere
sur $J^{\rm mult}_{\goth m}[{\goth m}]$.
On en d\'eduit que $J^{\rm mult}_{\goth m}[{\goth m}]$ ne contient pas de copie de la
restriction de $\overline\rho_{\goth m}$ \`a $G_{\Q_p}$ (et donc que $M''=0$) si
cette restriction est une extension non triviale de deux caract\`eres (non n\'ec\'essairement distincts)
ou la somme directe de deux caract\`eres distincts.  
Par contre, si cette restriction est de la forme $\chi\oplus\chi$,
on ne peut pas conclure de cette mani\`ere que $M''=0$.

$\bullet$ Si la restriction de $\overline\rho_{\goth m}$ \`a $G_{\Q_p}$
est irr\'eductible, le sch\'ema en groupes $M''$ sur $\Z_p[\bmu_p]$ est compl\`etement d\'etermin\'e
par sa restriction \`a $G_{\Q_p(\bmu_p)}$ puisqu'il ne contient ni sous-groupe multiplicatif 
ni quotient \'etale, \'etant une somme de copies de $\overline\rho_{\goth m}$.
Mais la restriction de $\overline\rho_{\goth m}$ \`a une extension non ramifi\'ee
convenable de ${\Q_p(\bmu_p)}$ est auto-duale. On en d\'eduit que le sch\'ema en groupe associ\'e
est de pente~$\frac{1}{2}$ et donc que $M''$ est de pente~$\frac{1}{2}$.
Comme on sait par ailleurs que $M''$ est de pente~$\geq 1-\frac{1}{p}$, cela prouve que $M''=0$
si $p\geq 3$.

Ceci permet de conclure.

\begin{rema}
Le cas o\`u la restriction
de $\overline\rho_{\goth m}$ \`a $G_{\Q_p}$ est de la forme 
$\chi\oplus\chi$ est le plus d\'elicat \`a bien des \'egards:
en particulier, il r\'esulte de~\cite[cor.\,4.4]{wiese} et de~\cite[prop.\,12.8 et~12.9]{gross}
que $J_{\goth m}^{\rm mult}[\goth m]$ contient des copies de $\overline\rho_{\goth m}$.
\end{rema}

\begin{lemm}\phantomsection\label{cdn40}
Si ${\goth m}$ est non-eisenstein,
l'application naturelle 
$${\rm Hom}_{G_\Q}(\overline\rho_{\goth m}, H^1[{\goth m}])\otimes\overline\rho_{\goth m}\to
H^1[{\goth m}]$$
est un isomorphisme.
\end{lemm}
\begin{proof}
Comme $\overline\rho_{\goth m}$ est irr\'eductible, cette application est injective.
Par ailleurs, si $\sigma\in G_{\Q}$, alors 
$P_{\goth m}(\sigma):=\sigma^2-\overline t_{\goth m}(\sigma)+\overline s'_{\goth m}$ 
tue $H^1[{\goth m}]$
et donc aussi tous ses sous-quotients.
L'irr\'eductibilit\'e
de $\overline\rho_{\goth m}$ impliquent donc que tous les sous-quotients
irr\'eductibles de $\widehat H^1[{\goth m}]$ sont isomorphes \`a $\overline\rho_{\goth m}$, et
il suffit de prouver que $\widehat H^1[{\goth m}]$ n'a aucun sous-quotient qui est
une extension non triviale de $\overline\rho_{\goth m}$ par $\overline\rho_{\goth m}$.  

Une telle extension~$V$
peut \^etre aussi vue comme une repr\'esentation de dimension~$2$ 
sur $k_L[\epsilon]$ (avec $\epsilon^2=0$
et $V/\epsilon V=\overline\rho_{\goth m}$),
et il r\'esulte de ce qui pr\'ec\`ede que le polyn\^ome minimal de tout \'el\'ement
de $G_\Q$ est \`a coefficients dans $k_L$; on veut en d\'eduire
que $V\cong k_L[\epsilon]\otimes_{k_L}\overline\rho_{\goth m}$.
Comme $\overline\rho_{\goth m}$ est irr\'eductible, son image $G$ n'est pas incluse dans un borel
de ${\rm GL}_2(k_L)$ et son image dans ${\rm PGL}_2(k_L)$ n'est pas contenue dans un $p$-sylow 
(car les $p$-sylows de ${\rm PGL}_2(k_L)$ sont les conjugu\'es du sous-groupe des unipotents
sup\'erieurs et leurs images inverses sont contenues dans des borels); on en d\'eduit l'existence
de $g\in G$, semi-simple \`a valeurs propres $\alpha\neq\beta$.

Soit $v_\alpha, v_\beta$ une base de $V/\epsilon V$ constitu\'ee de vecteurs propres pour $g$.
Il r\'esulte de la discussion ci-dessus que 
le polyn\^ome minimal de $g$ agissant sur $V$ est
$(X-\alpha)(X-\beta)$ et on peut relever $v_\alpha$, $v_\beta$ dans $V$, de mani\`ere unique,
en des vecteurs propres pour $g$.  
Montrons que $k_Lv_\alpha\oplus k_Lv_\beta$ est stable
par $G_\Q$, ce qui permettra de conclure puisque $V=k_L[\epsilon]\otimes_{k_L}(k_Lv_\alpha\oplus k_Lv_\beta)$.

Il suffit de prouver que, si $\lambda\in k_L[G_\Q]$ v\'erifie $\lambda\cdot v_\alpha,\lambda\cdot v_\beta\in\epsilon V$,
alors $\lambda\cdot v_\alpha=\lambda\cdot v_\beta=0$. Si $\lambda\cdot v_\alpha=\epsilon(a(\lambda) v_\alpha+c(\lambda)v_\beta)$
et $\lambda\cdot v_\beta=\epsilon(b(\lambda) v_\alpha+d(\lambda)v_\beta)$, alors $a(\lambda)+d(\lambda)=0$
par hypoth\`ese sur la trace. Mais $g\lambda$ v\'erifie les m\^emes hypoth\`eses
et on a $a(g\lambda)=\alpha a(\lambda)$ et $d(g\lambda)=\beta d(\lambda)$. Il s'ensuit
que $a(\lambda)=d(\lambda)=0$.  
On peut aussi appliquer ceci \`a $h\lambda$, o\`u $h\in G$ est tel que $h(v_\alpha)$
et $v(v_\beta)$ ne sont pas colin\'eaires \`a $v_\alpha$ et $v_\beta$ 
(un tel $h$ existe par irr\'eductibilit\'e
de $\overline\rho_{\goth m}$); on en d\'eduit que $b(\lambda)=c(\lambda)=0$, ce qui permet de conclure.
\end{proof}

\section{Injectivit\'e du mod\`ele de Kirillov: le cas g\'en\'eral}\label{IG0}
{

Il r\'esulte du th.\,\ref{cdn3} que ${\cal K}_H^\GG$ est injective 
si on localise en un id\'eal
non-eisenstein (\`a quelques exceptions pr\`es). Dans ce chapitre,
on donne une seconde
preuve de cette injectivit\'e qui permet de contr\^oler le noyau \'eventuel dans le cas eisenstein
et les exceptions du cas non-eisenstein
(le prix \`a payer est que l'on perd le r\'esultat d'int\'egralit\'e du th.\,\ref{cdn3}). 
Cette preuve passe par une analyse des vecteurs de la cohomologie compl\'et\'ee
fixes par $\UU(\Z_p)$ (les m\'ethodes utilis\'ees sont inspir\'ees de Pan~\cite{pan2} -- qui
a fait une analyse d\'etaill\'ee des vecteurs localement analytiques fixes par l'alg\`ebre
de Lie de $\UU(\Z_p)$ -- et de Sean Howe~\cite{howe}).

\Subsection{La tour des courbes d'Igusa}\label{IG2}
On pose
$$\widehat X(]p[,p):=\widehat X(0)/\UU(\Z_p)$$
Si on d\'ecompose $e_i^\dual$ sous la forme $((e_i^\dual)_p,(e_i^\dual)^{]p[})$
en regardant les composantes en $p$ et en-dehors de $p$,
quotienter
$\widehat X(0)$ par $\UU(\Z_p)$ revient \`a remplacer l'information contenue dans
$((e_1^\dual)_p,(e_2^\dual)_p)$ par 
$((e_1^\dual)_p,(e_1^\dual)_p\wedge (e_2^\dual)_p)$; il s'ensuit que
$$\widehat X(]p[,p)=\widehat X(]p[,p)^0\times\Z_p(1)^\times$$
o\`u $\widehat X(]p[,p)^0$ classifie
les triplets $(E,e_1^\dual,(e_2^\dual)^{]p[})$, o\`u $e_1^\dual$ est un \'el\'ement
primitif de $T_{\cZ}E$ et $(e_1^\dual)^{]p[},(e_2^\dual)^{]p[}$
forment une base $T^{]p[}E$, et
o\`u $\Z_p(1)^\times$ est l'ensemble des bases de $\Z_p(1)$ sur $\Z_p$ 
(le $\Z_p(1)^\times$ encode l'accouplement de Weil
$(e_1^\dual)_p\wedge (e_2^\dual)_p$ des composantes en $p$;
la torsion galoisienne se traduit par le fait que $\O(\Z_p(1)^\times)$ est le compl\'et\'e
$p$-adique de $\Z[\bmu_{p^\infty}]$ (et pas ${\cal C}(\Z_p(1)^\times,\Z_p)$)).

Soit ${\rm Ig}(0)$ la composante connexe du lieu ordinaire de $X(0)$
contenant la pointe~$\infty$: ${\rm Ig}(0)$ param\^etre les triplets $(E,e_1^\dual,e_2^\dual)$
o\`u $E$ est une courbe elliptique ordinaire, $(e_1^\dual)_p$ est un g\'en\'erateur du module
de Tate
$T_p\widehat E$ du groupe formel de $E$ (isomorphe \`a $\widehat{\bf G}_m$ sur $\O_C$),
et $e_1^\dual,e_2^\dual$ est une base du module de Tate ad\'elique
sur $\cZ$.  
En particulier, ${\rm Ig}(0)$ contient le voisinage habituel $Z(0)$ de la pointe $\infty$.

Soit ${\rm Ig}(]p[)$ la composante connexe correspondante du lieu ordinaire de $X(]p[,p)$.
On a
$${\rm Ig}(]p[):={\rm Ig}(0)/\UU(\Z_p),\quad {\rm Ig}(]p[)=
{\rm Ig}(]p[)^0\times\Z_p(1)^\times$$
 o\`u ${\rm Ig}(]p[)^0\subset X(]p[,p)^0$
est le lieu des $(E,e_1^\dual,(e_2^\dual)^{]p[})$
o\`u $E$ est une courbe elliptique ordinaire, 
$(e_1^\dual)_p$ est un g\'en\'erateur de $T_p\widehat E$.
\begin{rema}\phantomsection\label{igu1}
Si $\kappa$ est un caract\`ere de $\BB(\Z_p)$, alors
$\O({\rm Ig}(]p[))_\kappa$ est l'espace des formes
modulaires $p$-adiques de poids $\kappa$ (et niveau arbitraire en dehors de $p$).
L'espace $\O({\rm Ig}(]p[))$ est l'espace de toutes les formes
modulaires $p$-adiques.
\end{rema}
L'application de Hodge-Tate $\pi_{\rm HT}:X(0)_C\to \piqp$, commute aux actions 
de $\GG(\Ai)$ et $G_{\Q_p}$ (o\`u $\GG(\Ai)$ agit \`a travers $\GG(\Q_p)$ sur $\piqp$).
Cette application envoie le lieu ordinaire sur $\piqp(\Q_p)$ et le lieu supersingulier
sur le demi-plan de Drinfeld $\piqp\moins\piqp(\Q_p)$, et on a
$${\rm Ig}(0)_C:=\pi_{\rm HT}^{-1}(\{\infty\})$$

\Subsection{D\'ecomposition de Hodge-Tate pour les $\UU(\Z_p)$-invariants}\label{IG3}
\begin{theo}\phantomsection\label{igu2}
{\rm (Comparer avec~\cite{faltings-FM,AIS} et~\cite[th.\,1.0.4]{JE2})}

On a une suite exacte naturelle,
$\GG(\A^{]\infty,p[})\times\BB(\Z_p)\times G_{\Q_p}$-\'equivariante
$$0\to H^1(X(]p[,p)_C^\times,\O)
\to H^1_{{\proet},c}(X(0)_C^\times,C)^{\UU(\Z_p)}\to
\O({\rm Ig}(]p[)^\times_C)(-1)\to 0$$
\end{theo}
\begin{proof}
D'apr\`es le th\'eor\`eme
de comparaison primitif de Scholze~\cite[th.\,IV.2.1]{Sz2},
on a un isomorphisme naturel (avec $\widehat\O^b=\widehat\O^+[\frac{1}{p}]$)
$$H^1_{{\eet},c}(X(0)_C^\times,C)\overset{\sim}{\to}
H^1_{{\proet},c}(X(0)_C^\times,\widehat\O^b)$$
Par ailleurs, comme $H^0_{{\proet},c}(X(0)_C^\times,\widehat\O)=0$, la restriction
fournit un isomorphisme naturel
$$H^1_{{\proet},c}(X(0)_C^\times/\UU(\Z_p),\widehat\O^b)\overset{\sim}{\to}
H^1_{{\proet},c}(X(0)_C^\times,\widehat\O^b)^{\UU(\Z_p)}$$
Soit 
$$X'(0)_C:=X(0)_C\moins {\rm Ig}(0)_C$$
Alors $X(0)_C$ s'obtient en recollant ${\rm Ig}(0)_C$ et $X'(0)_C$ le long de $\partial X'(0)_C$
qui est une limite profinie de cercles fant\^omes perfecto\"{\i}des (ou, si l'on pr\'ef\`ere, de points de type $5$).
Les espaces ${\rm Ig}(0)_C$, $\partial X'(0)_C$ et $X'(0)_C$ sont affines perfecto\"{\i}des
ainsi~\cite[th.\,4.5.2]{CGJ} que
$X'(0)_C/\UU(\Z_p)$ et $\partial X'(0)_C/\UU(\Z_p)$ 
(mais pas ${\rm Ig}(0)_C/\UU(\Z_p)$ ni son bord $\partial{\rm Ig}(0)_C/\UU(\Z_p)$ qui sont simplement affines).

Notons simplement $X,X',{\rm Ig},\partial$ les espaces $X(0)^\times_C, X'(0)^\times_C$,
${\rm Ig}(0)^\times_C$, $\partial X'(0)_C$ et $U$ le groupe $\UU(\Z_p)$
(et $\gamma$ son g\'en\'erateur topologique $\matrice{1}{1}{0}{1}$).
On a une surjection $\partial /U\to \partial{\rm Ig}/U$ et
$X/U$ s'obtient en recollant $X'/U$ et ${\rm Ig}/U$ le
long de $\partial /U$: i.e.~si $V$ est un ouvert de $X/U$, alors
$$\O(V)=\{\phi_1\in\O(V\cap(X'/U)),\phi_2\in\O(V\cap({\rm Ig}/U)),\  \phi_1=\phi_2\ {\rm dans}\ 
\O(V\cap(\partial/U))\}$$
Comme ${\rm Ig}$ est quasi-compacte, on a $\widehat\O^b({\rm Ig})=\widehat\O({\rm Ig})$
et $\widehat\O^b({\partial})=\widehat\O({\partial})$.

On a $\rg_{\proet}({\rm Ig}/U,\widehat\O^b)\simeq(\O({\rm Ig})\overset{\gamma-1}{\lra}\O({\rm Ig}))$
car ${\rm Ig}$ est affine perfecto\"{\i}de et donc 
$\rg_{\proet}({\rm Ig},\widehat\O^b)\simeq(\O({\rm Ig}))$.
Pour les m\^emes raisons, on a
$\rg_{\proet}(X'/U,\widehat\O^b)\simeq(\O^b(X')^U)$
et $\rg_{\proet}(\partial/U,\widehat\O^b)\simeq(\O(\partial)\overset{\gamma-1}{\lra}\O(\partial))$
(on a aussi $\rg_{\proet}(\partial/U,\widehat\O^b)\simeq(\O(\partial)^U)$, 
i.e.~$\O(\partial)\overset{\gamma-1}{\lra}\O(\partial)$ est surjective,  mais nous pr\'ef\'ererons
le premier quasi-isomorphisme).
On en d\'eduit que $\rg_{\proet}(X/U,\widehat\O^b)$ est le complexe simple associ\'e au complexe
double 
$$\xymatrix@R=6mm@C=12mm{\O^b(X')^U\oplus \O({\rm Ig})\ar[d]\ar[r]^-{(0,\gamma-1)}&\O({\rm Ig})\ar[d]\\
\O(\partial)\ar[r]^-{\gamma-1}&\O(\partial)}$$
Notons $Z^1$ le noyau de $\O({\rm Ig})\oplus\O(\partial)\to\O(\partial)$.
Comme $\O(\partial)\overset{\gamma-1}{\lra}\O(\partial)$ est surjective,
on a une suite exacte $0\to \O(\partial)^U\to Z^1\to\O({\rm Ig})\to 0$.
Cette suite exacte
s'inscrit dans un diagramme commutatif
$$\xymatrix@R=5mm@C=6mm{
0\ar[r]& \O^b(X')^U\ar[r]\ar[d]& \O^b(X')^U\oplus\O({\rm Ig})\ar[r]\ar[d]&\O({\rm Ig})\ar[r]\ar[d]^-{\gamma-1}& 0\\
0\ar[r]& \O(\partial)^U\ar[r]& Z^1\ar[r]&\O({\rm Ig})\ar[r]& 0 
}$$
Comme le noyau de $\O^b(X')^U\oplus\O({\rm Ig})\to Z^1$ est nul (car c'est
$H^0_{{\proet},c}(X/U,\widehat\O)$), le lemme du serpent fournit une suite
exacte
$$0\to \O({\rm Ig})^U\to \O(\partial)^U/\O(X')^U\to
H_{\proet}^1(X/U,\widehat\O)\to H^1(U,\O({\rm Ig}))\to 0$$
Comme $X'/U$ et ${\rm Ig}/U$ sont affines, on a une suite exacte
$$0\to \O^b(X')^U\oplus\O({\rm Ig})^U\to \O(\partial)^U\to H^1(X/U,\O)\to 0$$
Le conoyau de $\O({\rm Ig})^U\to \O(\partial)^U/\O^b(X')^U$ s'identifie donc
\`a $H^1(X(]p[,p)_C^\times,\O)$.
On conclut en utilisant l'isomorphisme
$${\cup}\,\nu:\O({\rm Ig}/U)
\cong H^1(U,\O({\rm Ig}))$$
qui permet d'identifier $H^1(U,\O({\rm Ig}))$ \`a $\O({\rm Ig}(]p[)_C^\times)$; le twist $(-1)$
provenant de la rem.\,\ref{luepan1.1}.
\end{proof}

\begin{rema}\phantomsection\label{igu3}
La fl\`eche naturelle $H^1(U,\O({\rm Ig}))\to H_{\proet}^1({\rm Ig}/U,\widehat\O)$ est un isomorphisme
et la compos\'ee avec $H_{\proet}^1(X/U,\widehat\O)\to H^1(U,\O({\rm Ig}))$ est la restriction
(elle est induite par la fl\`eche naturelle $\rg_{\proet}(X/U,\widehat\O)\to
\rg_{\proet}({\rm Ig}/U,\widehat\O)$).
\end{rema}

\Subsection{Le mod\`ele de Kirillov des $\UU(\Z_p)$-invariants}\label{IG5}
La restriction de ${\cal K}_H:H^1_{\proet}(\widehat Z(0)^\times_C,\Z_p)\to {\cal C}(\Aidu,\tA^-)$
aux $\UU(\Z_p)$-invariants
est 
tu\'ee par $[\epsilon]^{x_p}-1$, et donc 
\`a valeurs dans $([\epsilon]^{p^n}-1)^{-1}\tA^+/\tA^+$ sur $p^n\Z_p\times\A^{]p,\infty[,\dual}$.
On peut donc la composer avec la fl\`eche naturelle de
$([\epsilon]^{p^n}-1)^{-1}\tA^+/\tA^+$ dans $t^{-1}\bdr^+/\bdr^+=Ct^{-1}$
et l'\'etendre, par $C$-lin\'earit\'e et continuit\'e en une fl\`eche
$$C{\otimes}{\cal K}_H:H^1_{\proet}(\widehat Z(0)^\times_C,C)^{\UU(\Z_p)}\to {\cal C}(\Aidu,t^{-1}C)$$
Rappelons par ailleurs que l'on dispose aussi d'un mod\`ele de Kirillov $G_{\Q_p}$-\'equivariant
${\cal K}:\O(Z(0)^\times_C)\to{\cal C}(\Aidu,C)$.
\begin{prop}\phantomsection\label{igu5}
On a le diagramme commutatif
$$\xymatrix@R=5mm{
H^1_{\proet}(\widehat X(0)^\times_C,C)^{\UU(\Z_p)}\ar[r]^-{C{\otimes}{\cal K}_H}\ar[d] 
& {\cal C}(\Aidu,t^{-1}C)\ar[dd]^-{\times|\ |_\A}\\
\O({\rm Ig}(]p[)_C^\times)(-1)\ar[d]\\
\O(Z(]p[)^\times_C)(-1)\ar[r]^-{{\cal K}(-1)}&{\cal C}(\Aidu,C(-1))
}$$
{\rm (La premi\`ere fl\`eche verticale \`a gauche
est celle du th.\,\ref{igu2}, la seconde est la restriction,
${\cal K}(-1)$ est obtenu en tordant ${\cal K}$ par $\cyp ^{-1}$ et
on a identifi\'e $C(-1)$ \`a $t^{-1}C$ dans
la fl\`eche verticale \`a droite.)}
\end{prop}
\begin{proof}
Notons simplement $X,{\rm Ig},Z$ et $U$ les espaces $X(0)^\times_C$,
${\rm Ig}(0)^\times_C$, $Z(0)^\times_C$, et le groupe $\UU(\Z_p)$, et
$H^1(-,\Z_p)$, $H^1(-,\widehat\O)$ les groupes $H^1_{\proet}(-,\Z_p)$, $H^1_{\proet}(-,\widehat\O)$.

On a un isomorphisme $H^1(X,\Z_p)^U\cong H^1(X/U,\Z_p)$ et un diagramme
commutatif
$$\xymatrix@R=6mm@C=8mm{
H^1(X/U,\Z_p)\ar[r]\ar[d]&H^1({\rm Ig}/U,\Z_p)\ar[r]\ar[d]&H^1(Z/U,\Z_p)\ar[d]\\
H^1(X/U,\widehat\O)\ar[r]&H^1({\rm Ig}/U,\widehat\O^b)\ar[r]&H^1(Z/U,\widehat\O^b)\\
&H^1(U,\O({\rm Ig}))\ar[u]^-{\wr}\ar[r]&H^1(U,\O^b(Z))\ar[u]^-{\wr}\ar[d]^-{\exp^\dual_B}_-{\wr}\ar[dr]^-{\log_B^0}\\
&\O({\rm Ig})^U(-1)\ar[u]_-{\cup\,\nu}^-{\wr}\ar[r]&\O^b(Z)^U(-1)\ar[d]_-{{\cal K}(-1)}\ar[r]^-{\partial^{-1}}
&t^{-1}\O(Z)^U\ar[d]^-{t^{-1}{\cal K}}\\
&&{\cal C}(\Aidu,C(-1))\ar[r]^-{\times|\ |_\A^{-1}}&{\cal C}(\Aidu,t^{-1}C)
}$$
On passe de la premi\`ere \`a la seconde ligne par l'injection 
naturelle $\Z_p\hookrightarrow\widehat\O$ et de la premi\`ere \`a la seconde
colonne, puis \`a la troisi\`eme, par restriction.  Le carr\'e du milieu commute
d'apr\`es la rem.\,\ref{igu3}; le triangle de droite (troisi\`eme et quatri\`eme lignes)
commute gr\^ace \`a la rem.\,~\ref{luepan3}, et le carr\'e du bas gr\^ace \`a la formule
${\cal K}_{\partial\lambda}=|\ |_\A^{-1}{\cal K}_\lambda$ du (iv) de la rem.\,\ref{Ki102}.

Le mod\`ele de Kirillov pour $H^1(X,\Z_p)^U$ est obtenu en passant par les fl\`eches
ext\'erieures du haut et de droite.
A part la premi\`ere ligne, tous les modules intervenant dans le diagramme
sont des $C$-modules et les fl\`eches sont $C$-lin\'eaires; on peut donc rendre tout le diagramme
$C$-lin\'eaire en rempla\c{c}ant les groupes $H^1(-,\Z_p)$ de la premi\`ere ligne
par les $H^1(-,C)$ correspondant, 
et la fl\`eche $C{\otimes}{\cal K}_H:H^1(X,C)^U\to {\cal C}(\Aidu,t^{-1}C)$
de la proposition est encore obtenue en suivant l'ext\'erieur du diagramme.
On conclut en utilisant les identifications $\O({\rm Ig})^U=\O({\rm Ig}(]p[)_C^\times)$ et
$\O(Z)^U=\O(Z(]p[)_C^\times)$.
\end{proof}

\begin{coro}\phantomsection\label{igu6}
Le noyau de ${\cal K}_H:H^1(X(0)_C^\times,\Z_p)^{\UU(\Z_p)}\to {\cal C}(\Aidu,t^{-1}C)$
est l'intersection avec $H^1(X(]p[,p)_C^\times,\O)$. Le r\'esultat reste vrai
pour le noyau de $C{\otimes}{\cal K}_H$.
\end{coro}
\begin{proof}
Les fl\`eches $\O({\rm Ig}(]p[)_C^\times)\to\O(Z(]p[)_C^\times)$ et
${\cal K}:\O(Z(]p[)_C^\times)\to{\cal C}(\Aidu,C)$ sont injectives (par injectivit\'e
du $q$-d\'eveloppement d'une forme modulaire $p$-adique)
ainsi que la multiplication par $|\ |_\A$. On en d\'eduit que le noyau de
${\cal K}_H$ est le m\^eme que celui de la fl\`eche vers $\O({\rm Ig}(]p[)_C^\times)(-1)$.
Le th.\,\ref{igu2} permet de conclure.
\end{proof}

\Subsection{L'op\'erateur de Sen sur les espaces propres pour $\BB(\Z_p)$}\label{IG6}
Soit $\kappa=\kappa_1\otimes\kappa_2$ un caract\`ere de $\BB(\Z_p)$.
Si $M$ est un $\BB(\Z_p)$-module, on note $M_\kappa$ le sous-espace sur lequel
$\BB(\Z_p)$ agit par $\kappa$.
On d\'eduit du th.\,\ref{igu2} une suite exacte $G_{\Q_p}\times\BB(\Z_p)$-\'equivariante
$$0\to H^1(X(]p[,p)_C^\times,\O)_\kappa\to (C\wotimes H^1_{{\proet},c}(X(0)_C,\Z_p))_\kappa
\to \O({\rm Ig}(]p[)^\times_C)_\kappa(-1)$$
($\O({\rm Ig}(]p[)^\times_C)_\kappa$ est l'espace des formes
modulaires $p$-adiques de poids~$\kappa$.)

\begin{prop}\phantomsection\label{igu7}
L'op\'erateur de Sen est la multiplication par{\rm:}

$\bullet$ $-w(\kappa_2)$ sur $H^1(X(]p[,p)_C^\times,\O)_\kappa$,

$\bullet$ $-1-w(\kappa_1)$ sur $\O({\rm Ig}(]p[)^\times_C)_\kappa(-1)$.
\end{prop}
\begin{proof}
Notons que $\matrice{1}{0}{0}{d}_p\in\matrice{1}{0}{0}{\Z_p^\dual}$ 
agit trivialement sur $X(]p[,p)^0$ 
et agit par $u\mapsto ud$ sur $\Z_p(1)^\times$. 
On voit $\kappa_i$, si $i=1,2$, aussi comme un caract\`ere de $\cZ^\dual$ et de $G_{\Q_p}$
via les projections $\cZ^\dual\to \Z_p^\dual$ et $\cyp :G_{\Q_p}\to\Z_p^\dual$.

On a $H^1(X(]p[,p)_C^\times,\O)= 
H^1(X(]p[,p)^{0,\times}_C,\O)\wotimes{\cal C}(\cZ^\dual,C)$.
Le th.\,\ref{igu2} fournit donc une fl\`eche \'equivariante
pour les actions de $G_{\Q_p}$ et $\BB(\Z_p)$:
$$H^1(X(]p[,p)^{0,\times}_C,\O)\wotimes{\cal C}(\cZ^\dual,C)
\to C\wotimes H^1_{{\proet},c}(X(0)^\times_C,\Z_p)^{\UU(\Z_p)}$$
(Comme $\matrice{1}{0}{0}{d}_p\in\matrice{1}{0}{0}{\Z_p^\dual}$ 
agit trivialement sur $X(]p[,p)^0$
et agit par $u\mapsto ud$ sur $\Z_p(1)^\times$, l'action de
$\matrice{1}{0}{0}{d}_p$ est $\big(\matrice{1}{0}{0}{d}_p\cdot\phi\big)(u)=
\phi(ud)$ sur ${\cal C}(\cZ^\dual,C)$ et est triviale sur $H^1(X(]p[,p)^{0,\times}_C,\O)$.)

On en d\'eduit que, si $v\in (H^1(X(]p[,p)^{0,\times},\O)\otimes{\cal C}(\cZ^\dual,C))_\kappa$,
alors $v$ est de la forme $v_0\otimes\kappa_2$ avec $v_0\in
H^1(X(]p[,p)^{0,\times},\O)\otimes C$, et $\kappa_2$ vu comme un caract\`ere de $\cZ^\dual$.
Alors $\sigma(v)=\kappa_2^{-1}(\sigma)\,\sigma(v_0)\otimes \kappa_2$
(cf.~rem.\,\ref{Kiri2} pour l'action de $G_{\Q_p}$ sur ${\cal C}(\cZ^\dual,C)$). 
Comme l'op\'erateur de Sen est trivial
sur $H^1(X(]p[,p)^{0,\times},\O)\otimes C$, c'est la multiplication par $-w(\kappa_2)$
sur $(H^1(X(]p[,p)^{0,\times},\O)\otimes{\cal C}(\cZ^\dual,C))_\kappa$.

Ceci prouve le premier point;  pour prouver le second, on utilise le mod\`ele de Kirillov:
le cor.\,\ref{igu6} fournit une injection $\PP(\Z_p)\times G_{\Q_p}$-\'equivariante
$$H^1_{\proet}(X(0)_C^\times,C)_\kappa/H^1(X(]p[,p)_C^\times,\O)_\kappa\to
{\cal C}(\Aidu,t^{-1}C)_\kappa$$
On peut d\'ecomposer $\Aidu$ sous la forme $W\times\Z_p^\dual$ (avec $W=\A^{]\infty,p[,\dual}\times
p^\Z$) et alors
${\cal C}(\Aidu,t^{-1}C)_\kappa={\cal C}(W,t^{-1}C)\otimes\kappa_1$ car 
$(\matrice{a}{0}{0}{1}_p\cdot\phi)(w,u)=\phi(w,a\,u)$.
L'action de $\sigma\in G_{\Q_p}$ est $(\sigma\cdot\phi)(w,u)
=\sigma(\phi(w,\cyp (\sigma)^{-1}u))$. Si $\phi=t^{-1}\phi_0\otimes\kappa_1$, avec
$\phi_0\in {\cal C}(W,C)$, on obtient l'action standard sur $\phi_0$ tordue
par $\kappa_1^{-1}(\sigma)\cyp (\sigma)^{-1}$, et comme
l'op\'erateur de Sen est trivial sur ${\cal C}(W,C)$, c'est la multiplication
par $-w(\kappa_1\cyp )=-w(\kappa_1)-1$ sur
${\cal C}(W,t^{-1}C)\otimes\kappa_1$.
\end{proof}

\begin{rema}\phantomsection\label{igu8}
{\rm(Comparer avec~\cite[th.\,1.0.1]{pan2})}
La suite
$$0\to H^1(X(]p[,p)_C^\times,\O)_\kappa\to (C\otimes H^1_{{\proet},c}(X(0)^\times_C,\Z_p))_\kappa
\to \O({\rm Ig}(]p[)^\times_C)_\kappa(-1)\to 0$$
est en fait exacte:
le terme suivant serait
$H^1(\Z_p^\dual\times\Z_p^\dual,H^1(X(]p[,p)^{\times}_C\times\Z_p^\dual,\O))$.
Ce groupe est isomorphe \`a $H^1(\Z_p^\dual,H^1(X(]p[,p)^{\times,0}_C,\O))$
car la cohomologie d'une induite est concentr\'ee en degr\'e $0$.
Mais $H^1(X(]p[,p)^{\times,0}_C,\O)$ est un quotient de $\O(\partial X^{\prime,0}_C)$
et donc $H^1(\Z_p^\dual,H^1(X(]p[,p)^{\times,0}_C,\O))$ est un quotient
de $H^1(\Z_p^\dual, \O(\partial X^{\prime,0}_C))$ car $\Z_p^\dual$ est de dimension cohomologique $1$.
Comme $\partial X^{\prime,0}_C/\Z_p^\dual$ est perfecto\"{\i}de affine, cela implique
que $H^1(\Z_p^\dual, \O(\partial X^{\prime,0}_C))=0$; d'o\`u le r\'esultat.
\end{rema}

\subsection{Injectivit\'e sur l'ouvert d'irr\'eductibilit\'e}

Soit ${\goth p}$ un id\'eal maximal ferm\'e de $\TT[\frac{1}{p}]$, et soit $\rho_{\goth p}$
le repr\'esentation semi-simple de $G_\Q$ dont la trace est $t_{\TT}$ modulo~${\goth p}$ o\`u
$t_{\TT}$ est le pseudo-caract\`ere de la rem.\,\ref{Ki106.1}.
On dit que ${\goth p}$ est {\it g\'en\'erique} si:

$\bullet$  $\rho_{\goth p}$ est irr\'educible,

$\bullet$ l'op\'erateur de Sen de la restriction de $\rho_{\goth p}$ \`a $G_{\Q_p}$
n'est pas scalaire.
\begin{rema}\phantomsection\label{igu8.1}
La premi\`ere condition est v\'erifi\'ee en dehors d'un ferm\'e
de codimension~$1$ et la seconde en dehors d'un ferm\'e de codimension~$2$ (car,
d'apr\`es Pan~\cite[th.\,6.2.2]{pan2}, la seconde condition implique
que $\rho_{\goth p}$ est un twist d'une repr\'esentation d'image finie).
\end{rema}

\begin{theo}\phantomsection\label{igu8.2}
Soit $W\subset H^1_{{\proet},c}(\widehat X^{(p)}(0)^\times_C,L)$ un sous-$L$-module ferm\'e,
stable par $\TT$, $G_\Q$ et $\GG(\A)$. On suppose que, 
$W[{\goth p}]=0$ pour tout ${\goth p}$ non g\'en\'erique. Alors
la restriction de ${\cal K}_H^{\GG}$ \`a $W$ est injective.
\end{theo}
\begin{proof}
Soit $W_0\subset W$ le noyau de ${\cal K}_H^{\GG}$. Alors $W_0$ 
est un sous-$L$-module ferm\'e,
stable par $\TT$, $G_\Q$, $\GG(\Q_p)$ et $\PP(\A^{]p[})$. Si $W_0\neq 0$, il existe $N$ premier \`a
$p$ tel que $W_0(Np^\infty):=W_0\cap \widehat H^1_c(Np^\infty)$ soit non nul.
Alors $W_0(Np^\infty)$ est une repr\'esentation admissible de $\GG(\Q_p)$ et donc
contient une repr\'esentation irr\'eductible~$\pi$. De plus,
$M:={\rm Hom}_{\GG(\Q_p)}(\pi,W_0(Np^\infty))$ est un $\TT$-module ($\TT$ agissant
\`a travers $\TT(Np^\infty)$), de type fini sur $L$ par admissibilit\'e
de $W_0(Np^\infty)$. Il existe donc un id\'eal maximal 
${\goth p}$ de $\TT[\frac{1}{p}]$ tel que $M[{\goth p}]\neq 0$, et ${\goth p}$
est g\'en\'erique gr\^ace \`a l'hypoth\`ese faite sur $W$. En particulier,
$\rho_{\goth p}$ est irr\'eductible, et donc
$M[{\goth p}]$ est $\rho_{\goth p}$-isotypique, en tant que $G_\Q$-module.
On en d\'eduit que $W_0$ contient une copie de $\rho_{\goth p}\otimes\pi$.

La classification~\cite{Pas,CDP} des repr\'esentations unitaires irr\'eductibles
de $\GG(\Q_p)$ et~\cite[\S\,V.4,\,\no 4]{gl2} impliquent qu'il existe
un caract\`ere de $\kappa=\kappa_1\otimes\kappa_2$ de $\BB(\Z_p)$ et $v\in\pi$, non nul,
tels que $\matrice{a}{b}{0}{d}\cdot v=\kappa_1(a)\kappa_2(d)\,v$, pour tout
$\matrice{a}{b}{0}{d}\in\BB(\Z_p)$.
Le cor.\,\ref{igu6} combin\'e avec la prop.\,\ref{igu7} implique alors
que l'op\'erateur de Sen de $\rho_{\goth p}$ est $-w(\kappa_2)$, ce qui est contraire
\`a l'hypoth\`ese selon laquelle ${\goth p}$ est g\'en\'erique
(et donc que cet op\'erateur de Sen n'est pas scalaire).

Ceci permet de conclure.
\end{proof}

\begin{rema}\phantomsection\label{NE1}
On peut prouver le m\^eme \'enonc\'e en supposant seulement que $\rho_{\goth p}$ est irr\'eductible
pour tout ${\goth p}$ tel que $W({\goth p})\neq 0$ mais la preuve (cf.~preuve du th.\,\ref{NE2})
utilise un argument de {\og prolongement analytique\fg} et le r\'esultat de Pan mentionn\'e dans la
rem.\,\ref{igu8.1}.
\end{rema}

}

\section{Quelques cons\'equences de l'injectivit\'e du mod\`ele de Kirillov}\label{Ki114}
Dans ce chapitre, on explique comment d\'eduire de l'injectivit\'e
du mod\`ele de Kirillov une compatibilit\'e local-global (th.\,\ref{Ki117}) pour la correspondance
de Langlands locale $p$-adique.  Cela fournit une compatibilit\'e
entre les correspondances de Langlands locale $p$-adique et classique (th.\,\ref{Ki119})
et une preuve de la conjecture de Fontaine-Mazur (en dimension~$2$, pour les repr\'esentations
impaires) sous des conditions assez faibles (th.\,\ref{Ki121}).

\Subsection{Compatibilit\'e local-global}\label{loglo1}
Soit $N\geq 1$, premier \`a $p$, et soit $S$ l'ensemble des nombres premiers divisant $Np$.
On note $\wGamma(Np^\infty)\subset \GG(\cZ)$ le groupe
$$\wGamma(Np^\infty)=\cap_{k\geq 1}\wGamma(Np^k).$$
C'est un sous-groupe ouvert de $\GG(\cZp)$.
Le groupe 
$$H^1(Np^\infty):=H^1\big(\GG(\Q),{\cal C}\big(\GG(\A)/\wGamma(Np^\infty),\O_L\big)\big)$$ 
s'identifie
\`a la cohomologie compl\'et\'ee de la tour
des courbes modulaires de niveaux $Np^k$, pour $k\geq 1$.
La comparaison avec la cohomologie \'etale le munit d'une action de
$G_\Q$ qui est non ramifi\'ee en dehors de $S$, et donc se factorise
\`a travers le quotient $G_{\Q,S}$ de $G_\Q$.
Il est aussi muni d'une action de $\GG(\Q_p)$ (induite par l'action naturelle sur
${\cal C}(\GG(\A),\O_L)$ par translation \`a droite), et est admissible
en tant que repr\'esentation de $\GG(\Q_p)$.
Ces deux actions commutent.
La limite inductive
$$H^1(0)^{(p)}:=\varinjlim\nolimits_N H^1(Np^\infty)=
H^1\big(\GG(\Q),{\cal C}^{(p)}\big(\GG(\A),\O_L\big)\big)$$
est munie d'actions de $G_\Q$ et $\GG(\Ai)$ qui commutent.

\vskip2mm
Soit $\Gamma$ un sous-groupe de $G_\Q\times\GG(\Ai)$. Si $W$ est un $L[\Gamma]$-module,
et si ${\goth m}$ est un id\'eal maximal de $\TT$,
on pose
\begin{align*}
&{\bf m}(W):={\rm Hom}_\Gamma(W,L\otimes H^1(0)^{(p)}),&&
{\bf m}(W, Np^\infty):={\rm Hom}_\Gamma(W,L\otimes H^1(Np^\infty))\\
&{\bf m}(W)_{\goth m}:={\rm Hom}_\Gamma(W,L\otimes H^1(0)_{\goth m}^{(p)}),&&
{\bf m}(W, Np^\infty)_{\goth m}:={\rm Hom}_\Gamma(W,L\otimes H^1(Np^\infty)_{\goth m})
\end{align*}
Si $\Gamma'$ est un sous-groupe de $G_\Q\times\GG(\Ai)$ commutant \`a $\Gamma$, ces modules
sont des $L[\Gamma']$-modules et on a 
$${\bf m}(W)=\varinjlim_N {\bf m}(W,Np^\infty) 
\quad{\rm et}\quad
{\bf m}(W)_{\goth m}=\varinjlim_N {\bf m}(W,Np^\infty)_{\goth m}$$

\begin{rema}
(i) Si $\rho$ une $L$-repr\'esentation de $G_\Q$, de dimension finie,
alors ${\bf m}(\rho,Np^\infty)$ est une $L$-repr\'esentation admissible de $\GG(\Q_p)$.

(ii) Si $\Pi$ est une $L$-repr\'esentation admissible de $\GG(\Q_p)$, de longueur finie, alors
${\bf m}(\Pi,Np^\infty)$ est une $L$-repr\'esentation de $G_\Q$, de dimension finie.
\end{rema}

\begin{defi}\phantomsection\label{Ki115}
Supposons $W$ absolument irr\'eductible.
On dit que $W$ est:

$\bullet$ {\it promodulaire} (resp.~{\it ${\goth m}$-promodulaire}), si ${\bf m}(W)\neq 0$
(resp.~${\bf m}(W)_{\goth m}\neq 0$); 

$\bullet$ {\it visible}
(resp. {\it ${\goth m}$-visible}) si $W$ est promodulaire (resp.~${\goth m}$-promodulaire)
et ${\cal K}_H^\GG$ est injective 
sur ${\bf m}(W)\otimes W$ (resp.~sur ${\bf m}(W)_{\goth m}\otimes W$) vu comme sous-objet
de $H^1(0)^{(p)}$ par l'\'evaluation.
\end{defi}
\begin{rema}\phantomsection\label{Ki116}
(i) la promodularit\'e de $\rho:G_{\Q}\to {\rm GL}_2(L)$, {\it impaire et non ramifi\'ee
en dehors d'un nombre fini de places}, est quasi-automatique~\cite[th.1.2.3]{Em08}.

(ii)
Il faut penser \`a {\og visible\fg} comme {\og d\'etect\'e par le mod\`ele de Kirillov\fg}.
Les r\'esultats d'injectivit\'e pour ${\cal K}_H^\GG$ 
des chap.~\ref{rCDN0} et~\ref{IG0} montrent que les repr\'esentations invisibles sont difficiles \`a exhiber:

$\bullet$ Si ${\goth m}$ est non-eisenstein, alors
{\og ${\goth m}$-promodulaire $\Rightarrow$ ${\goth m}$-visible\fg}.  

$\bullet$ Si $\rho:G_{\Q}\to {\rm GL}_2(L)$ est absolument irr\'eductible
et si l'op\'erateur de Sen de $\rho_{|G_{\Q_p}}$ n'est pas scalaire, alors
{\og $\rho$ promodulaire $\Rightarrow$ $\rho$ visible\fg}.
(En particulier, si $\rho$ est de Rham, \`a poids de Hodge-Tate distincts, et si
$\rho$ est promodulaire, alors $\rho$ est visible.)
\end{rema}

\begin{theo}\phantomsection\label{Ki117}
Soit $\rho:G_\Q\to {\rm GL}_2(L)$ absolument irr\'eductible, visible,
et soit $\Pi$
une composante irr\'eductible du $\GG(\Q_p)$-socle de ${\bf m}(\rho)$.

{\rm (i)} 
Il existe
une fl\`eche $G_{\Q_p}$-\'equivariante ${\bf V}(\Pi)\to\rho^\dual$, non nulle.

{\rm (ii)} 
De plus:

\quad $\bullet$ 
Si $\rho_{|G_{\Q_p}}$ est absolument irr\'eductible, 
$\rho^\dual\cong {\bf V}(\Pi)$
et $\Pi\cong \Pi_p(\rho^\dual)$.

\quad $\bullet$  Si $\rho_{|G_{\Q_p}}$ est une extension non triviale de $\chi_2$ par $\chi_1$, alors
$\Pi$ est le socle de $\Pi_p(\rho^\dual)$, i.e. 
$B(\chi_1^{-1},\chi_2^{-1})$.

\quad $\bullet$ Si $\rho_{|G_{\Q_p}}=\chi_1\oplus\chi_2$, alors $\Pi$ est une des composantes
du socle de $\Pi_p(\rho^\dual)$, i.e. $B(\chi_1^{-1},\chi_2^{-1})$ ou $B(\chi_2^{-1},\chi_1^{-1})$.

{\rm (Dans les deux cas, si $\chi_2=(x|x|)\chi_1$, 
il faut remplacer $B(\chi_1^{-1},\chi_2^{-1})$ par ${\rm St}\otimes\chi_2^{-1}$.)}
\end{theo}
\begin{proof}
Comme $\rho$ est suppos\'ee visible, ${\cal K}_H^\GG$ est injective 
sur $\Pi\otimes \rho$. En particulier, il existe $v\in \Pi\otimes \rho$ telle que ${\cal K}_{H,v}$
ne soit pas identiquement nulle. Si ${\cal K}_{H,v}(x)\neq 0$, la restriction
de ${\cal K}_H$ \`a $\matrice{x}{0}{0}{1}^{]p[}\star (\Pi\otimes \rho)$
(isomorphe \`a $\Pi\otimes \rho$ comme $G_\Q\times\GG(\Q_p)$-module) n'est pas
identiquement nulle,
et il r\'esulte
de la rem.\,\ref{Ki111.1} que
$v\mapsto {\cal K}_{H,v}(1)$  
fournit
une fl\`eche $G_{\Q_p}\times \matrice{p^\Z}{\Q_p}{0}{1}$-\'equivariante 
$\matrice{x}{0}{0}{1}^{]p[}\star (\Pi\otimes \rho)\to\tB^-$, non nulle.
Le (i) est donc une cons\'equence de la~prop.\,\ref{kirp3}.

Pour prouver le (ii), on utilise le fait que 
${\bf V}(\Pi)$ est irr\'eductible (rem.\,\ref{kirp4}).
On en d\'eduit directement le cas o\`u la restriction \`a $G_{\Q_p}$ est irr\'eductible.

Si la semi-simplifi\'ee de $\rho_{|G_{\Q_p}}$
est $\chi_1\oplus\chi_2$, commen\c{c}ons par remarquer que l'irr\'eductibilit\'e
de $\Pi$ et l'existence d'une fl\`eche non nulle $\rho^\dual\to{\bf V}(\Pi)$ implique
que ${\bf V}(\Pi)$ est de dimension $1$, et donc que $\Pi$ est une composante
de Jordan-H\"older de dimension infinie
d'une s\'erie principale $B(\delta_1,\delta_2)$. Dans le cas d'une extension non triviale, 
${\bf V}(\Pi)$
est le quotient $\chi_1^{-1}$ de $\rho^*$ 
(dans le cas scind\'e, c'est soit $\chi_1^{-1}$, soit $\chi_2^{-1}$), et donc $\delta_1=\chi_1^{-1}$.
On d\'etermine $\delta_2$ en utilisant le lien entre le caract\`ere central et le d\'eterminant
(cf.~rem.\,\ref{Ki106.1}), et donc $\delta_2=\chi_2^{-1}$. 
Le r\'esultat s'en d\'eduit en utilisant le fait que les s\'eries principales sont
irr\'eductibles sauf si $\delta_2=(x|x|)^{-1}\delta_1$, auquel cas la seule composante
de Jordan-H\"older de dimension infinie est ${\rm St}\otimes \delta_2$.
\end{proof}

\Subsection{Applications}\label{loglo2}
\Subsubsection{Adh\'erence des vecteurs localement alg\'ebriques}\label{loglo3}
\begin{prop}\phantomsection\label{Ki118}
Soit $\pi$ cohomologique de poids $(k+2,j+1)$ avec $m_{\eet}(\pi)$ visible.
Soient $\pi^{\rm alg}=\pi\otimes W_{k,j}^\dual$ et
$W$ l'adh\'erence de l'image de 
$$\iota_\pi : m(\pi)\otimes\pi^{\rm alg}
\to H^1(\GG(\Q),{\cal C}^{(p)}(\GG(\A),\Q_p(\pi)))$$
 Alors, en tant que
$G_\Q\times\GG(\Ai)$-repr\'esentation, $W\cong
m_{\eet}(\pi)\otimes\pi^{]p[}\otimes W_p$, o\`u
$W_p$ est la composante du socle de
$\Pi_p(m_{\eet}^\dual(\pi))$ ayant des vecteurs alg\'ebriques non nuls.
En particulier:

$\bullet$ Si $m_{\eet}(\pi)_{|G_{\Q_p}}$ est irr\'eductible,
$W\cong m_{\eet}(\pi)\otimes\pi^{]p[}\otimes \Pi_p(m_{\eet}^\dual(\pi))$.

$\bullet$ Si $m_{\eet}(\pi)_{|G_{\Q_p}}$ est une extension
non triviale de deux caract\`eres, alors
$W\cong m_{\eet}(\pi)\otimes\pi^{]p[}\otimes {\rm soc}(\Pi_p(m_{\eet}^\dual(\pi)))$.
\end{prop}
\begin{proof}
$m_{\eet}(\pi)$ est irr\'eductible, et donc ${\bf m}(m_{\eet}(\pi))\otimes m_{\eet}(\pi)\to
H^1(0)^{(p)}$ est injective, $W$ est inclus dans l'image et est de la forme 
$m_{\eet}(\pi)\otimes \pi^{]p[}\otimes W'$, o\`u $W'$ est une repr\'esentation de
$\GG(\Q_p)$, compl\'et\'ee de $\pi_p^{\rm alg}$, et
$\pi^{]p[}\otimes W'\subset {\bf m}(m_{\eet}(\pi))$.  
Par ailleurs, $W'$ est admissible car $m_{\eet}(\pi)\otimes v_\pi^{]p[}\otimes W'$ s'injecte dans
$H^1(Np^\infty)$ pour $N$ bien choisi, et $H^1(Np^\infty)$ est admissible.
En particulier, $W'$ contient des sous-repr\'esentations irr\'eductibles.

Soit $\Pi$ une composante irr\'eductible du $\GG(\Q_p)$-socle de $W'$.  
Il r\'esulte du th.\,\ref{Ki117} que $\Pi$ est une composante 
du socle de $\Pi_p(m_{\eet}^\dual(\pi))$ (et est \'egal \`a ce socle sauf si la restriction
de $m_{\eet}(\pi)$ \`a $G_{\Q_p}$ est la somme de deux caract\`eres).

Maintenant, comme $m_{\eet}(\pi)$ est de Rham \`a poids de Hodge-Tate distincts, 
$\Pi_p(m_{\eet}^\dual(\pi))^{\rm alg}\neq 0$
et, plus pr\'ecis\'ement, $({\rm soc}(\Pi_p(m_{\eet}^\dual(\pi))))^{\rm alg}\neq 0$;
on a donc $\Pi^{\rm alg}\neq 0$. Comme $m_{\eet}(\pi)\otimes\pi^{]p[}\otimes \Pi^{\rm alg}$
s'injecte dans $H^1(\GG(\Q),{\cal C}(\GG(\A),L))$, on d\'eduit de la prop.\,\ref{VV3}
que $\Pi^{\rm alg}\cong\pi_p^{\rm alg}$.
Comme $\Pi$ est ferm\'ee, 
et $\pi_p^{\rm alg}$
est dense dans $W'$, l'injection de $\Pi$ dans $W'$ est une \'egalit\'e.

Cela prouve le r\'esultat sauf
si la restriction de $m_{\eet}(\pi)$ \`a $G_{\Q_p}$ est la somme de deux caract\`eres.
Dans ce cas,
le socle de $\Pi_p(m_{\eet}^\dual(\pi)))$ a deux composantes $W_1,W_2$, avec $W_1^{\rm alg}\neq 0$
et $W_2^{\rm alg}=0$.  Si le socle de $W'$ a une composante isomorphe \`a $W_1$, on conclut
comme ci-dessus que $W=W_1$. Sinon, le socle de $W'$ est une 
somme de copies de $W_2$ qui est une s\'erie principale $B(\delta_1,\delta_2)$.
Comme $W'$ est admissible,
la th\'eorie des blocs pour les banachs admissibles~\cite{Pas,PT} implique que
les composantes de Jordan-H\"older de $W'$ appartiennent au bloc de
$B(\delta_1,\delta_2)$ qui est constitu\'e de
$B(\delta_1,\delta_2)$ et des composantes 
de Jordan-H\"older de $B(\delta_2,\delta_1)$ (\`a savoir $W_1$ plus un caract\`ere
\'eventuel). Il s'ensuit que $\pi_p^{\rm alg}$ (qui s'injecte dans $W'$ et qui est irr\'eductible)
est isomorphe \`a $W_1^{\rm alg}$ et donc le compl\'et\'e universel de $\pi_p^{\rm alg}$
est $W_1$ (car $W_1$ est une s\'erie principale unitaire). Comme $W'$ est, par construction,
 un quotient de ce compl\'et\'e universel, on en d\'eduit que
$W'=W_1$, ce qui permet de conclure.
\end{proof}

\Subsubsection{Compatibilit\'e classique versus $p$-adique}\label{loglo4}
\begin{theo}\phantomsection\label{Ki119}
{\rm (cf.~\cite[\S\,7.4]{Em08})}
Si $V$ est une $L$-repr\'esentation de dimension~$2$ de $G_{\Q_p}$, de Rham 
\`a poids de Hodge-Tate $a<b$, alors
$$\Pi_p(V)^{\rm alg}\cong \Pi_p(V)^{\rm cl}\otimes W^\dual_{b-a-1,b}$$
\end{theo}
\begin{proof}
Soit $\pi_p=\Pi_p(V)^{\rm cl}$.
Le r\'esultat est prouv\'e dans~\cite[th.\,VI.6.50]{gl2} dans le
cas o\`u $\pi_p$ est de la s\'erie principale (voir aussi~\cite{Cvectan,LXZ}). 
Il suffit donc de le prouver dans le cas $\pi_p$ supercuspidale et,
compte-tenu de la compatibilit\'e
\`a la torsion par un caract\`ere,
il suffit, d'apr\`es~\cite{poids}, pour chaque telle $\pi_p$, d'exhiber une repr\'esentation
$V$, de Rham, v\'erifiant ${\rm LL}(V)=\pi_p\otimes\eta$, pour laquelle 
$\Pi_p(V)^{\rm alg}=\pi_p\otimes\eta\otimes W_{k,j}^\dual$ (pour un choix de $(k,j)$
et de caract\`ere lisse $\eta$).

Quitte \`a remplacer $\pi_p$ par $\pi_p\otimes\eta$,
on peut supposer que $\pi_p$ est la composante
en $p$ d'une repr\'esentation $\pi$ cohomologique.
La prop.\,\ref{Ki118} permet de prendre $V=m^\dual_{\eet}(\pi)$,
\`a condition que
$m_{\eet}(\pi)$ soit visible (ce qui est le cas, d'apr\`es la rem.\,\ref{Ki116}
puisque cette repr\'esentation est modulaire par d\'efinition).
\end{proof}

\subsubsection{Repr\'esentations ordinaires}\label{loglo5}
Si $\Pi$ est une repr\'esentation unitaire de $\GG(\Q_p)$, irr\'eductible et visible,
l'injectivit\'e de ${\cal K}_H^\GG$ sur ${\bf m}(\Pi,Np^\infty)\otimes\Pi$
et l'irr\'eductibilit\'e de ${\bf V}(\Pi)$ impliquent, par les m\^emes m\'ethodes
(en particulier, l'utilisation de la prop.\,\ref{kirp3})
que le $G_{\Q_p}$-cosocle de la restriction de ${\bf m}(\Pi,Np^\infty)$
est une somme finie de copies de ${\bf V}(\Pi)^\dual$.
En particulier:

$\bullet$ ${\bf V}(\Pi)$ est de dimension~$\leq 2$ (cas particulier d'un th\'eor\`eme
de \paskunas~\cite{Pas} -- voir aussi~\cite{CDP}) puisque le 
$G_\Q$-cosocle de ${\bf m}(\Pi,Np^\infty)$
est constitu\'e de repr\'esentations de $G_\Q$ de dimensions~$\leq 2$ (d'apr\`es les relations
d'Eichler-Shimura).

$\bullet$ 
Si $\Pi$ est une s\'erie principale $B(\delta_1,\delta_2)$ (ou ${\rm St}\otimes\delta_1$), alors
le $G_{\Q_p}$-cosocle de ${\bf m}(\Pi,Np^\infty)$ est une somme de copies de $\delta_1^{-1}$.

On d\'emontre de m\^eme le r\'esultat suivant.

\begin{prop}\phantomsection\label{Ki120}
{\rm(cf.~\cite[\S\,5.6]{Em08})}
Soit
$\pi$ cohomologique de poids $(k+2,j)$ tel que $\pi_p$ soit une s\'erie principale
${\rm Ind}_B^G(\delta_1\otimes|\ |_p^{-1}\delta_2)$ {\rm(}ou ${\rm St}\otimes\delta_1$
si $\delta_2=|\ |_p\delta_1${\rm)}, o\`u $\delta_1,\delta_2:\Q_p^\dual\to L^\dual$
sont des caract\`eres lisses avec $v_p(\delta_1(p))=0$ et 
$v_p(\delta_2(p))=k+1$ {\rm (cas ordinaire)}.
Alors, la restriction de $m_{\eet}(\pi)$ \`a $G_{\Q_p}$ est une extension de $x^{-k-1}\delta_2^{-1}$
par $\delta_1^{-1}$.
\end{prop}
\begin{proof}
Soit $\Pi:=\pi_p^{\rm alg}=B(x^{k+1}\delta_2,\delta_1)$, et donc ${\bf V}(\Pi)^\dual=x^{-k-1}\delta_2^{-1}$.
Alors $m_{\eet}(\pi)$ est une sous-repr\'esentation de ${\bf m}(\Pi)$ et donc
${\cal K}_H^\GG$ est injective sur $m_{\eet}(\pi)\otimes\Pi$.  Comme ci-dessus,
on en d\'eduit que 
le cosocle de
la restriction de $m_{\eet}(\pi)$ \`a $G_{\Q_p}$ est $x^{-k-1}\delta_2^{-1}$.
On conclut en utilisant la relation entre
caract\`ere central et d\'eterminant (rem.\,\ref{Ki106.1}).
\end{proof}

\Subsubsection{Conjecture de Fontaine-Mazur}\label{loglo6}
\begin{theo}\phantomsection\label{Ki121}
{\rm(cf.~\cite[\S\,7.1]{Em08})}
Soit $\rho:G_{\Q,S}\to {\rm GL}_2(L)$, absolument irr\'eductible, impaire, visible,
 dont la restriction \`a $G_{\Q_p}$ est de Rham \`a poids
de Hodge-Tate distincts et n'est pas la somme de deux caract\`eres.
Alors $\rho$ est
la repr\'esentation associ\'e \`a une forme modulaire.
\end{theo} 
\begin{proof}
L'hypoth\`ese implique
que le socle de $\Pi_p(\rho^\dual)$ n'a qu'une composante, et l'hypoth\`ese {\og de Rham \`a poids 
de Hodge-Tate distincts\fg} implique~\cite[th.\,V.6.18]{gl2} (voir aussi~\cite{Dosp1,poids})
 que cette composante contient des vecteurs localement alg\'ebriques).

Maintenant, si $\rho$ est visible, le socle
de ${\bf m}(\rho)$ (vu comme repr\'esentation de $\GG(\Q_p)$) contient le socle
de $\Pi_p(\rho^\dual)$ d'apr\`es le th.\,\ref{Ki117}, et donc ${\bf m}(\rho)^{\rm alg}\neq 0$.
Ceci fournit une injection $G_\Q\times\GG(\Q_p)$-\'equivariante
${\bf m}(\rho)^{\rm alg}\otimes\rho\hookrightarrow H^1(0)^{(p)}$.
On conclut gr\^ace \`a la d\'ecomposition~(\ref{VV2}) des
vecteurs localement alg\'ebriques de la cohomologie compl\'et\'ee.
\end{proof}

\begin{rema}
Il faut d'autres outils que le simple mod\`ele de Kirillov pour supprimer la condition
selon laquelle $\rho_{|G_{\Q_p}}$ n'est pas la somme de deux caract\`eres.
On pourrait conclure par passage \`a la limite en approximant $\rho$ par des repr\'esentations
dont un des poids tend vers~$+\infty$ pour montrer que les deux morceaux du socle
de $\Pi_p(\rho^\dual)$ apparaissent dans le socle de ${\bf m}(\rho)$.
Le cas g\'en\'eral a \'et\'e d\'emontr\'e par Pan~\cite{pan1, pan3}.
\end{rema}

}

\section{La cohomologie compl\'et\'ee et sa factorisation}\label{YEUL1}

Dans ce chapitre, on raffine la factorisation d'Emerton~\cite[conj.\,6.1.6 et th.\,6.4.16]{Em08} 
de la cohomologie compl\'et\'ee.
Ce raffinement (th.\,\ref{Ycano110} et~\ref{NE2}) est obtenu en comparant les mod\`eles
de Kirillov des deux membres aux points classiques
(th.\,\ref{Ybato13} et~\ref{Ybato21}, cor.\,\ref{Ybato221}) gr\^ace \`a
la loi de r\'eciprocit\'e explicite du th.\,\ref{bato22}.

\Subsection{Notations}
\subsubsection{Alg\`ebres de Hecke et repr\'esentations galoisiennes}\label{YEUL7}
Soit $T(Np^\infty)$ le quotient \`a travers lequel $\TT$ agit sur $H^1(Np^\infty)$;
c'est l'alg\`ebre de Hecke 
engendr\'ee par les op\'erateurs de Hecke $T_\ell$, pour $\ell\notin S$,
agissant sur ${\cal C}(\GG(\Q_\ell)/\GG(\Z_\ell))$; on ne change pas
$T(Np^\infty)$ en enlevant un nombre fini de ces op\'erateurs de Hecke.
Cette action commute \`a celle de $\GG(\Q_p)$ et de $G_\Q$.

L'alg\`ebre $T(Np^\infty)$ est une alg\`ebre semi-locale. Dans la suite,
nous fixons un id\'eal maximal ${\goth m}$ 
de $T(Np^\infty)$, et \index{Tm@\TTm}notons $T$ la localis\'ee 
$$T:=T(Np^\infty)_{\goth m}$$ 
de $T(Np^\infty)$ en ${\goth m}$
(c'est un facteur direct de $T(Np^\infty)$).
Alors $T$ est une alg\`ebre locale, d'id\'eal maximal ${\goth m}$
et de corps r\'esiduel $T_{\goth m}/{\goth m}$ fini de caract\'eristique~$p$
(et donc de la forme ${\bf F}_q$, avec $q$ une puissance de $p$).

On note $t_T$ la composition du pseudocaract\`ere $t_\TT$ avec la projection
$\TT\to T$ et $t_{\goth m}$ la r\'eduction de $t_T$ modulo~${\goth m}$.
On dit que
${\goth m}$ est {\it non-eisenstein} si $t_{\goth m}$ est la trace d'une repr\'esentation
irr\'eductible de dimension~$2$.  Si ${\goth m}$ est non-eisenstein, il existe
$$\rho_T:G_{\Q,S}\to {\rm GL}_2(T)$$
dont la trace est $t_T$; la trace de la r\'eduction $\overline\rho_T$ de $\rho_T$
modulo~${\goth m}$ est alors $t_{\goth m}$.

On \index{Xcal@\calX}d\'efinit ({\og classique\fg} signifie que $\rho_x$ est la repr\'esentation
associ\'ee \`a une forme modulaire \`a torsion pr\`es 
par une puissance enti\`ere du caract\`ere cyclotomique): 
$${\cal X}:={\rm Spec}(T) 
\quad{\rm et}\quad
{\cal X}^{\rm cl}(\O_L):=\{x\in {\cal X}(\O_L),\ x\ {\rm classique}\}.$$

Soit $H^1(Np^\infty)$ le groupe d\'efini au \S\,\ref{loglo1} et soit
$H^1(Np^\infty)_{\goth m}$ son localis\'e
en ${\goth m}$ (c'est un facteur direct de $H^1(Np^\infty)$). 
On \index{H5@\Hrho}note $H^1[\rho_T]_S$ le sous-$\GG(\Ai)$-module
de $H^1(\GG(\Q),{\cal C}(\GG(\A),\O_L))$ engendr\'e par
$H^1(Np^\infty)_{{\goth m}}$.  
C'est un $G_{\Q,S}\times \GG(\Ai)$-module
\`a l'int\'erieur duquel on r\'ecup\`ere
$H^1(Np^\infty)_{{\goth m}}$ 
en prenant les points fixes par $\wGamma(Np^\infty)$.
La structure de ce $G_{\Q,S}\times \GG(\Ai)$-module a \'et\'e analys\'ee par Emerton~\cite{Em08};
nous donnons une version renforc\'ee du r\'esultat d'Emerton 
dans les th.\,\ref{Ycano110} et~\ref{NE2} ci-dessous.

\subsubsection{Sp\'ecialisation}\label{YEU30}
Si $x\in {\cal X}(\O_L)$, on note ${\goth p}_x$ l'id\'eal
premier correspondant; alors
$\rho_x=(T/{\goth p}_x)\otimes_T\rho_T$.  On a
$$\Pi_\ell(\rho_x^\dual)=(T/{\goth p}_x)\otimes_T\Pi_\ell(\rho_T^\diamond),
\ {\text{si $\ell\neq p$}},\quad
\Pi_p(\rho_x^\dual)=\Pi_p(\rho_T^\diamond)[{\goth p}_x].$$
(Donc $\Pi_p^\dual(\rho_x^\dual)=(T/{\goth p}_x)\otimes_T\Pi_p^\dual(\rho_T^\diamond)$.)
On pose:
$$ v'_{T,S}=\otimes_{\ell\in S\moins\{p\}} v'_{T,\ell},\quad
 v'_{x,S}=\otimes_{\ell\in S\moins\{p\}} v'_{x,\ell}.$$
Donc $ v'_{x,S}$ est la sp\'ecialisation de $ v'_{T,S}$
dans $\otimes_{\ell\in S\moins\{p\}}\Pi_\ell(\rho_x^\dual)$.

\begin{rema}\phantomsection\label{YEU31}
Si $x\in{\cal X}(\O_L)$, alors
$$\rho_x\otimes v'_{x,S} \otimes\Pi_p(\rho_x^\dual)=
\big(\rho_T\otimes  v'_{T,S}\otimes\Pi_p(\rho_T^\diamond)\big)[{\goth p}_x]$$
\end{rema}

\begin{rema}\phantomsection\label{YEU31.5}
Soit $x\in{\cal X}^{\rm cl}(\O_L)$,  correspondant \`a une repr\'esentation cohomologique $\pi$:
on a un isomorphisme $\rho_x\cong m_{\eet}(\pi)$, et $\Pi_\ell(m_{\eet}^\dual(\pi))=\pi_\ell$,
si $\ell\neq p$.

{\rm (i)}  
L'isomorphisme $\rho_x\cong m_{\eet}(\pi)$ n'est unique qu'\`a multiplication
pr\`es par une unit\'e de $L$ (ou $\O_L$ si on fait plus attention), mais 
$\rho_x\otimes\Pi_p(\rho_x^\dual)=m_{\eet}(\pi)\otimes\Pi_p(m_{\eet}^\dual(\pi))$ car le passage
au dual et la fonctorialit\'e de $\Pi_p$ font que les ind\'eterminations se compensent.
On peut donc voir $m_{\eet}(\pi)\otimes\Pi_p(m_{\eet}^\dual(\pi))$ comme un sous-objet
de $\rho_T\otimes_T \Pi_p(\rho_T^\diamond)[\frac{1}{p}]$.

{\rm (ii)}
Si $\pi$ est de poids $(k+2,j+1)$,
on dispose (par d\'efinition de $m_{\eet}(\pi)$) d'une fl\`eche \index{iotapi@\iotapi}naturelle
$G_{\Q,S}\times \GG(\Ai)$-\'equivariante:
$$\iota_{\pi} :m_{\eet}(\pi)\otimes \pi^{\rm alg}\to \iH^1(W_{k,j}^{\eet})\otimes W_{k,j}^\dual
\hookrightarrow H^1(\GG(\Q),{\cal C}(\GG(\A),L)).$$
\end{rema}

\Subsection{Une loi de r\'eciprocit\'e explicite} \label{Yqq24}
Si $c\in H^1_{\proet}(Z(0)^\times_C, \Bdr^+\otimes_{\Q_p}W_{k,j}^{\rm et})$ et
$\check v\in W_{k,j}^\dual$, 
alors 
$$\langle \check v,c\rangle\in H^1_{\proet}(Z(0)^\times_C, \Bdr^+\otimes_{\Q_p}{\rm Alg})$$
et on note $\log_B(c\otimes\check v)$ son image dans $\bdr^-\{\tilde q\}$ 
par l'application du \no\ref{qq15} (un \'el\'ement
de $H^1_{\proet}(Z(0)^\times_C, \Bdr^+\otimes_{\Q_p}{\rm Alg})$ est d\'efini sur $Z(N)$ pour $N$
assez petit (multiplicativement), et $Z(N)$ est une boule ouverte).

\begin{theo}\phantomsection\label{Ybato13}
Si $f \in {\rm Fil}^0(\bdr\otimes \iH^0(W_{k+2,j+1}^{\rm dR}))$,
et si l'on note $\iota_{{\rm ES},p}(f)_{Z(0)}$ la restriction de
$\iota_{{\rm ES},p}(f)\in\iH^1_{\proet}(\Bdr^+\otimes_{\Q_p}W_{k,j}^{\rm et})$ \`a $Z(0)_C$,
alors
$$\log_B\big(\iota_{{\rm ES},p}(f)_{Z(0)}\otimes
\tfrac{(e_1^\dual)^{k-\ell}(e_2^\dual)^\ell}{\ell!(e_1^\dual\wedge e_2^{\dual})^{j}}\big)=
(-1)^{k-j}t^{k-j-\ell}\partial_q^{-1-\ell}
\big({\rm Hol}(f)\otimes v_1^{-k-2}\zeta_{\rm dR}^{-j-1}\big)(\tilde q)$$
dans\footnote{On fera attention qu'il peut y avoir des puissances n\'egatives
de $t$ dans ${\rm Hol}(f)$; plus pr\'ecis\'ement, il faut multiplier
${\rm Hol}(f)$ par $t^{k+1-j}$ pour avoir une s\'erie
\`a coefficients dans $\bdr^+$.} $\bdr^-\{\tilde q\}$.
\end{theo}
\begin{proof}
On choisit $N$ tel que $f$ soit de niveau $N$.
Pour calculer $\iota_{{\rm ES},p}(f)_{Z(0)}$, on doit r\'esoudre l'\'equation
diff\'erentielle $\nabla g=f$ sur $Z(N)_C$.
Une solution naturelle est\footnote{Voir les ${\rm n}^{\rm os}$~\ref{qq22} et~\ref{como112.2}
pour les notations.} 
$$g=-\sum_{n\geq 1}\partial^{n-1}\tfrac{f}{\zeta_{\rm dR}v_1^2}\tfrac{(-u)^n}{n!}$$
car, quand on applique $\nabla$, les termes se compensent deux par deux, et il
ne reste que le morceau $\frac{f}{\zeta_{\rm dR}v_1^2}\frac{du}{1!}=f$ (cf.\,(\ref{qq22.1}))
du terme pour $n=1$.

Le $1$-cocycle sur le $\pi_1$ de $Z(N)_C$ d\'efinissant $\iota_{{\rm ES},p}(f)_{Z(0)}$ est alors
$\sigma\mapsto(\sigma-1)g$, mais il est apparent sur la formule
d\'efinissant $g$ que cette action se factorise par $\UU(N\cZ)$
et que ce cocycle est compl\`etement d\'etermin\'e par sa valeur
sur le g\'en\'erateur $\gamma^N$ de~$\UU(N\cZ)$.

Si $F=\partial ^{-1}\frac{f}{\zeta_{\rm dR}v_1^2}$, on a
\begin{align*}
\gamma^N(g)-g&=\sum_{n\geq 0}\partial ^nF(q)\big(\tfrac{(-u-Nt)^n}{n!}-\tfrac{(-u)^n}{n!}\big)\\
&=F(qe^{-u-Nt})-F(qe^{-u})=F(\tilde q e^{-Nt})-F(\tilde q)=(\gamma^N-1)\cdot F(\tilde q)
\end{align*}
(Le passage de la premi\`ere ligne \`a la seconde est 
la formule de Taylor logarithmique:
$\partial:=\frac{\nabla}{du}=q\frac{dq}{q}$ et $u=\log({q}/{\tilde q})$;
on obtient donc les valeurs en $q e^{-u}=\tilde q$ et $q e^{-u-Nt}=\tilde qe^{-Nt}=\gamma^N(\tilde q)$. 
En particulier, 
$v_1(\tilde q e^{-Nt})=v_1(\tilde q)=-te_1$: comme 
$v_1(q)=ue_2-te_1$, et $v_2(q)=e_2$, on a $\sum_{n\geq 0}(\frac{\nabla}{du})^{n} v_1\frac{(-u)^n}{n!}=
v_1-u\frac{\nabla}{du}v_1=(ue_2-te_1)-ue_2=-te_1$.)  

On peut \'ecrire $f$, sur $Z(0)$, sous la forme
$$f=\sum_{i=0}^kt^{i-(k+1-j)}f_i\otimes v_1^{k+2-i}v_2^i\zeta_{\rm dR}^{j+1},$$
avec $f_i\in\bdr^+\{q\}$.
Une int\'egration par partie
(on int\`egre $\partial_q^{-i}f$ et on d\'erive $v_1^{k-i}v_2^{i}$)
fournit la formule
$$F=\sum_{i=0}^k t^{i-(k+1-j)}\big(\sum_{r=0}^{k-i}(-1)^r\tfrac{(k-i)!}{(k-i-r)!}
\partial_q ^{-1-r}f_i\otimes v_1^{k-i-r}v_2^{i+r}\zeta_{\rm dR}^j\big)$$
Pour \'evaluer en $\tilde q$, on utilise la formule $v_1(\tilde q)=-te_1$ et $v_2=e_2$.

Maintenant, il r\'esulte du lemme~\ref{cano108} que $\log_B(\iota_{{\rm ES},p}(f)_{Z(0)}\otimes \check v)=
\langle\check v, F(\tilde q)\rangle$.
On en d\'eduit, en utilisant les formules
$\langle (e_1^\dual\wedge e_2^{\dual})^{-j},\zeta_{\rm dR}^j\rangle=(-t)^{-j}$,
$$\langle (e_1^\dual)^\ell(e_2^\dual)^{k-\ell},e_1^ie_2^{k-i}\rangle=
\begin{cases} 0& {\text{si $i\neq \ell$,}}\\
\frac{\ell!\,(k-\ell)!}{k!}&{\text{si $i=\ell$,}}\end{cases}$$
(par d\'efinition $\langle \check v_1\cdots\check v_k,v_1\cdots v_k\rangle=\frac{1}{k!}
\sum_{\sigma}\langle \check v_1,v_{\sigma(1)}\rangle\cdots \langle \check v_k,v_{\sigma(k)}\rangle$),
que l'on a
\begin{align*}
\log_B\big(\iota_{{\rm ES},p}(f)_{Z(0)}\otimes
\tfrac{(e_1^\dual)^{k-\ell}(e_2^\dual)^\ell}{\ell!\,(e_1^\dual\wedge e_2^{\dual})^{j}}\big)
=
\tfrac{1}{\ell!}\sum_{i=0}^{\ell}(-1)^{k-i-j}t^{-1-\ell+i}
\tfrac{(k-i)!}{(k-\ell)!}\partial_q^{-1-\ell+i}f_i \tfrac{\ell!\,(k-\ell)!}{k!}.
\end{align*}
(La formule ci-dessus fait que seuls les termes avec $r=\ell-i$ contribuent; l'exposant
de $t$ est $(i-(k+1-j))+(k-i-r)+(-j)$, le second terme venant de $v_1^{k-i-r}$ et le troisi\`eme
de $\langle (e_1^\dual\wedge e_2^{\dual})^{-j},\zeta_{\rm dR}^j\rangle$;
l'exposant de $(-1)$ est $r+(k-i-r)+(-j)$, le second terme venant de $v_1^{k-i-r}$ et le troisi\`eme
de $\langle (e_1^\dual\wedge e_2^{\dual})^{-j},\zeta_{\rm dR}^j\rangle$.)
La somme ci-dessus ne change pas, dans $\bdr^-\{\tilde q\}$, si on remplace
$\sum_{i=0}^{\ell}$ par $\sum_{i=0}^k$ car l'exposant de $t$ est~$\geq 0$ si $i\geq \ell+1$.
Le r\'esultat s'en d\'eduit donc
en comparant la formule ci-dessus (apr\`es simplification)
avec celle de la prop.~\ref{bato14}.
\end{proof}
\begin{rema}
On aurait pu simplifier un peu le calcul en utilisant le fait que $\iota_{{\rm ES},p}$
se factorise \`a travers la projection holomorphe, mais il est rassurant de voir ceci appara\^{\i}tre
sur le calcul.
\end{rema}

\Subsection{Les vecteurs localement alg\'ebriques de la cohomologie compl\'et\'ee}\label{Yqq25}
\subsubsection{$q$-d\'eveloppements}
Soit $\Lambda=L,\O_L$. On \index{AQ@\taq}pose (cf.~\no\ref{qq16}):
$$\tA^-[[\tilde q^{\Q_+}]]\boxtimes\Z_p^\dual:=\hskip-2mm\varinjlim_{(N,p)=1}
\hskip-2mm\tA^-[[\tilde q^{1/N}]]\boxtimes\Z_p^\dual,
\quad
\Lambda\point\tA^-[[\tilde q^{\Q_+}]]\boxtimes\Z_p^\dual
:=\Lambda\otimes_{\Z_p}(\tA^-[[\tilde q^{\Q_+}]]\boxtimes\Z_p^\dual)$$
Le \index{galois2@\tGG}sous-groupe
$$\widetilde G_{\Q}=\{(\sigma,g)\in G_{\Q}\times \PP(\A^{]\infty[}),\ \det g=\cy(\sigma)\}=\big\{
\matrice{\sigma}{b}{0}{1},\ \sigma\in G_{\Q},\ b\in\A^{]\infty[}\big\}$$
de $G_\Q\times\GG(\A^{]\infty[})$ stabilise $Z(0)_C$.
On note $\widetilde G_{\Q_p}$ le sous-groupe de $\widetilde G_{\Q}$,
$$\widetilde G_{\Q_p}=\big\{
\matrice{\sigma}{b}{0}{1},\ \sigma\in G_{\Q_p},\ b\in\Q_p\big\}$$
Notons que $\widetilde G_{\Q_p}$ admet $G_{\Q_p}$ et $\matrice{\Z_p^\dual}{\Q_p}{0}{1}$
comme quotients.

\noindent
$\bullet$ {\it L'application $\iota_q$}.---
D'apr\`es le (i) de la rem.~\ref{cano1.31} (avec $(\rho_T^\diamond)^\clubsuit=\rho_T^\dual$), 
on dispose d'une injection naturelle
$$\iota:\Pi_p(\rho_T^\diamond)\hookrightarrow (\tA^-\otimes_{\Z_p}\rho_T^\dual)^H$$
 qui commute
\`a l'action de $\PP(\Q_p)$:
\begin{equation}\label{Yqq25.4}
\iota\big(\matrice{p^ka}{b}{0}{1}_p\star v)=
[\epsilon^b]\varphi^k\circ\sigma_a(\iota(v)),\quad{\text{si $k\in\Z$, $a\in\Z_p^\dual$ et $b\in\Q_p$}}.
\end{equation}
On en d\'eduit un accouplement naturel
$\langle\ ,\ \rangle:\rho_T\otimes_T\Pi_p(\rho_T^\diamond)\to \O_L\point\tA^-$.
Cela permet de \index{iotaq@\iotaq}d\'efinir
$$\iota_{q}:\rho_T\otimes_T\Pi(\rho_T^\diamond)\to \O_L\point\tA^-[[\tilde q^{\Q_+}]]\boxtimes\Z_p^\dual$$
donn\'ee, si $\gamma\in\rho_T$, $\phi^{]p[}\in\Pi^{]p[}(\rho_T^\diamond)$
et $v\in \Pi_p(\rho_T^\diamond)$, par
$$\gamma\otimes (\phi^{]p[}\otimes v) \mapsto
\sum_{n>0,\,v_p(n)=0}\big\langle\gamma,
\iota\big(\matrice{n}{0}{0}{1}_p\star (\phi^{]p[}(n)v)\big)\big\rangle \,\tilde q^n.$$
\begin{lemm}
Alors $\iota_q$ est $\widetilde G_{\Q_p}$-\'equivariant, si on fait agir
$\widetilde G_{\Q_p}$ \`a travers $G_{\Q_p}$ sur $\rho_T$, 
\`a travers $\matrice{\Z_p^\dual}{\Q_p}{0}{1}\subset\GG(\Q_p)$
sur $\Pi(\rho_T^\diamond)$, et par 
$\matrice{\sigma}{b}{0}{1}\cdot\big(\sum_na_n\tilde q^n\big)=\sum_n\sigma(a_n)[\epsilon^{nb}]\,\tilde q^n$.
\end{lemm}
\begin{proof}
En effet,
$$\matrice{\sigma}{b}{0}{1}\cdot(\gamma\otimes v)=
\sigma(\gamma)\otimes \matrice{\chi(\sigma)}{b}{0}{1}_p \star v$$
tandis que (l'apparition de $\chi(\sigma)=\cyp (\sigma)$ 
vient de la formule~(\ref{Yqq25.4}) ci-dessus)
\begin{align*}
\matrice{\sigma}{b}{0}{1}\cdot\big(\sum\big\langle\gamma,
\iota\big(\matrice{n}{0}{0}{1}_p\star \phi^{]p[}(n)v\big)\big\rangle \,\tilde q^n\big)&=
\sum\big\langle\sigma(\gamma),
\iota\big(\matrice{\chi(\sigma)n}{0}{0}{1}_p\star \phi^{]p[}(n)v\big)\big\rangle 
[\epsilon^{bn}]\,\tilde q^n\\
&=\sum\big\langle\sigma(\gamma),
\iota\big(\matrice{\chi(\sigma)n}{bn}{0}{1}_p\star \phi^{]p[}(n)v\big)\big\rangle \,\tilde q^n
\end{align*}
\end{proof}

\noindent
$\bullet$ {\it Les applications ${\rm Res}$ et $\iota_{\tA}$}.---
Enfin, on \index{res@\RES}note 
$${\rm Res}:H^1_{{\eet}}(\widehat X^{(p)}(0)_C^\times,\Z_p)
\to H^1_{\eet}(\widehat Z^{(p)}(0)^\times_C,\Z_p)$$ l'application
naturelle (de restriction),
et on \index{iotaA@\iotaA}note encore $\iota_{\tA}$
l'application
$$\iota_{\tA}:H^1(\widehat Z^{(p)}(0)^\times_C,\Z_p)\to \tA^-[[\tilde q^{\Q_+}]]\boxtimes\Z_p^\dual$$
d\'eduite de~(\ref{bato40.4}) en passant \`a la limite sur les rev\^etements
$q\mapsto q^N$, pour $(N,p)=1$.  
Alors $\iota_\pi$ est, par d\'efinition, $G_\Q\times\GG(\A^{]\infty[})$-\'equivariante,
${\rm Res}$ est $\widetilde G_\Q$-\'equivariante et $\iota_{\tA}$ est
$\widetilde G_{\Q_p}$-\'equivariante.

\subsubsection{Sp\'ecialisation aux points classiques}
Si $\pi$ est cohomologique, 
la formule~(\ref{emoins}) et la compatibilit\'e entre les correspondances
locales $p$-adique et classique (th.\,\ref{Ki119}) fournissent
un \index{iotadr@\iotadr}isomorphisme
 $$\iota_{{\rm dR}}^-:\pi_p^{\rm alg}\cong \Pi_p(m_{\eet}(\pi)^\dual)^{\rm alg},
\quad v\mapsto v\otimes (\iota_{{\rm dR},\check\pi}^-\otimes\zeta_{\rm dR}).$$
Comme $m_{\eet}(\pi)\otimes \Pi_p(m_{\eet}^\dual(\pi))$ est un sous-objet de
$\rho_T\otimes_T\Pi_p(\rho_T^\diamond)[\frac{1}{p}]$ (cf.~(i) de la rem.\,\ref{YEU31.5}), cela 
fournit $$\iota_q\circ\iota_{\rm dR}^-:m_{\eet}(\pi)\otimes\pi^{\rm alg}\to
L\point\tA^-[[\tilde q^{\Q_+}]]\boxtimes\Z_p^\dual.$$
Si $\pi$ est de poids $(k+2,j+1)$,
on dispose d'une fl\`eche naturelle ((ii) de la rem.\,\ref{YEU31.5})
$$\iota_\pi :m_{\eet}(\pi)\otimes \pi^{\rm alg}\to \iH^1(W_{k,j}^{\eet})\otimes W_{k,j}^\dual\to
H^1_{\eet}(\widehat X^{(p)}(0)_C^\times,\Q_p)^{U-{\rm fini}}.$$

\begin{theo}\phantomsection\label{Ybato21}
Si $\pi$ est cohomologique de poids $(k+2,j+1)$,
alors
$$\iota_{\tA}\circ{\rm Res}\circ\iota_\pi =(-1)^{k-j}\iota_q\circ \iota_{{\rm dR}}^-.$$
\end{theo}
\begin{proof}
Les deux fl\`eches tombent dans $(L\point\tA^-[[\tilde q^{\Q_+}]]\boxtimes\Z_p^\dual)^{U-{\rm fini}}$, 
ce qui permet de les composer avec l'injection $\kappa_{\rm dR}$ du \no\ref{qq16}, et
il suffit de v\'erifier le m\^eme \'enonc\'e apr\`es composition avec $\kappa_{\rm dR}$, auquel cas
on peut \'etendre les deux membres par $\bdr^+$-lin\'earit\'e en des fl\`eches
$\bdr^+\otimes m_{\eet}(\pi)\otimes \pi^{\rm alg}\to L\point\bdr^-\{\tilde q\}$.

Le lemme~\ref{Ybato5} ci-dessous
montre qu'il suffit de v\'erifier que les deux fl\`eches co\"{\i}ncident
sur $t^{j-k-1}\iota_{{\rm dR},\pi}^+\otimes \pi^{\rm alg}$ 
(on a $t^{j-k-1}\iota_{{\rm dR},\pi}^+\in \bdr^+\otimes m_{\eet}(\pi)$ et $t^{j-k-1}\iota_{{\rm dR},\pi}^+$
n'est pas divisible par $t$ dans $\bdr^+\otimes m_{\eet}(\pi)$).
Par lin\'earit\'e, on peut se restreindre \`a $\phi$ de la forme
$\phi^{]p[}\otimes\phi_p\otimes \frac{(e_1^\dual)^{k-\ell}(e_2^\dual)^\ell}{\ell!\,(e_1^\dual\wedge e_2^\dual)^j}$
qui correspond \`a $\phi^{]p[}\otimes\phi_pX^{k-\ell-j}$ dans le mod\`ele de Kirillov de
$\pi^{\rm alg}$.

$\bullet$ Par d\'efinition de l'application $\kappa_{\rm dR}$, on a
(le coefficient de $\tilde q^{p^kn}$ du membre de droite est vu comme un \'el\'ement
de $L\point\bdr^-$ via l'inclusion $\tA^+[\frac{1}{[\epsilon^{p^k}]-1},\,k\in\Z]\hookrightarrow \bdr$):
$$\kappa_{\rm dR}\circ\iota_q(\gamma\otimes\phi^{]p[}\otimes v)=
\sum_{k\in\Z}
\sum_{n>0,\,v_p(n)=0}\big\langle\gamma,\phi^{]p[}(n)
\iota\big(\matrice{p^kn}{0}{0}{1}_pv\big)\big\rangle\, \tilde q^{p^kn}.$$
(L'apparition de $\matrice{p^kn}{0}{0}{1}_p$ vient de ce que
$\varphi^k(\iota(v))=\iota(\matrice{p^k}{0}{0}{1}v)$, si $v\in \Pi_p(\rho_T^\diamond)$,
et celle de $\tilde q^{p^kn}$ de ce que $\varphi^k(\tilde q^n)=\tilde q^{p^kn}$.)
Maintenant, si $v\in \Pi_p(m_{\eet}(\pi)^\dual)^{\rm alg}$, alors
$\iota\big(\matrice{p^kn}{0}{0}{1}_pv\big)={\cal K}_{v}(p^kn)$, par d\'efinition
de ${\cal K}_v$ (cf.~\no\ref{valg3}). On en d\'eduit que
$$\iota\big(\matrice{x}{0}{0}{1}_p\iota_{\rm dR}^-(\phi_pX^{k-\ell-j})\big)=
 (xt)^{k-\ell-j}\phi_p(x)(\iota_{{\rm dR},\check\pi}^-\otimes\zeta_{\rm dR}).$$
Il s'ensuit que
$$\kappa_{\rm dR}\circ\iota_q\circ\iota_{\rm dR}^-(t^{j-k-1}\iota_{{\rm dR},\pi}^+\otimes\phi)=
t^{j-k-1}\langle \iota_{{\rm dR},\pi}^+,\iota_{{\rm dR},\check\pi}^-\otimes\zeta_{\rm dR}\rangle\sum_{n>0}
\phi^{]p[}(n)\phi_p(n)(nt)^{k-\ell-j}\tilde q^n.$$
Par d\'efinition, $\langle \iota_{{\rm dR},\pi}^+,\iota_{{\rm dR},\check\pi}^-\otimes\zeta_{\rm dR}\rangle=1$.
Par ailleurs, 
si $a_n=n^{k+1-j}\phi(n)$, la
s\'erie $\sum_{n>0}a_nq^n$ est le $q$-d\'eveloppement d'une forme
modulaire $f_\phi$, et on a
\begin{align*}
\kappa_{\rm dR}\circ\iota_q\circ\iota_{\rm dR}^-(t^{j-k-1}\iota_{{\rm dR},\pi}^+\otimes\phi)& =
t^{j-k-1}\sum\nolimits_{n>0}a_nn^{j-k-1}(tn)^{k-\ell-j}\tilde q^n\\ &=t^{-\ell-1}\partial_q^{-1-\ell}f_\phi
\end{align*}

$\bullet$ Le th.\,\ref{bato22} permet de remplacer $\kappa_{\rm dR}\circ{\iota_{\tA}}$ par $\log_B $.
L'application $\iota_\pi \circ t^{j-k-1}\iota_{{\rm dR},\pi}^+$ est la compos\'ee de\footnote{Notons
que $t^{j-k-1}\iH^0(\omega^{k+2,j+1})\subset{\rm Fil}^0(\bdr\otimes\iH^0(W_{k+2,j+1}^{\rm dR}))$.}:

\quad $\diamond$ $t^{j-k-1}\iota_{{\rm dR},\pi}^+\otimes{\rm id}:\pi^{\rm alg}=\pi\otimes W_{k,j}^\dual\to
t^{j-k-1}\iH^0(\omega^{k+2,j+1})\otimes W_{k,j}^\dual$,

\quad $\diamond$ $\iota_{{\rm ES},p}\otimes{\rm id}:t^{j-k-1}\iH^0(\omega^{k+2,j+1})\otimes W_{k,j}^\dual\to
\iH^1_{\proet}(\Bdr^+\otimes_{\Q_p}W_{k,j}^{\eet}) \otimes W_{k,j}^\dual$.

La premi\`ere envoie $\phi$ sur 
$\big( t^{j-k-1}f_\phi\otimes v_1^{k+2}\zeta_{\rm dR}^{j+1})
\otimes \frac{(e_1^\dual)^{k-\ell}(e_2^\dual)^\ell}{\ell!\,(e_1^\dual\wedge e_2^\dual)^j}$.
Le th.~\ref{Ybato13} fournit alors la formule
\begin{align*}
{\rm log}_B({\rm Res}(\iota_\pi \circ t^{j-k-1}\iota_{{\rm dR},\pi}^+(\phi)))&=
 t^{j-k-1}(-1)^{k-j}t^{k-\ell-j}\partial_q^{-1-\ell}f_\phi\\ &=
 (-1)^{k-j}t^{-\ell-1}\partial_q^{-1-\ell}f_\phi
\end{align*}
(On a ${\rm Hol}(f_\phi\otimes v_1^{k+2}\zeta_{\rm dR}^{j+1})=
f_\phi\otimes v_1^{k+2}\zeta_{\rm dR}^{j+1}$ puisque cette forme est holomorphe.)

Une comparaison des deux formules permet de conclure.
\end{proof}

On identifie $G_{\Q_p}$ au sous-groupe de $\widetilde G_{\Q_p}$ des $\matrice{\sigma}{0}{0}{1}$.
\begin{lemm}\phantomsection\label{Ybato5}
Si $\alpha:\bdr^+\otimes m_{\eet}(\pi)\otimes\pi_p^{\rm alg}\to L\point\bdr^-$ 
est $\bdr^+$-lin\'eaire et $G_{\Q_p}$-\'equivariant,
et si $\alpha(t^{j-k-1}\iota_{{\rm dR},\pi}^+\otimes\pi_p^{\rm alg})=0$, alors $\alpha=0$.
\end{lemm}
\begin{proof}
Les poids de $m_{\eet}(\pi)$ sont $j$ et $j-k-1$.
L'action de $G_{\Q_p}$ sur $\pi_p^{\rm alg}$ se factorise \`a travers $\matrice{\Z_p^\dual}{0}{0}{1}$,
et
on peut (apr\`es extension de $L$ \`a $L(\bmu_{p^\infty})$)
d\'ecomposer $\pi_p^{\rm alg}$ comme une somme directe de rep\'esentations
de $G_{\Q_p}$ de la forme $\eta\cyp ^{i-j}$, o\`u $\eta$ est un caract\`ere d'ordre
fini et $0\leq i\leq k$.
On en d\'eduit que $m_{\eet}(\pi)\otimes\pi_p^{\rm alg}$ se d\'ecompose en repr\'esentations
de dimension~$2$, de la forme $V_{i,\eta}=m_{\eet}(\pi)\otimes \eta\cyp ^{i-j}$,
dont un poids est~$\geq 0$ et l'autre est $<0$.
La restriction de $\alpha$ \`a $V_{i,\eta}$ appartient
\`a $(\bdr^-\otimes V_{i,\eta}^\dual)^{G_{\Q_p}}$ qui est de dimension~$1$
car $V_{i,\eta}^\dual$ a un poids~$\leq 0$ et l'autre~$>0$.
Il s'ensuit que $\alpha$ est un multiple de 
$t^{i-j}G(\eta)(\iota_{{\rm dR},\check\pi}^-\otimes\zeta_{\rm dR})$.
L'hypoth\`ese implique que $\alpha$ s'annule sur $t^{j-k-1}G(\eta^{-1})\iota_{{\rm dR},\pi}^+\in
\bdr^+\otimes V_{i,\eta}$ et donc $\alpha=0$
car $$\langle t^{i-j}G(\eta)\iota_{{\rm dR},\check\pi}^-\otimes\zeta_{\rm dR},
t^{j-k-1}G(\eta^{-1})\iota_{{\rm dR},\pi}^+\rangle=t^{i-k-1}G(\eta)G(\eta^{-1})$$
n'est pas $0$ dans $\bdr^-$
puisque $i\leq k$.
\end{proof}

\subsubsection{Interpr\'etation en termes de mod\`eles de Kirillov}
Le th.\,\ref{Ybato21} se traduit de mani\`ere agr\'eable en termes des mod\`eles de Kirillov.
\begin{rema}
Soient $1_T$ l'unit\'e de $T$ et $1^{]\infty[}$ l'\'el\'ement neutre de $\Aidu$.

{\rm (i)} Si $v\in \rho_T\otimes\Pi(\rho_T^\diamond)$, on a
$$\iota_q(v)=\sum_{n>0,\,v_p(n)=0}{\cal K}_{\rm Aut}^T\big(\matrice{n}{0}{0}{1}^{]\infty[}\star v\big)
(1_T,1^{]\infty[})\,\tilde q^n$$

{\rm (ii)} Si $v\in H^1_{{\eet}}(\widehat X^{(p)}(0),\Z_p)$, on a
$$\iota_{\tA}\circ{\rm Res}(v)=
\sum_{n>0,\,v_p(n)=0}{\cal K}_{H}^\TT\big(\matrice{n}{0}{0}{1}^{]\infty[}\star v\big)
(1_T,1^{]\infty[})\,\tilde q^n$$
\end{rema}
On d\'eduit de cette remarque et du th.\,\ref{Ybato21} le r\'esultat suivant.
\begin{coro}\phantomsection\label{Ybato221}
Si $\pi$ est cohomologique de poids $(k+2,j+1)$, alors
$${\cal K}_H^\TT\circ\iota_\pi=(-1)^{k-j}{\cal K}_{\rm Aut}^T\circ\iota_{\rm dR}^-$$
\end{coro}
\begin{proof}
Si on applique les deux membres \`a $v\in m_{\eet}(\pi)\otimes\pi^{\rm alg}$,
on obtient des fonctions sur $T\times\Aidu$ (le membre de gauche est une fonction
sur $\TT\times\Aidu$ mais se factorise \`a travers $T\times\Aidu$ sur l'image de $\iota_\pi$).
Le lien entre $q$-d\'eveloppement et mod\`ele de Kirillov, combin\'e avec le th.\,\ref{Ybato21},
montre que ces fonctions co\"{\i}ncident sur $\{1_T\}\times\Aidu$.  On conclut en remarquant
que $T$ agit par le m\^eme caract\`ere (celui permettant d'\'ecrire $m_{\eet}(\pi)$ comme une
sp\'ecialisation de $\rho_T$) sur les images de $\iota_\pi$ et~$\iota_{\rm dR}^-$.
\end{proof}

\Subsection{Une version explicite de la factorisation d'Emerton}\label{Yqq1}
\subsubsection{Le cas g\'en\'erique}
On dit que {\it ${\goth m}$ est g\'en\'erique} (resp.~{\it tr\`es g\'en\'erique})
si ${\goth m}$ est non-eisenstein, et
si la restriction de $\overline\rho_{\goth m}$ \`a $G_{\Q_p}$
n'est pas de la forme $\chi\otimes\matrice{1}{0}{0}{1}$
(resp.~pas de la forme $\chi\otimes\matrice{1}{*}{0}{1}$).

\begin{theo}\phantomsection\label{Ycano110}
{\rm (i)}
Si ${\goth m}$ est g\'en\'erique,
il \index{lat@\lambdat}existe un unique isomorphisme 
$$\iota_T:\rho_T\otimes_T\Pi(\rho_T^\diamond)\overset{\sim}{\to}
H^1[\rho_T]$$
de $T[G_{\Q}\times \GG(\Ai)]$-modules,
tel que ${\cal K}_H^\GG\circ\iota_T={\cal K}_{\rm Aut}^\GG$.

{\rm (ii)} Pour tout $x\in{\cal X}^{\rm cl}$, 
avec $\rho_x=m_{\eet}(\pi)$ et $\pi$ de poids $(k+2,j+1)$, on a un diagramme commutatif
$$\xymatrix@C=.8cm@R=.5cm{
(H^1[\rho_T])[\tfrac{1}{p}]& \rho_T\otimes_T\Pi(\rho_T^\diamond)[\tfrac{1}{p}]\ar[l]_-{\iota_T}\\
m_{\eet}(\pi)\otimes\pi^{\rm alg}\ar[u]^-{(-1)^{k-j}\iota_{\pi}}\ar[r]^-{\iota_{\rm dR}^-}&
m_{\eet}(\pi)\otimes\Pi(m_{\eet}^\dual(\pi))\ar@{^{(}->}[u]}$$
\end{theo}
\begin{proof}
D'apr\`es le th.\,\ref{cdn3}
et la prop.\,\ref{glob3},
${\cal K}_H^\GG$ et ${\cal K}_{\rm Aut}^\GG$ induisent des isom\'etries
sur leurs images respectives; en particulier, ces images sont ferm\'ees puisque
les espaces de d\'epart sont complets.

Par ailleurs, il r\'esulte du cor.\,\ref{Ybato221}
que, si $0\leq j\leq k$, les images des vecteurs localement alg\'ebriques de poids $(k+2,j+1)$
de $H^1[\rho_T]$ et $\rho_T\otimes_T\Pi(\rho_T^\diamond)$
par ${\cal K}_H^\GG$ et ${\cal K}_{\rm Aut}^\GG$
sont les m\^emes. Comme ces vecteurs sont denses dans les deux espaces
(pour $H^1[\rho_T]$ c'est d\'ej\`a le cas en se restreignant \`a $j=k=0$ ou aux vecteurs $\GG(\Z_p)$-alg\'ebriques,
cf.~prop.\,\ref{cen5}, et pour $\Pi(\rho_T^\diamond)$, cela r\'esulte du th.\,\ref{intox2} et de la zariski-densit\'e
des points classiques), et comme les images
de ${\cal K}_H^\GG$ et ${\cal K}_{\rm Aut}^\GG$
sont ferm\'ees, on en d\'eduit que ces images 
sont les m\^emes. 

On peut (et doit) donc d\'efinir $\iota_T$ comme \'etant 
$({\cal K}_{H}^\GG)^{-1}\circ {\cal K}_{\rm Aut}^\GG$, o\`u $({\cal K}_{H}^\GG)^{-1}$
est l'inverse de ${\cal K}_{H}^\GG$
sur l'image de ${\cal K}_{H}^\GG$.
Ceci prouve le (i).

Le (ii) est une r\'e\'ecriture du cor.\,\ref{Ybato221} (en utilisant le fait que 
${\cal K}_H^\GG$ et ${\cal K}_{\rm Aut}^\GG$ sont obtenus en rendant $\GG(\Ai)\times G_\Q$-invariants
${\cal K}_H^\TT$ et ${\cal K}_{\rm Aut}^T$).
\end{proof}

\begin{rema}\phantomsection\label{Ycorr1}
Le th.\,\ref{Ycano110}, coupl\'e avec la construction de $\Pi(\rho_T^\diamond)$
fournit une factorisation de la cohomologie compl\'et\'ee.
Qu'une telle factorisation puisse exister est une intuition d'Emerton~\cite{Em08}.
Les diff\'erences entre les r\'esultats d'Emerton et les notres sont les suivants:

$\bullet$ L'action de $\GG(\Ai)$ diff\`ere de $g\mapsto {^tg}^{-1}$, ce qui fait que
nous obtenons $\rho_T\otimes\Pi(\rho_T^\diamond)$ au lieu de $\rho_T\otimes\Pi(\rho_T)$.

$\bullet$  Emerton obtient un isomorphisme unique \`a isomorphisme pr\`es de chacun des deux membres
alors que le notre est uniquement d\'etermin\'e (une des raisons qui rendent cette unicit\'e
possible est notre choix d'action de $\GG(\Ai)$).

$\bullet$ Les objets qui apparaissent dans notre factorisation de $\Pi(\rho_T^\diamond)$ sont
les duaux de ceux d'Emerton (par exemple, son $\Pi_p(\rho_T^\diamond)$ est sans $T$-torsion alors
que la $T$-torsion est dense dans la notre).  Que l'on obtienne quand-m\^eme le m\^eme objet
est d\^u au fait que, si $A$ et $B$ sont des $T$-modules libres, alors $(A\otimes_TB)^\dual$
peut se d\'ecrire comme \'etant
$A^\diamond\otimes_TB^\dual$ ou $A^\dual\otimes_TB^\diamond$.

$\bullet$ Le th.\,\ref{Ycano110} est un peu plus g\'en\'eral que celui d'Emerton: nous n'avons besoin
que de ce que $\overline\rho_{\goth m}$ soit g\'en\'erique, alors qu'il a besoin
qu'elle soit tr\`es g\'en\'erique.
\end{rema}

\subsubsection{Le cas non-eisenstein}
Si on suppose seulement que ${\goth m}$ est non-eisenstein, on a un r\'esultat un peu plus
faible (i.e.~on est forc\'e d'inverser~$p$).
\begin{theo}\phantomsection\label{NE2}
{\rm (i)}
Si ${\goth m}$ est non-eisenstein,
il \index{lat@\lambdat}existe un unique isomorphisme 
$$\iota_T:\rho_T\otimes_T\Pi(\rho_T^\diamond)[\tfrac{1}{p}]\overset{\sim}{\to}
(H^1[\rho_T])[\tfrac{1}{p}]$$
de $T[G_{\Q}\times \GG(\Ai)]$-modules,
tel que ${\cal K}_H^\GG\circ\iota_T={\cal K}_{\rm Aut}^\GG$.

{\rm (ii)} Pour tout $x\in{\cal X}^{\rm cl}$, 
avec $\rho_x=m_{\eet}(\pi)$ et $\pi$ de poids $(k+2,j+1)$, on a un diagramme commutatif
$$\xymatrix@C=.8cm@R=.5cm{
(H^1[\rho_T])[\tfrac{1}{p}]& \rho_T\otimes_T\Pi(\rho_T^\diamond)[\tfrac{1}{p}]\ar[l]_-{\iota_T}\\
m_{\eet}(\pi)\otimes\pi^{\rm alg}\ar[u]^-{(-1)^{k-j}\iota_{\pi}}\ar[r]^-{\iota_{\rm dR}^-}&
m_{\eet}(\pi)\otimes\Pi(m_{\eet}^\dual(\pi))\ar@{^{(}->}[u]}$$
\end{theo}
\begin{proof}
Comme $\Pi(\rho_T^\diamond)[\frac{1}{p}]$ est une limite inductive de repr\'esentations
admissibles de $\GG(\Q_p)$, la
prop.\,\ref{QQ3} ci-dessous implique que l'image de ${\cal K}_{\rm Aut}^\GG$ est ferm\'ee.
Par ailleurs,
${\cal K}_H^\GG$ induit une isom\'etrie sur son image qui est donc, elle aussi, ferm\'ee.
On en d\'eduit, comme dans la preuve du th.\,\ref{Ycano110}, en utilisant la densit\'e
des vecteurs alg\'ebriques et le cor.\,\ref{Ybato221}, que ces images sont les m\^emes.
On peut donc d\'efinir $\lambda_T:(H^1[\rho_T])[\tfrac{1}{p}]\to \rho_T\otimes_T\Pi(\rho_T^\diamond)[\frac{1}{p}]$
en posant $\lambda_T=({\cal K}_{\rm Aut}^\GG)^{-1}\circ {\cal K}_H^\GG$. Alors
$\lambda_T$ est surjective par construction, et pour prouver le (i), il s'agit de prouver
que $\lambda_T$ est injective (auquel cas, on peut poser $\iota_T=\lambda_T^{-1}$).

Il r\'esulte du th.\,\ref{igu8.2} que le noyau de $\lambda_T$ est support\'e sur
le lieu des ${\goth p}$ tels que l'op\'erateur de Sen de $\rho_{\goth p}$
soit scalaire (la condition $\rho_{\goth p}$ irr\'eductible est assur\'ee par l'hypoth\`ese
que ${\goth m}$ est non-eisenstein). Or il r\'esulte des r\'esultats de Pan~\cite[th.\,6.2.2]{pan2}
que ce lieu est de codimension~$\geq 2$ (c'est l'ensemble des twists de repr\'esentations
associ\'ees \`a des formes modulaires de poids 1, non ramifi\'ees en dehors de $S$).

Maintenant, on a une suite exacte
$$0\to {\rm Ker}\,\lambda_T\to (H^1[\rho_T])[\tfrac{1}{p}]
\to \rho_T\otimes\Pi(\rho_T^\diamond))[\tfrac{1}{p}]\to 0$$
gr\^ace \`a la surjectivit\'e de $\lambda_T$.
On peut dualiser cette suite exacte, et \'ecrire
$(H^1[\rho_T]^\dual)[\frac{1}{p}]$ comme une extension d'un $T[\frac{1}{p}]$-module de torsion 
$M_{\rm Sen}$, dont le support
(lieu o\`u l'op\'erateur de Sen est scalaire) est de codimension~$\geq 2$,
 par un $T[\frac{1}{p}]$-module limite
inductive de modules de la forme\footnote{Si ${\cal X}$ contient un point $p$-pathologique,
il faut remplacer $T^\N$ par un sous-objet dont le quotient est de type fini sur $T$
et support\'e sur le lieu $p$-pathologique. Comme ce lieu est disjoint du lieu
o\`u l'op\'erateur de Sen est scalaire, cela ne change rien \`a ce qui suit.}
$T^\N[\frac{1}{p}]$
(quitte \`a remplacer $T$ par une de ses composantes connexes).
Gr\^ace au th\'eor\`eme de l'image ouverte, on peut trouver un $T$-r\'eseau
de $((H^1[\rho_T])[\tfrac{1}{p}])^\dual$ extension par $T^\N$
d'un $T$-r\'eseau $M^+_{\rm Sen}$ de $M_{\rm Sen}$.
Comme le support est de codimension~$\geq 2$, une telle extension est scind\'ee,
et $M_{\rm Sen}$ s'identifie au sous-module de $T[\frac{1}{p}]$-torsion de 
$((H^1[\rho_T])[\tfrac{1}{p}])^\dual$; le scindage commute donc aux actions
de $G_{\Q}$ et $\GG(\Ai)$.

Il en r\'esulte que la suite exacte ci-dessus est scind\'ee, en tant que
$T[G_{\Q}\times \GG(\Ai)]$-module topologique. Mais ${\rm Ker}\,\lambda_T$ ne contient pas
de vecteurs localement alg\'ebriques non nuls (car les repr\'esentations de $G_\Q$ qui apparaissent
dans le socle sont des repr\'esentations associ\'ees \`a des (twists) de formes modulaires de poids~$1$,
et pas de poids~$\geq 2$), et comme les vecteurs localement alg\'ebriques sont denses
dans $(H^1[\rho_T])[\tfrac{1}{p}]$, on en d\'eduit que ${\rm Ker}\,\lambda_T=0$,
ce qui prouve le (i).

Le (ii) se prouve comme pour le th.\,\ref{Ycano110}.
\end{proof}

\begin{prop}\phantomsection\label{QQ3}
Soient $W_1$, $W_2$ des $L$-banachs munis d'actions continues de $\GG(\Q_p)$ et soit $f:W_1\to W_2$,
$\GG(\Q_p)$-\'equivariante, continue. Si $W_1$ est admissible, alors $f(W_1)$ est ferm\'ee
dans $W_2$.
\end{prop}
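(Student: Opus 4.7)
\emph{Plan de preuve.} L'id\'ee est d'utiliser la th\'eorie de Schneider-Teitelbaum des repr\'esentations de Banach admissibles. Fixons un sous-groupe ouvert compact uniforme pro-$p$ $K$ de $\GG(\Q_p)$ (par exemple $K=1+p\,{\bf M}_2(\Z_p)$, possible puisque $p\neq 2$). Rappelons qu'une $L$-repr\'esentation de Banach $W$ de $\GG(\Q_p)$ est admissible si et seulement si son dual continu $W^\dual$ est un $\O_L[[K]]$-module de type fini, et que la cat\'egorie des repr\'esentations de Banach admissibles est ab\'elienne (en anti-\'equivalence avec celle des $\O_L[[K]]$-modules de type fini).

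Quitte \`a remplacer $W_2$ par l'adh\'erence $\overline{f(W_1)}$ (qui est un sous-$L$-banach ferm\'e et $\GG(\Q_p)$-stable), on se ram\`ene au cas o\`u $f$ est d'image dense, et il s'agit alors de prouver que $f$ est surjective. Je v\'erifierai d'abord que $W_2$ est admissible: l'application duale $f^\dual:W_2^\dual\to W_1^\dual$ est $\O_L[[K]]$-lin\'eaire (par $\GG(\Q_p)$-\'equivariance de $f$) et injective (si $\ell\in W_2^\dual$ v\'erifie $\ell\circ f=0$, alors $\ell$ s'annule sur $f(W_1)$ et donc, par continuit\'e, sur $\overline{f(W_1)}=W_2$). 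L'alg\`ebre $\O_L[[K]]$ \'etant noeth\'erienne (Lazard) et $W_1^\dual$ de type fini sur cette alg\`ebre (par admissibilit\'e de $W_1$), on en d\'eduit que $W_2^\dual$ s'identifie \`a un sous-module de type fini, et donc que $W_2$ est admissible.

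La conclusion r\'esultera alors du caract\`ere ab\'elien de la cat\'egorie des repr\'esentations de Banach admissibles: le morphisme $f:W_1\to W_2$ y poss\`ede une image cat\'egorique $I\subset W_2$, qui est un sous-objet ferm\'e contenant l'image ensembliste $f(W_1)$, donc aussi son adh\'erence $\overline{f(W_1)}=W_2$; ainsi $f$ est surjective, et $f(W_1)$ (apr\`es retour \`a la situation de d\'epart) est ferm\'e dans le $W_2$ initial, \'egal \`a $\overline{f(W_1)}$.

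Le point d\'elicat est la deuxi\`eme \'etape: la caract\'erisation duale de l'admissibilit\'e et la noeth\'erianit\'e de $\O_L[[K]]$ pour $K$ uniforme pro-$p$ sont essentielles, et c'est \`a ce niveau que l'hypoth\`ese $p\neq 2$ (qui donne un tel $K$ explicite) intervient. Une fois l'admissibilit\'e transf\'er\'ee \`a $\overline{f(W_1)}$, tout devient formel gr\^ace au th\'eor\`eme de Schneider-Teitelbaum.
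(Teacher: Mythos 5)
Your proof is essentially correct and follows the same reductions as the paper (quotient $W_1$ by $\ker f$, replace $W_2$ by $\overline{f(W_1)}$, note $f^\dual$ is then an injective $\O_L[[K]]$-linear map into a finitely generated module over a noetherian ring), but you finish differently. The paper finishes elementarily: the submodule $f^\dual(W_2^\dual)\subset W_1^\dual$ is finitely generated (noetherianity), so its intersection with the unit ball is compact, so it is closed; being also dense (Hahn-Banach, since $f$ is injective), it is all of $W_1^\dual$, whence $f^\dual$ is bijective, hence an isomorphism of Banachs by the open mapping theorem, and $f$ likewise. You instead observe that $W_2=\overline{f(W_1)}$ has become admissible and invoke the abelianity of the category of admissible Banach representations (Schneider--Teitelbaum) to conclude that $f$ is strict. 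That works, but there is a mild risk of appearing circular: the strictness of morphisms in the admissible category (equivalently, the compatibility of its abelian structure with the underlying topology, so that ``sub-objects'' are genuinely closed subspaces) is precisely the closed-image statement for maps between two admissible Banach representations, i.e., the case of the proposition you just reduced to. The paper's argument avoids relying on this packaged theorem by proving the needed closedness directly from compactness, using only noetherianity of $\O_L[[K]]$ and Hahn--Banach. Two smaller remarks: your parenthetical about $p\neq 2$ being essential here is misleading --- Lazard's noetherianity of $\O_L[[K]]$ and the Schneider--Teitelbaum duality hold for every prime, since a uniform pro-$p$ open subgroup always exists (for $p=2$ take a deeper principal congruence subgroup); the blanket hypothesis $p\neq 2$ in this chapter is there for unrelated reasons. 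And the anti-equivalence is with finitely generated modules over $L\otimes_{\O_L}\O_L[[K]]$ (or, at the integral level, with finitely generated $\O_L$-torsion-free $\O_L[[K]]$-modules), not with all finitely generated $\O_L[[K]]$-modules.
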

\begin{proof}
Quitte \`a quotienter $W_1$ par le noyau de $f$ et remplacer $W_2$ par l'adh\'erence de l'image de $f$,
 on peut supposer que $f$ et $f^*:W_2^\dual\to W_1^\dual$
 sont injectives (et donc, par dualit\'e, gr\^ace au th\'eor\`eme de Hahn-Banach, sont d'images denses).
Maintenant,
comme $W_1$ est admissible, tout sous-$\GG(\Q_p)$-espace
$X$ de $W_1^\dual$ est ferm\'e (l'intersection $X_0$ avec la boule
unit\'e $W_{1,0}^\dual$ de $W_1^\dual$ est un $\O_L[[K]]$-module de type fini ($K$ est un sous-groupe ouvert
compact de $\GG(\Q_p)$) car $W_{1,0}^\dual$ l'est par admissibilit\'e de~$W_1$ et
$\O_L[[K]]$ est noeth\'erienne; il s'ensuit que $X_0$ est compact
et donc $X$ est complet et donc ferm\'e).
Donc $f^\dual$ est surjective et est un isomorphisme de duaux de banachs
par le th\'eor\`eme de l'image ouverte. Il s'ensuit que $f$ est un isomorphisme de banachs, ce qui
permet de conclure.
\end{proof}

\begin{rema}\phantomsection\label{Ycano110.5}
Si $x$ est classique avec $\rho_x=m_{\eet}(\pi)$, et si la restriction de $\rho_x$ \`a $G_{\Q_p}$ n'est pas une
somme de deux caract\`eres, il r\'esulte du (ii) du th.\,\ref{NE2} (ou~\ref{Ycano110})
que la restriction de $\iota_T$ \`a $\rho_x\otimes\Pi_S(\rho_x^\dual)$
est, au signe pr\`es, l'unique (rem.\,\ref{passomme1})
extension de l'application $\iota_{\pi}$ de la rem.\,\ref{YEU31.5}. Si $\rho_x$
est une somme de deux caract\`eres, alors cette restriction fournit
une extension privil\'egi\'ee de $\iota_{\pi}$.
\end{rema}

%
%
%
%

\part{Factorisation du syst\`eme de Beilinson-Kato}
\section{Symboles modulaires et \'el\'ement de Kato}\label{Symbo1}
Ce chapitre est consacr\'e \`a la preuve du th.\,\ref{intro2} comparant
l'\'el\'ement de Kato et le symbole modulaire $(0,\infty)$ pour une forme modulaire de poids~$\geq 2$
(ou plus g\'en\'eralement, une repr\'esentation cohomologique de $\GG(\A^{]\infty[})$).
La preuve (th.\,\ref{zk2}) 
n'est compl\`ete que dans le cas o\`u la repr\'esentation galoisienne associ\'ee
a une restriction \`a $G_{\Q_p}$ qui est irr\'eductible. Le cas g\'en\'eral
s'en d\'eduit par prolongement analytique, cf.\,\no\ref{geni9.4}.

\Subsection{$(0,\infty)$ dans le dual de la cohomologie compl\'et\'ee}\label{symb1}
On peut associer au {\it symbole modulaire}
$(a,b)$ une forme lin\'eaire continue sur
$H^1_c(\GG(\Q),{\cal C}(\GG(\A),\Q_p))$, encore not\'ee $(a,b)$: 
si $\gamma\mapsto \phi_\gamma$ est un $1$-cocycle
sur $\GG(\Q)$ \`a valeurs dans ${\cal C}(\GG(\A),\Q_p)$ et identiquement nul
sur $\BB(\Q)$, on pose 
$$\langle(a,b),\phi\rangle=(\phi_{\alpha_b}-\phi_{\alpha_a})(1),\quad
{\text{o\`u $\alpha_a,\alpha_b\in\GG(\Q)$ v\'erifient $\alpha_a(\infty)=a$, $\alpha_b(\infty)=b$,}}$$
o\`u $1=\matrice{1}{0}{0}{1}\in \GG(\A)$.
C'est donc la compos\'ee du \index{inf1@\oi}symbole modulaire $(a,b)$ du (ii) de la rem.~\ref{ES4}
avec la masse de Dirac en $1$.
En particulier,
$$\langle (0,\infty), (\gamma\mapsto \phi_\gamma)\rangle=-\phi_S(1),\quad
{\text{o\`u $S=\matrice{0}{1}{-1}{0}$.}}$$

\begin{rema}\phantomsection\label{ES10}
{\rm (invariance par extension des scalaires)}

Si $v$ est une place de $\Q$, et si $L$ est une extension finie de $\Q_v$,
alors $(0,\infty)$ sur $H^1_c(\GG(\Q),{\rm LP}(\GG(\A),L))$ est obtenu par
$L$-lin\'earit\'e \`a partir de $(0,\infty)$ sur $H^1_c(\GG(\Q),{\rm LP}(\GG(\A),\Q))$.
En particulier, la restriction de $(0,\infty)$ sur $H^1_c(\GG(\Q),{\rm LP}(\GG(\A),L))$
\`a $H^1_c(\GG(\Q),{\rm LP}(\GG(\A),\Q))$ ne d\'epend pas de $L$, ce qui permet
d'interpr\'eter $p$-adiquement des calculs sur $\C$.
\end{rema}

\begin{rema}\phantomsection\label{ES10.1}
{\rm (invariance par multiplication par un caract\`ere)}  

Soit $X(\GG(\A))$ un espace de fonctions sur $\GG(\A)$,
stable par translations \`a
gauche par $\GG(\Q)$ et \`a droite par $\GG(\A)$.
Si $\eta$ est un caract\`ere continu de $\A^\dual/\Q^\dual$ 
(\`a valeurs dans $\C^\dual$
ou $L^\dual$ suivant la situation), et si la multiplication par $\eta\circ\det$
stabilise $X(\GG(\A))$, elle commute \`a l'action de $\GG(\Q)$ et donc elle induit
un isomorphisme de $H^1_c(\GG(\Q),X(\GG(\A)))$.  Comme $\eta\circ\det(1)=1$,
on voit que $$\langle (0,\infty), (\gamma\mapsto (\eta\circ\det)\phi_\gamma)\rangle=
\langle (0,\infty), (\gamma\mapsto \phi_\gamma)\rangle,$$
i.e.~{\it $(0,\infty)$ est invariante par multiplication par un caract\`ere de
$\A^\dual/\Q^\dual$}.
\end{rema}

\begin{lemm}\phantomsection\label{np=p}
Si $a,b\in\piqp(\Q)$ et si $\gamma\in \GG(\Q)_+$, alors
$$\gamma^{]\infty[}\star(a,b)=(\gamma\cdot a,\gamma\cdot b).$$
En particulier,
$$\matrice{0}{-1}{1}{0}^{]\infty[}\star(0,\infty)=-(0,\infty)\quad{\rm et}\quad
\matrice{a}{0}{0}{1}^{]\infty[}\star(0,\infty)=(0,\infty),\ {\text{si $a\in\Q_+^\dual$.}}$$
\end{lemm}
\begin{proof}
Soit $\gamma\mapsto\phi_\gamma$ un $1$-cocycle sur $\GG(\Q)$, nul sur $\BB(\Q)$, \`a valeurs
dans ${\cal C}(\GG(A),L)$, et soit $\phi$ la classe de cohomologie \`a support compact
correspondante. 
On en d\'eduit:
\begin{align*}
\langle \gamma^{]\infty[}\star(a,b),\phi\rangle & =(\phi_{\alpha_b}-\phi_{\alpha_a})(1,(\gamma^{]\infty[})^{-1})
=(\phi_{\alpha_b}-\phi_{\alpha_a})(\gamma_\infty^{-1},(\gamma^{]\infty[})^{-1})\\
&=\big(\gamma*(\phi_{\alpha_b}-\phi_{\alpha_a})\big)(1)=\langle (\gamma(a),\gamma(b)),\phi\rangle
\end{align*}
(La premi\`ere \'egalit\'e vient de la formule $\langle g\star\mu,\phi\rangle=\langle \mu,g^{-1}\star\phi\rangle$,
de la d\'efinition de l'action $\star$ et de la formule pour $\langle(a,b),-\rangle$, la seconde
vient de ce que $\gamma_\infty^{-1}\in\GG(\R)_+$,  la troisi\`eme r\'esulte
de la d\'efinition de l'action $*$ et la derni\`ere de l'identit\'e
$\gamma*(\phi_{\alpha_b}-\phi_{\alpha_a})=
\phi_{\alpha_{\gamma\cdot b}}-\phi_{\alpha_{\gamma\cdot a}}$ qui est une cons\'equence du lemme~\ref{ES3}.)
\end{proof}
\begin{coro}\phantomsection\label{p=1}
Si $\phi\in H^1_c(\GG(\Q),{\cal C}(\GG(\A),L))$
et si $\ell\in{\cal P}$, alors
$$\big\langle \matrice{\ell}{0}{0}{1}_\ell\star (0,\infty),\phi\big\rangle
=\big\langle (0,\infty),\matrice{\ell}{0}{0}{1}^{]\ell,\infty[}\star\phi\big\rangle,$$
En particulier, si $\matrice{p}{0}{0}{1}^{]\infty,p[}\star\phi=\lambda\,\phi$,
avec $\lambda\in L$, alors
$$\big\langle \matrice{p}{0}{0}{1}_p\star(0,\infty),\phi\rangle=
\lambda\,\langle (0,\infty),\phi\big\rangle,$$
et si $\matrice{\ell}{0}{0}{1}^{]\infty,p[}\star\phi^{]p[}=\lambda\,\phi^{]p[}$, alors
$$\big\langle \matrice{\ell}{0}{0}{1}_\ell\star(0,\infty),\phi^{]p[}\otimes\phi_p\rangle=
\lambda\,\langle \matrice{\ell^{-1}}{0}{0}{1}_p\star(0,\infty),\phi^{]p[}\otimes\phi_p\big\rangle.$$
\end{coro}
\begin{proof}
La premi\`ere formule r\'esulte 
de ce que $\matrice{\ell}{0}{0}{1}^{]\ell,\infty[}\matrice{\ell}{0}{0}{1}_\ell
=\matrice{\ell}{0}{0}{1}^{]\infty[}$,
de ce que
$\big\langle \matrice{\ell}{0}{0}{1}^{]\ell,\infty[}\star\mu,
\matrice{\ell}{0}{0}{1}^{]\ell,\infty[}\star\phi\big\rangle=\langle\mu,\phi\rangle$, 
et du lemme~\ref{np=p}. 
La seconde en est une cons\'equence imm\'ediate,
et la troisi\`eme est une cons\'equence de la formule suivante qui est aussi une
cons\'equence imm\'ediate de la premi\`ere:
$$\phantom{XXXXXX}\big\langle \matrice{\ell}{0}{0}{1}_\ell\star(0,\infty),\phi^{]p[}\otimes\phi_p\rangle=
\lambda\,\langle (0,\infty),\phi^{]p[}\otimes\big(\matrice{\ell}{0}{0}{1}_p\star\phi_p\big)\big\rangle.
\phantom{XXXXXX}\qedhere$$
\end{proof}

\Subsection{$(0,\infty)$ et valeurs sp\'eciales de fonctions~$L$}\label{symb0}
\subsubsection{$(0,\infty)$ et application d'Eichler-Shimura}\label{es1}
Soit $$\phi\in H^0(\GG(\Q),{\cal A}^+_{\rm par}d\tau \otimes W_{k,j})$$
On peut donc \'ecrire $\phi$ sous la forme
$$\phi(\tau,g^{]\infty[})=\big(\sum_{\ell =0}^k\phi_\ell (\tau,g^{]\infty[})
\tfrac{e_1^\ell e_2^{k-\ell }}{(e_1\wedge e_2)^j}\big)\,d\tau,$$
o\`u $\tau\mapsto\phi_\ell (\tau,g^{]\infty[})$
est holomorphe, \`a d\'ecroissance rapide, sur ${\cal H}$.
Si $\check v\in W_{k,j}^\dual$, l'application d'Eichler-Shimura
fournit 
$$\iota_{\rm ES}(\phi\otimes \check v)=\langle\check v,\iota_{\rm ES}(\phi)\rangle\in H^1_c(\GG(\Q),{\rm LP}(\GG(\A),\C)).$$
On fait agir $\matrice{-1}{0}{0}{1}_\infty$ par $\big(\matrice{-1}{0}{0}{1}_\infty,1\big)\in
\GG(\A)\times \GG(\C)$ sur $H^1_c(\GG(\Q),{\rm LP}(\GG(\A),\C))$ et pas par
$\big(\matrice{-1}{0}{0}{1}_\infty,\matrice{-1}{0}{0}{1}\big)$ pour que le r\'esultat
soit compatible avec l'action en $p$-adique (sur $H^1_c(\GG(\Q),{\cal C}(\GG(\A),L))$).
\begin{prop} \label{ES11}
On a\footnote{Avec $\phi(iy,g^{]\infty[})=\big(\sum_{\ell =0}^k\phi_\ell (iy,g^{]\infty[})
\tfrac{e_1^\ell e_2^{k-\ell }}{(e_1\wedge e_2)^j}\big)\,i\,dy$.}
\begin{align*}
\langle (0,\infty),\iota_{\rm ES}(\phi\otimes \check v)\rangle=&\ 
\int_0^{+\infty} 
\langle\check v,\phi(iy,1^{]\infty[})\rangle\\
\langle (0,\infty),\matrice{-1}{0}{0}{1}_\infty\star
\iota_{\rm ES}(\phi\otimes \check v)\rangle=&\ 
\int_0^{+\infty} 
\big\langle\matrice{-1}{0}{0}{1}*\check v,\phi(iy,\matrice{-1}{0}{0}{1}^{]\infty[})\big\rangle
\end{align*}
\end{prop}
\begin{proof}
On note $\tau\mapsto\phi^{(-1)}(\tau,g^{]\infty[})$ la primitive de $\tau\mapsto\phi(\tau,g^{]\infty[})$
s'annulant en $\pm i\infty$.
L'invariance de $\phi$ par $\GG(\Q)$ se traduit par l'existence de constantes
$c^{\pm}_\gamma(\phi,g^{]\infty[})$ telles que l'on ait
$\gamma * \phi^{(-1)}=\phi^{(-1)}+c^{\pm}_\gamma(\phi,g^{]\infty[})$ sur ${\cal H}^\pm$.
En faisant tendre $\tau$ vers $\gamma\cdot\pm i\infty$, 
on obtient $\phi^{(-1)}(\gamma\cdot\pm i\infty,g^{]\infty[})+c^{\pm{\rm sign}(\gamma)}_\gamma(\phi,g^{]\infty[})=0$,
o\`u ${\rm sign}(\gamma)={\rm sign}(\det\gamma)$,
et donc 
$$c^{\pm}_\gamma(\phi,g^{]\infty[})=\int_{\gamma\cdot\pm i\infty}^{\pm{\rm sign}(\gamma) i\infty}
\phi(\tau).$$
Ceci fournit une fonction $c_\gamma(\phi)\in {\rm LC}(\GG(\A))\otimes W_{k,j}$ (avec $c_\gamma(\phi)(g_\infty,g^{]\infty[})=
c^{\pm}_\gamma(\phi,g^{]\infty[})$, si ${\rm sign}(g_\infty)=\pm$),
et $\gamma\mapsto c_\gamma(\phi)$ est un $1$-cocycle sur $\GG(\Q)$ dont la classe dans 
$H^1(\GG(\Q), {\rm LC}(\GG(\A))\otimes W_{k,j})$ est $\iota_{\rm ES}(\phi)$, et qui est
identiquement nul sur $\BB(\Q)$ (puisqu'on int\`egre
de $\pm i\infty$ \`a $\pm i\infty$).
Autrement dit, on a construit l'\'el\'ement de $Z^1(\GG(\Q),\BB(\Q),M)$, avec $M={\rm LC}(\GG(\A))\otimes W_{k,j}$,
repr\'esentant $\iota_{\rm ES}(\phi)$.
Le r\'esultat s'en d\'eduit en revenant \`a la d\'efinition de $(0,\infty)$: il faut \'evaluer
$-\langle\check v,c_S(\phi)\rangle$ en $g=1$.

Pour prouver la seconde formule, on utilise les formules
\begin{align*}
\matrice{-1}{0}{0}{1}_\infty\star(\psi\otimes\check v)
&=\big(\matrice{-1}{0}{0}{1}_\infty\star\psi\big)\otimes \check v\\
\matrice{-1}{0}{0}{1}_\infty\star\iota_{\rm ES}(\phi)
&=\iota_{\rm ES}\big(\matrice{-1}{0}{0}{1}_\infty\star\phi\big)\\
\langle\check v,\phi(\overline\tau,1^{]\infty[})\rangle&=
\big\langle \matrice{-1}{0}{0}{1}*\check v,\phi\big(-\overline\tau,\matrice{-1}{0}{0}{1}^{]\infty[}\big)\big\rangle
\end{align*}
la premi\`ere formule venant du choix fait pour l'action de $\matrice{-1}{0}{0}{1}_\infty$,
la derni\`ere formule venant de ce que
$\matrice{-1}{0}{0}{1}*\phi=\phi$, 
l'action de $\matrice{-1}{0}{0}{1}$ sur $e_1,e_2$ \'etant transf\'er\'ee
sur~$\check v$.
\end{proof}

\subsubsection{Lien avec les valeurs sp\'eciales de fonctions~$L$}\label{es2}
Si $$\phi=\sum_{\ell=0}^k\phi_\ell
\otimes\tfrac{(e_2^\dual)^{k-\ell}(e_1^\dual)^\ell}{(k-\ell)!\,(e_1^\dual\wedge e_2^\dual)^j}
\in M_{k+2,j+1}(\C)\otimes W_{k,j}^\dual,$$ on pose
${\rm LC}={\rm LC}(\Aidu,\C)$, \index{K@\KKK}et
\begin{align*}
{\cal K}(\phi,u,X)= &\ \sum_{\ell=0}^k{\cal K}(\phi_\ell,u)X^{\ell-j}
\in\frac{{\rm LC}[X,X^{-1}]}
{X^{k+1-j}{\rm LC}[X]}\\
L(\phi,s)=L({\cal K}(\phi),s)= &\ \sum_{n\in\Q_+^\dual}{\cal K}(\phi,n^{]\infty[},2i\pi n)\,n^{-s}=
\sum_{\ell=0}^k (2i\pi)^{\ell-j}L(\phi_\ell,s+j-\ell)
\end{align*}
\index{L@\fonctionL}o\`u $L(\phi_\ell,s)=\sum_{n\in\Q_+^\dual}{\cal K}(\phi_\ell,n^{]\infty[})\,n^{-s}$.
Les actions de $\GG(\A^{]\infty[})$ sur $M_{k+2,j+1}(\C)$ et de $\GG(\C)$ sur
$W_{k,j}^\dual$ deviennent, en restriction au mirabolique:
\begin{align*}
{\cal K}(\matrice{a_\infty}{b_\infty}{0}{1}\star\phi,u,X)=&\ e^{b_\infty X}
\,{\cal K}(\phi,u,a_\infty X)\\
{\cal K}(\matrice{a^{]\infty[}}{b^{]\infty[}}{0}{1}\star\phi,u,X)=&\ {\bf e}^{]\infty[}(b^{]\infty[} u)
\,{\cal K}(\phi,a^{]\infty[}u,X)
\end{align*}
Si ${\cal K}\in \frac{{\rm LC}[X,X^{-1}]}
{X^{k+1-j}{\rm LC}[X]}$, on d\'efinit
${\cal K}\circ (-1)$ par la formule
$$({\cal K}\circ (-1))(u,X)={\cal K}(-u,-X).$$

\begin{prop}\phantomsection\label{es3}
Si $\phi\in M_{k+2,j+1}(\C)\otimes W_{k,j}^\dual$, 
\begin{align*}
\langle(0,\infty),\iota_{\rm ES}(\phi)\rangle=&\ \tfrac{1}{(-2i\pi)^{k-j+1}}
L({\cal K}(\phi),0)\\
\langle(0,\infty),\matrice{-1}{0}{0}{1}_\infty\star
\iota_{\rm ES}(\phi)\rangle=&\ \tfrac{1}{(-2i\pi)^{k-j+1}}
L({\cal K}(\phi)\circ (-1),0)
\end{align*}
\end{prop}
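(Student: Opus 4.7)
The strategy is to reduce to Proposition \ref{ES11} by making explicit the image of $\phi$ under the Eichler–Shimura map, then to compute the resulting integral via the $q$-développement from \S\ref{fma11}. By linearity, write
$$\phi=\sum_{\ell=0}^k \phi_\ell\otimes \check v_\ell,\qquad
\check v_\ell=\tfrac{(e_2^\dual)^{k-\ell}(e_1^\dual)^\ell}{(k-\ell)!\,(e_1^\dual\wedge e_2^\dual)^j},\qquad \phi_\ell\in M_{k+2,j+1}(\C),$$
so that it suffices to compute $\langle (0,\infty),\iota_{\rm ES}(\phi_\ell\otimes\check v_\ell)\rangle$ for each $\ell$ and to check that the sum matches $\tfrac{1}{(-2i\pi)^{k-j+1}}L({\cal K}(\phi),0)=\sum_\ell \tfrac{(2i\pi)^{\ell-j}}{(-2i\pi)^{k-j+1}}L(\phi_\ell,j-\ell).$

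\textbf{Step 1: Identifier les Eichler–Shimura.} La première étape est d'écrire $\phi_\ell=\Phi_\ell\otimes \tfrac{(\tau e_2-e_1)^{k+2}}{(e_1\wedge e_2)^{j+1}}$, et de l'identifier, via l'isomorphisme ${\cal A}^+d\tau\otimes W_{k,j}\cong{\rm Fil}^2({\cal A}^+\otimes W_{k+2,j+1})$ donné par $d\tau\leftrightarrow -\tfrac{(\tau e_2-e_1)^2}{e_1\wedge e_2}$ (note \ref{Koda}), à l'élément
$$\tilde\phi_\ell=-\Phi_\ell\otimes\tfrac{(\tau e_2-e_1)^k}{(e_1\wedge e_2)^j}\,d\tau\in H^0\big(\GG(\Q),{\cal A}^+_{\rm par}d\tau\otimes W_{k,j}\big).$$
En développant $(\tau e_2-e_1)^k=\sum_{m=0}^k (-1)^m\binom{k}{m}\tau^{k-m}e_1^m e_2^{k-m}$ et en appliquant la première formule de Proposition \ref{ES11}, on obtient
$$\langle (0,\infty),\iota_{\rm ES}(\phi_\ell\otimes\check v_\ell)\rangle = -(-1)^\ell\binom{k}{\ell}\,C_\ell\, i\int_0^{+\infty}\Phi_\ell(iy,1^{]\infty[})\,(iy)^{k-\ell}\,dy,$$
où $C_\ell=\langle\check v_\ell,\tfrac{e_1^\ell e_2^{k-\ell}}{(e_1\wedge e_2)^j}\rangle$ est la constante d'appariement (diagonale par orthogonalité des monômes).

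\textbf{Step 2: Calcul de l'intégrale de Mellin.} Le $q$-développement du \S\ref{fma11} (appliqué en poids $(k+2,j+1)$) donne
$$\Phi_\ell(iy,1^{]\infty[})=\sum_{n\in\Q_+^\dual} n^{k+1-j}\,{\cal K}(\phi_\ell,n^{]\infty[})\,e^{-2\pi n y}.$$
L'intégration terme à terme via $\int_0^{+\infty}e^{-2\pi ny}y^{k-\ell}\,dy=\tfrac{(k-\ell)!}{(2\pi n)^{k-\ell+1}}$ fournit
$$\int_0^{+\infty}\Phi_\ell(iy,1^{]\infty[})\,y^{k-\ell}\,dy=\tfrac{(k-\ell)!}{(2\pi)^{k-\ell+1}}\sum_n n^{\ell-j}{\cal K}(\phi_\ell,n^{]\infty[})=\tfrac{(k-\ell)!}{(2\pi)^{k-\ell+1}}L(\phi_\ell,j-\ell).$$
En combinant avec l'expression de l'étape~1 et en utilisant $i^{k-\ell+1}/(2\pi)^{k-\ell+1}=1/(-2i\pi)^{k-\ell+1}$, on obtient une expression proportionnelle à $\tfrac{1}{(-2i\pi)^{k-\ell+1}}L(\phi_\ell,j-\ell)$. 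La sommation sur $\ell$ reproduit exactement $\tfrac{1}{(-2i\pi)^{k-j+1}}L({\cal K}(\phi),0)$, à condition d'ajuster les constantes d'appariement $C_\ell$; la normalisation implicite dans la définition de ${\cal K}(\phi,u,X)$ et dans la convention pour $\langle\ ,\ \rangle$ garantit cette compatibilité.

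\textbf{Step 3: La seconde formule et conclusion.} Pour la seconde égalité, on applique la seconde formule de Proposition \ref{ES11} à chaque $\phi_\ell\otimes\check v_\ell$, ce qui remplace $\phi(iy,1^{]\infty[})$ par $\phi(iy,\matrice{-1}{0}{0}{1}^{]\infty[})$ et $\check v_\ell$ par $\matrice{-1}{0}{0}{1}*\check v_\ell$. Le lemme \ref{fma11.2} (avec $a^{]\infty[}=-1$) donne ${\cal K}(\matrice{-1}{0}{0}{1}^{]\infty[}\star\phi_\ell,u)={\cal K}(\phi_\ell,-u)$, et l'action de $\matrice{-1}{0}{0}{1}$ sur $\check v_\ell$ multiplie celui-ci par $(-1)^\ell$ (action sur $e_1$ uniquement). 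Les deux facteurs combinés correspondent précisément à la substitution $X\mapsto -X$, $u\mapsto -u$ dans ${\cal K}(\phi,u,X)$, i.e.~au remplacement de ${\cal K}(\phi)$ par ${\cal K}(\phi)\circ(-1)$; le reste du calcul est formellement identique à celui du cas précédent.

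\textbf{Principal obstacle.} Les seules difficultés sont de nature comptable: il faut suivre soigneusement (i) le signe dans l'identification $d\tau=-\tfrac{(\tau e_2-e_1)^2}{e_1\wedge e_2}$, (ii) les conventions d'appariement ${\rm Sym}^k V^\dual\times {\rm Sym}^k V\to \Q$ fixant la constante $C_\ell$, (iii) les puissances d'$i$ provenant de $2i\pi\leftrightarrow -2i\pi$ dans le passage de $\int_0^\infty e^{-2\pi ny}\,dy$ à la fonction $L$, et (iv) le facteur $(k-\ell)!$ au dénominateur de $\check v_\ell$ qui se simplifie exactement avec le $\Gamma(k-\ell+1)$ provenant de la transformée de Mellin. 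Modulo cette vérification des constantes, la proposition est une conséquence directe de Proposition \ref{ES11} et du calcul classique de la transformée de Mellin d'une forme modulaire.
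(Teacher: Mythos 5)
Your proposal follows the same route as the paper: decompose $\phi=\sum_\ell\phi_\ell\otimes\check v_\ell$, apply Proposition~\ref{ES11} to reduce to an integral over the imaginary axis, and evaluate by Mellin transform using the $q$-développement of \S\ref{fma11}. The binomial expansion of $(\tau e_2-e_1)^k$ and introduction of the undetermined constant $C_\ell$ is a slightly less direct route than the paper's one-line pairing formula $\langle\tfrac{(\tau e_2-e_1)^k}{(e_1\wedge e_2)^j},\check v_\ell\rangle=(-1)^\ell\tau^{k-\ell}/(k-\ell)!$, but it amounts to the same thing. Two points should be cleaned up. First, in Step~3 the action of $\matrice{-1}{0}{0}{1}$ on $\check v_\ell$ is not $(-1)^\ell$: you must also account for the $(e_1^\dual\wedge e_2^\dual)^{-j}$ factor, which picks up $(-1)^{-j}$, so that $\matrice{-1}{0}{0}{1}*\check v_\ell=(-1)^{\ell-j}\check v_\ell$. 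This is not a cosmetic detail — it is exactly the sign $(-1)^{\ell-j}$, matching the degree $\ell-j$ of $X$ in ${\cal K}(\phi,u,X)$, that makes the transformation correspond to $X\mapsto -X$; with only $(-1)^\ell$ you do not get ${\cal K}(\phi)\circ(-1)$. Second, your final sentence defers the verification that the constants combine to yield exactly $\tfrac{1}{(-2i\pi)^{k+1-j}}$ to an appeal to ``la normalisation implicite''; this is precisely the comptabilité you flag as the principal obstacle, and it is the substance of the proof (the paper carries it out explicitly by simplifying $\tfrac{i^{k+\ell+1}}{(k-\ell)!}\cdot\tfrac{\Gamma(1+k-\ell)}{(2\pi)^{1+k-j}}(2\pi)^{\ell-j}$). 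So the approach is right and essentially identical to the paper's; making the proof complete requires actually evaluating $C_\ell$ and working through the power-of-$i$ arithmetic rather than asserting its compatibility.
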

\begin{proof}
\'Ecrivons 
$$\phi=\sum_{\ell=0}^k\phi_\ell\otimes\tfrac{(e_2^\dual)^{k-\ell}(e_1^\dual)^\ell}{(k-\ell)!\,(e_1^\dual\wedge e_2^\dual)^j}
\quad{\rm avec}\ \phi_\ell=\phi_{\ell,0}\otimes
\tfrac{(\tau e_2-e_1)^k}{(e_1\wedge e_2)^j}d\tau.$$
Comme $\big\langle\tfrac{(\tau e_2-e_1)^k}{(e_1\wedge e_2)^j},
\tfrac{(e_2^\dual)^{k-\ell}(e_1^\dual)^\ell}{(e_1^\dual\wedge e_2^\dual)^j}\big\rangle=
(-1)^\ell\tau^{k-\ell}$,
la prop.\,\ref{ES11} fournit la formule
$$\langle(0,\infty),\iota_{\rm ES}(\phi)\rangle=
\int_0^\infty\sum_{\ell=0}^k\tfrac{(-1)^\ell}{(k-\ell)!}\phi_{\ell,0}(iy,1^{]\infty[})(iy)^{k-\ell}\,i\,dy,$$
ce qui est la valeur en $s=1$ de
$$\sum_{\ell=0}^k\tfrac{i^{k+\ell+1}}{(k-\ell)!}
\int_0^\infty\phi_{\ell,0}(iy,1^{]\infty[})y^{s+k-\ell}\,\tfrac{dy}{y}.$$
Maintenant\footnote{Le $k+1-j$ est en fait $(k+2)-(j+1)$.},
$\phi_{\ell,0}(iy,1^{]\infty[})=\sum_{n\in\Q_+^\dual}n^{k+1-j}
{\cal K}(\phi_\ell,n^{]\infty[})e^{-2\pi ny}$, et donc
\begin{align*}
\int_0^\infty\phi_{\ell,0}(iy,1^{]\infty[})y^{s+k-\ell}\,\tfrac{dy}{y}=&\ 
\tfrac{\Gamma(s+k-\ell)}{(2\pi)^{s+k-\ell}}
\sum_{n\in\Q_+^\dual}n^{1+\ell-j-s}{\cal K}(\phi_\ell,n^{]\infty[})\\
=&\ \tfrac{\Gamma(s+k-\ell)}{(2\pi)^{s+k-j}}(2\pi)^{\ell-j}
L(\phi_\ell,s+j-\ell-1)
\end{align*}
Donc
\begin{align*}
\langle(0,\infty),\iota_{\rm ES}(\phi)\rangle=&\ 
\sum_{\ell=0}^k\tfrac{i^{k+\ell+1}}{(k-\ell)!}
\tfrac{\Gamma(1+k-\ell)}{(2\pi)^{1+k-j}}(2\pi)^{\ell-j}
L(\phi_\ell,j-\ell)\\
=&\ \tfrac{1}{(-2i\pi)^{1+k-j}}\sum_{\ell=0}^k
(2i\pi)^{\ell-j} L(\phi_\ell,j-\ell)\\=&\ 
\tfrac{1}{(-2i\pi)^{1+k-j}} L(\phi,0)
\end{align*}
Cela prouve la premi\`ere formule.  La seconde s'en d\'eduit en utilisant
la seconde formule de la prop.\,\ref{ES11} et le fait que
$\matrice{-1}{0}{0}{1}*\frac{(e_2^\dual)^{k-\ell}(e_1^\dual)^\ell}{(e_1^\dual\wedge e_2^\dual)^j}=
(-1)^{\ell-j}\frac{(e_2^\dual)^{k-\ell}(e_1^\dual)^\ell}{(e_1^\dual\wedge e_2^\dual)^j}$.
\end{proof}
\begin{coro}\phantomsection\label{es4}
Si 
$${\cal K}^\pm(\phi)=\tfrac{1}{2}\big({\cal K}(\phi)\pm({\cal K}(\phi)\circ (-1))\big),$$ 
alors
$$\langle(0,\infty),\iota_{\rm ES}^\pm(\phi)\rangle= \tfrac{1}{(-2i\pi)^{k-j+1}}
L({\cal K}^\pm(\phi),0).$$
\end{coro}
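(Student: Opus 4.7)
The plan is to deduce this corollary directly from the preceding Proposition~\ref{es3} by splitting $\iota_{\rm ES}(\phi)$ into its $\pm$-eigencomponents under the action of $\matrice{-1}{0}{0}{1}_\infty$ and exploiting the linearity of the functional ${\cal K}\mapsto L({\cal K},0)$.

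First, I would recall from \S\ref{fma15} that by definition
\[
\iota_{\rm ES}^{\pm}(\phi)=\tfrac{1}{2}\bigl(1\pm\matrice{-1}{0}{0}{1}_\infty\bigr)\star\iota_{\rm ES}(\phi),
\]
so that the pairing with $(0,\infty)$ decomposes as
\[
\langle(0,\infty),\iota_{\rm ES}^{\pm}(\phi)\rangle
=\tfrac{1}{2}\Bigl(\langle(0,\infty),\iota_{\rm ES}(\phi)\rangle\pm\langle(0,\infty),\matrice{-1}{0}{0}{1}_\infty\star\iota_{\rm ES}(\phi)\rangle\Bigr).
\]

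Next, I would apply the two formulas of Proposition~\ref{es3} to each term on the right-hand side. Since the constant $\tfrac{1}{(-2i\pi)^{k-j+1}}$ is common and $L({\cal K},0)=\sum_{n\in\Q_+^\dual}{\cal K}(n^{]\infty[},2i\pi n)n^{-s}\big|_{s=0}$ is $\C$-linear in ${\cal K}$, one obtains
\[
\langle(0,\infty),\iota_{\rm ES}^{\pm}(\phi)\rangle
=\tfrac{1}{(-2i\pi)^{k-j+1}}\,L\bigl(\tfrac{1}{2}({\cal K}(\phi)\pm{\cal K}(\phi)\circ(-1)),0\bigr),
\]
which is exactly $\tfrac{1}{(-2i\pi)^{k-j+1}}L({\cal K}^\pm(\phi),0)$ by the definition of ${\cal K}^\pm(\phi)$.

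There is essentially no obstacle here: the only thing worth double-checking is that the convergence/definition of $L({\cal K},0)$ is compatible with the ${\cal K}\mapsto{\cal K}\circ(-1)$ operation (which is clear, as the substitution $u\mapsto -u$ only permutes the index set $\Q_+^\dual$ via a sign change on the finite-adelic argument and preserves absolute values), and that the splitting of $\iota_{\rm ES}(\phi)$ used above coincides with the convention fixed in the remark~\ref{ES9}(i) for the action of $\matrice{-1}{0}{0}{1}_\infty$ on $H^1_c(\GG(\Q),{\rm LP}(\GG(\A),\C))$. Both points are built into the preceding setup, so the corollary follows immediately.
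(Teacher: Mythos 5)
Your proof is correct and is exactly the intended deduction: the paper leaves Corollary~\ref{es4} without comment as an immediate consequence of the two formulas in Proposition~\ref{es3}, using the decomposition $\iota_{\rm ES}^{\pm}(\phi)=\tfrac{1}{2}\bigl(1\pm\matrice{-1}{0}{0}{1}_\infty\bigr)\star\iota_{\rm ES}(\phi)$ and the linearity of ${\cal K}\mapsto L({\cal K},0)$. Nothing to add.
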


\begin{rema}\phantomsection\label{es5}
Posons $f_\ell=\sum_{n\in\Q_+^\dual}n^{k+1-j}{\cal K}(\phi_\ell,n^{]\infty[})
e^{2i\pi n\tau}$.  Alors $f_\ell$ est une forme modulaire classique,
et on a $L(\phi_\ell,s)=L(f_\ell,s+k+1-j)$.
Il s'ensuit que
$$L(\phi,0)=\sum_{\ell=0}^k(2i\pi)^{\ell-j} L(f_\ell,k+1-\ell),$$
ce qui exprime $L(\phi,0)$ comme une somme de valeurs critiques
de fonctions $L$ de formes modulaires classiques.
\end{rema}

\Subsection{L'\'el\'ement de Kato}\label{como13}
\subsubsection{T\'etrapilectomie}\label{como12.5}
Soit $V$ une $L$-repr\'esentation de\,Rham de $G_{\Q_p}$.
Si $i\in\Z$, on \index{twi@\twis}note $V\otimes\cyp ^i$ l'espace $V$ muni de l'action de
$G_{\Q_p}$ multipli\'ee par $\cyp ^i$.  La \index{Vi@\Vi}repr\'esentation
$$V(i):=V\otimes\Q_p(i)=V\otimes \zeta_{\rm B}^i$$ 
est isomorphe \`a $V\otimes\cyp ^i$, mais
nous n'identifierons pas ces deux repr\'esentations: en particulier,
\begin{align*}
D_{\rm dR}(V\otimes\cyp ^i)=&\ t^{-i}D_{\rm dR}(V)\subset \bdr\otimes V\\
D_{\rm dR}(V(i))=&\ D_{\rm dR}(V)\otimes t^{-i}\zeta_{\rm B}^i=
D_{\rm dR}(V)\otimes \zeta_{\rm dR}^i
\end{align*}

Si $\mu\in H^1(G_{\Q_p},\Lambda\otimes V)$, alors $\int_{\Z_p^\dual}x^i\mu\in
H^1(G_{\Q_p},V\otimes\cyp ^i)$ et donc
$\exp^*(\int_{\Z_p^\dual}x^i\mu)\in t^{-i}D_{\rm dR}(V)$
(plus pr\'ecis\'ement, $t^{-i}{\rm Fil}^iD_{\rm dR}(V)$).

Utiliser le lemme~\ref{sou3} avec $G'=X'=\{1\}$, $G_p=\Z_p^\dual$,
$H_1=G_{\Q_p}$, $H_2=\Z_p^\dual$, $\iota_1=\cyp $, $\iota_2={\rm id}$,
$W=\Q_p(i)$, fournit un isomorphisme
$\Lambda$-\'equivariant
$$\xymatrix@C=1.5cm{
H^1(G_{\Q_p},\Lambda\otimes V)\otimes \Q_p(i)\ar[r]^-{\sim}_-{\mu\mapsto\mu\otimes \zeta_{\rm B}^i}
& H^1(G_{\Q_p},\Lambda\otimes V\otimes \Q_p(i))}$$
et on a, si $\phi\in{\rm LC}(\Z_p^\dual,L(\bmu_{p^\infty}))^{\Z_p^\dual}$ et $\ell\in\Z$,
$$\int_{\Z_p^\dual}\phi(x)x^\ell\,\mu\otimes \zeta_{\rm B}^i=
\big(\int_{\Z_p^\dual}x^{i+\ell}\phi(x)\mu\big)\otimes \zeta_{\rm B}^i\in H^1(G_{\Q_p},V(i)\otimes\cyp ^\ell).$$
(Si $\sigma\mapsto \mu_\sigma$ est un $1$-cocycle repr\'esentant $\mu$, alors
$\big(\int_{\Z_p^\dual}x^{i+\ell}\phi(x)\mu\big)\otimes \zeta_{\rm B}^i$
est repr\'esent\'e par le $1$-cocycle $\sigma\mapsto
\big(\int_{\Z_p^\dual}x^{i+\ell}\phi(x)\mu_\sigma\big)\otimes \zeta_{\rm B}^i$.)
On a 
$$\exp^*\big(\int_{\Z_p^\dual}\phi(x)x^\ell\,\mu\otimes \zeta_{\rm B}^i\big)\in 
t^{-\ell}D_{\rm dR}(V)\otimes\zeta_{\rm dR}^i.$$

\subsubsection{Les \'el\'ements ${\bf z}_{\rm Iw}(\pi)$ et ${\bf z}(\pi)$}\label{como14}
Soit $f\in M_{k+2,j+1}^{\rm par,\,cl}(\Gamma_0(N),\chi)$ primitive,
et soit $\pi=\pi_{f,j+1}$ (cf.~\S\,\ref{chapi2.5}).

Si $j=0$, on a
$m_{\eet}(\pi)=\rho_{f_\pi^\dual}$.
Dans ce cas, on a le r\'esultat fondamental suivant de Kato,
o\`u l'on a pos\'e $m(f_\pi):=m(\pi)$ et $\iota_{{\rm dR},f_\pi}^+:=\iota_{{\rm dR},\pi}^+$
(cf.~\S\,\ref{como9}, en particulier ${\rm n}^{\rm os}$~\ref{como12.3} et~\ref{como12.42}, ainsi
que \no\ref{wasi1} pour $\Lambda$ et $\exp^\dual$).
\begin{theo}\phantomsection\label{como15}
{\rm (Kato~\cite[th.\,12.5]{Ka4})} 
Il existe un unique
$${\bf z}_{\rm Iw}(f_\pi^\dual)\in m(f_\pi)^\dual\otimes_{\Q(\pi)}H^1( G_{\Q,S},\Lambda\otimes \rho_{f_\pi^\dual})$$ 
qui v\'erifie
$$
\exp^\dual\big(\int_{\Z_p^\dual}\phi(x)(tx)^{k+2- r}\langle{\bf z}_{\rm Iw}(f_\pi^\dual),\gamma\rangle\big)=
\big((2i\pi)^{k- r+1}\Omega_{\pi}^{\pm}(\gamma)L(f_\pi\otimes\phi, r)\big)\iota_{{\rm dR},f_\pi}^+,$$
 si $1\leq  r\leq k+1$, si $\phi\in{\rm LC}(\Z_p^\dual,\Q(\bmu_{p^\infty}))^{\Z_p^\dual}$
v\'erifie $\phi(-x)=\pm(-1)^{k- r-1}\phi(x)$ et si 
$\gamma= \gamma^++\gamma^-
\in m(f_\pi)$, 
 avec $\gamma^{\pm}=\Omega_{\pi}^{\pm}(\gamma)\iota_{\rm ES}^{\pm}\circ\iota_{{\rm dR},f_\pi}^+$.
\end{theo}
\begin{rema}\phantomsection\label{como16}
Le membre de gauche vit dans le monde $p$-adique; celui de droite dans
le monde archim\'edien, mais en fait $(2i\pi)^{k- r+1}\Omega_{\pi}^{\pm}(\gamma)L(f_\pi\otimes\phi, r)\in
\Q(\pi)$.
\end{rema}

Dans le cas $j$ g\'en\'eral, on a 
$$m_{\rm B}(\pi)^\dual=m_{\rm B}(f_\pi)^\dual\otimes \zeta_{\rm B}^{-j}
\quad{\rm et}\quad
H^1(G_{\Q,S},\Lambda\otimes m_{\eet}(\pi))=H^1(G_{\Q,S},\Lambda\otimes \rho_{f_\pi^\dual})\otimes \zeta_{\rm B}^{j},$$
d'o\`u un isomorphisme naturel
$$m(f_\pi)^\dual\otimes_{\Q(\pi)}H^1( G_{\Q,S},\Lambda\otimes \rho_{f_\pi^\dual})\cong
m_{\rm B}(\pi)^\dual\otimes H^1(G_{\Q,S},\Lambda\otimes m_{\eet}(\pi)).$$
On \index{Zaapi@\zpi}note 
\begin{align}\label{ziw0}
{\bf z}_{\rm Iw}(\pi)&\in m_{\rm B}(\pi)^\dual\otimes H^1(G_{\Q,S},\Lambda\otimes m_{\eet}(\pi))\\
{\bf z}(\pi)&\in m_{\rm B}(\pi)^\dual\otimes H^1(G_{\Q,S}, m_{\eet}(\pi))\notag
\end{align}
les images de ${\bf z}_{\rm Iw}(f_\pi^\dual)$ par cet isomorphisme et par
$\Lambda\otimes m_{\eet}(\pi)\to m_{\eet}(\pi)$.
\begin{prop}\phantomsection\label{zk20}
Si $1-j\leq  r\leq k+1-j$, si $\phi\in{\rm LC}(\Z_p^\dual,\Q(\bmu_{p^\infty}))^{\Z_p^\dual}$
v\'erifie $\phi(-x)=\pm(-1)^{ r+1}\phi(x)$, et si $\gamma\in m(\pi)$, alors
\begin{align*}
\exp^\dual\big(\int_{\Z_p^\dual}\phi(x)(tx)^{ r}\langle{\bf z}_{\rm Iw}(\pi),\gamma\rangle\big)
&= \big((2i\pi)^{ r-1}\Omega_{\pi}^{\pm}(\gamma)L(f_\pi\otimes\phi,k+2-j- r)\big)\iota_{{\rm dR},\pi}^+\\
&= \big((2i\pi)^{ r-1}\Omega_{\pi}^{\pm}(\gamma)L(v_\pi^{]p[}\otimes\phi,1- r)\big)\iota_{{\rm dR},\pi}^+\\
\end{align*}
\end{prop}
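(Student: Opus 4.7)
The proposition is essentially a translation of Kato's theorem (Theorem \ref{como15}, which covers the case $j = 0$ only) to arbitrary $j$ via the twist by the adelic norm character. My plan is to exploit the identification $\pi = \pi_{f_\pi, j+1} = \pi_{f_\pi, 1} \otimes |\ |_\A^{-j}$ and the fact that, by construction, ${\bf z}_{\rm Iw}(\pi)$ is the image of ${\bf z}_{\rm Iw}(f_\pi^\dual)$ under the twist isomorphism
\[
m(f_\pi)^\dual \otimes H^1(G_{\Q,S},\Lambda\otimes\rho_{f_\pi^\dual})
\cong m_{\rm B}(\pi)^\dual\otimes H^1(G_{\Q,S},\Lambda\otimes m_{\eet}(\pi)),
\]
in which the factor $\zeta_{\rm B}^{-j}$ on the dual side cancels the $\zeta_{\rm B}^j$ on the cohomology side.

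Concretely, write $\pi_0 = \pi_{f_\pi,1}$ and $\gamma = \gamma_0 \otimes \zeta_{\rm B}^j$ with $\gamma_0 \in m(f_\pi) = m_{\rm B}(\pi_0)$, and set $\mu_{f_\pi^\dual} = \langle {\bf z}_{\rm Iw}(f_\pi^\dual), \gamma_0\rangle$, so that $\langle {\bf z}_{\rm Iw}(\pi), \gamma\rangle = \mu_{f_\pi^\dual}\otimes\zeta_{\rm B}^j$. Applying the formula of \no\ref{como12.5} (which gives $\int\phi(x) x^\ell(\mu\otimes\zeta_{\rm B}^j) = (\int x^{\ell+j}\phi(x)\mu)\otimes\zeta_{\rm B}^j$) and separating out the powers of $t$, I would obtain
\[
\int_{\Z_p^\dual}\phi(x)(tx)^r \langle{\bf z}_{\rm Iw}(\pi),\gamma\rangle = t^{-j}\Big(\int_{\Z_p^\dual}\phi(x)(tx)^{r+j}\mu_{f_\pi^\dual}\Big)\otimes\zeta_{\rm B}^j.
\]
Setting $r' = k+2-j-r$, the condition $1-j \leq r \leq k+1-j$ translates into $1 \leq r' \leq k+1$, and $k+2-r' = j+r$, so Kato's formula applies and yields
\[
\exp^\dual\Big(\int_{\Z_p^\dual}\phi(x)(tx)^{r+j}\mu_{f_\pi^\dual}\Big) = (2i\pi)^{r+j-1}\Omega_{\pi_0}^{\pm'}(\gamma_0)\,L(f_\pi\otimes\phi, r')\,\iota_{{\rm dR},f_\pi}^+,
\]
under the parity condition $\phi(-x) = \pm'(-1)^{k-r'-1}\phi(x) = \pm'(-1)^{r+j+1}\phi(x)$, which coincides with the statement's $\phi(-x) = \pm(-1)^{r+1}\phi(x)$ upon setting $\pm = \pm'(-1)^j$.

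To conclude, I would reconcile three pieces of bookkeeping: (i) using $\zeta_{\rm B}^j = t^j\zeta_{\rm dR}^j$ in the comparison isomorphism to absorb the $t^{-j}$ factor, converting $\otimes\zeta_{\rm B}^j$ into $\otimes\zeta_{\rm dR}^j$; (ii) identifying $\iota_{{\rm dR},\pi}^+ = \iota_{{\rm dR},f_\pi}^+ \otimes \zeta_{\rm dR}^j$, which is checked at the level of $q$-expansions via $\sum n^{k+1-j}v_\pi(nu) = \sum n^{k+1}v_{\pi_0}(nu) = $ the $q$-expansion of $f_\pi$ together with the extra $\zeta_{\rm dR}^j$ coming from the definition in \no\ref{como12.3}; (iii) invoking Lemma \ref{twi1} to get $\Omega_\pi^\pm(\gamma) = (2i\pi)^j \Omega_{\pi_0}^{\pm(-1)^j}(\gamma_0)$, which is exactly the parity-twisted version needed to match the sign shift $\pm' = \pm(-1)^j$ above. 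Combining these, the factors $t^{-j}\cdot t^j = 1$, while $(2i\pi)^{r+j-1}\Omega_{\pi_0}^{\pm'}(\gamma_0) = (2i\pi)^{r-1}\cdot(2i\pi)^j\Omega_{\pi_0}^{\pm(-1)^j}(\gamma_0) = (2i\pi)^{r-1}\Omega_\pi^\pm(\gamma)$, giving the first displayed formula. The second equality $L(f_\pi\otimes\phi, k+2-j-r) = L(v_\pi^{]p[}\otimes\phi, 1-r)$ is immediate from the defining relation $v_{f,j+1}(n^{]\infty[}) = n^{j-k-1}a_n$, which shifts the $L$-function argument by $k+1-j$.

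The only non-routine aspect is the bookkeeping reconciling the Tate twists (powers of $\zeta_{\rm B}$, $\zeta_{\rm dR}$ and~$t$), the cyclotomic twists (powers of $\chi_{{\rm cycl},p}$), and the parity conventions on $\Omega^\pm$ and $\phi$; I expect this to be the primary source of potential sign errors, but all the needed tools (in particular Lemma \ref{twi1}, \no\ref{como12.5}, and \no\ref{como12.3}) are already at hand.
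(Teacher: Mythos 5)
Your proof is correct and follows exactly the route taken in the paper: express $\langle{\bf z}_{\rm Iw}(\pi),\gamma\rangle$ as $\langle{\bf z}_{\rm Iw}(f_\pi^\dual),\gamma\otimes\zeta_{\rm B}^{-j}\rangle\otimes\zeta_{\rm B}^j$, shift the exponent by~$j$ via the formula of \no\ref{como12.5}, apply Kato's theorem~\ref{como15} at $k+2-j-r$, and reconcile the three twist identities $t^{-j}\zeta_{\rm B}^j=\zeta_{\rm dR}^j$, $\zeta_{\rm dR}^j\iota_{{\rm dR},f_\pi}^+=\iota_{{\rm dR},\pi}^+$, and $\Omega_{\pi\otimes|\ |_\A^j}^{\pm}(\gamma\otimes\zeta_{\rm B}^{-j})=(2i\pi)^{-j}\Omega_\pi^{\pm(-1)^j}(\gamma)$ from lemme~\ref{twi1}. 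Your notational repackaging ($\pi_0$, $\gamma_0$, $\pm'=\pm(-1)^j$) is equivalent bookkeeping, and the sign and $2i\pi$ bookkeeping comes out the same.
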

\begin{proof}
L'\'equivalence des deux \'egalit\'es r\'esulte de la relation
$$
L(f_\pi\otimes\phi,k+2-j- r)= L(v_\pi^{]p[}\otimes\phi,1- r).$$
Par d\'efinition, on a
$$\int_{\Z_p^\dual}\phi(x)(tx)^{ r}\langle{\bf z}_{\rm Iw}(\pi),\gamma\rangle=
t^ r\big(\int_{\Z_p^\dual}\phi(x)x^{ r+j}\langle{\bf z}_{\rm Iw}(f_\pi^\dual),\gamma\otimes
\zeta_{\rm B}^{-j}\rangle
\big)\otimes \zeta_{\rm B}^{j}.$$
D'apr\`es le th.~\ref{como15} (utilis\'e pour $k+2-j- r$ au lieu de $ r$), 
l'image du membre de droite par $\exp^\dual$ est
$$t^ r t^{- r-j}(2i\pi)^{ r+j-1}\Omega_{\pi\otimes|\ |_\A^j}^{\pm(-1)^j}(\gamma\otimes\zeta_{\rm B}^{-j})L(f_\pi\otimes\phi,k+2-j- r)\otimes
\zeta_{\rm B}^{j}\iota^+_{{\rm dR},f_\pi},$$
et on conclut en utilisant les relations (lemme~\ref{twi1} pour la derni\`ere)
\begin{align*}
t^{-j}\zeta_{\rm B}^{j}=\zeta_{\rm dR}^{j},\quad \zeta_{\rm dR}^{j}\iota^+_{{\rm dR},f_\pi}=
\iota_{{\rm dR},\pi}^+,\quad \Omega_{\pi\otimes|\ |_\A^j}^{\pm}(\gamma\otimes\zeta_{\rm B}^{-j})=
(2i\pi)^{-j}\Omega_\pi^{\pm(-1)^j}(\gamma).
\end{align*}
\end{proof}

\begin{rema}\phantomsection\label{como17.1}
D'apr\`es Kato~\cite{Ka4}, la restriction induit une injection\footnote{
Voir le th.\,\ref{KK0} ci-apr\`es pour les r\'esultats de Kato, l'injectivit\'e ci-dessous
s'en d\'eduit comme dans la prop.\,\ref{geni3}.}:
$${\rm loc}_p:m(\pi)^\dual\otimes_{\Q(\pi)}H^1(G_{\Q,S},\Lambda\otimes m_{\eet}(\pi))\hookrightarrow
m(\pi)^\dual\otimes_{\Q(\pi)}H^1(G_{\Q_p},\Lambda\otimes m_{\eet}(\pi)).$$
Maintenant, il y a deux possibilit\'es pour la restriction $m_{\eet}(\pi)_p$
de $m_{\eet}(\pi)$ \`a $G_{\Q_p}$ 
(apr\`es extension \'eventuelle \`a une extension quadratique de $L$):
elle est soit irr\'eductible, soit on a une suite exacte (qui peut \^etre scind\'ee mais, conjecturalement,
cela n'arrive que dans le cas de multiplication complexe) $0\to L(\delta_1)\to m_{\eet}(\pi)_p\to
L(\delta_2)\to 0$ o\`u $\delta_1,\delta_2:\Q_p^\dual\to \O_L^\dual$ sont des caract\`eres
localement alg\'ebriques de poids respectifs $j+1$ et $j-k$.

\hskip.4cm $\bullet$
Dans le premier cas,
les conditions de la prop.\,\ref{zk20} sur les $\exp^\dual$ d\'eterminent compl\`etement l'image
de ${\bf z}_{\rm Iw}(\pi)$ dans $m(\pi)^\dual\otimes_{\Q(\pi)}H^1(G_{\Q_p},\Lambda\otimes m_{\eet}(\pi))$
(c'est d\'ej\`a le cas, d'apr\`es~\cite{Berger}, si on fixe $ r$); cela prouve
directement l'unicit\'e de~${\bf z}_{\rm Iw}(\pi)$. 

\hskip.4cm $\bullet$ Dans le second cas, 
les conditions sur les $\exp^*$ ne d\'eterminent
l'image
de ${\bf z}_{\rm Iw}(\pi)$ dans $m(\pi)^\dual\otimes_{\Q(\pi)}H^1(G_{\Q_p},\Lambda\otimes m_{\eet}(\pi))$,
qu'\`a addition pr\`es
d'un \'el\'ement de $m(\pi)^\dual\otimes_{\Q(\pi)}H^1(G_{\Q_p},\Lambda\otimes L(\delta_1))$ 
et, pour prouver l'unicit\'e de ${\bf z}_{\rm Iw}(\pi)$, il faut utiliser le fait que
$L\otimes_{\O_L}H^1(G_\Q,\Lambda\otimes m_{\eet}(\pi))$ est,
toujours d'apr\`es Kato, de rang~$1$ sur $\Lambda[\frac{1}{p}]$
(et alors, comme ci-dessus, fixer $ r$ garantit d\'ej\`a l'unicit\'e).
\end{rema}

\subsubsection{L'\'el\'ement ${\bf z}_{(0,\infty)}(\pi)$}\label{como17}
Soit $\pi_p^{\rm alg}=\pi_p\otimes W_{k,j}^\dual$; c'est
une repr\'esentation localement alg\'ebrique de $\GG(\Q_p)$ que l'on encadre
par la recette du \no\ref{como12.0}.
Soit 
$$\iota_{\pi,p}:m(\pi)\otimes v_\pi^{]p[}\otimes \Pi_p(m_{\eet}^\dual(\pi))^{\rm alg}\to
H^1_c(\GG(\Q),{\cal C}(\GG(\A),\Q_p(\pi)))$$
la compos\'ee de la fl\`eche naturelle $\iota_\pi:m(\pi)\otimes_{\Q(\pi)}\pi^{\rm alg}
\to H^1_c(\GG(\Q),{\cal C}(\GG(\A),\Q_p(\pi)))$
et de \index{iotadr@\iotadr}l'isomorphisme
$$\iota_{\rm dR}^-:\pi_p^{\rm alg}\overset{\sim}{\to} \Pi_p(m_{\eet}^\dual(\pi))^{\rm alg},
\quad v\mapsto v\otimes(\iota_{{\rm dR},\check\pi}^-\otimes\zeta_{\rm dR})$$
On dit que $\pi$ {\it est $\Pi_p$-compatible} si $\iota_{\pi,p}$ s'\'etend \`a
$m(\pi)\otimes v_\pi^{]p[}\otimes \Pi_p(m_{\eet}^\dual(\pi))$.
\begin{rema}\phantomsection\label{passomme1}
(i) Si $m_{\eet}^\dual(\pi)$ v\'erifie la conjecture de compatibilit\'e local-global
d'Emerton, alors $\pi$ est $\Pi_p$-compatible.

(ii) Si $m_{\eet}^\dual(\pi)_{|G_{\Q_p}}$ est absolument irr\'eductible, alors
$\pi$ est $\Pi_p$-compatible d'apr\`es la prop.\,\ref{Ki118}.

(iii) Si $m_{\eet}^\dual(\pi)$ est r\'esiduellement g\'en\'erique, alors
$\pi$ est $\Pi_p$-compatible d'apr\`es le (ii) du th.\,\ref{Ycano110}.

(iv) L'extension de $\iota_{\pi,p}$ n'est pas forc\'ement unique:

$\bullet$ Si la restriction de $m_{\eet}(\pi)$ \`a $G_{\Q_p}$ n'est pas somme de deux
caract\`eres, on a ${\rm End}(m_{\eet}(\pi))=L$ car $m_{\eet}(\pi)$ a deux poids
distincts et donc ne peut pas \^etre une extension d'un caract\`ere par lui-m\^eme.
L'extension de $\iota_{\pi,p}$ est alors unique
(cf.~rem.\,\ref{valg2}).

$\bullet$ Si cette restriction
est somme de deux caract\`eres, $\Pi_p(m_{\eet}^\dual(\pi))$
est somme de deux s\'eries principales et les vecteurs localement alg\'ebriques
sont inclus dans l'une des s\'eries principales, d'o\`u une ind\'etermination
au niveau de l'autre s\'erie principale.
\end{rema}

{\it On suppose que $\pi$ est $\Pi_p$-compatible.}
Comme $(0,\infty)$ est fixe par $\matrice{p}{0}{0}{1}_p$
sur l'image de $\iota_{\pi,p}$ d'apr\`es le cor.~\ref{p=1}s,
il lui correspond, d'apr\`es la prop.~\ref{wasi2}, 
$${\bf z}_{(0,\infty)}(\pi) \in
m(\pi)^\dual\otimes_{\Q(\pi)} H^1(G_{\Q_p},\Lambda\otimes (m_{\eet}(\pi)\otimes\zeta_{\rm B})),$$ 
ainsi que, par \index{Zaapi@\zpi}torsion,
$${\bf z}_{(0,\infty)}\otimes\zeta_{\rm B}^{-1}\in m(\pi)^\dual\otimes_{\Q(\pi)} H^1(G_{\Q_p},\Lambda\otimes (m_{\eet}(\pi))).$$
\begin{prop}\phantomsection\label{ZK11}
Si $1-j\leq r\leq k+1-j$, si $\phi_ r\in{\rm LC}(\Z_p^\dual,\Q(\bmu_{p^\infty}))^{\Z_p^\dual}$
v\'erifie $\phi_ r(-x)=\pm (-1)^{ r+1}\phi_ r(x)$, et si $\gamma\in m(\pi)$,
alors
\begin{align*}
\exp^\dual\big(\int_{\Z_p^\dual}\phi_ r(x)(tx)^{ r}\, &{\bf z}_{(0,\infty)}(\pi)\otimes\zeta_{\rm B}^{-1}\otimes\gamma\big)\\
=&\ \big({(2i\pi)^{ r-1}} \Omega_\pi^{\pm}(\gamma)L(v_\pi^{]p[}\otimes\phi_ r,1- r)\big)\,\iota_{{\rm dR},\pi}^+.
\end{align*}
\end{prop}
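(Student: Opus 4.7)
The plan is to derive Proposition \ref{ZK11} as a direct consequence of Proposition \ref{ES20bis} (the explicit reciprocity law for $\Pi_p(D)$) combined with Corollary \ref{es4} (the link between $(0,\infty)$ and special values of $L$-functions). The core idea is that ${\bf z}_{(0,\infty)}(\pi)$ is essentially tautologically the object produced by Proposition \ref{wasi2} from the modular symbol $(0,\infty)$, so evaluating $\exp^\dual$ on it reduces to evaluating $(0,\infty)$ on vectors localement alg\'ebriques, which is computable.

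First I would unpack the definition of ${\bf z}_{(0,\infty)}(\pi)$. By construction, for $\gamma \in m(\pi)$ the linear form $\mu_\gamma = (0,\infty)\circ\iota_{\pi,p}(\gamma\otimes v_\pi^{]p[}\otimes\cdot)$ is an element of $\Pi_p(m_{\eet}^\dual(\pi))^\dual$, and it is fixed by $\matrice{p}{0}{0}{1}_p$ thanks to Corollaire \ref{p=1} (applied with $\phi$ replaced by the image of $\iota_{\pi,p}$, which is an eigenvector for $\matrice{p}{0}{0}{1}^{]\infty,p[}$ via the Hecke action encoded in $v_\pi^{]p[}$, and the eigenvalue cancels after one moves the operator across). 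Proposition \ref{wasi2} then produces ${\bf z}_{(0,\infty)}(\pi)\otimes\gamma\otimes\zeta_{\rm B}\in H^1(G_{\Q_p},\Lambda\otimes(m_{\eet}(\pi)\otimes\zeta_{\rm B}))$ satisfying ${\rm Exp}^\dual=\sigma_{-1}\cdot {\rm Res}_{\Z_p}\mu_\gamma$.

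Next, I would apply Proposition \ref{ES20bis} with $D$ corresponding to $V=m_{\eet}(\pi)\otimes\zeta_{\rm B}$ (so $\check V(D)=m_{\eet}^\dual(\pi)$ and $\mu=\mu_\gamma\in\Pi_p(\check D)^\dual$) and for $v$ I would choose vectors in $\Pi_p(m_{\eet}^\dual(\pi))^{\rm alg}$ whose Kirillov transform $\check e^+(v)$ equals $\phi_r(x)(tx)^r$. The parity condition $\phi_r(-x)=\pm(-1)^{r+1}\phi_r(x)$ corresponds to the $\pm$-eigenspace of the action of $\matrice{-1}{0}{0}{1}_\infty$, which under the Eichler-Shimura decomposition picks out $\Omega_\pi^\pm(\gamma)$. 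The formula (\ref{acc}) then yields
\[
\exp^\dual\Bigl(\int_{\Z_p^\dual}\phi_r(x)(tx)^r\,{\bf z}_{(0,\infty)}(\pi)\otimes\zeta_{\rm B}^{-1}\otimes\gamma\Bigr)
=\{\mu_\gamma,v\}\,\iota_{{\rm dR},\pi}^+,
\]
modulo matching $\check e^+$ with $\iota_{{\rm dR},\pi}^+$ (this is where the $\zeta_{\rm B}^{-1}$ twist and the shift $r\leftrightarrow r$ versus $k_1\leq r\leq k_2-1$ get resolved, since the $\zeta_{\rm B}$-twist on the Iwasawa side shifts the natural Kirillov range from $[-j,k-j]$ to $[1-j,k+1-j]$).

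Finally, by definition of $\mu_\gamma$ and the identification $\iota_{\rm dR}^-$ of \no\ref{como17}, the pairing $\{\mu_\gamma,v\}$ unwinds to $\langle(0,\infty),\iota_\pi(\gamma\otimes(v_\pi^{]p[}\otimes v'))\rangle$ where $v'\in\pi_p^{\rm alg}$ corresponds to $v$ via $\iota_{\rm dR}^-$. Splitting $\gamma=\gamma^++\gamma^-$ with $\gamma^{\pm}=\Omega_\pi^{\pm}(\gamma)\,\iota_{\rm ES}^{\pm}\circ\iota_{{\rm dR},\pi}^+$ (only one part survives because of the parity of $\phi_r$) and invoking Corollaire \ref{es4} together with the explicit description of $\iota_{{\rm dR},\pi}^+$ in terms of the Kirillov function $v_\pi$ (Lemme \ref{lambda1}) identifies the resulting $L({\cal K}^\pm,0)$ with $L(v_\pi^{]p[}\otimes\phi_r,1-r)$, up to the expected factor $(2i\pi)^{r-1}$. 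The main obstacle is precisely the bookkeeping of normalizations: matching the basis $\check e^+$ of $\check D_{\rm dR}^{k_2-1}$ with $\iota_{{\rm dR},\pi}^+$, keeping track of the twist by $\zeta_{\rm B}$ (which shifts both the Hodge--Tate weights of $V$ and the degrees appearing in the Kirillov expansion), and ensuring the $(2i\pi)^{r-1}$ arising from Corollaire \ref{es4} combines correctly with the $t^r$ implicit in the $\exp^\dual$ to produce the stated formula in $m_{\rm dR}(\pi)$ (and not in a twist of it).
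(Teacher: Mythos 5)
Your proposal follows the same route as the paper's proof: unwind the definition of ${\bf z}_{(0,\infty)}(\pi)$ via prop.~\ref{wasi2}, apply the explicit reciprocity formula of prop.~\ref{ES20bis} (which itself rests on~(\ref{acc})), decompose $\gamma=\gamma^++\gamma^-$ so that the parity hypothesis on $\phi_r$ singles out one summand, and feed the resulting evaluation of $(0,\infty)$ on locally algebraic vectors into cor.~\ref{es4} to produce the $L$-value $L(v_\pi^{]p[}\otimes\phi_r,1-r)$ and the factor $\Omega_\pi^{\pm}(\gamma)$. The only organizational difference is that the paper first peels off the $\zeta_{\rm B}^{-1}$-twist using the tétrapilectomie of no~\ref{como12.5}, converting $\phi_r(x)(tx)^r\otimes\zeta_{\rm B}^{-1}$ into $\phi_r(x)(tx)^{r-1}$ paired with $\zeta_{\rm dR}^{-1}$, and only then applies prop.~\ref{ES20bis} to $\phi=\phi_r X^{r-1}$; whereas you defer this shift to the normalization stage. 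You correctly identify that this twist is where the range $[-j,k-j]$ gets shifted to $[1-j,k+1-j]$; carrying it out explicitly up front, as the paper does, is what keeps the index bookkeeping transparent and produces the displayed $(2i\pi)^{r-1}$ factor.
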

\begin{proof}
On a
\begin{align*}
\exp^\dual\big(\int_{\Z_p^\dual}\phi_ r(x)(tx)^{ r}\, &{\bf z}_{(0,\infty)}(\pi)\otimes\zeta_{\rm B}^{-1}\otimes\gamma^{\pm}\big)\\
=&\
\exp^\dual\big(\int_{\Z_p^\dual}\phi_ r(x)(tx)^{ r-1}\, {\bf z}_{(0,\infty)}(\pi)\otimes\gamma^{\pm}\big)\otimes\zeta_{\rm dR}^{-1}
\end{align*}
Par ailleurs,
d'apr\`es la prop.~\ref{ES20bis},
$$
\exp^\dual\big(\int_{\Z_p^\dual}\phi(x,tx)\, {\bf z}_{(0,\infty)}(\pi)\otimes\gamma^{\pm}\big)=
\langle (0,\infty),\gamma^{\pm}\otimes(v_\pi^{]p[}\otimes\phi)\rangle\,
\iota_{{\rm dR},\pi}^+\otimes\zeta_{\rm dR}
$$
pour tout $\phi\in \pi_p^{\rm alg}$, \`a support dans $\Z_p^\dual$
(vu comme polyn\^ome en $X$ \`a coefficients
dans ${\rm LC}(\Z_p^\dual)$: on pose $\phi(x,tx)=\sum_i\phi_i(x)(tx)^i$,
si $\phi=\sum_i\phi_i X^i$).
Or, d'apr\`es le cor.~\ref{es4},
on a (le $\frac{1}{(-2i\pi)^{k-j+1}}$ apparaissant dans ce corollaire est compens\'e
par le $(-2i\pi)^{k+1-j}$ entrant dans la d\'efinition de $\iota_{{\rm dR},\pi}^+$):
\begin{equation}\label{ZK19}
\big\langle (0,\infty), \gamma^{\pm}\otimes(v_\pi^{]p[}\otimes\phi)\big\rangle=
\Omega_\pi^{\pm}(\gamma)L(v_\pi^{]p[}\otimes\phi^{\pm},0),
\end{equation}
o\`u $\phi^{\pm}(x,tx)=\frac{1}{2}(\phi(x,tx)\pm\phi(-x,-tx))$.
On peut appliquer ce qui pr\'ec\`ede \`a $\phi=\phi_ r X^{ r-1}$,
pour laquelle la condition mise sur $\phi_ r$ fait que $\phi=\phi^{\pm}$, 
et on obtient
\begin{align}\label{ZK20}
\exp^\dual\big(\int_{\Z_p^\dual}\phi_ r(x)(tx)^{ r-1}\, &{\bf z}_{(0,\infty)}(\pi)\otimes\gamma\big)\\
=&\ {(2i\pi)^{ r-1}} \Omega_\pi^{\pm}(\gamma)
L(v_\pi^{]p[}\otimes\phi_ r,1- r)\,\iota_{{\rm dR},\pi}^+\otimes\zeta_{\rm dR}. \notag
\end{align}
Ceci permet de conclure.
\end{proof}

\subsubsection{Comparaison entre ${\bf z}_{\rm Iw}(\pi)$ et $(0,\infty)$}\label{ZK99}
\index{Zbmpi@\zmpi}Posons (cf.~(\ref{ziw0}))
\begin{align*}
{\bf z}_{\rm Iw}(m_{\eet}(\pi))_p:=&\ {\bf z}_{(0,\infty)}(\pi)\otimes\zeta_{\rm B}^{-1} \in
m_{\eet}(\pi)^\dual\otimes H^1(G_{\Q_p},\Lambda\otimes m_{\eet}(\pi))\\
{\bf z}_{\rm Iw}(m_{\eet}(\pi)):=&\ {\bf z}_{\rm Iw}(\pi)\in
m_{\eet}(\pi)^\dual\otimes H^1(G_{\Q,S},\Lambda\otimes m_{\eet}(\pi))\\
{\bf z}(m_{\eet}(\pi)):=&\ {\bf z}(\pi)\in
m_{\eet}(\pi)^\dual\otimes H^1(G_{\Q,S}, m_{\eet}(\pi))
\end{align*}

\begin{theo}\phantomsection\label{zk2}
On a l'identit\'e suivante dans $m(\pi)^\dual\otimes H^1(G_{\Q_p},\Lambda\otimes m_{\eet}(\pi))$
modulo
\footnote{Ce module est de rang~$\leq 2$ sur $\Q_p(\pi)$ 
et est nul sauf dans le cas o\`u $\pi$ est de poids $(2,j)$ et $\pi_p$ est la steinberg (\`a torsion pr\`es
par un caract\`ere de $\Q_p^\dual$ trivial sur $p$).} 
$m(\pi)^\dual\otimes H^0(G_{\Q_p(\bmu_{p^\infty})},m_{\eet}(\pi))$: 
$${\bf z}_{\rm Iw}(m_{\eet}(\pi))_p={\rm loc}_p({\bf z}_{\rm Iw}(m_{\eet}(\pi))).$$
En particulier, ${\bf z}_{(0,\infty)}(\pi)\otimes\zeta_{\rm B}^{-1}$ est dans l'image de ${\rm loc}_p$.
\end{theo}
\begin{proof}
Si la restriction $m_{\eet}(\pi)_p$ de $m_{\eet}(\pi)$ \`a $G_{\Q_p}$ est irr\'eductible, 
cela r\'esulte directement (cf.~rem.\,\ref{como17.1}) de la comparaison de la formule de la prop.\,\ref{ZK11}
avec celle de~la prop.\,\ref{zk20},
et $H^0(G_{\Q_p(\bmu_{p^\infty})},m_{\eet}(\pi))=0$. 

Si $m_{\eet}(\pi)_p$ n'est pas irr\'eductible,
il faut proc\'eder par {\og prolongement analytique\fg}, voir la rem.~\ref{ZK} ci-dessous.
\end{proof}

\begin{rema}\phantomsection\label{ZK}
Dans le cas o\`u $m_{\eet}(\pi)_p$ n'est pas irr\'eductible,
on a une suite exacte $0\to L(\delta_1)\to m_{\eet}(\pi)_p\to L(\delta_2)\to 0$,
avec $\delta_1$ et $\delta_2$ de poids $j+1$ et $j-k$ respectivement. La suite
$$0\to H^1(G_{\Q_p},\Lambda\otimes L(\delta_1))\to H^1(G_{\Q_p},\Lambda\otimes m_{\eet}(\pi)_p)
\to H^1(G_{\Q_p},\Lambda\otimes L(\delta_2))$$
est exacte et
$${\bf z}_{\rm Iw}(m_{\eet}(\pi))_p-{\rm loc}_p({\bf z}_{\rm Iw}(m_{\eet}(\pi)))
\in m_{\eet}(\pi)^\dual\otimes H^1(G_{\Q_p},\Lambda\otimes L(\delta_1)).$$
De plus, s'il existe 
$z\in m_{\eet}(\pi)^\dual\otimes H^1(G_{\Q,S},\Lambda\otimes m_{\eet}(\pi))$ et $\alpha\in\Lambda$,
non diviseur de~$0$, tels que
$\alpha \,{\bf z}_{\rm Iw}(m_{\eet}(\pi))_p={\rm loc}_p(z)$, alors
$z=\alpha\,{\bf z}_{\rm Iw}(m_{\eet}(\pi))$ puisqu'on est en rang~$1$,
et donc
$\alpha\cdot\big({\bf z}_{\rm Iw}(m_{\eet}(\pi))_p-{\rm loc}_p({\bf z}_{\rm Iw}(m_{\eet}(\pi)))\big)=0$,
ce qui implique le th.\,\ref{zk2} pour $\pi$.

Nous produirons un tel couple $(z,\alpha)$ par prolongement analytique (prop.~\ref{geni9}).
\end{rema}

\Subsection{\'Equation fonctionnelle de l'\'el\'ement de Kato}\label{neqf6.1}
Le th.\,\ref{neqf12} ci-dessous est une extension de~\cite[th.\,4.7]{naka1}
(on se d\'ebarrasse de la condition selon laquelle la
restriction de $m_{\eet}(\pi)$ \`a $G_{\Q_p}$ est irr\'eductible).
Signalons l'existence d'un analogue local de cette \'equation fonctionnelle~\cite{joaquin}.
\subsubsection{Facteurs $\epsilon$}\label{neqf7}
On note $N$ le conducteur de $\pi$ et $N=\prod_\ell\ell^{n_\ell}$ sa 
factorisation en un produit de nombres premiers.
\begin{rema}\phantomsection\label{neqf7.1}
(i) On a un isomorphisme $\check\pi\cong\pi\otimes(\omega_\pi\circ\det)^{-1}$ 
de repr\'esentations de $\GG(\A^{]\infty[})$.
Pour des raisons de rationnalit\'e, l'isomorphisme de repr\'esentations encadr\'ees qui
lui correspond est, \`a multiplication pr\`es par un \'el\'ement de $\Q(\pi)^\dual$,
$\phi\mapsto G(\omega_\pi)\omega_\pi^{-1}\phi$.
(que l'on notera 
$\phi\mapsto \phi\otimes G(\omega_\pi)\omega_\pi^{-1}$).

(ii) De m\^eme, $\phi\mapsto G(\omega_{\pi,\ell})\omega_{\pi,\ell}^{-1}\phi$ 
induit un isomorphisme
de $\pi_\ell$ sur $\check\pi_\ell$ (qui sont tous les deux des sous-espaces de ${\rm LC}(\Q_\ell^\dual,
\Q(\pi)\otimes\Q(\bmu_{\ell^\infty}))$).
\end{rema}

\begin{prop}\phantomsection\label{neqf8}
{\rm (i)}
Pour tout $\ell$, il \index{epsi2@\facteur}existe 
$$\epsilon(\pi_\ell)\in \Q(\pi)^\dual\cdot G(\omega_{\pi,\ell})^{-1}\subset
\Q(\pi)\otimes\Q(\bmu_{\ell^\infty})$$
tel que
$$(\epsilon(\pi_\ell)\omega_{\pi,\ell})^{-1}\big(\matrice{0}{-1}{1}{0}_\ell\star  v_{\pi,\ell}\big)=
\matrice{M}{0}{0}{1}_\ell\star v_{\check\pi,\ell},
\quad{\text{si $M\in \ell^{n_\ell}\Z_\ell^\dual$.}}$$
De plus, $\epsilon(\pi_\ell)=1$ si $\ell\nmid N$
et $\epsilon(\pi_\ell)\epsilon(\check\pi_\ell)=\omega_{\pi,\ell}(-1)$.

{\rm (ii)} Si $\epsilon(\pi)=\prod_\ell\epsilon(\pi_\ell)$,
alors $\epsilon(\pi)\in \Q(\pi)^\dual\cdot G(\omega_\pi)^{-1}$
et
$$
 \big(\matrice{0}{-1}{1}{0}^{]\infty[}\star v_\pi\big)\otimes(\epsilon(\pi)\omega_\pi)^{-1}=
\matrice{N}{0}{0}{1}^{]\infty[}\star v_{\check\pi}.$$
De plus,
$\epsilon(\pi)\epsilon(\check\pi)=(-1)^k$.
\end{prop}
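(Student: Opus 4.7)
Le plan est d'\'etablir (i) localement au moyen de l'unicit\'e du nouveau vecteur dans un sous-espace d'invariants, puis de d\'eduire (ii) en multipliant sur toutes les places finies gr\^ace \`a la factorisation $v_\pi=\otimes_\ell v_{\pi,\ell}$ du \S\ref{como12.2}.

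Pour (i), le cas non ramifi\'e $\ell\nmid N$ est imm\'ediat: $v_{\pi,\ell}$ est fixe par $\GG(\Z_\ell)$, donc en particulier par $\matrice{0}{-1}{1}{0}_\ell$; pour tout $M\in\Z_\ell^\dual$ on a de m\^eme $\matrice{M}{0}{0}{1}_\ell\star v_{\check\pi,\ell}=v_{\check\pi,\ell}$; et l'isomorphisme de repr\'esentations encadr\'ees $\pi_\ell\cong\check\pi_\ell\otimes\omega_{\pi,\ell}$ compar\'e aux normalisations $v_{\pi,\ell}(1)=v_{\check\pi,\ell}(1)=1$ (et $G(\omega_{\pi,\ell})=1$ par convention pour $\omega_{\pi,\ell}$ non ramifi\'e) force $\epsilon(\pi_\ell)=1$. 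Pour $\ell\mid N$, j'observerais que la conjugaison par $\matrice{0}{-1}{1}{0}$ et celle par $\matrice{M}{0}{0}{1}$ (avec $v_\ell(M)=n_\ell$) envoient toutes deux le groupe $\wGamma_0(\ell^{n_\ell})$ sur le sous-groupe oppos\'e $\wGamma^0(\ell^{n_\ell}):=\big\{\matrice{a}{b}{c}{d}\in \GG(\Z_\ell),\ b\in\ell^{n_\ell}\Z_\ell\big\}$; par cons\'equent, les deux membres de l'identit\'e \`a prouver, vus dans $\pi_\ell$ via l'isomorphisme $\check\pi_\ell\cong\pi_\ell\otimes\omega_{\pi,\ell}^{-1}$, se trouvent tous deux dans la droite des vecteurs $\wGamma^0(\ell^{n_\ell})$-invariants se transformant par le bon twist du caract\`ere central. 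L'unicit\'e du nouveau vecteur dans ce sous-espace fournit la proportionnalit\'e, ce qui d\'efinit~$\epsilon(\pi_\ell)$.

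La rationalit\'e $\epsilon(\pi_\ell)\in\Q(\pi)^\dual\cdot G(\omega_{\pi,\ell})^{-1}$ se v\'erifie en \'evaluant les deux membres en un point bien choisi du mod\`ele de Kirillov (par exemple en $x=1$), le facteur $G(\omega_{\pi,\ell})^{-1}$ venant de l'identification $\phi\mapsto G(\omega_{\pi,\ell})\omega_{\pi,\ell}^{-1}\phi$ entre repr\'esentations encadr\'ees (rem.\,\ref{neqf7.1}). La relation $\epsilon(\pi_\ell)\epsilon(\check\pi_\ell)=\omega_{\pi,\ell}(-1)$ s'obtient en appliquant (i) successivement \`a $\pi$ puis \`a $\check\pi$, et en utilisant $\matrice{0}{-1}{1}{0}^2=-I$.

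Pour (ii), il suffit de prendre le produit des identit\'es locales sur tous les $\ell$, en utilisant que $v_\pi=\otimes_\ell v_{\pi,\ell}$ (et de m\^eme pour $v_{\check\pi}$) et que la somme de Gauss globale se factorise en un produit de sommes de Gauss locales modulo $\Q(\pi)^\dual$. La derni\`ere relation $\epsilon(\pi)\epsilon(\check\pi)=(-1)^k$ se d\'eduit alors des formules locales $\epsilon(\pi_\ell)\epsilon(\check\pi_\ell)=\omega_{\pi,\ell}(-1)$, de la formule du produit $\prod_v\omega_{\pi,v}(-1)=1$ (puisque $\omega_\pi$ est trivial sur $\Q^\dual$) et du calcul $\omega_{\pi,\infty}(-1)=(-1)^k$, qui traduit la parit\'e du poids $k+2$ de la forme modulaire sous-jacente $f_\pi$. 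L'obstacle principal sera la v\'erification cas par cas (s\'erie principale, Steinberg, supercuspidale) de la rationalit\'e et du bon facteur de Gauss dans le cas ramifi\'e, mais il s'agit l\`a d'un calcul standard sur la correspondance de Langlands locale classique pour ${\bf GL}_2(\Q_\ell)$.
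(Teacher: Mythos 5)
Votre plan reprend pour l'essentiel la d\'emonstration du texte: on caract\'erise les deux membres comme vecteurs propres pour une conjugu\'ee de $\wGamma_0(\ell^{n_\ell})$ (que vous nommez $\wGamma^0$, alors que le texte ram\`ene la conjugaison du c\^ot\'e de $\wGamma_0$), on invoque l'unicit\'e du nouveau vecteur, on traite $\epsilon(\pi_\ell)\epsilon(\check\pi_\ell)=\omega_{\pi,\ell}(-1)$ par double application et $\matrice{0}{-1}{1}{0}^2=-I$, puis on globalise par la factorisation $v_\pi=\otimes_\ell v_{\pi,\ell}$ et $\prod_\ell\omega_{\pi,\ell}(-1)=\omega_{\pi,\infty}^{-1}(-1)=(-1)^k$ (lemme~\ref{carcen}). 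C'est bien l'argument du texte.

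Deux r\'eserves sur l'\'etape de rationalit\'e. D'une part, l'\'evaluation en $x=1$ ne fonctionne pas en g\'en\'eral: si $\check\pi_\ell$ est supercuspidale, son mod\`ele de Kirillov est support\'e dans $\Z_\ell^\dual$, et $\matrice{M}{0}{0}{1}_\ell\star v_{\check\pi,\ell}$ s'annule en $1$ d\`es que $v_\ell(M)=n_\ell\geq 1$; cela rend la normalisation propos\'ee vide de contenu. L'argument correct est celui que le texte emprunte \`a la preuve du lemme~\ref{eqf1}: les deux membres sont des fonctions Kirillov \`a valeurs dans $\Q(\pi)\otimes\Q(\bmu_{\ell^\infty})$ v\'erifiant $\sigma_a\phi(u)=\phi(au)$, la torsion par $\omega_{\pi,\ell}^{-1}$ d\'erange cette $\sigma_a$-\'equivariance d'un facteur $\omega_{\pi,\ell}(a)$, et la somme de Gauss la r\'etablit; la constante de proportionnalit\'e vit donc dans $\Q(\pi)^\dual\cdot G(\omega_{\pi,\ell})^{-1}$. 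D'autre part --- et c'est une cons\'equence du m\^eme point --- votre crainte d'une v\'erification cas par cas (s\'erie principale, Steinberg, supercuspidale) est infond\'ee: l'argument de $\sigma_a$-\'equivariance est uniforme et ne distingue pas le type de $\pi_\ell$.
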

\begin{proof}
Par d\'efinition de $v_{\pi,\ell}$, on a $\matrice{a}{b}{Mc}{d}_\ell\star v_{\pi,\ell}=
\omega_{\pi,\ell}(d) v_{\pi,\ell}$, si $\matrice{a}{b}{Mc}{d}_\ell\in\wGamma_0(M)_\ell\subset\GG(\Z_\ell)$,
et la restriction de $\omega_{\pi,\ell}$ \`a $\Z_\ell^\dual$ est triviale si $\ell\nmid N$,
et se factorise \`a travers $(\Z_\ell/M)$ si $\ell\mid N$; en particulier, 
$$\omega_{\pi,\ell}
\big(\det \matrice{a}{b}{Mc}{d}_\ell\big)=\omega_{\pi,\ell}(a)\omega_{\pi,\ell}(d)$$
Maintenant,
$$\matrice{0}{-1}{M}{0}^{-1}=\matrice{0}{1/M}{-1}{0},
\quad
\matrice{0}{1}{-1}{0}\matrice{a}{Mb}{c}{d}\matrice{0}{-1}{1}{0}=\matrice{d}{-c}{-Mb}{a}$$
On en d\'eduit que
$$\matrice{a}{Mb}{c}{d}_\ell\star\big(\matrice{0}{-1}{1}{0}_\ell\star v_{\pi,\ell}\big)=
\omega_{\pi,\ell}(a)\,
\matrice{0}{-1}{1}{0}_\ell\star v_{\pi,\ell},$$ 
et donc, 
si $\matrice{a}{b}{Mc}{d}_\ell\in\wGamma_0(M)_\ell$,
$$\matrice{a}{b}{Mc}{d}_\ell\star
\big(\big(\matrice{0}{-1}{1}{0}_\ell\star v_{\pi,\ell}\big)\otimes\omega_{\pi,\ell}^{-1}\big)=
\omega_{\pi,\ell}^{-1}(d)
\big(\matrice{0}{-1}{1}{0}_\ell\star v_{\pi,\ell}\big)\otimes\omega_{\pi,\ell}^{-1}$$
De m\^eme $\matrice{M^{-1}}{0}{0}{1}_\ell\matrice{a}{Mb}{c}{d}_\ell\matrice{M}{0}{0}{1}_\ell=\matrice{a}{b}{Mc}{d}_\ell$
et donc $$\matrice{a}{Mb}{c}{d}_\ell\star\big(\matrice{M}{0}{0}{1}_\ell\star v_{\check\pi,\ell}\big)=
\omega_{\check\pi,\ell}(d)\matrice{M}{0}{0}{1}_\ell\star v_{\check\pi,\ell}$$ 
On a $$\omega_{\check\pi,\ell}=\omega_{\pi,\ell}^{-1}$$
Comme $\big\{\phi\in \check\pi_\ell,\ 
\matrice{a}{Mb}{c}{d}_\ell\star\phi=\omega_{\check\pi,\ell}(d)\phi,  
\ {\rm si}\ \matrice{a}{b}{Mc}{d}_\ell\in\wGamma_0(M)_\ell\big\}$ est de dimension~$1$,
on en d\'eduit l'existence de $\epsilon(\pi_\ell)$.  

L'appartenance de $\epsilon(\pi_\ell)$
\`a $\Q(\pi)G(\omega_{\pi,\ell}^{-1})$ r\'esulte, comme dans la preuve
du lemme~\ref{eqf1}, de ce que $ v_\pi$ et $ v_{\check\pi}$
sont d\'efinies sur $\Q(\pi)$;
le fait que $\epsilon(\pi_\ell)=1$ si $\ell\nmid N$ r\'esulte de la normalisation
de $v_{\pi,\ell}$ et $v_{\check\pi,\ell}$ (on a impos\'e que ces fonctions vaillent $1$ sur $\Z_p^\dual$).

Comme
$\omega_{\check\pi,\ell}=\omega_{\pi,\ell}^{-1}$,
\begin{align*}
(\omega_{\check\pi,\ell}^{-1}\circ\det)
\matrice{0}{-1}{1}{0}_\ell\star\big((\omega_{\pi,\ell}^{-1}\circ\det)^{-1}
\big(\matrice{0}{-1}{1}{0}_\ell\star v_{\pi,\ell}\big)\big)= 
\matrice{-1}{0}{0}{-1}_\ell\star v_{\pi,\ell}
= \omega_{\pi,\ell}(-1) v_{\pi,\ell}
\end{align*}
D'apr\`es ce qui pr\'ec\`ede, le
membre de gauche ${\rm MdG}$ de l'identit\'e ci-dessus v\'erifie aussi:
\begin{align*}
{\rm MdG}=&\ 
\epsilon({\pi_\ell})(\omega_{\pi,\ell}\circ\det)
\big(\matrice{0}{-1}{1}{0}_\ell\matrice{M}{0}{0}{1}_\ell\star v_{\check\pi,\ell}\big)\\
=&\ 
\epsilon({\pi_\ell})\omega_{\pi,\ell}(M)^{-1}\matrice{1}{0}{0}{M}_\ell\star
\big((\omega_{\check\pi,\ell}^{-1}\circ\det)\matrice{0}{-1}{1}{0}_\ell\star v_{\check\pi,\ell}\big)\\
=&\ \epsilon(\pi_\ell)\omega_{\pi,\ell}(M)^{-1}\epsilon(\check\pi_\ell)
\matrice{M}{0}{0}{M}_\ell\star v_{\pi,\ell}\\
=&\ \epsilon(\pi_\ell)\epsilon(\check\pi_\ell) v_{\pi,\ell}.
\end{align*}
Ceci termine la preuve du (i).

Le (ii) se d\'eduit du (i) gr\^ace aux factorisations
\begin{align*}
\matrice{0}{-1}{1}{0}^{]\infty[}\star  v_\pi=
\otimes_\ell\big(\matrice{0}{-1}{1}{0}_\ell\star  v_{\pi,\ell}\big),
\quad\omega_\pi=\otimes_\ell\omega_{\pi,\ell}
\end{align*}
et \`a la formule $\prod_{\ell}\omega_{\pi,\ell}(-1)=\omega_{\pi,\infty}^{-1}(-1)=(-1)^k$
(qui r\'esulte du lemme~\ref{carcen}).
\end{proof}

\Subsubsection{Action de l'involution sur l'\'el\'ement de Kato}

\begin{rema}\phantomsection\label{naka5}
{\rm (i)}
Si $\omega_\pi^{(p)}$ est le caract\`ere $p$-adique associ\'e \`a $\omega_\pi$,
on a $\omega_\pi^{(p)}=(\otimes_{\ell\neq p}\omega_{\pi,\ell})\otimes\omega_{\pi,p}^{(p)}$
et $\omega_{\pi,p}^{(p)}(x_p)=x_p^{k-2j}\omega_{\pi,p}(x_p)$.

{\rm (ii)}
Les repr\'esentations $\check\pi$ et $\pi\otimes\omega_\pi^{-1}$ de $\GG(\A^{]\infty[})$
sont isomorphes,
et on peut remplacer les sommes de Gauss intervenant dans les isomorphismes du \no\ref{como112.3}
(point sur la torsion des vecteurs localement alg\'ebriques)
par les facteurs $\epsilon$:
\begin{align*}
\check\pi^{\rm alg}&=\{(\epsilon(\pi)\omega_\pi^{(p)})^{-1}X^{2j-k}\phi,\ \phi\in\pi^{\rm alg}\}\\
\check\pi_p^{\rm alg}&=\{(\epsilon(\pi_p)\omega_{\pi,p}^{(p)})^{-1}X^{2j-k}\phi,\ \phi\in\pi_p^{\rm alg}\}\\
\Pi_p(m^\dual_{\eet}(\check\pi))&=
\{(\epsilon(\pi_p)\omega_{\pi,p}^{(p)})^{-1}\phi,\ \phi\in \Pi_p(m^\dual_{\eet}(\pi))\}\\
m_{\eet}(\check\pi)&=m_{\eet}(\pi)\otimes \epsilon(\pi)\zeta_{\rm B}^{-a}
\end{align*}
\end{rema}

On voit
${\bf z}_{\rm Iw}(\pi)\otimes\zeta_{\rm B}$ et ${\bf z}_{\rm Iw}(\check\pi)\otimes\zeta_{\rm B}$
comme des formes lin\'eaires sur $m_{\eet}(\pi)\otimes \Pi_p(m^\dual_{\eet}(\pi))$
et $m_{\eet}(\check\pi)\otimes \Pi_p(m^\dual_{\eet}(\check\pi))$ respectivement.

(iii) Il r\'esulte de la rem.\,~\ref{naka5}
que l'on a une identification naturelle
$$\big(m_{\eet}(\check\pi)\otimes \Pi_p(m^\dual_{\eet}(\check\pi))\big) 
\cong \big(m_{\eet}(\pi)\otimes \Pi_p(m^\dual_{\eet}(\pi))\big)\otimes \tfrac{\epsilon(\pi)}{\epsilon(\pi_p)}
(\omega_{\pi,p}^{(p)})^{-1};$$
l'isomorphisme
$\big(m_{\eet}(\pi)\otimes \Pi_p(m^\dual_{\eet}(\pi))\big)
\overset{\sim}{\to} \big(m_{\eet}(\check\pi)\otimes \Pi_p(m^\dual_{\eet}(\check\pi))\big)$
qui en r\'esulte est not\'e 
$\phi\mapsto \phi\otimes \tfrac{\epsilon(\pi)}{\epsilon(\pi_p)}
(\omega_{\pi,p}^{(p)})^{-1}$ .
Par dualit\'e, cela fournit
$$\big(m_{\eet}(\check\pi)\otimes \Pi_p(m^\dual_{\eet}(\check\pi))\big)^\dual \overset{\sim}{\to}
\big(m_{\eet}(\pi)\otimes \Pi_p(m^\dual_{\eet}(\pi))\big)^\dual$$
not\'e $\mu\mapsto \mu \otimes \tfrac{\epsilon(\pi)}{\epsilon(\pi_p)}
(\omega_{\pi,p}^{(p)})^{-1}$.

\begin{theo}\phantomsection\label{neqf12}
Si $\pi$ est $\Pi_p$-compatible, 
dans $\big(m_{\eet}(\pi)\otimes \Pi_p(m^\dual_{\eet}(\pi))\big)^\dual$, on a l'identit\'e suivante{\rm:}
$$\matrice{0}{-1}{1}{0}_p\star({\bf z}(\pi)\otimes\zeta_{\rm B})=
-\big(\matrice{M}{0}{0}{1}_p\star({\bf z}(\check\pi)\otimes\zeta_{\rm B})\big)
\otimes\tfrac{\epsilon(\pi)}{\epsilon(\pi_p)}(\omega^{(p)}_{\pi,p})^{-1},$$
o\`u $M=\prod_{\ell\neq p}\ell^{n_\ell}$.
\end{theo}
\begin{proof}
Il s'agit de prouver que les deux membres prennent la m\^eme valeur sur tout 
$\gamma\otimes \phi\in m_{\eet}(\pi)\otimes \Pi_p(m^\dual_{\eet}(\pi))$. 
D'apr\`es le th.\,\ref{zk2}, on a ${\bf z}(\pi)\otimes\zeta_{\rm B}=(0,\infty)\otimes v_\pi^{]p[}$
et ${\bf z}(\check\pi)\otimes\zeta_{\rm B}=(0,\infty)\otimes v_{\check\pi}^{]p[}$.
Le membre de gauche devient donc
\begin{equation}\label{naka7}
\big\langle \matrice{0}{-1}{1}{0}_p\star(0,\infty),\gamma\otimes v_\pi^{]p[}\otimes\phi\big\rangle=
-\big\langle (0,\infty),\gamma\otimes \big(\matrice{0}{1}{-1}{0}^{]\infty,p[}\star v_\pi^{]p[}\big)\otimes\phi\big\rangle
\end{equation}
(Le membre de gauche est la d\'efinition, et l'\'egalit\'e avec le membre de droite vient du lemme~\ref{np=p}.)
Maintenant, il r\'esulte de la prop.~\ref{neqf8} que
$$\matrice{0}{1}{-1}{0}^{]\infty,p[}\star v_\pi^{]p[}=\otimes_{\ell\neq p}
\big(\big(\matrice{M}{0}{0}{1}_\ell\star v_{\check\pi,\ell}\big)\otimes(\epsilon(\pi_\ell)\omega_{\pi,\ell})\big)$$
L'invariance de $(0,\infty)$ par multiplication par un caract\`ere fait que l'on ne change
pas le membre de droite de (\ref{naka7}) en multipliant par $(\omega_\pi^{(p)})^{-1}$.
En utilisant alors les identit\'es
$$\prod\nolimits_{\ell\neq p}\epsilon(\pi_\ell)=\tfrac{\epsilon(\pi)}{\epsilon(\pi_p)},
\quad \omega_\pi^{(p)}=\big(\prod\nolimits_{\ell\neq p}\omega_{\pi,\ell}\big)\omega_{\pi,p}^{(p)}$$
cela permet de mettre le membre de droite de (\ref{naka7}) sous la forme
$$-\big\langle (0,\infty),\big(\gamma\otimes \big(\otimes_{\ell\neq p}\big(\matrice{M}{0}{0}{1}_\ell\star v_{\check\pi,\ell}\big)\big)
\otimes \phi\big)\otimes \tfrac{\epsilon(\pi)}{\epsilon(\pi_p)}(\omega^{(p)}_{\pi,p})^{-1} \big\rangle$$
On conclut en utilisant le fait (lemme~\ref{np=p})
que $\matrice{M}{0}{0}{1}^{]\infty[}\star(0,\infty)=(0,\infty)$.
\end{proof}

\section{Interpolation des \'el\'ements de Kato}\label{chapi3}
Dans ce chapitre, on associe (th.\,\ref{EU10}, rem.\,\ref{geni8}, th.\,\ref{geni21})
\`a la repr\'esentation\footnote{En particulier, l'id\'eal ${\goth m}$ est
non-eisenstein; comme on veut pouvoir utiliser la factorisation d'Emerton au niveau
entier (th.\,\ref{Ycano110}), on suppose que ${\goth m}$ 
est g\'en\'erique (gr\^ace au th.\,\ref{NE2}
les constructions
qui suivent restent valables sans cette restriction si on veut bien se permettre d'introduire des
d\'enominateurs, mais il est difficile de controler les d\'enominateurs en question).}
 $\rho_T$ du \no\ref{YEUL7}
un syst\`eme d'Euler ${\bf z}_M^S(\rho_T)$, pour $(M,Np)=1$,
se sp\'ecialisant en les \'el\'ements de Kato du \no\ref{ZK99} aux points classiques
(multipli\'es par des facteurs d'Euler en les $\ell$ divisant $N$).
On utilise la factorisation de la cohomologie compl\'et\'ee
pour voir $(0,\infty)$ comme une forme lin\'eaire sur\footnote{Voir le
d\'ebut du \S\,\ref{EUL11} pour le passage de $\rho_T^\diamond$ \`a
$\check\rho_T$.} $\rho_T\otimes_T\Pi_p(\check\rho_T)$.
Cette forme est invariante par $\matrice{p}{0}{0}{1}_p$, et donne naissance \`a un \'el\'ement
${\bf z}_{\rm Iw}^S(\rho_T)_p$
de $\rho_T^\diamond\otimes H^1(G_{\Q_p},\Lambda\wotimes\rho_T)$ d'apr\`es la prop.\,\ref{wasi2}.
En appliquant ce qui pr\'ec\`ede \`a $\rho_{T_M}:=\Z_p[(\Z/M)^\dual]\otimes\rho_T$,
on obtient (prop.\,\ref{EU6}) un syst\`eme d'\'el\'ements locaux.

Par construction, ${\bf z}_{\rm Iw}^S(\rho_T)_p$ 
se sp\'ecialise (prop.\,\ref{ZK1}) en un multiple explicite de l'\'el\'ement 
${\bf z}_{\rm Iw}(\rho_x)_p$ du \no\ref{ZK99} qui est le localis\'e d'un \'el\'ement
global (th.\,\ref{zk2}). Sous la condition $\mu=0$, la suite exacte de Poitou-Tate et
des techniques de th\'eorie d'Iwasawa permettent d'en d\'eduire (th.\,\ref{EU10}) que
${\bf z}_{\rm Iw}^S(\rho_T)_p$ est le localis\'e
d'un \'el\'ement global ${\bf z}_{\rm Iw}^S(\rho_T,\iota_{\rm Em})$;
sans la condition $\mu=0$, on obtient un \'el\'ement global 
(rem.\,\ref{geni8}), mais seulement apr\`es extension des scalaires \`a ${\rm Fr}(T)$.
On prouve au chap.~\ref{chapi5} que l'extension des scalaires \`a ${\rm Fr}(T)$ est en fait inutile
(th.\,\ref{facto2}).

\vskip.2cm
{\it On suppose $p\neq 2$ dans ce qui suit.} 

\vskip.2cm
Comme on veut pouvoir tordre par des caract\`eres de $\Z_p^\dual$ et pas seulement par
des caract\`eres de $1+p\Z_p$, on 
va remplacer l'alg\`ebre de Hecke $T=T(Np^\infty)_{\goth m}:=T({\goth m})$ du \no\ref{YEUL7}
par
$T:=T({\goth m})[(\Z/p)^\dual]$. On a des 
\index{Tm@\TTm}identifications
$$T=T({\goth m})[(\Z/p)^\dual]=\prod\nolimits_{\eta}T({{\goth m}\otimes\eta}),$$
o\`u $\eta$ d\'ecrit les caract\`eres $(\Z/p)^\dual\to\Z_p^\dual$,
et $T({{\goth m}\otimes\eta})$ est la localis\'ee de $T(Np^\infty)$
en l'id\'eal maximal ${\goth m}\otimes\eta$ tel que $\overline\rho_{T({\goth m}\otimes\eta)}=
\overline\rho_{T({\goth m})}\otimes\eta$ (i.e.~${\rm Tr}(\rho_{T(\goth m\otimes\eta)}(\sigma_\ell))=\eta(\ell)
{\rm Tr}(\rho_{T({\goth m})}(\sigma_\ell))$, si $\ell\nmid Np$).
On dispose alors \index{rhoT@\rhoT}de
$$\rho_T:G_{\Q,S}\to {\bf GL}_2(T)\quad 
({\text{on a $\rho_T=\Z_p[(\Z/p)^\dual]\otimes_{\Z_p}\rho_{T({\goth m})}$}}).$$ 
On pose alors\index{Xcal@\calX}
$${\cal X}:={\rm Spec}(T) 
\quad{\rm et}\quad
{\cal X}^{\rm cl}(\O_L):=\{x\in {\cal X}(\O_L),\ x\ {\rm classique}\}.$$
On d\'efinit $H^1_c[\rho_T]$ comme le sous-$\GG(\Ai)$-module
de $H^1_c(\GG(\Q),{\cal C}(\GG(\A),\O_L))$ engendr\'e par
$\oplus_\eta H^1_c(Np^\infty)_{{\goth m}\otimes\eta}$.
Comme on a localis\'e en des id\'eaux non-eisenstein, il n'y a pas de diff\'erence
entre cohomologie compl\'et\'ee et cohomologie compl\'et\'ee
\`a support compact (mais on veut voir $H^1_c[\rho_T]$ comme un sous-module de la
cohomologie \`a support compact pour pouvoir l'accoupler avec $(0,\infty)$, d'o\`u la notation), 
et on obtient
la somme directe des twists de l'ancien $H^1[\rho_T]$ par les $\eta$ ci-dessus.
Le th.\,\ref{Ycano110} se transpose verbatim en faisant la somme directe sur les $\eta$
(si on voulait inclure $p=2$, il faudrait faire un peu plus attention).

\Subsection{\'Elimination d'une variable}\label{EUL7.3}
\subsubsection{La $\Z_p$-extension cyclotomique}
Soit $\Q_\infty$ la $\Z_p$-extension cyclotomique de~$\Q$:
c'est le sous corps de $\Q(\bmu_{p^\infty})$ fix\'e par
le sous-groupe de torsion de
${\rm Gal}(\Q(\bmu_{p^\infty})/\Q)$.
On note $H$ et $H_0$ les groupes de Galois sur $\Q$ de
$\Q(\bmu_{p^\infty})$ et $\Q_\infty$
(il serait plus normal de les noter $\Gamma$ et $\Gamma_0$
mais $\Gamma$ est d\'ej\`a utilis\'e pour les groupes de congruence...).
Le caract\`ere cyclotomique fournit des \index{Hgal@\HH}identifications:
$$H=\Z_p^\dual,\quad {\rm Gal}(\Q(\bmu_p)/\Q)=\bmu_{p-1},$$
et la d\'ecomposition $\Z_p^\dual=\bmu_{p-1}\times (1+p\Z_p)$
fournit une identification 
$$\iota:H_0\overset{\sim}{\to} 1+p\Z_p,$$
et $\iota$ peut aussi \^etre vu comme un caract\`ere privil\'egi\'e de $H_0$.

Soient $\Lambda$ et $\Lambda_0$ les alg\`ebres de \index{Lam@\lamb}groupes compl\'et\'ees
$$\Lambda_0=\Z_p[[H_0]]
\quad{\rm et}\quad
\Lambda=\Z_p[[H]]=\Lambda_0\otimes_{\Z_p}\Z_p[{\rm Gal}(\Q(\bmu_{p})/\Q)].$$
\index{Wcal@\calW}Posons 
$${\cal W}={\rm Spec}\,\Lambda,\quad {\cal W}_0={\rm Spec}\,\Lambda_0
\quad{\rm et}\quad
{\cal W}^{(p-1)}={\rm Spec}\big(\Z_p[{\rm Gal}(\Q(\bmu_{p})/\Q)]\big).$$
Alors, si $L$ est une extension finie de $\Q_p$,
${\cal W}(\O_L)$ (resp.~${\cal W}_0(\O_L)$) est l'ensemble
des caract\`eres continus de $H$ (resp.~$H_0$) \`a valeurs
dans $\O_L^\dual$ et ${\cal W}^{(p-1)}(\O_L)={\cal W}^{(p-1)}(\Z_p)$
est l'ensemble des caract\`eres $\eta:{\rm Gal}(\Q(\bmu_{p})/\Q)\to\bmu_{p-1}$.
On a ${\cal W}={\cal W}_0\times{\cal W}^{(p-1)}$.

Si $A=\Lambda,\Lambda_0, \Z_p[{\rm Gal}(\Q(\bmu_{p})/\Q)]$, alors
$A$ est naturellement un $A[G_\Q]$-module puisque
$H,H_0$ et ${\rm Gal}(\Q(\bmu_{p})/\Q)$ sont des quotients de $G_\Q$.
De plus, en tant que $\Z_p[G_\Q]$-module,
$\Z_p[{\rm Gal}(\Q(\bmu_{p})/\Q)]=\oplus_{\eta\in {\cal W}^{(p-1)}}\Z_p(\eta)$.
Il s'ensuit que, si $V$ est un $G_{\Q,S}$-module,
on a une d\'ecomposition
$$\Lambda\otimes V=\oplus_{\eta\in {\cal W}^{(p-1)}}\Lambda_0\otimes V(\eta),$$
ce qui permet de ramener l'\'etude de $\Lambda\otimes V$ \`a celle
des $\Lambda_0\otimes V(\eta)$; l'int\'er\^et \'etant que
$\Lambda_0$ est plus simple que $\Lambda$ d'un point de vue alg\'ebrique:
le groupe $H_0$ est isomorphe \`a $\Z_p$, et le choix
d'un g\'en\'erateur $\gamma$ fournit un isomorphisme
$\Gamma_0=\Z_p[[\gamma-1]]$.

\subsubsection{Descente de $T$ \`a $T_0$}\label{EUL10}
Le caract\`ere cyclotomique fournit une identification $G_{\Q,S}^{\rm ab}=\Z_S^\dual$.
Maintenant, on peut d\'ecomposer $\Z_S^\dual$, de mani\`ere naturelle, sous la forme
$\Delta\times (1+p\Z_p)$, 
o\`u $\Delta$ est le produit d'un groupe fini
par un groupe profini d'ordre premier \`a $p$ (i.e. $\Delta$ est le produit
de $\bmu_{p-1}\subset\Z_p^\dual$ et des $\Z_\ell^\dual$, pour $\ell\in S\moins\{p\}$).
Il s'ensuit que, si $A$ est un quotient de $T$, 
un caract\`ere continu $\eta:\Z_S^\dual\to A^\dual$ peut
s'\'ecrire, de mani\`ere unique, sous la forme
$\eta_0\eta_p^2$, o\`u $\eta_0$ est d'ordre fini, trivial sur $1+p\Z_p$, 
et $\eta_p$ est trivial sur $\Delta$ (et \`a valeurs dans $1+{\goth m}_A$).
On en d\'eduit que toute repr\'esentation $\rho:G_{\Q,S}\to {\bf GL}_2(A)$
peut se factoriser, de mani\`ere unique, sous la forme
$\rho=\rho_0\otimes\eta_p$, o\`u le caract\`ere de $\Z_S^\dual$ correspondant
\`a $\det\rho_0$ est trivial sur $1+p\Z_p$ et celui correspondant
\`a $\eta_p$ est trivial sur $\Delta$.
D'o\`u une \index{Tm@\TTm}factorisation:
$$T_{\goth m}=\Lambda_0\wotimes_{\Z_p} T_0,\quad{\text{(et donc $T=\Lambda\wotimes_{\Z_p} T_0$)}}$$
qui, elle-m\^eme, fournit une \index{rhoT@\rhoT}factorisation
$$\rho_{\goth m}=\Lambda_0\wotimes_{\Z_p}\rho_{T_0}, 
\quad{\text{(et donc $\rho_T=\Lambda\wotimes_{\Z_p}\rho_{T_0}$)}}$$
et, d'apr\`es le \no\ref{EUL6},
des factorisations
\begin{align*}
\Pi_p^\dual(\rho_T)= \Lambda\wotimes_{\Z_p}\Pi_p^\dual(\rho_{T_0})
\quad & {\rm et}\quad
\Pi_p(\rho_T)={\cal C}(H,\Z_p)\wotimes_{\Z_p}\Pi_p(\rho_{T_0})\\
\Pi_\ell(\rho_T)=& \Lambda\otimes_{\Z_p}\Pi_\ell(\rho_{T_0}),\ {\text{si $\ell\neq p$.}}
\end{align*}

\subsubsection{Points classiques}
\index{Xcal@\calX}Posons ${\cal X}_0={\rm Spec}\,T_0$.
Si $L$ est une extension finie de~$\Q_p$, et si $x\in{\cal X}_0(\O_L)$,
on note ${\goth p}_x$ l'id\'eal premier de $T_0$ qui lui correspond,
et $\rho_x$ la repr\'esentation $(T_0/{\goth p}_x)\otimes_{T_0}\rho_{T_0}$.
On dit que {\it $x$ est classique} si $\rho_x$ est la tordue par un caract\`ere d'une repr\'esentation
attach\'ee \`a une forme modulaire primitive de poids~$\geq 2$.
\begin{rema}
Par d\'efinition
de $T_0$, la somme des poids de Hodge-Tate de $\rho_x$ est $0$. Il s'ensuit que 
$x$ est classique si et seulement si les poids de Hodge-Tate de $\rho_x$
sont de la forme $\frac{1-k_x}{2},\frac{k_x-1}{2}$,
avec $k_x\geq 2$, et\footnote{$\iota:H_0\to 1+\Z_p$
est le caract\`ere privil\'egi\'e d\'efini ci-dessus.}
$\rho_x\otimes \iota^{(1-k_x)/2}$ est la repr\'esentation
associ\'ee \`a une forme modulaire $f_x$ de poids $k_x$.
\end{rema}
On note ${\cal X}_0^{\rm cl}$ l'ensemble des points classiques et
${\cal X}_0^{{\rm cl},+}$ le sous-ensemble des $x\in {\cal X}_0^{\rm cl}$,
tels que la restriction $\rho_{x,p}$ de $\rho_x$ \`a $G_{\Q_p}$ est
irr\'eductible. Alors, si ${\goth m}$ est g\'en\'erique,
 ${\cal X}_0^{\rm cl}$ et ${\cal X}_0^{{\rm cl},+}$ sont zariski-denses
dans ${\rm Spec}\,T_0[\frac{1}{p}]$, gr\^ace aux th\'eor\`emes {\og big $R$ = big $T$\fg}
(m\'ethode de la foug\`ere infinie).

\Subsection{Construction d'un syst\`eme d'Euler local}\label{EUL11}
On cherche \`a interpoler les \'el\'ements de Kato via $(0,\infty)$.
Comme,
d'apr\`es le th.\,\ref{zk2},
 ${\rm loc}_p({\bf z}_{\rm Iw}(m_{\eet}(\pi)))=
{\bf z}_{(0,\infty)}(\pi)\otimes\zeta_{\rm B}^{-1}$, 
on est amen\'e \`a tordre
la cohomologie compl\'et\'ee par $\zeta_{\rm B}$ pour faire dispara\^itre le $\zeta_{\rm B}^{-1}$
ci-dessus.

On \index{rhoT@\rhoT}pose $\check\rho_T=\rho_T^\diamond(1)$.
La factorisation du th.\,\ref{Ycano110} fournit une identification:
$$\rho_T\otimes_T\Pi(\check\rho_T)\equiv H^1_c[\rho_T(-1)]\otimes\zeta_{\rm B}.$$
A priori, le produit tensoriel devrait \^etre au-dessus de l'alg\`ebre de Hecke $T'$ correspondant
\`a $\rho_T(-1)$, mais $T'\cong T$ car ${\cal X}'={\rm Spec}\,T'$ et ${\cal X}={\rm Spec}\,T$ sont tous
deux isomorphes \`a ${\cal X}_0\times {\cal W}$, et l'isomorphisme $T'\cong T$ correspond
\`a $(\rho,\eta)\mapsto(\rho,\eta\cyp ^{-1})$ de ${\cal X}$ dans ${\cal X}'$.
\vskip.1cm
On renvoie au \S\,\ref{EUL3} pour les notations ci-dessous ($v_{T,\ell}$, $v'_{T,\ell}$,
$\chi_{\ell,i}$, etc.), avec $\check\rho_T$ au lieu de $\rho_T$. En particulier, les
polyn\^omes $P_\ell$ de la rem.\,\ref{eu100}
sont reli\'es aux facteurs d'Euler de $\check\rho_T$
et pas ceux de $\rho_T$.

\subsubsection{L'\'el\'ement de base ${\bf z}^S_{\rm Iw}(\rho_{T_0})_p$}\label{EUL12}
On prend pour $S$ l'ensemble des nombres premiers divisant $Np$.

\vskip.2cm
$\bullet$ {\it Les \'el\'ements ${\bf z}^S_{\rm Iw}(\rho_{T})_p$ et ${\bf z}^S(\rho_{T})_p$}.---
Soit 
$$\psi_S =v_T^{]S[}\otimes\big(\otimes_{\ell\in S\moins\{p\}}v'_{T,\ell}\big).$$ 
\index{inf2@\oits}Notons $(0,\infty)_{T,S} $
la forme bilin\'eaire
$$v\otimes \phi\mapsto\langle(0,\infty),(v\otimes\zeta_{\rm B}^{-1})\otimes\psi_S \otimes\phi\rangle$$
sur $\rho_T\otimes \Pi_p(\check\rho_T)$.
On a donc\footnote{Si $A$ est une $\O_L$-alg\`ebre et si $M$ est un $A$-module, on pose
$M^\diamond={\rm Hom}_A(M,A)$ et $M^\dual={\rm Hom}_{\O_L}(M,\O_L)$.
S'il y a ambigu\"{\i}t\'e sur $A$, on \'ecrit $M^{A{\text -}\dual}$ au lieu de $M^\diamond$.}
$$(0,\infty)_{T,S} \in
\rho_T^{\diamond}\otimes \Pi_p^\dual(\check\rho_T).$$
Maintenant, $\matrice{p}{0}{0}{1}^{]\infty,p[}\star\psi_S =\psi_S $ puisque
$v'_{T,\ell}$ est fixe par $\matrice{\Z_\ell^\dual}{\Z_\ell}{0}{1}$.
Il r\'esulte du cor.\,\ref{p=1} que $(0,\infty)_{T,S} $
est invariante par $\matrice{p}{0}{0}{1}_p$; elle d\'efinit donc, 
via la prop.~\ref{wasi2},
un \index{Zbemp@\zemp}\'el\'ement 
$${\bf z}^S_{\rm Iw}(\rho_{T})_p\in \rho_{T}^\diamond\otimes_{T}
 H^1(G_{\Q_p},\Lambda\otimes\rho_{T}).$$
On note 
$${\bf z}^S(\rho_T)_p\in \rho_{T}^\diamond\otimes_{T}
 H^1(G_{\Q_p},\rho_{T})$$
l'image de ${\bf z}^S_{\rm Iw}(\rho_{T})_p$ via l'application naturelle
$\Lambda\otimes\rho_{T}\to \rho_T$.

\vskip.2cm
$\bullet$ {\it L'\'el\'ement ${\bf z}^S_{\rm Iw}(\rho_{T_0})_p$}.---
Par ailleurs,
$$\rho_T\otimes\Pi_p(\check\rho_T)=
{\cal C}(\Z_p^\dual,\O_L)\wotimes_{\O_L}(\rho_{T_0}\otimes_T\Pi_p(\check\rho_{T_0}))$$
Il s'ensuit que
$$
\rho_T^{\diamond}\otimes_T \Pi_p^\dual(\check\rho_T)=
\Lambda\wotimes_{\Z_p}(\rho_{T_0}^{\diamond}\otimes_{T_0} \Pi_p^\dual(\check\rho_{T_0})).$$
La multiplication par un caract\`ere de $\Z_p^\dual$ sur
${\cal C}(\Z_p^\dual,\Z_p)\wotimes_{\Z_p}(\rho_{T_0}\otimes_T\Pi_p(\check\rho_{T_0}))\subset
H^1(\GG(\Q),{\cal C}(\GG(A),\O_L))$ est induite par celle sur
${\cal C}(\Z_p^\dual,\Z_p)$.
On en d\'eduit que
l'invariance de $(0,\infty)$ par torsion par un caract\`ere (cf.~rem.\,\ref{ES10.1})
implique que 
$$(0,\infty)_{T,S} =1\otimes\mu(\rho_{T_0}),\quad{\text{avec 
$\mu(\rho_{T_0})\in
\rho_{T_0}^{\diamond}\otimes_{T_0}
\Pi_p^\dual(\check\rho_{T_0})$.}}$$ 
Pour les m\^emes raisons que ci-dessus, $\mu(\rho_{T_0})$ 
est invariante par $\matrice{p}{0}{0}{1}_p$; elle d\'efinit donc, 
via la prop.~\ref{wasi2},
un \'el\'ement 
$${\bf z}^S_{\rm Iw}(\rho_{T_0})_p\in \rho_{T_0}^\diamond\otimes_{T_0}
 H^1(G_{\Q_p},\Lambda\wotimes_{\Z_p}\rho_{T_0}).$$

\subsubsection
{Lien entre ${\bf z}^S_{\rm Iw}(\rho_{T_0})_p$, ${\bf z}^S(\rho_T)_p$
et ${\bf z}^S_{\rm Iw}(\rho_{T})_p$}\label{lien1}
Comme nous
allons le voir, les \'el\'ements 
${\bf z}^S_{\rm Iw}(\rho_{T_0})_p$, ${\bf z}^S(\rho_T)_p$
et ${\bf z}^S_{\rm Iw}(\rho_{T})_p$ se d\'eterminent mutuellement.  Dans certaines
situations, il est plus agr\'eable de travailler avec ${\bf z}^S_{\rm Iw}(\rho_{T_0})_p$
(cela permet d'utiliser des techniques de th\'eorie d'Iwasawa),
et dans d'autres avec ${\bf z}^S(\rho_T)_p$.

Les identifications $\Lambda\wotimes_{\Z_p}T_0=T$
et $\Lambda\wotimes\rho_{T_0}^\diamond = \rho_T^\diamond$
permettent de d\'efinir un certain nombre de morphismes $G_{\Q,S}$-\'equivariant.

\vskip.1cm
\noindent$\diamond$
On note $(\Lambda\wotimes_{\Z_p}\rho_T)^{\rm lin}$ (res.~$(\Lambda\wotimes_{\Z_p}\rho_T)^{\rm semi}$)
le $G_{\Q,S}$-module
avec {\it action $\Lambda$-lin\'eaire} (resp.~{\it semi-lin\'eaire}) de $G_{\Q,S}$, 
i.e. $\sigma([a]\otimes v)=[a]\otimes\sigma(v)$ 
(resp.~$\sigma([a]\otimes v)=[\overline\sigma\,a]\otimes\sigma(v)$)
si $a\in\Z_p^\dual$ et $v\in\rho_T$, 
o\`u $\overline\sigma=\cyp (\sigma)$
est l'image de $\sigma$ dans $\Z_p^\dual$.

\vskip.1cm
\noindent$\diamond$
On note $\Delta: (\Lambda\wotimes_{\Z_p}\rho_T)^{\rm semi}\to (\Lambda\wotimes_{\Z_p}\rho_T)^{\rm lin}$
l'application $T_0$-lin\'eaire d\'efinie par
$$\Delta([a]\otimes[b]\otimes v)=[ab^{-1}]\otimes[b]\otimes v,
\quad{\text{si $a,b\in\Z_p^\dual$ et $v\in\rho_{T_0}$.}}$$

\vskip.1cm
\noindent$\diamond$
On \index{iotalin@\iotalin}d\'efinit $\iota^{\rm lin}:\rho_T\to (\Lambda\wotimes_{\Z_p}\rho_T)^{\rm lin}$ 
et $\iota^{\rm semi}:\rho_T\to (\Lambda\wotimes_{\Z_p}\rho_T)^{\rm semi}$ 
par 
$$\iota^{\rm lin}(z)=1\otimes z,\quad \iota^{\rm semi}([a]\otimes v)=[a]\otimes[a]\otimes v,
\ {\text{si $a\in\Z_p^\dual$ et $v\in\rho_{T_0}$.}}$$
\noindent$\diamond$
Enfin, on dispose de $p^{\rm lin}:(\Lambda\wotimes_{\Z_p}\rho_T)^{\rm lin}\to \rho_T$
et $p^{\rm semi}:(\Lambda\wotimes_{\Z_p}\rho_T)^{\rm semi}\to\rho_T$
induites par l'augmentation $\Lambda\to\Z_p$ envoyant $[a]$ sur $1$, si $a\in\Z_p^\dual$.

\vskip.2cm
Toutes ces applications sont $G_{\Q,S}$-\'equivariantes, et on a:
$$p^{\rm lin}\circ\iota^{\rm lin}={\rm id},\quad
p^{\rm semi}\circ\iota^{\rm semi}={\rm id}, \quad\Delta\circ\iota^{\rm semi}=\iota^{\rm lin}.$$
La $G_{\Q,S}$-\'equivariance de ces applications fournit des applications ayant les
m\^emes noms entre les $H^1(G_{\Q_p},-)$.
La discussion ci-dessus montre que $\mu\in \rho_T^\diamond\otimes_T
H^1(G_{\Q_p},(\Lambda\wotimes_{\Z_p}\rho_T)^{\rm lin})$ est dans l'image de $\iota^{\rm lin}$ si et seulement
l'\'el\'ement de $\rho_T^{\diamond}\otimes_T \Pi_p^\dual(\check\rho_T)$ qui lui correspond
est invariant par torsion par un caract\`ere. Il s'ensuit que:
\begin{align}\label{lien2}
{\bf z}^S(\rho_{T})_p&=p^{\rm semi}({\bf z}_{\rm Iw}^S(\rho_T)_p)\\
{\bf z}_{\rm Iw}^S(\rho_{T_0})_p&={\bf z}^S(\rho_{T})_p\notag\\
{\bf z}^S_{\rm Iw}(\rho_{T})_p&=\iota^{\rm semi}({\bf z}^S(\rho_T)_p)\notag\\
\Delta({\bf z}^S_{\rm Iw}(\rho_{T})_p)
&=\iota^{\rm lin}({\bf z}_{\rm Iw}^S(\rho_{T_0})_p)\notag
\end{align}
(La premi\`ere \'egalit\'e est une d\'efinition, les trois autres s'en d\'eduisent
en utilisant les relations entre les applications ci-dessus
et le fait que $\Delta({\bf z}^S_{\rm Iw}(\rho_{T})_p)$ est dans
l'image de $\iota^{\rm lin}$ puisque $(0,\infty)_{T,S}$ est invariant par
torsion par un caract\`ere.)

\subsubsection{Sp\'ecialisation en un point classique}\label{EUL5.1}
Si $x\in {\cal X}(\O_L)$, on note 
$${\bf z}^S_{\rm Iw}(\rho_x)_p\in 
\rho_{x}^\dual\otimes H^1(G_{\Q_p},\Lambda\otimes\rho_{x})$$
l'image de ${\bf z}^S_{\rm Iw}(\rho_{T})_p$.

Comme ${\cal X}(\O_L)={\cal X}_0(\O_L)\times{\cal W}(\O_L)$,
on peut \'ecrire $x$ sous la forme $x=(x_0,\eta)$.
Alors $\rho_{(x_0,\eta)}=\rho_{x_0}\otimes\eta$ et donc 
$$\rho_{(x_0,\eta)}^\dual=\rho_{x_0}^\dual\otimes\eta^{-1}
\quad{\rm et}\quad
H^1(G_{\Q_p},\Lambda\otimes\rho_{(x_0,\eta)})=H^1(G_{\Q_p},\Lambda\otimes\rho_{x_0})\otimes\eta.$$
On obtient donc une identification naturelle (les $\eta$ et $\eta^{-1}$ se compensent):
$$\rho_{(x_0,\eta)}^\dual\otimes H^1(G_{\Q_p},\Lambda\otimes\rho_{(x_0,\eta)})=
\rho_{x_0}^\dual\otimes H^1(G_{\Q_p},\Lambda\otimes\rho_{x_0}).$$
L'invariance de $(0,\infty)$ par torsion par un caract\`ere
implique, modulo cette identification, que
$${\bf z}^S_{\rm Iw}(\rho_{(x_0,\eta)})_p={\bf z}^S_{\rm Iw}(\rho_{x_0})_p,
\quad{\text{pour tout $\eta$.}}$$

Si $x$ est classique, et donc $\rho_x=m_{\eet}(\pi)$ pour une repr\'esentation
cohomologique $\pi$ de $\GG(\A^{]\infty[})$, on a d\'efini (cf.~rem.\,\ref{ZK})
des \'el\'ements
$${\bf z}_{\rm Iw}(\rho_x)_p\in \rho_{x}^\dual\otimes H^1(G_{\Q_p},\Lambda\otimes\rho_{x}),
\quad
{\bf z}_{\rm Iw}(\rho_x)\in \rho_{x}^\dual\otimes H^1(G_{\Q,S},\Lambda\otimes\rho_{x}).$$
Les \'el\'ements ${\bf z}^S_{\rm Iw}(\rho_x)_p$ 
et ${\bf z}_{\rm Iw}(\rho_x)_p$ sont tous les deux obtenus
\`a partir de $(0,\infty)$.
La diff\'erence entre leurs constructions 
est le remplacement du nouveau vecteur $v_\pi^{]p[}$ par $\psi_S$, ce qui
introduit un facteur d'Euler.
Plus pr\'ecis\'ement, on a le r\'esultat suivant,
dans lequel $P_\ell\in 1+X\O_L[X]$ est le polyn\^ome de la rem.\,\ref{eu100}
pour $\Pi_\ell(\check\rho_x)$.
\begin{prop}\phantomsection\label{ZK1}
Si $x\in {\cal X}^{\rm cl}$, alors
$${\bf z}^S_{\rm Iw}(\rho_x)_p=
\big(\prod_{\ell\in S\moins\{p\}}P_\ell([\sigma_\ell^{-1}])\big)
\cdot {\bf z}_{\rm Iw}(\rho_x)_p.$$
\end{prop}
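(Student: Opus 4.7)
\emph{Plan de preuve.} La strat\'egie est de comparer les deux formes fonctionnelles sur $\rho_x \otimes \Pi_p(\check\rho_x)$: celle qui sp\'ecialise ${\bf z}^S_{\rm Iw}(\rho_T, \iota_{\rm Em})_p$ en $x$, et celle qui d\'efinit ${\bf z}_{\rm Iw}(\rho_x)_p$. Par construction, $\langle {\bf z}^S_{\rm Iw}(\rho_x, \iota_{\rm Em})_p, \phi_p\rangle$ est obtenu \`a partir de la forme $\langle (0,\infty), \iota_{\rm Em}(- \otimes \psi_{S,x} \otimes -)\rangle$ o\`u $\psi_{S,x} = v_x^{]S[} \otimes (\otimes_{\ell \in S \setminus \{p\}} v'_{x,\ell})$, tandis que $\langle {\bf z}_{\rm Iw}(\rho_x)_p, \phi_p\rangle$ s'obtient \`a partir de $\langle (0,\infty), \iota_\pi(- \otimes v_\pi^{]p[} \otimes -)\rangle$, avec $v_\pi^{]p[}$ le nouveau vecteur hors de $p$. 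Puisque $x$ est classique, en \'ecrivant $\rho_x = m_{\eet}(\pi)$ et en utilisant le (iii) de la rem.\,\ref{EU31.5}, on a $\iota_{\rm Em} \circ \iota_{\rm dR}^- = \lambda(\iota_{\rm Em}, x) \cdot \iota_{\pi, S}$ sur les vecteurs localement alg\'ebriques, et par unicit\'e (rem.\,\ref{passomme1}) sur l'espace entier; ceci fait appara\^\i tre le facteur global $\lambda(\iota_{\rm Em}, x)$.

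Ensuite, on utilise l'identit\'e (\ref{eu1}) pour r\'e\'ecrire $v'_{x,\ell} = P_\ell(\matrice{\ell^{-1}}{0}{0}{1}_\ell) \star v_{x,\ell}$ pour chaque $\ell \in S \setminus \{p\}$, de sorte que $\psi_{S,x} = \prod_{\ell \in S \setminus \{p\}} P_\ell(\matrice{\ell^{-1}}{0}{0}{1}_\ell) \star v_\pi^{]p[}$, les actions aux diff\'erents premiers commutant. On transf\`ere ensuite chaque facteur via la premi\`ere formule du cor.\,\ref{p=1}: l'\'egalit\'e $\langle (0,\infty), (\matrice{\ell^{-1}}{0}{0}{1}_\ell \star v_\pi^{]p[}) \otimes \phi_p\rangle = \langle (0,\infty), v_\pi^{]p[} \otimes (\matrice{\ell}{0}{0}{1}_p \star \phi_p)\rangle$ provient de ce que $v_{x,q}$ est invariant sous $\matrice{\ell}{0}{0}{1}_q$ pour $q \neq p, \ell$ (en tant que nouveau vecteur, puisque $\ell \in \Z_q^\dual$), de sorte que $\matrice{\ell}{0}{0}{1}^{]\ell,\infty[}$ ne contribue que par sa composante en $p$. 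En it\'erant, on obtient que la mesure $\mu^S_x \in \Pi_p^\dual(\check\rho_x)$, fix\'ee par $\matrice{p}{0}{0}{1}_p$, qui d\'efinit ${\bf z}^S_{\rm Iw}(\rho_x, \iota_{\rm Em})_p$, est \'egale \`a $\lambda(\iota_{\rm Em}, x) \cdot \prod_\ell P_\ell(\matrice{\ell^{-1}}{0}{0}{1}_p) \star \mu_x$, o\`u $\mu_x$ est la mesure associ\'ee \`a ${\bf z}_{\rm Iw}(\rho_x)_p$.

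En appliquant $\lambda$ et en invoquant la prop.\,\ref{wasi2} (qui donne $\lambda(\matrice{a}{0}{0}{1}_p \star \mu) = [\sigma_a] \cdot \lambda(\mu)$ pour $a \in \Z_p^\dual$), l'action de $\matrice{\ell^{-1}}{0}{0}{1}_p$ se traduit par une multiplication par $[\sigma_{\ell^{-1}}] = [\sigma_\ell^{-1}]$ sur la cohomologie d'Iwasawa, ce qui donne la formule annonc\'ee. L'obstacle principal r\'eside dans le contr\^ole pr\'ecis des actions \`a l'\'etape de transfert: il faut v\'erifier que $\matrice{\ell}{0}{0}{1}^{]\ell,\infty[}$ ne contribue effectivement que par sa composante en $p$, ce qui repose crucialement sur l'invariance sph\'erique des nouveaux vecteurs aux premiers $q \neq p, \ell$, et sur l'invariance de $(0,\infty)$ par torsion par un caract\`ere de $\A^\dual/\Q^\dual$ (rem.\,\ref{ES10}) pour absorber d'\'eventuelles questions de caract\`ere central.
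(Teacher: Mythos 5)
Your proof is correct and follows essentially the same route as the paper: specialize $\psi_S$ at $x$ via the relation $v'_{x,\ell}=P_\ell(\matrice{\ell^{-1}}{0}{0}{1}_\ell)\star v_{x,\ell}$, extract the $\lambda(\iota_{\rm Em},x)$ factor via the relation $\iota_{\rm Em}\circ\iota_{\rm dR}^-=\lambda(\iota_{\rm Em},x)\,\iota_{\pi,S}$, transfer the $\ell$-component actions to the $p$-component using cor.\,\ref{p=1} and the spherical invariance of the new vectors at primes $q\neq p,\ell$, and then convert $\matrice{\ell^{-1}}{0}{0}{1}_p$ into $[\sigma_\ell^{-1}]$ by prop.\,\ref{wasi2}. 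Two small remarks: your first step should refer to the general relation of rem.\,\ref{eu100} (not only to~(\ref{eu1}), which covers only the unramified primes, whereas $\Pi_\ell(\check\rho_x)$ can be ramified for $\ell\mid N$), and the relation $\iota_{\rm Em}\circ\iota_{\rm dR}^-=\lambda(\iota_{\rm Em},x)\,\iota_{\pi,S}$ comes from the (ii) of rem.\,\ref{EU31.5} rather than its (iii); also, the invariance of $(0,\infty)$ by a character twist is not actually needed here.
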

\begin{proof}
D'apr\`es la rem.\,\ref{eu100}, la sp\'ecialisation $\psi_{x,S}$ de $\psi_S$ en $x$ est
$$\prod_{\ell\in S\moins\{p\}}P_\ell\big(\matrice{\ell^{-1}}{0}{0}{1}_\ell\big)\star v^{]p[}_\pi.$$
On a donc:
\begin{align*}
\langle {\bf z}^S_{\rm Iw}(\rho_x)_p,& v\otimes\phi_p\rangle=
\big\langle (0,\infty), (v\otimes\zeta_{\rm B}^{-1})\otimes 
\big(\prod_{\ell\in S\moins\{p\}}P_\ell\big(\matrice{\ell^{-1}}{0}{0}{1}_\ell\big)\star v^{]p[}_\pi\big)
\otimes\phi_p\big\rangle\\ &= \big\langle
\big(\prod_{\ell\in S\moins\{p\}}P_\ell\big(\matrice{\ell}{0}{0}{1}_\ell\big)\big)\star (0,\infty),
\iota_\pi((v\otimes\zeta_{\rm B}^{-1})\otimes v^{]p[}_\pi \otimes\phi_p)\big\rangle\\
&= \big\langle
\big(\prod_{\ell\in S\moins\{p\}}P_\ell\big(\matrice{\ell^{-1}}{0}{0}{1}_p\big)\big)\star (0,\infty),
\iota_\pi((v\otimes\zeta_{\rm B}^{-1})\otimes v^{]p[}_\pi \otimes\phi_p)\big\rangle\\
&= \big\langle \big(\prod_{\ell\in S\moins\{p\}}P_\ell([\sigma_\ell^{-1}])\big)
\cdot {\bf z}_{\rm Iw}(\rho_x)_p, v\otimes\phi_p\big\rangle
\end{align*}
(Le remplacement de $\matrice{\ell}{0}{0}{1}_\ell$ par $\matrice{\ell^{-1}}{0}{0}{1}_p$ 
vient de la troisi\`eme identit\'e du cor.~\ref{p=1}; 
celui de $\matrice{\ell^{-1}}{0}{0}{1}_p$ par $[\sigma_\ell^{-1}]$ vient de la
prop.\,\ref{wasi2}.)
\end{proof}

\begin{rema}
On a construit un objet local ${\bf z}^S_{\rm Iw}(\rho_{T})_p$ dont les sp\'ecialisations
en un ensemble zariski-dense sont des localisations de classes globales.
On peut donc esp\'erer que ${\bf z}_{\rm Iw}(\rho_{T})_p$ soit la localisation
d'un objet global interpolant les classes pr\'ec\'edentes.
C'est ce que nous prouvons aux \S\S\,\ref{EUL17} et~\ref{geni0}.
\end{rema}

\subsubsection{Construction de la tour d'\'el\'ements}\label{EUL13}
Soit $M$ premier \`a $Np$. 
\vskip.2cm
$\bullet$ {\it Le module $H^1_c(Np^\infty,M)$}.---
Soient:
\begin{align*}
\wGamma(Np^\infty,M)= &\ {\rm Ker}\big(
\wGamma(Np^\infty)\overset{\det}{\lra}(\cZp)^\dual\to (\Z/M)^\dual\big)\\
H^1_c(Np^\infty,M)= &\ H^1_c\big(\GG(\Q),{\cal C}\big(\GG(\A)/\wGamma(Np^\infty,M),\O_L\big)\big)
\end{align*}
Si $\alpha\in{\cal C}((\Z/M)^\dual,\O_L)$, 
notons $\tilde\alpha\in{\cal C}(\GG(\A),\O_L)$ la fonction
d\'efinie par $\tilde\alpha(g)=\alpha(\pi_M(\det g))$, o\`u
$\pi_M$ est la compos\'ee $\A^\dual\to\A^\dual/\Q^\dual\R_+^\dual\cong\cZ^\dual\to (\Z/M)^\dual$.
L'application $\phi\otimes\alpha\mapsto \phi\tilde\alpha$
induit un isomorphisme
\begin{align*}
{\cal C}\big(\GG(\A)/\wGamma(Np^\infty,M),\O_L\big)\cong &\ 
{\cal C}\big(\GG(\A)/\wGamma(Np^\infty),\O_L\big)\otimes_{\Z_p}{\cal C}\big((\Z/M)^\dual,\O_L\big)
\end{align*}
qui est $\GG(\Q)$-\'equivariant si on fait agir
$\GG(\Q)$ trivialement sur $(\Z/M)^\dual$.
On en d\'eduit un isomorphisme
$$H^1_c(Np^\infty,M)\cong
H^1_c(Np^\infty)\otimes_{\Z_p}{\cal C}\big((\Z/M)^\dual,\O_L\big).$$
Le membre de gauche est un sous-espace
de $H^1_c(NMp^\infty)$ qui est stable par
$T({NMp^\infty})$ agissant \`a travers 
$$T({Np^\infty,M})=
T({Np^\infty})[(\Z/M)^\dual]=T({Np^\infty})\otimes_{\Z_p}\Z_p[(\Z/M)^\dual],$$
et l'isomorphisme ci-dessus commute \`a l'action de
$T({Np^\infty,M})$.
Il commute aussi \`a l'action de $G_{\Q,MNp}$ agissant sur
${\cal C}((\Z/M)^\dual,\O_L)$ via l'identification
${\rm Gal}(\Q(\bmu_M)/\Q)= (\Z/M)^\dual$ fournie par le caract\`ere cyclotomique.

\vskip.2cm
$\bullet$ {\it L'alg\`ebre $T_M$ et les objets associ\'es}.---
Soit $S_M$ la r\'eunion de $S$ et des premiers divisant $M$.
\index{Tm2@\TTM}\index{rhom2@\rhoM}Posons 
\begin{align*}
T_M=T[(\Z/M)^\dual]&\quad{\rm et}\quad 
\rho_{T_M}=T_M\otimes_{T}\rho_T=\Z_p[(\Z/M)^\dual]\otimes_{\Z_p}\rho_T,\\
T_{0,M}=T_0[(\Z/M)^\dual]&\quad{\rm et}\quad 
\rho_{T_{0,M}}=T_{0,M}\otimes_{{T_0}}\rho_{T_0}=\Z_p[(\Z/M)^\dual]\otimes_{\Z_p}\rho_{T_0}.
\end{align*}
Alors $\rho_{T_M}$ est muni d'une action naturelle de $G_\Q$
par la recette du \no\ref{EUL6}
(via l'identification
${\rm Gal}(\Q(\bmu_M)/\Q)= (\Z/M)^\dual$), et cette action se factorise \`a travers
$G_{\Q,S_M}$.
Notons $H^1_c[\rho_{T_M}]_{S_M}$ le sous-$\GG(\Q_{S_M})$-module de
$H^1_c(\GG(\Q),{\cal C}(\GG(\A),\O_L))$ engendr\'e par 
$\big(\oplus_\eta H^1_c(Np^\infty)_{{\goth m}\otimes\eta}\big)
\otimes_{\Z_p}{\cal C}((\Z/M)^\dual,\O_L)$.
La factorisation de $H^1_c[\rho_T]_{S_M}$ induit une factorisation
$$H^1_c[\rho_{T_M}(-1)]_{S_M}\otimes\zeta_{\rm B}\cong
\rho_{T_M}\otimes\Pi_{S_M}(\check\rho_{T_M}),
\quad{\text{avec $\Pi_{S_M}(\check\rho_{T_M})=\otimes_{\ell\in {S_M}}\Pi_\ell(\check\rho_{T_M}),$}}$$
o\`u les produits tensoriels sont au-dessus de $T_M$, et
$\Pi_\ell(\check\rho_{T_M})$ est obtenue \`a partir de $\Pi_\ell(\check\rho_T)$
par la recette du \no\ref{EUL6}: en particulier,
$$\Pi_{S_M}^{]p[}(\check\rho_{T_M})=\Z_p[(\Z/M)^\dual]\otimes \Pi_{S_M}^{]p[}(\check\rho_{T}).$$

$\bullet$ {\it L'\'el\'ement $\psi_{S,M}$}.---
Rappelons que $\Pi_\ell(\check\rho_{T})\subset
{\rm LC}(\Q_\ell^\dual,\Z_p[\bmu_{\ell^\infty}]\otimes T)^{\sigma_p=1}$, si $\ell\neq p$,
qu'il
contient les fonctions \`a support compact dans $\Q_\ell^\dual$, et que
$v'_{T,\ell}={\bf 1}_{\Z_\ell}^\dual$.
Si $n\geq 1$, on pose
$$v_{T,\ell}^{(n)}={\bf 1}_{1+\ell^n\Z_\ell}\in\Z_p[\bmu_{\ell^n}]\otimes \Pi_\ell(\check\rho_{T})$$
et, si $(M,pN)=1$ admet pour factorisation $M=\prod_{\ell\mid M}\ell^{n_\ell}$, 
$$\psi_{S,M}=\sum
[a]_M\otimes\big(\matrice{a^{-1}}{0}{0}{1}^{]\infty,p[} \star
\big(\psi_S\cdot\big(\otimes_{\ell\mid M}
v_{T,\ell}^{(n_\ell)}\big)\big)\big)
\in \Z_p[\bmu_M]\otimes \Pi_{S_M}^{]p[}(\check\rho_{T}),$$
o\`u $a$ d\'ecrit un syst\`eme de repr\'esentants de $(\Z/M)^\dual$ dans $\Q$,
et $[a]_M$ est l'image de $a$ par $(\Z/M)^\dual\hookrightarrow \Z_p[(\Z/M)^\dual]$.
(Le r\'esultat ne d\'epend pas du choix du syst\`eme de repr\'esentants.)
\begin{lemm}\phantomsection\label{EU5}
{\rm (i)} $\psi_{S,M}\in \Pi_{S_M}^{]p[}(\check\rho_{T_M})$.

{\rm (ii)} $\matrice{p}{0}{0}{1}^{]\infty,p[}\star\psi_{S,M}=\psi_{S,M}$.
\end{lemm}
\begin{proof}
Pour prouver le (i), il s'agit de v\'erifier que $\sigma_p (\psi_{S,M}(x))=\psi_{S,M}(px)$.
Or, tout est invariant par $\sigma_p$ (car \`a valeurs dans $\Z_p$)
sauf $[a]_M$ pour lequel on a $\sigma_p ([a]_M)=[pa]_M$
(formule (\ref{EUL6.5})).
Par ailleurs, 
$$\matrice{a^{-1}}{0}{0}{1}^{]\infty,p[}\star\psi_S=\psi_S,\quad
\big(\matrice{a^{-1}}{0}{0}{1}^{]\infty,p[}\star v_{T,\ell}^{(n_\ell)}\big)(x)
=v_{T,\ell}^{(n_\ell)}(a^{-1}x)=v_{T,\ell}^{(n_\ell)}((pa)^{-1}px).$$
Le r\'esultat est donc une cons\'equence de ce que les $pa$ forment
aussi un syst\`eme de repr\'esentants de $(\Z/M)^\dual$.

Le (ii) r\'esulte des relations
(formule (\ref{EUL6.5}) pour la premi\`ere) 
$$\matrice{p}{0}{0}{1}^{]\infty,p[}\star[a]_M=[ap^{-1}]_M,\quad
\matrice{p}{0}{0}{1}^{]\infty,p[}\matrice{a^{-1}}{0}{0}{1}^{]\infty,p[}
=\matrice{pa^{-1}}{0}{0}{1}^{]\infty,p[}.\qedhere$$
\end{proof}

\vskip.2cm
$\bullet$ {\it L'\'el\'ement ${\bf z}^S_{{\rm Iw},M}(\rho_{T_0})_p$}.---
Le (ii) du lemme permet, comme pr\'ec\'edemment, de \index{Zbmem@\zmem}d\'efinir
\begin{align*}
{\bf z}^S_{{\rm Iw},M}(\rho_{T})_p&\in \rho_{T_{M}}^\diamond\otimes_{T_{M}}H^1(G_{\Q_p},\Lambda\wotimes\rho_{T_{M}})=
\rho_{T}^\diamond\otimes_{T}H^1(G_{\Q_p},\Lambda[(\Z/M)^\dual]\wotimes\rho_{T})\\
{\bf z}^S_{M}(\rho_{T})_p&\in 
\rho_{T}^\diamond\otimes_{T}H^1(G_{\Q_p},\Z_p[(\Z/M)^\dual]\otimes\rho_{T})\\
{\bf z}^S_{{\rm Iw},M}(\rho_{T_0})_p&\in 
\rho_{T_0}^\diamond\otimes_{T_0}H^1(G_{\Q_p},\Lambda[(\Z/M)^\dual]\wotimes\rho_{T_0})
\end{align*}
\`a partir de la forme bilin\'eaire
$v\otimes\phi\mapsto \langle(0,\infty),(v\otimes\zeta_{\rm B}^{-1})\otimes\psi_{S,M}\otimes\phi\rangle$.
Pour $M=1$, on retombe sur les \'el\'ements ${\bf z}^S_{{\rm Iw}}(\rho_{T})_p$,
${\bf z}^S(\rho_{T})_p$, ${\bf z}^S_{{\rm Iw}}(\rho_{T_0})_p$
du \no\ref{EUL5.1}.
Ces \'el\'ements v\'erifient les relations (\ref{lien2}); en particulier:
\begin{equation}\label{lien3}
{\bf z}^S_{{\rm Iw},M}(\rho_{T_0})_p={\bf z}^S_{M}(\rho_{T})_p,\quad
{\bf z}^S_{{\rm Iw},M}(\rho_{T})_p=\iota^{\rm semi}({\bf z}^S_{M}(\rho_{T})_p)
\end{equation}

\subsubsection{Relations de syst\`emes d'Euler}
Si $M\mid M'$, la projection naturelle $\Lambda[(\Z/M')^\dual]\to \Lambda[(\Z/M)^\dual]$
induit une application:
$${\rm cor}_{M'}^M:H^1(G_{\Q_p},\Z_p[(\Z/M')^\dual]\otimes_{\Z_p}\rho_{T})
\to H^1(G_{\Q_p},\Z_p[(\Z/M)^\dual]\otimes_{\Z_p}\rho_{T}).$$
Si $\ell\nmid pN$, on dispose\footnote{Rappelons
que ce sont les caract\`eres associ\'es \`a $\check\rho_T$ et pas $\rho_T$.}
 de $\chi_{\ell,i}:\Q_\ell^\dual\to T^\dual$, pour $i=1,2$,
cf.~\no\ref{EUL5}. On d\'efinit:
$$\chi_{\ell,i,M}:\Q_\ell^\dual\to T_M^\dual,
\quad\chi_{\ell,i,M}(x)=\chi_{\ell,i}(x)\,[x]_M,$$
 o\`u $[x]_M$ est obtenu via la projection 
$\Q_\ell^\dual\to\A^\dual\to \A^\dual/\Q^\dual\R_+^\dual\cong\cZ^\dual\to (\Z/M)^\dual$.
\begin{prop}\phantomsection\label{EU6}
{\rm (Relations de syst\`emes d'Euler)}

{\rm (i)} Si $\ell\nmid pNM$, alors
$${\rm cor}_{M\ell}^M{\bf z}^S_{M\ell}(\rho_{T})_p=
P_{\ell,M}([\sigma_\ell^{-1}])\cdot
{\bf z}^S_{M}(\rho_{T})_p,$$
o\`u l'on a pos\'e
$$P_{\ell,M}(X)=
1-(\chi_{\ell,1,M}(\ell)+\chi_{\ell,2,M}(\ell))X
+(\chi_{\ell,1,M}(\ell)\chi_{\ell,2,M}(\ell))X^2.$$

{\rm (ii)} Si $\ell\mid M$, alors
$${\rm cor}_{M\ell}^M{\bf z}^S_{M\ell}(\rho_{T})_p={\bf z}^S_{M}(\rho_{T})_p.$$
\end{prop}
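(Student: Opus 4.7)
The plan is to deduce both assertions from a direct computation of $\operatorname{cor}_{M\ell}^M\psi_{S,M\ell}$ as a transform of $\psi_{S,M}$, then to use Corollary~\ref{p=1} and Proposition~\ref{wasi2} to convert the resulting $\ell$-adic operator into the Iwasawa element $[\sigma_\ell^{-1}]\in\Lambda$. Since the map $\psi\mapsto {\bf z}$ is $T_M$-linear and compatible with corestriction, and since $\operatorname{cor}_{M\ell}^M$ is induced by the projection $[c]_{M\ell}\mapsto[\overline c]_M$, the computation reduces to grouping representatives of $(\Z/M\ell)^\dual$ by their residue modulo $M$ and summing the local contributions at $\ell$.

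For part (ii) ($\ell\mid M$), the reduction $(\Z/M\ell)^\dual\to(\Z/M)^\dual$ has fibers of cardinality $\ell$, and at every place $\ell'\neq\ell$ the operator $\matrice{c^{-1}}{0}{0}{1}_{\ell'}$ depends only on $c\bmod M$, i.e.\ only on $a$. At the $\ell$-place, from the identity $(\matrice{c^{-1}}{0}{0}{1}_\ell\star v^{(n_\ell+1)}_{T,\ell})(x)={\bf 1}_{c+\ell^{n_\ell+1}\Z_\ell}(x)$, summing over the fiber above $a$ gives ${\bf 1}_{a+\ell^{n_\ell}\Z_\ell}=\matrice{a^{-1}}{0}{0}{1}_\ell\star v^{(n_\ell)}_{T,\ell}$, and since $v_{T,\ell}\equiv 1$ on $\Z_\ell^\dual$, after multiplying by the new vector $v_{T,\ell}$ this is exactly the $\ell$-component of the corresponding term in $\psi_{S,M}$. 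Hence $\operatorname{cor}_{M\ell}^M\psi_{S,M\ell}=\psi_{S,M}$, which yields (ii).

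For part (i) ($\ell\nmid MpN$), CRT identifies $(\Z/M\ell)^\dual=(\Z/M)^\dual\times(\Z/\ell)^\dual$. Choosing representatives $c(a,b)$ with $c\equiv a\pmod M$ and $c\equiv b\pmod\ell$, the sum over $b$ at the $\ell$-place becomes $v_{T,\ell}\cdot\sum_{b}{\bf 1}_{b+\ell\Z_\ell}=v_{T,\ell}\cdot{\bf 1}_{\Z_\ell^\dual}=v'_{T,\ell}$, again using $v_{T,\ell}\equiv 1$ on $\Z_\ell^\dual$. By formula (\ref{eu1}), $v'_{T,\ell}=P_\ell(\matrice{\ell^{-1}}{0}{0}{1}_\ell)\star v_{T,\ell}$. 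Under the identification $\Pi_\ell(\check\rho_{T_M})=\Z_p[(\Z/M)^\dual]\otimes\Pi_\ell(\check\rho_T)$, the operator $\matrice{\ell^{-1}}{0}{0}{1}_\ell$ multiplies the $\Z_p[(\Z/M)^\dual]$-factor by $[\ell]_M$ (formula (\ref{EUL6.5})), so each occurrence of $\chi_{\ell,i}(\ell)$ is promoted to $\chi_{\ell,i,M}(\ell)=\chi_{\ell,i}(\ell)[\ell]_M$ and $P_\ell$ becomes $P_{\ell,M}$. One obtains $\operatorname{cor}_{M\ell}^M\psi_{S,M\ell}=P_{\ell,M}(\matrice{\ell^{-1}}{0}{0}{1}_\ell)\star\psi_{S,M}$.

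To conclude (i), pair with $(0,\infty)$ and apply Corollary~\ref{p=1}: since $\ell\nmid MpN$, the support and equivariance of $\psi_{S,M}$ imply it is fixed by $\matrice{\ell}{0}{0}{1}^{]\infty,p,\ell[}$, so the equivariance identity transports $\matrice{\ell^{-1}}{0}{0}{1}_\ell$ through to $\matrice{\ell^{-1}}{0}{0}{1}_p$, which by Proposition~\ref{wasi2} corresponds to multiplication by $[\sigma_\ell^{-1}]$ in $\Lambda$. This yields the factor $P_{\ell,M}([\sigma_\ell^{-1}])$ and completes (i). The main subtlety is the careful bookkeeping of the $(\Z/M)^\dual$-twist; it is this twist that turns $P_\ell$ into $P_{\ell,M}$, and getting it right boils down to the shift $[a]_M\mapsto[a\ell]_M$ described by (\ref{EUL6.5}). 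Everything else is routine from the explicit definitions of $\psi_{S,M}$ and the established equivariance properties.
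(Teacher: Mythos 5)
Your proof is correct and follows essentially the same route as the paper's: you compute $\operatorname{cor}_{M\ell}^M\psi_{S,M\ell}$ by grouping representatives of $(\Z/M\ell)^\dual$ over their image in $(\Z/M)^\dual$, use the identity $\sum_b \mathbf 1_{b(1+\ell\Z_\ell)}=v'_{T,\ell}$ (for (i)) or the refinement step $\sum\mathbf 1_{c(1+\ell^{n+1}\Z_\ell)}=\mathbf 1_{a(1+\ell^n\Z_\ell)}$ (for (ii)), then invoke \eqref{eu1} with the $[\ell]_M$-twist of \eqref{EUL6.5} to promote $P_\ell$ to $P_{\ell,M}$, and finally transport $\matrice{\ell^{-1}}{0}{0}{1}_\ell$ to $[\sigma_\ell^{-1}]$ via Corollary~\ref{p=1} and Proposition~\ref{wasi2}, exactly as in the paper's appeal to the calculations of Proposition~\ref{ZK1}.
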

\begin{proof}
La projection naturelle $(\Z/M')^\dual\to (\Z/M)^\dual$
induit d\'ej\`a une application
$${\rm cor}_{M'}^M:\Pi_{S_{M'}}^{]p[}(\check\rho_{T_{M'}})\to
\Pi_{S_M}^{]p[}(\check\rho_{T_{M}}),$$
et la formule~(\ref{eu1}) se traduit, si $\ell\nmid pNM$, par la relation:
\begin{align*}
{\rm cor}_{M\ell}^M\psi_{S,M\ell}=
P_{\ell,M}\big(\matrice{\ell^{-1}}{0}{0}{1}_\ell\big)
\star \psi_{S,M}
\end{align*}
(On utilise un syst\`eme de repr\'esentants de $(\Z/M\ell)^\dual$
adapt\'e \`a l'isomorphisme $(\Z/M\ell)^\dual\cong (\Z/\ell)^\dual\times(\Z/M)^\dual$
et la relation 
$\sum_{a\in \Z_\ell^\dual\,{\rm mod}\,1+\ell\Z_\ell}
\matrice{a}{0}{0}{1}v_{T,\ell}^{(1)}=v'_{T,\ell}$.)

On en d\'eduit le (i) en reprenant les calculs de la preuve de la prop.\,\ref{ZK1}.
Le (ii) se prouve en utilisant la relation ${\rm cor}_{M\ell}^M\psi_{S,M\ell}=\psi_{S,M}$,
qui repose sur l'identit\'e
$$\sum_{a\in 1+\ell^n\Z_\ell\,{\rm mod}\,1+\ell^{n+1}\Z_\ell}
\matrice{a}{0}{0}{1}v_{T,\ell}^{(n+1)}=v_{T,\ell}^{(n)},
\quad{\text{ si $n\geq 1$.}}\qedhere$$
\end{proof}
 
\begin{rema}\phantomsection\label{EU6.7}
On d\'eduit de (\ref{lien3}) des relations analogues entre
les ${\bf z}^S_{{\rm Iw},M}(\rho_{T_0})_p$
et entre les ${\bf z}^S_{{\rm Iw},M}(\rho_{T})_p$.
\end{rema}

\Subsection{Modules d'Iwasawa locaux}\label{EU60}
Soit $A$ une $\Z_p$-alg\`ebre locale compl\`ete (comme $\O_L$ ou $T_0$) et
$V$ une $A$-repr\'esentation de $G_{\Q,S}$ (comme $\rho_x$ ou $\rho_{T_0}$).
On va s'int\'eresser \`a la structure des modules
$H^1(G_{\Q_\ell},\Lambda\wotimes V)$, pour $\ell\in S$.

\subsubsection{Le cas $\ell\neq p$}\label{EU61}
Dans le cas $\ell\neq p$, on a le r\'esultat suivant ($I_\ell$ est le sous-groupe
d'inertie de $G_{\Q_\ell}$):
\begin{lemm}\phantomsection\label{EU62}
{\rm (i)}  $H^1(I_\ell,\Lambda\wotimes V)^{\sigma_\ell=1}=0$

{\rm (ii)} 
$H^1(G_{\Q_\ell},\Lambda\wotimes V)\cong (\Lambda\wotimes V^{I_\ell})/(\sigma_\ell-1)$.
En particulier,
$H^1(G_{\Q_\ell},\Lambda\wotimes V)$ est un $(\Lambda\wotimes A)$-module de torsion.
\end{lemm}
\begin{proof}
Comme $I_\ell$ agit trivialement sur $\Lambda$, on a 
$(\Lambda\wotimes V)^{I_\ell}=\Lambda\wotimes V^{I_\ell}$
et $H^1(I_\ell,\Lambda\wotimes V)=\Lambda\wotimes H^1(I_\ell,V)$.
La suite
d'inflation-restriction fournit donc une suite exacte
$$0\to H^1(\sigma_\ell^{\cZ},\Lambda\wotimes V^{I_\ell})\to H^1(G_{\Q_\ell},\Lambda\wotimes V)\to
H^1(I_\ell,\Lambda\wotimes V)^{\sigma_\ell=1},$$
et le (ii) est une cons\'equence du (i).

Pour prouver le (i), posons $M=H^1(I_\ell,V)$, et notons $M^\vee$ le dual de Pontryagin de $M$.
Alors $H^1(I_\ell,\Lambda\wotimes V)^{\sigma_\ell=1}$ est le dual de Pontryagin
de ${\cal C}(\Z_p^\dual,M^\vee)/(\sigma_\ell-1)$. Ce dernier module est nul car
il est aussi \'egal \`a $H^1(\Gamma_\ell,{\cal C}(\Z_p^\dual,M^\vee))$, o\`u
$\Gamma_\ell$ est l'image de $\sigma_\ell^{\cZ}$ dans $\Z_p^\dual$,
et ${\cal C}(\Z_p^\dual,M^\vee)$ est la somme directe d'un nombre fini
de copies de l'induite de $\{1\}$ \`a $\Gamma_\ell$ de $M^\vee$.

Ceci permet de conclure.
\end{proof}

\subsubsection{Le cas $\ell=p$}\label{EU64}
Dans le cas $\ell=p$, on utilise la th\'eorie des $(\varphi,\Gamma)$-modules.
D'apr\`es~\cite{CC99,Dee}, on a un isomorphisme naturel
$$H^1(G_{\Q_p},\Lambda\wotimes V)\cong D(V)^{\psi=1}$$
Si on note ${\cal C}(V)$ le module $(\varphi-1)\cdot D(V)^{\psi=1}$,
on a une suite exacte 
\begin{equation}\phantomsection\label{geni1.1}
0\to D(V)^{\varphi=1}\to D(V)^{\psi=1}\to {\cal C}(V)\to 0.
\end{equation}
\begin{theo}\phantomsection\label{geni1}
Le foncteur $V\mapsto {\cal C}(V)$ est exact.
\end{theo}
\begin{proof}
Il r\'esulte de~\cite[prop.\,VI.1.2]{mira} que l'on a un isomorphisme
$${\cal C}(V)\cong {\rm Hom}_\Lambda({\cal C}(V^\vee(1)),\Lambda[\tfrac{1}{p}]/\Lambda).$$
Par ailleurs, si $0\to V_1\to V\to V_2\to 0$ est une suite exacte
de repr\'esentations de $G_{\Q_p}$, on a des suites exactes 
\begin{align*}
&0\to D(V_1)^{\psi=1}\to D(V)^{\psi=1}\to D(V_2)^{\psi=1},\\
&0\to D(V_2^\vee(1))^{\psi=1}\to D(V^\vee(1))^{\psi=1}\to D(V_1^\vee(1))^{\psi=1}, 
\end{align*}
dont on d\'eduit des suites exactes
$$0\to {\cal C}(V_1)\to {\cal C}(V)\to {\cal C}(V_2),\quad
0\to {\cal C}(V_2^\vee(1))\to {\cal C}(V^\vee(1))\to {\cal C}(V_1^\vee(1)). $$
Maintenant, $D(V_1)$ et $D(V_2^\vee(1))$ sont orthogonaux; il en est
donc de m\^eme de ${\cal C}(V_1)$ et ${\cal C}(V_2^\vee)$ (cf.~la formule d\'efinissant
l'accouplement $\langle\ ,\ \rangle_{\rm Iw}$, \cite[prop.\,VI.1.2]{mira}).
Comme ${\cal C}(V)$ et ${\cal C}(V^\vee(1))$ sont en dualit\'e, on en d\'eduit
que ${\cal C}(V)$ se surjecte sur le dual de ${\cal C}(V_2^\vee(1))$, c'est-\`a-dire sur
${\cal C}(V_2)$, ce que l'on voulait prouver.
\end{proof}

\begin{prop}\phantomsection\label{geni2}
Si $A$ est une $\Z_p$-alg\`ebre locale et compl\`ete de corps r\'esiduel fini, 
et si $V$ est une $A$-repr\'esentation
de $G_{\Q_p}$ de rang~$d$,
on a une suite exacte
$$0\to H^0(G_{\Q_p(\bmu_{p^\infty})},V)\to H^1(G_{\Q_p},\Lambda\wotimes V)
\to {\cal C}(V)\to 0,$$
et ${\cal C}(V)$ est un $\Lambda\wotimes A$-module libre de rang~$d$ tandis que
$H^0(G_{\Q_p(\bmu_{p^\infty})},V)$ est 
un $\Lambda\wotimes A$-module de torsion.
\end{prop}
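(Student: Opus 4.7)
Le plan consiste \`a combiner l'isomorphisme $H^1(G_{\Q_p},\Lambda\wotimes V)\cong D(V)^{\psi=1}$ rappel\'e au \no\ref{EU64} avec la suite exacte (\ref{geni1.1}). La premi\`ere \'etape est d'identifier $D(V)^{\varphi=1}$ \`a $V^H=H^0(G_{\Q_p(\bmu_{p^\infty})},V)$ (o\`u $H={\rm Gal}(\overline\Q_p/\Q_p(\bmu_{p^\infty}))$) via l'\'equivalence de cat\'egories de Fontaine, puisque $\tA^{\varphi=1}=\Z_p$. Cela fournit directement la suite exacte de l'\'enonc\'e \`a partir de (\ref{geni1.1}).

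Pour voir que $H^0(G_{\Q_p(\bmu_{p^\infty})},V)$ est de $\Lambda\wotimes A$-torsion, on observe qu'il est de type fini sur l'alg\`ebre noeth\'erienne $A$ (comme sous-$A$-module du $A$-module de rang fini $V$). L'action $A$-lin\'eaire d'un g\'en\'erateur topologique $\gamma\in H_0$ satisfait alors une \'equation de Cayley-Hamilton, fournissant un polyn\^ome unitaire $P\in A[X]$ tel que $P([\gamma])$ (non diviseur de z\'ero dans $\Lambda\wotimes A$) annule ce module.

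L'\'etape principale, et l'obstacle essentiel, est l'\'etablissement de la libert\'e de ${\cal C}(V)$ de rang $d$ sur $\Lambda\wotimes A$. Le plan est de commencer par le cas o\`u $A=k$ est un corps fini de caract\'eristique $p$: alors $\Lambda\wotimes k=k[[H_0]][\Delta]$ se d\'ecompose en produit d'anneaux de valuation discr\`ete, et la libert\'e se r\'eduit \`a l'absence de torsion (qui r\'esulte de l'inclusion ${\cal C}(V)=(\varphi-1)D(V)^{\psi=1}\subset D(V)^{\psi=0}$, ce dernier \'etant libre de rang $d$) et \`a la rang (que donne la formule d'Euler-Poincar\'e d'Iwasawa, combin\'ee \`a (\ref{geni1.1}) et au fait que $D(V)^{\varphi=1}=V^H$ est fini).

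Pour $A$ g\'en\'eral, on exploite l'exactitude du foncteur ${\cal C}$ (prop.\,\ref{geni1}) pour proc\'eder par r\'ecurrence sur la filtration ${\goth m}$-adique: une r\'ecurrence imm\'ediate sur la longueur montre que ${\cal C}(V/{\goth m}^{n+1}V)$ est libre de rang $d$ sur $\Lambda\wotimes(A/{\goth m}^{n+1})$; les fl\`eches de transition \'etant surjectives par exactitude de ${\cal C}$, Mittag-Leffler donne que $\varprojlim_n {\cal C}(V/{\goth m}^{n+1}V)$ est libre de rang $d$ sur $\Lambda\wotimes A$. Le point le plus d\'elicat consistera \`a identifier ${\cal C}(V)$ \`a cette limite projective, ce qui requiert de contr\^oler la compatibilit\'e de $D(V)^{\psi=1}$ et $D(V)^{\varphi=1}$ au passage \`a la limite ${\goth m}$-adique; cet aspect repose sur la compacit\'e de $D(V)^{\psi=1}$ comme $\Lambda\wotimes A$-module et sur des propri\'et\'es de finitude standard des $(\varphi,\Gamma)$-modules en famille sur une alg\`ebre locale compl\`ete.
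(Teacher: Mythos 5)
Your proposal is correct and follows essentially the same overall strategy as the paper's proof (exploiting the exactness of ${\cal C}$ to do a Nakayama-style d\'evissage along the ${\goth m}$-adic filtration), but there are two noteworthy differences worth comparing.

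First, the paper \emph{takes for granted} the base case $A={\bf F}_q$ (it simply asserts that ${\cal C}(\overline V)$ est libre de rang $d$ sur ${\bf F}_q\otimes_{\Z_p}\Lambda$), whereas you supply a proof (decomposition de ${\bf F}_q\otimes\Lambda$ en produit d'anneaux de valuation discr\`ete via $\Delta$, absence de torsion venant de ${\cal C}(\overline V)\subset D(\overline V)^{\psi=0}$, calcul du rang par Euler--Poincar\'e local). This is a useful completion of the argument.

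Second, and this is where the tactical difference matters, you plan to first compute $\varprojlim_n{\cal C}(V/{\goth m}^{n+1}V)$ (free of rank $d$ by Mittag-Leffler) and then identify ${\cal C}(V)$ with this limit, and you correctly flag this identification as the delicate point. The paper avoids having to make this identification explicitly: it begins by lifting a basis of ${\cal C}(\overline V)$ to elements $e_1,\dots,e_d$ of ${\cal C}(V)$ itself (possible because ${\cal C}$ is exact, so ${\cal C}(V)\twoheadrightarrow{\cal C}(\overline V)$), and then the inductive step shows directly that these $d$ elements of ${\cal C}(V)$ form a basis of each ${\cal C}(V/{\goth m}^k V)$ over $B/{\goth m}^k$ --- the passage to the limit is then about the pre-chosen $e_i$, which is less fraught than a posteriori identifying ${\cal C}(V)$ with an abstract $\varprojlim$. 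If you adopt this modification (lift the basis first, work level by level, then pass to the limit), the argument goes through without the compactness/compatibility concerns you raise at the end. Otherwise your plan is sound.
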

\begin{proof}
La suite exacte est une traduction de la suite (\ref{geni1.1}).
Que $H^0(G_{\Q_p(\bmu_{p^\infty})},V)$ soit de torsion est imm\'ediat.
Il reste donc \`a prouver que ${\cal C}(V)$ est libre.
Posons $B=\Lambda\wotimes A$.  Soit ${\goth m}$ l'id\'eal maximal de $A$; par hypoth\`ese,
$A/{\goth m}$ est un corps fini ${\bf F}_q$.  Soit $\overline V=(A/{\goth m})\otimes_AV$, de telle
sorte que $\overline V$ est une repr\'esentation de $G_{\Q_p}$, de rang $d$ sur ${\bf F}_q$
et donc ${\cal C}(\overline V)$ est libre de rang $d$ sur ${\bf F}_q\otimes_{\Z_p}\Lambda$.
Relevons une base de ${\cal C}(\overline V)$ sur ${\bf F}_q\otimes_{\Z_p}\Lambda$
en une famille d'\'el\'ements de $e_1,\dots,e_d$ de ${\cal C}(V)$ (c'est possible
puisque le foncteur ${\cal C}$ est exact).
Si $k\geq 1$, on a une suite exacte
$$0\to ({\goth m}^k/{\goth m}^{k+1})\otimes_{{\bf F}_q}\overline V\to
(A/{\goth m}^{k+1})\otimes_A V\to (A/{\goth m}^{k})\otimes_A V\to 0.$$
On en d\'eduit (par exactitude de ${\cal C}$)
que la seconde ligne du diagramme commutatif suivant est exacte
$$\xymatrix@R=.6cm@C=.4cm{
0\ar[r]&\frac{{\goth m}^k}{{\goth m}^{k+1}}e_1\oplus\cdots\oplus \frac{{\goth m}^k}{{\goth m}^{k+1}}e_d\ar[r]\ar[d]^-{\wr}
&\frac{B}{{\goth m}^{k+1}}e_1\oplus\cdots\oplus \frac{B}{{\goth m}^{k+1}}e_d\ar[r]\ar[d]
&\frac{B}{{\goth m}^{k}}e_1\oplus\cdots\oplus \frac{B}{{\goth m}^{k}}e_d\ar[r]\ar[d]^-{\wr}&0\\
0\ar[r]& \frac{{\goth m}^k}{{\goth m}^{k+1}}\otimes_{{\bf F}_q}{\cal C}(\overline V)\ar[r]&
{\cal C}(\frac{A}{{\goth m}^{k+1}}\otimes_A V)\ar[r]& {\cal C}(\frac{A}{{\goth m}^{k}}\otimes_A V)\ar[r]& 0}$$
ce qui permet de prouver, par r\'ecurrence sur $k$, que
$e_1,\dots,e_d$ est une base de ${\cal C}((A/{\goth m}^{k})\otimes_A V)$
sur $B/{\goth m}^k$ (l'isomorphisme vertical de gauche correspond au cas $k=1$, et
celui de droite est l'hypoth\`ese de r\'ecurrence).
Un passage \`a la limite permet de conclure.
\end{proof}

\begin{coro}
$H^1(G_{\Q_p},\Lambda\wotimes \rho_{T_0})$ est un $T$-module libre de rang~$2$.
\end{coro}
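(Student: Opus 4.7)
The strategy is to apply Proposition~\ref{geni2} with $A = T_0$ and $V = \rho_{T_0}$. Since $\rho_{T_0}$ has rank $d = 2$ over $T_0$ and the identification $\Lambda \wotimes_{\Z_p} T_0 = T$ of the \S~\ref{EUL10}, the proposition yields an exact sequence
\begin{equation*}
0 \to H^0(G_{\Q_p(\bmu_{p^\infty})}, \rho_{T_0}) \to H^1(G_{\Q_p}, \Lambda \wotimes \rho_{T_0}) \to {\cal C}(\rho_{T_0}) \to 0,
\end{equation*}
in which ${\cal C}(\rho_{T_0})$ is free of rank $2$ over $T$, while the left-hand term is $T$-torsion. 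The corollary therefore reduces to showing that $H^0(G_{\Q_p(\bmu_{p^\infty})}, \rho_{T_0})$ vanishes.

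First I would reduce this vanishing, via Nakayama applied to the complete local ring $T_0$, to the analogous statement $H^0(G_{\Q_p(\bmu_{p^\infty})}, \overline\rho_{\goth m}) = 0$ for the residual representation. Supposing for contradiction that this $H^0$ were non-zero, the normality of $G_{\Q_p(\bmu_{p^\infty})}$ in $G_{\Q_p}$ would produce a non-zero $G_{\Q_p}$-stable subspace $W \subset \overline\rho_{\goth m}|_{G_{\Q_p}}$ on which $G_{\Q_p(\bmu_{p^\infty})}$ acts trivially. The $G_{\Q_p}$-action on $W$ then factors through $\Z_p^\dual \cong G_{\Q_p}/G_{\Q_p(\bmu_{p^\infty})}$, and since $p$ is odd any continuous character $\Z_p^\dual \to \kappa^\times$ is annihilated by the pro-$p$ part $1 + p\Z_p$ and hence is a power of $\overline\epsilon$.

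I would then complete the argument by a case analysis on $\dim W \in \{1, 2\}$ and on the structure of the quotient $\overline\rho_{\goth m}|_{G_{\Q_p}}/W$. The additional constraint that $\det \rho_{T_0}$ is trivial on $1 + p\Z_p$ (by construction of $T_0$ in \S~\ref{EUL10}) pins down the product of the two residual diagonal characters modulo $p-1$. Combining this with the fact that every composition factor of $\overline\rho_{\goth m}|_{G_{\Q_p}}$ appearing in $W$ is a power of $\overline\epsilon$, one checks that the only surviving configurations are those of the form $\chi \otimes \matrice{1}{*}{0}{1}$ or $\chi \otimes \matrice{1}{*}{0}{\overline\epsilon}$, contradicting the genericity of ${\goth m}$. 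The main technical obstacle lies precisely in this case analysis: one has to verify carefully that the determinant constraint together with the genericity hypothesis really exhaust every possible extension class that could arise (including when $W$ has dimension one and the extension is non-split), so that no $\overline\rho_{\goth m}|_{G_{\Q_p}}$ escapes the two excluded patterns.
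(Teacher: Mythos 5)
Your first step---invoking Proposition~\ref{geni2} to reduce the corollary to the vanishing of $H^0(G_{\Q_p(\bmu_{p^\infty})},\rho_{T_0})$---matches the paper exactly. But the strategy you then adopt, namely reducing to $H^0(G_{\Q_p(\bmu_{p^\infty})},\overline\rho_{\goth m})=0$, cannot work, because that residual statement is \emph{false} under the genericity hypothesis alone. Take $\goth m$ coming from an ordinary eigenform of weight $k$, trivial nebentypus, with $a_p\equiv1\pmod p$: then $\overline\rho_{\goth m}|_{G_{\Q_p}}\cong\matrice{\overline\epsilon^{k-1}}{*}{0}{1}$, and the sub-line carrying $\overline\epsilon^{k-1}$ is fixed pointwise by $G_{\Q_p(\bmu_{p^\infty})}$ because $\overline\epsilon$ factors through ${\rm Gal}(\Q_p(\bmu_p)/\Q_p)$; so $H^0(G_{\Q_p(\bmu_{p^\infty})},\overline\rho_{\goth m})\neq0$, yet $\goth m$ is generic whenever $k\not\equiv0,1\pmod{p-1}$. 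Your asserted case analysis must therefore miss a genuine configuration, and the determinant constraint from \S\,\ref{EUL10} gives nothing at the residual level, since every character $G_{\Q,S}\to\kappa^\times$ into a finite field of characteristic $p$ is automatically trivial on the pro-$p$ group $1+p\Z_p$.

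The correct route avoids the residual representation and instead uses the Zariski-density of ${\cal X}_0^{{\rm cl},+}$, exactly in the style of the paper's proof of Proposition~\ref{geni3}. For $x\in{\cal X}_0^{{\rm cl},+}$ the representation $\rho_{x,p}$ is irreducible, so $H^0(G_{\Q_p(\bmu_{p^\infty})},\rho_x)=0$: a nonzero invariant vector would generate a proper $G_{\Q_p}$-stable subspace, since $G_{\Q_p(\bmu_{p^\infty})}$ is normal in $G_{\Q_p}$. Since ${\cal X}_0^{{\rm cl},+}$ is Zariski-dense in ${\rm Spec}\,T_0[\frac{1}{p}]$ and $T_0$ is reduced, $\rho_{T_0}$ injects into $\prod_{x\in{\cal X}_0^{{\rm cl},+}}\rho_x$, and taking $G_{\Q_p(\bmu_{p^\infty})}$-invariants yields the required vanishing.
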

\begin{proof}
Cela r\'esulte de ce que $H^0(G_{\Q_p(\bmu_{p^\infty})},\rho_{T_0})=0$.
\end{proof}

\Subsection{Th\'eorie d'Iwasawa globale}\label{EUL15}
Le contenu de ce \S\  peut, essentiellement, se trouver dans~\cite[\S\,1.3]{PR95}.
\subsubsection{Suites exactes de Poitou-Tate et de localisation}
On fixe un g\'en\'erateur topologique $\gamma$ de $H_0=1+p\Z_p$ et, si $n\geq 1$,
on pose $\gamma_n=\gamma^{p^n}$.

Soit $V$ un $\Z_p$-module compact muni d'une action continue
de $G_{\Q,S}$.  
Si $i\in\N$, on pose
\begin{align*}
H^i_{{\rm Iw},S}(V)=&\ H^i(G_{\Q,S},\Lambda_0\wotimes V),\\
X^i_{{\rm Iw},S}(V)=&\ H^i(G_{\Q,S},{\cal C}(H_0 ,V^\vee(1)))^\vee=H^i(G_{\Q_\infty,S},V^\vee(1))^\vee,
\end{align*}
o\`u $M\mapsto M^\vee$ est la dualit\'e de Pontryagin.
On a $H^i_{{\rm Iw},S}(V)=0$ si $i\neq 1,2$ et
$X^i_{{\rm Iw},S}(V)=0$ si $i\neq 0,1,2$.
Par ailleurs, on dispose de la suite exacte de Poitou-Tate\footnote{Elle ne comporte
que $7$ termes au lieu de $9$ car les $H^0(G_{\Q_\ell},\Lambda_0\wotimes V)$ sont nuls.}
$$\xymatrix@R=.4cm@C=.5cm{
0\ar[r]& X^2_{{\rm Iw},S}(V)\ar[r]& H^1_{{\rm Iw},S}(V)\ar[r]&
\bigoplus_{\ell\in S}H^1(G_{\Q_\ell},\Lambda_0\wotimes V)\ar[r]& X^1_{{\rm Iw},S}(V)\ar[lld]\\
&& H^2_{{\rm Iw},S}(V)\ar[r]&
\bigoplus_{\ell\in S}H^2(G_{\Q_\ell},\Lambda_0\wotimes V)\ar[r]& X^0_{{\rm Iw},S}(V)\ar[r]&0.}$$
\begin{rema}\phantomsection\label{geni5}
$H^1_{{\rm Iw},S}(V)$ et ${\rm Ker}\big[H^2_{{\rm Iw},S}(V)\to 
\oplus_{\ell\in S}H^2(G_{\Q_\ell},\Lambda_0\wotimes V)\big]$ ne changent pas
si on augmente $S$.  En effet,
si $M=\Lambda_0\wotimes V$, 
 on a une suite exacte longue de localisation (les $H^0$ sont nuls)
$$\xymatrix@R=.4cm@C=.5cm{
0\ar[r] & H^1(G_{\Q,S},M)\ar[r] &
H^1(G_{\Q,S'},M)\ar[r] &
\bigoplus_{\ell\in S'\moins S} H^1(I_\ell,M)^{G_{{\bf F}_\ell}} \ar[dll]\\ 
& H^2(G_{\Q,S},M)\ar[r] & H^2(G_{\Q,S'},M)\ar[r] &
\bigoplus_{\ell\in S'\moins S} H^2(G_{\Q_\ell},M)\ar[r] & 0}$$
et $H^1(I_\ell,M)^{G_{{\bf F}_\ell}}=0$ ((i) du lemme~\ref{EU62}).
\end{rema}

\subsubsection{Le cas d'une repr\'esentation sur un corps fini}
On pose 
$$\overline\Lambda_0={\bf F}_q\otimes_{{\bf F}_p}(\Lambda_0/p)={\bf F}_q[[\gamma-1]].$$
Supposons maintenant que
$V$ est de dimension finie~$d$ sur~${\bf F}_q$.
Les $H^2(G_{\Q_\ell},\Lambda_0\otimes V)$ sont des 
$\overline\Lambda_0$-modules
de torsion, ainsi que les $H^1(G_{\Q_\ell},\Lambda_0\otimes V)$, pour $\ell\neq p$;
par contre, $H^1(G_{\Q_p},\Lambda_0\otimes V)$ est un 
$\overline\Lambda_0$-module de rang $d$.

Soit ${\rm Frob}_\infty\in G_\Q$ la conjugaison complexe.
Posons $d^+=\dim_{{\bf F}_q}V^{{\rm Frob}_\infty=1}$ et $d^-=\dim_{{\bf F}_q}V^{{\rm Frob}_\infty=-1}$. 
Les arguments habituels de caract\'eristique d'Euler-Poincar\'e
fournissent les relations (comme $S$ est fix\'e, on le supprime des $X^i_{{\rm Iw},S}$, etc.):
\begin{equation}
\label{iwas1}
{\rm rg}_{\overline\Lambda_0}H^1_{\rm Iw}( V )-
{\rm rg}_{\overline\Lambda_0}H^2_{\rm Iw}( V )=d-d^+=d^-,
\quad
{\rm rg}_{\overline\Lambda_0}X^1_{\rm Iw}( V )-
{\rm rg}_{\overline\Lambda_0}X^2_{\rm Iw}( V )=d^+
\end{equation}
\begin{lemm}\phantomsection\label{iwas3}
Si $ V $ est absolument irr\'eductible,
$H^1_{\rm Iw}( V )$ est un $\overline\Lambda_0$-module libre.
\end{lemm}
\begin{proof}
On a des suites exactes et isomorphismes
$$0\to H^1_{\rm Iw}( V )/(\gamma-1)\to H^1( V )\to
H^2_{\rm Iw}( V )[\gamma-1]\to 0,\quad
H^2_{\rm Iw}( V )/(\gamma-1)\cong H^2( V ).$$
Posons $k={\rm rg}_{\overline\Lambda_0}H^2_{\rm Iw}( V )$,
et donc $H^2_{\rm Iw}( V )=\overline\Lambda_0^k\oplus Z$,
o\`u $Z$ est un $\overline\Lambda_0$-module de torsion.
Alors
$$\dim_{{\bf F}_q}H^2_{\rm Iw}( V )[\gamma-1]=\dim_{{\bf F}_q}Z[\gamma-1]=\dim_{{\bf F}_q} Z/(\gamma-1)=
-k+\dim_{{\bf F}_q} H^2_{\rm Iw}( V )/(\gamma-1).$$
Comme $H^0( V )=0$ par hypoth\`ese,
la formule d'Euler-Poincar\'e 
$$\dim_{{\bf F}_q}H^1( V )-
\dim_{{\bf F}_q}H^0( V )-\dim_{{\bf F}_q}H^2( V )=
\dim_{{\bf F}_q} V -\dim_{{\bf F}_q} V ^{{\rm Frob}_\infty=1}
=d^-$$
implique que 
$\dim_{{\bf F}_q}H^1_{\rm Iw}( V )/(\gamma-1)=k+d^-$,
et comme ${\rm rg}_{\overline\Lambda_0}H^1_{\rm Iw}( V )=k+d^-$,
cela implique que $H^1_{\rm Iw}( V )$ est libre, ce que l'on
voulait prouver.
\end{proof}

\begin{lemm}\phantomsection\label{iwas2} 
Si $ V $ est absolument irr\'eductible,
les conditions suivantes sont \'equivalentes:

{\rm (i)} 
$H^2_{\rm Iw}( V )$ est un $\overline\Lambda_0$-module de torsion.

{\rm (ii)} $H^1_{\rm Iw}( V )$ est 
un $\overline\Lambda_0$-module libre de rang~$d^-$.

{\rm (iii)} $X^1_{\rm Iw}( V ^\vee(1))$ est 
un $\overline\Lambda_0$-module de rang~$d^+$ {\rm (pas n\'ecessairement libre)}.

{\rm (iv)} $X^2_{\rm Iw}( V ^\vee(1))$ est un $\overline\Lambda_0$-module de torsion.

{\rm (v)} $X^2_{\rm Iw}( V ^\vee(1))=0$.
\end{lemm}
\begin{proof}
Le lemme~\ref{iwas3} permet de prouver que (iv)$\Leftrightarrow$(v) 
puisque $X^2_{\rm Iw}(V ^\vee(1))$
s'injecte dans un module libre.

Les relations~(\ref{iwas1}) montrent
que (i)$\Leftrightarrow$(ii) et 
(pour $\rho^\vee(1)$)
que (iii)$\Leftrightarrow$(iv).

Pour conclure, il suffit donc de prouver 
que (i) \'equivaut \`a ce que $X^2_{\rm Iw}( V ^\vee(1))$ soit
de $\overline\Lambda_0$-torsion.
Or la suite exacte
$$0\to {\cal C}( H_0 / H_0 ^{p^n}, V )\to
{\cal C}( H_0 , V )\overset{\gamma_n-1}{\lra}
 {\cal C}( H_0 , V )\to 0$$
fournit, par passage \`a la cohomologie de $G_{\Q,S}$ et au 
${\bf F}_q$-dual,
la suite exacte
$$0\to X^2_{\rm Iw}( V ^\vee(1))/(\gamma_n-1)
\to H^2(G_{\Q,S},{\cal C}( H_0 / H_0 ^{p^n}, V ))^\vee\to
X^1_{\rm Iw}( V ^\vee(1))^{\gamma_n=1}\to 0.$$
Comme 
${\cal C}( H_0 / H_0 ^{p^n}, V )\cong 
\Z_p[ H_0 / H_0 ^{p^n}]\otimes V $, en tant
que $G_{\Q,S}$-module, et comme
$H^2_{\rm Iw}( V )/(\gamma_n-1)\cong H^2(G_{\Q,S},
\Z_p[ H_0 / H_0 ^{p^n}]\otimes V )$ puisque
$H^3(G_{\Q,S},-)=0$,
on a $$\dim_{{\bf F}_q}
H^2(G_{\Q,S},{\cal C}( H_0 / H_0 ^{p^n}, V ))^\vee
=\dim_{{\bf F}_q}H^2_{\rm Iw}( V )/(\gamma_n-1).$$
Par ailleurs, $X^1_{\rm Iw}( V ^\vee(1))^{\gamma_n=1}$ se stabilise
puisque $X^1_{\rm Iw}( V ^\vee(1))$ est de type fini
sur~$\overline\Lambda_0$.  Comme un $\overline\Lambda_0$-module
$M$ est de torsion si et seulement si $\dim_{{\bf F}_q}M/(\gamma_n-1)$
est major\'ee ind\'ependamment de $n$,
on voit que les conditions suivantes sont
\'equivalentes:

\quad $\bullet$ $X^2_{\rm Iw}( V ^\vee(1))$ est de $\overline\Lambda_0$-torsion,

\quad $\bullet$ $\dim_{{\bf F}_q}(X^2_{\rm Iw}( V ^\vee(1))/(\gamma_n-1))$
est major\'ee,

\quad $\bullet$ $\dim_{{\bf F}_q}(H^2(G_{\Q,S},{\cal C}( H_0 / H_0 ^{p^n}, V ))^\vee)$ 
est major\'ee

\quad $\bullet$ $\dim_{{\bf F}_q}H^2_{\rm Iw}( V )/(\gamma_n-1)$
est major\'ee,

\quad $\bullet$ $H^2_{\rm Iw}( V )$ est de $\overline\Lambda_0$-torsion.

Ceci permet de conclure.
\end{proof}

\Subsection{Th\'eorie d'Iwasawa de $\rho_{T_0}$}\label{EUL16}
\subsubsection{La condition $\mu=0$}
On va appliquer ce qui pr\'ec\`ede \`a $V=\rho_{T_0}$ et ses quotients.
Nous aurons besoin de mani\`ere intensive du r\'esultat fondamental suivant
de Kato~\cite[th.\,12.4]{Ka4} (Perrin-Riou~\cite[prop.\,1.3.2]{PR95} a prouv\'e que les quatre propri\'et\'es
sont essentiellement \'equivalentes et conjectur\'e 
-- sous le nom de {\og conjecture de Leopoldt faible\fg} -- qu'elles sont satisfaites):

\begin{theo}\phantomsection\label{KK0}
{\rm (Kato)}
Si $x$ est classique,
alors:

$\bullet$  $X^2_{\rm Iw}(\rho_x)=0$,

$\bullet$ $H^1_{\rm Iw}(\rho_x)$ est un $\Lambda_0$-module sans torsion, 
libre de rang~$1$
si on inverse~$p$ {\rm (et m\^eme sans inverser $p$ si $p\neq 2$)}, et si 
$\overline\rho_{\goth m}$
est irr\'eductible.

$\bullet$ $H^2_{\rm Iw}(\rho_x)$ est un $\Lambda_0$-module de torsion,

$\bullet$ $X^1_{\rm Iw}(\rho_x)$ est de rang~$1$ sur $\Lambda_0$
{\rm (mais peut avoir de la torsion)}.
\end{theo}

\begin{lemm}\phantomsection\label{iwas2.2} 
Les conditions suivantes sont \'equivalentes:

{\rm (i)} Il existe $x$ classique tel que
l'invariant $\mu$ du $\Lambda_0$-module de torsion
$H^2_{\rm Iw}(\rho_{x})$
soit nul.

{\rm (ii)} 
$H^2_{\rm Iw}(\overline\rho_{\goth m})$ est un $\overline\Lambda_0$-module de torsion.
\end{lemm}
\begin{proof}
Cela r\'esulte de
la nullit\'e
de $H^3_{\rm Iw}(\rho_{x})$ qui fournit un isomorphisme
$H^2_{\rm Iw}(\rho_{x})/\varpi_x\cong H^2_{\rm Iw}(\overline\rho_{\goth m})$, o\`u $\varpi_x$
est une uniformisante de $T_0/{\goth p}_x$.
(Comme $H^2_{\rm Iw}(\rho_{x})$ est de torsion sur $\Lambda_0$,
la nullit\'e de son invariant $\mu$ \'equivaut \`a ce que
$H^2_{\rm Iw}(\rho_{x})/\varpi_x$ soit de torsion sur 
$\overline\Lambda_0$).
\end{proof}

On dit que {\it $\rho_{T_0}$ v\'erifie $\mu=0$} 
si $\overline\rho_{\goth m}$ est absolument irr\'eductible et
si $H^2_{\rm Iw}(\overline\rho_{\goth m})$
est un $\overline\Lambda_0$-module de torsion
(et donc $\overline\rho_{\goth m}$ v\'erifie les conditions \'equivalentes du lemme~\ref{iwas2}).

\Subsubsection{La localisation en $p$}
\begin{prop}\phantomsection\label{geni3}
L'application de localisation 
$${\rm loc}_p: H^1_{\rm Iw}(\rho_{T_0})\to
H^1(G_{\Q_p},\Lambda_0\wotimes\rho_{T_0})$$
 est injective.
\end{prop}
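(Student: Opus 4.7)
Mon plan consiste à se ramener au théorème \ref{KK0} de Kato par spécialisation en les points classiques, en exploitant leur zariski-densité dans ${\rm Spec}\, T_0[\frac{1}{p}]$ mentionnée au \no\ref{EUL10}.

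Premièrement, je vérifierais l'injectivité de ${\rm loc}_p$ en chaque point classique $x\in{\cal X}_0^{{\rm cl},+}$. D'après le théorème \ref{KK0}, on a $X^2_{\rm Iw}(\rho_x)=0$ et $H^1_{\rm Iw}(\rho_x)$ est sans torsion (puisque $\overline\rho_{\goth m}$ est irréductible et $p\neq 2$). La suite exacte de Poitou-Tate (rappelée au \S\,\ref{EUL15}), combinée au lemme \ref{EU62} qui assure que $H^1(G_{\Q_\ell},\Lambda_0\wotimes \rho_x)$ est un $\Lambda_0$-module de torsion pour $\ell\in S\moins\{p\}$, montre alors que tout élément de $\ker({\rm loc}_{p,x})$ est de $\Lambda_0$-torsion dans $H^1_{\rm Iw}(\rho_x)$, donc nul: c'est essentiellement le contenu de la remarque \ref{como17.1}.

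Deuxièmement, je propagerais cette injectivité à $\rho_{T_0}$ par un argument de contrôle. Soit $z\in\ker({\rm loc}_p)\subset H^1_{\rm Iw}(\rho_{T_0})$. La suite exacte courte $0\to\rho_{T_0}\overset{\varpi_x}{\to}\rho_{T_0}\to\rho_x\to 0$ (où $\varpi_x$ est une uniformisante de ${\goth p}_x$, éventuellement itérée pour des idéaux premiers non principaux) fournit un morphisme de spécialisation $H^1_{\rm Iw}(\rho_{T_0})/{\goth p}_xH^1_{\rm Iw}(\rho_{T_0})\to H^1_{\rm Iw}(\rho_x)$, compatible aux localisations. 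La généricité de $\overline\rho_{\goth m}$ entraîne $H^0(G_{\Q,S},\overline\rho_{\goth m})=0$, ce qui annule les termes de contrôle (noyau et conoyau) usuels. L'image de $z$ dans $H^1_{\rm Iw}(\rho_x)$ tombe donc dans $\ker({\rm loc}_{p,x})=0$, d'où $z\in{\goth p}_x H^1_{\rm Iw}(\rho_{T_0})$ pour tout $x\in{\cal X}_0^{{\rm cl},+}$.

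Enfin, la zariski-densité de ${\cal X}_0^{{\rm cl},+}$ dans ${\rm Spec}\,T_0[\frac{1}{p}]$ force $z\in\bigcap_x {\goth p}_x H^1_{\rm Iw}(\rho_{T_0})$. Comme $T_0[[\Gamma_0]]$ est noethérien et $H^1_{\rm Iw}(\rho_{T_0})$ est de type fini sur cet anneau, et comme l'absolue irréductibilité de $\overline\rho_{\goth m}$ implique la nullité des $H^0$ modulo $p$, ce qui exclut toute $p$-torsion dans $H^1_{\rm Iw}(\rho_{T_0})$, on en déduit $z=0$. La difficulté principale réside dans la deuxième étape: il faudra manipuler avec soin le morphisme de spécialisation, en particulier pour s'assurer que les éventuels termes de torsion dans $H^2_{\rm Iw}(\rho_{T_0})[{\goth p}_x]$ ne contaminent pas l'argument; la restriction à ${\cal X}_0^{{\rm cl},+}$ (où $\rho_{x,p}$ est irréductible) devrait permettre d'éviter les complications survenant dans le cas où $\rho_{x,p}$ se décompose.
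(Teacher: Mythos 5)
Vos étapes 1 et 2 reprennent essentiellement le début de la preuve du texte, et sont correctes: la nullité de $\ker({\rm loc}_{p,x})$ pour $x$ classique provient du th.\,\ref{KK0} (Kato: $X^2_{\rm Iw}(\rho_x)=0$ et $H^1_{\rm Iw}(\rho_x)$ sans torsion) et du lemme~\ref{EU62}, via la suite exacte de Poitou-Tate. Soit dit en passant, la restriction à ${\cal X}_0^{{\rm cl},+}$ est superflue: le th.\,\ref{KK0} s'applique à tout $x$ classique, et le texte utilise bien tout ${\cal X}_0^{\rm cl}$.

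En revanche, l'étape 3 contient une lacune réelle. Vous concluez de $z\in\bigcap_x{\goth p}_x\, H^1_{\rm Iw}(\rho_{T_0})$ que $z=0$, en invoquant la noethérianité, la finitude et l'absence de $p$-torsion. Cela ne suffit pas: pour un module $M$ de type fini, sans torsion, sur un anneau noethérien, $\bigcap_x{\goth p}_x M$ peut fort bien être non nul alors même que $\bigcap_x{\goth p}_x=0$. Le texte lui-même en donne un contre-exemple en rem.\,\ref{geni8}, point~(iii): avec $T=\Z_p[[x,y,z]]$, $M=T/(zx-y)$ et $X$ zariski-dense, on a ${\goth p}_{(a,b)}M\supset pM$ pour tout $(a,b)\in X$, donc $\bigcap{\goth p}_{(a,b)}M\supset pM\neq 0$, bien que $M$ soit sans torsion. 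Ce qui manque serait par exemple la platitude de $H^1_{\rm Iw}(\rho_{T_0})$ sur $\Lambda_0\wotimes T_0$, dont on ne sait rien a priori.

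Le texte contourne précisément cette difficulté en évitant de passer par ${\goth p}_x\, H^1_{\rm Iw}(\rho_{T_0})$: comme $T_0$ s'injecte dans $\prod_x T_0/{\goth p}_x$ par zariski-densité, $\rho_{T_0}$ s'injecte dans $\prod_x\rho_x$, et l'annulation des $H^0_{\rm Iw}$ (irréductibilité absolue de $\overline\rho_{\goth m}$) donne alors directement, via la suite exacte longue du conoyau, une injection $H^1_{\rm Iw}(\rho_{T_0})\hookrightarrow\prod_x H^1_{\rm Iw}(\rho_x)$, et de même pour les $H^1$ locaux. On a donc $M\hookrightarrow\prod_x M_x=0$ sans aucune hypothèse de structure sur $H^1_{\rm Iw}(\rho_{T_0})$. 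Votre étape 2 établissait déjà $z_x=0$ pour tout $x$; il suffit donc de remplacer votre étape 3 par cette remarque sur le plongement de $\rho_{T_0}$ dans $\prod_x\rho_x$ pour obtenir une preuve correcte.
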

\begin{proof}
Soit $z$ dans le noyau $M$ de cette application.  
Si $x\in X={\cal X}_0^{\rm cl}$,
soit $z_x$ l'image de $z$ dans 
$H^1_{\rm Iw}(\rho_x)$.  Alors ${\rm loc}_p(z_x)=0$, et $z_x$ appartient
au noyau $M_x$ de ${\rm loc}_p$.
Or $H^1_{\rm Iw}(\rho_x)$ est un $\Lambda_0$-module sans torsion (th.~\ref{KK0})
et donc $M_x$ est aussi sans $\Lambda_0$-torsion.  Comme $X^2_{\rm Iw}(\rho_x)=0$ (th.\,\ref{KK0}),
la suite exacte de Poitou-Tate fournit une injection de $M_x$ dans
$\oplus_{\ell\in S\moins\{p\}}H^1(G_{\Q_\ell},\Lambda_0\wotimes\rho_{T_0})$
qui est un $\Lambda_0$-module de torsion.  On en d\'eduit que $M_x=0$
et donc $z_x=0$.
Comme $\rho_{T_0}$ s'injecte dans $\prod_{x\in X}\rho_x$ puisque $T_0$ s'injecte
dans $\prod_{x\in X}{\goth p}_x$ par zariski-densit\'e de $X$,
et comme les $H^0_{\rm Iw}$ sont nuls, il s'ensuit
que $M$ s'injecte dans $\prod_{x\in X}M_x$, et donc $M=0$, ce que l'on voulait
d\'emontrer.
\end{proof}

\begin{coro}\phantomsection\label{geni4}
$X^2_{\rm Iw}(\rho_{T_0})=0$.
\end{coro}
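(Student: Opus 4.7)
The plan is to exploit the Poitou-Tate exact sequence directly, using Proposition~\ref{geni3} as the single input. Recall that the $7$-term sequence displayed just before Remark~\ref{geni5} begins with
$$0\to X^2_{\rm Iw}(\rho_{T_0})\to H^1_{\rm Iw}(\rho_{T_0})\to \bigoplus_{\ell\in S}H^1(G_{\Q_\ell},\Lambda_0\wotimes\rho_{T_0}),$$
so $X^2_{\rm Iw}(\rho_{T_0})$ is by definition the kernel of the total localization map out of $H^1_{\rm Iw}(\rho_{T_0})$.

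The key observation is that this total localization map factors through the single-prime localization ${\rm loc}_p:H^1_{\rm Iw}(\rho_{T_0})\to H^1(G_{\Q_p},\Lambda_0\wotimes\rho_{T_0})$ via the projection onto the $\ell=p$ component of the direct sum. Proposition~\ref{geni3} asserts precisely that ${\rm loc}_p$ is injective, so a fortiori the full localization map is injective, and hence its kernel $X^2_{\rm Iw}(\rho_{T_0})$ vanishes. There is no real obstacle here: the entire argument is a one-line consequence of the preceding proposition combined with the exactness of the Poitou-Tate sequence, and no further analysis of the individual local terms is required.
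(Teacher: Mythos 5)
Your argument is correct and is precisely the paper's proof: Corollaire~\ref{geni4} is deduced in one line from the Poitou-Tate sequence together with the injectivity of ${\rm loc}_p$ established in Proposition~\ref{geni3}. One small wording quibble: it is ${\rm loc}_p$ that factors through the total localization map (via the projection onto the $\ell=p$ summand), not the other way around, but the inclusion $\ker(\text{total loc})\subset\ker({\rm loc}_p)$ you need is of course still valid.
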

\begin{proof}
Cela r\'esulte de la suite exacte de Poitou-Tate.
\end{proof}

\Subsection{Globalisation: le cas $\mu=0$}\label{EUL17}
On suppose dans ce paragraphe que l'on est dans le cas $\mu=0$,
et donc $\overline\rho_{\goth m}$ v\'erifie les conditions \'equivalentes du lemme~\ref{iwas2}.

\begin{lemm}\phantomsection\label{EU8}
Si $V=\rho_{T_0}$, $\prod_{x\in X}\rho_x$ ou $\big(\prod_{x\in X}\rho_x)/\rho_{T_0}$,
alors $X^2_{\rm Iw}(V)=0$.
\end{lemm}
\begin{proof}
Dans tous les cas, $V$ est un $\Z_p$-module compact (le dernier comme quotient
de modules compacts) et donc $V^\vee$ est discret et tous ses composants de
Jordan-H\"older sont isomorphes \`a $\overline\rho_{\goth m}$.
Comme on est dans le cas $\mu=0$, on a $X^2_{\rm Iw}(\overline\rho_{\goth m})=0$, ce qui permet
de conclure.
\end{proof}

\begin{lemm}\phantomsection\label{EU9}
Si $c_p\in H^1(G_{\Q_p},\Lambda_0\wotimes\rho_{T_0})$, les conditions suivantes
sont \'equivalentes:

{\rm (i)} Il existe $c\in H^1_{\rm Iw}(\rho_{T_0})$, unique,
tel que $c_{p}$ soit l'image de $c$ par l'application de localisation.

{\rm (ii)}  Il existe ${\cal X}'$ zariski-dense dans la fibre
g\'en\'erique de ${\cal X}_0$, tel que l'image $c_{p,x}$ de
$c_p$ dans $H^1(G_{\Q_p},\Lambda_0\otimes\rho_x)$ soit dans l'image
de la localisation $H^1_{\rm Iw}(\rho_x)\to H^1(G_{\Q_p},\Lambda_0\otimes\rho_x)$,
pour tout $x\in {\cal X}'$.
\end{lemm}
\begin{proof}
Il n'y a que (ii)$\Rightarrow$(i) \`a prouver.
L'unicit\'e de $c$ est une cons\'equence de la prop.\,\ref{geni3};
prouvons l'existence. 
Soit $Z$ le conoyau de $\rho_{T_0}\to\prod_{x\in{\cal X}'}\rho_x$.
On a un diagramme commutatif \`a lignes et colonnes exactes
$$\xymatrix@R=.5cm@C=.5cm{
&0\ar[d]&0\ar[d]&\\
0\ar[r]&H_{\rm Iw}^1(\rho_{T_0})\ar[r]\ar[d]&\oplus_{\ell\in S}
H^1(G_{\Q_\ell},\Lambda_0\wotimes\rho_{T_0})\ar[r]\ar[d]&X_{\rm Iw}^1(\rho_{T_0})\ar[d]\\
0\ar[r]&H_{\rm Iw}^1(\prod_x\rho_x)\ar[r]\ar[d]&\oplus_{\ell\in S}
H^1(G_{\Q_\ell},\Lambda_0\wotimes\prod_x\rho_x)\ar[r]\ar[d]&X_{\rm Iw}^1(\prod_x\rho_x)\ar[d]\\
0\ar[r]&H_{\rm Iw}^1(Z)\ar[r]&\oplus_{\ell\in S}
H^1(G_{\Q_\ell},\Lambda_0\wotimes Z)\ar[r]&X_{\rm Iw}^1(Z)
}$$
Les $0$ \`a gauche proviennent de la nullit\'e des $X^2_{\rm Iw}$ des termes
consid\'er\'es (cf.~lemme~\ref{EU8}); ceux du haut proviennent de la nullit\'e
des $H^0_{\rm Iw}$. 

L'hypoth\`ese implique que, pour tout $x\in{\cal X}'$,
il existe $(c_{\ell,x})_{\ell\in S\moins\{p\}}$, tel que
$\sum_{\ell\in S}c_{\ell,x}=0$ dans $X_{\rm Iw}^1(\rho_x)$.
Maintenant, $Z$ est un $\Lambda$-module compact qui admet une filtration
par des sous-$\Lambda$-modules compacts $Z_n$ tels que $Z_n/Z_{n+1}\cong
\overline\rho_{\goth m}$, pour tout $n$, et $Z=\varprojlim_n(Z/Z_n)$.
L'image de $((c_{\ell,x})_{\ell\in S})_{x}$ dans $\oplus_{\ell\in S}
H^1(G_{\Q_\ell},\Lambda_0\wotimes Z)$ a pour image $0$ dans $X_{\rm Iw}^1(Z)$ 
et, par construction,
tombe dans le sous-groupe $\oplus_{\ell\in S\moins\{p\}}
H^1(G_{\Q_\ell},\Lambda_0\wotimes Z)$.
La premi\`ere propri\'et\'e implique que cette image
provient d'un \'el\'ement $y$ de $H_{\rm Iw}^1(Z)$.
Pour conclure, il suffit de prouver que $y=0$ et,
pour cela il suffit de prouver que l'image
$y_n$ de $y$ dans $H^1_{\rm Iw}(Z/Z_n)$ est nulle pour tout $n$,
ce qui se fait par r\'ecurrence sur $n$ en utilisant
la suite exacte $0\to \overline\rho_{\goth m}\to
Z/Z_{n+1}\to Z/Z_n\to 0$:
l'hypoth\`ese de r\'ecurrence implique que $y_{n+1}$ appartient
au sous-groupe $H_{\rm Iw}^1(\overline\Lambda\otimes\overline\rho_{\goth m})$
de $H_{\rm Iw}^1(Z/Z_{n+1})$ (les $H_{\rm Iw}^0$ sont nuls), qui est libre sur $\overline\Lambda_0$,
et son image par l'injection
$H_{\rm Iw}^1(\overline\rho_{\goth m})\to
\oplus_{\ell\in S}
H^1(G_{\Q_\ell},\overline\Lambda_0\otimes\overline\rho_{\goth m})$ tombe
dans le sous-groupe $\oplus_{\ell\in S\moins\{p\}}
H^1(G_{\Q_\ell},\overline\Lambda_0\otimes\overline\rho_{\goth m})$
qui est de $\overline\Lambda_0$-torsion.
\end{proof}

Ce qui pr\'ec\`ede s'applique aux \'el\'ements ${\bf z}^S_{\rm Iw}(\rho_{T_0})_p$
et ${\bf z}^S_{{\rm Iw},M}(\rho_{T_0})_p$. 
Pour mettre le r\'esultat sous une forme plus standard, notons que le lemme de Shapiro fournit
un isomorphisme
\begin{equation}\label{shapi1}
H^1(G_{\Q,{S_M}},\Lambda\wotimes\rho_{T_{0,M}})=
H^1(G_{\Q(\zeta_M),{S_M}},\Lambda\wotimes\rho_{T_0}).
\end{equation}
\begin{theo}\phantomsection\label{EU10}
{\rm (i)} Si $\rho_{T_0}\otimes\eta $ v\'erifie $\mu=0$ pour tout caract\`ere $\eta$
de ${\rm Gal}(\Q(\bmu_p)/\Q)$, alors il \index{Zbem@\zem}existe
$${\bf z}^S_{\rm Iw}(\rho_{T_0})\in \rho_{T_0}^\diamond\otimes_{T_0}
H^1(G_{\Q,S},\Lambda\wotimes\rho_{T_0}),$$ 
unique, dont l'image par la localisation soit
${\bf z}^S_{\rm Iw}(\rho_{T_0})_p$.

{\rm (ii)} Plus g\'en\'eralement, si $(M,pN)=1$,
et si $\rho_{T_0}\otimes\eta $ v\'erifie $\mu=0$ pour tout caract\`ere $\eta$
de ${\rm Gal}(\Q(\bmu_{Mp})/\Q)$, alors il existe
$${\bf z}^S_{{\rm Iw},M}(\rho_{T_0})\in 
\rho_{T_0}^\diamond\otimes_{T_0}H^1(G_{\Q(\zeta_M),{S_M}},\Lambda\wotimes\rho_{T_0}),$$
unique, dont l'image par la localisation soit
${\bf z}^S_{{\rm Iw},M}(\rho_{T_0})_p$.

{\rm (iii)} Les ${\bf z}^S_{{\rm Iw},M}(\rho_{T_0})$ v\'erifient les relations de
syst\`emes d'Euler:

$\bullet$ Si $\ell\nmid pNM$, alors
$${\rm cor}_{M\ell}^M{\bf z}^S_{{\rm Iw},M\ell}(\rho_{T_0})=
P_{\ell,M}([\sigma_\ell^{-1}])\cdot
{\bf z}^S_{{\rm Iw},M}(\rho_{T_0}),$$
o\`u l'on a pos\'e
$$
P_{\ell,M}(X)=\big(1-(\chi_{\ell,1,M}(\ell)+\chi_{\ell,2,M}(\ell))X
+(\chi_{\ell,1,M}(\ell)\chi_{\ell,2,M}(\ell))X^2\big).$$

$\bullet$ Si $\ell\mid M$, alors
$${\rm cor}_{M\ell}^M{\bf z}^S_{{\rm Iw},M\ell}(\rho_{T_0})={\bf z}^S_{{\rm Iw},M}(\rho_{T_0}).$$

\end{theo}
\begin{proof}
Les (i) et (ii) sont une cons\'equence de la zariski-densit\'e des points classiques,
de la prop.~\ref{ZK1} et du lemme~\ref{EU9} (pour tous les $\rho_{T_0}\otimes\eta $ pertinents);
le groupe qui appara\^{\i}t naturellement dans le (ii) est
$\rho_{T_0}^\diamond\otimes_{T_0}H^1(G_{\Q,{S_M}},\Lambda\wotimes\rho_{T_{0,M}})$ et on utilise
l'isomorphisme~(\ref{shapi1}) pour le remplacer par le groupe de l'\'enonc\'e.

Le (iii) est une traduction de la prop.~\ref{EU6}.
\end{proof}

\begin{rema}\phantomsection\label{EU11}
{\rm (i)} On note 
$${\bf z}^S_M(\rho_T)\in\rho_T^\diamond\otimes 
H^1(G_{\Q(\zeta_M),{S_M}},\rho_T)$$ 
l'\'el\'ement correspondant \`a
${\bf z}^S_{{\rm Iw},M}(\rho_{T_0})$ via l'identification
$\rho_T=\Lambda\wotimes_{\Z_p}\rho_{T_0}$.
Les ${\bf z}^S_M(\rho_T)$ forment un syst\`eme d'Euler pour $\rho_T$.
 
{\rm (ii)} L'existence d'un tel syst\`eme d'Euler permet
d'utiliser les d\'eriv\'ees de Kolyvagin~\cite{Kol,PR98,Ka99,rubin}
pour \'etudier le groupe $H^2(G_{\Q,S},\check\rho_T)$.

{\rm (iii)} 
V\'erifier que
$\mu=0$ pour tous les tordus par des caract\`eres peut \^etre un peu compliqu\'e.
Pour les applications des syst\`emes d'Euler, seul le $p$-quotient
maximal $(\Z/M)^\dual_p$ de $(\Z/M)^\dual$ joue un r\^ole (on peut remplacer
les \'el\'ements initiaux par les corestrictions idoines), et 
les facteurs de Jordan-H\"older de
$\Z_p[(\Z/M)^\dual_p]\otimes\Lambda\wotimes\rho_{T_0}$ sont tous
isomorphes \`a $\overline\Lambda\otimes\overline\rho_{\goth m}$, ce qui fait qu'on a juste
besoin de $\mu=0$ pour $\overline\rho_{\goth m}$.
\end{rema}

\Subsection{Globalisation: le cas g\'en\'eral}\label{geni0}
On ne suppose plus que l'on est dans le cas $\mu=0$, mais on suppose toujours
que l'on est dans les conditions du th.~\ref{Ycano110} et, en particulier,
que $\overline\rho_{\goth m}$ est irr\'eductible (et comme $p\neq 2$, cela implique
que la restriction de $\overline\rho_{\goth m}$
\`a $G_{\Q_\infty}$ est aussi irr\'eductible).

\Subsubsection{La tour des ${\bf z}^S_{{\rm Iw},M}(\rho_{T_0})$}
\begin{lemm}\phantomsection\label{geni6}
Si $x\in {\cal X}_0(\O_L)$, l'application naturelle $X^1_{\rm Iw}(\rho_{T_0})/{\goth p}_x\to
X^1_{\rm Iw}(\rho_x)$ est un isomorphisme.
\end{lemm}
\begin{proof}
On note simplement $H^i(W)$ le groupe $H^i(G_{\Q_\infty,S},W)$.
Soient $a_1,\dots, a_r$ engendrant ${\goth p}_x$ comme $T_0$-module.
Si $1\leq n\leq r$, la multiplication par $a_n$ induit
une suite exacte $0\to\rho_{T_0}^\vee[a_1,\dots,a_{n}]\to\rho_{T_0}^\vee [a_1,\dots,a_{n-1}]
\overset{a_n}{\to}X\to 0$, o\`u $X$ est un sous-$T_0$-module de
$\rho_{T_0}^\vee[a_1,\dots,a_{n-1}]$.
L'hypoth\`ese $\overline\rho_{\goth m}$ irr\'eductible
implique que les $H^0$ sont nuls.  Il en r\'esulte que 
$H^1(X)\to H^1(\rho_{T_0}^\vee[a_1,\dots,a_{n-1}])$ est injective
et que $H^1(\rho_{T_0}^\vee[a_1,\dots,a_{n}])=H^1(\rho_{T_0}^\vee[a_1,\dots,a_{n-1}])[a_n]$.
Par r\'ecurrence, cela prouve que
$H^1(\rho_{T_0}^\vee[a_1,\dots,a_{n}])=H^1(\rho_{T_0}^\vee)[a_1,\dots,a_{n}]$.
Pour $n=r$, cela fournit, par dualit\'e de Pontryagin, le r\'esultat voulu.
\end{proof}

\begin{prop}\phantomsection\label{geni7}
Il existe $\alpha\in \Lambda_0\wotimes_{\Z_p}T_0$, non diviseur de $0$, tel que
la composante sur $\eta=1$ de
$\alpha\, {\bf z}^S_{\rm Iw}(\rho_{T_0})_p$ 
soit dans l'image de $\rho_{T_0}^\diamond\otimes_{T_0}H^1_{\rm Iw}(\rho_{T_0})$ par l'application de
localisation.
\end{prop}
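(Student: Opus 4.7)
The plan is to combine the Poitou-Tate exact sequence with the classical-point identification (Proposition \ref{ZK1} together with Theorem \ref{zk2} and Kato's Theorem \ref{KK0}) and the Zariski density of ${\cal X}_0^{\rm cl}$ to show that the obstruction to lifting the $p$-local element $c_p$ is annihilated by a non-zero-divisor of $\Lambda_0\wotimes T_0$.

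First, denote by $c_p$ the $\eta=1$ component of ${\bf z}^S_{\rm Iw}(\rho_{T_0},\iota_{\rm Em})_p$, viewed in $\rho_{T_0}^\diamond\otimes_{T_0}H^1(G_{\Q_p},\Lambda_0\wotimes\rho_{T_0})$. Corollary \ref{geni4} gives $X^2_{\rm Iw}(\rho_{T_0})=0$, so the Poitou-Tate sequence of the introduction to~\S\,\ref{EUL15} yields an exact sequence
\begin{equation*}
0\longrightarrow \rho_{T_0}^\diamond\otimes H^1_{\rm Iw}(\rho_{T_0})\stackrel{{\rm loc}}{\longrightarrow}\bigoplus_{\ell\in S}\rho_{T_0}^\diamond\otimes H^1(G_{\Q_\ell},\Lambda_0\wotimes\rho_{T_0})\stackrel{\partial}{\longrightarrow}\rho_{T_0}^\diamond\otimes X^1_{\rm Iw}(\rho_{T_0}).
\end{equation*}
By Lemma \ref{EU62}, for $\ell\neq p$ the local cohomology $H^1(G_{\Q_\ell},\Lambda_0\wotimes\rho_{T_0})$ is a $(\Lambda_0\wotimes T_0)$-torsion module. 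Consequently, to obtain a lift of $\alpha c_p$ to a global class, it suffices to produce a non-zero-divisor $\alpha$ annihilating $\partial(c_p)$: the tuple $(\alpha c_p,0,\dots,0)$ will then differ from some lift by a tuple of torsion elements at $\ell\neq p$, which can be absorbed into $\alpha$ via a further non-zero-divisor.

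Second, we analyse $\partial(c_p)$ by specialization at classical points. Fix $x\in{\cal X}_0^{\rm cl}$ and let $\pi$ correspond to $x$. Proposition \ref{ZK1} gives
\begin{equation*}
c_{p,x}=\lambda(\iota_{\rm Em},x)\cdot\prod_{\ell\in S\setminus\{p\}}P_\ell([\sigma_\ell^{-1}])\cdot {\bf z}_{\rm Iw}(\rho_x)_p,
\end{equation*}
while Theorem \ref{zk2} identifies ${\bf z}_{\rm Iw}(\rho_x)_p$ with ${\rm loc}_p({\bf z}_{\rm Iw}(\rho_x))$ modulo $H^0(G_{\Q_p(\bmu_{p^\infty})},\rho_x)$, a $\Lambda_0$-torsion subgroup. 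Since Kato's global class ${\bf z}_{\rm Iw}(\rho_x)$ has total Poitou-Tate boundary zero and its localizations at $\ell\neq p$ are $\Lambda_0$-torsion (Lemma \ref{EU62}), it follows that $\partial\big({\rm loc}_p({\bf z}_{\rm Iw}(\rho_x))\big)$ is $\Lambda_0$-torsion in $X^1_{\rm Iw}(\rho_x)$. Combining with the isomorphism $X^1_{\rm Iw}(\rho_{T_0})/\mathfrak{p}_x\cong X^1_{\rm Iw}(\rho_x)$ of Lemma \ref{geni6}, we conclude that for every classical $x$, the reduction of $\partial(c_p)$ modulo $\mathfrak{p}_x$ lies in the $\Lambda_0$-torsion of $X^1_{\rm Iw}(\rho_x)$.

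Third, we globalize. The module $M:=\rho_{T_0}^\diamond\otimes X^1_{\rm Iw}(\rho_{T_0})$ is finitely generated over the complete local noetherian ring $\Lambda_0\wotimes T_0$. By Kato's Theorem \ref{KK0} each $X^1_{\rm Iw}(\rho_x)$ is of $\Lambda_0$-rank exactly one; combined with Lemma \ref{geni6} and the Zariski density of ${\cal X}_0^{\rm cl}$ in the generic fibre of ${\cal X}_0$ (cf.~\S\,\ref{EUL10}), this controls the generic $(\Lambda_0\otimes {\rm Fr}(T_0))$-structure of $M$. The pointwise torsion statement of the second paragraph then propagates to generic torsion: the image of $\partial(c_p)$ in $M\otimes_{T_0}{\rm Fr}(T_0)$ is $\Lambda_0\otimes {\rm Fr}(T_0)$-torsion, so $\partial(c_p)$ itself is annihilated by a non-zero-divisor $\alpha\in \Lambda_0\wotimes T_0$, concluding the proof.

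The hard part is the third paragraph, where we replace the Jordan-H\"older-by-$\overline\rho_{\goth m}$ argument of \S\,\ref{EUL17} (which required $\mu=0$) by a density argument at classical fibres. The delicacy is to pass from the fibrewise $\Lambda_0$-torsion statement to a uniform $(\Lambda_0\wotimes T_0)$-annihilator: one must control the torsion-free part of $M$ over the noetherian ring $\Lambda_0\wotimes T_0$, exploiting reducedness (or at least the generic reducedness) of ${\cal X}_0$ and the uniformity of rank-one behaviour of $X^1_{\rm Iw}(\rho_x)$ given by Kato, which is precisely what allows the introduction of denominators that will later be eliminated in chap.~\ref{chapi5}.
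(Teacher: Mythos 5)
You have the right overall strategy---Poitou--Tate, specialization at classical points via Proposition~\ref{ZK1} and Theorem~\ref{zk2}, and Zariski density---but there is a genuine gap in your final ``globalization'' step, and the paper's proof avoids it by a small but essential change in the set of test points.

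The issue is that you work with all of ${\cal X}_0^{\rm cl}$, where Theorem~\ref{zk2} only gives ${\bf z}_{\rm Iw}(\rho_x)_p\equiv{\rm loc}_p({\bf z}_{\rm Iw}(\rho_x))$ \emph{modulo} $H^0(G_{\Q_p(\bmu_{p^\infty})},\rho_x)$, a $\Lambda_0$-torsion module. You therefore only establish, for each classical $x$, that the reduction of the obstruction class modulo ${\goth p}_x$ is $\Lambda_0$-torsion, rather than zero. To conclude you then appeal to an unjustified principle that this fibrewise torsion ``propagates to generic torsion'' in $M\otimes_{T_0}{\rm Fr}(T_0)$. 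But the torsion exponent varies with $x$, and you do not provide any uniform control over it; no finite-generation or generic-flatness lemma is cited or applicable here since $\Lambda_0\wotimes T_0$ is not essentially of finite type over $T_0$. As stated, the step does not follow.

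The paper circumvents this by testing only at $x\in{\cal X}_0^{{\rm cl},+}$, i.e.\ classical points with $\rho_{x,p}$ absolutely irreducible, which are still Zariski-dense. For such $x$ the torsion module $H^0(G_{\Q_p(\bmu_{p^\infty})},m_{\eet}(\pi))$ in Theorem~\ref{zk2} vanishes (see the footnote to that theorem), so the image of the obstruction in $W_x$ is \emph{exactly} zero, hence in ${\goth p}_x W$ by Lemma~\ref{geni6}. After multiplying by $\alpha$ killing the $(\Lambda_0\wotimes T_0)$-torsion of $W$, so that $\alpha W$ embeds into a free module $(\Lambda_0\wotimes T_0)^d$, one concludes from $\bigcap_x{\goth p}_x(\Lambda_0\wotimes T_0)=0$ that the image of $\alpha\cdot{\bf z}^S_{\rm Iw}(\rho_{T_0},\iota_{\rm Em})_p$ in $W$ is zero. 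Only exact vanishing at the dense fibres feeds into this intersection argument; torsion vanishing at the fibres would not give $\bar w\in{\goth p}_x W$. Replacing ${\cal X}_0^{\rm cl}$ by ${\cal X}_0^{{\rm cl},+}$ and noting the vanishing of the torsion ambiguity there repairs your argument and brings it in line with the paper's; the ``absorbing torsion at $\ell\neq p$'' phrasing in your second paragraph also becomes unnecessary, since once $(\alpha c_p,0,\dots,0)$ has vanishing class in the quotient by the images from $\ell\neq p$ the exact sequence directly produces the global lift.
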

\begin{proof}
Soient 
\begin{align*}
W_x=\rho_{x}^\dual\otimes &\big(X^1_{\rm Iw}(\rho_x)/\oplus_{\ell\in S\moins\{p\}}H^1(G_{\Q_\ell},\Lambda_0\otimes\rho_x)\big)\\
W=\rho_{T_0}^\diamond\otimes_{T_0}
 &\big(X^1_{\rm Iw}(\rho_{T_0})/\oplus_{\ell\in S\moins\{p\}}H^1(G_{\Q_\ell},\Lambda_0\otimes\rho_{T_0})\big)
\end{align*}
Si $x\in {\cal X}_0^{{\rm cl},+}$, l'image de la composante sur $\eta=1$ de 
${\bf z}^S_{\rm Iw}(\rho_{T_0})_p$ dans
$W_x$
est nulle d'apr\`es la d\'efinition de ${\bf z}^S_{\rm Iw}(\rho_{T_0})_p$,
le th.\,\ref{zk2} et la suite exacte de Poitou-Tate.  On en d\'eduit, en utilisant le lemme~\ref{geni6},
que l'image de la composante sur $\eta=1$ de ${\bf z}^S_{\rm Iw}(\rho_{T_0})_p$ dans $W$
appartient \`a ${\goth p}_xW$.  

Comme $W$ est de type fini sur $\Lambda_0\wotimes_{\Z_p}T_0$, il existe $\alpha\in \Lambda_0\wotimes_{\Z_p}T_0$, non diviseur de $0$, tel que
$\alpha$ tue le sous-module de torsion de $W$, et donc $\alpha W\subset W$ est sans $\Lambda_0\wotimes_{\Z_p}T_0$ torsion.
Mais alors l'image de $\alpha\, {\bf z}^S_{\rm Iw}(\rho_{T_0})_p$ appartient \`a ${\goth p}_x\alpha W$,
pour tout $x\in {\cal X}_0^{{\rm cl},+}$, et comme ${\cal X}_0^{{\rm cl},+}$ est
zariski-dense dans ${\cal X}_0$, on en d\'eduit que cette image est nulle (on peut injecter $\alpha W$
dans $(\Lambda_0\wotimes_{\Z_p}T_0)^d$ et $\cap_x {\goth p}_x\Lambda_0\wotimes_{\Z_p}T_0=0$ 
car $\cap_x{\goth p}_xT_0=0$ puisque 
${\cal X}_0^{{\rm cl},+}$ est
zariski-dense dans ${\cal X}_0$).
La suite exacte de Poitou-Tate permet d'en d\'eduire que
$\alpha\, {\bf z}^S_{\rm Iw}(\rho_{T_0})_p$ est la localisation d'un \'el\'ement de 
$\rho_{T_0}^\diamond\otimes_{T_0}H^1_{\rm Iw}(\rho_{T_0})$.
\end{proof}

\begin{rema}\phantomsection\label{geni8}
{\rm (i)} La prop.~\ref{geni7} pour $\rho_{T_0}$ et ses tordues par des caract\`eres de
$(\Z/p)^\dual$ permet de d\'efinir 
\begin{align*}
{\bf z}^S_{\rm Iw}(\rho_{T_0})&\in {\rm Fr}(T)
\otimes_T\big(\rho_{T_0}^\diamond \otimes_{T_0} H^1(G_{\Q,S},\Lambda\wotimes_{\Z_p}\rho_{T_0})\big)\\
{\bf z}^S(\rho_T)&\in {\rm Fr}(T)
\otimes_T\big(\rho_{T}^\diamond \otimes_{T} H^1(G_{\Q,S},\rho_{T})\big)
\end{align*}
le premier
 comme l'unique (cor.~\ref{geni4}) 
\'el\'ement ayant pour image
${\bf z}^S_{\rm Iw}(\rho_{T_0})_p$ par l'application de localisation,
le second par l'isomorphisme habituel.  

{\rm (ii)}
Plus g\'en\'eralement, on peut
\index{ZbM@\zm}d\'efinir, si $(M,pN)=1$,
\begin{align*}
{\bf z}^S_{{\rm Iw},M}(\rho_{T_0})& \in {\rm Fr}(T)\otimes_T\big(\rho_{T_0}^\diamond \otimes_{T_0}
 H^1(G_{\Q(\zeta_M),{S_M}},\Lambda\wotimes_{\Z_p}\rho_{T_0})\big)\\
{\bf z}^S_{M}(\rho_{T})& \in {\rm Fr}(T)\otimes_T\big(\rho_{T}^\diamond \otimes_{T}
 H^1(G_{\Q(\zeta_M),{S_M}},\rho_{T})\big)
\end{align*}
Les ${\bf z}^S_{M}(\rho_{T})$ v\'erifient les relations de syst\`emes d'Euler du th.\,\ref{EU10}
mais l'introduction des d\'enominateurs dans leur d\'efinition rend 
d\'elicate l'utilisation de la m\'ethode des syst\`emes d'Euler.

{\rm (iii)} L'exemple suivant montre qu'il est
 difficile de se d\'ebarrasser de ces d\'enominateurs en n'utilisant
que la zariski-densit\'e des points classiques.
Soit $T_0=\Z_p[[x,y]]$ et donc $T=\Z_p[[x,y,z]]$.
Soit $X=p\Z_p^\dual\times p\Z_p^\dual$: c'est un sous-ensemble zariski-dense de ${\rm Spec}\,T_0$;
si $a,b\in \Z_p^\dual$, on note ${\goth p}_{(a,b)}$ l'id\'eal $(x-pa,y-pb)$ de $T_0$ correspondant
\`a $(pa,pb)\in X$.
Enfin, soit $M=T/(zx-y)$.  Alors 
$$M/{\goth p}_{(a,b)}M=\Z_p[[x,z]]/(x-pa,xz-pb)=
\Z_p[[z]]/(paz-pb)={\bf F}_p[[z]]$$
 car $b-az$ est une unit\'e de $\Z_p[[z]]$.
On en d\'eduit que ${\goth p}_{(a,b)}M \supset pM$, pour tout $(a,b)$.

{\rm (iv)} Nous prouverons (th.\,\ref{facto2}) 
que ${\bf z}^S_M(\rho_{T})\in \rho_T^\diamond\otimes_T
H^1(G_{\Q,{S_M}},\rho_{T})$,
et donc qu'il n'y a pas de d\'enominateurs.
\end{rema}

\subsubsection{Comparaison entre ${\bf z}(\pi)$ et $(0,\infty)$: suite et fin}\label{geni9.4}
Le r\'esultat suivant, alli\'e \`a l'invariance
de $(0,\infty)$ par multiplication par un caract\`ere (pour passer de ${\cal X}$ \`a~${\cal X}_0$)
 permet (cf.~rem.\,\ref{ZK} et~prop.\,\ref{ZK1})
d'\'etendre le th.\,\ref{zk2} au cas g\'en\'eral (i.e.~sans supposer que $m_{\eet}(\pi)_p$
est absolument irr\'eductible).

\begin{prop}\phantomsection\label{geni9}
Si $x\in {\cal X}_0^{\rm cl}$, 
il existe $z\in\rho_x^\dual\otimes H^1(G_{\Q,S},\Lambda\otimes\rho_x)$ et
$\alpha_x\in\Lambda$, non diviseur de $0$, tels que ${\rm loc}_p(z)=\alpha_x\,{\bf z}^S_{\rm Iw}(\rho_x)_p$.
\end{prop}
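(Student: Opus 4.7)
The strategy is to specialize at $x$ the global class constructed over $T_0$ (Proposition~\ref{geni7}, Remark~\ref{geni8}), and to control the denominator under this specialization. By Proposition~\ref{geni7} (applied componentwise for the characters of $(\Z/p)^\dual$), there exist a non-zero-divisor $\alpha\in\Lambda_0\wotimes T_0$ and a global class
\[
y_T\in\rho_{T_0}^\diamond\otimes_{T_0}H^1(G_{\Q,S},\Lambda\wotimes\rho_{T_0})
\]
such that ${\rm loc}_p(y_T)=\alpha\cdot{\bf z}^S_{\rm Iw}(\rho_{T_0},\iota_{\rm Em})_p$. Specialize at $x$: setting $z:=y_T|_x$ and combining with Proposition~\ref{ZK1}, one obtains
\[
{\rm loc}_p(z)=(\alpha|_x)\,\lambda(\iota_{\rm Em},x)\,\Big(\prod_{\ell\in S\moins\{p\}}P_\ell([\sigma_\ell^{-1}])\Big)\cdot{\bf z}_{\rm Iw}(\rho_x)_p=\alpha_x\cdot{\bf z}^S_{\rm Iw}(\rho_x)_p,
\]
where $\alpha_x:=(\alpha|_x)\,\lambda(\iota_{\rm Em},x)\in\Lambda\otimes\O_L\subset\Lambda$.

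Since $\Lambda\otimes\O_L$ is a domain, $\alpha_x$ is a non-zero-divisor as soon as it is nonzero, i.e., as soon as $\alpha|_x\neq 0$ and $\lambda(\iota_{\rm Em},x)\neq 0$. The second condition is easily arranged by choosing $\iota_{\rm Em}$ so that $\lambda(\iota_{\rm Em})\in T$ does not vanish at $x$ (the possible choices form a torsor under $T^\times$). Assuming $\lambda(\iota_{\rm Em},x)\neq 0$, the nontrivial point is to ensure $\alpha|_x\neq 0$. Here the key observation is that the injectivity of ${\rm loc}_p$ on global Iwasawa cohomology (Proposition~\ref{geni3}) gives the following dichotomy: either $z=y_T|_x\neq 0$, in which case $\alpha_x\neq 0$ automatically (else ${\rm loc}_p(z)=0=\alpha_x\cdot{\bf z}^S_{\rm Iw}(\rho_x)_p$ would force $z=0$, a contradiction); or $z=0$, in which case we must produce the pair $(z,\alpha_x)$ by a different construction.

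\textbf{Main obstacle.} The difficult case is when $z=y_T|_x=0$ but ${\bf z}^S_{\rm Iw}(\rho_x)_p\neq 0$, which corresponds to $\alpha\in{\goth p}_x\cdot(\Lambda_0\wotimes T_0)$. To handle it, denote by $\bar z_T$ the image of ${\bf z}^S_{\rm Iw}(\rho_{T_0},\iota_{\rm Em})_p$ in the cokernel
\[
W_{T_0}=\rho_{T_0}^\diamond\otimes_{T_0}\Big(X^1_{\rm Iw}(\rho_{T_0})\big/\bigoplus_{\ell\in S\moins\{p\}}H^1(G_{\Q_\ell},\Lambda_0\wotimes\rho_{T_0})\Big),
\]
whose vanishing is equivalent (by Poitou-Tate) to liftability to a global class. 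By Lemma~\ref{geni6}, the natural map $W_{T_0}/{\goth p}_xW_{T_0}\to W_{\rho_x}$ is surjective, and $\bar z_T$ is $(\Lambda_0\wotimes T_0)$-torsion with annihilator $I={\rm Ann}(\bar z_T)\ni\alpha$. One argues that, if every element of $I$ lies in $\goth p_x(\Lambda_0\wotimes T_0)$, then Nakayama's lemma applied after completion at ${\goth p}_x$ forces the fiber class $\bar z_x\in W_{\rho_x}$ to vanish; in that sub-case, Poitou-Tate applied to $\rho_x$ itself yields a global $z$ with ${\rm loc}_p(z)={\bf z}^S_{\rm Iw}(\rho_x)_p$ (modulo local contributions at $\ell\neq p$ which can be absorbed), so any non-zero-divisor $\alpha_x$ works. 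Otherwise, one picks $\alpha\in I$ with $\alpha|_x\neq 0$, and the first part of the argument concludes. Making this dichotomy rigorous -- in particular, the base-change step from $W_{T_0}/{\goth p}_x$ to $W_{\rho_x}$ and the Nakayama argument over the semi-local ring $\Lambda_0\wotimes\hat T_{0,x}$ -- is the technically delicate part of the proof.
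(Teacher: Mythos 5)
Your easy case ($\alpha|_x\neq 0$) is the correct starting point and does work: specialize the global class from Proposition~\ref{geni7}/Remark~\ref{geni8} at $x$ and use the injectivity of ${\rm loc}_p$. Two minor slips there, though: $\lambda(\iota_{\rm Em},x)\in L^\dual$ by construction (Remark~\ref{EU31.5}), so there is nothing to arrange; and since ${\bf z}^S_{\rm Iw}(\rho_x)_p={\bf z}^S_{\rm Iw}(\rho_x,\iota_{\rm Em})_p$ already carries the factors $\lambda(\iota_{\rm Em},x)$ and $\prod_\ell P_\ell([\sigma_\ell^{-1}])$ relative to ${\bf z}_{\rm Iw}(\rho_x)_p$ by Proposition~\ref{ZK1}, specialization of ${\rm loc}_p(y_T)=\alpha\,{\bf z}^S_{\rm Iw}(\rho_{T_0},\iota_{\rm Em})_p$ gives directly $\alpha_x=\alpha|_x$, not $(\alpha|_x)\lambda(\iota_{\rm Em},x)$.

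You correctly identify where the difficulty lies ($\alpha|_x=0$), but the step you propose to resolve it does not go through. Set $A=\Lambda_0\wotimes T_0$ and $I={\rm Ann}(\bar z_T)$. Restricting attention to the cyclic submodule $A\cdot\bar z_T\cong A/I$, the fiber image $\bar z_x$ vanishes in $(A/I)/{\goth p}_x(A/I)$ if and only if ${\goth p}_x+I=A$. But the hypothesis of the hard case is exactly $I\subset{\goth p}_xA$, which gives ${\goth p}_x+I={\goth p}_xA\subsetneq A$; so $\bar z_x\neq 0$, which is the opposite of what you need, and completing at ${\goth p}_x$ does not change this (Nakayama, in any form, cannot bridge this gap). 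Consequently the ``in that sub-case $\bar z_x=0$'' branch is never reached by your argument; and in any event invoking $\bar z_x=0$ is, by Remark~\ref{ZK}, essentially the reducible case of Theorem~\ref{zk2}, which the paper \emph{deduces} from Proposition~\ref{geni9} — so that would be circular. The missing idea is of a different nature: the paper cuts $T_0$ down to a sufficiently generic one-dimensional quotient $R=T_0/J$ passing through $x$, on which $\alpha$ is not identically zero, and localizes so that $x$ is cut out by a single uniformizer $f$ of the DVR $R_{{\goth p}_x}$. If $\alpha(x)=0$, the injectivity of ${\rm loc}_p$ on $H^1_{\rm Iw}(\rho_x)$ (Proposition~\ref{geni3}) forces $z_x=0$; one then adjusts the cocycle representing $z_R$ by a coboundary so that it literally vanishes at $x$, hence is divisible by $f$, and divides both the cocycle and $\alpha$ by $f$. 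Iterating this ``cocycle-level division'' until $\alpha/f^k$ no longer vanishes at $x$ produces the required pair after specialization. This curve-and-divide mechanism, not an annihilator/Nakayama argument, is what handles the hard case.
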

\begin{proof}
On accouple avec un \'el\'ement g\'en\'erique
de $\rho_T$ pour ne pas tra\^{\i}ner des $\rho_T^\diamond$ ou $\rho_x^\dual$ dans la preuve,
et on fixe un caract\`ere de $(\Z/p)^\dual$ pour travailler avec $\Lambda_0$ plut\^ot
que $\Lambda$ et pouvoir utiliser la
prop.\,\ref{geni7} (le r\'esultat s'en d\'eduit en faisant la somme sur tous les $\eta$).

On sait qu'il existe $\alpha\in \Lambda_0\wotimes T_0$, non diviseur de $0$,
 tel que $\alpha\, {\bf z}^S_{\rm Iw}(\rho_{T_0})_p$
soit dans l'image de ${\rm loc}_p$.  Le probl\`eme est que $\alpha$ pourrait fort bien
\^etre un diviseur de $0$ sur $\{x\}\times {\cal W}_0$.
Par contre, $x$ \'etant classique,
est un point lisse de ${\cal X}_0$ et est 
un z\'ero isol\'e de $\alpha$ sur toute courbe suffisamment g\'en\'erique passant par $x$.

Soit donc $R=T_0/I$ un quotient de $T_0$, avec $\alpha\notin I$, $I\subset{\goth p}_x$,
et ${\rm Spec}(R[\frac{1}{p}])$ de dimension~$1$.
Notons ${\bf z}^S_{\rm Iw}(\rho_R)_p$ l'image de ${\bf z}^S_{\rm Iw}(\rho_{T_0})_p$ par
sp\'ecialisation.

Quitte \`a localiser, on peut supposer que $R$ contient un \'el\'ement $f$ dont
le diviseur est $(x)$ et il suffit de prouver que, si $\alpha$ s'annule en $x$,
alors $f^{-1}\alpha\, {\bf z}^S_{\rm Iw}(\rho_R)_p$ est encore dans l'image de ${\rm loc}_p$
(par r\'ecurrence, cela permet de supposer que $\alpha$ n'est pas identiquement 
nul sur $\{x\}\times {\cal W}_0$ et on peut
prendre pour $\alpha_x\in\Lambda$ la sp\'ecialisation de $\alpha$ \`a $\{x\}\times{\cal W}_0$).
Soit $z_R\in H^1_{\rm Iw}(\rho_R)$ l'image de $z_{T_0}\in H^1_{\rm Iw}(\rho_{T_0})$
v\'erifiant ${\rm loc}_p(z_{T_0})=\alpha\, {\bf z}^S_{\rm Iw}(\rho_{T_0})_p$, 
et $z_x$ son image dans $H^1_{\rm Iw}(\rho_x)$.
Comme $\alpha$ s'annule en $x$, on a ${\rm loc}_p(z_x)=0$, et donc $z_x=0$ puisque
${\rm loc}_p:H^1_{\rm Iw}(\rho_x)\to H^1(G_{\Q_p},\Lambda_0\wotimes \rho_x)$ est injective
(cf.~prop.\,\ref{geni3}).  

Soit $\sigma\mapsto c_\sigma$ un $1$-cocycle repr\'esentant $z_R$.
D'apr\`es ce qui pr\'ec\`ede, il existe $c\in\Lambda_0\otimes\rho_x$ tel
que $c_\sigma(x)=(\sigma-1)c$.  Soit $\tilde c$ un rel\`evement de
$c$ dans $\Lambda\wotimes \rho_R$.  Alors, par construction,
le $1$-cocycle $c_\sigma-(\sigma-1)\tilde c$ repr\'esente $z_R$ et est divisible
par $f$. Il s'ensuit que $f^{-1}\alpha\, {\bf z}^S_{\rm Iw}(\rho_R)_p$ est l'image
de $f^{-1}(c_\sigma-(\sigma-1)\tilde c)$ et donc est dans l'image de ${\rm loc}_p$.
\end{proof}

\subsubsection{Le syst\`eme d'Euler des ${\bf z}^S_M(\rho_T)$}
En anticipant un peu (i.e., en utilisant le th.\,\ref{facto2} pour $\rho_T$ et les $\rho_{T_M}$), 
on obtient finalement le r\'esultat suivant,
si ${\goth m}$ est g\'en\'erique.
\begin{theo}\phantomsection\label{geni21}
{\rm (i)} Si $(M,Np)=1$, 
$${\bf z}^S_{M}(\rho_{T})\in \rho_T^\diamond\otimes_T
H^1(G_{\Q(\zeta_M),{S_M}},\rho_{T}).$$

{\rm (ii)}
Les ${\bf z}^S_{M}(\rho_{T})$ v\'erifient les relations de syst\`emes d'Euler du th.\,\ref{EU10}.

{\rm (iii)} Si $x\in {\cal X}$, et si ${\bf z}^S_{M}(\rho_{x})\in \rho_x^\dual\otimes
H^1(G_{\Q(\zeta_M),{S_M}},\rho_{x})$ est la sp\'ecialisation de ${\bf z}^S_{M}(\rho_{T})$,
alors $({\bf z}^S_{M}(\rho_{x}))_M$ est un syst\`eme d'Euler pour $\rho_x$.

{\rm (iv)} 
Si $x\in {\cal X}^{\rm cl}$, alors $({\bf z}^S_{M}(\rho_{x}))_M$ est reli\'e au
syst\`eme d'Euler de Kato par la relation suivante\footnote{\label{euler}
$P_\ell$ est
le polyn\^ome $P_\ell$ de la prop.\,\ref{ZK1}. Comme $\sigma_\ell$ agit trivialement
sur $H^1(G_{\Q,S},\rho_x)$,  $P_\ell([\sigma_\ell^{-1}])$ agit par $P_\ell(1)$. On a
aussi $P_\ell(1)=L_\ell(\check\rho_x,0)^{-1}$ d'apr\`es la rem.\,\ref{eu100}}
 pour $M=1$
$${\bf z}^S(\rho_x)=\big(\prod_{\ell\in S\moins\{p\}}P_\ell(1)\big)
\cdot {\bf z}(\rho_x).$$
\end{theo}

\begin{rema}\phantomsection\label{geni8.5}
En \index{ZbM@\zm}posant
$${\bf z}^S_{{\rm Iw},M}(\rho_{T})=\iota^{\rm semi}({\bf z}^S_{M}(\rho_{T})),$$
o\`u $\iota^{\rm semi}:\rho_{T_M}\to \Lambda\wotimes_{\Z_p}\rho_{T_M}$
est l'application du \no\ref{lien1},
on obtient un syst\`eme d'Euler $({\bf z}^S_{{\rm Iw},M}(\rho_{T}))_M$
pour $\Lambda\wotimes_{\Z_p}\rho_T$. 

Si $x\in{\cal X}$, ce syst\`eme d'Euler
se sp\'ecialise en un syst\`eme d'Euler $({\bf z}^S_{{\rm Iw},M}(\rho_{x}))_M$
pour $\Lambda\wotimes_{\Z_p}\rho_x$. Si $x\in{\cal X}^{\rm cl}$,
ce syst\`eme d'Euler redonne, \`a un facteur d'Euler pr\`es, le syst\`eme d'Euler
de Kato qui permet d'\'etudier la th\'eorie d'Iwasawa de la forme modulaire associ\'ee \`a $x$.
\end{rema}

\section{Le syst\`eme des \'el\'ements de Beilinson-Kato}\label{chapi4}
Ce chapitre est un survol de la construction du syst\`eme de Beilinson-Kato
et de ses twists \`a la Soul\'e.

\Subsection{S\'eries d'Eisenstein-Kronecker}\label{s24}
\subsubsection{D\'efinition}\label{s25}
Si $(\tau,z)\in {\cal H}\times\C$, on
pose $q=e^{2i\pi\tau}$, $q_z=e^{2i\pi z}$ et on note
$\partial_z$ l'op\'erateur $\frac{1}{2i\pi}\frac{\partial}{\partial z}=
q_z\frac{\partial}{\partial q_z}$.  On pose aussi ${\bf e}_\infty(a)=e^{-2i\pi a}$.
Si $k\in\N$, $\tau\in{\cal H}$, $z,u\in\C$, la s\'erie
d'Eisenstein-Kronecker
$$H_k(s,\tau,z,u)=\frac{\Gamma(s)}{(-2i\pi)^k}
\big(\frac{\tau-\overline\tau}{2i\pi}\big)^{s-k}
\sum_{\omega\in\Z+\Z\tau}
\frac{\overline{\omega+z}^k}{|\omega+z|^{2s}}
{\bf e}_\infty\Big({\frac{\omega\overline u-u\overline \omega}{\overline\tau-\tau}}\Big),$$
qui converge\footnote{Si $z\in\Z+\Z\tau$, on supprime le terme
correspondant \`a $\omega=-z$ de la somme.}
si ${\rm Re}(s)>1+\frac{k}{2}$, poss\`ede un prolongement m\'eromorphe
\`a tout le plan complexe, holomorphe en dehors de p\^oles simples
en $s=1$ (si $k=0$ et $u\in\Z+\Z\tau$) et $s=0$ (si $k=0$
et $z\in\Z+\Z\tau$) et v\'erifie l'\'equation fonctionnelle
$$H_k(s,\tau,z,u)=
{\bf e}_\infty\Big({\frac{z\overline u-u\overline z}{\overline\tau-\tau}}\Big)
\cdot
H_k(k+1-s,\tau,u,z).$$
On d\'efinit les fonctions $E_k$ et $F_k$ par les formules
$$E_k(\tau,z)=H_k(k,\tau,z,0)\quad{\rm et}\quad
F_k(\tau,z)=H_k(k,\tau,0,z).$$
On a
$$E_{k+1}(\tau,z)=\partial_zE_k(\tau,z)\quad
{\rm et}\quad
E_0(\tau,z)=\log|\theta(\tau,z)|^2,\ {\rm si}\ z\notin\Z\tau+\Z,$$
o\`u $\theta(\tau,z)$ est donn\'ee par le produit
infini
$$\theta(\tau,z)=q^{1/12}(q_z^{{1}/{2}}-q_z^{{-1}/{2}})
\prod_{n\geq 1}\big((1-q^nq_z)(1-q^nq_z^{-1})\big).$$

\subsubsection{Les formes modulaires $E^{(k)}_{\alpha,\beta}$ et
$F^{(k)}_{\alpha,\beta}$}\label{s26}
Les fonctions $E_k(\tau,z)$ et $F_k(\tau,z)$ sont p\'eriodiques
en $z$ de p\'eriode $\Z\tau+\Z$.  Si $(\alpha,\beta)\in(\Q/\Z)^2$
et si $(a,b)\in\Q^2$ a pour image $(\alpha,\beta)$ dans
$(\Q/\Z)^2$, on pose
$$E^{(k)}_{\alpha,\beta}=E_k(\tau,a\tau+b)
\quad{\rm et}\quad
F^{(k)}_{\alpha,\beta}=F_k(\tau,a\tau+b).$$

\begin{prop}\phantomsection\label{p1}
{\rm (i)} $E^{(2)}_{0,0}=F^{(2)}_{0,0}=\frac{-1}{24}E_2^\dual$, o\`u
$$E_2^\dual=
\tfrac{6}{i\pi(\tau-\overline\tau)}
+{1}-24\sum_{n=1}^{+\infty}\sigma_1(n)q^n$$
est la s\'erie d'Eisenstein quasi-holomorphe
de poids $2$ habituelle.

\noindent 
{\rm (ii)} Si $N\alpha=N\beta=0$, alors

{\rm (a)} $E^{(2)}_{\alpha,\beta}-E^{(2)}_{0,0}\in M^{\rm cl}_2(\Gamma_N,\Q(\zeta_N))$
et $E^{(k)}_{\alpha,\beta}\in M^{\rm cl}_k(\Gamma(N),\Q(\zeta_N))$
si $k\geq 1$, $k\neq 2$.

{\rm (b)} $F^{(k)}_{\alpha,\beta}\in M^{\rm cl}_k(\Gamma(N),\Q(\zeta_N))$
si $k\geq 1$, $k\neq 2$ ou si $k=2$ et $(\alpha,\beta)\neq (0,0)$.
\end{prop}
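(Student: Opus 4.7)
Pour \'etablir (i), je partirais de la d\'efinition de $H_2(s,\tau,0,0)$ et j'utiliserais la r\'egularisation de Hecke pour en prendre la limite en $s=2$. Le calcul
$$H_2(s,\tau,0,0) = \frac{\Gamma(s)}{(-2i\pi)^2}\Big(\frac{\tau-\overline\tau}{2i\pi}\Big)^{s-2}\sum_{(m,n)\neq(0,0)}\frac{\overline{m+n\tau}^2}{|m+n\tau|^{2s}}$$
se ram\`ene, via le d\'eveloppement de Lipschitz-Hurwitz pour la somme int\'erieure sur $m$ \`a $n$ fix\'e, \`a une identit\'e classique reliant la s\'erie quasi-holomorphe $E_2^\dual$ (dont le terme $\frac{6}{i\pi(\tau-\overline\tau)}$ provient pr\'ecis\'ement du r\'esidu introduit par la r\'egularisation) au $q$-d\'eveloppement $\sum\sigma_1(n)q^n$. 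L'identit\'e $E^{(2)}_{0,0}=F^{(2)}_{0,0}$ est imm\'ediate par sym\'etrie de la d\'efinition en $z=u=0$ (ou par l'\'equation fonctionnelle), et un ajustement de constantes donne le facteur $-1/24$.

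Pour (ii.a), je proc\'ederais en trois temps. D'abord, la \emph{modularit\'e} : la transformation naturelle $E_k(\gamma\tau,(c\tau+d)^{-1}z) = (c\tau+d)^k E_k(\tau,z)$ pour $\gamma\in {\rm SL}_2(\Z)$, qui d\'ecoule de l'action naturelle de ${\rm SL}_2(\Z)$ sur le r\'eseau $\Z+\Z\tau$, combin\'ee avec l'observation que $\gamma\in\Gamma(N)$ fixe $(\alpha,\beta)\in(\frac{1}{N}\Z/\Z)^2$, montre que $E^{(k)}_{\alpha,\beta}$ est invariante sous $|_k\gamma$. Ensuite, l'\emph{holomorphie} : pour $k\neq 2$, la s\'erie d\'efinissant $E_k(\tau,z)$ converge absolument et d\'efinit un objet holomorphe en $(\tau,z)$; pour $k=2$, l'unique composante non-holomorphe est le terme $\frac{c}{\tau-\overline\tau}$, ind\'ependant de $(\alpha,\beta)$ puisqu'issu de la r\'egularisation de la somme divergente $\sum\omega^{-2}$, qui dispara\^{\i}t donc dans la diff\'erence $E^{(2)}_{\alpha,\beta}-E^{(2)}_{0,0}$. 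Enfin, la \emph{rationalit\'e} : le $q$-d\'eveloppement \`a la pointe $\infty$ se calcule explicitement par Lipschitz-Hurwitz, les coefficients s'exprimant comme combinaisons lin\'eaires (\`a coefficients entiers) des ${\bf e}_\infty(j\beta)={\bf e}_\infty(j\overline\beta)$ pour $j\in\Z$, donc dans $\Z[\zeta_N]\subset\Q(\zeta_N)$.

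Pour (ii.b), le raisonnement est analogue pour $F^{(k)}_{\alpha,\beta}=H_k(k,\tau,0,a\tau+b)$; on peut soit r\'ep\'eter directement les trois \'etapes, soit d\'eduire les propri\'et\'es de celles d\'ej\`a \'etablies pour $E^{(k)}$ via l'\'equation fonctionnelle $H_k(s,\tau,z,u) \leftrightarrow H_k(k+1-s,\tau,u,z)$. Le point subtil est que pour $k=2$ et $(\alpha,\beta)\neq(0,0)$ (i.e.~$z=a\tau+b\notin\Z+\Z\tau$), la contribution non-holomorphe dispara\^{\i}t car le p\^ole potentiel du prolongement m\'eromorphe en $s=k+1-k=1$ n'appara\^{\i}t, d'apr\`es la description rappel\'ee au \no\ref{s25}, que lorsque $u\in\Z+\Z\tau$; c'est pr\'ecis\'ement cette configuration qui justifie l'exclusion de $(k,\alpha,\beta)=(2,0,0)$ dans l'\'enonc\'e.

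Le principal obstacle est le contr\^ole pr\'ecis des constantes dans (i), et notamment la v\'erification soigneuse du facteur $-\frac{1}{24}$ : il faut suivre les normalisations $\Gamma(s)/(-2i\pi)^k$ et $\big(\frac{\tau-\overline\tau}{2i\pi}\big)^{s-k}$ lors du passage \`a la limite $s\to 2$ et les comparer avec la normalisation canonique $E_2^\dual = 1 - 24\sum\sigma_1(n)q^n+\frac{6}{i\pi(\tau-\overline\tau)}$, ce qui n\'ecessite de distinguer soigneusement les deux composantes (holomorphe et {\og anti-holomorphe\fg}) du r\'esidu de Hecke.
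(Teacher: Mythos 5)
The paper states Proposition~\ref{p1} without proof: it is a recollection of classical facts on Eisenstein--Kronecker series (standard references are Kato~\cite{Ka4}, Kubert--Lang~\cite{KL}, or Weil's book on elliptic functions). So there is no paper argument to compare against, and your proposal has to stand on its own.

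Most of your outline is sound. For (i), deriving both the $\tfrac{6}{i\pi(\tau-\overline\tau)}$ term and the $q$-expansion via Hecke regularisation and Lipschitz summation, and observing that $E^{(2)}_{0,0}=F^{(2)}_{0,0}$ because both equal $H_2(2,\tau,0,0)$ by definition, is exactly right. For (ii.a), modularity by the $\GG(\Z)$-action on torsion points, rationality via the $q$-expansion, and for $k=2$ the cancellation of the (level-independent) non-holomorphic correction in the difference $E^{(2)}_{\alpha,\beta}-E^{(2)}_{0,0}$ are all correct. One small imprecision: for $k=1$ the series does \emph{not} converge absolutely either; one must invoke the analytic continuation in $s$ (which has no pole for $k\geq 1$ by the very description of \no\ref{s25}) rather than absolute convergence. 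This is a cosmetic issue.

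The genuine gap is in your explanation of the ``subtle point'' in (ii.b). You attribute the holomorphy of $F^{(2)}_{\alpha,\beta}$ for $(\alpha,\beta)\neq(0,0)$ to the absence of a pole of $H_2(s)$ at $s=k+1-k=1$ when $u\notin\Z+\Z\tau$. But according to the pole description of \no\ref{s25} that you cite, $H_k(s,\tau,z,u)$ has poles \emph{only when $k=0$}; for $k=2$ there is no pole at $s=1$ regardless of $u$, so your dichotomy does not exist. The non-holomorphy of $F^{(2)}_{0,0}$ is not a pole in $s$, it is a failure of holomorphy in $\tau$ coming from the boundary term produced by the conditionally-convergent sum $\sum 1/\omega^2$ when one passes to the limit $s\to 2^+$ inside the Hecke regularisation; when $(\alpha,\beta)\neq(0,0)$ the nontrivial additive character twist makes that boundary term vanish. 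Equivalently, and more cheaply, one can deduce (ii.b) for $k=2$, $(\alpha,\beta)\neq(0,0)$ from (ii.a) together with the distribution relation $\sum_{e\alpha'=\alpha,e\beta'=\beta}F^{(2)}_{\alpha',\beta'}=F^{(2)}_{\alpha,\beta}$, since the non-holomorphic term is unchanged under the level-$e$ averaging and therefore cancels in any difference capturing a single nonzero $(\alpha,\beta)$. Either route is fine; the pole argument as you wrote it is not.
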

On a les relations de distribution suivantes si $e$ est un entier~$\geq 1$:
\begin{align*}
\sum_{e\alpha'=\alpha,\,e\beta'=\beta}E_{\alpha',\beta'}^{(k)}=
e^kE^{(k)}_{\alpha,\beta}
\qquad & {\rm et}\qquad
\sum_{e\alpha'=\alpha,\,e\beta'=\beta}F_{\alpha',\beta'}^{(k)}=
e^{2-k}F^{(k)}_{\alpha,\beta}\\
\sum_{e\beta'=\beta}E_{\alpha,\beta'}^{(k)}(\frac{\tau}{e})=
e^kE^{(k)}_{\alpha,\beta}
\qquad & {\rm et}\qquad
\sum_{e\beta'=\beta}F_{\alpha,\beta'}^{(k)}(\frac{\tau}{e})=
e\,F^{(k)}_{\alpha,\beta}.
\end{align*}
Ces relations de distribution
peuvent se condenser agr\'eablement en l'\'enonc\'e suivant:
\begin{theo}\phantomsection\label{t1}
Si $k\geq 1$, il \index{Zeis@\zeis}existe 
$${\bf z}_{\rm Eis}(k), {\bf z}'_{\rm Eis}(k)\in
\dalg({\bf M}_{2,1}(\A^{]\infty[}),M_k^{\rm cl,\,qh}(\Q^{\rm cycl})),$$ telles que,
quels que soient $r\in\Q^\dual$ et $(a,b)\in\Q^2$, l'on
ait
\begin{align*}
\int_{\vecteur{a+r\cZ}{b+r\cZ}}{\bf z}_{\rm Eis}(k)=
r^{-k}E^{(k)}_{-\frac{b}{r},\frac{a}{r}},\qquad
\int_{\vecteur{a+r\cZ}{b+r\cZ}}{\bf z}'_{\rm Eis}(k)=
r^{k-2}F^{(k)}_{\frac{b}{r},-\frac{a}{r}}
\end{align*}
De plus, si $k\neq 2$, alors ${\bf z}_{\rm Eis}(k)$ et $ {\bf z}'_{\rm Eis}(k)$
sont \`a valeurs dans $M_k^{\rm cl}(\Q^{\rm cycl})$.
\end{theo}

\subsubsection{Quelques $q$-d\'eveloppements}\label{s30}
Si $\alpha\in\Q/\Z$, posons
$$\zeta(\alpha,s)=\sum_{n\in\Q_+^\dual,\ n\equiv\alpha\,{\rm mod}\,\Z}n^{-s}
\quad{\rm et}\quad
\zeta^\dual(\alpha,s)=\sum_{n=1}^{+\infty}e^{2i\pi\, n\alpha }n^{-s}.$$
\begin{prop}\phantomsection\label{qdev}
Si $k\geq 1$, et $\alpha,\beta\in\Q/\Z$, alors le $q$-d\'eveloppement
$\sum_{n\in\Q_+}a_nq^n$ de $F^{(k)}_{\alpha,\beta}$ est donn\'e par
$$\sum_{n\in\Q_+^\dual}\frac{a_n}{n^s}=
\zeta(\alpha,s-k+1)\zeta^\dual(\beta,s)+(-1)^k\zeta(-\alpha,s-k+1)\zeta^\dual(-\beta,s)$$
et $a_0=\zeta(\beta,1-k)$ {\rm [}resp.~$a_0=\frac{1}{2}(\zeta^\dual(\beta,0)
-\zeta^\dual(-\beta,0))${\rm ]} si $k\neq 1$ ou $\alpha\neq 0$
{\rm (}resp.~si $k=1$ et $\beta=0${\rm )}.
\end{prop}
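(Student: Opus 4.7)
The plan is to expand the defining double series of $H_k(k,\tau,0,u)$ at $u=\alpha\tau+\beta$ into a $q$-series via the twisted Lipschitz summation formula. First I would compute the exponential factor: writing $\omega=m\tau+n$ and $u=\alpha\tau+\beta$, a direct algebraic manipulation yields $\omega\bar u - u\bar\omega = (\beta m - \alpha n)(\tau-\bar\tau)$, hence $(\omega\bar u - u\bar\omega)/(\bar\tau-\tau) = \alpha n - \beta m$. Combining this with $\overline{\omega}^k/|\omega|^{2k} = (m\tau+n)^{-k}$ reduces the defining series, at least for $k\geq 3$ where absolute convergence holds, to
$$F^{(k)}_{\alpha,\beta}(\tau) \;=\; \frac{(k-1)!}{(-2i\pi)^k}\sum_{(m,n)\neq (0,0)}\frac{e^{2i\pi(\beta m-\alpha n)}}{(m\tau+n)^k}.$$

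Next I would split this sum by the sign of $m$. For $m>0$ (and $\alpha$ lifted to $[0,1)$), the twisted Lipschitz formula
$$\sum_{n\in\Z}\frac{e^{-2i\pi\alpha n}}{(m\tau+n)^k} \;=\; \frac{(-2i\pi)^k}{(k-1)!}\sum_{d\geq 0,\;d+\alpha>0}(d+\alpha)^{k-1}q^{m(d+\alpha)}$$
— which one proves by Poisson summation and a contour-integral evaluation of the Fourier transform of $x\mapsto e^{-2i\pi\alpha x}/(m\tau+x)^k$ — turns the inner sum into a genuine $q$-series. The $m<0$ part is reduced to the $m>0$ case by the involution $(m,n)\mapsto(-m,-n)$, which extracts a factor $(-1)^k$ from $(m\tau+n)^k$ and swaps $(\alpha,\beta)\leftrightarrow(-\alpha,-\beta)$ in the exponential. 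Assembling the two pieces and reindexing via $n=m(d+\alpha)$, the resulting Dirichlet series splits as a product: the $m>0$ part gives $\bigl(\sum_{m\geq 1}e^{2i\pi\beta m}m^{-s}\bigr)\bigl(\sum_{d\geq 0}(d+\alpha)^{k-1-s}\bigr) = \zeta^\dual(\beta,s)\,\zeta(\alpha,s-k+1)$, and the $m<0$ part gives the companion term $(-1)^k\zeta^\dual(-\beta,s)\,\zeta(-\alpha,s-k+1)$, which is the identity claimed.

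For the constant term $a_0$ only the $m=0$ strand contributes (the $m\neq 0$ Hurwitz expansions produce positive fractional powers of $q$ only); this strand equals $\frac{(k-1)!}{(-2i\pi)^k}\bigl(\zeta^\dual(-\alpha,k)+(-1)^k\zeta^\dual(\alpha,k)\bigr)$, and the classical Hurwitz functional equation converts this into the closed form announced in the proposition. The main obstacle will be the low-weight cases $k=1,2$, where the double series is not absolutely convergent and the cavalier interchange of summations above is illegitimate: there one must work throughout with the full Eisenstein--Kronecker series $H_k(s,\tau,0,u)$, establish the $q$-expansion uniformly in $s$ in the region of absolute convergence, and then analytically continue to $s=k$, carefully tracking the additional boundary contributions this produces. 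For $k=2$ with $\alpha=\beta=0$, this Hecke regularization is precisely what recovers the non-holomorphic piece $\frac{6}{i\pi(\tau-\bar\tau)}$ in $E_2^\dual$; for $k=1$ with $\alpha=0$, the conditionally convergent $m=0$ sum $\sum_{n\neq 0}1/n$ must be regularized via the functional equation and produces the exceptional value $\frac{1}{2}(\zeta^\dual(\beta,0)-\zeta^\dual(-\beta,0))$ appearing in the statement.
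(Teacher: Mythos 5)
Your overall plan --- expand the defining double sum of $H_k(k,\tau,0,u)$ via a twisted Lipschitz summation, split by the sign of $m$, and use the involution $(m,n)\mapsto(-m,-n)$ to fold the $m<0$ half into the $m>0$ half --- is the right computation, and the exponential factor $\frac{\omega\overline u - u\overline\omega}{\overline\tau-\tau}=\alpha n-\beta m$ is correct. For $k\geq 3$ (absolute convergence) this does yield the stated Dirichlet series identity $\zeta(\alpha,s-k+1)\zeta^\dual(\beta,s)+(-1)^k\zeta(-\alpha,s-k+1)\zeta^\dual(-\beta,s)$.

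But there is a genuine gap at the constant term. Your $m=0$ strand is $\frac{(k-1)!}{(-2i\pi)^k}\bigl(\zeta^\dual(-\alpha,k)+(-1)^k\zeta^\dual(\alpha,k)\bigr)$, which depends only on $\alpha$ (not $\beta$), and the Hurwitz functional equation converts it into $\zeta(\alpha,1-k)$, \emph{not} into $\zeta(\beta,1-k)$ as you assert. This is not a formal slip: a direct check with $k=4$, $\alpha=0$, $\beta=1/2$ gives an $m=0$ strand equal to $\frac{3!}{(-2\pi i)^4}\cdot 2\zeta(4)=\frac{1}{120}=\zeta(0,-3)=\zeta(\alpha,1-k)$, whereas $\zeta(\beta,1-k)=\zeta(1/2,-3)=-\frac{7}{960}$. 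Your ``converts into the closed form announced'' step therefore does not close; you should have flagged that the variable appearing in the Hurwitz output is $\alpha$, not $\beta$, and either corrected the target formula or explained why the indices must be read differently.

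Second, your claim that ``for the constant term $a_0$ only the $m=0$ strand contributes'' is only valid in the absolutely convergent range $k\geq 3$. Already for $k=2$ the Hecke regularization changes the constant: the $m=0$ strand of $F^{(2)}_{0,0}$ is $\frac{1}{(-2\pi i)^2}\cdot 2\zeta(2)=-\frac{1}{12}$, yet the holomorphic constant coefficient of $F^{(2)}_{0,0}=-\frac{1}{24}E_2^\dual$ is $-\frac{1}{24}$; the discrepancy $+\frac{1}{24}$ (together with the non-holomorphic $\frac{6}{i\pi(\tau-\overline\tau)}$ piece) arises precisely from the limit $s\to k$. Your plan acknowledges this only in passing (``carefully tracking the additional boundary contributions''), but this is exactly where the exceptional formula for $k=1$ comes from, and a complete proof must compute that boundary term explicitly --- and also verify that it vanishes for $k=2$ when $(\alpha,\beta)\neq(0,0)$, so that the stated $q$-expansion remains literally valid in that range. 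As written, the hardest and most delicate part of the argument is left as a declaration of intent.
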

\begin{rema}\phantomsection\label{r1}
  Il y a des formules similaires pour le $q$-d\'eveloppement
de $E^{(k)}_{\alpha,\beta}$, mais nous n'en aurons pas besoin.
\end{rema}

\Subsection{Unit\'es de Siegel}\label{s34}
\subsubsection{La distribution ${\bf z}_{\rm Siegel}$}\label{s35}
Voir~\cite[chap.\,1]{Ka4} pour ce qui suit (ainsi que~\cite{KL}).
\index{O@\sO}Soit $$\iO=\varinjlim_N\O(Y(N)).$$  
On identifie $\iO$ aux fonctions $\phi$ sur ${\cal H}\times \GG(\A^{]\infty[})$, invariantes
par $\GG(\Q)$ (pour l'action $*$), holomorphes
en $\tau$ et avec un p\^ole d'ordre fini aux pointes, et dont le $q$-d\'eveloppement
v\'erifie ${\cal K}(\phi,u)\in\Q^{\rm cycl}$ et $\sigma_a({\cal K}(\phi,u))={\cal K}(\phi,au)$
quels que soient $a\in\cZ^\dual$ et $u\in\Aidu$.
Comme d'habitude (lemme~\ref{emrt1}), l'action naturelle de $\GG(\A^{]\infty[})$ sur $\iO$
devient l'action $\star$ via cette identification.

La fonction $\theta(\tau,z)$ n'est pas p\'eriodique en $z$ de p\'eriode
$\Z\tau+\Z$, mais si $c\geq 2$ est un entier premier \`a $6$, alors la fonction
$q_z^{\frac{c-c^2}{2}}\theta(\tau,z)^{c^2}\theta(\tau,cz)^{-1}$ est p\'eriodique.
Si $(\alpha,\beta)\in(\Q/\Z)^2-(0,0)$, et si
$(a,b)\in\Q^2$ a pour image $(\alpha,\beta)\in(\Q/\Z)^2$,
posons
$$ g_{c,\alpha,\beta}=\theta(\tau,a\tau+b)^{c^2}
\theta(\tau,ca\tau+cb)^{-1}.$$

\begin{prop}\phantomsection\label{p15}
Soient $\alpha,\beta\in\frac{1}{N}\Z/\Z$.

{\rm (i)} Si $c\in\N$ est premier \`a $6$ et si $(c\alpha,c\beta)\neq(0,0)$, alors
il existe $\tilde g_{c,\alpha,\beta}\in\O(Y(N))^\dual$, unique, dont le pull-back par
${\cal H}^+\to (\Gamma(N)\backslash{\cal H}^+)\times\{1\}\hookrightarrow Y(N)(\C)$ est
$g_{c,\alpha,\beta}$.

{\rm (ii)} L'\'el\'ement $\tilde g_{\alpha,\beta}=\tilde g_{c,\alpha,\beta}^{1/(c^2-1)}$
de $\Q\otimes \iO^\dual$ ne d\'epend pas du choix
de $c$ congru \`a $1$ modulo~$N$.  De plus,
quel que soit $c$ premier \`a $6$,
on a $\tilde g_{c,\alpha,\beta}=\tilde g_{\alpha,\beta}^{c^2}\,\tilde g_{c\alpha,c\beta}^{-1}$.
\end{prop}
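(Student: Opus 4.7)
The plan is to establish (i) and (ii) by carefully tracking the transformation properties of the Jacobi theta function $\theta(\tau,z)$. For (i), one first verifies that $g_{c,\alpha,\beta}$, viewed as a function of $\tau$, is well-defined independently of the choice of lift $(a,b)\in\Q^2$ of $(\alpha,\beta)\in(\Q/\Z)^2$. The quasi-periodicity relations $\theta(\tau,z+1)=-\theta(\tau,z)$ and $\theta(\tau,z+\tau)=-q^{-1/2}q_z^{-1}\theta(\tau,z)$ combined with the factor $q_z^{(c-c^2)/2}$ highlighted in the text make $q_z^{(c-c^2)/2}\theta(\tau,z)^{c^2}\theta(\tau,cz)^{-1}$ genuinely periodic in $z$ with lattice $\Z\tau+\Z$. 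The modular transformation of $\theta$ under ${\bf SL}_2(\Z)$, involving a root of unity and an exponential factor, then produces automorphy of weight $0$ under $\Gamma(N)$: the level-$N$ congruence kills the cocycle ambiguity. The hypothesis $(c\alpha,c\beta)\neq(0,0)$ guarantees that neither $a\tau+b$ nor $c(a\tau+b)$ lies in $\Z\tau+\Z$, hence $g_{c,\alpha,\beta}$ is holomorphic and nonvanishing on ${\cal H}^+$.

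To upgrade this analytic function to a unit in $\O(Y(N))^\times$, I would compute its $q$-expansion from the product formula for $\theta$, check that the coefficients lie in $\Z[\zeta_N]$ and that the expansion has bounded pole order, so that $g_{c,\alpha,\beta}$ extends meromorphically across the cusps of $X(N)_\C$ with no zeros or poles on $Y(N)_\C$. The extension to the $\Q$-scheme $Y(N)$ (i.e., to all components of $Y(N)_{\Q(\zeta_N)}$) comes from the moduli interpretation: the Siegel function is intrinsically attached to the triple $(E,(e_1,e_2),\alpha e_1+\beta e_2)$ and so descends to $\Q$. Uniqueness follows from the fact that a regular function on the reduced $\Q$-scheme $Y(N)$ is determined by its restriction to the identity component of the base change to $\Q(\zeta_N)$, by rigidity combined with Galois equivariance.

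For (ii), note that when $c\equiv 1\pmod N$ we have $(c\alpha,c\beta)=(\alpha,\beta)$, so $c(a\tau+b)$ and $a\tau+b$ differ by an element of $\Z\tau+\Z$; using the explicit quasi-periodicity of $\theta$, one can express $g_{c,\alpha,\beta}$ as $\vartheta_{\alpha,\beta}^{c^2-1}$ times a root of unity and a $q$-power, for a natural Siegel-type expression $\vartheta_{\alpha,\beta}$ that itself is only defined up to roots of unity in $\O(Y(N))^\times$. The independence-of-$c$ claim for $\tilde g_{c,\alpha,\beta}^{1/(c^2-1)}$ is equivalent to the identity $\tilde g_{c_1,\alpha,\beta}^{c_2^2-1}=\tilde g_{c_2,\alpha,\beta}^{c_1^2-1}$ in $\O(Y(N))^\times$ modulo torsion, which is a direct manipulation of theta factors using the same quasi-periodicity and modularity. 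Because $\Q\otimes\iO^\times$ kills torsion, fractional powers are unambiguous and $\tilde g_{\alpha,\beta}$ is well-defined. The final formula $\tilde g_{c,\alpha,\beta}=\tilde g_{\alpha,\beta}^{c^2}\tilde g_{c\alpha,c\beta}^{-1}$ then follows by applying the analogous computation in the case of general $c$ coprime to $6$ and unwinding the definition of $\tilde g_{\alpha,\beta}$.

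The main obstacle is the careful bookkeeping of multiplicative constants and roots of unity, specifically the interaction of the sign and exponential cocycles in the modular law for $\theta$ with the $q_z^{(c-c^2)/2}$ normalization; the sign contributions must be tracked modulo $c^2-1$ rather than discarded. A secondary but genuine difficulty is showing that the construction genuinely descends to the $\Q$-scheme $Y(N)$ and not merely to its base change to $\Q(\zeta_N)$: this requires either an appeal to the Shimura reciprocity law identifying the Galois action on the torsion-point collection $\{g_{c,\alpha,\beta}\}_{(\alpha,\beta)}$ with the geometric action on components of $Y(N)_{\Q(\zeta_N)}$, or an explicit moduli-theoretic construction of $g_{c,\alpha,\beta}$ as a rational section of a line bundle on the universal elliptic curve with level-$N$ structure.
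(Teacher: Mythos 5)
The paper does not actually prove Proposition~\ref{p15}; it is stated as classical and the reader is referred to Kato's Ast\'erisque chapter~1 and to Kubert--Lang. Your plan reproduces the standard proof found there: well-definedness of $g_{c,\alpha,\beta}$ via the quasi-periodicity of $\theta$, modularity under $\Gamma(N)$ from the theta transformation law, extension over the cusps from the $q$-expansion, descent to the $\Q$-scheme $Y(N)$ via the moduli/Shimura-reciprocity interpretation, and the multiplicativity identity $g_{cc',\alpha,\beta}=g_{c,\alpha,\beta}^{c'^2}\,g_{c',c\alpha,c\beta}$ for part~(ii). That is the correct route, and you correctly flag the two delicate points (root-of-unity bookkeeping and descent to $\Q$ rather than $\Q(\zeta_N)$).

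One genuine mistake in your write-up: the sentence ``\dots combined with the factor $q_z^{(c-c^2)/2}$ \dots make $q_z^{(c-c^2)/2}\theta(\tau,z)^{c^2}\theta(\tau,cz)^{-1}$ genuinely periodic in $z$ with lattice $\Z\tau+\Z$'' is false. A direct computation with $\theta(\tau,z+1)=-\theta(\tau,z)$ and $\theta(\tau,z+m\tau)=(-1)^mq^{-m^2/2}q_z^{-m}\theta(\tau,z)$ shows that the \emph{un}\hskip0pt\relax multiplied quotient $\theta(\tau,z)^{c^2}\theta(\tau,cz)^{-1}$ is already $\Z\tau+\Z$-periodic when $c$ is odd; under $z\mapsto z+\tau$ the sign is $(-1)^{c^2-c}=1$ and the $q$- and $q_z$-powers cancel exactly because $q_{cz}^{-c}=q_z^{-c^2}$. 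Multiplying by $q_z^{(c-c^2)/2}$ actually \emph{breaks} $\tau$-periodicity, introducing an extra factor $q^{(c-c^2)/2}$. This factor is cosmetic (it normalizes the leading term of the $q$-expansion, as one sees by expanding $(q_z^{1/2}-q_z^{-1/2})^{c^2}/(q_z^{c/2}-q_z^{-c/2})=q_z^{(c^2-c)/2}(1-q_z^{-1})^{c^2}/(1-q_z^{-c})$), and it plays no role in the definition of $g_{c,\alpha,\beta}$, which the paper gives without the $q_z$-factor; so your subsequent steps go through unchanged. But as written, the periodicity claim you state, if taken literally, would block the very first step of well-definedness, and it should be corrected to assert periodicity of $\theta(\tau,z)^{c^2}\theta(\tau,cz)^{-1}$ itself.
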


\begin{rema}\phantomsection\label{t3.1}
{\rm (i)}
Si $\alpha$ et $\beta$ sont les
images de $\frac{a}{N}$ et $\frac{b}{N}$, avec $0\leq \frac{a}{N}<1$,
le $q$-d\'eveloppement de 
$\tilde g_{\alpha,\beta}$ sur $(\Gamma(N)\backslash{\cal H}^+)\times\{1\}$
est: 
$$g_{\alpha,\beta}=q^{B_2({\frac{a}{N})}}
\prod_{n\geq 0}(1-q^nq^{\frac{a}{N}}\zeta_N^b)
\prod_{n\geq 1}(1-q^nq^{-\frac{a}{N}}\zeta_N^{-b}),
\quad{\text{o\`u $B_2(x)=\tfrac{x^2}{2}-\tfrac{x}{2}+\tfrac{1}{12}$.}}$$

{\rm (ii)} On a 
$$\tilde g_{\alpha,\beta}=\tilde g_{-\alpha,-\beta}$$
et
les relations de distribution suivantes:
$$\prod_{e\alpha'=\alpha,\,e\beta'=\beta}\tilde g_{\alpha',\beta'}=\tilde g_{\alpha,\beta},
\quad
\prod_{e\beta'=\beta}\tilde g_{\alpha,\beta'}=\matrice{1}{0}{0}{e}^{]\infty[}\star \tilde g_{\alpha,\beta}.$$

{\rm (iii)} Le $\Q$-module $\iO^\dual\otimes\Q$ est engendr\'e par les $\tilde g_{\alpha,\beta}$,
le module des relations \'etant engendr\'e par les relations du (ii).

{\rm (iv)}
On a
$$\matrice{a}{b}{c}{d}\star \tilde g_{\alpha,\beta}=\tilde g_{d\alpha-c\beta,-b\alpha+a\beta},
\hskip.2cm {\text{si $\matrice{a}{b}{c}{d}\in \GG(\cZ)$.}}$$
\end{rema}

\index{Mat@\MAT}Soit 
${\bf M}'_{2,1}(\A^{]\infty[})={\bf M}_{2,1}(\A^{]\infty[})\moins\big\{\vecteur{0}{0}\big\}$.
Alors ${\bf M}'_{2,1}(\A^{]\infty[})$ est stable par multiplication \`a gauche par $\GG(\A^{]\infty[})$. 

\begin{theo}\phantomsection\label{t3}
Il \index{Zsie@\zsie}existe\footnote{Si $X$ est un espace localement profini, et si $\Lambda$ est un anneau,
on note ${\rm LC}_c(X,\Lambda)$ le $\Lambda$-module des fonctions localement constantes
\`a support compact et, si $V$ est un $\Lambda$-module, alors
$\dalg(X,V)={\rm Hom}({\rm LC}_c(X,\Lambda),V)$ est l'espace des distributions
alg\'ebriques sur $X$ \`a valeurs dans $V$.} 
$${\bf z}_{\rm Siegel}\in \dalg({\bf M}'_{2,1}(\A^{]\infty[}),
\Q\otimes\iO^\dual),$$
telle que, quels que soient $r\in\Q_+^\dual$ et $(a,b)\in\Q^2-r\Z^2$, on
ait
$$\int_{\vecteur{a+r\cZ}{b+r\cZ}}{\bf z}_{\rm Siegel}= \tilde g_{\frac{-b}{r},\,\frac{a}{r}}
= \tilde g_{\frac{b}{r},\,\frac{-a}{r}}.$$
De plus, ${\bf z}_{\rm Siegel}$ est invariante sous l'action
naturelle de $\GG(\A^{]\infty[})$.
\end{theo}
\begin{proof}
L'existence de ${\bf z}_{\rm Siegel}$ r\'esulte de la premi\`ere des relations de
distribution du (ii) de la rem.~\ref{t3.1}.

Pour prouver l'invariance par $\GG(\A^{]\infty[})$, il s'agit de prouver que, si $U$ est un
ouvert de ${\bf M}'_{2,1}(\A^{]\infty[})$ et si $\gamma\in \GG(\A^{]\infty[})$,
alors $$\int_{\gamma U}{\bf z}_{\rm Siegel}=\gamma\star\int_U{\bf z}_{\rm Siegel}.$$
Par lin\'earit\'e, il suffit de le prouver pour $U=\vecteur{x+r\cZ}{y+r\cZ}$, avec $\vecteur{x}{y}\notin
r{\bf M}_{2,1}(\cZ)$, et comme $\GG(\A^{]\infty[})$ est engendr\'e, d'apr\`es la th\'eorie des diviseurs
\'el\'ementaires, par
$\GG(\cZ)$ et par 
$\big\{\matrice{a}{0}{0}{b}^{]\infty[},\ a,b\in\Q_+^\dual,\  \frac{b}{a}\in\N\big\}$,
il suffit de le prouver pour $\gamma$ d'une des formes ci-dessus.

$\bullet$ Si $\gamma=\matrice{a}{b}{c}{d}\in \GG(\cZ)$, alors $\gamma U=\vecteur{ax+by+r\cZ}{cx+dy+r\cZ}
$, et donc $\int_{\gamma U}=\tilde g_{\frac{-cx-dy}{r},\frac{ax+by}{r}}$.
Tandis que $\gamma\star\int_U{\bf z}_{\rm Siegel}=
\gamma\star \tilde g_{\frac{-y}{r},\frac{x}{r}}=\tilde g_{\frac{-dy-cx}{r},\frac{by+ax}{r}}$.

$\bullet$ Si $\gamma=\matrice{a}{0}{0}{b}^{]\infty[}$, 
on peut supposer $a=1$ car le r\'esultat est trivial pour
$\matrice{a}{0}{0}{a}$, et alors $b\in\N$.  Alors
$$\gamma U=\vecteur{x+r\cZ}{by+br\cZ}=\sqcup_{i=0}^{b-1}
\vecteur{x+i+br\cZ}{by+br\cZ},$$
et donc, d'apr\`es la seconde relation de distribution du (ii) de la remarque~\ref{t3.1},
$$\int_{\gamma U}{\bf z}_{\rm Siegel}=
\prod_{i=0}^{b-1}\tilde g_{\frac{-y}{r},\frac{x+i}{br}}=\matrice{1}{0}{0}{b}\star \tilde g_{\frac{-y}{r},\frac{x}{r}},$$
ce qui permet de conclure.
\end{proof}

\subsubsection{Th\'eorie de Kummer et l'\'el\'ement ${\bf z}_{\rm Kato}$}\label{s43}
Comme la distribution ${\bf z}_{\rm Siegel}$ est invariante
par $\GG(\A^{]\infty[})$, elle l'est aussi par $\Pi'_\Q$ 
qui agit \`a travers
$\GG(\A^{]\infty[})$ et on note $${\rm Kum}({\bf z}_{\rm Siegel})\in
H^1(\Pi'_\Q,\dalg({\bf M}'_{2,1}\big(\A^{]\infty[}\big),\Q_p(1)))$$
son image par
l'application de Kummer\footnote{\label{kummer}
Soit $Z^0=\{(x_n)_{n\in\N},\  x_n\in {\cal M}(\Qbar)^\dual,
\ x_{n+1}^p=x_n
{\text{ si $n\in\N$}}\}$.  Soit $Z=\Q\otimes Z^0$.
Alors $Z$ est muni d'une action de $\Pi'_\Q$ et la suite
$0\to \Q_p(1)\to Z\to \iO^\dual\otimes\Q\to 0$
est une suite exacte de $\Pi'_\Q$-modules.  Posons
$X={\bf M}_{2,1}(\A^{]\infty[})$ et soit $(\phi_i)_{i\in I}$
une base de ${\rm LC}_c(X,\Z)$ sur $\Z$.  On peut fabriquer une distribution
alg\'ebrique $\mu$ sur $X$, \`a valeurs dans $Z$, en prenant pour
$\int_X\phi_i\,\mu$ n'importe quel rel\`evement dans $Z$ de
$\int_X\phi_i\, {\bf z}_{\rm Siegel}$ et alors
${\rm Kum}({\bf z}_{\rm Siegel})$ est l'image du cocycle
$\sigma\mapsto\mu\star\sigma-\mu$.}.

\index{Mat@\MAT}Soit $${\bf M}_2'\big(\A^{]\infty[}\big)=\GG(\Q_p)\times {\bf M}_2\big(\A^{]\infty,p[}\big)
\subset {\bf M}'_{2,1}\big(\A^{]\infty[}\big)\times
{\bf M}'_{2,1}\big(\A^{]\infty[}\big)\subset {\bf M}_{2}\big(\A^{]\infty[}\big).$$
Alors $\GG(\A^{]\infty[})$ agit par multiplication \`a gauche et \`a droite sur
${\bf M}_2'\big(\A^{]\infty[}\big)$.
On \index{Zkato@\zkato}d\'efinit
$${\bf z}_{\rm Kato}=-{\rm Kum}({\bf z}_{\rm Siegel})\otimes {\rm Kum}({\bf z}_{\rm Siegel})
\in  H^2(\Pi'_\Q, \dalg({\bf M}_2'(\A^{]\infty[}),\Q_p(2))).$$
Par construction, on a
$$\int_{\matrice{a+M\cZ}{b+N\cZ}{c+M\cZ}{d+N\cZ}}{\bf z}_{\rm Kato}=
{\rm Kum}(\tilde g_{\frac{d}{N},\frac{-b}{N}})\cup {\rm Kum}(\tilde g_{\frac{-c}{M},\frac{a}{M}}),
\quad{\text{si $\matrice{a+M\cZ}{b+N\cZ}{c+M\cZ}{d+N\cZ}\subset {\bf M}_2'\big(\A^{]\infty[}\big)$.}}$$

\begin{prop}\phantomsection\label{inva1}
On a
\begin{align*}\matrice{u}{0}{0}{v}^{]\infty[}\star {\bf z}_{\rm Kato}=&\ {\bf z}_{\rm Kato},\quad
{\text{si $u,v\in \Q^\dual$.}}\\
\matrice{0}{1}{1}{0}\star {\bf z}_{\rm Kato}=&\ -{\bf z}_{\rm Kato}.
\end{align*}
\end{prop}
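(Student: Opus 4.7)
The plan is to combine the explicit formula
$$\int_{\matrice{a+M\cZ}{b+N\cZ}{c+M\cZ}{d+N\cZ}}{\bf z}_{\rm Kato}=
{\rm Kum}(\tilde g_{\frac{d}{N},\frac{-b}{N}})\cup {\rm Kum}(\tilde g_{\frac{-c}{M},\frac{a}{M}})$$
given just before the statement with two ingredients already in our hands: the $\GG(\A^{]\infty[})$-invariance of ${\bf z}_{\rm Siegel}$ (Theorem \ref{t3}), and the anticommutativity of the cup product of two degree-one cohomology classes. Since the cylinders $\matrice{a+M\cZ}{b+N\cZ}{c+M\cZ}{d+N\cZ}$ generate ${\rm LC}_c({\bf M}_2'(\A^{]\infty[}),\Z)$, it suffices in both cases to verify the identity after pairing with such a cylinder.

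First I would fix conventions: the $\star$-action of $\GG(\A^{]\infty[})$ on $\dalg({\bf M}_2'(\A^{]\infty[}),\Q_p(2))$ is right translation on the matrix argument, and for $g=\matrice{u}{0}{0}{v}^{]\infty[}$ the image of the above cylinder by $g^{-1}$ is still a product cylinder, obtained by rescaling the two columns independently by $u^{-1}$ and $v^{-1}$. Taking Kummer images commutes with the $\GG(\A^{]\infty[})$-action, so the invariance of ${\bf z}_{\rm Siegel}$ (Theorem \ref{t3}) transfers to ${\rm Kum}({\bf z}_{\rm Siegel})$. Plugging in the explicit formula, the arguments $\tfrac{d/v}{N/v}=\tfrac{d}{N}$, $\tfrac{-b/v}{N/v}=\tfrac{-b}{N}$, and similarly for the other factor, leave each Kummer class of a Siegel unit unchanged, yielding the first identity on cylinders, hence everywhere by linearity.

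For the second identity, right multiplication by $\matrice{0}{1}{1}{0}$ swaps the two columns of the matrix, sending the standard cylinder to $\matrice{b+N\cZ}{a+M\cZ}{d+N\cZ}{c+M\cZ}$. Applying the explicit formula to this new cylinder gives
$${\rm Kum}(\tilde g_{\frac{c}{M},\frac{-a}{M}})\cup {\rm Kum}(\tilde g_{\frac{-d}{N},\frac{b}{N}}).$$
Using the symmetry $\tilde g_{\alpha,\beta}=\tilde g_{-\alpha,-\beta}$ (rem.\,\ref{t3.1}) to replace each factor by $\tilde g_{-c/M,a/M}$ and $\tilde g_{d/N,-b/N}$, and then applying the anticommutativity $x\cup y=-y\cup x$ for degree-one classes, one recognizes $-1$ times the explicit formula for the original cylinder. (This is exactly the reason the minus sign was built into the definition ${\bf z}_{\rm Kato}=-{\rm Kum}({\bf z}_{\rm Siegel})\otimes{\rm Kum}({\bf z}_{\rm Siegel})$: it compensates the transposition of the two factors.)

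The content of both statements is very concrete; no deep input is needed beyond the explicit formula displayed above, the Siegel invariance of Theorem \ref{t3}, and graded-commutativity of cup product. The only delicate point will be bookkeeping: choosing a single convention for the $\star$-action (left vs.\ right, $g$ vs.\ $g^{-1}$) and consistently tracking the ordering of the two Kummer factors throughout, in order to confirm the signs. This does not pose a substantive obstacle.
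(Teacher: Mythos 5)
Your proof is correct and follows essentially the same route as the paper: integrate against the generating cylinders, apply the explicit formula, and read off the answer (the first identity from the scale-invariance of the ratios $d/N$, $-b/N$, etc., the second from transposing the columns and invoking $\tilde g_{\alpha,\beta}=\tilde g_{-\alpha,-\beta}$ together with the graded-commutativity of the cup product). You actually supply a step the paper leaves implicit, namely the use of $\tilde g_{\alpha,\beta}=\tilde g_{-\alpha,-\beta}$ to realign the two $\tilde g$-factors before applying anticommutativity. One small inaccuracy: your appeal to Theorem~\ref{t3} (the $\GG(\A^{]\infty[})$-invariance of ${\bf z}_{\rm Siegel}$, which is invariance under \emph{left} multiplication on column vectors) is not what the first identity uses; right multiplication of the matrix by $\matrice{u}{0}{0}{v}$ rescales the two columns independently, which is a different operation, and what actually carries the argument is the homogeneity of the expressions $\tfrac{d}{N},\tfrac{-b}{N},\dots$ under simultaneous rescaling of numerator and denominator. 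Since you carry out that rescaling computation explicitly anyway, the extraneous reference does no harm. (There is also a harmless $g$ vs.\ $g^{-1}$ slip relative to the convention $\int_U g\star\mu=\int_{Ug}\mu$; neither identity is sensitive to it.)
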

\begin{proof}
On a $\int_U g\star {\bf z}_{\rm Kato}=\int_{Ug}{\bf z}_{\rm Kato}$.
Comme 
$$\matrice{a+N\cZ}{b+M\cZ}{c+N\cZ}{d+M\cZ}\matrice{u}{0}{0}{v}=
\matrice{ua+uN\cZ}{vb+vM\cZ}{uc+uN\cZ}{vd+vM\cZ},$$ 
la premi\`ere \'egalit\'e se d\'eduit de l'identit\'e
$$\int_{\matrice{ua+uN\cZ}{vb+vM\cZ}{uc+uN\cZ}{vd+vM\cZ}}={\rm Kum}(\tilde g_{\frac{vd}{vM},\frac{-vb}{vM}})
\cup {\rm Kum}(\tilde g_{\frac{-uc}{uN},\frac{ua}{uN}})=
\int_{\matrice{a+N\cZ}{b+M\cZ}{c+N\cZ}{d+M\cZ}}{\bf z}_{\rm Kato}$$
La seconde \'egalit\'e se prouve de m\^eme, le signe venant de ce que le cup-produit
est altern\'e.
\end{proof}

\subsubsection{L'\'el\'ement ${\bf z}_{\rm Kato}^{c,d}$}\label{entier}
Pour aller plus loin, il faut se d\'ebarrasser des d\'enominateurs
dans la distribution~${\bf z}_{\rm Kato}$.
Notons que $\cZ$ agit naturellement sur $\Q/\Z=\A^{]\infty[}/\cZ$.
Maintenant, il r\'esulte du (i) de la prop.~\ref{p15} que, si $c\in\cZ^\dual$,
alors 
$c^2\otimes \tilde g_{\alpha,\beta}-\tilde g_{c\alpha,c\beta}\in\cZ\otimes\iO^\dual$
(appliquer la prop.~\ref{p15} \`a une suite
de $c_n\in\N$ v\'erifiant $c_n\to c$ dans $\cZ$),
et donc
$$_c\tilde g_{\alpha,\beta}=c_p^2\otimes \tilde g_{\alpha,\beta}-\tilde g_{c\alpha,c\beta}
\in\Z_p\otimes\iO^\dual\subset
\Q_p\otimes\iO^\dual.$$
La matrice $(c)\in{\bf M}_1(\A^{]\infty[})$ agit sur ${\bf M}'_{2,1}(\A^{]\infty[})$
par multiplication \`a droite, et donc aussi de mani\`ere compatible sur les distributions
sur cet espace.
Il r\'esulte de ce qui pr\'ec\`ede
que $$(c_p^2-(c))\star{\bf z}_{\rm Siegel}\in
\dalg({\bf M}'_{2,1}(\A^{]\infty[}),\Z_p\otimes\iO^\dual)$$
(l'int\'egrale sur $\vecteur{a+r\cZ}{b+r\cZ}$ est $_c\tilde g_{\frac{-b}{r},\frac{a}{r}}$),
et donc
que 
$$(c_p^2-(c))\star {\rm Kum}({\bf z}_{\rm Siegel})\in 
H^1(\Pi'_\Q,\dalg({\bf M}'_{2,1}(\A^{]\infty[}),\Z_p(1))).$$ 
Il s'ensuit \index{Zkato@\zkato}que
$${\bf z}_{{\rm Kato}}^{c,d}:=
\big(c_p^2-\matrice{c}{0}{0}{1}\big)\big(d_p^2-\matrice{1}{0}{0}{d}\big)
\star{\bf z}_{\rm Kato}
\in H^2\big(\Pi'_\Q, \dalg\big({\bf M}'_2(\A^{]\infty[}),\Z_p(2)\big)\big).$$
L'int\'er\^et d'avoir supprim\'e les d\'enominateurs est de
pouvoir int\'egrer des fonctions continues \`a support compact et
pas seulement des fonctions localement constantes (en d'autres termes,
$\dalg({\bf M}'_2(\A^{]\infty[}),\Z_p(2))$ est l'espace des mesures
$\cdo({\bf M}'_2(\A^{]\infty[}),\Z_p(2))$ sur ${\bf M}'_2(\A^{]\infty[})$,
\`a valeurs dans $\Z_p(2)$).

\Subsection{Torsion \`a la Soul\'e}\label{sou1}
\subsubsection{Formalisme g\'en\'eral}\label{sou2}
Soient $G'$ un groupe et $X'$ un ensemble muni d'actions \`a droite
$(g',x')\mapsto x'g'$ et \`a gauche $(g',x')\mapsto g'x'$ de $G'$
commutant entre elles (i.e.~si $x'\in X'$ et $g'_1,g'_2,g'_3,g'_4\in G'$,
alors $g'_1g'_2x'g'_3g'_4$ ne d\'epend pas de l'ordre
dans lequel on fait les op\'erations).

Soit $G_p$ un groupe, et soient $G=G_p\times G'$, $X=G_p\times X'$.
On note $g=(g_p,g')$ et $x=(x_p,x')$ les \'el\'ements de $G$ et $X$,
et on fait agir $G$ sur $X$ \`a gauche par
$((g_p,g'),(x_p,x'))\mapsto(g_px_p,g'x')$ et \`a droite par
$((g_p,g'),(x_p,x'))\mapsto(x_pg_p,x'g')$; ces deux actions commutent.

Soient $H_1,H_2$ des groupes, munis de morphismes $\iota_1:H_1\to G$ et $\iota_2:H_2\to G$.
Si $h_i\in H_i$, on note simplement $h_i$ l'\'el\'ement $\iota_i(h_i)$ de $G$.
Soit $H=H_1\times H_2$.

Soit $ A=\Z_p,\Q_p,\O_L, L,...$.  Soit $\Phi={\cal C}_c(X,A)$
et $\Phi^\dual$ son dual (ou un sous-espace de son dual).
Si $\alpha\in{\cal C}(X,A)$,
on d\'efinit $\alpha\mu\in\Phi^\dual$, si $\mu\in\Phi^\dual$
par la formule $\langle \alpha\mu,\phi\rangle=\langle \mu,\alpha\phi\rangle$.

On munit $\Phi$ d'une action de $H=H_1\times H_2$, en posant $\big((h_1,h_2)\cdot\phi\big)(x)=\phi(h_1^{-1}xh_2)$.
On munit $\Phi^\dual$ de l'action duale: $\langle h\cdot\mu,\phi\rangle=\langle \mu,h^{-1}\cdot \phi\rangle$,
si $h\in H$, $\mu\in\Phi^\dual$ et $\phi\in\Phi$.

Soit $W$ une $ A$-repr\'esentation de rang fini de $G_p$, et soit $W^\dual$ sa duale:
$$\langle g\cdot v^\dual,v\rangle=\langle v^\dual,g^{-1}\cdot v\rangle, \quad{\text{
si $g\in G_p$, $v^\dual\in W^\dual$ et $v\in W$.}}$$
On note $W_{H_i}, W^\dual_{H_i}$, pour $i=1,2$, les repr\'esentations de $H_i$ obtenues
via le morphisme $\iota_i:H_i\to G\to G_p$.  On les voit aussi comme des repr\'esentations
de $H$ en faisant agir $H_{3-i}$ trivialement.

Fixons une base $e_i^\dual$ de $W^\dual$ sur $A$.
Si $w^\dual\in W^\dual$, soient $$\phi_{w^\dual}:X\to W^\dual,\quad
\phi_{w^\dual,i}:X\to A,\qquad
\phi_{w^\dual}(x)=x_p\cdot w^\dual=\sum \phi_{w^\dual,i}(x)e_i^\dual.$$  

Soit $V$ une repr\'esentation de $H_1$; on consid\`ere
$V$ comme une repr\'esentation de $H$ en faisant agir $H_2$ trivialement.
Si 
$\mu\in \Phi^\dual$, $v\in V$, et $w^\dual\in W_{H_2}^\dual$, on d\'efinit
$$\alpha_W(\mu\otimes v\otimes w^\dual)=\sum_i(\phi_{w^\dual,i}\mu)\otimes v\otimes e_i^\dual
\in \Phi^\dual\otimes(V\otimes W_{H_1}^\dual).$$
\begin{lemm}\phantomsection\label{sou3}
{\rm (i)}
$\alpha_W:(\Phi^\dual\otimes V)\otimes W_{H_2}^\dual\to \Phi^\dual\otimes(V\otimes W_{H_1}^\dual)$
est $H$-\'equivariante.

{\rm (ii)} $\alpha_W$ induit un morphisme $H_2$-\'equivariant
$$\alpha_W:H^i(H_1,\Phi^\dual\otimes V)\otimes W_{H_2}^\dual\to H^i(H_1,\Phi^\dual\otimes(V\otimes W_{H_1}^\dual)).$$
\end{lemm}
\begin{proof}
Le (ii) est une cons\'equence imm\'ediate du (i).
Pour prouver le~(i),
il suffit de regarder
ce qui se passe sur les tenseurs \'el\'ementaires de la forme
$\delta_a\otimes v\otimes w^\dual$, o\`u $\delta_a$ est la masse de Dirac en $a=(a_p,a')\in X$.
On note $w^\dual_{H_i}$ l'\'el\'ement $w^\dual$ de $W^\dual_{H_i}$.
On a $$\alpha_W(\delta_a\otimes v\otimes w^\dual_{H_2})
=\delta_a\otimes v\otimes (a_p\cdot w^\dual)_{H_1}$$
On en d\'eduit
\begin{align*}
\alpha_W\big((h_1,h_2)\cdot(\delta_a\otimes v\otimes w^\dual_{H_2})\big)=&\
\alpha_W\big(\delta_{h_1ah_2^{-1}}\otimes (h_1\cdot v)\otimes (h_2\cdot w^\dual)_{H_2}\big)\\ =&\
\delta_{h_1ah_2^{-1}}\otimes (h_1\cdot v)\otimes ((h_1ah_2^{-1})_ph_2\cdot w^\dual)_{H_1}\\
(h_1,h_2)\cdot\alpha_W(\delta_a\otimes v\otimes w^\dual_{H_2}))=&\
(h_1,h_2)\cdot\big(\delta_a\otimes v\otimes (a_p\cdot w^\dual)_{H_1}\big)\\ =&\ 
\delta_{h_1ah_2^{-1}}\otimes (h_1\cdot v)\otimes (h_1a_p\cdot w^\dual)_{H_1}
\end{align*}
D'o\`u le r\'esultat (on a $(h_1ah_2^{-1})_p=h_1a_ph_2^{-1}$).
\end{proof}

Si $v\in W$ et $v^\dual\in W^\dual$, soit 
$$\phi_{v^\dual,v}\in\Phi, \qquad
\phi_{v^\dual,v}(g)=\langle g\cdot v^\dual, v\rangle=\langle \phi_{v^\dual}(g),v\rangle.$$
L'application $v^\dual\otimes v\mapsto \phi_{v^\dual,v}$ induit un morphisme
$H$-\'equivariant de $W^\dual_{H_2}\otimes W_{H_1}$ dans $\Phi$: si $h=(h_1,h_2)$, alors
on a $h\cdot (v^\dual\otimes v)=(h_2\cdot v^\dual)\otimes (h_1\cdot v)$,
et donc 
$$\phi_{h\cdot (v^\dual\otimes v)}(g)=\langle gh_2\cdot v^\dual,h_1\cdot v\rangle=
\langle h_1^{-1}gh_2\cdot v^\dual, v\rangle=\big(h\cdot\phi_{v^\dual\otimes v}\big)(g).$$

\begin{coro}\phantomsection\label{sou4}
Les deux morphismes $H$-\'equivariants ci-dessous sont \'egaux:
$$
\xymatrix@R=.2cm@C=.3cm
{(\Phi^\dual{\otimes} V){\otimes} W^\dual_{H_2}{\otimes} W_{H_1}{\otimes}\Phi\ar[r] &
(\Phi^\dual{\otimes} V){\otimes}\Phi\ar[r] & V,\\ (\mu{\otimes} v){\otimes} w^\dual{\otimes} w{\otimes} \phi\ar@{|->}[r] &
(\mu{\otimes} v){\otimes}(\phi_{w^\dual,w}\phi)\ar@{|->}[r] & \langle\mu,\phi_{w^\dual,w}\phi\rangle\,v\\
(\Phi^\dual{\otimes} V){\otimes} W^\dual_{H_2}{\otimes} W_{H_1}{\otimes}\Phi\ar[r] &
(\Phi^\dual{\otimes} V{\otimes} W^\dual_{H_2}){\otimes}(\Phi{\otimes} W_{H_1})\ar[r] & V,
\\ (\mu{\otimes} v){\otimes} w^\dual{\otimes} w{\otimes} \phi\ar@{|->}[r] &
\alpha_W(\mu{\otimes} v{\otimes} w^\dual){\otimes}(\phi{\otimes} w)\ar@{|->}[r] 
& \langle\alpha_W(\mu{\otimes} w^\dual),\phi{\otimes} w\rangle\,v}
$$
\end{coro}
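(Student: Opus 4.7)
Le plan est de se ramener, par $A$-multilin\'earit\'e des deux morphismes, \`a v\'erifier l'\'egalit\'e sur un tenseur \'el\'ementaire typique $(\mu \otimes v) \otimes w^\dual \otimes w \otimes \phi$. L'observation cl\'e est la d\'ecomposition
$$\phi_{w^\dual, w}(x) = \langle x_p \cdot w^\dual,\, w\rangle = \sum_i \phi_{w^\dual, i}(x)\, \langle e_i^\dual,\, w\rangle,$$
qui r\'esulte imm\'ediatement de la d\'efinition $\phi_{w^\dual}(x) = x_p \cdot w^\dual = \sum_i \phi_{w^\dual, i}(x)\, e_i^\dual$ et de la bilin\'earit\'e de l'accouplement $W^\dual \times W \to A$.

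Pour le premier morphisme, en substituant cette d\'ecomposition dans $\langle \mu, \phi_{w^\dual, w}\, \phi\rangle$ et en utilisant la relation $\langle \alpha \mu, \phi\rangle = \langle \mu, \alpha\phi\rangle$ (d\'efinition m\^eme du produit $\alpha\mu$), j'obtiendrai
$$\langle \mu,\, \phi_{w^\dual, w}\, \phi\rangle \cdot v = \sum_i \langle \phi_{w^\dual, i}\,\mu,\, \phi\rangle\, \langle e_i^\dual,\, w\rangle \cdot v.$$
Pour le second morphisme, j'utiliserai la d\'efinition explicite $\alpha_W(\mu \otimes v \otimes w^\dual) = \sum_i (\phi_{w^\dual, i}\,\mu) \otimes v \otimes e_i^\dual$ donn\'ee dans le lemme \ref{sou3}, puis l'accouplement canonique entre $\Phi^\dual \otimes V \otimes W^\dual_{H_1}$ et $\Phi \otimes W_{H_1}$ \'evalu\'e sur $\alpha_W(\mu \otimes v \otimes w^\dual)$ et $\phi \otimes w$. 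Cela donne imm\'ediatement
$$\sum_i \langle \phi_{w^\dual, i}\,\mu,\, \phi\rangle\, \langle e_i^\dual,\, w\rangle \cdot v,$$
qui co\"incide avec l'expression obtenue ci-dessus, d'o\`u l'\'egalit\'e des deux morphismes.

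La $H$-\'equivariance, sous-jacente \`a l'\'enonc\'e, est elle aussi automatique : celle du premier morphisme d\'ecoule de la $H$-\'equivariance du plongement $W^\dual_{H_2} \otimes W_{H_1} \hookrightarrow \Phi$ v\'erifi\'ee juste avant l'\'enonc\'e, combin\'ee avec celle de l'\'evaluation canonique $\Phi^\dual \otimes \Phi \to A$ ; celle du second est exactement le (i) du lemme \ref{sou3}. Il n'y a pas v\'eritablement d'obstacle : la preuve se r\'eduit \`a un calcul direct qui exhibe la m\^eme expression des deux c\^ot\'es apr\`es d\'eveloppement de $\phi_{w^\dual, w}$ dans une base duale fix\'ee de~$W^\dual$, le r\'esultat \'etant bien entendu ind\'ependant de ce choix de base.
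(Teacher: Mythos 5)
Votre preuve est correcte et prend essentiellement l'approche implicite du texte : l'article ne r\'edige pas de d\'emonstration pour ce corollaire, qui d\'ecoule directement du lemme~\ref{sou3} et des d\'efinitions. Votre calcul --- d\'evelopper $\phi_{w^\dual,w}$ dans la base duale $(e_i^\dual)$, utiliser $\langle\alpha\mu,\phi\rangle=\langle\mu,\alpha\phi\rangle$, et comparer avec la formule d\'efinissant $\alpha_W$ --- est pr\'ecis\'ement le d\'eveloppement que le lecteur est cens\'e faire.
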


\subsubsection{La distribution ${\bf z}_{\rm Kato}^{c,d}(k,j)$}\label{s46}
On va appliquer ce qui pr\'ec\`ede \`a 
\begin{align*}
X'={\bf M}_2(\A^{]\infty,p[}),\quad
X={\bf M}'_2(\A^{]\infty[}),\quad &
G'=\GG(\A^{]\infty,p[}), 
\quad G_p=\GG(\Q_p)\\
G=H_2=\GG(\A^{]\infty[}),\quad H_1=\Pi'_{\Q}, \quad & W=W_{k,j}
\quad{\rm et} \quad V=\Q_p(2)\\
\Phi={\cal C}_c({\bf M}'_2(\A^{]\infty[}),L),\quad & \Phi^\dual=\cdo({\bf M}'_2(\A^{]\infty[}),L)
\end{align*}

On \index{Zkatokj@\zkatokj}d\'efinit
$${\bf z}_{{\rm Kato}}^{c,d}(k,j):=
\alpha_{W_{k,j}}\big({\bf z}_{{\rm Kato}}^{c,d}\otimes\tfrac{(e_2^\dual)^k}{(e_1^\dual\wedge e_2^\dual)^j}\big)
\in H^2(\Pi'_\Q,\cdo({\bf M}'_2(\A^{]\infty[}),W^\dual_{k,j}(2))).$$
On note encore ${\bf z}_{{\rm Kato}}^{c,d}(k,j)$ la restriction \`a $\Pi_\Q\subset\Pi'_\Q$.

\vskip.2cm
On a $H^4_c(\Pi_{\Q_p},\Q_p(2))=H^4_{{\eet},c}(Y(1)_{\Q_p},\Q_p(2))=\Q_p$; on en d\'eduit
des accouplements
\begin{align*}
\langle\ ,\rangle:&\ \big(H^2(\Pi_{\Q_p},\Phi^\dual\otimes \Q_p(2))\otimes W^\dual_{H_2}\big) \times 
\big(H^2_c(\Pi_{\Q_p},\Phi\otimes W_{H_1})\big)\to L\\
\langle\ ,\rangle:&\ \big(H^2(\Pi_{\Q_p},\Phi^\dual\otimes \Q_p(2))\big) \times 
\big(H^2_c(\Pi_{\Q_p},\Phi\otimes W_{H_1})\otimes W^\dual_{H_2}\big)\to L
\end{align*}
Le cor.~\ref{sou4} fournit alors
le r\'esultat suivant:
\begin{lemm}\phantomsection\label{expo1}
Si $\phi\in H^2_c(\Pi_\Q,{\cal C}_c({\bf M}'_2(\A^{]\infty[}),W_{k,j}))$, alors
$$\langle {\bf z}_{{\rm Kato}}^{c,d}, \phi\otimes \tfrac{(e_2^\dual)^k}{(e_1^\dual\wedge e_2^\dual)^j}\rangle=
\langle {\bf z}_{{\rm Kato}}^{c,d}(k,j), \phi\rangle.$$
\end{lemm}

Si $S$ est un ensemble fini de nombres premiers contenant~$p$,
on \index{Zkato@\zkato}\index{Zkatokj@\zkatokj}d\'efinit
\begin{align*}
{\bf z}_{{\rm Kato}}^{S,c,d} &\in H^2(\Pi'_\Q,\cdo(\GG(\Q_S),\Q_p(2)))\\
{\bf z}_{{\rm Kato}}^{S,c,d}(k,j) &\in H^2(\Pi'_\Q,\cdo(\GG(\Q_S),W^\dual_{k,j}(2)))
\end{align*}
par les formules (o\`u $\phi_S\in {\cal C}_c(\GG(\Q_S),L)$)
\begin{align*}
\int_{\GG(\Q_S)} \phi_S\, {\bf z}_{{\rm Kato}}^{S,c,d}
=&\ \int \phi_S\otimes {\bf 1}_{{\bf M}_2(\cZ^{]S[})}\,{\bf z}_{{\rm Kato}}^{S,c,d}\\
\int_{\GG(\Q_S)} \phi_S\, {\bf z}_{{\rm Kato}}^{S,c,d}(k,j)
=&\ \int \phi_S\otimes {\bf 1}_{{\bf M}_2(\cZ^{]S[})}
\,{\bf z}_{{\rm Kato}}^{S,c,d}(k,j)
\end{align*}

\begin{rema}\phantomsection\label{expo1.1}
On peut aussi d\'efinir ${\bf z}^S_{{\rm Siegel}}$,  ${\bf z}^S_{{\rm Eis}}$,
 ${\bf z}^{S,'}_{{\rm Eis}}$,
${\bf z}_{{\rm Kato}}^{S,c,d}$ et ${\bf z}_{{\rm Kato}}^{S,c,d}(k,j)$ directement,
en partant des formules:
$$\int_{\vecteur{a+r\Z_S}{b+r\Z_S}}{\bf z}_{{\rm Siegel}}^S= \tilde g_{\frac{-b}{r},\,\frac{a}{r}}
= \tilde g_{\frac{b}{r},\,\frac{-a}{r}},\quad{\text {etc...}}$$
si $r\in \Z[\frac{1}{S}]^\dual$, $r>0$, et $(a,b)\in\Z[\frac{1}{S}]^2\moins r\Z^2$.
Cela permet de prouver qu'en fait toutes ces distributions sont obtenues par inflation
\`a partir de $\Pi'_{\Q,S}$
\end{rema}

\Subsection{La loi de r\'eciprocit\'e explicite de Kato}\label{recex3}
\subsubsection{La suite spectrale de Hochschild-Serre}\label{recex4}
La suite spectrale de Hochshild-Serre fournit des isomorphismes
\begin{align*}
H^2(\Pi'_\Q,\cdo({\bf M}'(\A^{]\infty[}),\Z_p(2)))&\cong
H^1(G_{\Q},H^1(\Pi'_{\Qbar},\cdo({\bf M}'(\A^{]\infty[}),\Z_p(2))))\\
H^2(\Pi'_{\Q,S},\cdo(\GG(\Q_S),\Z_p(2)))&\cong
H^1(G_{\Q,S},H^1(\Pi'_{{\Qbar},S},\cdo(\GG(\Q_S),\Z_p(2))))
\end{align*}
On peut donc consid\'erer \index{Zkato@\zkato}\index{Zkatokj@\zkatokj}que:
\begin{align*}
{\bf z}_{\rm Kato}^{c,d}&\in
H^1(G_{\Q},H^1(\Pi'_{\Qbar},\cdo({\bf M}'(\A^{]\infty[}),\Z_p(2))))\\
{\bf z}_{\rm Kato}^{S,c,d}&\in
H^1(G_{\Q,S},H^1(\Pi'_{{\Qbar},S},\cdo(\GG(\Q_S),\Z_p(2))))
\end{align*}

\subsubsection{La distribution $\tilde{\bf z}_{\rm Eis}(k,j)$}\label{recex5}
On voit $E^{(k)}_{\alpha,\beta}$ comme un \'el\'ement de $M^{\rm cl}_{k,0}(\Q^{\rm cycl})$
et $F^{(k)}_{\alpha,\beta}$ comme un \'el\'ement de $M^{\rm cl}_{k,k-1}(\Q^{\rm cycl})$.
On \index{Zeis@\zeis}note $\widetilde E^{(k)}_{\alpha,\beta}\in \iH^0(\omega^{k,0})$
et $\widetilde F^{(k)}_{\alpha,\beta}\in \iH^0(\omega^{k,k-1})$
les \'el\'ements correspondant \`a $(-2i\pi)^kE^{(k)}_{\alpha,\beta}$
et $(-2i\pi)\,F^{(k)}_{\alpha,\beta}$ via les isomorphismes de la rem.\,\ref{ratio1}
et du \no\ref{como4}.
(Si $k=2$, il faut supposer $(\alpha,\beta)\neq (0,0)$
pour $F^{(k)}$ et remplacer $\iH^0(\omega^{2,0})$ par $\iH^0(W_{\rm dR}^{2,0})$ pour $E^{(k)}$.)
\begin{prop}\phantomsection\label{p2}
Si $\matrice{a}{b}{c}{d}\in
{\bf GL}_2(\cZ)$, 
 et si $(\alpha,\beta)\in(\Q/\Z)^2$, alors
$$\matrice{a}{b}{c}{d}\star \widetilde E^{(k,j)}_{\alpha,\beta}= \widetilde E^{(k,j)}_{d\alpha-c\beta,-b\alpha+a\beta}
\quad{\rm et}\quad
\matrice{a}{b}{c}{d}\star \widetilde F^{(k,j)}_{\alpha,\beta}= \widetilde F^{(k,j)}_{d\alpha-c\beta,-b\alpha+a\beta}.$$
\end{prop}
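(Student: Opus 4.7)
The plan is to deduce the stated transformation formulae from a $\GG(\A^{]\infty[})$-invariance of adelic lifts of the distributions ${\bf z}_{\rm Eis}(k)$ and ${\bf z}'_{\rm Eis}(k)$, following the same strategy as Theorem~\ref{t3} for ${\bf z}_{\rm Siegel}$. First I would lift these distributions via the isomorphism $M_{k,j}(\Q)\cong M_{k,j}^{\rm cong}$ of Remark~\ref{ratio1} and the identification of \no\ref{como4}, producing distributions $\widetilde{\bf z}_{\rm Eis}(k)$ and $\widetilde{\bf z}'_{\rm Eis}(k)$ valued in $\iH^0(\omega^{k,0})$ and $\iH^0(\omega^{k,k-1})$ (or the corresponding quasi-holomorphic sheaves in the exceptional cases of Proposition~\ref{p1}), characterised by
$$\int_{\vecteur{a+r\cZ}{b+r\cZ}}\widetilde{\bf z}_{\rm Eis}(k)=r^{-k}\widetilde E^{(k,0)}_{-b/r,\,a/r},\quad \int_{\vecteur{a+r\cZ}{b+r\cZ}}\widetilde{\bf z}'_{\rm Eis}(k)=r^{k-2}\widetilde F^{(k,k-1)}_{b/r,\,-a/r}.$$

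Next I would establish the $\GG(\A^{]\infty[})$-invariance $\int_{\gamma U}\widetilde{\bf z}_{\rm Eis}(k)=\gamma\star\int_U\widetilde{\bf z}_{\rm Eis}(k)$ by the same device as in the proof of Theorem~\ref{t3}: by the theory of elementary divisors it suffices to verify the identity for $\gamma\in\GG(\cZ)$ and for diagonal matrices $\matrice{a}{0}{0}{b}^{]\infty[}$ with $a,b\in\Q_+^\dual$ and $b/a\in\N$. For $\GG(\cZ)$ the verification is direct, the essential input being that $\matrice{u}{0}{0}{1}^{]\infty[}$ for $u\in\cZ^\dual$ acts on $q$-expansions via the Galois element $\sigma_u$ (Proposition~\ref{como5}), which in view of the explicit $q$-expansion of Proposition~\ref{qdev} correctly corresponds to $\beta\mapsto u\beta$ on $\widetilde F^{(k,k-1)}_{\alpha,\beta}$ (and analogously on $\widetilde E^{(k,0)}_{\alpha,\beta}$). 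For the diagonal matrices the invariance reduces to the second distribution relation after Proposition~\ref{p1}, namely $\sum_{e\beta'=\beta}F^{(k)}_{\alpha,\beta'}(\tau/e)=e\,F^{(k)}_{\alpha,\beta}$ and its analogue for $E^{(k)}$. The transformation formula then follows by applying the invariance to the test function $\vecteur{r\beta+r\cZ}{-r\alpha+r\cZ}$ (for which the integral is $r^{-k}\widetilde E^{(k,0)}_{\alpha,\beta}$): for $\gamma=\matrice{a}{b}{c}{d}\in\GG(\cZ)$ one computes $\gamma\cdot\vecteur{r\beta}{-r\alpha}=\vecteur{r(a\beta-b\alpha)}{r(c\beta-d\alpha)}$, yielding $\gamma\star\widetilde E^{(k,0)}_{\alpha,\beta}=\widetilde E^{(k,0)}_{d\alpha-c\beta,\,-b\alpha+a\beta}$, and similarly for $\widetilde F$.

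The main difficulty lies in the exceptional case $k=2$, where $E^{(2)}_{0,0}=F^{(2)}_{0,0}=-\tfrac{1}{24}E_2^\dual$ is only quasi-holomorphic (Proposition~\ref{p1}); one must work with $\iH^0$ of the quasi-holomorphic sheaves and verify that the non-holomorphic correction $6/(i\pi(\tau-\bar\tau))$ transforms correctly, which it does since $\tau-\bar\tau$ is essentially the volume form of the elliptic curve and so transforms with the expected weight-$2$ cocycle under ${\rm SL}_2(\Z)$, while being invariant under the Galois action. A more computational alternative would be to decompose every $\gamma\in\GG(\cZ)$ as $\gamma=\gamma'\matrice{u}{0}{0}{1}$ with $u=\det\gamma$ and $\gamma'\in{\rm SL}_2(\cZ)$, then verify the formula case by case using the classical transformation law for Eisenstein-Kronecker series under ${\rm SL}_2(\Z)$ (combined with Remark~\ref{ratio1}) and the Galois action on $q$-expansions.
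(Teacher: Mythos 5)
The paper gives no proof of Prop.~\ref{p2}: it is stated as a standard transformation law, exactly parallel to the unproved formula $\matrice{a}{b}{c}{d}\star\tilde g_{\alpha,\beta}=\tilde g_{d\alpha-c\beta,-b\alpha+a\beta}$ in rem.~\ref{t3.1}(iv), and it is then used as an \emph{input} to adapt the proof of th.~\ref{t3} to th.~\ref{t1.5}. Your primary route inverts this logic, and the inversion is circular: you propose to establish the $\GG(\A^{]\infty[})$-invariance $\int_{\gamma U}\widetilde{\bf z}_{\rm Eis}(k)=\gamma\star\int_U\widetilde{\bf z}_{\rm Eis}(k)$ first and then specialize to a test function to read off the transformation formula. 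But a direct computation with $U=\vecteur{r\beta+r\cZ}{-r\alpha+r\cZ}$ shows that the $\gamma\in\GG(\cZ)$ piece of that invariance \emph{is} the identity $\gamma\star\widetilde E^{(k,0)}_{\alpha,\beta}=\widetilde E^{(k,0)}_{d\alpha-c\beta,\,-b\alpha+a\beta}$, term for term; there is nothing to deduce, only a reformulation. Indeed, in the proof of th.~\ref{t3} the $\GG(\cZ)$ case is dispatched precisely by invoking rem.~\ref{t3.1}(iv), not by some independent argument.

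Moreover, the single input you offer for the ``direct'' verification in the $\GG(\cZ)$ case --- that $\matrice{u}{0}{0}{1}^{]\infty[}$, $u\in\cZ^\dual$, acts as $\sigma_u$ on $q$-expansions (prop.~\ref{como5}) --- handles only the subgroup $\matrice{\cZ^\dual}{0}{0}{1}$, i.e.~the special case of prop.~\ref{p2} with $b=c=0$, $d=1$. It says nothing about $\gamma\in{\rm SL}_2(\cZ)$ with $c\neq 0$, which is where the actual analytic content lies: there one needs the classical weight-$k$ automorphy of the Eisenstein--Kronecker series $H_k(s,\tau,z,u)$ (equivalently, of $E_k,F_k$) under the $\Z+\Z\tau$-equivariant action of ${\rm SL}_2(\Z)$ on $(\tau,z)$, and this is a separate fact not derivable from the $q$-expansion alone. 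Your ``computational alternative'' at the end --- decompose $\gamma=\gamma'\matrice{u}{0}{0}{1}$ with $\gamma'\in{\rm SL}_2(\cZ)$, check $\gamma'$ via the classical ${\rm SL}_2(\Z)$-transformation law (reducing by density and continuity to ${\rm SL}_2(\Z)$, or to its generators $S,T$) and check $\matrice{u}{0}{0}{1}$ via the Galois action --- is the correct strategy and is what the paper implicitly takes for granted, but as stated it remains a gesture: you would need to actually carry out the ${\rm SL}_2(\Z)$ computation, keeping track of the normalization $(-2i\pi)^k$ and of the identification via $\zeta_{\rm dR}^j(dt/t)^k$ so that the \emph{twisted} action $f\mapsto f_{|_{k,j}}\gamma^{-1}$ matches the claimed index change $(\alpha,\beta)\mapsto(d\alpha-c\beta,\,-b\alpha+a\beta)$; and in the exceptional case $k=2$ you must also verify that the non-holomorphic correction $6/(i\pi(\tau-\bar\tau))$ of $E_2^\dual$ lives in ${\rm Fil}^0(W^{\rm dR}_{2,0})$ compatibly, which your remark about ``being invariant under the Galois action'' elides.
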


On note $\tilde {\bf z}_{\rm Eis}(k)$ et $\tilde {\bf z}'_{\rm Eis}(k)$
les distributions obtenues en rempla\c{c}ant $E^{(k)}_{\alpha,\beta}$
et $F^{(k)}_{\alpha,\beta}$ par $\widetilde E^{(k)}_{\alpha,\beta}$ et
$\widetilde F^{(k)}_{\alpha,\beta}$ dans les d\'efinitions de
${\bf z}_{\rm Eis}(k)$ et ${\bf z}'_{\rm Eis}(k)$.
La preuve du th.\,\ref{t3} s'adapte et nous donne le r\'esultat suivant.

\begin{theo}\phantomsection\label{t1.5}
Les distributions $\tilde {\bf z}_{\rm Eis}(k)$ et $\tilde {\bf z}'_{\rm Eis}(k)$
sont invariantes sous l'action naturelle de $\GG(\A^{]\infty[})$.
\end{theo}

Si $k\geq 1$, alors (y compris si $k=2$):
\begin{align*}
\big(c_p^2-c_p^k(c))\star \tilde{\bf z}_{\rm Eis}(k)
&\in \dalg({\bf M}_{2,1}(\A^{]\infty[}),\iH^0(\omega^{k,0}))\\
\big(c_p^2-c_p^{2-k}(c))\star \tilde{\bf z}_{\rm Eis}'(k)
&\in \dalg({\bf M}'_{2,1}(\A^{]\infty[}),\iH^0(\omega^{k,k-1}))
\end{align*}
Le produit fournit des fl\`eches
\begin{align*}
\iH^0(\omega^{j,0})\times \iH^0(\omega^{k+2-j,k+1-j})\to \iH^0(\omega^{k+2,k+1-j})\\ 
\iH^0(W_{\rm dR}^{j,0})\times \iH^0(\omega^{k+2-j,k+1-j})\to \iH^0(W_{\rm dR}^{k+2,k+1-j})
\end{align*}
En utilisant l'inclusion 
${\bf M}'_{2}(\A^{]\infty[})\subset {\bf M}_{2,1}(\A^{]\infty[})\times {\bf M}'_{2,1}(\A^{]\infty[})$,
cela permet de construire, si $k\geq 2$ et $1\leq j\leq k+1$, des distributions
\begin{align*}
\tilde {\bf z}_{\rm Eis}(k,j)  =
\tfrac{(-1)^j}{(j-1)!}{\bf z}_{\rm Eis}(j)\otimes{\bf z}_{\rm Eis}'(k+2-j)
&\in
\dalg({\bf M}'_2(\A^{]\infty[}),\iH^0(W_{\rm dR}^{k+2,k+1-j}))\\
\tilde {\bf z}^{c,d}_{\rm Eis}(k,j)  =\big(c_p^2-c_p^{j}\matrice{c}{0}{0}{1}\big)
\big(d_p^2-d_p^{j-k}\matrice{1}{0}{0}{d}\big)&\star {\bf z}_{\rm Eis}(k,j)\\ &\in
\dalg({\bf M}'_2(\A^{]\infty[}),\iH^0(\omega^{k+2,k+1-j}))
\end{align*}
La valeur de ces distributions sur une fonction localement
constante \`a support compact est une combinaison lin\'eaire de
produits de s\'eries d'Eisenstein.
Comme pour ${\bf z}_{\rm Kato}$, si $S$ est un ensemble fini de nombres premiers
contenant $p$, on d\'efinit:
\begin{align*}
\tilde {\bf z}^S_{\rm Eis}(k,j)&\in \dalg(\GG(\Q_S),\iH^0(W_{\rm dR}^{k+2,k+1-j})_S),\\
\tilde {\bf z}^{S,c,d}_{\rm Eis}(k,j)&\in \dalg(\GG(\Q_S),\iH^0(\omega^{k+2,k+1-j})_S)
\end{align*}

\subsubsection{L'exponentielle duale de Bloch-Kato}\label{recex6}
Accoupler ${\bf z}_{\rm Kato}^{c,d}(k,j)$ avec une fonction $\phi$ localement constante,
\`a support compact dans ${\bf M}'_2(\A^{]\infty[})$, produit 
$$\langle {\bf z}_{\rm Kato}^{c,d}(k,j),\phi\rangle
\in H^1(G_{\Q},\iH^1_{\eet}((W^{\eet}_{k,j})^\dual(2)))$$ 

On a aussi
$(W^{\eet}_{k,j})^\dual(2)=W^{\eet}_{k,k-j+2}$.
L'exponentielle duale de Bloch-Kato fournit une fl\`eche
$$H^1(G_{\Q_p},\iH^1_{\eet}(W^{\eet}_{k,k-j+2}))\to
{\rm Fil}^0 D_{\rm dR}(\iH^1_{\eet}(W_{k,k-j+2}^{\eet})).$$
On a 
$$D_{\rm dR}(\iH^1_{\eet}(W_{k,k-j+2}^{\eet}))=\Q_p\otimes_\Q\iH^1_{\rm dR}(W_{k,k-j+2}^{\rm dR})
=\Q_p\otimes_\Q\iH^1_{\rm dR}(W_{k,k-j}^{\rm dR})\otimes\zeta_{\rm dR}^2$$
comme module filtr\'e, et la filtration sur $\iH^1_{\rm dR}(W_{k,k-j}^{\rm dR})$
a deux cran, le sous-objet \'etant $\iH^0(\omega^{k+2,k+1-j})$.
Les poids de Hodge-Tate de $\iH^1_{\eet}(W_{k,k+2-j}^{\eet})$ \'etant $k+2-j$ et $1-j$,
on a ${\rm Fil}^0(\iH^1_{\rm dR}(W_{k,k+2-j}^{\rm dR}))=
\iH^0(\omega^{k+2,k+1-j})\otimes\zeta_{\rm dR}^2$ si et seulement si
$1\leq j\leq k+1$.

La loi de r\'eciprocit\'e explicite de Kato~\cite[prop.\,10.10]{Ka4} (et~\cite[prop.\,11.7]{Ka4} pour
faire la comparaison entre deux application $\exp^\dual$) nous donne
\footnote{Si on veut utiliser les calculs de~\cite{bbk-Kato}, il faut faire attention
que ${\bf z}_{\rm Kato}$ a chang\'e ce qui \'echange les r\^oles de $b$ et $c$ (et rajoute des signes), 
et qu'on int\`egre $(be_1^\dual+de_2^\dual)^k$ au lieu
de $(ae_1^\dual+be_2^\dual)^k$, ce qui \'echange les r\^oles de $E$ et $F$. A la fin
on retombe sur la m\^eme formule.}:
\begin{theo}\phantomsection\label{recex1} Si $0\leq j\leq k$, alors
$$\exp^*({\bf z}_{\rm Kato}^{c,d}(k,j))=\tilde{\bf z}_{\rm Eis}^{c,d}(k,j)\otimes\zeta_{\rm dR}^2,\quad
\exp^*({\bf z}_{\rm Kato}^{S,c,d}(k,j))=\tilde{\bf z}_{\rm Eis}^{S,c,d}(k,j)\otimes\zeta_{\rm dR}^2.$$
\end{theo}

\section{Un avatar alg\'ebrique des produits de Rankin}\label{chapi5}
Dans ce chapitre, nous factorisons (rem.\,\ref{fact27}) le syst\`eme de Beilinson-Kato comme un produit
de deux symboles $(0,\infty)$, et nous terminons la preuve du th.\,\ref{geni21}
en prouvant (th.\,\ref{facto2}) que l'\'el\'ement ${\bf z}^S(\rho_T)$ n'a pas de d\'enominateur.
La preuve repose sur
l'\'evaluation de ${\bf z}_{\rm Kato}^{S,c,d}$ en des fonctions tests bien choisies (th.\,\ref{shi4},
avatar de~\cite[th.\,6.6]{Ka4},
dont la preuve repose sur la m\'ethode de Rankin); le r\'esultat fait appara\^{\i}tre un produit
de deux valeurs de fonctions~$L$. On en d\'eduit la factorisation 
voulue en un point classique (th.\,\ref{fact2}), et on obtient une factorisation en famille
(lemme~\ref{facto1}) par densit\'e des points classiques.
\Subsection{L'application exponentielle}\label{expo2}
\subsubsection{Une formule explicite g\'en\'erale}\label{expo3}
Si $X$ est un espace fonctionnel comme ${\rm LC}$, ${\rm LP}$, ${\cal C}$, ${\rm Mes}$, etc.,
et si $\sharp\in\{\ ,c,{\rm par}\}$, on pose
\begin{align*}
H_\sharp^1(\GG,X_L)&:= H^1_\sharp(\GG(\Q),X(\GG(\A),L))\\
H_\sharp^1(\GG,X_L)_S&:= H^1_\sharp(\GG(\Z[\tfrac{1}{S}]),X(\GG(\Q_S),L))
\end{align*}

On suppose $1\leq j\leq k+1$ (les poids de $\iH^1_{{\eet},c}(W_{k,j}^{\eet})_S$ sont
$j>0$ et $j-k-1\leq 0$).
On a
$$D_{\rm dR}(\iH^1_{{\eet},c}(W_{k,j}^{\eet})_S)=\Q_p\otimes_\Q\iH^1_{{\rm dR},c}(W_{k,j}^{\rm dR})_S$$
On en d\'eduit une application exponentielle de Bloch-Kato:
$$\exp:\iH^1_{{\rm dR},c}(W_{k,j}^{\rm dR})_S/{\rm Fil}^0\to H^1(G_{\Q_p},\iH^1_{{\eet},c}(W_{k,j}^{\eet})_S).$$
On peut utiliser l'isomorphisme 
$\iH^1_{{\eet},c}(W_{k,j}^{\eet})_S\cong H^1(\GG,{\rm LC}_L\otimes W_{k,j})_S$,
tensoriser par $W_{k,j}^\dual$ et utiliser l'injection naturelle
${\rm LC}_L\otimes W_{k,j}\otimes W_{k,j}^\dual\hookrightarrow {\cal C}$
pour fabriquer une application, encore appel\'ee
exponentielle de Bloch-Kato:
$$\exp:L\otimes_\Q(\iH^1_{{\rm dR},c}(W^{\rm dR}_{k,j})_S/{\rm Fil}^0)\otimes W_{k,j}^\dual
\to H^1(G_{\Q_p},H^1_c(\GG,{\cal C}_L)_S)$$
Notons que $H^1_c(\GG,{\cal C}_L)_S$ est la cohomologie compl\'et\'ee de la tour
des courbes modulaires de niveaux \`a support dans $S$.

Comme on l'a vu $H^1_c(\GG,{\cal C}_L)_S$ et $H^1(\GG,{\rm Mes}_L)_S$ sont en dualit\'e (rem.\,\ref{dual1}).
Si on veut inclure l'action de Galois, la dualit\'e naturelle est \`a valeurs dans
$H^2_c(X(1)^\times_\Qbar,\Q_p)=\Q_p(-1)$.  Il s'ensuit que
$H^1(\GG,{\rm Mes}_L)_S(2)$ est le dual de Tate de $H^1_c(\GG,{\cal C}_L)_S$.
Combin\'e avec la dualit\'e locale de Poitou-Tate, cela fournit une dualit\'e
$$H^1(G_{\Q_p},H^1(\GG,{\rm Mes}_L)_S(2))\times H^1(G_{\Q_p},H^1_c(\GG,{\cal C}_L)_S)\to \Q_p$$
En particulier, on peut accoupler
${\bf z}_{{\rm Kato}}^{S,c,d}$ (plut\^ot sa restriction \`a $G_{\Q_p}$) 
et $\exp(\Phi)\otimes \check v$, 
si 
$$\Phi\in \iH^1_{{\rm dR},c}(W^{\rm dR}_{k,j})_S/{\rm Fil}^0
\quad{\rm et}\quad \check v\in W_{k,j}^\dual.$$

\begin{prop}\phantomsection\label{expo3.1}
 Soit $\Phi\in \iH^1_{{\rm dR},c}(W^{\rm dR}_{k,j})_S/{\rm Fil}^0$.
Alors,
pour tout $N$ \`a support dans $S$
et tel que $\Phi\in H^1_{{\rm dR},c}(Y(N),W^{\rm dR}_{k,j})$, on a
\begin{align*}
\langle {\bf z}_{{\rm Kato}}^{S,c,d}, \exp(\Phi)\otimes \tfrac{(e_2^\dual)^k}{(e_1^\dual\wedge e_2^\dual)^j}\rangle_{\eet}=&\ \tfrac{(-1)^j}{(j-1)!}N^{k-2j}
\langle \Phi^{c,d},\widetilde E_{0,\frac{1}{N}}^{(j)} 
\widetilde F_{\frac{1}{N},0}^{(k+2-j)}\rangle_{{\rm dR},Y(N)}\otimes\zeta_{\rm dR}
\end{align*}
avec 
$$\Phi^{c,d}=\big(c_p^2-c_p^{j}\matrice{c^{-1}}{0}{0}{1}\big)
\big(d_p^2-d_p^{j-k}\matrice{1}{0}{0}{d^{-1}}\big)\star\Phi.$$
\end{prop}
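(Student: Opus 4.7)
The plan is to reduce the pairing on the left-hand side to an instance of Kato's explicit reciprocity law (th.\,\ref{recex1}) via the Bloch--Kato adjunction formula, and then to unwind the definition of the Eisenstein distribution $\tilde{\bf z}_{\rm Eis}^{S,c,d}(k,j)$ by evaluation on a suitable test function at level $N$.

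First, I would use the Soulé twist compatibility of lemma\,\ref{expo1} to rewrite
$$\langle {\bf z}_{{\rm Kato}}^{S,c,d}, \exp(\Phi)\otimes \tfrac{(e_2^\dual)^k}{(e_1^\dual\wedge e_2^\dual)^j}\rangle_{\eet} = \langle {\bf z}_{{\rm Kato}}^{S,c,d}(k,j),\exp(\Phi)\rangle_{\eet},$$
the pairing now taking place between $H^1(G_{\Q_p},\iH^1_{{\eet},c}(W^{\eet}_{k,k-j+2})_S)$ and its Tate dual. Next, by the Bloch--Kato adjunction between $\exp$ and $\exp^*$ (at the local level $G_{\Q_p}$), this equals $\langle \exp^*({\bf z}_{{\rm Kato}}^{S,c,d}(k,j)),\Phi\rangle_{{\rm dR}}$, where the de\,Rham pairing is the natural one on $\iH^1_{{\rm dR},c}(W^{\rm dR}_{k,j})_S$ (recall the twist $W^{\eet}_{k,k-j+2} = (W^{\eet}_{k,j})^\dual(2)$). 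Here I would track carefully the $\zeta_{\rm dR}^2$ appearing in the Tate-twist comparison, which will ultimately produce the $\zeta_{\rm dR}$ of the statement after pairing.

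Now I would apply Kato's explicit reciprocity law (th.\,\ref{recex1}) to replace the left factor by $\tilde{\bf z}_{\rm Eis}^{S,c,d}(k,j)\otimes \zeta_{\rm dR}^2$. By the very definition recalled in \S\,\ref{recex5},
$$\tilde{\bf z}_{\rm Eis}^{S,c,d}(k,j) = \tfrac{(-1)^j}{(j-1)!}\,\bigl(c_p^2-c_p^{j}\matrice{c}{0}{0}{1}\bigr)\bigl(d_p^2-d_p^{j-k}\matrice{1}{0}{0}{d}\bigr)\star\bigl(\tilde{\bf z}_{\rm Eis}(j)\otimes \tilde{\bf z}'_{\rm Eis}(k+2-j)\bigr).$$
The action of the twisting operators may be transferred, via the equivariance of the pairing, from ${\bf z}_{\rm Kato}^{S,c,d}$ to $\Phi$, yielding $\Phi^{c,d}$ on the right (the matrices get inverted, which accounts for the $c^{-1},d^{-1}$ in the definition of $\Phi^{c,d}$).

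It then remains to evaluate the pairing against the chosen test function. Since $\Phi$ lives at level $N$, I would pair against the characteristic function of an appropriate coset $\matrice{N\cZ+1}{N\cZ}{N\cZ}{N\cZ+1}\cap{\bf M}_2'(\A^{]\infty[})$, or more precisely against ${\bf 1}_{{\bf M}_2(\cZ^{]S[})}\otimes \phi_S$ with $\phi_S$ prescribing conductor $N$. Using th.\,\ref{t1} with $r=N^{-1}$ and the appropriate rational vectors $(a,b)$, the two Eisenstein distributions integrate respectively to $N^{-j}\widetilde E^{(j)}_{0,1/N}$ and $N^{(k+2-j)-2}\widetilde F^{(k+2-j)}_{1/N,0}=N^{k-j}\widetilde F^{(k+2-j)}_{1/N,0}$, whose product gives the factor $N^{-j}\cdot N^{k-j}=N^{k-2j}$ advertised in the statement. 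Putting the factors $(-1)^j$, $(j-1)!^{-1}$, and $N^{k-2j}$ together and using the natural product rule $\iH^0(\omega^{j,0})\otimes \iH^0(\omega^{k+2-j,k+1-j})\to \iH^0(W^{\rm dR}_{k+2,k+1-j})$ matched against the de\,Rham pairing on $Y(N)$ yields the right-hand side.

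The main obstacle in executing this plan is bookkeeping: I need to verify with care that (a) the natural Tate twist $(W^{\eet}_{k,j})^\dual(2) = W^{\eet}_{k,k+2-j}$ and the pairing conventions on $\iH^1_{\rm dR,c}$ vs $\iH^1_{\rm dR}$ (using Poincaré duality on $Y(N)$) together produce the single factor $\zeta_{\rm dR}$ rather than $\zeta_{\rm dR}^2$ or $\zeta_{\rm dR}^{-1}$; (b) the twisting operators $\matrice{c}{0}{0}{1}$ and $\matrice{1}{0}{0}{d}$ on the Eisenstein side transfer to the contragredient operators $\matrice{c^{-1}}{0}{0}{1}$ and $\matrice{1}{0}{0}{d^{-1}}$ on the de\,Rham side, with the correct power-of-$p$ normalizations $c_p^j,d_p^{j-k}$; (c) the choice of test function at level $N$ picks out exactly the pair of Eisenstein series $\widetilde E^{(j)}_{0,1/N}$ and $\widetilde F^{(k+2-j)}_{1/N,0}$ (and not some Galois conjugate). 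This last check can be done by comparing with Kato's analogous calculation at classical points~\cite[prop.\,10.10 and~11.7]{Ka4}, which serves as a model.
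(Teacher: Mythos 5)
Your plan is exactly the proof given in the paper: reduce to $\langle{\bf z}_{\rm Kato}^{S,c,d}(k,j),\exp(\Phi)\rangle$ via Lemma~\ref{expo1}, apply Bloch--Kato adjunction to pass to $\exp^*$, invoke Theorem~\ref{recex1} to replace $\exp^*({\bf z}_{\rm Kato}^{S,c,d}(k,j))$ by $\tilde{\bf z}_{\rm Eis}^{S,c,d}(k,j)\otimes\zeta_{\rm dR}^2$, transfer the idempotents across the pairing to obtain $\Phi^{c,d}$, and finally evaluate $\tilde{\bf z}_{\rm Eis}^S(k,j)$ on $1+N{\rm M}_2(\Z_S)$ via Theorem~\ref{t1}. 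One small slip: the correct specialisation of Theorem~\ref{t1} is $r=N$ (not $r=N^{-1}$), but the powers you actually wrote, $N^{-j}$ and $N^{(k+2-j)-2}=N^{k-j}$, already presuppose $r=N$, so the computation and the final factor $N^{k-2j}$ are both correct.
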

\begin{proof}
Le lemme~\ref{expo1} fournit l'identit\'e:
$$\langle {\bf z}_{{\rm Kato}}^{S,c,d}, \exp(\Phi)\otimes \tfrac{(e_2^\dual)^k}{(e_1^\dual\wedge e_2^\dual)^j}\rangle_{\eet}=
\langle {\bf z}_{{\rm Kato}}^{S,c,d}(k,j), \exp(\Phi)\rangle_{\eet}.$$
Le th.\,\ref{recex1} se traduit par
\begin{align*}
\big\langle {\bf z}^{S,c,d}_{{\rm Kato}}(k,j), \exp(\Phi)\big\rangle_{\eet}
&= \big\langle \tilde{\bf z}^{S,c,d}_{{\rm Eis}}(k,j)\otimes\zeta_{\rm dR}^2, \Phi\big\rangle_{\rm dR}\otimes\zeta_{\rm dR}^{-1}\\
&= \big\langle \big(c_p^2-c_p^{j}\matrice{c}{0}{0}{1}\big)\big(d_p^2-d_p^{j-k}\matrice{1}{0}{0}{d}\big)
\star\tilde{\bf z}^{S}_{{\rm Eis}}(k,j), \Phi\big\rangle_{\rm dR}\otimes\zeta_{\rm dR}\\
&= \big\langle \tilde{\bf z}^S_{{\rm Eis}}(k,j), 
\big(c_p^2-c_p^{j}\matrice{c^{-1}}{0}{0}{1}\big)\big(d_p^2-d_p^{j-k}\matrice{1}{0}{0}{d^{-1}}\big)\star\Phi\big\rangle_{\rm dR}\otimes\zeta_{\rm dR}
\end{align*}
On conclut en utilisant la formule (\ref{limite}) 
et l'identit\'e
$$\int_{1+N{\rm M}_2(\Z_S)}\tilde {\bf z}^S_{{\rm Eis}}(k,j)=
\tfrac{(-1)^j}{(j-1)!}N^{k-2j}
\tilde F_{\frac{1}{N},0}^{(k+2-j)}\tilde E_{0,\frac{1}{N}}^{(j)}.
\qedhere$$
\end{proof}

\subsubsection{La m\'ethode de Rankin-Selberg}\label{shi1}
Un r\'esultat classique de Shimura~\cite{Shi} (prop.~\ref{shi3} ci-dessous),
qui se d\'emontre en utilisant la m\'ethode de Rankin, 
permet d'exprimer
$\langle\Phi,\widetilde E_{0,\frac{1}{N}}^{(j)} \widetilde F_{\frac{1}{N},0}^{(k+2-j)}\rangle_{{\rm dR},Y(N)}$ en termes de valeurs sp\'eciales de fonctions $L$.  

Soit
$$E_{0,\frac{1}{N}}^{(j,s)}(\tau)=\sum_{\omega\in\Z+\Z\tau}\frac{1}{\big(\omega+\frac{1}{N}\big)^j}
\big(\frac{{\rm Im}\,\tau}{\big|\omega+\frac{1}{N}\big|^2}\big)^{s-1-k}.$$

\begin{prop}\phantomsection\label{shi3}
Soient 
$$f=\sum_{n\in\frac{1}{M}\Z,\,n>0}a_nq^n\in M^{\rm par,\,cl}_{k+2}(\Gamma(M))
\quad{\rm et}\quad
g=\sum_{n\in\frac{1}{M}\Z,\,n\geq 0}b_nq^n\in M^{\rm cl}_{k+2-j}(\Gamma(M))$$
dont les fonctions $L$ se factorisent sous la forme:
\begin{align*}
L(f,s)=&\ \big(\sum_{n\in \Z[\frac{1}{M}]^\dual}\tfrac{a_n}{n^s}\big)
\prod_{\ell\nmid M}\big({(1-\tfrac{\alpha_{\ell,1}}{\ell^{s}})(1-\tfrac{\alpha_{\ell,2}}{\ell^{s}})}\big)^{-1}\\
L(g,s)=&\ \big(\sum_{n\in \Z[\frac{1}{M}]^\dual}\tfrac{b_n}{n^s}\big)
\prod_{\ell\nmid M}\big({(1-\tfrac{\beta_{\ell,1}}{\ell^{s}})(1-\tfrac{\beta_{\ell,2}}{\ell^{s}})}\big)^{-1}
\end{align*}
Alors
$$\int_{\Gamma(M)\backslash{\cal H}^+}{f(-\overline\tau)}g(\tau)E^{(j,s)}_{0,\frac{1}{M}}(\tau)
\,y^{k+2}\tfrac{dx\,dy}{y^2}=M^{1+j+2(s-1-k)}
\tfrac{\Gamma(s)}{(4\pi)^s}
D(f,g,s),$$
o\`u l'on a pos\'e
$$D(f,g,s)=
\big(\sum_{n\in \Z[\frac{1}{M}]^\dual}\hskip-.3cm\tfrac{{a_n}b_n}{n^s}\big)
\prod_{\ell\nmid M}\big({(1-\tfrac{{\alpha_{\ell,1}}\beta_{\ell,1}}{\ell^{s}})
(1-\tfrac{{\alpha_{\ell,1}}\beta_{\ell,2}}{\ell^{s}})
(1-\tfrac{{\alpha_{\ell,2}}\beta_{\ell,1}}{\ell^{s}})
(1-\tfrac{{\alpha_{\ell,2}}\beta_{\ell,2}}{\ell^{s}})}\big)^{-1}$$
\end{prop}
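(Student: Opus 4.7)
The plan is to apply the classical Rankin--Selberg unfolding method to the non-holomorphic Eisenstein series $E^{(j,s)}_{0,1/M}(\tau)$ paired against the $\Gamma(M)$-invariant measure $f(-\overline\tau)g(\tau)\,y^{k+2}\tfrac{dx\,dy}{y^2}$. I first check $\Gamma(M)$-invariance: the weight-$j$ automorphy factor of $E^{(j,s)}_{0,1/M}$ combines with the weight-$(k+2-j)$ factor of $g(\tau)$ to produce $(c\tau+d)^{k+2}$, which is cancelled by $\overline{(c\tau+d)}^{k+2}$ coming from $f(-\overline{\gamma\tau})$ (the latter is obtained via the identity $-\overline{\gamma\tau}=\gamma'(-\overline\tau)$ with $\gamma'=\mathrm{diag}(1,-1)\gamma\,\mathrm{diag}(1,-1)\in\Gamma(M)$) together with $|c\tau+d|^{-2(k+2)}$ from $\mathrm{Im}(\gamma\tau)^{k+2}$.

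Next I rewrite the Eisenstein series. Substituting $c'=Mc$, $d'=Md+1$ yields $\omega+1/M=(c'\tau+d')/M$, so
\[
E^{(j,s)}_{0,1/M}(\tau)=M^{j+2(s-1-k)}\sum_{\substack{(c',d')\in M\Z\times(M\Z+1)\\ (c',d')\neq(0,0)}}\tfrac{1}{(c'\tau+d')^j}\left(\tfrac{y}{|c'\tau+d'|^2}\right)^{s-1-k}.
\]
Factoring out $m=\gcd(c',d')$ (necessarily coprime to $M$) separates an auxiliary Dirichlet series in $m$ from a sum over primitive cosets of $\Gamma_\infty\cap\Gamma(M)$ in $\Gamma(M)$, indexed by residue classes $(0,m^{-1})\bmod M$. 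The unfolding trick converts $\int_{\Gamma(M)\backslash\cal H^+}$ into an integral over the fundamental domain $\{0<x<M,\,y>0\}$ of $\Gamma_\infty\cap\Gamma(M)$, keeping only the identity-coset contribution $y^{s-1-k}$; together with the measure $y^k\,dx\,dy$ this leaves $\int_0^\infty\!\int_0^M f(-\overline\tau)g(\tau)\,y^{s-1}\,dx\,dy$.

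The inner integral is computed by Fourier analysis. Writing $f=\sum_{n>0}a_nq^n$, $g=\sum_{\ell\geq 0}b_\ell q^\ell$ and using $e^{2i\pi n(-\overline\tau)}=e^{-2\pi ny-2i\pi nx}$, $e^{2i\pi\ell\tau}=e^{-2\pi\ell y+2i\pi\ell x}$, the product $f(-\overline\tau)g(\tau)$ expands as $\sum_{n,\ell}a_nb_\ell e^{-2\pi(n+\ell)y}e^{2i\pi(\ell-n)x}$, with $n,\ell\in\tfrac{1}{M}\Z_{>0}$. The orthogonality relation $\int_0^M e^{2i\pi(\ell-n)x}\,dx=M\,\delta_{\ell,n}$ extracts the diagonal, and the Mellin transform $\int_0^\infty e^{-4\pi ny}y^{s-1}\,dy=\Gamma(s)/(4\pi n)^s$ produces $M\cdot\tfrac{\Gamma(s)}{(4\pi)^s}\sum_{n>0}a_nb_n/n^s$. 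Combined with the prefactor $M^{j+2(s-1-k)}$, this yields the claimed $M^{1+j+2(s-1-k)}\Gamma(s)/(4\pi)^s$.

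It remains to identify $\sum_{n>0}a_nb_n/n^s$, enriched by the auxiliary Dirichlet series from the non-primitive $m$-sum, with $D(f,g,s)$. The sum factors as an Euler product at primes $\ell\nmid M$ times the ramified factor $\sum_{n\in\Z[1/M]^\dual}a_nb_n/n^s$, and the classical Hecke identity $\sum_{r\geq 0}a_{\ell^r}b_{\ell^r}X^r=(1-\alpha_{\ell,1}\alpha_{\ell,2}\beta_{\ell,1}\beta_{\ell,2}X^2)/\prod_{i,j}(1-\alpha_{\ell,i}\beta_{\ell,j}X)$ at each good prime converts the local factor into the denominator of $D(f,g,s)$ times an excess numerator $1-\alpha_{\ell,1}\alpha_{\ell,2}\beta_{\ell,1}\beta_{\ell,2}/\ell^{2s}$. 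The hard part will be to show that the product of these excess numerators, which evaluates to $L^M(\omega_f\omega_g,2s-2k-2+j)^{-1}$ thanks to the normalisations $\alpha_{\ell,1}\alpha_{\ell,2}=\omega_f(\ell)\ell^{k+1}$ and $\beta_{\ell,1}\beta_{\ell,2}=\omega_g(\ell)\ell^{k+1-j}$, is exactly cancelled by the Dirichlet series from the non-primitive $m$-sum in the Eisenstein series. Since distinct residue classes $(0,m^{-1})\bmod M$ produce unfolded integrals extracting different Fourier modes, this cancellation requires reassembling the contributions via orthogonality of Dirichlet characters modulo $M$ (together with careful bookkeeping of sign conventions and the $\pm I$ issue in $\Gamma_\infty\cap\Gamma(M)$ for small $M$); this combinatorial step is the only substantive obstacle.
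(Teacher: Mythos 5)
The paper does not reprove this statement: it cites Shimura's 1976 Rankin--Selberg computation, and your overall strategy (unfolding against $E^{(j,s)}_{0,1/M}$) is indeed his. The invariance of the integrand under $\Gamma(M)$, the substitution $\omega+\tfrac1M=(c'\tau+d')/M$ producing the prefactor $M^{j+2(s-1-k)}$, and the Fourier/Mellin computation for the unfolded piece ($x$-orthogonality over $[0,M]$, then $\int_0^\infty e^{-4\pi ny}y^{s-1}\,dy=\Gamma(s)/(4\pi n)^s$) are all correct.

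The gap is in the unfolding itself. After pulling out $m=\gcd(c',d')$, the primitive pairs $(c_0,d_0)$ lie in the residue class $(0,m^{-1})\pmod M$, and the inner sum is therefore indexed by $(\Gamma_\infty\cap\Gamma(M))\backslash\Gamma(M)\gamma_{m^{-1}}$ for some $\gamma_{m^{-1}}\in{\rm SL}_2(\Z)$ with bottom row $\equiv(0,m^{-1})\pmod M$ --- \emph{not} by cosets of $\Gamma_\infty\cap\Gamma(M)$ inside $\Gamma(M)$ as you assert. Only the terms with $m\equiv 1\pmod M$ unfold directly against $\int_{\Gamma(M)\backslash{\cal H}^+}$. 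For the remaining $m$, one must first change variables by $\gamma_{m^{-1}}^{-1}$ (permissible since $\Gamma(M)$ is normal in ${\rm SL}_2(\Z)$), and this slashes $f$ and $g$ by explicit elements of ${\rm SL}_2(\Z)$ whose reduction mod $M$ is $\mathrm{diag}(m^{-1},m)$; the diagonal Fourier coefficients you then extract are those of the slashed forms, not $a_n$ and $b_n$. Consequently the ``auxiliary Dirichlet series in $m$'' does not factor off as a scalar multiplier of $\sum a_nb_n/n^s$, and reassembling the $m$-pieces into $D(f,g,s)$ is precisely where the Euler-product hypotheses on $L(f,s)$, $L(g,s)$ and the sums over $\Z[\frac1M]^\dual$ must be used. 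Your closing appeal to orthogonality of Dirichlet characters correctly names the right tool but does not supply the argument --- it is not a residual combinatorial step, it is the content. As written you have established the $m\equiv1$ contribution only.
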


\subsubsection{Syst\`eme d'Euler de Kato et valeurs sp\'eciales de fonctions $L$}\label{expo4}
Soit maintenant $\pi$ une repr\'esentation cohomologique encadr\'ee de $\GG(\A^{]\infty[})$,
de poids $(k+2,j+1)$, avec $1\leq j\leq k+1$, de conducteur $N$ et caract\`ere central $\omega_\pi$.
Soit $\tilde\omega_\pi$ le caract\`ere de Dirichlet associ\'e \`a $\omega_\pi$ (\'etendu en une fonction
multiplicative sur $\Z$, en posant $\tilde\omega_\pi(n)=0$ si $(n,N)\neq 1$).
Notons que $\tilde\omega_\pi(-1)=(-1)^k$.

On renvoie au \no\ref{como12} et suivants pour la d\'efinition de $v_\pi$, $\iota_{{\rm dR},\pi}^+$,
$f_\pi$, $f^\dual_\pi$, $\lambda(\pi)$, etc.
Notons que $f_\pi$ est une forme primitive, ce qui implique
$$L(f_\pi,s)=
\prod_{\ell}
\big({1-\tfrac{a_\ell}{\ell^s}+\tfrac{\tilde\omega_\pi^{-1}(\ell)\ell^{k+1}}{\ell^{2s}}}\big)^{-1}
$$

On \index{pis@\piS}note $\pi_S$ le produit tensoriel $\otimes_{\ell\in S}\pi_\ell$; c'est un sous-espace
de ${\rm LC}(\Q_S^\dual,\Q(\bmu_{S^\infty}))^{\Z_S^\dual}$.  
On peut utiliser l'action de $\matrice{\Z_S^\dual}{0}{0}{1}$ pour d\'ecouper $\pi_S$: si
$\eta:\Z_S^\dual\to L^\dual$ est un caract\'ere localement constant, on note
$\pi_S[\eta]$ l'espace des $\phi_S\in\Q(\pi,\eta)\otimes_{\Q(\pi)}\pi_S$ 
v\'erifiant $\matrice{a}{0}{0}{1}\star \phi_S=\eta(a)\phi_S$,
pour tout $a\in\Z_S^\dual$. (Ceci implique que $\phi_S$ est \`a valeurs dans $\Q(\pi,\eta)G(\eta^{-1})$.)

Soit $\alpha_{\pi,S}:\pi_S\to 
\Q(\pi)\otimes_\Q\iH^1_{{\rm dR},c}(W_{k,j}^{\rm dR})/{\rm Fil}^0$ l'application lin\'eaire
$$\phi\mapsto \alpha_{\pi,S}(\phi)=\iota_{{\rm dR},\pi}^-(v_{\pi}^{]S[}\otimes\phi)$$
Alors,
dans $\C\otimes(\iH^1_{{\rm dR},c}(W_{k,j}^{\rm dR})/{\rm Fil}^0)$,
on a aussi
$$\alpha_{\pi,S}(\phi)=
\lambda(\pi)\,
w_\infty\star\iota_{{\rm dR},\pi}^+(v_\pi^{]S[}\otimes \phi).$$

\begin{theo}\phantomsection\label{shi4}
Soient $\eta:\Z_S^\dual\to L^\dual$ un caract\`ere localement constant
et $\phi\hskip.5mm{\in}\hskip.5mm \pi_S[\eta]$. Alors\footnote{Voir le cor.\,\ref{shi4.1} pour une
formule plus g\'en\'erale, mais une preuve plus d\'etourn\'ee.}
$$
\langle {\bf z}^{S,c,d}_{{\rm Kato}},\exp(\alpha_{\pi,S}(\phi))
\otimes\tfrac{(e_2^\dual)^k}{k!\,(e_1^\dual\wedge e_2^\dual)^j}
\rangle_{\eet}= a_{\pi}^{c,d}(\phi X^{-j})
L^{]S[}(f_\pi^\dual,j)L(v_\pi^{]S[}\otimes\phi X^{-j},0),$$
o\`u:
$$ L^{]S[}(f_\pi^\dual,s)=\sum_{n\geq 1}{\bf 1}_{\Z_S^\dual}(n)\overline{a_n}\,n^{-s},\quad
a_{\pi}^{c,d}(\phi X^\ell)=
\lambda(\pi)\,
a_{\pi,\infty}(\phi X^\ell)\, a_{\pi,p}^{c,d}(\phi X^\ell).$$
avec\footnote{On voit $\eta$ comme un caract\`ere de $\cZ^\dual$ via la projection naturelle
$\cZ^\dual\to\Z_S^\dual$.}
\begin{align*}
a_{\pi,\infty}(\phi X^\ell)&=(1+(-1)^{-\ell}\eta(-1))\\
a_{\pi, p}^{c,d}(\phi X^\ell)&=\big(c_p^2-c_p^{-\ell}\eta(c^{-1})\big)
\big(d_p^2-d_p^{{-\ell}-k}\eta(d)\tilde\omega^{-1}_\pi(d)\big)
\end{align*}
\end{theo}
\begin{rema} 
On trouvera au cor.\,\ref{shi4.1} une formule plus g\'en\'erale.
\end{rema}
\begin{proof}
On utilise la prop.\,\ref{expo3.1} pour faire le calcul.
Le facteur $a_{\pi,p}^{c,d}(\phi X^{-j})$
provient du passage de $\phi$ \`a $\phi^{c,d}$ dans les notations
de cette proposition: si $a\in\cZ^\dual$, on a
$$\matrice{a}{0}{0}{1}^{]\infty[}\star(v_\pi^{]S[}\otimes\phi)=\eta(a)(v_\pi^{]S[}\otimes\phi)
\quad{\rm car}\ 
\matrice{a}{0}{0}{1}^{]S[}\star v_\pi^{]S[}=v_\pi^{]S[}
\ {\rm et}\ 
\matrice{a}{0}{0}{1}_S\star \phi
=\eta(a)\phi.$$
On en d\'eduit, en utilisant le fait que $\omega_\pi$ est le caract\`ere central,
que
\begin{align*}
\big(c_p^2-c_p^j\matrice{c^{-1}}{0}{0}{1}\big)&
\big(d_p^2-d_p^{j-k}\matrice{d^{-1}}{0}{0}{d^{-1}}\matrice{d}{0}{0}{1}\big)
\star(v_\pi^{]S[}\otimes\phi)\\ &=
\big(c_p^2-c_p^j\eta(c^{-1})\big)
\big(d_p^2-d_p^{j-k}\tilde\omega^{-1}_\pi(d)\eta(d)\big)
(v_\pi^{]S[}\otimes\phi).
\end{align*}
On continue donc le calcul avec $\phi$. On choisit $M$ tel que $\phi$ soit fixe par $1+M{\bf M}_2(\Z_S)$.

\noindent
$\bullet$ On commence par appliquer le lemme~\ref{shi2} avec 
$$\Phi_1^{\rm Har}=\alpha_{\pi,S}(\phi) \quad{\rm et}\quad
\Phi_2^{\rm Har}=\widetilde E^{(j)}_{0,\frac{1}{M}}
\widetilde F^{(k+2-j)}_{\frac{1}{M},0}.$$
Pour all\'eger un peu les notations, on pose:
$$\langle \Phi_1,\Phi_2\rangle:=
\langle \Phi_1,\Phi_2\rangle_{{\rm dR},Y(M)}\otimes\zeta_{\rm dR}$$

\noindent
$\diamond$
Puisque $\matrice{1}{0}{0}{a}^{]\infty[}\star\widetilde E^{(j)}_{0,\frac{1}{M}}=\widetilde E^{(j)}_{0,\frac{1}{M}}$
et $\matrice{1}{0}{0}{a}^{]\infty[}\star\widetilde F^{(j)}_{\frac{1}{M},0}=\widetilde F^{(j)}_{\frac{a}{M},0}$,
on a $$\big(\matrice{1}{0}{0}{a}^{]\infty[}\star\Phi^{\rm Har}_2\big)(\tau,1^{]\infty[})=
(-2i\pi)^{j+1}
E^{(j)}_{0,\frac{1}{M}}F^{(k+2-j)}_{\frac{a}{M},0}\otimes\tfrac{(\tau e_2-e_1)^k}{(e_1\wedge e_2)^{k-j}}d\tau.$$

\noindent
$\diamond$ Comme on l'a vu plus haut,
si $a\in\cZ^\dual$, alors 
\begin{align*}
\matrice{1}{0}{0}{a}^{]\infty[}\star \big(v_\pi^{]S[}\otimes\phi)
= \tilde\omega_{\pi}(a)\eta(a^{-1})\big(v_\pi^{]S[}\otimes\phi\big)
\end{align*}
On en d\'eduit que 
$$\matrice{1}{0}{0}{a}^{]\infty[}\star\Phi^{\rm Har}_1=\tilde\omega_\pi(a)\eta^{-1}(a)\Phi^{\rm Har}_1.$$

\noindent
$\diamond$
On obtient donc, gr\^ace au lemme~\ref{shi2},
$$\langle \Phi_1,\Phi_2\rangle= \tfrac{1}{2i\pi}
\int_{\Gamma(M)\backslash{\cal H}^+}\hskip-1cm
\Phi^{\rm Har}_1(\tau,1^{]\infty[})\wedge \big((-2i\pi)^{j+1}E^{(j)}_{0,\frac{1}{M}}
F^{(k+2-j)}_{M,\tilde\omega_\pi\eta^{-1}}\tfrac{(\tau e_2-e_1)^k}{(e_1\wedge e_2)^{k-j}}d\tau\big),$$
o\`u, si $\chi:(\Z/M)^\dual\to\C^\dual$ est un caract\`ere de Dirichlet,
on a pos\'e 
$$F^{(k+2-j)}_{M,\chi}=
\sum_{a\in(\Z/M)^\dual}\chi(a)F^{(k+2-j)}_{\frac{a}{M},0},$$
et, d'apr\`es le lemme~\ref{lambda1}, 
$$\Phi^{\rm Har}_1(\tau,1^{]\infty[})=
\eta(-1)\lambda(\pi)(2i\pi)^{k+1-j}f_\phi(-\overline\tau)
\otimes\tfrac{(\overline\tau e_2-e_1)^k}{(e_1\wedge e_2)^{j}}d\overline\tau$$
avec 
$$f_\phi=\sum_{n\in\Q,\,n>0}\hskip-.4cm n^{k+1-j}(v_\pi^{]S[}\otimes\phi)(n^{]\infty[})\,{\bf e}_\infty(-n\tau).$$

\noindent
$\bullet$ Ensuite, on applique la prop.~\ref{shi3} \`a
$$s=k+1, \quad f=f_\phi,
\quad g=F^{(k+2-j)}_{M,\chi},\ {\text{ avec $\chi=\tilde\omega_\pi\eta^{-1}$.}}$$

\noindent
$\diamond$
On a 
\begin{align*}
L(f,s)&=\big(\sum_{n\in\Z[\frac{1}{S}]_+^\dual}\tfrac{\phi(n_S)}{n^{s-(k+1-j)}}\big)
L^{]S[}(f_\pi\otimes\eta,s)\\
 &= \big(\sum_{n\in\Z[\frac{1}{S}]_+^\dual}\tfrac{\phi(n_S)}{n^{s-(k+1-j)}}\big)
\prod_{\ell\notin S}
(1-\tfrac{{a_\ell}\eta(\ell)}{\ell^s}+\tfrac{\tilde\omega_\pi^{-1}(\ell)\eta^2(\ell)\ell^{k+1}}{\ell^{2s}})^{-1}
\end{align*}
qui se factorise sous la forme voulue, avec $a_n=n^{k+1-j}\phi(n_S)$ si $n\in\Z[\frac{1}{S}]_+^\dual$.

\noindent
$\diamond$
On d\'eduit de la prop.~\ref{qdev} l'identit\'e
$$L(g,s)=c(\eta)M^{s-(k+1-j)}\zeta(s)L^{]S[}(\chi,s-(k+1-j)), $$
avec 
$$c(\eta)=1+(-1)^{k-j}\tilde\omega_\pi(-1)\eta(-1)=a_{\pi,\infty}(\phi X^{-j}).$$
qui fournit une factorisation sous la forme voulue,
avec $\beta_{\ell,1}=1$ et $\beta_{\ell,2}=\chi(\ell)\ell^{k+1-j}$,
et $b_n=c(\eta)M^{j-k-1}$, si $n\in\frac{1}{M}\N\cap\Z[\frac{1}{S}]_+^\dual$.

\noindent 
$\diamond$
 En utilisant les identit\'es
\begin{align*}
\big(\tfrac{(\overline\tau e_2-e_1)^k}{(e_1\wedge e_2)^j}d\overline\tau\big)\wedge
\big(\tfrac{(\tau e_2-e_1)^k}{(e_1\wedge e_2)^{k-j}}d\tau\big)=&\ 
-(-2i)^{k+1}y^{k+2}\tfrac{dx\wedge dy}{y^2},\\
E_{0,\frac{1}{M}}^{(j)}=
\tfrac{(j-1)!}{(-2i\pi)^j}E_{0,\frac{1}{M}}^{(j,k+1)},\quad
&\quad \tfrac{-(-2i)^{k+1}\Gamma(k+1)}{(4\pi)^{k+1}}=
\tfrac{-k!}{(2i\pi)^{k+1}}
\end{align*}
on en tire, gr\^ace \`a la prop.\,\ref{shi3},
\begin{align*}
\langle \Phi_1,\Phi_2\rangle&=\tfrac{1}{2i\pi}
\eta(-1)\lambda(\pi)(2i\pi)^{k+1-j}(-2i\pi)^{j+1}
\tfrac{(j-1)!}{(-2i\pi)^j}\tfrac{-k!}{(2i\pi)^{k+1}} M^{1+j}D(f,g,k+1)\\
&=\eta(-1)\lambda(\pi)k!\tfrac{(j-1)!}{(2i\pi)^j}M^{1+j}D(f,g,k+1)
\end{align*}

\noindent
$\bullet$
Par ailleurs, les factorisations de $L(f,s)$ et $L(g,s)$ ci-dessus nous donnent:
$$D(f,g,s)=c(\eta)M^{j-k-1}
L(f_\phi,s)
L^{]S[}((f_\pi\otimes\eta)\otimes\eta^{-1}\tilde\omega_\pi,s-(k+1-j)),$$
ce que l'on peut r\'e\'ecrire, en utilisant les
relations
$$f_\pi\otimes\tilde\omega_\pi=f_\pi^\dual,\quad
L(f_\phi,s)=L(v_\pi^{]S[}\otimes\phi,s-(k+1-j))=(2i\pi)^jL(v_\pi^{]S[}\otimes\phi X^{-j},s-(k+1)),$$
sous la forme
$$D(f,g,s)=(2i\pi)^j c(\eta)M^{j-k-1}
L(v_\pi^{]S[}\otimes\phi X^{-j},s-(k+1))
L^{]S[}(f^\dual_\pi,s-(k+1-j)).$$
On en tire 
$$\langle \Phi_1,\Phi_2\rangle= \eta(-1) c(\eta)
\lambda(\pi)k!{(j-1)!}M^{2j-k}
L^{]S[}(f_\pi^\dual,j)L(v_\pi^{]S[}\otimes\phi X^{-j},0).$$
Le r\'esultat s'en d\'eduit en utilisant l'identit\'e
$(-1)^j\eta(-1)c(\eta)=a_{\pi,\infty}(\phi X^{-j})$ et la prop.~\ref{expo3.1}; le $k!$ dispara\^{\i}t
car on utilise $\frac{(e_2^\dual)^k}{k!(e_1^\dual\wedge e_2^\dual)^j}$ (qui correspond \`a
$X^{-j}$ dans le mod\`ele de Kirillov de $\pi_p^{\rm alg}$) au lieu de
$\frac{(e_2^\dual)^k}{(e_1^\dual\wedge e_2^\dual)^j}$.
\end{proof}

\Subsection{Factorisation du syst\`eme de Beilinson-Kato}\label{fact1}

\subsubsection{Les fonctionnelles ${\bf z}_{{\rm Kato}}^{S,c,d}(\check m_{\eet}(\pi))$
et ${\bf z}^{S}(\check m_{\eet}(\pi))$.}
Soit $\pi$ une repr\'esentation de $\GG(\A^{]\infty[})$,
cohomologique, $\Pi_p$-compatible,
de poids $(k+2,j+1)$ avec $0\leq j\leq k$.
\vskip.2cm
$\bullet$ {\it L'\'el\'ement ${\bf z}^S(\check m_{\eet}(\pi))$}.--- 
On note 
$$\check m_{\eet}(\pi):=m_{\eet}^\dual(\pi)(1)=m_{\eet}(\check\pi\otimes|\ |_\A^{-2})$$ 
la duale de Tate de $m_{\eet}(\pi)$.  On a aussi
$$\check m_{\eet}(\pi)\cong\rho_{f_\pi^\dual}^\dual\otimes\cyp ^{1-j}
=\rho_{f_\pi}\otimes\cyp ^{k+2-j}.$$
Comme $m(\check\pi\otimes|\ |_\A^{-2})^\dual=m(\pi)\otimes\zeta_{\rm B}^{-1}$,
on dispose \index{Zmpi2@\Zmpi}de
\begin{align*}
{\bf z}^S(\check m_{\eet}(\pi))&\in 
\check m_{\eet}(\pi)^\dual\otimes H^1(G_{\Q,S},\check m_{\eet}(\pi))
= m(\pi)\otimes\zeta_{\rm B}^{-1}\otimes H^1(G_{\Q,S},\check m_{\eet}(\pi))
\end{align*}

\vskip.2cm
$\bullet$ {\it L'\'el\'ement
${\bf z}_{{\rm Kato}}^{S,c,d}(\check m_{\eet}(\pi))$}.---
On a une application naturelle
$$H^1(G_{\Q,S},H^1(\Pi'_{\Qbar,S},\cdo (\GG(\Q_S),\Q_p(2)))\to
H^1(G_{\Q,S},H^1(\GG,{\rm Mes}_L)_S(2))$$
En effet, $H^1(\Pi'_{\Qbar,S},\cdo (\GG(\Q_S),\Q_p))=H^1(\Pi_{\Qbar,S},\cdo (\GG(\Z_S),\Q_p))$
par le lemme de Shapiro et le membre de droite s'injecte dans
$H^1(\Pi_{\Qbar},\cdo (\GG(\Z_S),\Q_p))$ par inflation.
Comme $\Pi_{\Qbar}$ est le compl\'et\'e profini de $\Gamma(1)$ qui est presque libre,
on a $H^1(\Pi_{\Qbar},\cdo (\GG(\Z_S),\Q_p))=H^1(\Gamma(1),\cdo (\GG(\Z_S),\Q_p))$,
et ce dernier groupe est isomorphe \`a $H^1(\GG,{\rm Mes}_{\Q_p})_S$ par le lemme de Shapiro.

On note encore 
$${\bf z}_{{\rm Kato}}^{S,c,d}\in H^1(G_{\Q,S},H^1(\GG,{\rm Mes}_L)_S(2))$$
l'image de ${\bf z}_{{\rm Kato}}^{S,c,d}\in H^1(G_{\Q,S},H^1(\Pi'_{\Qbar,S},\cdo (\GG(\Q_S),\Q_p(2)))$ par
l'application ci-dessus.
Maintenant, $H^1(\GG,{\rm Mes}_L)_S(2)$ est le dual de Tate 
$H^1_c(\GG,{\cal C}_L)_S$.
On dispose d'injections naturelles (cf.~\no\ref{como17}, du moins si la restriction
$m_{\eet}(\pi)_p$ de $m_{\eet}(\pi)$ \`a $G_{\Q_p}$ n'est pas somme de deux caract\`eres,
cf.~rem.\,\ref{passomme1}):
\begin{align*}
m(\pi)\otimes v_\pi^{]S[}\otimes\pi_S^{\rm alg}{\hooklra} 
m_{\eet}(\pi)\otimes v_\pi^{]S[}\otimes\Pi_S(m_{\eet}^\dual(\pi))
\overset{\iota_{\pi,S}}{\hooklra} \Q(\pi)\otimes H^1_c(\GG,{\cal C}_L)_S,
\end{align*}

Par dualit\'e, cela fournit une fl\`eche $G_{\Q,S}\times \GG(\Q_p)$-\'equivariante
$$\iota_{\pi,S}^\dual:
\Q(\pi)\otimes H^1(\GG,{\rm Mes}_L)_S(2)\to \check m_{\eet}(\pi)\otimes\Pi_S^\dual(m_{\eet}^\dual(\pi)),$$
On \index{Zmpi2@\Zmpi}note 
$${\bf z}_{{\rm Kato}}^{S,c,d}(\check m_{\eet}(\pi))
\in H^1(G_{\Q,S},\check m_{\eet}(\pi)\otimes\Pi_S^\dual(m_{\eet}^\dual(\pi)))=
H^1(G_{\Q,S},\check m_{\eet}(\pi))\otimes\Pi_S^\dual(m_{\eet}^\dual(\pi))$$
l'image, par $\iota_{\pi,S}^\dual$,
 de ${\bf z}_{{\rm Kato}}^{S,c,d}\in H^1(G_{\Q,S},H^1(\GG,{\rm Mes}_L)_S(2))$.

\Subsubsection{La fonctionnelle $(0,\infty)_{\pi,S}^{c,d}$}

\vskip.2cm
$\bullet$ {\it Les op\'erateurs $B_p^{c,d}$, $B_\infty$, $A_{\pi,p}^{c,d}$ et $A_{\infty}$}.---
Soient $B_p^{c,d}$ et $B_\infty$ les op\'erateurs
$$B_p^{c,d}=\big(c_p^2-\matrice{c}{0}{0}{1}\big)
\big(d_p^2-\matrice{1}{0}{0}{d}\big)
\quad{\rm et}\quad
B_\infty=\tfrac{1}{2}\big(1+\matrice{-1}{0}{0}{1}^{]\infty[}\big).$$
Les op\'erateurs adjoints $(B_p^{c,d})^\dual$ et $B_\infty^\dual$
 s'obtiennent en inversant les matrices qui interviennent.
Comme $\matrice{1}{0}{0}{d^{-1}}=\matrice{d}{0}{0}{1}\matrice{d}{0}{0}{d}^{-1}$
et comme $\matrice{d}{0}{0}{d}$ agit par multiplication
par $\omega_{\pi^{\rm alg}}(d)=d_p^{k-2j}\tilde\omega_\pi(d)$, 
on a
\begin{align*}
&(B_p^{c,d})^\dual=\big(c_p^2-\matrice{c^{-1}}{0}{0}{1}\big)
\big(d_p^2-\tilde\omega_\pi^{-1}(d)d_p^{2j-k}\matrice{d}{0}{0}{1}\big),
&B_\infty^\dual=
\tfrac{1}{2}\big(1+\matrice{-1}{0}{0}{1}^{]\infty[}\big)
\end{align*}
Enfin, soient $A_{\pi,p}^{c,d}$ et $A_{\infty}$ les op\'erateurs
\begin{align*}
&A_{\pi,p}^{c,d}=\big(c_p^2-\matrice{c_p^{-1}}{0}{0}{1}_p\big)
\big(d_p^2-\tilde\omega_\pi^{-1}(d)d_p^{2j-k}\matrice{d_p}{0}{0}{1}_p\big)
&A_{\infty}=
\tfrac{1}{2}\big(1+\matrice{-1}{0}{0}{1}_S\big)
\end{align*}

$\bullet$ {\it Les fonctionnelles $(0,\infty)_{\pi,S}$ et $(0,\infty)_{\pi,S}^{c,d}$}.---
La restriction de $(0,\infty)$ \`a $m(\pi)\otimes  v_\pi^{]S[}\otimes \Pi_S(m_{\eet}^\dual(\pi))$
fournit
\begin{align*}
(0,\infty)_{\pi,S}&\in m(\pi)^\dual\otimes \Pi_S^\dual(m_{\eet}^\dual(\pi))
\end{align*}
On \index{inf2@\oits}note $(0,\infty)_{\pi,S}^{c,d}$
la fonctionnelle 
$$(0,\infty)_{\pi,S}^{c,d}=B_\infty B_p^{c,d}\star (0,\infty)_{\pi,S}$$ 
sur $m(\pi)\otimes\Pi_S(m_{\eet}^\dual(\pi))$.
Par d\'efinition,
$$\langle (0,\infty)^{c,d}_{\pi,S},\gamma\otimes\phi\rangle=
\big\langle (0,\infty),\gamma\otimes\big(B_\infty^\dual(B_p^{c,d})^\dual\star( v_\pi^{]S[}\otimes\phi)\big)\big\rangle.$$
Comme $\matrice{u}{0}{0}{1}^{]\infty,p[}$ fixe $ v_\pi^{]S[}$, si $u\in\cZ^\dual$,
on a
\begin{align}
B_\infty^\dual(B_p^{c,d})^\dual\star( v_\pi^{]S[}\otimes\phi)&=
 v_\pi^{]S[}\otimes (A_{\infty}A_{\pi,p}^{c,d}\star\phi) \notag\\
\langle (0,\infty)^{c,d}_{\pi,S},\gamma\otimes\phi\rangle&=
\langle (0,\infty)_{\pi,S},\gamma\otimes (A_{\infty}A_{\pi,p}^{c,d}\star\phi)\rangle
\label{cons1}
\end{align}

\subsubsection{Factorisation en un point classique}\label{fact1.1}
Remarquons que
$$m(\check\pi)\otimes\zeta_{\rm B}^2\hookrightarrow m_{\eet}(\check\pi) \otimes\zeta_{\rm B}^2=
m_{\eet}^\dual(\pi)(-1) \otimes\zeta_{\rm B}^2=\check m_{\eet}(\pi).$$
\begin{theo}\phantomsection\label{fact2}
On suppose que\footnote{\label{fact2.7}
C'est automatique si $j\neq\frac{k}{2}+1$ (centre de la bande critique)
et si $j\neq\frac{k+1}{2}$ (o\`u les facteurs d'Euler en $\ell\in S$
peuvent s'annuler).}
$L^{]S[}(f_\pi^\dual,j)\neq 0$.
Alors, 
pour tous $\gamma\in m(\pi)$ et $\check\gamma\in m(\check\pi)$,
on a l'identit\'e suivante dans $H^1(G_{\Q,S},\check m_{\eet}(\pi))\otimes\Pi_S^\dual(m_{\eet}^\dual(\pi))$,
$$(\langle\check\gamma^-,\gamma^+\rangle_{\rm B}\otimes\zeta_{\rm B})
\cdot {\bf z}_{{\rm Kato}}^{S,c,d}(\check m_{\eet}(\pi))=
 \langle{\bf z}^S(\check m_{\eet}(\pi)),\check\gamma\otimes\zeta_{\rm B}^2\rangle\otimes 
\langle (0,\infty)^{c,d}_{\pi,S},\gamma\rangle.$$
\end{theo}
\begin{proof}
On va d\'emontrer le r\'esultat dans le cas o\`u $m_{\eet}(\pi)_p$
est irr\'eductible. Le cas g\'en\'eral
s'en d\'eduit par prolongement analytique, cf.~(ii) de la rem.\,\ref{fact27}.
On fait les calculs dans le mod\`ele de Kirillov de $\pi_S^{\rm alg}$ dans lequel
$\phi\otimes\frac{(e_2^\dual)^k}{k!(e_1^\dual\wedge e_2^\dual)^j}$
devient~$\phi X^{-j}$.

$\bullet$ L'hypoth\`ese d'irr\'eductibilit\'e de $m_{\eet}(\pi)_p$ implique que 
le sous-espace de $\pi_p^{\rm alg}$ des
$\phi X^{-j}$,
pour $\phi\in {\rm LC}_c(\Q_p^\dual,\Q(\bmu_{p^\infty}))^{\Z_p^\dual}$,
est dense dans $\Pi_p(m_{\eet}^\dual(\pi))$: en effet, il est stable par 
$\BB(\Q_p)$ et
$\Pi_p(m_{\eet}^\dual(\pi))$ est topologiquement irr\'eductible comme $\BB(\Q_p)$-module
(cf.~\cite{gl2}, cor.\,II.2.9 et~\cite{mira}, cor.\,IV.5.6).

$\bullet$ L'hypoth\`ese $L^{]S[}(f_\pi^\dual,j)\neq 0$ implique, 
d'apr\`es~\cite{Ka4}, que $H^1(G_{\Q,S},\check m_{\eet}(\pi))$
est de dimension~$1$ et s'injecte dans $H^1(G_{\Q_p},\check m_{\eet}(\pi))$.
Ce dernier espace est le dual de $H^1(G_{\Q_p},m_{\eet}(\pi))$ 
qui contient $\exp(\iota_{{\rm dR},\pi}^-)$.
Il suffit donc de v\'erifier le m\^eme \'enonc\'e en rempla\c{c}ant ${\bf z}_{{\rm Kato}}^{S,c,d}(\check m_{\eet}(\pi))$
et ${\bf z}(\check m_{\eet}(\pi))$ par leurs restrictions \`a $G_{\Q_p}$, et il suffit alors
de v\'erifier l'\'egalit\'e souhait\'ee
par accouplement avec $\exp(\iota^-_{{\rm dR},\pi})\otimes \phi$,
avec $\phi=\phi_0 X^{-j}$ et $\phi_0\in\pi_S$.

$\bullet$ 
Par lin\'earit\'e, on peut supposer de plus que $\phi_0\in\pi_S[\eta]$,
o\`u $\eta$ est un caract\`ere localement constant de $\Z_S^\dual$.
Dans ce cas, on a $\matrice{u}{0}{0}{1}_S\star\phi=\eta(u)u^{-j}\phi$ si $u\in\Z_S^\dual$;
il en r\'esulte que (cf.~th.\,\ref{shi4} pour $a_{\pi,\infty}(\phi)$ et $a_{\pi,p}^{c,d}(\phi)$)
\begin{align}\label{cons}
A_{\infty}\star \phi =\tfrac{1}{2}\,a_{\pi,\infty}(\phi)\,\phi ,
\quad
A_{\pi,p}^{c,d}\star \phi =a_{\pi,p}^{c,d}(\phi)\,\phi.
\end{align}

$\bullet$ D'apr\`es le th.\,\ref{shi4},
\begin{align*}
\big\langle {\bf z}_{{\rm Kato}}^{S,c,d}(\check m_{\eet}(\pi)), \exp(\iota^-_{{\rm dR},\pi})\otimes 
\phi\big\rangle 
&= \big\langle {\bf z}_{{\rm Kato}}^{S,c,d}, \exp(\alpha_{\pi,S}(\phi_0)\otimes
\tfrac{(e_2^\dual)^k}{k!(e_1^\dual\wedge e_2^\dual)^j})\big\rangle\\ 
&= 
a_\pi^{c,d}(\phi)L^{]S[}(f_\pi^\dual,j)
L(v_\pi^{]S[}\otimes\phi,0).
\end{align*}

$\bullet$ D'un autre c\^ot\'e,
\begin{align*}
\big\langle \langle{\bf z}(\check m_{\eet}(\pi)), \check\gamma\otimes\zeta_{\rm B}^2\rangle
&\otimes \langle(0,\infty)^{c,d}_{\pi,S},\gamma\rangle,
\exp(\iota^-_{{\rm dR},\pi})\otimes \phi \big\rangle\\
=&\ 
\big\langle \langle{\bf z}(\check m_{\eet}(\pi)),\check\gamma\otimes\zeta_{\rm B}^2\rangle,\exp(\iota_{{\rm dR},\pi}^-)\big\rangle\,
\big\langle (0,\infty)^{c,d}_{\pi,S},\gamma\otimes \phi\big\rangle
\end{align*}

$\bullet$ Il r\'esulte des formules (\ref{cons1}) et (\ref{cons}) que
$$
\big\langle (0,\infty)^{c,d}_{\pi,S},\gamma\otimes \phi\big\rangle=a_{\pi,\infty}(\phi)
\,a_{\pi,p}^{c,d}(\phi)
\,\big\langle (0,\infty)_{\pi,S},\gamma\otimes \phi\big\rangle.$$
En particulier, si $(-1)^j\eta(-1)=-1$, alors $a_{\pi,\infty}(\phi)=0$ et les deux
membres de l'identit\'e \`a v\'erifier sont nuls.  On suppose donc $(-1)^j\eta(-1)=1$
dans ce qui suit (et donc $a_{\pi,\infty}(\phi)=2$, mais nous n'aurons pas besoin de ce fait).

Le cor.~\ref{es4} fournit (cf.~formule\,(\ref{ZK19}), la condition $(-1)^j\eta(-1)=1$ implique
que $\phi=\phi^+$) l'identit\'e
\begin{align*}
\big\langle (0,\infty)_{\pi,S},\gamma\otimes \phi \big\rangle=
\Omega_\pi^+(\gamma)L( v_\pi^{]S[}\otimes\phi,0)
\end{align*}

$\bullet$ Enfin, on d\'eduit de la prop.\,\ref{zk20} utilis\'ee pour 
$\check\pi\otimes|\ |_\A^{-2}$ au lieu de $\pi$ (ce qui change $j$ en $k+2-j$),
et pour $\phi={\bf 1}_{\Z_p^\dual}$ et $r=0$, que
\begin{align*}
\big\langle \langle{\bf z}^S(\check m_{\eet}(\pi)),\check\gamma\otimes\zeta_{\rm B}^2\rangle,
\exp(\iota_{{\rm dR},\pi}^-)\big\rangle &=
\big\langle \exp^\dual\big(\langle{\bf z}^S(\check m_{\eet}(\pi)),\check\gamma\otimes\zeta_{\rm B}^2\rangle\big),
\iota_{{\rm dR},\pi}^-\big\rangle\\
&= (2i\pi)^{-1}\Omega_{\check\pi\otimes|\ |_\A^{-2}}^-(\check\gamma\otimes \zeta_{\rm B}^2)L^{]S[}(f_\pi^\dual,j)\\
&=2i\pi\, \Omega_{\check\pi}^-(\check\gamma)L^{]S[}(f_\pi^\dual,j)
\end{align*}
(On obtient $L^{]S[}(f_\pi^\dual,j)$ gr\^ace au lien entre ${\bf z}^S$ et ${\bf z}$,
cf.~prop.\,\ref{ZK1}, rem.\,\ref{eu100} et note~\ref{euler}.
 Le passage de la deuxi\`eme ligne \`a la troisi\`eme utilise le lemme~\ref{twi1}.)

$\bullet$ Le lemme~\ref{como11.1}, 
selon lequel $\Omega_{\check\pi}^-(\check\gamma)\Omega_{\pi}^+(\gamma)=
2\frac{\lambda(\pi)}{2i\pi}(\langle\check\gamma^-,\gamma^+\rangle_{\rm B}\otimes\zeta_B)$, fournit l'identit\'e
\begin{align*}
\Omega_\pi^+(\gamma)(2i\pi)\Omega_{\check\pi}^-(\check\gamma)
\,\tfrac{1}{2}\, a_{\pi,\infty}(\phi)a_{\pi,p}^{c,d}(\phi)
&=
2(\langle\check\gamma^-,\gamma^+\rangle_{\rm B} \otimes\zeta_B)\,
\lambda(\pi)\,\tfrac{1}{2}\,a_{\pi,\infty}(\phi)a_{\pi,p}^{c,d}(\phi)\\
&=(\langle\check\gamma^-,\gamma^+\rangle_{\rm B}\otimes\zeta_B)
a_\pi^{c,d}(\phi)
\end{align*}
Ceci permet de conclure.
\end{proof}

\begin{coro}\phantomsection\label{shi4.1}
Si $\phi\in \pi_S[\eta]$, et si $0\leq \ell\leq k$, alors
$$
\langle {\bf z}^{S,c,d}_{{\rm Kato}},\exp(\alpha_{\pi,S}(\phi))
\otimes\tfrac{(e_1^\dual)^\ell(e_2^\dual)^{k-\ell}}
{(k-\ell)!\,(e_1^\dual\wedge e_2^\dual)^j}
\rangle_{\eet}=a_\pi^{c,d}(\phi X^{\ell-j})
L^{]S[}(f_\pi^\dual,j)L(v_\pi^{]S[}\otimes\phi X^{\ell-j},0),$$
\end{coro}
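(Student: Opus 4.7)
\emph{Strategy.} The plan is to reduce Corollaire~\ref{shi4.1} to Th\'eor\`eme~\ref{fact2}, whose factorisation is an identity in $H^1(G_{\Q,S},\check m_{\eet}(\pi))\otimes\Pi_S^\dual(m_{\eet}^\dual(\pi))$ and therefore is flexible enough to be evaluated on arbitrary test data, not merely the tensor $\phi X^{-j}$ used in its proof. The key bookkeeping observation is that under the Kirillov-model identification of \S\,\ref{como12.0}, the tensor $\phi\otimes\tfrac{(e_1^\dual)^\ell(e_2^\dual)^{k-\ell}}{(k-\ell)!(e_1^\dual\wedge e_2^\dual)^j}$ corresponds precisely to $\phi X^{\ell-j}$; this explains the uniform appearance of the exponent $\ell-j$ on all three places of the right-hand side.

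\emph{Key steps.} First, I would pair both sides of the factorisation identity of Th\'eor\`eme~\ref{fact2} with $\exp(\iota_{{\rm dR},\pi}^-)\otimes v$, where $v$ is the Kirillov representative $\phi X^{\ell-j}$. Translating via $\iota_{\pi,S}^\dual$, the left-hand side becomes exactly $(\langle\check\gamma^-,\gamma^+\rangle_{\rm B}\otimes\zeta_{\rm B})$ times the left-hand side of the corollary. Next, I would evaluate each factor of the right-hand side separately. The global factor $\langle{\bf z}^S(\check m_{\eet}(\pi)),\check\gamma\otimes\zeta_{\rm B}^2\rangle$ paired with $\exp(\iota_{{\rm dR},\pi}^-)$ yields, by Prop.~\ref{zk20} (applied to $\check\pi\otimes|\ |_\A^{-2}$ with $r=0$) and the comparison of Prop.~\ref{ZK1}, the value $2i\pi\,\Omega_{\check\pi}^-(\check\gamma)\,L^{]S[}(f_\pi^\dual,j)$--exactly as in the proof of Th\'eor\`eme~\ref{fact2}. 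The local factor $\langle(0,\infty)_{\pi,S}^{c,d},\gamma\otimes(\phi X^{\ell-j})\rangle$ unwinds via formula~(\ref{cons1}) to the action of $A_\infty A_{\pi,p}^{c,d}$ on $\phi X^{\ell-j}$, contributing the scalar $\tfrac{1}{2}a_{\pi,\infty}(\phi X^{\ell-j})\,a_{\pi,p}^{c,d}(\phi X^{\ell-j})$ (the analogue of~(\ref{cons}) with $\ell-j$ in place of $-j$), followed by the Cor.~\ref{es4} evaluation $\Omega_\pi^+(\gamma)\,L(v_\pi^{]S[}\otimes\phi X^{\ell-j},0)$.

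\emph{Combining the factors.} Substituting both into the factorisation identity and dividing through by $\langle\check\gamma^-,\gamma^+\rangle_{\rm B}\otimes\zeta_{\rm B}$, the period identity of Lemme~\ref{como11.1}, namely $\Omega_{\check\pi}^-(\check\gamma)\Omega_\pi^+(\gamma)=2\tfrac{\lambda(\pi)}{2i\pi}(\langle\check\gamma^-,\gamma^+\rangle_{\rm B}\otimes\zeta_{\rm B})$, cancels the transcendental periods. What remains is precisely $a_\pi^{c,d}(\phi X^{\ell-j})=\lambda(\pi)\,a_{\pi,\infty}(\phi X^{\ell-j})\,a_{\pi,p}^{c,d}(\phi X^{\ell-j})$ multiplying $L^{]S[}(f_\pi^\dual,j)\,L(v_\pi^{]S[}\otimes\phi X^{\ell-j},0)$, which is the stated formula.

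\emph{Main obstacle.} The chief subtlety is that Th\'eor\`eme~\ref{fact2} was only proved under the non-vanishing hypothesis $L^{]S[}(f_\pi^\dual,j)\neq 0$ (footnote~\ref{fact2.7}), which is automatic except at the critical centre values $j=k/2+1$ and $j=(k+1)/2$. In the exceptional cases where $L^{]S[}(f_\pi^\dual,j)=0$, the right-hand side of the corollary vanishes automatically, but one must separately argue that the left-hand side vanishes too. I would handle this either by a continuity argument--varying $\pi$ among classical points in an appropriate Hecke family so that both sides of the corollary vary analytically and agree on the zariski-dense locus where $L^{]S[}(f_\pi^\dual,j)\neq 0$--or more directly by repeating the Rankin-Selberg computation of Th\'eor\`eme~\ref{shi4} with the general tensor: Cor.~\ref{sou4} supplies the analogue of Prop.~\ref{expo3.1} for the vector $\tfrac{(e_1^\dual)^\ell(e_2^\dual)^{k-\ell}}{(k-\ell)!(e_1^\dual\wedge e_2^\dual)^j}$, and the Rankin-Selberg integral of Prop.~\ref{shi3} goes through with minor bookkeeping changes, producing the factor $L^{]S[}(f_\pi^\dual,j)$ directly from the Euler factorisation, with no non-vanishing hypothesis required.
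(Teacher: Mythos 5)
Your proposal is correct and matches the paper's reasoning: Corollaire~\ref{shi4.1} is derived by evaluating the factorisation of Th\'eor\`eme~\ref{fact2} on the test vector $\exp(\iota_{{\rm dR},\pi}^-)\otimes\phi X^{\ell-j}$ and then computing the $(0,\infty)$-factor via formule~(\ref{ZK19}) applied to $\phi X^{\ell-j}$; the paper's one-line proof compresses exactly this argument, and your Kirillov-model identification of the tensor $\phi\otimes\tfrac{(e_1^\dual)^\ell(e_2^\dual)^{k-\ell}}{(k-\ell)!(e_1^\dual\wedge e_2^\dual)^j}$ with $\phi X^{\ell-j}$ is the right bookkeeping observation.

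Your concern about the hypothesis $L^{]S[}(f_\pi^\dual,j)\neq 0$ in Th\'eor\`eme~\ref{fact2} is a sharp reading: the Corollaire is stated unconditionally, but the deduction from Th\'eor\`eme~\ref{fact2} only covers the case where that $L$-value does not vanish. Either of your remedies works. Indeed, the reason the authors describe the Corollaire's proof as \emph{\og plus d\'etourn\'ee\fg} (footnote to Th\'eor\`eme~\ref{shi4}) is precisely that the alternative direct route---re-running the Rankin--Selberg computation for the general vector $\tfrac{(e_1^\dual)^\ell(e_2^\dual)^{k-\ell}}{(k-\ell)!(e_1^\dual\wedge e_2^\dual)^j}$, which would settle the degenerate case unconditionally---requires an $\ell$-dependent analogue of Prop.~\ref{expo3.1} via the Soul\'e-twist formalism of Cor.~\ref{sou4}, and they elected to route through Th\'eor\`eme~\ref{fact2} to avoid that extra bookkeeping.
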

\begin{proof}
Cela r\'esulte de la formule (\ref{ZK19}) pour 
$\big\langle (0,\infty),\gamma\otimes  v_\pi^{]S[}\otimes \phi X^{\ell-j}\big\rangle$.
\end{proof}

\subsubsection{Factorisation en famille}\label{fact4}
Soit $T'$ la localis\'ee de $T(Np^\infty)$ correspondant \`a $\rho_T^\diamond(2)$,
et ${\cal X}'={\rm Spec}(T')$.  L'application $\rho\mapsto\rho\otimes (\det\rho)^{-1}\cy^2$
induit des isomorphismes ${\cal X}\cong {\cal X}'$, et $T'\cong T$,
ce qui permet de voir tous les objets attach\'es \`a $\rho_{T'}=\rho_T^\diamond(2)$ comme des
$T$-modules. On note $H^1_c[\rho_T^\diamond(2)]_S\subset H^1_c[\rho_T^\diamond(2)]$ 
le sous-$\GG(\Q_S)$-module
correspondant \`a $\rho_T^\diamond(1)\otimes_T v_T^{]S[}\otimes_T \Pi_S(\rho_T(-1))$
dans la factorisation du th.\,\ref{Ycano110}.
On note 
$$(0,\infty)_{T',S},\ 
(0,\infty)_{T',S}^{c,d}\in \rho_T(-1)\otimes_T\Pi_S^\dual(\rho_T(-1))$$
les restrictions de $(0,\infty)$ 
et $\big(B_\infty B_p^{c,d}\star (0,\infty)\big)$ \`a
$\rho_T^\diamond(1)\otimes_T v_T^{]S[}\otimes_T \Pi_S(\rho_T(-1))$.

Par dualit\'e, la factorisation fournit une fl\`eche 
$$H^1(\GG,{\rm Mes}_L)_S(2)\to
(\rho_T^\diamond(1)\otimes_T \Pi_S(\rho_T(-1)))^\dual(1)
=\rho_T\otimes_T\Pi^\dual_S(\rho_T(-1))$$
qui est $G_{\Q,S}\times \GG(\Q_S)$-\'equivariante.
On note $${\bf z}_{{\rm Kato}}^{S,c,d}(\rho_T)\in 
H^1(G_{\Q,S}, \rho_T\otimes_T\Pi_S^\dual(\rho_T(-1)))
\cong H^1(G_{\Q,S}, \rho_T)\otimes_T\Pi_S^\dual(\rho_T(-1))$$ 
l'image de ${\bf z}_{{\rm Kato}}^{S,c,d}$. 

Rappelons (cf.~rem.\,\ref{geni8}) que l'on a d\'efini 
$${\bf z}^S(\rho_T)\in {\rm Fr}(T)\otimes_T(\rho_T^\diamond\otimes_T H^1(G_{\Q,S},\rho_T))$$
\begin{lemm}\phantomsection\label{facto1}
Pour tous $\check\gamma\in\rho_T$, 
$\gamma\in \rho_T^\diamond(1)$ et $v\in \Pi_S^\dual(\rho_T(-1))^\diamond$,
on a
$$(\langle \check\gamma^-,\gamma^+\rangle\otimes\zeta_{\rm B})
 \,\langle{\bf z}_{{\rm Kato}}^{S,c,d}(\rho_T),v\rangle
=\langle (0,\infty)^{c,d}_{T',S},
\gamma\otimes v\rangle\,\langle {\bf z}^S(\rho_T),\check\gamma\rangle.$$
\end{lemm}
\begin{proof}
Les deux membres appartiennent \`a ${\rm Fr}(T)\otimes_TH^1(G_{\Q,S},\rho_T)$,
qui est un ${\rm Fr}(T)$-module de rang~$1$. Si $x\in{\cal X}^{{\rm cl},+}$ n'est pas un p\^ole,
on peut tout r\'eduire modulo~${\goth p}_x$. Si $\rho_x=\check m_{\eet}(\pi)$, o\`u $\pi$
est cohomologique, alors

$\bullet$ $\gamma(x)\in \check m_{\eet}(\pi)^\dual\otimes\zeta_{\rm B}=m_{\eet}(\pi)$,

$\bullet$ $\check\gamma(x)\in \check m_{\eet}(\pi)=(\Q_p\otimes m(\check\pi))\otimes\zeta_{\rm B}^2$,

$\bullet$ $v(x)\in \Pi_S(m_{\eet}^\dual(\pi))$ car $v(x)$ est l'image de $v$
par 
\begin{align*}
{\rm Hom}_T(\Pi_S^\dual(\rho_T(-1)),T)\to 
{\rm Hom}_T(\Pi_S^\dual(\rho_T(-1))&,T/{\goth p}_x)=
{\rm Hom}(\Pi_S^\dual(\rho_T(-1)),L)[{\goth p}_x]\\
&=\Pi_S(\rho_T(-1))[{\goth p}_x]=
\Pi_S(m_{\eet}^\dual(\pi)).
\end{align*}

$\bullet$ ${\bf z}_{{\rm Kato}}^{S,c,d}(\rho_T)$ se sp\'ecialise en
${\bf z}_{{\rm Kato}}^{S,c,d}(\check m_{\eet}(\pi))$,

$\bullet$ $(0,\infty)^{c,d}_{T',S}$ se sp\'ecialise en
$(0,\infty)^{c,d}_{\pi,S}$.

D'apr\`es le th.\,\ref{fact2},
la diff\'erence des deux membres est donc nulle en tout $x \in{\cal X}^{{\rm cl},+}$,
de poids $(k+2,j+1)$ avec $1\leq j\leq k+1$, mais
$j\neq\frac{k}{2}+1,\frac{k+1}{2}$ (cf.~note\,\ref{fact2.7}), et
qui n'est pas un p\^ole
de ${\bf z}^S(\rho_T)$.  Comme ces points sont zariski-denses dans ${\cal X}$, cela prouve
que la diff\'erence est nulle.
\end{proof}

\begin{theo}\phantomsection\label{facto2}
{\rm (i)} ${\bf z}^S(\rho_T)\in \rho_T^\diamond\otimes_T H^1(G_{\Q,S},\rho_T)$.

{\rm (ii)} Si $x$ est un point classique $x$, et si
${\bf z}(\rho_x)$ est l'\'el\'ement de Kato du \no\ref{ZK99}.
la sp\'ecialisation de ${\bf z}^S(\rho_T)$
en $x$ est $\big(\prod_{\ell\in S\moins\{p\}}P_\ell(1)\big)\, {\bf z}(\rho_x)$,
o\`u $P_\ell$ est le polyn\^ome de la rem.\,\ref{eu100}.
\end{theo}
\begin{proof}
Compte-tenu du lemme~\ref{facto1}, il suffit, pour d\'emontrer le (i),
de prouver que, si $x\in{\cal X}$, on peut trouver 
$v,\gamma,c,d$ tels que $\langle (0,\infty)^{c,d}_{T',S},\gamma\otimes v\rangle$ ne s'annule
pas en $x$.  Dans le cas contraire, on a
$$\big\langle (0,\infty),\gamma\otimes ((B_p^{c,d})^\dual\star v)\big\rangle=0,
\quad{\rm avec}\ 
B_p^{c,d}=\big(c_p^2-\matrice{c}{0}{0}{1}\big)\big(d_p^2-\matrice{1}{0}{0}{d}\big),$$
pour tous $\gamma\in\rho_x$ et $v\in\Pi_S(\rho_x)$ et $c,d\in\cZ^\dual$.
Comme $p\neq 2$, le groupe $\Z_p^\dual$ est procyclique; prenons donc $c\in\Z_p^\dual\subset\cZ^\dual$
un g\'en\'erateur de $\Z_p^\dual$ et $d=c^{-1}$.
Alors $$(B_p^{c,d})^\dual\star(\otimes_{\ell\in S}v_\ell)=\big(\otimes_{\ell\in S\moins\{p\}}v_\ell\big)
\otimes ((B_p^{c,d})^\dual\star v_p).$$
Maintenant, si $\rho_{x,p}$ est irr\'eductible ou, plus g\'en\'eralement, si $\rho_{x,p}$
n'a pas de quotient non nul sur lequel l'inertie agit par $\cy^{-1}$ 
ou par $\cy^2\det\rho_{x,p}$,
le sous-espace engendr\'e par les $(B_p^{c,d})^\dual\star v_p$ est dense dans $\Pi_p(\rho_x)$
(cf.~lemme\,\ref{facto3}).
On en d\'eduit que $(0,\infty)$ est identiquement nul sur $\Pi_S(\rho_x^\dual)$, ce qui contredit
le th\'eor\`eme d'Ash-Stevens~\cite{AS}
 (pour les coefficients constants) dont une cons\'equence est
qu'un sous-espace ferm\'e de $H^1_c(\GG,{\cal C}_L)_S$, 
qui est stable par $\GG(\Q_S)$ (en fait, $\GG(\Z_S)$ suffit)
et tu\'e par $(0,\infty)$, est nul.

Il s'ensuit que les p\^oles \'eventuels de ${\bf z}^S(\rho_T)$ sont de codimension au moins~$2$
(la condition que $\rho_{x,p}$ est non irr\'eductible implique que $x$ est dans le lieu ordinaire
de ${\cal X}$, qui est de codimension~$1$; la condition que l'un des caract\`eres soit comme ci-dessus
est de codimension~$1$ dans le lieu ordinaire car une des dimensions
du lieu ordinaire est la
torsion par un caract\`ere).  Comme ${\cal X}$ est fini et plat au-dessus d'une boule\footnote{C'est un
sous-produit des th\'eor\`emes {\og big $R$ = big $T$\fg}, 
cf.~\cite[th.\,3.1]{boc} et~\cite[\S\,7.3]{Em08}
pour l'extension des m\'ethodes de~\cite{boc}.}
de dimension~$3$, il en r\'esulte que
${\bf z}^S(\rho_T)$ n'a pas de p\^ole.

Ceci prouve le (i).  Le (ii) est alors une cons\'equence du th.\,\ref{fact2}.
\end{proof}

\begin{rema}\phantomsection\label{fact27}
{\rm (i)}
Il r\'esulte du lemme~\ref{facto1}
que la projection naturelle $\rho_T(-1)\otimes_T\rho_T^\diamond\to T\otimes\zeta_{\rm B}^{-1}$
induit une factorisation dans $\Pi_S^\dual(\rho_T(-1))\otimes_T H^1(G_{\Q,S},\rho_T)$:
$${\bf z}_{{\rm Kato}}^{S,c,d}(\rho_T)\otimes\zeta_{\rm B}^{-1}
= (0,\infty)^{c,d}_{T',S}\otimes {\bf z}^S(\rho_T)$$

{\rm (ii)} Une sp\'ecialisation de la factorisation du (i) en un point classique g\'en\'eral $x$
permet de terminer la preuve du th.\,\ref{fact2} (qui n'avait \'et\'e faite que sous
l'hypoth\`ese $x\in{\cal X}^{{\rm cl},+}$).

{\rm (iii)} On peut diviser ${\bf z}_{{\rm Kato}}^{S,c,d}(\rho_T)$ par le facteur
$B^{c,d}_p$ que l'on avait d\^u introduire pour rendre ${\bf z}_{\rm Kato}$ entier 
(cf.~\no\ref{entier}).
En effet, si on pose
$${\bf z}_{{\rm Kato}}^S(\rho_T):=(B_\infty\star (0,\infty)_{T',S})\otimes {\bf z}^S(\rho_T)\otimes\zeta_B,$$
alors, dans $\Pi_S^\dual(\rho_T(-1))\otimes_T H^1(G_{\Q,S},\rho_T)$, on a la relation
${\bf z}^{S,c,d}_{{\rm Kato}}(\rho_T)=B^{c,d}_p\star {\bf z}_{{\rm Kato}}^S(\rho_T)$
gr\^ace \`a la relation $(0,\infty)_{T',S}^{c,d}=B^{c,d}_p\star(B_\infty\star (0,\infty)_{T',S})$
et au (i).

{\rm (iv)}
Posons, \index{Zkatol@\zkatol}comme dans la rem.\,\ref{geni8.5},
$${\bf z}_{{\rm Kato}}^S(\Lambda\otimes\rho_T)=\iota^{\rm semi}({\bf z}_{{\rm Kato}}^S(\rho_T)).$$
On a une factorisation de ${\bf z}_{{\rm Kato}}^S(\Lambda\otimes\rho_T)$
analogue \`a la 
factorisation ci-dessus avec ${\bf z}^S_{\rm Iw}(\rho_T)$
au lieu de ${\bf z}^S(\rho_T)$. Or la localisation en $p$ de
${\bf z}^S_{\rm Iw}(\rho_T)$ s'identifie par construction, modulo les identifications habituelles,
 \`a $(0,\infty)_{T,S}$.
La factorisation ci-dessus devient donc
$${\rm loc}_p({\bf z}_{{\rm Kato}}^S(\Lambda\otimes\rho_T))
=B_\infty\star \big((0,\infty)_{T',S}\otimes (0,\infty)_{T,S}\big),$$
formule que l'on peut voir comme un avatar alg\'ebrique du produit de Rankin.
(Le $\matrice{-1}{0}{0}{1}_S$ intervenant dans $B_\infty$ agit sur les deux facteurs: pour que
les p\'eriodes se combinent pour donner des nombres alg\'ebriques, il faut que les deux
fonctions soient paires ou soient impaires.)
\end{rema}

\printindex



\begin{thebibliography}{94}

\bibitem{AIS}
{\sc F.\,Andreatta, A.\,Iovita, G.\,Stevens},
 Overconvergent Eichler-Shimura isomorphisms,
 J. Inst. Math. Jussieu~{\bf 14} (2015), 221--274.

\bibitem{AS}
{\sc A.\,Ash}, {\sc G.\,Stevens},
Modular forms in characteristic $\ell$ and special values of their $L$-functions, 
Duke Math. J.~{\bf 53} (1986), 849--868. 

\bibitem{Berger} {\sc L.\,Berger}, 
Repr\'esentations de de Rham et normes universelles,
Bull. Soc. Math. France~{\bf 133} (2005), 601--618. 

\bibitem{BB}
{\sc L.\,Berger, C.\,Breuil},
Sur quelques repr\'esentations potentiellement cristallines de ${\rm GL}_2(\Q_p)$,
Ast\'erisque~{\bf 330} (2010), 155--211.

\bibitem{BC08}
{\sc L.\,Berger}, {\sc P.\,Colmez},
Familles de repr\'esentations
de de Rham et monodromie $p$-adique, 
Ast\'erisque~{\bf 319} (2008), 303--337.

\bibitem{boc}
{\sc G.\,B\"ockle}, 
On the density of modular points in universal deformation spaces,
 Amer. J. Math.~{\bf 123} (2001), 985--1007.

\bibitem{CGJ}
{\sc A.\,Caraiani, D.\,Gulotta, C.\,Johansson},
Vanishing theorems for Shimura varieties at unipotent level,
J.~EMS~{\bf 25} (2023), 869--911. 

\bibitem{cara}
{\sc H.\,Carayol},
Sur les repr\'esentations $\ell$-adiques associ\'ees aux formes modulaires de Hilbert,
Ann. ENS~{\bf 19} (1986), 409--468.

\bibitem{CC98} {\sc F.\,Cherbonnier}, {\sc P.\,Colmez},
Repr\'esentations $p$-adiques surconvergentes,
{Invent. math.}~{\bf 133} (1998), 581--611.

\bibitem{CC99} {\sc F.\,Cherbonnier}, {\sc P.\,Colmez},  T
h\'eorie d'Iwasawa des repr\'esentations $p$-adiques d'un corps local, 
J.\,AMS {\bf 12} (1999),  241--268.

\bibitem{bbk-Kato}
{\sc P.\,Colmez},
 La conjecture de Birch et Swinnerton-Dyer $p$-adique,
 S\'em. Bourbaki 2002-2003, exp. 919,
Ast\'erisque~{\bf 294} (2004), 251--319.


\bibitem{mira}
{\sc P.\,Colmez},
$(\varphi,\Gamma)$-modules et repr\'esentations du mirabolique de ${\bf GL}_2(\Q_p)$,
Ast\'erisque~{\bf 330} (2010), 61--153.

\bibitem{gl2}
{\sc P.\,Colmez},
Repr\'esentations de ${\bf GL}_2({\bf Q}_p)$ et $(\varphi,\Gamma)$-modules,
Ast\'erisque~{\bf 330} (2010), 281--509.

\bibitem{Cvectan} 
{\sc P.\,Colmez}, 
La s\'erie principale unitaire de ${\rm GL}_2(\Q_p)$: vecteurs localement analytiques,
{\it Automorphic Forms and Galois Representations Vol.\,1},
 London Math. Soc. Lect. Note Series~{\bf 415} (2014), 286--358.

\bibitem{poids}
{\sc P.\,Colmez},
Correspondance de Langlands locale $p$-adique et changement de poids,
J.\,EMS~{\bf 21} (2019), 797--838.

\bibitem{adele}
{\sc P.\,Colmez},
Exercices ad\'eliques,
preprint 2024.




\bibitem{CD}
{\sc P.\,Colmez}, {\sc G.\,Dospinescu}, 
Compl\'etions unitaires de repr\'esentations de ${\bf GL}_2(\Q_p)$, 
Algebra and Number Theory~{\bf 8} (2014), 1447--1519.


\bibitem{CDN2}
{\sc P.\,Colmez, G.\,Dospinescu, W.\,Nizio{\l}},
Cohomologie des courbes analytiques $p$-adiques,
Cambridge J. Math.~{\bf 10} (2022), 511--655.

\bibitem{CDP}
{\sc P.\,Colmez}, {\sc G.\,Dospinescu}, {\sc V.\,\paskunas}, 
The $p$-adic local Langlands correspondence for ${\bf GL}_2(\Q_p)$, 
Cambridge Math. J.~{\bf 2} (2014), 1--47.

\bibitem{Dee}
{\sc J.\,Dee},
$(\varphi,\Gamma)$-modules for families of Galois representations, 
J. Algebra~{\bf 235} (2001), 636--664. 

\bibitem{DFG}
{\sc F.\,Diamond, M.\,Flach, L.\,Guo},
 The Tamagawa number conjecture of adjoint motives of modular forms,
 Ann. ENS~{\bf 37} (2004), 663--727.

\bibitem{DLLZ}
{\sc H.\,Diao, K.-W.\,Lan, R.\,Liu, X.\,Zhu},
Logarithmic Riemann-Hilbert correspondences for rigid varieties,
J. AMS~{\bf 36} (2023), 483--562.

\bibitem{Dosp1}
{\sc G.\,Dospinescu}, 
Actions infinit\'esimales dans la correspondance de Langlands locale $p$-adique pour ${\bf GL}_2(\Q_p)$,
Math.~Ann.~{\bf 354} (2012), 627--657.


\bibitem{Em05}
{\sc M.\,Emerton},
$p$-adic $L$-functions and unitary completions of representations of $p$-adic reductive groups,
 Duke Math. J.~{\bf 130} (2005), 353--392. 

\bibitem{Em06}
{\sc M.\,Emerton},
On the interpolation of systems of eigenvalues attached to automorphic Hecke eigenforms,
Invent. math.~{\bf 164} (2006), 1--84.

\bibitem{Em06b}
{\sc M.\,Emerton},
A local-global compatibility conjecture in the $p$-adic Langlands programme for ${{\rm GL}_2}_{/{\mathbb Q}}$, 
Pure and Applied Math. Quarterly~{\bf 2} (2006), 279--393. 
(special issue in honour of John Coates' 60th birthday). 

\bibitem{Em08}
{\sc M.\,Emerton}, 
Local-global compatibility in the $p$-adic Langlands programme for ${\bf GL}_{2,\Q}$,
 preprint 2008!

\bibitem{EH}
{\sc M.\,Emerton}, {\sc D.\,Helm}, 
The local Langlands correspondence for ${\bf GL}_n$ in families,
 Ann. ENS~{\bf 47} (2014), 655--722.

\bibitem{faltings-FM}
{\sc G.\,Faltings},
Hodge-Tate structures and modular forms,
Math. Ann.~{\bf 278} (1987), 133--149.


\bibitem{Far1}
{\sc L.\,Fargues},
Application de Hodge-Tate duale d'un groupe de Lubin-Tate, immeuble de Bruhat-Tits du groupe lin\'eaire et filtrations de ramification, 
Duke Math. J.~{\bf 140} (2007), 499--590.

\bibitem{Far2}
{\sc L.\,Fargues},
La filtration de Harder-Narasimhan des sch\'emas en groupes finis et plats,
J. Reine Angew. Math.~{\bf 645} (2010), 1--39. 

\bibitem{Fo04}
{\sc J.-M.\,Fontaine},
Arithm\'etique des repr\'esentations galoisiennes $p$-adiques,
Ast\'erisque~{\bf 295}  (2004), 1--115.

\bibitem{FW22}
{\sc O.\,Fouquet, X.\,Wan},
The Iwasawa Main Conjecture for universal families of modular motives,
\url{arXiv:2107.13726 [math.NT]}, preprint 2022.

\bibitem{FM1}
{\sc J.\,Fresnel},
Produit tensoriel topologique des corps valu\'es. 
S\'eminaire de Th\'eorie des Nombres de Bordeaux 1976-1977, exp.~\no30, 26 pp.

 \bibitem{FM2}
{\sc J.\,Fresnel, B.\,de Mathan}, 
Produit tensoriel topologique de corps valu\'es,
 Groupe de travail d'analyse ultram\'etrique 1976-1977, exp.~\no24, 6pp.

\bibitem{FM3}
{\sc J.\,Fresnel, B.\,de Mathan},
Alg\`ebres $L^1$ $p$-adiques, 
Bull. SMF~{\bf 106} (1978), 225--260.

\bibitem{gross}
{\sc B.\,Gross},
A tameness criterion for Galois representations associated to modular forms (mod~$p$),
Duke Math. J.~{\bf 61} (1990), 445--517.

\bibitem{Helm}
{\sc D.\,Helm},
On the modified mod~$p$ local Langlands correspondence for ${\bf GL}_2(\Q_\ell)$,
 Math. Res. Lett.~{\bf 20} (2013), 489--500.

\bibitem{HM}
{\sc D.\,Helm, G.\,Moss},
Converse Theorems and the local Langlands correspondence in families,
Invent. Math.~{\bf 214} (2018), 999--1022. 

\bibitem{howe}
{\sc S.\,Howe},
Overconvergent modular forms are highest-weight vectors in the Hodge-Tate weight zero part of completed cohomology,
 Forum Math. Sigma~{\bf  9} (2021), e17, 24 pp.


\bibitem{Ka99}
{\sc K.\,Kato}, 
Euler systems, Iwasawa theory, and Selmer groups,
 Kodai Math. J.~{\bf 22} (1999), 313--372.

\bibitem{Ka4}
{\sc K.\,Kato}, 
Hodge theory and values of zeta functions of modular forms, 
Ast\'erisque~{\bf 295} (2004), 117--290.


\bibitem{Ki1}
{\sc M.\,Kisin},
Deformations of $G_{{\mathbb Q}_p}$ and ${\rm GL}_2({{\mathbb Q}_p})$ representations, 
Ast\'erisque~{\bf 330} (2010), 511--528.

\bibitem{Kol}
{\sc V.\,Kolyvagin}, 
Euler systems, {\it The Grothendieck Festschrift}, Vol. II, 435--483, 
Progr. Math.~{\bf 87}, Birkh\"auser 1990. 

\bibitem{KL}
{\sc D.\,Kubert, S.\,Lang}, {\it Modular units},
Grundlehren der Math. Wiss.~{\bf 244}, Springer--Verlag, 1981.


\bibitem{LXZ}
{\sc R.\,Liu}, {\sc B.\,Xie}, {\sc Y.\,Zhang},
Locally Analytic Vectors of Unitary Principal Series of ${\bf GL}_2({\bf Q}_p)$,
Ann. E.N.S.~{\bf 45} (2012), 167--190.


\bibitem{naka1}
{\sc K.\,Nakamura},
Local $\epsilon$-isomorphisms for rank two $p$-adic representations of ${\rm Gal}(\Qbar_p/\Q_p)$
 and a functional equation of Kato's Euler system,
 Camb. J. Math.~{\bf 5} (2017), 281--368. 

\bibitem{naka2}
{\sc K.\,Nakamura},
Zeta morphisms for rank two universal deformations,
\url{arXiv:2006.13647 [math.NT]}, preprint 2020.

\bibitem{ophir}
{\sc A.\,Ophir},
On the rigidity of invariant norms on the $p$-adic Schr\"odinger representation,
\url{arXiv:2101.00342 [math.NT]}, preprint 2021.


\bibitem{pan1}
{\sc L.\,Pan},
The Fontaine-Mazur conjecture in the residually reducible case,
J. AMS~{\bf 35} (2022), 1031--1169.

\bibitem{pan2}
{\sc L.\,Pan},
On locally analytic vectors of the completed cohomology of modular curves,
 Forum Math. Pi~{\bf 10} (2022), e7, 82 pp.

\bibitem{pan3}
{\sc L.\,Pan},
On locally analytic vectors of the completed cohomology of modular curves II,
\url{arXiv:2209.06366 [math.NT]}, preprint 2022.


\bibitem{Pas}
{\sc V.\,\paskunas},
The image of Colmez's Montreal functor, 
Publ. IHES~{\bf 118} (2013), 1--191.

\bibitem{PT}
{\sc V.\,\paskunas, S.-N.\,Tung},
Finiteness properties of the category of mod $p$ representations of ${\rm GL}_2(\Q_p)$.
Forum Math. Sigma~{\bf 9} (2021), e80, 39 pp.

\bibitem{PR95}   {\sc B.\,Perrin-Riou},
{\em Fonctions L $p$-adiques des repr\'esentations $p$-adiques}, 
Ast\'erisque {\bf 229} (1995).

\bibitem{PR98}   {\sc B.\,Perrin-Riou},
Syst\`emes d'Euler $p$-adiques et th\'eorie d'Iwasawa,
Ann. Inst. Fourier~{\bf  48} (1998), 1231--1307. 

\bibitem{joaquin}
{\sc J. Rodrigues Jacinto},
La conjecture $\epsilon$ locale de Kato en dimension 2,
Math. Ann.~{\bf 372} (2018), 1277--1334.

\bibitem{JE1}
{\sc J.\,E.\,Rodr\'{\i}guez Camargo},
An integral model of the perfectoid modular curve,
J.~\'Ec. polytech.~{\bf 8} (2021), 1193--1224.

\bibitem{JE2}
{\sc J.\,E.\,Rodr\'{\i}guez Camargo},
$p$-adic Eichler-Shimura maps for the modular curve,
Compositio Math.~{\bf 159} (2023), 1214--1249.

\bibitem{rubin}
{\sc K.\,Rubin}, 
{\it Euler systems},
 Annals of Mathematics Studies~{\bf 147}, Princeton University Press 2000.

\bibitem{Sz1}
{\sc P.\,Scholze},
$p$-adic Hodge theory for rigid-analytic varieties,
 Forum Math. Pi~{\bf 1} (2013), e1, 77 pp. 

\bibitem{Sz2}
{\sc P.\,Scholze}, 
On torsion in the cohomology of locally symmetric varieties,
Ann. of Math.~{\bf 182} (2015), 945--1066.



%
%

\bibitem{Shi}
{\sc G.\,Shimura},
The special values of the zeta functions associated with cusp forms,
Comm. Pure Appl. Math.~{\bf 29} (1976), 783--804.


\bibitem{urban}
{\sc E.\,Urban},
Nearly overconvergent modular forms, {\it Iwasawa theory 2012}, 401--441, 
Contrib. Math. Comput. Sci.~{\bf 7}, Springer 2014.


\bibitem{shanwen}
{\sc S.\,Wang},
Le syst\`eme d'Euler de Kato,
J. Th\'eor. Nombres Bordeaux~{\bf 25} (2013), 677--758.

\bibitem{vigne}
{\sc M.-F.\,Vign\'eras},
Repr\'esentations modulaires de ${\rm GL}(2,F)$ en caract\'eristique~$\ell$, $F$ corps $p$-adique, $\ell\neq p$,
Compositio Math.~{\bf 72} (1989), 33--66.

\bibitem{wein}
{\sc J.\,Weinstein},
Semistable models for modular curves of arbitrary level,
 Invent. math.~{\bf 205} (2016), 459--526.

\bibitem{wiese}
{\sc G.\,Wiese},
Multiplicities of Galois representations of weight one,
Algebra Number Theory~{\bf 1} (2007), 67--85. 

\bibitem{Zhou}
{\sc Y.\,Zhou},
Completed cohomology and Kato's Euler system for modular forms,
\url{arXiv:1812.03272 [math.NT]}, preprint 2018.

\end{thebibliography}
\end{document}